\documentclass[english]{article}
\usepackage{preamble}

\begin{document}

\title{The Chromatic Fourier Transform}
\maketitle

\begin{abstract}
    We develop a general theory of higher semiadditive Fourier transforms that includes both the classical discrete Fourier transform for finite abelian groups at height $n=0$, as well as a certain duality for the $E_n$-(co)homology of $\pi$-finite spectra, established by Hopkins and Lurie, at heights $n\ge 1$. We use this theory to generalize said duality in three different directions. First, we extend it from $\ZZ$-module spectra to all (suitably finite) spectra and use it to compute the discrepancy spectrum of $E_n$. Second, we lift it to the telescopic setting by replacing $E_n$ with $T(n)$-local higher cyclotomic extensions, from which we deduce various results on affineness, Eilenberg--Moore formulas and Galois extensions in the telescopic setting. Third, we categorify their result into an equivalence of two symmetric monoidal $\infty$-categories of local systems of $K(n)$-local $E_n$-modules, and relate it to (semiadditive) redshift phenomena. 
\end{abstract}

\begin{figure}[H]
    \centering{}
    \includegraphics[scale=0.185]{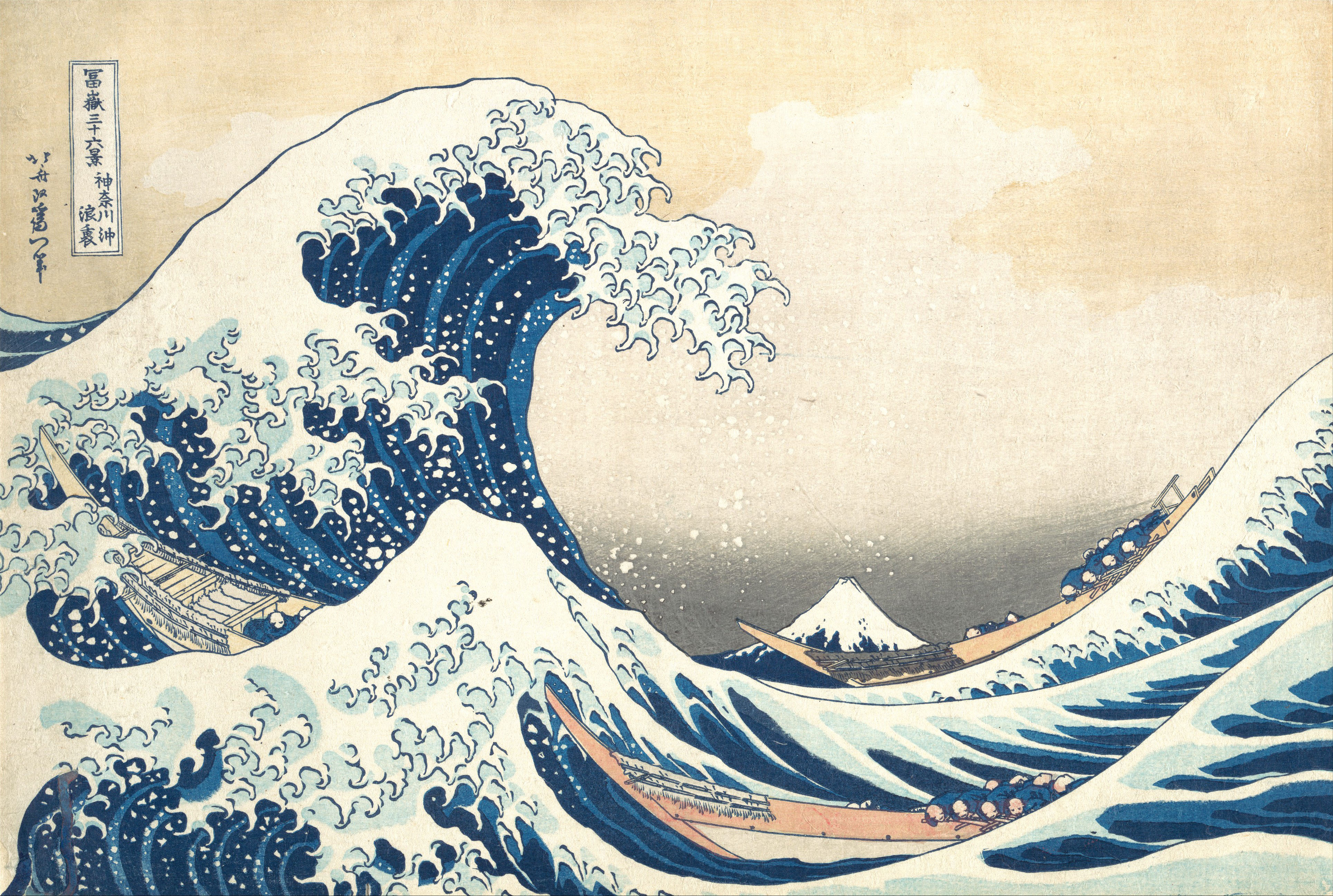}
    \caption*{The Great Wave off Kanagawa, Katsushika Hokusai.}
\end{figure}

\newpage
\tableofcontents{}

\section{Introduction}\label{sec:intro}

\subsubsection{Background \& overview}

The classical $m$-dimensional Discrete Fourier Transform (DFT) is a linear isomorphism
\[
    \Four_\omega \colon \CC^m \iso \CC^m,
\]
associated to a primitive $m$-th root of unity $\omega \in \CC$, whose characteristic property is transforming the convolution product on the source to the pointwise product on the target. More generally, one can associate to every commutative ring $R$ with an $m$-th root of unity $\omega \colon \ZZ/m \to R^\times$, a natural transformation of $R$-algebras,
\[
    \Four_\omega \colon R[M] \too R^{M^*},
\]
from the group $R$-algebra of an $m$-torsion abelian group $M$ to the algebra of $R$-valued functions on its Pontryagin dual $M^* = \hom(M, \QQ/\ZZ)$. Furthermore, $\Four_\omega$ is an isomorphism if and only if the image of $\omega$ is primitive in every residue field of $R$. The classical case is recovered by taking $R=\CC$ and $M = \ZZ/m$.

Passing from the ordinary category of abelian groups to the $\infty$-category of spectra, i.e., from classical commutative algebra to stable homotopy theory, introduces new ``characteristics''. The Morava $K$-theory ring spectra of heights $n=0,\dots,\infty$ at an (implicit) prime $p$,
\[
    \QQ = K(0) \:\: , \:\: K(1) \:\: , \:\: K(2) \:\: , \:\: \dots \:\:, \:\: K(n) \:\: , \:\: \dots \:\:, \:\: K(\infty) = \FF_p, 
\]
are in a precise sense the prime fields in the world of spectra, and can be thought of as providing an interpolation between the classical characteristics $0$ and $p$; see \cite{nilp2}. 
A central tool in the study of these intermediate characteristics is  Lubin--Tate spectra. For each $0<n<\infty$, this is a $K(n)$-local commutative algebra $E_n$ that can be realized as the algebraic closure of the $K(n)$-local sphere, and which has deep connections to the algebraic geometry of formal groups making it amenable to computations. For example, in \cite{AmbiKn}, Hopkins and Lurie prove the following theorem, which resembles the discrete Fourier transform, only in higher chromatic heights:

\begin{thm}[{{\cite[Corollary 5.3.26]{AmbiKn}}}] 
\label{HL_orientation_Intro}
    For all integers $n\ge1$, there is a natural isomorphism of $K(n)$-local commutative $E_n$-algebras
    \[
        E_n[M] \iso 
        E_n^{\Omega^{\infty - n} M^*},
    \]
    where $M$ is a connective $\pi$-finite (i.e., having only finitely many non-vanishing homotopy groups, all of which are finite)  $p$-local $\ZZ$-module and
    $M^*$ is its Pontryagin dual.
\end{thm}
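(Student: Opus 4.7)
My approach is to construct the Fourier transform $\mathcal{F}_M \colon E_n[M] \to E_n^{\Omega^{\infty-n}M^*}$ from $K(n)$-local higher semiadditivity together with the orientation of $E_n$, and then verify it is an equivalence by devissage on $M$. The ambient category throughout is the $K(n)$-local stable $\infty$-category, which Hopkins--Lurie have shown to be higher semiadditive: there are canonical norm isomorphisms $R[A] \simeq R^A$ for every $\pi$-finite space $A$ and every $K(n)$-local ring spectrum $R$. This is the central external input to the argument.

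\textbf{Step 1 (constructing $\mathcal{F}_M$).} The Pontryagin evaluation pairing of $\ZZ$-module spectra $M \otimes_\ZZ M^* \to \QQ/\ZZ$ shifts by $n$ and deloops to a bilinear map of pointed spaces
\[
    M \times \Omega^{\infty-n}M^* \too K(\QQ/\ZZ, n).
\]
The $K(n)$-local orientation of $E_n$ --- equivalently, a compatible system of Drinfeld level structures on its formal group $\widehat{G}_{E_n}$ --- provides a canonical character $K(\QQ_p/\ZZ_p, n) \to BGL_1(E_n)$ of infinite loop spaces. Composing with the pairing above and adjointing across the smash product/mapping space duality converts it into the desired $E_n$-algebra map $\mathcal{F}_M$, manifestly natural in $M$.

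\textbf{Step 2 (devissage to Eilenberg--MacLane layers).} Every connective $\pi$-finite $p$-local $\ZZ$-module admits a finite Postnikov tower whose layers are shifted Eilenberg--MacLane spectra $\mathrm{H}(\ZZ/p^k)[i]$. Given a fiber sequence $M' \to M \to M''$ of such modules, the source functor $E_n[-]$ produces a pushout square of $E_n$-algebras (by higher semiadditivity, which identifies the relevant norm square with its colimit counterpart) while the target functor $E_n^{\Omega^{\infty-n}(-)^*}$ produces a pullback square (since $\Omega^{\infty-n}$ carries fibers of $\ZZ$-module spectra to fibers of spaces). The map $\mathcal{F}$ intertwines these squares, so by two-out-of-three it suffices to treat a single layer $M = \mathrm{H}(\ZZ/p^k)[i]$.

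\textbf{Step 3 (base case; principal obstacle).} When $i > n$, the target $\Omega^{\infty-n}M^*$ is contractible, and what remains is the chromatic vanishing $E_n[K(\ZZ/p^k, i)] \simeq E_n$ in the $K(n)$-local category, going back to Ravenel--Wilson. When $0 \leq i \leq n$, both sides can be abstractly described via Ravenel--Wilson as rings of functions on iterated exterior powers of the $p^k$-torsion of $\widehat{G}_{E_n}$. The real obstacle, however, is not the existence of some abstract isomorphism but the verification that the specific $\mathcal{F}_M$ built above realizes it. I would handle this by induction on $i$, using that the pairing defining $\mathcal{F}$ is compatible with delooping on $M$ and looping on $\Omega^{\infty-n}M^*$, so that the $i$-th case reduces via bar/cobar resolutions to the $(i-1)$-th case. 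The base case $i = 0$ is then precisely the orientation axiom: $\mathcal{F}$ sends $[1] \in E_n[\ZZ/p^k]$ to the function on $K(\ZZ/p^k, n)$ encoding the chosen $p^k$-th root of unity in $\widehat{G}_{E_n}$, and this is an equivalence because the Drinfeld level structure is, by definition, as ``primitive'' as possible.
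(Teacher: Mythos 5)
The paper does not prove this statement; it quotes it verbatim as Corollary 5.3.26 of Hopkins--Lurie [AmbiKn] and uses it as a black-box input (e.g., to establish \cref{LT_Zp_Or}). There is therefore no internal proof to compare your argument against, so I will assess it on its own terms.

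Your overall architecture --- construct a Fourier-type transform from a normalization of the formal group of $E_n$ and check that it is an equivalence by devissage on Postnikov layers --- is in the right spirit, and is close to the skeleton that the paper's abstract machinery systematizes. But Step 2 contains a genuine error. You claim that $E_n[-]$ sends the fiber sequence $M' \to M \to M''$ to a pushout square of $E_n$-algebras while $E_n^{\Omega^{\infty-n}(-)^*}$ sends it to a \emph{pullback} square, and then invoke ``two-out-of-three.'' A map from a pushout square to a pullback square which is an equivalence on three corners need not be an equivalence on the fourth; two-out-of-three requires the squares to be of the same type. What is actually needed is that \emph{both} sides be relative tensor (pushout) squares of $K(n)$-local $E_n$-algebras. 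For the source this is automatic, since $E_n[-]$ preserves colimits and a fiber sequence of spectra is also a cofiber sequence; for the target it is precisely the Eilenberg--Moore isomorphism for the fiber sequence of spaces $\Omega^{\infty-n}M''^* \to \Omega^{\infty-n}M^* \to \Omega^{\infty-n}M'^*$, which is a substantial claim. As the paper shows (\cref{Affineness_Kunneth_EM}, \cref{Orientability_Ext_Cof}), this follows from ambidexterity \emph{plus} the affineness of $\Omega^{\infty-n}M'^*$, and that affineness is itself deduced from lower-dimensional cases of the orientation isomorphism. So making the devissage run requires a simultaneous bootstrap of orientability and affineness by induction --- the content of \cref{local_ring_orientation} --- which your sketch elides entirely.

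Step 3 has a related problem. The reduction from the $i$-th Eilenberg--MacLane layer to the $(i-1)$-th ``via bar/cobar resolutions'' is asserted but not explained, and it is exactly here that the real computational content (Ravenel--Wilson's computation of $E_n^*(K(\ZZ/p^k, j))$ and Strickland's regularity for moduli of level structures) does the work. The base case $i=0$ is likewise asserted rather than established: the statement that a normalization of the formal group yields $E_n[\ZZ/p^k] \iso E_n^{B^n\ZZ/p^k}$ is a genuine theorem comparing a convolution algebra with the algebra of functions on a nontrivial formal scheme, and the phrase ``as primitive as possible'' hides rather than supplies the argument.
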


Furthermore, they deduce from this result several fundamental structural properties of local systems of $K(n)$-local algebras on $\pi$-finite spaces, reproving among other things the convergence of the $K(n)$-based Eilenberg--Moore spectral sequence from \cite{bauer2008convergence}.

In this paper, we develop a general theory that formalizes and substantiates the analogy between \Cref{HL_orientation_Intro} and the classical Fourier transform. In particular, we reinterpret both in terms of a unified notion of a \textit{chromatic Fourier transform} isomorphism for all finite chromatic heights, and show that it shares many of the formal properties of the classical Fourier transform. We then apply this theory to generalize \Cref{HL_orientation_Intro} in three different directions:

\begin{enumerate}
    \item We \textit{lift} it to the telescopic world, by replacing $E_n$ with certain faithful Galois extensions of the $T(n)$-local sphere (\Cref{Tn_orientation_Intro}). By analogy with the $K(n)$-local case, we deduce several structural results for local systems of $T(n)$-local algebras over $\pi$-finite spaces (\Cref{Tn_Applications_Intro}). We also obtain an analogue of Kummer theory at heights $n\ge 1$ (\Cref{Higher_Kummer_Intro}).
    
    \item We \textit{extend} it over $E_n$ to all (i.e., not just $\ZZ$-module) connective $\pi$-finite $p$-local spectra  (\cref{Sphere_Or_Intro}), and deduce from this the conjectured description of the discrepancy spectrum of Ando--Hopkins--Rezk in terms of the Brown--Comenetz spectrum (\Cref{Discrepancy_Intro}). As another application, we construct a certain $K(n)$-local pro-$\pi$-finite Galois extension, which is a strong analogue of the classical $p$-typical cyclotomic extension (\Cref{Sph_Gal_Intro}).
    
    \item We \textit{categorify} it into a symmetric monoidal equivalence between  $\infty$-categories of local systems of $K(n)$-local $E_n$-modules on the underlying spaces of two dual $\pi$-finite spectra. Among other things, this generalizes the weight space decomposition of representations of finite abelian groups in characteristic zero (\Cref{E_n_Categorification_Intro}). We also explain how this categorification accords with semiadditive redshift phenomena.
\end{enumerate}

We shall now discuss each of these sets of results in some more detail, and outline along the way some of the key aspects of the general theory. 

\subsubsection{Telescopic lift}
  
Recall that the \textit{telescopic localization} $\Sp_{T(n)}$ is the Bousfield localization of $\Sp$ with respect to $T(n) = F(n)[v^{-1}]$, where $F(n)$ is (any) finite spectrum of type $n$ with a $v_n$-self map of the form $v\colon \Sigma^d F(n) \to F(n)$. 
It is a classical fact that $\Sp_{K(n)} \sseq \Sp_{T(n)}$, and a long standing conjecture of Ravenel, known as the telescope conjecture, states that the two localizations are in fact \textit{equal}. While proven to be true in heights $n=0,1$, the telescope conjecture is widely believed to be \textit{false} for all $n\ge 2$ and all primes $p$. In recent years, the telescopic localizations gained new interest (independently of the status of the telescope conjecture) due to their pivotal role in several remarkable developments, of which we mention two. First, the work \cite{heuts2021lie} of Heuts on unstable chromatic homotopy theory, which generalizes Quillen's classical rational homotopy theory to higher chromatic heights. And second, the works \cite{land2020purity, clausen2020descent}, which made a major progress on establishing the conjectural chromatic redshift philosophy pioneered by Rognes (see, e.g., \cite{Rognes_Redshift}). 

The $T(n)$-localizations are considerably less amenable to computations than the corresponding $K(n)$-localizations, largely due to the lack of a (faithful) telescopic lift of $E_n$. Nevertheless, we show that the isomorphism of \cref{HL_orientation_Intro} descends from $E_n$ to a deeper base, which does admit a faithful telescopic lift and over to which the chromatic Fourier transform lifts as well. To explain this in more detail, we first note that while the classical Fourier transform is not defined over $\QQ$, one does not need to go all the way up to $\CC$ or even $\cl{\QQ}$. Instead, for $\ZZ/m$-modules, it suffices to have a primitive $m$-th root of unity $\omega_m$, so one can construct the Fourier transform already over the cyclotomic field $\QQ(\omega_m)$, which is a finite Galois extension of $\QQ$. 
In the same spirit, we observe that natural transformations as in  \cref{HL_orientation_Intro} are in a canonical bijection with \textit{higher roots of unity} $\Sigma^n \ZZ/p^r \to E_n^\times$ of $E_n$. Moreover, the natural \textit{isomorphisms} are in a canonical bijection with those higher roots of unity that are \textit{primitive} in the sense of \cite[Definition 4.2]{carmeli2021chromatic}. 

\begin{rem}
    In \cite{AmbiKn}, the isomorphism of \Cref{HL_orientation_Intro} is constructed from a \textit{normalization} of the $p$-divisible group $\GG$ associated with $E_n$, namely, an isomorphism of the top alternating power $\Alt_n (\GG)$ with the constant $p$-divisible group $\QQ_p/\ZZ_p$. It can be verified directly, that such data are equivalent to compatible systems of primitive higher roots of unity of $E_n$,
    \[
        \Sigma^n\QQ_p/\ZZ_p \simeq 
        \colim \Sigma^n\ZZ/p^r \too 
        E_n^\times.
    \]
\end{rem}

We then proceed to show that, as in the classical case, the chromatic Fourier isomorphism exists already over the \textit{higher cyclotomic extensions} $R_{n,r}$, which are certain faithful $(\ZZ/p^r)^\times$-Galois extensions of the $K(n)$-local sphere classifying primitive \textit{higher roots of unity} in the sense of \cite{carmeli2021chromatic}. 
The key point now is that, by \cite[Theorem A]{carmeli2021chromatic}, the $R_{n,r}$-s admit faithful $T(n)$-local lifts $R^f_{n,r}$, the corresponding $T(n)$-local higher cyclotomic extensions. Consequently, the general theory developed in this paper, combined with the nilpotence theorem, allows us to lift the chromatic Fourier transform to the telescopic world. 

\begin{theorem}[\ref{Tn_Fourier}]
    \label{Tn_orientation_Intro}
    For every $n,r\ge 1$, there exists a faithful $(\ZZ/p^r)^\times$-Galois extension $R^f_{n,r}$ of the $T(n)$-local sphere and a natural isomorphism of $T(n)$-local commutative $R^f_{n,r}$-algebras
    \[
        \Four_{\omega \colon 
        }R_{n,r}^f[M] \iso 
        (R_{n,r}^f)^{\Omega^{\infty - n} M^*},
    \]
    where $M$ is a connective $\pi$-finite $\ZZ/p^r$-module and
    $M^*$ is its Pontryagin dual.
\end{theorem}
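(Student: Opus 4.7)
The plan is to deduce the statement from the general chromatic Fourier formalism developed earlier in the paper, applied to the base $R := R^f_{n,r}$. As indicated in the discussion preceding \Cref{HL_orientation_Intro}, to give a natural transformation of $R$-algebras
\[
    R[M] \too R^{\Omega^{\infty-n}M^*}
\]
for connective $\pi$-finite $\ZZ/p^r$-modules $M$ amounts to choosing a higher root of unity $\omega \colon \Sigma^n \ZZ/p^r \to R^\times$, and such a transformation is an isomorphism precisely when $\omega$ is primitive. The task thus reduces to producing a $T(n)$-local base carrying a primitive higher root of unity and then checking primitivity telescopically.

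For the first step, I would invoke \cite[Theorem A]{carmeli2021chromatic} to obtain the faithful $(\ZZ/p^r)^\times$-Galois extension $R^f_{n,r}$ of the $T(n)$-local sphere, together with a tautological primitive higher root of unity $\omega^f \colon \Sigma^n\ZZ/p^r \to (R^f_{n,r})^\times$ whose $K(n)$-localization recovers the corresponding primitive higher root of unity on $R_{n,r}$. Feeding the pair $(R^f_{n,r},\omega^f)$ into the general chromatic Fourier construction then produces the desired natural $T(n)$-local $R^f_{n,r}$-algebra map
\[
    \Four_{\omega^f} \colon R^f_{n,r}[M] \too (R^f_{n,r})^{\Omega^{\infty-n}M^*},
\]
with the $(\ZZ/p^r)^\times$-equivariance arising from the naturality of the construction in the chosen root of unity.

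For the second step, I would verify that $\Four_{\omega^f}$ is an equivalence, which by the general formalism amounts to primitivity of $\omega^f$ as a $T(n)$-local higher root of unity. Since $M$ is $\pi$-finite, both the source and target of $\Four_{\omega^f}$ are built from $R^f_{n,r}$ by finite (co)limits indexed by the $\pi$-finite space $\Omega^{\infty-n}M^*$, and are in particular $T(n)$-locally dualizable as $R^f_{n,r}$-modules. Consequently, the cofiber of $\Four_{\omega^f}$ vanishes $T(n)$-locally if and only if it vanishes after smashing with a type $n$ finite complex, and hence after $K(n)$-localization, by the nilpotence theorem. But after $K(n)$-localization, $R^f_{n,r}$ becomes $R_{n,r}$, the root $\omega^f$ becomes the primitive higher root of unity on $R_{n,r}$, and $\Four_{\omega^f}$ reduces to the $K(n)$-local chromatic Fourier isomorphism over $R_{n,r}$ already established in the paper, which is an equivalence.

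The main obstacle I anticipate is the careful articulation of primitivity in the $T(n)$-local setting and its detection after $K(n)$-localization: one needs to ensure that the general Fourier formalism behaves well under base change along $R^f_{n,r} \to R_{n,r}$, and that the relevant notion of primitivity descends from the $K(n)$-local to the $T(n)$-local world via dualizability and the nilpotence theorem. This is precisely the point at which the general theory developed earlier in the paper must combine with the nilpotence theorem to bridge the $K(n)$- and $T(n)$-local chromatic worlds.
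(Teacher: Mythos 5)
Your strategy is correct and rests on the same two ultimate inputs as the paper's — the Hopkins--Lurie orientability of $E_n$ and the nilpotence theorem — but is organized somewhat differently. The paper deduces \Cref{Tn_Fourier} in one line from \Cref{Cyclo_Univ_Or}, having first established that $\Sp_{T(n)}$ is virtually $(\FF_p,n)$-orientable (\Cref{Tn_Virt_Or}) by transporting orientability of $\LocMod_{E_n}$ along the nil-conservative functor $E_n\otimes(-)\colon \Sp_{T(n)}\to\LocMod_{E_n}$ (via \Cref{nil_cons_virt_F_p}); this yields the universal property that $R^f_{n,r}$ \emph{is} the $\ZZ/p^r$-cyclotomic extension $\orcyc{\ZZ/p^r}{n}$. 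You instead take $R^f_{n,r}$ from \cite{carmeli2021chromatic} as given, build $\Four_{\omega^f}$ from its tautological primitive root, and check the isomorphism by reducing along $L_{K(n)}\colon \Sp_{T(n)}\to\Sp_{K(n)}$. That reduction is valid, but needs a sharper articulation than you give: the cofiber of $\Four_{\omega^f}$ is a \emph{dualizable} $R^f_{n,r}$-module (being the cofiber of a map of dualizables), and nil-conservative functors reflect triviality of dualizable objects (\cite[Proposition 4.4.4]{TeleAmbi}, this is where the nilpotence theorem actually enters); without the dualizability hypothesis your chain ``vanishes after smashing with a type $n$ finite complex, and hence after $K(n)$-localization'' would amount to the telescope conjecture, which is expected to be false. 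Also, the $K(n)$-local Fourier isomorphism ``over $R_{n,r}$'' you invoke is only stated in the paper for $E_n$ (\Cref{HL_orientation}) and for $R_n$ (\Cref{R_n_fourier}); the $R_{n,r}$-version is itself obtained from \Cref{Cyclo_Univ_Or} applied in $\Sp_{K(n)}$, so you are not actually bypassing the cyclotomic machinery — reducing directly to $E_n$ would be cleaner. In exchange for being slightly more hands-on, your route does not recover the universal property of $R^f_{n,r}$ as a by-product, which the paper's does.
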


The natural isomorphisms of \Cref{Tn_orientation_Intro} are compatible with varying $r$. Thus, if we replace the individual $R^f_{n,r}$-s with the colimit $R_n^f := \colim R_{n,r}^f$,
we obtain a telescopic Fourier transform for all connective $\pi$-finite $\ZZ_{(p)}$-module (or equivalently, $p$-local $\ZZ$-module) spectra as in \Cref{HL_orientation_Intro}. The commutative ring spectrum $R_n^f$ can be viewed as the infinite $p$-typical higher cyclotomic extension and is a telescopic lift of Westerland's $K(n)$-local commutative ring spectrum $R_n$ (see \cite{Westerland}). However, in contrast with $R_n$, it is not known whether $R_n^f$ is \textit{faithful}. This subtle point might also shed some new light on (the failure of) the telescope conjecture. Localizing $\Sp_{T(n)}$ with respect to $R_n^f$ forms an interesting intermediate localization between $\Sp_{K(n)}$ and $\Sp_{T(n)}$.
In particular, if one speculates that $R_n^f$ is in fact $K(n)$-local, the telescope conjecture becomes equivalent to the faithfulness of $R_n^f$.

As in \cite{AmbiKn}, we deduce from \Cref{Tn_orientation_Intro} several structural properties of local systems of $T(n)$-local algebras over $\pi$-finite spaces.

\begin{theorem}
    \label{Tn_Applications_Intro}
    Let $A$ be a $\pi$-finite space such that $\pi_1(A,a)$ is a $p$-group and $\pi_{n+1}(A,a)$ is of order prime to $p$, for all $a\in A$.
    \begin{enumerate}
        \item (\ref{Tn_Affineness}) For every $R \in \alg(\Sp_{T(n)})^A$, the \textit{global sections} functor induces a symmetric monoidal equivalence
        \[
            \Mod_R(\Sp_{T(n)})^A \iso \Mod_{A_*R}(\Sp_{T(n)}).
        \]
        
        \item (\ref{Tn_EM}) For every $R \in \alg(\Sp_{T(n)})$ and spaces $B$ and $C$ mapping to $A$, one of which is $\pi$-finite, the canonical \textit{Eilenberg--Moore} map is an isomorphism:
        \[
            R^B \otimes_{R^A} R^C \iso R^{B\times_A C}.
        \]
        
        \item (\ref{Tn_Galois}) Assuming $A$ is connected, every $R \in \calg(\Sp_{T(n)})^A$ is \textit{$\Omega A$-Galois}, in the sense of Rognes, over the global sections (i.e., $\Omega A$-homotopy fixed points) algebra $A_*R \in \calg(\Sp_{T(n)})$. 
    \end{enumerate}
\end{theorem}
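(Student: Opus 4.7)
The plan is to mirror the strategy used by Hopkins--Lurie to deduce the analogous $K(n)$-local consequences from \Cref{HL_orientation_Intro}, with \Cref{Tn_orientation_Intro} and the faithful $(\ZZ/p^r)^\times$-Galois extensions $R^f_{n,r}$ of \cite{carmeli2021chromatic} playing the role of the telescopic substitutes. All three parts share a common reduction: in the higher semiadditive setting of $\Sp_{T(n)}$, each of (1), (2), (3) is controlled by showing that a suitable \emph{norm}/\emph{induction} map attached to $A$ becomes an equivalence after a faithful base change along $\SS_{T(n)} \to R^f_{n,r}$ for a suitable $r$.

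First I would treat the base case where $A = K(G,k)$ is an Eilenberg--MacLane space with $G$ a finite abelian $p$-group and $1 \le k \le n$. Choosing $r$ with $p^r G = 0$, we have $G$ a $\ZZ/p^r$-module, and applying \Cref{Tn_orientation_Intro} to $M = \Sigma^k G$ yields a natural isomorphism of $R^f_{n,r}$-algebras
\[
    \Four\colon R^f_{n,r}[A] \iso (R^f_{n,r})^{K(G^*,\, n-k)}.
\]
This isomorphism trades the (higher semiadditive) comultiplication on $R^f_{n,r}[A]$ for the pointwise multiplication on the target, and accordingly identifies the canonical norm map $R \to R^A$ on the source with a diagonal/evaluation map on the target. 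This explicit description verifies (1)--(3) for such $A$ over the base $R^f_{n,r}$, and faithful $(\ZZ/p^r)^\times$-Galois descent along $\SS_{T(n)} \to R^f_{n,r}$ transports the conclusions to arbitrary $R \in \alg(\Sp_{T(n)})^A$.

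To extend to all $\pi$-finite $A$ satisfying the hypotheses, I would induct along the (finite) Postnikov tower of $A$ using the fiber sequences $K(\pi_k A,k) \to A_{\le k} \to A_{\le k-1}$ (working componentwise). Each of (1)--(3) enjoys a standard two-out-of-three principle along such fiber sequences in any higher semiadditive setting, as in \cite{AmbiKn}. The hypothesis that $\pi_1(A)$ is a $p$-group, together with its nilpotence, allows $B\pi_1(A)$ to be built from iterated central extensions with abelian $p$-group fibers, to which the base case applies. For $2 \le k \le n$, the abelian group $\pi_k A$ decomposes as a $p$-part (handled by the base case) plus a prime-to-$p$ part (whose EM space is $T(n)$-locally trivial since $T(n)$ is $p$-local). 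The hypothesis that $\pi_{n+1}(A)$ is prime to $p$ makes the $(n+1)$-st layer entirely of this latter type, and for $k > n+1$ the telescopic analogue of the Ravenel--Wilson vanishing renders $K(H, k)$ innocuous for any finite abelian $H$.

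The main obstacle will be the currently open \emph{faithfulness} of the infinite cyclotomic extension $R^f_n = \colim_r R^f_{n,r}$, which would be the cleanest unified base over which to carry out the whole argument. The remedy is that any single $A$ as in the theorem has only finitely many nontrivial Postnikov layers with bounded $p$-torsion, so the entire argument fits over a fixed finite-level $R^f_{n,r}$, which \emph{is} known to be faithful by \cite{carmeli2021chromatic}. A secondary subtlety is tracking the symmetric monoidal compatibility of the Fourier isomorphism needed for the Galois formulation (3); this should follow from the naturality and monoidality built into the general theory developed in this paper.
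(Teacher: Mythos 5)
Your proposal captures the right ingredients --- the telescopic Fourier isomorphism over the faithful finite-level extensions $R^f_{n,r}$, faithful Galois descent, Postnikov induction with $B^kC_p$-fibers, and the prime-to-$p$/high-connectivity layers being trivial for height reasons --- and this is indeed the same circle of ideas the paper uses. The main structural difference is that the paper does \emph{not} run three parallel inductions carrying (1), (2), (3) simultaneously up the Postnikov tower. Instead, it proves (1) (affineness) by the Postnikov induction, using the single fact that $\cC$-affine spaces are closed under extensions (\cref{affineness_extensions}), and then derives (2) and (3) as formal corollaries: Eilenberg--Moore follows from affineness of $A$ plus K\"{u}nneth for the (ambidextrous) fibers via \cref{Affineness_Kunneth_EM}/\cref{Affiness_Eilenberg_Moore}, and the Galois statement follows directly from affineness via \cref{Galois_Auto_Rel}. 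This factoring is not merely cosmetic: your claim that ``each of (1)--(3) enjoys a standard two-out-of-three principle along such fiber sequences'' is clear and true for affineness, but for the Eilenberg--Moore and Galois statements it is not a standard off-the-shelf lemma --- you would have to establish those devissage statements separately, and the easiest way to do so is essentially to reduce them to affineness, which is what the paper does. So I would tighten the proposal to prove only (1) by induction, then quote the general affineness-to-EM and affineness-to-Galois implications.

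Two further minor points. First, the paper packages the descent from $R^f_{n,r}$ into the notion of ``virtual $(\FF_p,n)$-orientability'' (\cref{Tn_Virt_Or}), obtained from nil-conservativity of $E_n\otimes(-)\colon\Sp_{T(n)}\to\LocMod_{E_n}$ rather than directly from the Galois extensions; this is a slicker route to the same conclusion. Second, the vanishing you invoke for Postnikov layers above degree $n+1$ is not really ``Ravenel--Wilson vanishing'' but the semiadditive-height statement that for $\cC$ of height $\le n$, $(n+1)$-connected $\pi$-finite $p$-spaces are $\cC$-amenable (hence trivially affine), cf.\ \cref{Affiness_Height}(1); the telescopic input is the height computation of $\Sp_{T(n)}$ from \cite{TeleAmbi}, not a homology computation of Eilenberg--MacLane spaces.
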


\begin{rem}
    The assumptions on $\pi_1$ and $\pi_{n+1}$ are necessary. The spaces $B^{n+1}C_p$ and $ BC_q$ for $q \neq p$ are counterexamples to all three claims. On the other hand, for $n\ge1$, \Cref{Tn_Applications_Intro} is interesting already for $A = BG$, where $G$ is a finite $p$-group.  
\end{rem}

We think of the first claim as an \textit{affineness} property (cf. \cite{mathew2015affineness}), and study it in a general context by abstracting the $K(n)$-local arguments of \cite[\S 5.4]{AmbiKn}. In particular, in an ambidextrous setting, it both implies and is implied by (a very special case of) the second claim regarding the Eilenberg--Moore isomorphism. Roughly speaking, the affineness of $A$ reduces the Eilenberg--Moore isomorphism to the \textit{K\"{u}nneth isomorphism} for the fibers of $B$ and $C$ over $A$, which in an ambidextrous setting is guaranteed by the $\pi$-finiteness assumptions. The suitable special case of the Eilenberg--Moore isomorphism is then deduced from the chromatic Fourier transform, replacing ``cohomology'' with ``homology'', for which the analogous claim holds for formal reasons. The third claim, which we learned from Dustin Clausen, that the Galois condition is automatically satisfied, is also a rather formal consequence of affineness, and might have been known to other experts (in the $K(n)$-local case), though we are not aware that it has appeared in the literature. 

As a further corollary of the above, we obtain an analogue of \textit{Kummer theory} at higher chromatic heights $n\ge1$. 
\begin{theorem} (\ref{Tn_Higher_Kummer})
    \label{Higher_Kummer_Intro}
    Let $R$ be a $T(n)$-local commutative algebra admitting a primitive higher $p^r$-th root of unity, and let $M$ be a connected $n$-finite $\ZZ/p^r$-module spectrum. Then,
    \[
        \{\text{\,$\Omega M$-Galois extensions of $R$\,}\} \:\simeq\:
        \Map(\Sigma^nM^*, R^\times).
    \]
\end{theorem}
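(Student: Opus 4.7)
The plan is to combine the affineness and automatic-Galois property of \Cref{Tn_Applications_Intro} with the chromatic Fourier transform of \Cref{Tn_orientation_Intro}, the latter descending to $R$ by the hypothesis on primitive $p^r$-th roots of unity. First I would set $A := \Omega^\infty M$, a connected $\pi$-finite space with $\pi_1(A)$ a finite $p$-group and $\pi_{n+1}(A) = 0$, so that the hypotheses of \Cref{Tn_Applications_Intro} are satisfied. The Galois group $\Omega M$, viewed as an $E_\infty$-group, is then the loop space $\Omega A$, and by descent along $B\Omega A \simeq A$ an $\Omega M$-Galois extension of $R$ is the same datum as a commutative $R$-algebra local system $S \in \calg(\Mod_R)^A$ whose global sections recover $R$. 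By part (3) of \Cref{Tn_Applications_Intro}, every such $S$ is automatically $\Omega A$-Galois over its global sections, so the Galois condition comes for free. Via the affineness equivalence of part (1), namely $\calg(\Mod_R)^A \simeq \calg(\Mod_{R^A})$ through the global sections functor, the local systems with global sections $R$ correspond to $R^A$-algebra structures on $R$, giving
\[
    \{\,\Omega M\text{-Galois extensions of }R\,\} \;\simeq\; \Map_{\calg(\Mod_R)}(R^A,\, R).
\]

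Next, the chromatic Fourier transform identifies $R^A$ as a group algebra. Setting $N := \Sigma^n M^*$, which is again a connective $\pi$-finite $\ZZ/p^r$-module spectrum, Pontryagin double duality gives $N^* \simeq \Sigma^{-n} M$, so that $\Omega^{\infty-n} N^* \simeq \Omega^\infty M = A$. \Cref{Tn_orientation_Intro}, applied over $R$ via its primitive $p^r$-th root of unity, then provides a natural isomorphism of commutative $R$-algebras $R^A \simeq R[\Sigma^n M^*]$. Combined with the standard universal property of the group algebra,
\[
    \Map_{\calg(\Mod_R)}(R[\Sigma^n M^*],\, R) \;\simeq\; \Map(\Sigma^n M^*,\, R^\times),
\]
this concludes the argument.

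The subtle step, as I see it, is the first one: precisely matching the moduli of $\Omega M$-Galois extensions of $R$ with the fiber of the forgetful functor $\calg(\Mod_{R^A}) \to \calg(\Sp_{T(n)})$ over $R$. This is conceptually a formal consequence of higher Galois descent together with parts (1) and (3) of \Cref{Tn_Applications_Intro}, but aligning the various ``under $R$'' structures needs some care. Once that identification is in place, the remaining steps are pure applications of the Fourier isomorphism and of the standard adjunction between group algebras and the units spectrum.
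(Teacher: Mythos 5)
Your proposal is correct and takes essentially the same route as the paper. In the body of the paper this theorem is \Cref{Tn_Higher_Kummer}, proved as a direct specialization of the general Kummer result \Cref{Kummer}, and \Cref{Kummer} runs through exactly the three steps you identify: first, affineness of $\und{M}$ plus the automatic-Galois phenomenon (\Cref{orientation_affineness_for_R_modules}, \Cref{automatic_Galois}, \Cref{Galois_Affine}) reduce the Galois moduli to $\Map_{\calg(\cC)}(\one^{\und{M}},S)$; second, the Fourier isomorphism $\Four_\omega\colon \one[\Dual{M}{n}]\iso\one^{\und{M}}$ replaces the function algebra by the group algebra; third, the $\one[-]\dashv(-)^\times$ adjunction finishes. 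Your ``subtle step'' is precisely \Cref{Galois_Affine}, which you are re-deriving from \Cref{Tn_Applications_Intro}(1) and (3) — the same ingredients the paper feeds into its proof. The only cosmetic difference is that you cite the headline $T(n)$-local theorems rather than the underlying general propositions, and you work directly in $\Mod_R$ rather than phrasing things for a general pair $(\cC,S)$.
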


That is, we obtain a classification of abelian Galois extensions of a commutative algebra in terms of its units, in the presence of enough primitive (higher) roots of unity.\footnote{For a more precise relation to classical Kummer theory in height $n=0$, see \Cref{Kummer_Lower}.} 

\subsubsection{Spherical orientations}

Generalizing \Cref{HL_orientation_Intro} in another direction, we show that when one does work $K(n)$-locally and over $E_n$, the chromatic Fourier transform extends to \textit{all} connective $\pi$-finite $p$-local spectra (i.e., not just $\ZZ$-modules), provided that we replace \textit{Pontryagin duality} by \textit{Brown--Comenetz duality}.

\begin{theorem}[\ref{Sphere_or_E_n}]
    \label{Sphere_Or_Intro}
    There is a natural isomorphism of $K(n)$-local commutative $E_n$-algebras
    \[
        E_n[M] \iso 
        E_n^{\Omega^{\infty - n} I M},
    \]
    where $M$ is a connective $\pi$-finite $p$-local spectrum and
    $I M$ is its Brown--Comenetz dual.
\end{theorem}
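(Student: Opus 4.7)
The plan is to construct $\Four_M$ using the general chromatic Fourier transform formalism developed in the body of the paper, and then verify it is an isomorphism by induction along the Postnikov tower of $M$, with \Cref{HL_orientation_Intro} providing the base case on Eilenberg--MacLane layers and the $K(n)$-local affineness/Eilenberg--Moore package propagating the iso to general $M$.

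The key input required by the formalism is a \emph{spherical} primitive higher root of unity for $E_n$ at height $n$: a compatible system of maps of spectra $\Sigma^n I(\mathbb{S}/p^r) \to E_n^\times$ whose Postnikov truncations to $\Sigma^n H\ZZ/p^r \to E_n^\times$ recover the $\ZZ$-module higher roots of unity used by Hopkins--Lurie. The existence of this spherical refinement is established by an obstruction-theoretic lift along the truncation $I(\mathbb{S}/p^r) \to H\ZZ/p^r$: the relevant obstructions lie in $E_n$-cohomology groups of bounded $\pi$-finite spectra and vanish by an analysis using the $E_n$-logarithm on units together with the nilpotence theorem. Alternatively, the datum can be extracted from the canonical $MU$-orientation of $E_n$, refining the normalization of its associated $p$-divisible group used in \Cref{HL_orientation_Intro}. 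With the spherical root of unity in hand, the chromatic Fourier formalism produces $\Four_M$ for every connective $\pi$-finite $p$-local $M$.

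For the isomorphism, the base case $M = \Sigma^i HA$ with $A$ a finite abelian $p$-group and $0 \le i \le n$ is a $\ZZ$-module spectrum with $IM \simeq \Sigma^{-i} H(A^*)$; by construction, $\Four_M$ restricts to the $\ZZ$-module Fourier transform and is an iso by \Cref{HL_orientation_Intro}. For the inductive step, both $E_n[-]$ and $E_n^{\Omega^{\infty - n} I(-)}$ convert the Postnikov fiber sequence
\[
    \Sigma^k H\pi_k M \too \tau_{\le k} M \too \tau_{\le k-1} M
\]
into a pushout square of $K(n)$-local commutative $E_n$-algebras: the source side by a spectrum-level affineness argument for $\pi$-finite spectra, and the target side by an Eilenberg--Moore argument applied to the fibration of infinite loop spaces obtained by applying $\Omega^{\infty - n}$ to the Brown--Comenetz-dualized cofiber sequence $I \tau_{\le k-1} M \to I \tau_{\le k} M \to \Sigma^{-k} IH\pi_k M$. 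Both are $K(n)$-local, spectrum-level analogues of \Cref{Tn_Applications_Intro}, proved by abstracting the same semiadditive arguments. Naturality of $\Four$ identifies these two pushout presentations, so the iso on the base and fiber propagates to the total. The main obstacle is twofold: constructing the spherical root of unity (going beyond the $p$-divisible group level used in the $\ZZ$-module case) and establishing the affineness/Eilenberg--Moore package at the level of $\pi$-finite spectra rather than merely spaces. Both steps rely on $K(n)$-local higher semiadditivity for spectra together with the Hopkins--Lurie input on $\ZZ$-module building blocks.
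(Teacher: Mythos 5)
Your inductive strategy — Postnikov devissage propagated by affineness and Eilenberg--Moore — is consistent in spirit with the paper's machinery (it is essentially the content of \Cref{local_ring_generators_modules} and \Cref{Orientability_Ext_Cof}, packaged as \Cref{local_ring_orientation}). The genuine gap is in the construction of the spherical orientation itself.

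First, the obstruction groups you need to kill are not "$E_n$-cohomology groups of bounded $\pi$-finite spectra"; following \Cref{Small_Ext} they are of the form $\pi_0\Map_\Sp(\Dual{\Sigma X}{n}, E_n^\times)$ where $X$ runs over the (shifted) homotopy groups of the sphere. Their vanishing is precisely governed by the $d$-connectedness of $\LocMod_{E_n}$, which by \Cref{Conn_Mu_p} amounts to a computation of $\Map_{\Sp}(C_p, E_n^\times)$. Neither "the $E_n$-logarithm together with the nilpotence theorem" nor "the canonical $MU$-orientation of $E_n$" supplies this: the input the paper actually uses is the identification $\Map_{\Sp}(C_p,\pic(E_n))\simeq B^{n+1}C_p$ from the chromatic nullstellensatz \cite[Proposition 8.14]{Null}, which is a substantial theorem. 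Your sketch does not identify (nor replace) this input.

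Second, and more decisively, the obstruction-theoretic lift along the Postnikov tower of $\Sph_{(p)}$ cannot complete the last step directly. The step $\tau_{\le n}\Sph_{(p)}\to\tau_{\le n-1}\Sph_{(p)}$ has fiber $\Sigma^n\pi_n\Sph_{(p)}$, so it is only $(n{+}1)$-small in the sense of \Cref{Small_Ext}; killing its obstruction requires $(n{+}1)$-connectedness of $\LocMod_{E_n}$, which is ruled out by \Cref{Or_Conn_Bound} (an $(\FF_p,n)$-orientable category is never $(n{+}1)$-connected). The paper circumvents this by passing to the categorification $\Mod_{\LocMod_{E_n}}$, which \emph{can} be $(n{+}1)$-connected, running the extension there (\Cref{Conn_Ext_Cat}), and descending via the categorification equivalence of orientations (\Cref{Orientability_Cat}). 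Your proposal contains no analogue of this categorification step, and without it the argument caps out at a $\tau_{\le n-1}\Sph_{(p)}$-orientation — exactly the restricted statement the paper flags in \Cref{HL_Trunc_Or_Intro} as following from the looped (unit-level rather than Picard-level) form of the nullstellensatz.
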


\Cref{Sphere_Or_Intro} generalizes \Cref{HL_orientation_Intro}, as for $\ZZ$-module spectra, the Brown--Comenetz dual identifies canonically with the Pontryagin dual. Conversely, it can be obtained from \Cref{HL_orientation_Intro} by a bootstrap procedure. To begin with, for a $K(n)$-local commutative algebra $R$, a compatible system of primitive higher $p^r$-th roots of unity defines a map
\[
    \Sigma^n \QQ_p/\ZZ_p \simeq
    \colim \Sigma^n \ZZ/p^r \too
    R^\times. 
\]
Denoting by $I_{\QQ_p/\ZZ_p}$ the Brown--Comenetz spectrum, we show that extensions of the chromatic Fourier transformation over $R$ to non-$\ZZ$-module spectra are in a natural bijection with solutions to the following extension problem: 

\[\begin{tikzcd}
	{\Sigma^n\QQ_p/\ZZ_p} && {\Sigma^nI_{\QQ_p/\ZZ_p}} \\
	& {R^\times.}
	\arrow[from=1-1, to=1-3]
	\arrow[from=1-1, to=2-2]
	\arrow[dashed, from=1-3, to=2-2]
\end{tikzcd}\]
We call such extensions \textit{spherical orientations} of $R$, and think of them as spherical  analogues of (compatible systems of) primitive higher roots of unity of $R$. Using a devissage argument, we show that the Fourier transform associated to a spherical orientation is an isomorphism for all connective $\pi$-finite $p$-local spectra. By studying the obstructions for extending higher roots of unity to spherical orientations, we construct a universal $R$ with a spherical orientation, the \textit{spherical cyclotomic extension}, and prove that it is $K(n)$-locally faithful. In fact, we even construct it $T(n)$-locally, and show that it is faithful over the intermediate localization,
\[
    \Sp_{K(n)} \:\sseq\: 
    (\Sp_{T(n)})_{R_n^f} \:\sseq\: 
    \Sp_{T(n)}.
\]

We further introduce a natural \textit{higher connectedness} property for commutative algebras, which guarantees the canonical vanishing of said obstructions, providing a practical criterion for extending the chromatic Fourier transform to non-$\ZZ$-module spectra for specific choices of $R$. As a consequence of the ``chromatic nullstellensatz'' of the third author with Burklund and Yuan \cite{Null}, this criterion is satisfied for $R = E_n$, thus yielding \Cref{Sphere_Or_Intro}.

\begin{rem}\label{HL_Trunc_Or_Intro}
    More specifically, the required ingredient is \cite[Proposition 8.14]{Null},
    \[
        \Map_\Sp(C_p , \pic(E_n)) \:\simeq\: B^{n+1} C_p. 
    \]
    Looping this isomorphism once yields
    \[
        \Map_\Sp(C_p , E_n^\times) \:\simeq\: B^{n} C_p,
    \]
    which was conjectured, and subsequently proven, by Hopkins and Lurie (see \cite[Conjecture 5.4.14]{AmbiKn}), though we are not aware of a written account of their proof. We note that the second isomorphism suffices for establishing \Cref{Sphere_Or_Intro} for all connective $\pi$-finite $p$-local spectra whose $n$-truncation admits a module structure over the $(n-1)$-truncated sphere spectrum.
\end{rem}

As a further consequence of \Cref{Sphere_Or_Intro}, we deduce that a spherical orientation of $E_n$ identifies the connective cover of $\Sigma^nI_{\QQ_p/\ZZ_p}$ with the universal right approximation of $E_n^\times$ by an \textit{ind-$\pi$-finite} connective $p$-local spectrum. We denote the latter by $\mu_{\Sph_{(p)}}(E_n)$ as a ``spherical'' analogue of the spectrum of ordinary $p$-typical roots of unity of $E_n$. We then reinterpret $\mu_{\Sph_{(p)}}(E_n)$ as the $p$-local part of the connective cover of the so-called \textit{discrepancy spectrum} of $E_n$, which was defined by Ando, Hopkins and Rezk to be the fiber of the localization map $E_n^\times \to L_n E_n^\times$. To summarize, we obtain the following result, originally announced by Hopkins and Lurie:

\begin{theorem} 
    \label{Discrepancy_Intro}
    (\ref{discrepency_fiber}) The $p$-localization of the connective cover of the discrepancy spectrum of $E_n$ is isomorphic to the connective cover of $\Sigma^n I_{\QQ_p/\ZZ_p}$.
\end{theorem}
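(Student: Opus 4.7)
The plan is to combine the spherical orientation of $E_n$ with a universal property argument. By \Cref{Sphere_Or_Intro}, the spherical orientation of $E_n$ provides a canonical equivalence $\tau_{\ge 0}\Sigma^n I_{\QQ_p/\ZZ_p} \simeq \mu_{\Sph_{(p)}}(E_n)$, where the right-hand side is the universal right approximation of $E_n^\times$ by an ind-$\pi$-finite connective $p$-local spectrum. It therefore suffices to reinterpret $\mu_{\Sph_{(p)}}(E_n)$ as $\tau_{\ge 0} D_n(E_n)_{(p)}$, where $D_n(E_n) := \mathrm{fib}(E_n^\times \to L_n E_n^\times)$ is the Ando--Hopkins--Rezk discrepancy spectrum.

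The first observation is that every ind-$\pi$-finite connective $p$-local spectrum $Y$ is $L_n$-acyclic: $\pi$-finite $p$-local connective spectra lie in the thick subcategory of $\Sp$ generated by $H\FF_p$, which is $K(m)$-acyclic for all $m \ge 1$, and this class is closed under filtered colimits. Consequently $\Map(Y, L_n E_n^\times)$ is contractible, so any map $Y \to E_n^\times$ factors uniquely through $D_n(E_n)$ and, by the connectivity and $p$-locality of $Y$, through $\tau_{\ge 0} D_n(E_n)_{(p)}$. Specializing to the spherical orientation $\Sigma^n I_{\QQ_p/\ZZ_p} \to E_n^\times$ yields the canonical comparison map $\mu_{\Sph_{(p)}}(E_n) \to \tau_{\ge 0} D_n(E_n)_{(p)}$, and the same factorization shows that $\tau_{\ge 0} D_n(E_n)_{(p)}$ already satisfies one half of the universal property characterizing $\mu_{\Sph_{(p)}}(E_n)$ -- every ind-$\pi$-finite connective $p$-local spectrum mapping to $E_n^\times$ factors uniquely through it.

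The remaining half -- that $\tau_{\ge 0} D_n(E_n)_{(p)}$ itself belongs to the class of ind-$\pi$-finite connective $p$-local spectra -- is the main obstacle. My plan to handle it is a direct homotopy-group computation from the defining fiber sequence: using that $\pi_k E_n^\times = \pi_k E_n$ for $k \ge 1$, together with Rezk's analysis of the logarithmic map relating $E_n^\times$ to $L_n E_n^\times$, the long exact sequence associated to $D_n(E_n) \to E_n^\times \to L_n E_n^\times$ should show, after $p$-localization, that $\pi_k D_n(E_n)_{(p)}$ is a finite $p$-group for $0 \le k < n$, is isomorphic to $\QQ_p/\ZZ_p$ (an ind-finite $p$-group) at $k = n$, and vanishes for $k > n$. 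These match precisely the homotopy groups $\pi_k \Sigma^n I_{\QQ_p/\ZZ_p} = \hom(\pi_{n-k}\Sph_{(p)}, \QQ_p/\ZZ_p)$ in the relevant range. Once this computation is established, both the source and target of the comparison map lie in the class of ind-$\pi$-finite connective $p$-local spectra with the same universal property, so they are canonically equivalent, and the theorem follows by composing with \Cref{Sphere_Or_Intro}.
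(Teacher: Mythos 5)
Your overall strategy matches the paper's: isolate the identification $\tau_{\ge 0}\Sigma^n I_{\QQ_p/\ZZ_p}\simeq \disc[E_n]$ (which is the paper's Theorem \ref{E_n_tor_units_bc}, itself a consequence of the spherical orientability of $E_n$), and then separately identify $\disc[E_n]$ with $\tau_{\ge 0}(D_n(E_n)_{(p)})$. Your factorization step is sound: every $Y\in\Sp_{(p)}^{\ptors{\pi}}$ is indeed $L_n$-acyclic (it lies in the thick subcategory generated by $H\FF_p$, which is $K(m)$-acyclic for all $m\ge 0$, including the rational factor), so $\Map(Y,L_nE_n^\times)\simeq\pt$ and maps from $Y$ into $E_n^\times$ factor through the discrepancy spectrum, and then through its $p$-local connective cover. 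You also correctly identify the remaining obstacle: showing that $\tau_{\ge 0}(D_n(E_n)_{(p)})$ actually belongs to $\Sp_{(p)}^{\ptors{\pi}}$.

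The gap is in how you propose to close that obstacle. Computing $\pi_*D_n(E_n)_{(p)}$ directly from the long exact sequence of $D_n(E_n)\to E_n^\times\to L_nE_n^\times$ and ``Rezk's analysis of the logarithmic map'' is not feasible, and if it were, it would make all the orientation machinery unnecessary. Rezk's logarithm identifies $L_{K(n)}gl_1(R)$ with $L_{K(n)}R$ for $K(n)$-local $R$; it says nothing directly about $L_nE_n^\times$ (the localization you need involves all heights $0,\dots,n$), and the specific groups you list for $\pi_kD_n(E_n)_{(p)}$ --- finite $p$-groups in degrees $0\le k<n$, $\QQ_p/\ZZ_p$ in degree $n$, zero above --- are precisely the content of the conjecture of Hopkins--Lurie that this theorem settles; there is no known unconditional computation of them from the fiber sequence.

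What you actually need is much weaker than the exact homotopy groups, and the paper proves it by a soft formal argument. The key notion is that of an \emph{almost $L_n^f$-local} spectrum $X$ (\Cref{Almost_Lnf_Local}): one for which $\hom(Z,X)$ is bounded above for finite $Z$ of type $n+1$. Since $E_n$ is $L_n$-local it is almost $L_n^f$-local, and since this property depends only on a sufficiently high connective cover of the underlying space, $(E_n^\times)_{(p)}$ is almost $L_n^f$-local as well (\Cref{Almost_Lnf_Local_Eventual}, \Cref{Almost_Lnf_Local_Units}). Then, using Hovey's presentation $C_n^fX\simeq\colim\hom(Z_\alpha,X)$ over a cofiltered diagram of type $n+1$ finite spectra, each term is bounded above and $p$-torsion, hence lies in $\Sp_{(p)}^{\ptors{\pi}}$ after taking connective covers (\Cref{Almost_Lnf_Local_Cnf}, \Cref{Almost_Lnf_Local_Connective_Cover}). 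This gives $\tau_{\ge 0}C_n^f(E_n^\times)\in \Sp_{(p)}^{\ptors{\pi}}$ with no computation at all. Note also that the paper replaces $L_n$ with $L_n^f$ throughout (they agree on $L_n$-local spectra such as $E_n$, hence on $E_n^\times$); this matters because $L_n^f$ is smashing with a good colimit description of its acyclification $C_n^f$, which is what makes the argument work. Replacing your homotopy-group computation with this ``almost $L_n^f$-local'' argument closes the gap, and the resulting statement (\Cref{discrepency_fiber}) also has the virtue of applying to all almost $L_n^f$-local commutative ring spectra, not just $E_n$.
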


As the discrepancy spectrum is constructed from $K(n)$-local ingredients, it is somewhat remarkable that one can read off of it the first $n$ stable homotopy groups of spheres, which are a ``global'' invariant. This relation also ties the \textit{chromatic} filtration with the \textit{Postnikov} filtration, which are generally speaking two quite ``orthogonal'' filtrations in stable homotopy theory. 

To systematize the study of the various flavours of the chromatic Fourier transform, and to facilitate d\'{e}vissage arguments, we introduce a general notion of an \textit{$\OR$-orientation} of height $n$ for every connective $p$-local commutative ring spectrum $\OR$. This is the type of data from which one can construct the chromatic Fourier transform for connective $\pi$-finite $\OR$-module spectra. The case $\OR = \ZZ/p^r$ recovers primitive height $n$ roots of unity, while the case $\OR = \Sph_{(p)}$ recovers spherical orientations. Furthermore, we show that if $\OR$ is itself $n$-truncated and $\pi$-finite, then the associated $\OR$-cyclotomic extension, classifying $\OR$-orientations, is $\OR^\times$-Galois. Note that in general, $\OR^\times$ is not a finite discrete group, but a \textit{$\pi$-finite} group in the homotopical sense. We deduce from this that the spherical cyclotomic extension is a \textit{pro-$\pi$-finite} Galois extension.

\begin{theorem}[\ref{Kn_Pro_Galois}]\label{Sph_Gal_Intro}
    The $K(n)$-local spherical cyclotomic extension is pro-$G$-Galois for $G = \tau_{\le n}\Sph_p^\times$, viewed as a pro-$\pi$-finite group. 
\end{theorem}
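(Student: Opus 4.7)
The plan is to realize the $K(n)$-local spherical cyclotomic extension $R$ as a filtered colimit of $\OR$-cyclotomic extensions, where $\OR$ ranges over a cofinal tower of $n$-truncated $\pi$-finite approximations to $\tau_{\le n}\Sph_p$, and then deduce the pro-Galois property by passing to the colimit of the $\OR^\times$-Galois extensions produced by the already-established classification for $n$-truncated $\pi$-finite $\OR$.

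Concretely, for each $r\ge 1$, I would set $\OR_r := \tau_{\le n}(\Sph_{(p)}/p^r)$. Since $\pi_i\Sph$ is finite for $i\ge 1$, each $\OR_r$ is a connective, $p$-local, $n$-truncated, $\pi$-finite commutative ring spectrum, and the natural maps $\OR_{r+1}\to \OR_r$ assemble into a tower whose limit is $\tau_{\le n}\Sph_p$ (the $n$-truncated $p$-local stems are already $p$-complete above degree $0$, and $\pi_0$ recovers $\ZZ_p$). Passing to units yields
\[
    G \;=\; \tau_{\le n}\Sph_p^\times \;=\; \lim_r \OR_r^\times,
\]
so $G$ is manifestly the pro-$\pi$-finite group appearing in the theorem.

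The first and main step is to identify $R$ canonically with $\colim_r R_{\OR_r}$ in $\calg(\Sp_{K(n)})$, where $R_{\OR_r}$ denotes the $\OR_r$-cyclotomic extension. I would do this by comparing universal properties: a spherical orientation of a $K(n)$-local commutative algebra $A$ is an extension of the primitive height $n$ root of unity along $\Sigma^n\QQ_p/\ZZ_p \to \Sigma^n I\Sph_{(p)}$ to a map $\Sigma^n I\Sph_{(p)} \to A^\times$. Brown--Comenetz duality converts the tower $\{\OR_r\}$ into an ind-system of $\pi$-finite spectra whose colimit models the connective cover of $\Sigma^n I\Sph_{(p)}$, in the spirit of the ind-$\pi$-finite description of $\mu_{\Sph_{(p)}}(E_n)$ recorded just after Theorem \ref{Sphere_Or_Intro}. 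Thus a spherical orientation of $A$ should be equivalent to a compatible family of $\OR_r$-orientations, and translating to the representing objects yields the desired colimit formula.

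With Step 1 in hand, Step 2 applies the theorem (recalled in the paragraph just before the statement) that $R_{\OR_r}$ is $\OR_r^\times$-Galois, since each $\OR_r$ is $n$-truncated and $\pi$-finite. A filtered colimit of a tower of Galois extensions with compatible Galois groups is pro-Galois for the inverse limit group, in Rognes's sense, so combined with Step 1 this exhibits $R$ as pro-$G$-Galois with $G = \lim_r \OR_r^\times = \tau_{\le n}\Sph_p^\times$. The main obstacle will be the matching of universal properties in Step 1: one must verify that the $\OR$-orientation formalism intertwines the limit structure on $\{\OR_r\}$ with the colimit structure on the ind-$\pi$-finite decomposition of the connective cover of $\Sigma^n I_{\QQ_p/\ZZ_p}$, that the primitivity condition passes correctly through this correspondence, and that the colimit $\colim_r R_{\OR_r}$ taken in $\calg(\Sp_{K(n)})$ indeed represents the spherical orientation functor — in effect, that this functor preserves filtered colimits of $K(n)$-local commutative algebras.
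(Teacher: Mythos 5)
Your strategy is sound in outline and matches the paper's at the structural level: approximate the $n$-truncated $p$-complete sphere by a tower of $\pi$-finite local commutative rings $\{\OR_s\}$ with residue field $\FF_p$, identify the spherical cyclotomic extension with $\colim_s \orcyc{\OR_s}{n}$ via Brown--Comenetz duality applied levelwise, and feed each stage into the general result (Proposition \ref{Orcyc_Galois} combined with \Cref{Virt_Fp_Bootstrap}) that the $\OR_s$-cyclotomic extension is $\OR_s^\times$-Galois. However, your choice of tower is fatally flawed: $\OR_r := \tau_{\le n}(\Sph_{(p)}/p^r)$ is in general \emph{not} a commutative ring spectrum. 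The Moore spectrum $\Sph/p^r$ does not admit an $\EE_\infty$-structure for any $r$ (at $p=2$, $\Sph/2$ fails to be even $\EE_1$; at odd $p$, $\Sph/p$ is $A_{p-1}$ but not $A_p$, and so on --- $\Sph/p^r$ never attains an $\EE_\infty$-structure), and truncation, being lax monoidal, only preserves $\EE_\infty$-structure, it cannot create it. Since the entire $\OR$-orientation machinery (\Cref{def:pre_orientations}, \Cref{universal_oriented_ring}, \Cref{Orcyc_Galois}) lives over $\OR \in \calg(\Sp_{(p)}^\cn)$, your $\OR_r$ are not admissible inputs.

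There is also a secondary problem with the inverse-limit identification: the cofiber sequence $\Sph_{(p)} \xrightarrow{p^r} \Sph_{(p)} \to \Sph_{(p)}/p^r$ forces $\pi_i(\Sph_{(p)}/p^r)$ for $i\ge 1$ to be an extension of $\pi_i\Sph_{(p)}/p^r$ by $\pi_{i-1}\Sph_{(p)}[p^r]$; since the positive $p$-local stems are finite $p$-torsion, for large $r$ the torsion subgroup is all of $\pi_{i-1}\Sph_{(p)}$, so matching $\lim_r\pi_i(\OR_r)$ with $\pi_i(\tau_{\le n}\Sph_p)$ requires a careful analysis of the transition maps which you did not supply.

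The paper's proof (\Cref{Spherical_Cyc_Galois}, via \Cref{Sph_Tot} and \Cref{Spherical_Cyc_Approx}) sidesteps both issues at once: instead of Moore spectra, it takes the partial totalizations $\Tot{s}$ of the Amitsur cosimplicial resolution of $\Sph_{(p)}\to\FF_p$. These are automatically $\EE_\infty$-rings (being finite limits of $\EE_\infty$-rings), each has finite homotopy groups and is local with residue field $\FF_p$ by a dual Steenrod algebra computation, and the tower converges to $\Sph_p$ by Adams spectral sequence convergence with the Mittag--Leffler condition handing over the inverse limit of homotopy groups cleanly. If you replace your Moore-spectrum tower with this Amitsur tower, the rest of your argument goes through exactly as you describe.
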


The classical $p$-typical cyclotomic extension $\QQ_p(\omega_{p^\infty})$, and the corresponding $p$-typical cyclotomic character classifying it
\[
    \chi \colon \Gal(\cl{\QQ}_p/\QQ_p) \too \ZZ_p^\times  = \tau_{\le 0}\Sph_p^\times,
\]
plays a fundamental role in number theory, in the formulation of various arithmetic duality theorems, via the construction of Tate twists. In \cite{carmeli2021chromatic}, it is shown that Westerland's ring spectrum $R_n$, which is a $\ZZ_p^\times$-Galois extension of $\Sph_{K(n)}$, can be similarly viewed as a $p$-typical \textit{higher cyclotomic extension}. As the pro-finite Galois extensions of $\Sph_{K(n)}$ are classified by the Morava stabilizer group $\GG_n$, associated to $R_n$ is the \textit{higher cyclotomic character}
\[
    \chi \colon \GG_n := \Gal(E_n/\Sph_{K(n)}) \too \ZZ_p^\times,
\] 
which is essentially the determinant map. This higher cyclotomic character plays a similarly fundamental role in chromatic homotopy theory, via the construction of the determinant sphere featuring in Gross-Hopkins duality. \Cref{Sph_Gal_Intro}, together with the theory developed in this paper, suggests that the spherical cyclotomic extension should assume a similarly pivotal role in $K(n)$-local \textit{higher Galois theory}, which deals with pro-\textit{$\pi$-finite} Galois extensions of $\Sph_{K(n)}$. A more systematic account of this circle of ideas and their applications is a subject for a future work.  

\subsubsection{Categorification}

Finally, we also extend \Cref{HL_orientation_Intro} by way of \textit{categorification}.
The key feature of $\Sp_{K(n)}$, that allows the chromatic Fourier transform to be an isomorphism, is \textit{higher semiadditivity} in the sense of \cite[Definition 4.4.2]{AmbiKn}. Moreover, the chromatic height $n$, which appears as the ``shift'' in the chromatic Fourier transform, can be interpreted as the \textit{semiadditive height} of $\Sp_{K(n)}$ in the sense of \cite{AmbiHeight}. 
Similarly, our telescopic lift of \Cref{HL_orientation_Intro} relies on the higher semiadditivity of $\Sp_{T(n)}$, which is the maximal higher semiadditive localization of $\Sp$ of height $n$ (see \cite{TeleAmbi}). We therefore construct and study the higher Fourier natural transformation in the general setting of higher semiadditive symmetric monoidal $\infty$-categories of a given semiadditive height $n$ (at an implicit prime $p$). However, it is not easy to determine, in this abstract setting, when the Fourier transform is an isomorphism, and a large portion of this paper is devoted to developing tools for answering this question.  

One interesting source of examples of higher semiadditive $\infty$-categories, outside of the stable realm, is higher category theory itself. As already observed in \cite{AmbiKn}, the $\infty$-category $\Prl$ of presentable $\infty$-categories is $\infty$-semiadditive. More generally, for any presentably symmetric monoidal $\infty$-category $\cC$, the $\infty$-category $\Mod_\cC$ of $\cC$-linear presentable $\infty$-categories is $\infty$-semiadditive. 
Regarding roots of unity, a height $n$ root of unity $\omega\colon \Sigma^n \ZZ/m \to R^\times$ of a commutative algebra $R$ in $\cC$ deloops uniquely to a height $n+1$ root of unity 
\[ 
    \cl{\omega}\colon
    \Sigma^{n+1} \ZZ/m \too 
    \pic(R) := 
    \Mod_R^\times
\] 
of $\Mod_R$, viewed as a commutative algebra in $\Mod_\cC$. Note that the ``shift by one'' in the height is consistent with the \textit{semiadditive redshift} phenomenon from \cite{AmbiHeight}. Namely, if $\cC$ happens to be itself $\infty$-semiadditive and $R$ is of semiadditive height $n$ in $\cC$, then $\Mod_R$ is of semiadditive height $n+1$ in $\Mod_\cC$. As the semiadditive height generalizes the chromatic height, this is strongly related to the \textit{chromatic redshift} philosophy. Now, given $\omega$ as above, our theory provides a Fourier transform of commutative $R$-algebras in $\cC$
\[
    \Four_{\omega}\colon
    R[M] \too
    R^{\Omega^{\infty-n}M^*},
\]
and also a ``categorified'' Fourier transform of symmetric monoidal $\Mod_R$-linear presentable $\infty$-categories
\[
    \Four_{\cl{\omega}}\colon
    \Mod_R[M] \too 
    \Mod_R^{\Omega^{\infty-(n+1)}M^*}.
\]
Unpacking the definitions, the right hand side is the $\infty$-category of $\Mod_R$-valued local systems on $\Omega^{\infty-(n+1)}M^*$ with the \textit{pointwise} symmetric monoidal structure, while the source is the $\infty$-category of $\Mod_R$-valued local systems on $\Omega^\infty M$ with the \textit{Day convolution} symmetric monoidal structure. 
Our main result regarding this situation is that $\Four_{\cl{\omega}}$ is an isomorphism if and only if $\Four_{\omega}$ is an isomorphism. 

Categorification of spherical orientations is more subtle. In general, there can be an obstruction for delooping a height $n$ spherical orientation  $\Sigma^nI_{\QQ_p/\ZZ_p} \to R^\times$ of $R$ to a height $n+1$ spherical orientation $\Sigma^{n+1}I_{\QQ_p/\ZZ_p} \to \pic(R)$ of $\Mod_R$. However, it does give an $\OR$-orientation for the truncated $p$-local sphere spectrum $\OR = \tau_{\le n}\Sph_{(p)}$. In the case of the $\infty$-category $\LocMod_{E_n}$ of $K(n)$-local $E_n$-modules, this partial result reads as follows:

\begin{theorem}[\ref{Ninga_Orientation}]
    \label{E_n_Categorification_Intro}
    For every $n\ge0$, there is a natural equivalence of symmetric monoidal $\infty$-categories:
    \[
        \Fun(\Omega^\infty M, \LocMod_{E_n})_\Day \iso 
        \Fun(\Omega^{\infty - (n+1) } IM, \LocMod_{E_n})_\Ptw,
    \]
    for $M$ a connective $\pi$-finite $p$-local spectrum, assuming the vanishing of the canonical map 
    \[
        \pi_{n+1}\Sph \otimes \pi_0 M \too \pi_{n+1} M.
    \] 
\end{theorem}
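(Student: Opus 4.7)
The plan is to recognize the claimed equivalence as an instance of the general higher Fourier transform attached to an $\OR$-orientation, applied to the commutative algebra $R := \LocMod_{E_n}$ in an ambient higher semiadditive $\infty$-category of semiadditive height $n+1$, taking $\OR := \tau_{\le n}\Sph_{(p)}$.

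The first task is to produce the orientation. Invoking the semiadditive redshift principle, $\LocMod_{E_n}$ is a commutative algebra of semiadditive height $n+1$ in $\Mod_{\Sp_{K(n)}}$, and its unit group is modelled by the Picard spectrum $\pic(E_n)$. The essential input is the computation from \Cref{HL_Trunc_Or_Intro}:
\[
\Map_\Sp(C_p,\pic(E_n)) \simeq B^{n+1}C_p,
\]
which, by passing to the colimit over $r$, yields a compatible family of height-$(n+1)$ roots of unity $\Sigma^{n+1}\ZZ/p^r \to \pic(E_n)$ assembling to a primitive map $\Sigma^{n+1}\QQ_p/\ZZ_p \to \pic(E_n)$ in the sense of \cite{carmeli2021chromatic}. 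By the obstruction-theoretic analysis of spherical orientations developed earlier in the paper, this is exactly the data of an $\OR$-orientation of height $n+1$ for $R$.

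The second task is to verify that $M$ satisfies the hypothesis needed to apply the $\OR$-oriented Fourier transform. By construction, the Fourier transform of height $n+1$ attached to an $\OR$-orientation is naturally defined on connective $\pi$-finite $\OR$-module spectra, and for a general connective $\pi$-finite spectrum $M$ only depends on $\tau_{\le n+1}M$; thus it applies to $M$ precisely when $\tau_{\le n+1}M$ admits a $\tau_{\le n}\Sph_{(p)}$-module structure. For a connective $M$, the only potential obstruction in this range is the action of $\pi_{n+1}\Sph$ on $\pi_0 M$, i.e., the canonical map
\[
\pi_{n+1}\Sph \otimes \pi_0 M \too \pi_{n+1}M,
\]
whose vanishing is exactly the assumed hypothesis. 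Under this vanishing, one further identifies the $\OR$-linear Brown--Comenetz dual of $\tau_{\le n+1}M$ with the relevant truncation of $IM$, so that the target local system $\infty$-category matches the one in the statement.

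With both ingredients in place, the general $\OR$-oriented Fourier transform produces the desired natural symmetric monoidal equivalence. The main obstacle will be the construction in the first step: one must confirm that the computation $\Map_\Sp(C_p,\pic(E_n)) \simeq B^{n+1}C_p$ genuinely delivers a primitive height-$(n+1)$ root of unity in the categorified setting, and that these assemble, compatibly with the semiadditive redshift, into a bona-fide $\OR$-orientation to which the formal Fourier machinery applies. The remaining steps are then formal consequences of the general theory developed in the preceding sections.
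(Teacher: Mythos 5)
Your overall strategy — realize the statement as the Fourier transform for a $\tau_{\le n}\Sph_{(p)}$-orientation of height $n+1$ on $\Mod_{\LocMod_{E_n}}$, translate the technical hypothesis on $M$ via \Cref{Truncated_Sph_p_Crit}, and feed in the Picard computation $\Map_\Sp(C_p,\pic(E_n)) \simeq B^{n+1}C_p$ — is the paper's strategy. But there is a genuine gap in your first step, and the logic as you state it runs backwards. The Picard computation on its own does \emph{not} yield a primitive height-$(n+1)$ root of unity, let alone a compatible $p^r$-system or a $\tau_{\le n}\Sph_{(p)}$-orientation: $\Map_\Sp(C_p,\pic(E_n)) \simeq B^{n+1}C_p$ says $\mu_p(\one_{\Mod_{\LocMod_{E_n}}}) \simeq B^{n+1}C_p$, which is a \emph{connectedness} statement (\Cref{Conn_Mu_p}). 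It tells you there are $p$ height-$(n+1)$ roots of unity of order $p$, one of which is zero, and says nothing directly about primitivity or about $C_{p^r}$ for $r>1$. The phrase ``passing to the colimit over $r$'' has no content here, since the input only concerns $r=1$.

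What is actually needed, and what the paper uses, is the following order of operations. First, the Hopkins--Lurie theorem (\Cref{HL_orientation}, reinterpreted as \Cref{LT_Zp_Or}) supplies a $(\ZZ_{(p)},n)$-orientation of $\LocMod_{E_n}$; in particular $\LocMod_{E_n}$ is $(\FF_p,n)$-orientable. The categorification theorem \Cref{Orientability_Cat} then promotes this to an $(\FF_p,n+1)$-orientation of $\Mod_{\LocMod_{E_n}}$. Only \emph{with such an orientation in hand} does the Picard computation give you something: by \Cref{Conn_Mu_p} it shows $\Mod_{\LocMod_{E_n}}$ is $(n+1)$-connected, and then \Cref{Conn_Ext} (whose obstruction-vanishing argument requires the existing $\FF_p$-orientation to run the Fourier transform at each Postnikov stage) lifts the $\FF_p$-orientation to a $\tau_{\le n}\Sph_{(p)}$-orientation. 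This is packaged in the paper as \Cref{Pic_Conn_Frac} $\Rightarrow$ \Cref{Sphere_or_E_n} $\Rightarrow$ \Cref{Orientability_Cat} $\Rightarrow$ \Cref{Ninga_Orientation}. So the Hopkins--Lurie orientation of $E_n$ is an indispensable input, not a consequence of the Picard computation; your proposal never invokes it and hence cannot produce the primitivity (or even nontriviality) you assert.
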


\begin{rem}
    For a connective $M$, the induced isomorphism on the endomorphism objects of the respective monoidal units in the equivalence of \Cref{E_n_Categorification_Intro} recovers \Cref{Sphere_Or_Intro}.
\end{rem}

The technical assumption on $M$ is equivalent to the $(n+1)$-truncation of $M$ having a (necessarily unique) module structure over the $n$-truncated sphere spectrum. This happens, for example, if $M$ admits a $\ZZ$-module structure, as in the original statement of \Cref{HL_orientation_Intro}, in which case the equivalence exists already over $R_n$ (or even $R_n^f$). We conjecture however that this hypothesis is  unnecessary, namely, that the spherical orientation of $E_n$ can be delooped to a spherical orientation of $\LocMod_{E_n}$. This would happen if, for example, 
\[
    \Map(C_p, \mathrm{br}(E_n)) \simeq B^{n+2}C_p,
\]
where $\mathrm{br}$ stands for the \textit{Brauer spectrum} (cf. \Cref{HL_Trunc_Or_Intro}).

The height $n=0$ case of \Cref{E_n_Categorification_Intro} recovers a classical fact. 
For a finite abelian group  $A$ and $M = A^*$, we get $\Omega^\infty M = A^*$ and $\Omega^{\infty - 1} IM = BA$. We therefore obtain an equivalence between the symmetric monoidal (derived) categories of $\cl{\QQ}$-representations of $A$, and of $A^*$-graded $\cl{\QQ}$-vector spaces. This equivalence is provided, unsurprisingly, by decomposition into weight spaces.     
Assuming $A$ is a $p$-group, for a general height $n$ and $M = A^*$, we get a similar ``weight space decomposition'' of the symmetric monoidal $\infty$-category of $\LocMod_{E_n}$-representations of the higher group $G = B^nA$,
\[
    \Fun(B^{n+1}A,\LocMod_{E_n}) \simeq
    \prod_{\varphi \in A^*} \LocMod_{E_n}.
\]
For an example of a different flavor, in height $n=1$ with $M = \Sigma A$, we get an equivalence between $\LocMod_{E_1}$-representations of $A$ and of $A^*$, but with different symmetric monoidal structures. A similar (non-monoidal) equivalence, was considered by Treumann in \cite{T2015Representations}.

\subsubsection{Organization}

In \cref{sec:affine}, we study the notion of \textit{affineness}. 
We begin in \ref{ssec:affinefunc}, by introducing it in the general setting of monoidal functors. We establish equivalent characterisations for affine functors and certain closure and monoidal properties thereof. 
Then, in \ref{ssec:affinelocsys}, we specialize to functors arising as pullbacks for local systems, and relate the property of affineness to the behavior of Eilenberg--Moore maps and Galois extensions. 
We end this section, in \ref{ssec:affineambi}, by studying the interaction of affineness with ambidexterity and semiadditive height, establishing among other things the mutual implications of affineness and the Eilenberg--Moore isomorphism. 

In \cref{sec:higherfourier}, we lay down the foundations of the abstract theory of \textit{Fourier transforms}. 
In \ref{ssec:pre-or}, we define \textit{$\OR$-pre-orientations} of height $n$ for a connective $p$-local commutative ring spectrum $\OR$, and the corresponding notion of Brown--Comenetz duality for $\OR$-modules.  
In \ref{ssec:fourier}, we proceed to construct the Fourier (not necessarily invertible) transformation associated to an $\OR$-pre-orientation and discuss its functoriality and duality invariance. 
We conclude, in \ref{ssec:co-mult}, by promoting the Fourier transform to a map of Hopf algebras, and deducing an analogue of the classical translation invariance property.

In \cref{sec:orientations}, we study \textit{$\OR$-orientations}, which are $\OR$-pre-orientations such that the associated Fourier transform is an isomorphism for all suitably finite $\OR$-modules. 
In \ref{ssec:orient}, after presenting the relevant definitions and functorialities, we establish for a given $\OR$-pre-orientation various closure properties of the class of \textit{oriented modules} (those for which the Fourier transform is an isomorphism). 
Next, in \ref{ssec:cyclext}, we introduce the universal $\OR$-oriented algebras, the \textit{$\OR$-cyclotomic extensions}, and show that they are $\OR^\times$-Galois under certain finiteness hypotheses. 
Then, in \ref{ssec:affor}, we discuss \textit{virtual orientability}, which is the property of admitting an $\OR$-orientation after a faithful extension of scalars, or equivalently, that the $\OR$-cyclotomic extension is faithful. We show that under the assumption of virtual $\OR$-orientability, the underlying space of a suitably finite $\OR$-module is affine and deduce a general form of higher Kummer theory. 
Finally, in \ref{ssec:localrings}, we focus on local rings $\OR$, and show that an $\OR$-pre-orientation can be checked to be an orientation after pushforward to the residue field of $\OR$. This result is essential in lifting primitive (higher) $p$-th roots of unity to spherical orientations. 

In \cref{sec:categorification}, we investigate the interaction of the Fourier transform with \textit{categorification}. 
In the preliminary subsection \ref{ssec:categorification}, we review the categorification-decategorification adjunction taking, in one direction, an $\EE_n$-algebra to its $\EE_{n-1}$-monoidal category of modules, and in the other, an $\EE_{n-1}$-monoidal $\infty$-category to the $\EE_n$-algebra of (enriched) endomorphisms of the unit. We reinterpret the property of affineness in terms of this adjunction and address certain set-theoretical issues related to $\Prl$ not being itself presentable. 
In \ref{ssec:catfourier}, we initiate the study of the ``categorified'' Fourier transform, by describing its source and target in explicit terms, and explaining how the ``decategorified'' Fourier transform can be recovered from it. 
We continue, in \ref{ssec:orcat}, to show that orientations categorify to orientations and that the categorical $\OR$-cyclotomic extension is the categorification of the usual $\OR$-cyclotomic extension. 

In \cref{sec:examplesforspecificrings}, we concentrate on $\OR$-(pre)-orientations for local ring spectra $\OR$ with residue field $\FF_p$, paying special attention to the following tower of rings:
\[
    \Sph_{(p)} \to \dots \to 
    \tau_{\le d}\Sph_{(p)} \to \dots \to 
    \ZZ_{(p)} \to \dots \to 
    \ZZ/p^r \to \dots \to 
    \FF_p.
\]
In \ref{ssec:F_p_orientations}, we study the consequences of virtual $\FF_p$-orientability (at height $n$). In particular, we show that it implies affineness for all $\pi$-finite spaces as in \cref{Tn_Applications_Intro}. We also show  that it implies virtual $\OR$-orientability for all connective $\pi$-finite commutative ring spectra with residue field $\FF_p$, such as $\OR = \ZZ/p^r$. 
In \ref{ssec:zpror+higherroots}, we relate $\ZZ/p^r$-orientations in the \textit{stable} setting to primitive higher $p^r$-th roots of unity in the sense of \cite{carmeli2021chromatic}, and deduce that virtual $\FF_p$-orientability is detected by nil-conservative functors. We also characterise virtual $\FF_p$-orientability in terms of the affineness of certain spaces and the Galois condition for the higher cyclotomic extensions. 
We proceed, in \ref{ssec:Z_p-orientations and hyper-completeness}, to study the consequences of virtual $\ZZ_{(p)}$-orientability. First, we show that it implies virtual $\OR$-orientability for \textit{all} local ring spectra $\OR$ with residue field $\FF_p$, and so in particular that the spherical cyclotomic extension is faithful (the case $\OR=\Sph_{(p)}$). Second, we show that in the stable setting, it implies that the localization with respect to the infinite $p$-typical higher cyclotomic extension (or equivalently, the spherical cyclotomic extension) is a \textit{smashing} localization, and provide a formula for its unit. 
Finally, in \ref{ssec:connectedness}, we study \textit{truncated spherical orientations}, i.e., $\OR$-orientations for $\OR = \tau_{\le d}\Sph_{(p)}$. Observing that the case $d = n$ is already equivalent to an $\Sph_{(p)}$-orientation, we study the obstructions for lifting a $\tau_{\le d-1}\Sph_{(p)}$-orientation to a $\tau_{\le d}\Sph_{(p)}$-orientation for $d = 1,\dots,n$. To this end, we introduce the notion of \textit{$d$-connectedness}, which ensures that the said obstructions vanish up to $d$, and relate this property to the connectedness of the (ordinary) roots of unity $\mu_p$ and ``spherical'' roots of unity $\mu_{\Sph_{p}}$ spectra. The last step $d = n$ requires a categorification argument from section 5, and accordingly involves the Picard spectrum. We conclude with a discussion of the spherical cyclotomic extension, showing it is pro-Galois. 

In \cref{sec:chromatic}, we apply the abstract theory developed in the previous sections to chromatic homotopy theory. In the preliminary subsection \ref{ssec:chromaticpreliminaries}, we briefly recall some material on the monochromatic $\infty$-categories $\Sp_{K(n)}$ and $\Sp_{T(n)}$, the Lubin--Tate spectra $E_n$, and the higher cyclotomic extensions $R_{n,r}$ and $R_{n,r}^f$. In \ref{ssec:orlubintate}, we study the Fourier theory over $E_n$. First, we interpret \Cref{HL_orientation_Intro} as the existence of a $\ZZ_{(p)}$-orientation on $E_n$ and bootstrap it to an $\Sph_{(p)}$-orientation, proving \Cref{Sphere_Or_Intro}. By the general theory, this readily implies \Cref{Sph_Gal_Intro} on the $K(n)$-local spherical cyclotomic extension, and \Cref{E_n_Categorification_Intro} on the categorified spherical Fourier transform. We conclude with the study of the discrepancy spectrum, proving \Cref{Discrepancy_Intro}.
Finally, in \ref{ssec:ortelescopic}, we apply the theory of orientations and the Fourier transform to the telescopic setting. We first prove that $\Sp_{T(n)}$ is virtually $\ZZ/p^r$-orientable and deduce \Cref{Tn_orientation_Intro}. From the general theory we immediately deduce all the properties of local systems of $K(n)$-local algebras on $\pi$-finite spaces stated in \Cref{Tn_Applications_Intro} and the higher Kummer theory as formulated in \Cref{Higher_Kummer_Intro}. We conclude with a short discussion about the universal $T(n)$-local virtually spherically oriented localization and its relation to the properties of $R_n^f$ and the telescope conjecture. 


\subsubsection{Notation and conventions}

Throughout the paper, we work in the framework of $\infty$-categories (a.k.a.~quasi-categories) as developed in \cite{htt} and \cite{HA}. We generally follow the terminology and notation therein. For all concepts related to semiadditivity,  semiadditive height, and higher cyclotomic extensions we refer the reader to \cite{AmbiHeight} and \cite{carmeli2021chromatic}; precise references are given in the main body of the text. 

In addition, we employ the following notation:
\begin{enumerate}
    \item The underlying space of a spectrum $X$ will be denoted by $\und{X} := \Omega^{\infty}X$. More generally, for an object $X$ in a monoidal $\infty$-category $\cC$, we write $\und{X} = \Map_{\cC}(\one,X)$.
    \item A square in an $\infty$-category is called \textit{exact} if it is both a pullback and a pushout square. 
    \item In our applications, we usually fix a prime $p$ and work $p$-locally. We usually indicate the prime through a subscript; for instance, we write $\Sp_{(p)}$ for the category of $p$-local spectra.
    \item A $p$-local spectrum $X$ is said to be \textit{$\pi$-finite} if it is connective and $\bigoplus_n\pi_nX$ is a finite abelian group. We write $\Sp_{(p)}^{\ptors{\pi}}$ for the category of $p$-local $\pi$-torsion spectra (\cref{def:pitor}), i.e., those \emph{connective} spectra which can be written as filtered colimits of $p$-local  $\pi$-finite spectra. The $p$-local $\pi$-torsion torsion part of a $p$-local spectrum $X$ is denoted by $(X)_{(p)}^{\ptors{\pi}}$.
\end{enumerate}

\subsubsection{Acknowledgments}
We would like to thank Robert Burklund, Gijs Heuts, Michael Hopkins and Allen Yuan for useful discussions, and Shai Keidar, Shay Ben Moshe, and Maxime Ramzi for helpful comments on an earlier version of this paper. The first author is supported by the European Research Council (ERC) under Horizon Europe (grant No.~101042990). The third author is supported by ISF1588/18 and BSF 2018389.   The first and fourth authors thank the Max Planck Institute for its hospitality. The first author would also like to thank the Hausdorff Center for Mathematics, and the third author would like to thank the Massachusetts Institute of Technology for its hospitality.


\section{Affineness and Eilenberg--Moore}\label{sec:affine}

In algebraic geometry, affine schemes are those which can be canonically recovered from their algebra of global regular functions. In this section, we study an analogous notion in homotopy theory and its interaction with Eilenberg--Moore type properties, Galois extensions and ambidexterity. Much of the material in this section is inspired by, and is an abstraction of, the results and arguments in \cite[\S 5.4]{AmbiKn} for $K(n)$-local spectra.

\subsection{Affine functors}\label{ssec:affinefunc}

We begin the study of affineness in the abstract generality of presentably monoidal $\infty$-categories. For $f^*\colon \cD \to \cC$ in $\alg(\Prl)$, i.e., a monoidal, colimit preserving functor between presentably monoidal $\infty$-categories, write $f_*\colon \cC \to \cD$ for the right adjoint to $f^*$, which exists by the adjoint functor theorem. 
By \cite[Corollary 7.3.2.7]{HA}, $f_*$ has a canonical structure of a \textit{lax} monoidal functor. Hence, for every $R \in \alg(\cC)$, we have $f_*R \in \alg(\cD)$ and an induced functor 
\[
    \mdef{f_\sharp} \colon 
    \LMod_R(\cC) \too \LMod_{f_*R}(\cD),
\]
taking $X\in \LMod_R(\cC)$ to $f_*X$, endowed with its canonical $f_*R$-module structure. This functor admits a left adjoint
\[
    \mdef{f^\sharp} \colon 
    \LMod_{f_*R}(\cD) \too \LMod_R(\cC),
\]
taking $Y \in \LMod_{f_*R}(\cD)$ to
\[
f^\sharp X:= R \relotimes{f^*f_*R} f^*X,
\]
using the algebra map $f^*f_* R \to R$ provided by the counit of the adjunction $f^* \dashv f_*$ (as in the proof of \cite[Theorem~5.4.3]{AmbiKn}).

\begin{defn}\label{def:affine}
    A functor $f^*\colon \cD\to \cC$ in $\alg(\Prl)$ is called \tdef{affine}, if the functor 
    \[
f_\sharp \:\colon\: 
        \cC \simeq
        \LMod_{\one_\cC}(\cC)\too 
        \LMod_{f_*\one_\cC}(\cD) 
    \]
    is an equivalence.
\end{defn}

This definition generalizes the usual notion of affineness from algebraic geometry in the following sense:
\begin{example}
    Consider a morphism of commutative rings $g\colon R \to S$, write $g^*\colon \Mod_R \to \Mod_S$ for the extension of scalars functor between the associated module categories, and let $g_*$ be its forgetful right adjoint. Then $g^*$ is affine. More generally, let $f\colon X\to Y$ be a morphism of schemes, and $f^*\colon \QCoh(Y)\to \QCoh(X)$ the functor of pullback of (ordinary) quasi-coherent sheaves along $f$. Then, $f^*$ is affine if and only if $f$ is an affine morphism of schemes. 
\end{example}

\subsubsection{Characterization}

Our first goal is to characterize affineness in terms of the intrinsic properties of $f^*$, or rather its right adjoint $f_*$.
A functor $f^*\colon \cD \to \cC$ in $\alg(\Prl)$ endows $\cC$ with a $\cD$-linear structure, by which we mean a structure of a right $\cD$-module in the symmetric monoidal $\infty$-category $\Prl$, and $f^*$ becomes canonically a $\cD$-linear functor. Furthermore, we get a \textit{projection formula map}
\[
    \varphi \colon f_*(X)\otimes Y \to f_*(X\otimes f^*Y)
    \qin \cD,
\]
which for every $X\in \cC$ and $Y\in \cD$ is given by the composition 
\[
    f_*(X)\otimes Y \oto{u} f_*f^*(f_*(X)\otimes Y) \simeq f_*(f^*f_*X\otimes f^*Y) \oto{c} f_*(X\otimes f^*Y),
\]
with $u$ and $c$ the unit and counit maps respectively of the adjunction $f^*\dashv f_*$. 
If $\varphi$ happens to be a natural \textit{isomorphism}, then $f_*$ is $\cD$-linear as well, and the whole adjunction $f^* \dashv f_*$ promotes to the world of $\cD$-linear categories (see \cite[Remark~7.3.2.9]{HA}). 

\begin{lem}\label{Affine_Properties}
    Let $f^* \colon \cD \to \cC$ in $\alg(\Prl)$. If $f^*$ is affine, then $f_*$ is $\cD$-linear, colimit preserving and conservative. 
\end{lem}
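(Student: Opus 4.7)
The plan is to exploit the factorization $f_* = U \circ f_\sharp$, where $U \colon \LMod_{f_*\one_\cC}(\cD) \to \cD$ is the monadic forgetful functor. By the definition of affineness, $f_\sharp$ is an equivalence, so each of the three properties of $f_*$ will be inherited from $U$. Monadicity of $U$ (via \cite[\S4.2.3]{HA}) immediately gives that it is conservative and preserves all colimits, whence the same follows for $f_*$.

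For $\cD$-linearity, which amounts to the projection formula map $\varphi \colon f_*(X) \otimes Y \to f_*(X \otimes f^*Y)$ being an isomorphism, the strategy is to argue through the inverse $f^\sharp \simeq (f_\sharp)^{-1}$. From the explicit formula $f^\sharp(N) = \one_\cC \otimes_{f^* f_* \one_\cC} f^* N$, the monoidality of $f^*$ together with the $\cC$-linearity of the relative tensor product give natural equivalences
\[
    f^\sharp(N \otimes Y) \;\simeq\; \one_\cC \otimes_{f^* f_* \one_\cC} (f^* N \otimes f^* Y) \;\simeq\; f^\sharp(N) \otimes f^* Y,
\]
exhibiting $f^\sharp$ as a $\cD$-linear functor. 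Setting $N = f_\sharp X$ and invoking $f^\sharp f_\sharp \simeq \id_\cC$ yields $X \otimes f^* Y \simeq f^\sharp(f_\sharp X \otimes Y)$; applying $f_\sharp$ and then the (manifestly $\cD$-linear) forgetful $U$ produces a natural isomorphism $f_*(X) \otimes Y \simeq f_*(X \otimes f^* Y)$ in $\cD$.

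The main (minor) subtlety, and the only point where I expect care is needed, is confirming that the isomorphism so constructed is indeed the canonical projection formula map $\varphi$. This is formal: both natural transformations arise as adjoint mates of the monoidal structure on $f^*$ under the adjunction $f^* \dashv f_*$, and the uniqueness of such mates identifies them. Thus $\varphi$ is an isomorphism, completing the proof.
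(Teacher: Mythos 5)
Your proof is correct and rests on the same core observation as the paper's: factoring $f_* \simeq U \circ f_\sharp$ with $f_\sharp$ an equivalence (by affineness) and transporting the three properties from the forgetful functor $U$. For conservativity and colimit preservation this is exactly the paper's argument.

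Where you diverge is in the $\cD$-linearity. The paper dispatches this by citing directly that the forgetful functor $U\colon \LMod_{f_*\one_\cC}(\cD)\to\cD$ is $\cD$-linear (\cite[Remark 4.8.4.11]{HA}), so $f_*$ inherits it through the equivalence. You instead take a detour through $f^\sharp$: first show $f^\sharp$ is $\cD$-linear by an explicit computation with the relative tensor product, then transport along $f_\sharp$ and compose with $U$. This is more work, but it does make explicit something the paper's terse phrasing elides — namely that the equivalence $f_\sharp$ must itself be promoted to a $\cD$-linear one before one can conclude that $f_*$ inherits $\cD$-linearity from $U$. Your final remark about identifying the constructed isomorphism with the canonical projection formula map $\varphi$ is exactly the right thing to worry about; your dismissal of it as a formal check via uniqueness of mates is brief, but it is not a gap — the key point is that $f^\sharp\circ(\text{free})$ is naturally isomorphic to $f^*$ as a $\cD$-linear left adjoint, and the projection map of a right adjoint of a $\cD$-linear functor is determined up to canonical isomorphism. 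So your argument is sound; it is simply a slightly longer route to the same conclusion, trading the citation of HA for an explicit verification.
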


\begin{proof}
    If $f^*$ is affine, then up to isomorphism, the functor $f_*$ identifies with the forgetful functor $\LMod_{f_*\one_\cC}(\cD) \to \cD$. Such functors are $\cD$-linear, colimit preserving, and conservative by \cite[Remark 4.8.4.11, Corollary 4.2.3.7, and Corollary 4.2.3.2]{HA}.
\end{proof}

We shall show that the converse of \Cref{Affine_Properties} holds as well, giving a characterization of affine functors in terms of the properties of their right adjoint. We begin with the following more general fact:

\begin{prop}\label{colim_lin_fully_faithful}
    Let $f^*\colon \cD\to \cC$ in $\alg(\Prl)$. If $f_*$ is $\cD$-linear and colimit preserving, then for every algebra $R\in\alg(\cC)$, the functor $f^\sharp \colon \LMod_{f_*R}(\cD) \to  \LMod_R(\cC)$ is fully faithful. 
\end{prop}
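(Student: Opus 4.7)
The plan is to show that the unit $\eta \colon \mathrm{id} \Rightarrow f_\sharp f^\sharp$ of the adjunction $f^\sharp \dashv f_\sharp$ is an equivalence on every $Y \in \LMod_{f_*R}(\cD)$, which is equivalent to the fully faithfulness of $f^\sharp$. Under the hypotheses, I would first argue that the adjunction $f^\sharp \dashv f_\sharp$ is $\cD$-linear and colimit preserving on both sides: the functor $f^\sharp$ is a $\cD$-linear left adjoint by direct inspection (using that $f^*$ is monoidal and unwinding the relative tensor product), while $f_\sharp$ is $\cD$-linear and colimit preserving because $f_*$ is by hypothesis --- equivalently, the projection formula map $\varphi \colon f_*(X) \otimes Y \to f_*(X \otimes f^*Y)$ is a natural equivalence. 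Consequently, $\eta$ is a $\cD$-linear natural transformation of colimit-preserving endofunctors of $\LMod_{f_*R}(\cD)$.

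Next, since every object of $\LMod_{f_*R}(\cD)$ can be written as a sifted colimit of free modules of the form $f_*R \otimes X$ (for $X \in \cD$), for instance via the two-sided bar resolution, it suffices to verify that $\eta_{f_*R}$ is an equivalence. Compute $f^\sharp(f_*R) = R \otimes_{f^*f_*R} f^*f_*R \simeq R$, where the canonical identification is induced by the counit $\epsilon_R \colon f^*f_*R \to R$; hence $f_\sharp f^\sharp(f_*R) \simeq f_*R$. Under the adjunction bijection $\Map(f^\sharp(f_*R), R) \simeq \Map(f_*R, f_\sharp R)$, the unit $\eta_{f_*R} \colon f_*R \to f_*R$ corresponds to $\mathrm{id}_R$. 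Unwinding the adjunction $f^\sharp \dashv f_\sharp$ as the composite of the base-change adjunction along $\epsilon_R$ and the module-level adjunction induced by $f^* \dashv f_*$, one identifies $\eta_{f_*R}$ with the composite
\[
    f_*R \too f_*f^*f_*R \too f_*R
\]
of the unit $u_{f_*R}$ of $f^*\dashv f_*$ followed by $f_*(\epsilon_R)$, which is $\mathrm{id}_{f_*R}$ by the triangle identity.

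The main obstacle is verifying, in the $\infty$-categorical setting, that the hypothesis of $\cD$-linearity of $f_*$ indeed implies that the whole induced adjunction $f^\sharp \dashv f_\sharp$ is $\cD$-linear, so that $\eta$ is a $\cD$-linear natural transformation. This is formal but requires careful bookkeeping of the projection formula, the compatibility of $f_*$ with the lax-monoidal structures, and the descent of the adjunction to module categories. Once this coherence is in place, the reduction to $Y = f_*R$ and the triangle identity give the result immediately.
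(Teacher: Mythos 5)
Your proof is correct and follows the same high-level strategy as the paper's: show the unit of $f^\sharp \dashv f_\sharp$ is an equivalence, observe both sides are colimit preserving, and reduce to free modules $f_*R \otimes Y$. Where you differ is in the finishing move. The paper stops the reduction at objects of the form $f_*R \otimes Y$ for $Y \in \cD$, applies the conservative forgetful functor $\LMod_{f_*R}(\cD) \to \cD$, and identifies the underlying map of the unit with the projection formula map $\varphi \colon f_*R \otimes Y \to f_*(R \otimes f^*Y)$, which is an equivalence by the hypothesis that $f_*$ is $\cD$-linear; no coherence beyond the projection formula isomorphism is needed. You instead push the reduction one step further, from $f_*R \otimes Y$ down to just $f_*R$, which forces you to first promote the entire adjunction $f^\sharp \dashv f_\sharp$ to a $\cD$-linear adjunction so that $\eta$ becomes a $\cD$-linear natural transformation; you then close out with the triangle identity. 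Your route is valid but pays for the shorter final computation by front-loading the coherence work (you rightly flag this). The paper's route is a bit more economical: it never needs $f^\sharp$ to be $\cD$-linear or $\eta$ to be a $\cD$-linear transformation, only the pointwise projection formula isomorphism, and the check at a generic free module $f_*R \otimes Y$ is barely longer than your check at $f_*R$. Both are sound; the paper's is the leaner bookkeeping.
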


\begin{proof}
    To prove that $f^\sharp$ is fully faithful, we have to show that the unit morphism 
    $
        u\colon X\to f_\sharp f^\sharp X 
    $
    is an isomorphism for every $X\in \LMod_{f_*S}(\cD)$. 
    First, note that $f_\sharp$ is colimit preserving. Indeed, we have a commutative diagram 
    \[
        \xymatrix{
        \LMod_R(\cC)\ar[d]\ar^{f_\sharp}[r] & \LMod_{f_*R}(\cD)\ar[d] \\
        \cC\ar^{f_*}[r]                     & \cD,
        }
    \]
    where the vertical maps are the conservative and colimit preserving forgetful functors, and the lower horizontal functor is colimit preserving by assumption. Therefore, the upper horizontal functor is colimit preserving as well.
    We get that the source and the target of $u\colon \Id \to f_\sharp f^\sharp$ are colimit preserving functors. Since $\LMod_{f_*R}(\cD)$ is generated under colimits by the modules of the form $f_*R \otimes Y$ for $Y\in \cD$, it suffices to show that $u$ is an isomorphism at such modules. 
    The image of $f_\sharp f^\sharp (f_*R\otimes Y)$ under the forgetful functor $\LMod_{f_*R}(\cD) \to \cD$ is given by
    \[
         f_*(R \otimes_{f^*f_*R} f^*(f_*R\otimes Y)) \simeq f_*(R\relotimes{f^*f_*R}f^*f_*R\otimes f^*Y) \simeq f_*(R\otimes f^*Y). 
    \]
    Via this identification, the map 
    $u\colon f_*R \otimes Y \to f_\sharp f^\sharp(f_*R \otimes Y)$ 
    corresponds to the projection morphism 
    \[
        \varphi \colon f_*R\otimes Y \to f_*(R\otimes f^*Y), 
    \]
    which is an isomorphism by our assumption that $f_*$ is $\cD$-linear. Since the forgetful functor is conservative, we deduce that $u$ is an isomorphism at $f_*R\otimes Y$ and the result follows.
\end{proof}

We deduce the following characterization and consequence of affineness (cf.  \cite[Corollary 3.7]{mathew2015affineness}):

\begin{prop}\label{criterion_affineness} 
    Let $f^*\colon \cD\to \cC$ in $\alg(\Prl)$. The functor $f^*$ is affine if and only if its right adjoint $f_*\colon \cC \to \cD$ is $\cD$-linear, colimit preserving and conservative. Moreover, in such a case, for every $R\in \alg(\cC)$, the functor
    \[
        f_\sharp \colon \LMod_{R}(\cC)\to \LMod_{f_*R}(\cD)
    \]
    is an equivalence.
\end{prop}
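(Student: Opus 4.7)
The plan is to observe that the forward implication is already contained in \Cref{Affine_Properties}, and to derive the reverse implication (along with the stronger ``moreover'' statement) directly from \Cref{colim_lin_fully_faithful} together with a conservativity argument, effectively turning the fully faithful left adjoint into an equivalence.

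More precisely, assume $f_*$ is $\cD$-linear, colimit preserving, and conservative, and fix any $R \in \alg(\cC)$. Since $f_*$ is $\cD$-linear and colimit preserving, \Cref{colim_lin_fully_faithful} applies and tells us that the left adjoint $f^\sharp \colon \LMod_{f_*R}(\cD) \to \LMod_R(\cC)$ is fully faithful, i.e.\ the unit $u\colon \Id \to f_\sharp f^\sharp$ is an isomorphism. To upgrade the adjunction $f^\sharp \dashv f_\sharp$ to an equivalence it then suffices to show that $f_\sharp$ is conservative: the triangle identity $f_\sharp c \circ u_{f_\sharp} = \Id_{f_\sharp}$ forces $f_\sharp c$ to be an isomorphism, and conservativity of $f_\sharp$ then promotes this to $c\colon f^\sharp f_\sharp \to \Id$ being an isomorphism.

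To verify that $f_\sharp$ is conservative, I would reuse the commutative square that appeared in the proof of \Cref{colim_lin_fully_faithful},
\[
\xymatrix{
\LMod_R(\cC)\ar[d]\ar^{f_\sharp}[r] & \LMod_{f_*R}(\cD)\ar[d] \\
\cC\ar^{f_*}[r]                     & \cD,
}
\]
whose vertical arrows are the forgetful functors (and hence conservative) and whose bottom arrow $f_*$ is conservative by hypothesis. Thus the composite $\LMod_R(\cC) \to \cD$ is conservative, which forces $f_\sharp$ itself to be conservative. Specialising to $R = \one_\cC$ yields the affineness of $f^*$, while the argument for general $R$ gives the ``moreover'' clause.

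I do not expect any real obstacle here: all the substantive work, namely establishing full faithfulness of $f^\sharp$ via the projection formula, has already been carried out in \Cref{colim_lin_fully_faithful}. The only mildly delicate point is conceptual — recognising that fully faithful left adjoint plus conservative right adjoint suffices for an equivalence — and handling it uniformly in $R$ so that the ``moreover'' statement falls out of the same argument.
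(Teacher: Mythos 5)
Your proof is correct and follows essentially the same route as the paper: citing \Cref{Affine_Properties} for the forward direction, using \Cref{colim_lin_fully_faithful} to get full faithfulness of $f^\sharp$, and then establishing conservativity of $f_\sharp$ via the commutative square with the forgetful functors. The only difference is that you spell out, via the triangle identity, why a fully faithful left adjoint together with a conservative right adjoint yields an equivalence, whereas the paper treats this as immediate; that extra detail is fine and does not change the substance of the argument.
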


\begin{proof}
    The `only if' part is given by \Cref{Affine_Properties}. Now, if $f_*$ is $\cD$-linear, colimit preserving and conservative, then by \Cref{colim_lin_fully_faithful}, the functor 
    \(
        f_\sharp \colon \LMod_{R}(\cC)\to \LMod_{f_*R}(\cD)
    \)
admits a fully faithful left adjoint. To show that $f_\sharp$ is an equivalence it therefore suffices to show that it is conservative. Note that the composition of $f_\sharp$ with the forgetful functor $\LMod_{f_*R}(\cD) \to \cD$ is the functor $f_*$, which is conservative by assumption. Thus, $f_\sharp$ is conservative as well, and hence an equivalence. In particular, the `if' part follows by taking $R=\one_\cC$.
\end{proof}

\begin{rem}
    \Cref{criterion_affineness} above is closely related to the Barr--Beck--Lurie monadicity theorem \cite[Theorem 4.7.3.5]{HA} and could have been proved using it. Namely, the fact that $f_*$ is colimit preserving and conservative implies that the adjunction $f^*\dashv f_*$ is monadic. The condition that it is also $\cD$-linear identifies the monad $f_*f^*$ on $\cD$ with the monad of tensoring with $f_*\one_\cC \in \alg(\cD)$, so that we can identify $\cC$ with the category of left modules over $f_*\one_\cC$ in $\cD$. 
\end{rem}

In view of \Cref{criterion_affineness}, we adopt the following definition:

\begin{defn}\label{def:semi-affine}
    Let $f^*\colon \cD \to \cC$ in $\alg(\Prl)$. We say that $f^*$ is \tdef{semi-affine}\footnote{This notion has appeared in various guises in the literature before; for instance, it is sometimes referred to as \textit{twisted ambidextrous}.} if $f_*$ is colimit preserving and $\cD$-linear. 
\end{defn}

As an immediate consequence of \Cref{criterion_affineness}, $f^*$ is affine if and only if it is semi-affine and $f_*$ is conservative.   

\subsubsection{Closure properties}

We now describe some of the closure properties enjoyed by the collection of affine functors. First, affine functors are closed under composition and satisfy cancellation from the left. 

\begin{prop} \label{affine_comp}
    Let $f^*\colon \cD \to \cC$ and $g^*\colon \cE \to \cD$ be morphisms in $\alg(\Prl)$. If $g^*$ is affine, then $f^*$ is affine if and only if $f^*g^*$ is affine.
\end{prop}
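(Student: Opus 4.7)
The natural strategy is to reduce everything to the characterization of affineness from \Cref{criterion_affineness}: a morphism $h^*$ in $\alg(\Prl)$ is affine if and only if its right adjoint $h_*$ is colimit preserving, conservative, and module-linear over the source. The right adjoint of the composite is $(f^*g^*)_* = g_*f_*$, so the plan is to check these three conditions for $g_*f_*$ and $f_*$ in the respective directions.

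For the forward implication, if $f^*$ and $g^*$ are both affine, then $g_*f_*$ satisfies the three defining conditions simply because each is stable under composition of functors: colimit preservation and conservativity in an obvious way, and $\cE$-linearity because $f_*$ is $\cD$-linear (hence $\cE$-linear by restriction along $g^*$) while $g_*$ is $\cE$-linear.

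For the reverse implication, I would exploit the equivalences $g_\sharp\colon \cD \iso \LMod_A(\cE)$ and $(f^*g^*)_\sharp\colon \cC \iso \LMod_B(\cE)$ provided by the affineness of $g^*$ and $f^*g^*$, where $A := g_*\one_\cD$ and $B := g_*f_*\one_\cC$. Applying $g_*$ to the unit map $\one_\cD \to f_*\one_\cC$ produces an algebra map $A \to B$ in $\cE$, and the plan is to identify $f^*$ with the extension-of-scalars functor $B \otimes_A (-)$ under these equivalences, so that $f_*$ corresponds to the forgetful functor along $A \to B$. Being a forgetful functor, it is automatically $\LMod_A(\cE)$-linear (equivalently $\cD$-linear), colimit preserving, and conservative, and a second appeal to \Cref{criterion_affineness} concludes.

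The main obstacle I anticipate is the identification of $f^*$ with extension of scalars in the reverse direction; the content is the coherent compatibility of the lax monoidal structures on $f_*$, $g_*$, and $g_*f_*$ with the $A$- and $B$-module structures produced by the affineness equivalences. A more hands-on alternative that sidesteps this bookkeeping is to verify the three conditions on $f_*$ directly: conservativity of $f_*$ is immediate from conservativity of $g_*f_*$; colimit preservation follows from $g_*f_*$ being colimit preserving together with $g_*$ being conservative and colimit preserving; and the projection formula for $f_*$ can be reduced, by colimit preservation in the $\cD$-variable and the generation of $\cD$ under colimits by objects of the form $g^*E$, to the projection formulas for $g_*$ and $g_*f_*$ already in hand.
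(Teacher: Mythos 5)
Your proposal is correct, and both of your suggested routes would work, but the paper's own argument is shorter and more structural than either.  The paper observes that $h_\sharp\colon \cC\to \LMod_{h_*\one_\cC}(\cE)$ (for $h^*=f^*g^*$) factors as
\[
\cC \;\oto{\;f_\sharp\;}\; \LMod_{f_*\one_\cC}(\cD)\;\oto{\;g_\sharp\;}\; \LMod_{g_*f_*\one_\cC}(\cE),
\]
and that the second map is an equivalence by the ``moreover'' clause of \Cref{criterion_affineness} (affineness of $g^*$ gives $g_\sharp\colon \LMod_R(\cD)\iso\LMod_{g_*R}(\cE)$ for \emph{every} $R$, not just $R=\one$); the proposition is then a 2-out-of-3 argument for equivalences.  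This packages all the compatibility-of-lax-structure bookkeeping you flag in your first approach into the single identification $h_\sharp\simeq g_\sharp\circ f_\sharp$, and it entirely sidesteps the need to check the three conditions of \Cref{criterion_affineness} one at a time.  Your second, hands-on route is a genuinely different and valid argument: deducing conservativity of $f_*$ from that of $g_*f_*$ is immediate, colimit preservation follows by conservativity plus colimit preservation of $g_*$, and the reduction of the projection formula for $f_*$ to those of $g_*$ and $g_*f_*$ via the generators $g^*E$ is sound, though it implicitly invokes the standard compatibility $\varphi_{h}\simeq g_*(\varphi_f)\circ \varphi_g$ (applied at $Y=g^*E$), which you should state rather than leave tacit.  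The trade-off is that your hands-on route is more elementary and makes no further appeal to the module-category machinery, but is noticeably longer; the paper's 2-out-of-3 argument is the efficient choice once one has \Cref{criterion_affineness} in hand.
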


\begin{proof}
Let $h^*= f^*g^*$. The functor $h_\sharp \colon \cC\to \LMod_{h_*\one_\cC}(\cE)$ can be identified with the composition 
    \[
\cC \oto{\:f_\sharp\:} 
\LMod_{f_*\one_\cC}(\cD)\oto{\:g_\sharp\:}
        \LMod_{g_*f_*\one_\cC}(\cE). 
    \]
    The first functor $f_\sharp$ is an equivalence if and only if $f_*$ is affine by definition, and the second functor $g_\sharp$ is an equivalence if $g_*$ is affine by \Cref{criterion_affineness}. Hence, the result follows by 2-out-of-3. 
\end{proof}

Our next goal is to study the closure properties of affine functors under limits in $\alg(\Prl)$. 
Let $I$ be a small $\infty$-category and let $\cC_{(-)}$ and $\cD_{(-)}$ be two functors from $I$ to $\alg(\Prl)$. Given a natural transformation $f_{(-)}^*\colon \cD_{(-)}\to \cC_{(-)}$ which is level-wise affine, we would like to know when the induced map on the limits over $I$ is affine as well. While we do not expect this to always be the case, we can show this under the assumption that $f_{(-)}^*$ is \tdef{right adjointable}, in the sense of \cite[Definition 4.7.4.16]{HA}. Namely, we need the lax natural transformation $\cC_{(-)}\to \cD_{(-)}$ assembled from the right adjoints of the $f_a^*$-s to be a (strict) natural transformation.

\begin{prop} \label{affine_limits}
    Let $f^*_{(-)}\colon \cD_{(-)}\to \cC_{(-)}$ be a natural transformation of $I$-shaped diagrams in $\alg(\Prl)$. Assume that, 
    \begin{enumerate}
        \item For every $a\in I$, the functor $f_a^*\colon \cD_a \to \cC_a$ is affine. 
        
        \item $f^*_{(-)}$ is right adjointable.
    \end{enumerate}
    Then, the induced functor on the limits 
    \[
        f^* \colon 
        \invlim_{a\in I} \cD_a \too 
        \invlim_{a\in I}\cC_a
    \]
    is affine. 
\end{prop}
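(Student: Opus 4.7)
The plan is to apply \Cref{criterion_affineness}: set $\cC := \invlim_a \cC_a$ and $\cD := \invlim_a \cD_a$, and verify that the right adjoint $f_*$ of the induced functor $f^*$ is colimit-preserving, $\cD$-linear, and conservative. The crucial preliminary step is to exploit the \emph{right adjointability} hypothesis, which promotes the pointwise right adjoints $(f_{a,*})_{a \in I}$ to a genuine natural transformation $\cC_{(-)} \to \cD_{(-)}$ of $I$-diagrams of large $\infty$-categories. Taking limits, one obtains an induced functor $\invlim_a f_{a,*}$ which, by uniqueness of adjoints, must coincide with $f_*$. In particular, writing $p_a^\cC\colon \cC \to \cC_a$ and $p_a^\cD\colon \cD \to \cD_a$ for the limit projections, we have a natural equivalence $p_a^\cD \circ f_* \simeq f_{a,*} \circ p_a^\cC$.

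With this componentwise description of $f_*$ in hand, the three required properties follow by testing along the projections. For \textit{conservativity}: a morphism in $\cD$ is an isomorphism iff it is so after each $p_a^\cD$, and each $f_{a,*}$ is conservative by \Cref{Affine_Properties}. For \textit{colimit preservation}: colimits in $\invlim_a \cC_a$, as a limit in $\Prl$, are computed componentwise, the projections $p_a^\cD$ preserve colimits (being morphisms in $\Prl$), and each $f_{a,*}$ preserves colimits by \Cref{Affine_Properties}; so $p_a^\cD \circ f_* \simeq f_{a,*}\circ p_a^\cC$ preserves colimits, and hence so does $f_*$. For \textit{$\cD$-linearity}: since the tensor products on $\cC$ and $\cD$ are likewise computed componentwise and the projections are symmetric monoidal, the projection formula map
\[
    \varphi\colon f_*(X)\otimes Y \too f_*(X\otimes f^*(Y))
\]
in $\cD$ is pointwise identified with the projection formula map for $f_a^*$, which is an isomorphism by the affineness of $f_a^*$ and \Cref{Affine_Properties}. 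Joint conservativity of the $p_a^\cD$ then yields that $\varphi$ itself is an isomorphism.

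The main obstacle is really the first step: legitimizing the description of $f_*$ as the pointwise limit of the $f_{a,*}$, together with the compatibility of this description with the ambient symmetric monoidal structures. This is precisely what the right adjointability hypothesis is designed to guarantee; without it, one would only obtain a \emph{lax} natural transformation of right adjoints, which would not pass to limits to yield a right adjoint of the induced $f^*$. Once that identification is secured, the three verifications above are routine checks along the projections.
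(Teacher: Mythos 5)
Your proof is correct and follows essentially the same route as the paper's: identify $f_*$ with $\invlim_a (f_a)_*$ via right adjointability, then verify conservativity, colimit preservation, and $\cD$-linearity of $f_*$ by testing along the limit projections and invoking \Cref{Affine_Properties} for each $f_{a,*}$, before concluding with \Cref{criterion_affineness}. One small slip: the statement is for $\alg(\Prl)$, so the projections are monoidal rather than \emph{symmetric} monoidal, but this has no effect on the argument.
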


\begin{proof}
    Since $f_{(-)}^*$ is right adjointable, the right adjoint of $f^*$ is given by the limit of the right adjoints
    \[
         f_*\simeq \invlim_{a\in I} (f_a)_* \colon 
         \invlim_{a\in I} \cC_a \too
         \invlim_{a\in I}\cD_a,
    \]
    see, e.g., \cite[Proposition 2.1.7]{arad2019tale}.
    By \Cref{criterion_affineness}, it suffices to show that $f_*$ is conservative, colimit preserving and $\cD$-linear. For the conservativity, since the projections $\cD\to \cD_a$ are jointly conservative, it suffices to show that the compositions 
    \[
\cC \oto{\:f_*\:}\cD \too \cD_a
    \]
    are jointly conservative. But this follows from the facts that these composites identify with
    \[
        \cC \too \cC_a \oto{(f_a)_*} \cD_a
    \]
    and that the $(f_a)_*$-s are all conservative.
    
    To see that $f_*$ is colimit preserving, note that, since all the transition functors in the diagrams $\cC_{(-)}$ and $\cD_{(-)}$ are colimit preserving functors, the projections $\cC \to \cC_a$ and $\cD \to \cD_a$ jointly detect (and preserve) colimits in $\cC$ and $\cD$ respectively. Hence, the result follows once again from the assumption that $(f_a)_*$ is colimit preserving.
     
    It remains to show that $f_*$ is $\cD$-linear.
Let $X= \{X_a\}_{a\in I} \in \cC$ and $Y = \{Y_a\}_{a\in I} \in \cD$. Then, the $a$-th component of the projection morphism 
    \[
        f_*X \otimes Y \too f_*(X\otimes f^*Y) 
    \]
    is the projection morphism 
    \[
        (f_a)_*X_a \otimes Y_a \too (f_a)_*(X_a\otimes f_a^*Y)
    \]
    which is an isomorphism by the assumption that all the $f_a^*$-s are affine. We deduce that $f_*$ is $\cD$-linear and hence that $f^*$ is affine. 
\end{proof}

\begin{cor}\label{affine_limits_spaces}
Affine functors are closed under limits over spaces in $\alg(\Prl)$.
\end{cor}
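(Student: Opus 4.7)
The plan is to deduce this as a direct corollary of \Cref{affine_limits}. Since pointwise affineness is already given, the only thing that remains to verify is that the right adjointability hypothesis is automatic when the indexing $\infty$-category $I$ is a space.

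To that end, I observe that every morphism in a space is invertible, so for each arrow $\alpha\colon a \to b$ in $I$, both transition functors $\cC_\alpha^*\colon \cC_a \to \cC_b$ and $\cD_\alpha^*\colon \cD_a \to \cD_b$ are equivalences in $\alg(\Prl)$. Right adjointability of $f^*_{(-)}$ then amounts to the canonical Beck--Chevalley transformation between the two composites $(\cD_\alpha^*)^{-1}(f_b)_*$ and $(f_a)_*(\cC_\alpha^*)^{-1}$ being an equivalence for every such $\alpha$. This holds for purely formal reasons: since the horizontal transition maps are themselves equivalences, their right adjoints are inverse equivalences whose units and counits are isomorphisms, which forces the Beck--Chevalley comparison to be an equivalence.

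With right adjointability verified, both hypotheses of \Cref{affine_limits} are in place, and the induced functor on the limit is affine. I anticipate no real obstacle: the substantive content lies entirely in the preceding proposition, and this corollary merely records the observation that the right adjointability hypothesis becomes vacuous whenever the indexing diagram is an $\infty$-groupoid.
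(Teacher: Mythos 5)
Your proposal is correct and matches the paper's proof exactly: both deduce the corollary from \Cref{affine_limits} by observing that when the indexing diagram is a space, all transition functors are equivalences and hence right adjointability is automatic. You spell out the Beck--Chevalley argument in slightly more detail than the paper, but the reasoning is the same.
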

\begin{proof}
    For a space $A$, since all morphisms in $A$ are invertible, all natural transformations of $A$-shaped diagrams in $\Prl$ are right-adjointable. Hence, the result
    follows from \Cref{affine_limits}.
\end{proof}

\subsubsection{Monoidal structure}

By \Cref{criterion_affineness}, for an algebra $R\in \alg(\cC)$ and an affine functor $f^*\colon \cD \to \cC$, we can identify $R$-modules in $\cC$ with $f_*R$-modules in $\cD$. We shall show that this identification is compatible with the formation of relative tensor products of (left and right) modules. We first observe that for all $N\in \RMod_R(\cC)$ and $M\in \LMod_R(\cC)$, the lax monoidal structure on $f_*$ provides a canonical comparison map
\[
    \nu \colon 
    (f_*N)\relotimes{f_*R}(f_*M) \too 
    f_*(N\relotimes{R} M)
    \qin \cD,
\] 
given by the composition 
\[
    f_*N\relotimes{f_*R}f_*M \oto{u}
    f_*f^*(f_*N\relotimes{f_*R}f_*M) \simeq 
    f_*(f^*f_*N \relotimes{f^*f_*R} f^*f_*M) \to 
    f_*(N \relotimes{R} M),
\]
where the last map is the one induced on the relative tensor product from the counit $f^*f_* \to \Id$. 

\begin{prop}\label{affine_relative_tensor}
    Let $f^*\colon \cD\to \cC$ in $\alg(\Prl)$ and let $R\in \alg(\cC)$. If $f^*$ is affine, then for every $M \in \LMod_R(\cC)$  and $N\in \RMod_R(\cC)$, the map
    \[
        \nu \colon    
        (f_*N) \relotimes{f_*R} (f_*M) \too 
        f_*(N\relotimes{R} M)
    \]
    is an isomorphism.
\end{prop}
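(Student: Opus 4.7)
The plan is to reduce to the case where $M$ (or symmetrically $N$) is a ``doubly free'' module of the form $R \otimes f^*Y$ for $Y \in \cD$, and to verify the statement there by a direct application of the projection formula.

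First, both sides of the comparison map $\nu$, regarded as functors of $M \in \LMod_R(\cC)$ with $N$ fixed, preserve colimits. For the target, $f_*$ preserves colimits by \Cref{Affine_Properties} and $N \otimes_R -$ preserves colimits in any presentably monoidal setting; for the source, the relative tensor product $(f_*N) \otimes_{f_*R} -$ preserves colimits and so does $f_*$ on $\LMod_R(\cC) \to \LMod_{f_*R}(\cD)$ (again because $f^*$ is affine). It therefore suffices to check that $\nu$ is an isomorphism on a class of modules generating $\LMod_R(\cC)$ under colimits.

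By the bar resolution, every $M \in \LMod_R(\cC)$ is a colimit of free modules $R \otimes X$ with $X \in \cC$. Moreover, \Cref{criterion_affineness} yields an equivalence $\cC \simeq \LMod_{f_*\one_\cC}(\cD)$, under which every $X \in \cC$ is a colimit of objects of the form $f^*Y$ for $Y \in \cD$ (via the bar resolution of $X$ as an $f_*\one_\cC$-module). Combining, every $M$ is a colimit of modules of the form $R \otimes f^*Y$ with $Y \in \cD$, and it suffices to verify $\nu$ at such $M$.

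For $M = R \otimes f^*Y$, the right hand side simplifies as
\[
    f_*\bigl(N \otimes_R (R \otimes f^*Y)\bigr) \simeq f_*(N \otimes f^*Y) \simeq (f_*N) \otimes Y,
\]
where the last step is the projection formula, which holds because $f^*$ is affine (\Cref{criterion_affineness}). On the left hand side, the same projection formula gives $f_*(R \otimes f^*Y) \simeq (f_*R) \otimes Y$ as $f_*R$-modules, so
\[
    (f_*N) \otimes_{f_*R} f_*(R \otimes f^*Y) \simeq (f_*N) \otimes_{f_*R} \bigl((f_*R) \otimes Y\bigr) \simeq (f_*N) \otimes Y.
\]
The two descriptions agree, and the naturality of the projection formula shows that the identification is induced by $\nu$ itself.

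The main obstacle is keeping track of the naturality in the last step, i.e., confirming that the two descriptions really are glued by the map $\nu$ built from the lax monoidal structure on $f_*$. A slicker but equivalent route would be to appeal directly to the equivalence $\cC \simeq \LMod_{f_*\one_\cC}(\cD)$, under which $\LMod_R(\cC) \simeq \LMod_R(\cD)$ for any $R \in \alg(\cC)$, and use that the relative tensor product over $R$ depends only on the $R$-module structure; this renders $\nu$ an equivalence essentially tautologically, at the cost of verifying the comparison of relative tensor products across the equivalence.
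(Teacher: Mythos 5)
Your first reduction and the two computations match the paper's proof: both sides preserve colimits in $M$ (since $f_*$ is colimit-preserving), so one reduces to modules of the form $M = R \otimes f^*Y$, and the projection formula then identifies both sides with $(f_*N) \otimes Y$. The step you correctly flag as ``the main obstacle'' --- verifying that the two projection-formula identifications are actually glued by $\nu$ --- is where the paper does something different, and it is a genuine gap in what you wrote. Rather than chasing $\nu$ through the identifications, the paper observes that once $M = R \otimes f^*Y$, the map $\nu$ becomes a natural transformation $\overline{\nu}$ in the $Y$-variable whose source and target are colimit-preserving $\cD$-linear endofunctors of $\cD$ (via the projection formula), and that $\overline{\nu}$ is a \emph{$\cD$-linear} natural transformation because every ingredient in its construction (the lax monoidal structure of $f_*$, the projection formula) is $\cD$-linear. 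A $\cD$-linear natural transformation between $\cD$-linear colimit-preserving functors $\cD \to \cD$ is determined by its value at $Y = \one_\cD$, and there $\nu$ degenerates to the unitality isomorphism $(f_*N) \otimes_{f_*R} (f_*R) \simeq f_*N$, which is visibly an isomorphism. This is the missing idea: reduce to $Y = \one_\cD$ by $\cD$-linearity rather than tracking the comparison map explicitly.

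Your proposed ``slicker route'' is less innocent than it looks and is essentially circular. Under the equivalence $\cC \simeq \LMod_{f_*\one}(\cD)$, the functor $f_*$ is identified with the forgetful functor, which is lax monoidal but not strong monoidal: the tensor product of $\cC$ does not go to the tensor product of $\cD$. Consequently, the bar construction computing $N \otimes_R M$ in $\cC$, after applying $f_*$, does \emph{not} term-by-term agree with the bar construction computing $(f_*N) \otimes_{f_*R} (f_*M)$ in $\cD$; one only has the lax comparison maps, whose realization is $\nu$. Showing this comparison is an isomorphism is precisely the content of the proposition, not a consequence of ``the relative tensor product depending only on the module structure.''
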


\begin{proof}
Since $f_*$ is colimit preserving, the source and target of $\nu$ preserve colimits in the $M$-variable. The $\infty$-category $\LMod_R(\cC)$ is generated under colimits by modules of the from $R\otimes X$ for $X\in \cC$. Also, since $f_*$ is conservative, $\cC$ is generated under colimits from the essential image of $f^*$. Consequently, it suffices to show that $\nu$ is an isomorphism at modules of the form $N=R\otimes f^*Y$ for $Y\in \cD$. 
    Hence, it suffices to show that the natural transformation
    \[
        \overline{\nu} \colon 
        (f_* N) \relotimes{f_*R}f_*(R\otimes f^*Y) \too 
        f_*(N\relotimes{R} (R\otimes f^*Y))
    \]
    obtained from $\nu$ via composing with the functor $R\otimes f^*(-)\colon\cD \to \LMod_R(\cC)$
in the $Y$-variable is a natural isomorphism. The assumption that $f_*$ is $\cD$-linear shows that the source and target of $\overline{\nu}$ are $\cD$-linear functors, and $\overline{\nu}$ is hence a natural transformation of $\cD$-linear functors. Thus, we are reduced to the case $Y= \one_\cD$, where we get the canonical isomorphism
    \[
        (f_*N) \otimes_{f_*R} (f_*R) \iso f_* N.\qedhere
    \]
\end{proof}

When dealing with \textit{symmetric} monoidal $\infty$-categories and \textit{commutative} algebras, the above has a very clean interpretation. 

\begin{prop}\label{Affine_SM}
    Let $f^*\colon \cC \to \cD$ in $\calg(\Prl)$. If $f^*$ is affine, then for every $R\in \calg(\cC)$ we have a natural symmetric monoidal equivalence
    \[
        f_\sharp\colon \Mod_R(\cC) \iso \Mod_{f_*R}(\cD).
    \]
    In particular, it induces an equivalence
    \[
        \calg_R(\cC) \iso \calg_{f_*R}(\cD).
    \] 
\end{prop}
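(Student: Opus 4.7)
The proof is short given the preceding results. The plan is first to enhance the underlying equivalence $f_\sharp\colon \Mod_R(\cC) \iso \Mod_{f_*R}(\cD)$ established in \Cref{criterion_affineness} to a symmetric monoidal equivalence, and then apply the functor $\calg(-)$ to deduce the statement for commutative algebras.

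Since $f^*$ is a morphism in $\calg(\Prl)$, it is symmetric monoidal and colimit-preserving, so its right adjoint $f_*$ inherits a canonical lax symmetric monoidal structure by \cite[Corollary 7.3.2.7]{HA}. For $R\in \calg(\cC)$, this lax structure descends to a lax symmetric monoidal structure on the induced functor $f_\sharp\colon \Mod_R(\cC) \to \Mod_{f_*R}(\cD)$, whose comparison map at $M,N\in \Mod_R(\cC)$ is precisely the morphism
\[
    \nu\colon f_*M \otimes_{f_*R} f_*N \too f_*(M\otimes_R N)
\]
appearing in \Cref{affine_relative_tensor}. By that proposition, affineness of $f^*$ implies $\nu$ is an isomorphism; hence the lax symmetric monoidal structure on $f_\sharp$ is strong. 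Combined with \Cref{criterion_affineness}, which guarantees $f_\sharp$ is an equivalence on underlying $\infty$-categories, we conclude that $f_\sharp$ is a symmetric monoidal equivalence.

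For the second claim, I would apply the functor $\calg(-)$ to this symmetric monoidal equivalence, yielding an equivalence $\calg(\Mod_R(\cC)) \simeq \calg(\Mod_{f_*R}(\cD))$. Unwinding the definitions via the standard identification $\calg(\Mod_R(\cC)) \simeq \calg_R(\cC)$ and likewise for the target gives the desired equivalence of $\infty$-categories of commutative algebras.

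The main (and essentially only) technical point is identifying the abstractly constructed lax symmetric monoidal structure on $f_\sharp$ with the explicit comparison map $\nu$; this is a formal matter of tracing through the construction of the lax structure on $f_*$ and its passage to induced functors on module categories, and presents no substantive difficulty. An alternative route, which avoids this bookkeeping, is to observe that the inverse functor $f^\sharp$ factors as the symmetric monoidal functor $f^*\colon \Mod_{f_*R}(\cD)\to \Mod_{f^*f_*R}(\cC)$ induced by the symmetric monoidal $f^*\colon \cD\to \cC$, followed by base change along the algebra map $f^*f_*R \to R$; both are symmetric monoidal, so $f^\sharp$ is, and its inverse $f_\sharp$ therefore inherits a symmetric monoidal structure.
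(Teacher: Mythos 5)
Your proof is correct and follows essentially the same route as the paper: both invoke \Cref{criterion_affineness} for the underlying equivalence and \Cref{affine_relative_tensor} to promote it to a symmetric monoidal one, then apply $\calg(-)$. The paper simply states these two steps more tersely, while you also note the (genuinely clean) alternative of reading off symmetric monoidality from the factorization of $f^\sharp$.
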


\begin{proof}
    Since the tensor product in the $\infty$-category of modules over a commutative algebra is given by the relative tensor product, the first claim is a consequence of \Cref{affine_relative_tensor}. The second claim follows by taking commutative algebra objects and using the natural identification
    \[
        \calg(\Mod_R(\cC)) \simeq \calg_R(\cC).\qedhere
    \]
\end{proof}

For $\cC \in \calg(\Prl)$, pushouts in $\calg(\cC)$ are computed via the relative tensor product. Hence, for a symmetric monoidal functor $f^*\colon \cD \to \cC$ in $\calg(\Prl)$, \Cref{Affine_SM} implies that if $f^*$ is affine, then the functor $f_*\colon \calg(\cC) \to \calg(\cD)$ preserves pushout squares. We shall now discuss a generalization of this property to the context of non-commutative algebras.  
Let $\cC\in \alg(\Prl)$ and let 
\[
    \xymatrix{
    R_0\ar[r]\ar[d] & R_1\ar[d] \\
    S_0 \ar[r] & S_1
    }
\]
be a commutative square in $\alg(\cC)$. 
The right map $R_1\to S_1$ is a map of left $R_0$-modules.
The $R_0$-module structure of $S_1$ comes from restricting
the $S_0$-module structure along the left map. Hence, the restriction-extension
of scalars adjunction along $R_0\to S_0$ induces a map of left $S_0$-modules:
\[
    S_0\relotimes{R_0}R_1\too S_1.
\]

\begin{defn} \label{def:relative_tensor_square}
    We say that a square of algebras as above is a \tdef{relative tensor square} if the above map $S_0\otimes_{R_0}R_1\to S_1$ is an isomorphism.
\end{defn}


Every sifted colimit preserving monoidal functor preserves relative tensor squares, as the relative tensor product can be realized via the geometric realization of a bar construction. For an \emph{affine} functor, the same holds for its right adjoint.
\begin{prop}\label{affine_functor_EM_right_adjoint}
    Let $f^*\colon \cD \to \cC$ in $\alg(\Prl)$. If $f^*$ is affine, then a commutative square in $\alg(\cC)$ is a relative tensor square if and only if its image under $f_* \colon \cC \to \cD$ is a relative tensor square in $\alg(\cD)$. 
\end{prop}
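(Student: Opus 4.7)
The plan is to reduce the biconditional to the conservativity of $f_*$, using \Cref{affine_relative_tensor} to commute $f_*$ past the relative tensor product. Fix a commutative square of algebras with initial vertex $R_0$, terminal vertex $S_1$, and intermediate vertices $R_1$ and $S_0$. Regarding $S_0$ as a right $R_0$-module and $R_1$ as a left $R_0$-module via the structure maps, \Cref{affine_relative_tensor} supplies a canonical isomorphism
\[
    \nu \colon (f_*S_0) \relotimes{f_*R_0} (f_*R_1) \iso f_*\bigl(S_0 \relotimes{R_0} R_1\bigr)
\]
in $\cD$.

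The first step is to verify that, under $\nu$, the comparison map $(f_*S_0) \relotimes{f_*R_0} (f_*R_1) \to f_*S_1$ associated to the $f_*$-image square coincides with $f_*$ applied to the comparison map $S_0 \relotimes{R_0} R_1 \to S_1$ of the original square. Both are morphisms of left $f_*S_0$-modules into $f_*S_1$, and by the universal property of the relative tensor product (as the corepresentative of left $f_*R_0$-linear maps from $f_*R_1$ into restrictions of left $f_*S_0$-modules), such a morphism is determined by its precomposition with the canonical map $f_*R_1 \to (f_*S_0) \relotimes{f_*R_0} (f_*R_1)$. Both precompositions in question are $f_*$ of the right edge $R_1 \to S_1$ of the original square, so the two maps agree.

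Once this identification is in place, the biconditional is immediate. By \Cref{Affine_Properties}, affineness of $f^*$ implies that $f_*$ is conservative, so $S_0 \relotimes{R_0} R_1 \to S_1$ is an isomorphism in $\cC$ if and only if its $f_*$-image is an isomorphism in $\cD$.

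The main obstacle I expect is the naturality bookkeeping in the first step: although morally transparent, a rigorous verification requires tracing the construction of $\nu$ (built from the $f^* \dashv f_*$ unit and counit) and of the relative tensor square comparison map (built from extension of scalars along $f_*R_0 \to f_*S_0$) and confirming their compatibility as morphisms of left $f_*S_0$-modules. This is a routine but somewhat delicate diagram chase, cleanest to carry out via universal properties rather than bar constructions.
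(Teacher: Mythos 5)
Your proposal is correct and follows the paper's own argument essentially exactly: apply \Cref{affine_relative_tensor} to commute $f_*$ past the relative tensor product, check that under that isomorphism the comparison maps match, and conclude by conservativity of $f_*$. The paper merely asserts the identification of the two comparison maps where you spell out the universal-property justification; this is the right way to make the step rigorous and does not change the route.
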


\begin{proof}
    Assume that $f^*$ is affine and let 
    \[
        \xymatrix{
        R_0 \ar[r]\ar[d] & \ar[d] R_1 \\ 
        S_0 \ar[r] & S_1
        }
    \]
    be a commutative square in $\alg(\cC)$. By \Cref{affine_relative_tensor}, we have a canonical isomorphism
    \[
        (f_*R_1) \relotimes{f_*R_0} (f_*S_0) \simeq f_*(R_1\relotimes{R_0}S_0).
    \]
    Via this isomorphism, we can identity the map 
    $(f_*R_1) \otimes_{f_*R_0}(f_*S_0) \to f_*S_1$ 
    with the image under $f_*$ of the corresponding map $R_1 \otimes_{R_0} S_0 \to S_1$. Hence, if the map $R_1 \otimes_{R_0} S_0 \to S_1$ is an isomorphism, so is the map $(f_*R_1) \otimes_{f_*R_0}(f_*S_0) \to f_*S_1$. 
    By \Cref{criterion_affineness} the functor $f_*$ is conservative, so the converse of the above implication holds as well.
\end{proof}

\subsection{Affineness for local systems}\label{ssec:affinelocsys}

We now specialize our discussion of affineness to the setting we are mostly interested in, that of local systems. 

\begin{defn}
    Let $\cC\in \alg(\Prl)$. A map of spaces $f\colon A\to B$ is said to be \tdef{$\cC$-(semi-)affine}, if the pullback functor $f^*\colon \cC^B\to \cC^A$ is (semi-)affine in the sense of \cref{def:affine} and \cref{def:semi-affine}. 
\end{defn}

\subsubsection{Basic properties}



Given a map of spaces $f\colon A \to B$, and writing $q\colon B \to \pt$ for the terminal map, the unit of the adjunction $f^* \dashv f_*$ 
induces a map
\[
\one^B= q_*q^* \one \too 
q_*f_*f^*q^* \one= \one^A
    \qin \alg(\cC).
\]
We now show that $f^* \colon \cC^B \to \cC^A$ is compatible with the functor 
$\LMod_{\one^B}(\cC) \to \LMod_{\one^A}(\cC)$ given by extending scalars along the above map.

\begin{prop}\label{pullback_relative_tensor_affine}
    Let $\cC\in \alg(\Prl)$ and
    let $f\colon A\to B$ be a map of spaces with $q\colon B \to \pt$ the terminal map. We have a commutative square of $\infty$-categories
    \[\begin{tikzcd}
    	{\LMod_{\one^B}(\cC)} && {\LMod_{\one^A}(\cC)} \\
    	{\cC^B} && {\cC^A.}
\arrow["{f^*}", from=2-1, to=2-3]
\arrow["{q^\sharp}"', from=1-1, to=2-1]
\arrow["{\one^A\otimes_{\one^B}(-)}", from=1-1, to=1-3]
\arrow["{(qf)^\sharp}", from=1-3, to=2-3]
    \end{tikzcd}\]
    In particular, when $A$ and $B$ are $\cC$-affine, the top and bottom functors are isomorphic.
\end{prop}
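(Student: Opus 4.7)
The plan is to evaluate both composites at an object $Y \in \LMod_{\one^B}(\cC)$ and check that they yield canonically isomorphic $\one_{\cC^A}$-modules. Recall that for $h^* \colon \cE \to \cC'$ in $\alg(\Prl)$ with right adjoint $h_*$, the functor $h^\sharp$ is given by $X \mapsto \one_{\cC'} \otimes_{h^*h_*\one_{\cC'}} h^*X$, so $q^\sharp(Y) = \one_{\cC^B} \otimes_{q^*\one^B} q^*Y$ and analogously for $(qf)^\sharp$. Going down-then-right, since $f^*$ is monoidal and colimit preserving (as a morphism in $\alg(\Prl)$), it commutes with relative tensor products (realized as bar constructions), and using $f^* q^* = (qf)^*$ we obtain
\[
f^* q^\sharp(Y) \simeq \one_{\cC^A} \otimes_{(qf)^*\one^B} (qf)^* Y.
\]
Going right-then-down, one first forms $\one^A \otimes_{\one^B} Y$ along the algebra map $\one^B \to \one^A$ (which is $q_*$ applied to the unit $\one_{\cC^B} \to f_*\one_{\cC^A}$) and then applies $(qf)^\sharp$. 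Since $(qf)^*$ also preserves relative tensor products, this simplifies to
\[
(qf)^\sharp(\one^A \otimes_{\one^B} Y) \simeq \one_{\cC^A} \otimes_{(qf)^*\one^A} \bigl((qf)^*\one^A \otimes_{(qf)^*\one^B} (qf)^*Y\bigr) \simeq \one_{\cC^A} \otimes_{(qf)^*\one^B} (qf)^*Y,
\]
which matches the first expression.

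The main, though entirely formal, point to verify is that the two algebra structure maps $(qf)^*\one^B \to \one_{\cC^A}$ used in the two paths coincide. Along the first path, this map is $f^*\epsilon_q$ evaluated at $\one_{\cC^B}$, where $\epsilon_q$ denotes the counit of $q^* \dashv q_*$; along the second, it factors as $(qf)^*\one^B \to (qf)^*\one^A \to \one_{\cC^A}$, where the second arrow is the counit of $(qf)^* \dashv (qf)_* \simeq f^*q^* \dashv q_*f_*$, which decomposes as $\epsilon_f \circ f^*\epsilon_q f_*$. The equality of the two composites follows by applying naturality of $\epsilon_q$ to the unit $\eta_f \colon \one_{\cC^B} \to f_*\one_{\cC^A}$ and then invoking the triangle identity $\epsilon_f \circ f^*\eta_f = \id_{f^*}$; this is the one step that requires a bit of care, and it reduces the second composite to the first.

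For the \emph{in particular} assertion, when $A$ and $B$ are $\cC$-affine, \Cref{criterion_affineness} tells us that $q_\sharp$ and $(qf)_\sharp$ are equivalences, hence so are their left adjoints $q^\sharp$ and $(qf)^\sharp$. The commutativity of the square then exhibits the bottom functor $f^*$ as the conjugate of the top functor $\one^A \otimes_{\one^B}(-)$ by these vertical equivalences, yielding the asserted natural isomorphism between them.
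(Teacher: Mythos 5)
Your proof is correct, but it takes a genuinely different route from the paper's. The paper observes that it suffices to check commutativity of the square of \emph{right} adjoints: the right adjoint of $\one^A \otimes_{\one^B} (-)$ is restriction of scalars along $\one^B \to \one^A$, and the right adjoints of $q^\sharp$ and $(qf)^\sharp$ are $q_\sharp$ and $(qf)_\sharp$, which simply take $X$ to $q_*X$ and $q_*f_*X$ equipped with their canonical module structures. In that dual square, both composites send $X \in \cC^A$ to $q_*f_*X$, and the only thing to check is that the $\one^B$-module structure obtained directly coincides with the one restricted along $\one^B \to \one^A$ -- which is visible from the definition of that map as $q_*$ of the unit of $f^* \dashv f_*$. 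By contrast, you work directly with the left adjoints, expanding $q^\sharp$ and $(qf)^\sharp$ via the relative tensor product formula, pushing $f^*$ and $(qf)^*$ through bar constructions, and then aligning the two algebra structure maps $(qf)^*\one^B \to \one_{\cC^A}$ via naturality of the counit and the triangle identity. This is more work, and in particular the triangle-identity step you flag is exactly the computation that the paper sidesteps by dualizing; on the other hand, your argument makes the mechanism more visible at the level of the functors one actually cares about. Both are valid; the paper's is slicker, yours more explicit.
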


\begin{proof} 
    It suffices to show that the square comprising from the right adjoints of all the functors commutes. As above, let $q\colon B \to \pt$ be the terminal map. 
    The composition
    \[
\cC^A \oto{\:\;f_*\:\;} \cC^B \oto{\:\;q_\sharp\:\;} \LMod_{\one^B}{\cC}
    \]
    is the functor that takes $X\in \cC$ to $q_*f_*X \in \cC$ with the induced 
$\one^B= q_* q^*\one$-module structure. This is evidently the same as restricting the $\one^A = f_*q_*q^*f^* \one$-module structure on $q_*f_*X \in \cC$ along the map
    \[
\one^B= q_*q^* \one \too 
q_*f_*f^*q^* \one= \one^A,
    \]
    which is the second composition
    \[
\cC^A \oto{\:(qf)_\sharp\:} \LMod_{\one^A}{\cC} \to \LMod_{\one^B}{\cC}.
    \]
    The last claim follows from the fact that if $A$ and $B$ are $\cC$-affine, then the vertical functors are equivalences. 
\end{proof}

For every point $b\in B$, let $\one_b$ be the $\one^B$ algebra structure on $\one$ given by evaluation at $b$.
\begin{cor} \label{Ev_Sharp}
    Let $\cC\in \alg(\Prl)$ and let $B$ be a space with $q\colon B \to \pt$ the terminal map. For every $b\in B$,  we have a commutative diagram, 
    \[\begin{tikzcd}
{\Mod_{\one^B}(\cC)} && {\: \cC^B\quad} \\
    	& \cC.
\arrow["{\one_b \otimes_{\one^B}(-)}"'{pos=0.3}, from=1-1, to=2-2]
\arrow["q^\sharp", from=1-1, to=1-3]
\arrow["{\ev_b}", from=1-3, to=2-2]
    \end{tikzcd}\]
\end{cor}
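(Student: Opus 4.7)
The plan is to apply Proposition~\ref{pullback_relative_tensor_affine} to the inclusion $b \colon \pt \to B$ and then simplify the resulting commutative square using the fact that the composite $\pt \xrightarrow{b} B \xrightarrow{q} \pt$ is (canonically isomorphic to) the identity on $\pt$.

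Concretely, I would set $A = \pt$ and $f = b$ in the proposition. The pullback functor $b^* \colon \cC^B \to \cC^\pt \simeq \cC$ is by definition the evaluation functor $\ev_b$. Moreover, since $qb \simeq \id_\pt$, we obtain $\one^A = (qb)_\ast q^\ast b^\ast \one \simeq \one$, and the induced unit map $\one^B \to \one^A \simeq \one$ in $\alg(\cC)$ is precisely the map classifying the $\one^B$-algebra structure $\one_b$ on $\one$. Under the canonical identification $\LMod_{\one^\pt}(\cC) \simeq \cC$, the right-hand vertical functor $(qb)^\sharp$ is the identity. Consequently, the square provided by Proposition~\ref{pullback_relative_tensor_affine} degenerates to
\[
\begin{tikzcd}
\Mod_{\one^B}(\cC) \ar[r, "\one_b \otimes_{\one^B}(-)"] \ar[d, "q^\sharp"'] & \cC \ar[d, "\id"] \\
\cC^B \ar[r, "\ev_b"'] & \cC,
\end{tikzcd}
\]
which is exactly the triangle asserted by the corollary.

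There is essentially no obstacle here: the corollary is a direct specialization of the previous proposition to the case of a point inclusion, and the only content is the identification of the various functors under the canonical equivalences. If anything, the only point requiring a quick check is that the algebra map $\one^B \to \one$ arising as the unit for $b \colon \pt \to B$ coincides with the one defining $\one_b$, which follows immediately from the definitions since both are given by pulling back along $b$.
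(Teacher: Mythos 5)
Your proof is correct and is exactly the paper's own argument (which simply invokes Proposition~\ref{pullback_relative_tensor_affine} with $A=\pt$ and $f=b$); you have just spelled out the specialization explicitly, including the identifications $\cC^\pt\simeq\cC$, $b^*=\ev_b$, $(qb)^\sharp=\Id$, and the fact that the unit-induced algebra map $\one^B\to\one$ is $\ev_b$, which is the map classifying $\one_b$. The only very minor slip is writing $(qb)_*q^*b^*\one$ rather than $q_*b_*b^*q^*\one=(qb)_*(qb)^*\one$, but this is a harmless transcription error and does not affect the argument.
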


\begin{proof}
This is a special case of \Cref{pullback_relative_tensor_affine}, with $A=\pt$ and $f$ the inclusion $\pt \oto{\:b\:} B$.
\end{proof}

We next observe that the closure properties of affine functors imply corresponding closure properties for $\cC$-affine maps of spaces. 

\begin{prop} \label{affineness_extensions}
    Let $\cC\in \alg(\Prl)$ and let $f\colon A\to B$, $f'\colon A' \to B'$ and $g\colon B\to C$ be maps of spaces.
    \begin{enumerate}
        \item If $f$ is an isomorphism, then it is $\cC$-affine.
    
        \item If $g$ is $\cC$-affine, then $f$ is $\cC$-affine if and only if $g\circ f$ is $\cC$-affine.
        
        \item If all the fibers of $f$ are $\cC$-affine, then $f$ is $\cC$-affine.
        
        \item If $f$ and $f'$ are $\cC$-affine, then 
        $f\sqcup f' \colon A\sqcup A' \to B\sqcup B'$
        is $\cC$-affine.
    \end{enumerate}
\end{prop}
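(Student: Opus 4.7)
Claims (1), (2) and (4) should be essentially formal. For (1), if $f$ is an equivalence then $f^*$ is an equivalence in $\alg(\Prl)$, so its right adjoint is an equivalence as well; in particular it is colimit-preserving, conservative, and $\cC^B$-linear, and \Cref{criterion_affineness} applies. Claim (2) is just \Cref{affine_comp} applied to $g^* \colon \cC^C \to \cC^B$ and $f^* \colon \cC^B \to \cC^A$, using that $(g\circ f)^* = f^* \circ g^*$. For (4), the identifications $\cC^{X \sqcup Y} \simeq \cC^X \times \cC^Y$ exhibit $(f \sqcup f')^*$ as $f^* \times (f')^*$ and hence $(f \sqcup f')_*$ as $f_* \times (f')_*$; since conservativity, colimit-preservation, and $\cD$-linearity of the right adjoint are all closed under products, affineness follows from \Cref{criterion_affineness}.

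The substance of the proposition is (3). The plan is to realise $f^*$ as a limit over $B$ of the pullbacks $f_b^* \colon \cC \to \cC^{A_b}$ associated to the terminal maps of the fibers $A_b$, and then to invoke \Cref{affine_limits_spaces}. By straightening, the map $f$ corresponds to a diagram $B \to \Spc$, $b \mapsto A_b$, and $A \simeq \colim_{b \in B} A_b$. Applying $\cC^{(-)}$ should yield natural identifications
\[
    \cC^A \simeq \lim_{b \in B} \cC^{A_b}, \qquad
    \cC^B \simeq \lim_{b \in B} \cC
\]
in $\alg(\Prl)$, under which $f^*$ is the map on limits induced by the natural transformation of $B$-shaped diagrams whose value at $b$ is $f_b^*$.

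By hypothesis each $f_b^*$ is affine, and since $B$ is a space every morphism in it is invertible so right-adjointability of the pointwise transformation is automatic. Consequently \Cref{affine_limits_spaces} applies and $f^*$ is affine. The main step requiring care is the straightening argument identifying $f^*$ with the described map on limits; once this is in place, the invocation of \Cref{affine_limits_spaces} is immediate. I do not anticipate any deeper obstacle.
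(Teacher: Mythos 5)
Your proposal is correct and takes essentially the same route as the paper: (1) is formal, (2) is \Cref{affine_comp}, (3) realises $f^*$ as a $B$-indexed limit of the terminal maps $f_b^*\colon \cC\to\cC^{f^{-1}(b)}$ via straightening and invokes \Cref{affine_limits_spaces}, and (4) uses $(f\sqcup f')^*\simeq f^*\times(f')^*$. The only cosmetic difference is that in (4) you verify the three conditions of \Cref{criterion_affineness} directly for the product right adjoint rather than viewing the binary product as a limit over a two-point space and citing \Cref{affine_limits_spaces}, which is what the paper does; both are fine.
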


\begin{proof}
    (1) is clear and (2) is an immediate consequence of \Cref{affine_comp}. 
    For (3), we observe that the functor $f^*\colon \cC^B\to \cC^A$ can be written as a limit over $b\in B$ of the functors 
    $f_b^*\colon \cC \to \cC^{f^{-1}(b)},$
    where $f_b \colon f^{-1}(b) \to \pt$ is the terminal map. By assumption, each $f_b^*$ is affine, and by \Cref{affine_limits_spaces}, affine functors are closed under limits over spaces, so the result follows. The proof of (4) is similar to that of (3). It follows from the isomorphism 
    $(f\sqcup f')^* \simeq f^* \times f'^*$ and \Cref{affine_limits_spaces}.
\end{proof}

\begin{war}
Claim (3) can not be promoted to an `if and only if' statement. Namely, a $\cC$-affine map of spaces need not have  $\cC$-affine fibers. Indeed, if $A$ is a $\cC$-affine space, then the projection $\pi_1 \colon A\times A \to A$ is $\cC$-affine by (3). Since the composition $A \oto{\Delta} A\times A \oto{\pi_1} A$ is the identity, we deduce by (2) and (1) that the diagonal $\Delta \colon A \to A\times A$ is $\cC$-affine as well. As the fibers of $\Delta$ are the loop-spaces $\Omega A$, if the converse of (3) were to be true, it would have implied that if $A$ is $\cC$-affine, then $\Omega A$ is $\cC$-affine as well. However, for all $n\ge 0$, we have that $B^{n+2} C_p$ is $\Sp_{K(n)}$-affine while $B^{n+1} C_p= \Omega B^{n+2} C_p$ is not (\cref{affiness_Height_below}).
\end{war}

\subsubsection{Eilenberg--Moore}

Our next goal is to show that affineness is strongly related to Eilenberg--Moore
type formulas for the cohomology of pullbacks of spaces. 
Given a pullback square of spaces 
\[
    \xymatrix{B'\times_B A\ar[d]\ar[r] & A\ar[d]\\
    B'\ar[r] & B
    }
\]
and a ring $R\in \alg(\cC)$, we can form the square in $\alg(\cC)$,
\[
    \xymatrix{R^{B}\ar[d]\ar[r] & R^{A}\ar[d]\\
    R^{B'}\ar[r] & R^{B'\times_B A}.
    }
\]
In some cases, this square turns out to be a relative tensor square. For $\cC=\Sp$, this implies the existence of a spectral sequence computing the $R$-cohomology of 
$ B'\times_B A$ from the $R$-cohomologies of $B,B'$ and $A$, known as the \textit{Eilenberg--Moore spectral sequence}. This motivates the following definition:
\begin{defn}\label{Eilenberg_Moore} 
    Let $\cC\in\alg(\Prl)$ and let $R\in\alg(\cC)$. A map of spaces $f\colon A\to B$
    is said to be \tdef{Eilenberg--Moore} with respect to $R \in \alg(\cC)$, if for every map $g\colon B'\to B$, the canonical morphism
    \[
        R^{B'} \otimes_{R^B} R^A \too R^{B'\times_B A}
    \]
    is an isomorphism.
\end{defn}

In the special case where $B= \pt$, the Eilenberg--Moore property degenerates to the \textit{K\"{u}nneth isomorphism}, for which the following is a useful criterion.

\begin{prop}\label{Kunneth}
    Let $\cC \in \alg(\Prl)$, let $R\in \alg(\cC)$ and let $A$ be a space. If $R[A] \in \LMod(R)$ is left dualizable, then $A$ has the Eilenberg--Moore property with respect to $R$. That is, for every space $B$, we have a K\"{u}nneth isomorphism
    \[
        R^A\otimes_R R^B \iso R^{A\times B}.
    \] 
    In particular, if $\one[A] \in \cC$ is left dualizable, the above holds for every $R\in\alg(\cC)$.
\end{prop}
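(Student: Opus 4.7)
The strategy is to express both sides of the proposed Künneth isomorphism as internal morphism objects out of $R[A]$, and then to use the dualizability assumption to identify each with a tensor product involving the dual $R[A]^\vee$. The key observation is that for any space $X$, the object $R[X] \in \LMod_R(\cC)$ is canonically the colimit of the constant $X$-indexed diagram at $R$, so for any $M \in \LMod_R(\cC)$ there is a natural equivalence
\[
\hom_R(R[X], M) \;\simeq\; \lim_X M \;\simeq\; M^X,
\]
where $\hom_R$ denotes the internal morphism object of $\LMod_R(\cC)$. Specializing, $R^A \simeq \hom_R(R[A], R)$ and $R^{A \times B} \simeq (R^B)^A \simeq \hom_R(R[A], R^B)$.

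The assumption that $R[A]$ is left dualizable in $\LMod_R(\cC)$, with left dual $R[A]^\vee$, then supplies a natural equivalence
\[
\hom_R(R[A], M) \;\simeq\; R[A]^\vee \otimes_R M \qquad (M \in \LMod_R(\cC)).
\]
Applying this with $M = R$ gives $R^A \simeq R[A]^\vee$, while applying it with $M = R^B$ and combining with the previous identification yields
\[
R^{A \times B} \;\simeq\; R[A]^\vee \otimes_R R^B \;\simeq\; R^A \otimes_R R^B,
\]
which is the desired Künneth isomorphism. The one point which takes some care is verifying that the composite equivalence constructed above actually agrees with the canonical comparison map; this is a naturality argument, tracing the unit/counit of the dualizability data through the colimit-limit identification.

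For the \emph{In particular} clause, suppose $\one[A] \in \cC$ is left dualizable with left dual $\one[A]^\vee$. For any $R \in \alg(\cC)$, the object $R[A] \simeq R \otimes \one[A]$ is the free left $R$-module on $\one[A]$, and one checks directly that $\one[A]^\vee \otimes R$ (with its evident right $R$-action) furnishes a left dual of $R[A]$ in $\LMod_R(\cC)$: the evaluation and coevaluation of $\one[A]$ in $\cC$ induce, via the multiplication of $R$, the duality data for $R[A]$. The first part then applies to every $R$, so $A$ is Künneth with respect to every algebra. The main obstacle throughout is bookkeeping rather than substance: since $\cC$ is only monoidal (not symmetric), one must keep careful track of which side $R[A]^\vee$ lives on, but once the handedness is pinned down the formal dualizability yoga goes through routinely.
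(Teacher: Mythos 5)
Your proof is correct and is essentially the argument in the paper, just with the exponential law applied in the opposite order: you write $R^{A\times B}\simeq(R^B)^A\simeq\hom_R(R[A],R^B)$ and then use dualizability to convert $\hom_R(R[A],-)$ into $R^A\otimes_R(-)$, whereas the paper writes $R^{A\times B}\simeq(R^A)^B$ and uses dualizability directly to see that the assembly map for the right adjoint $G=R^A\otimes_R(-)$ applied to the constant $B$-diagram is an isomorphism. These are the same formal move; the paper is somewhat more careful in identifying the resulting equivalence with the \emph{canonical} comparison map and, for the last clause, invokes \cite[Example 4.6.2.5]{HA} rather than rechecking the dualizability data by hand (where, as you note, the handedness of $\one[A]^\vee\otimes R$ in the non-symmetric setting needs to be pinned down precisely).
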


\begin{proof}
    Under the assumption that $R[A] \in \LMod(R)$ is left dualizable, the module $R^A \in \RMod_R(\cC)$ is its left dual. Unwinding the definitions, the canonical comparison map
    \[
R^A\otimes_R R^B \too (R^A \otimes_R R)^B= R^{A\times B},
    \]
    identifies with the assembly map for the functor
    \[
G:=R^A \otimes_R (-) \:\colon\: \LMod_R(\cC) \to \cC
    \] 
and the constant $B$-shaped limit. Now, by \cite[Proposition 4.6.2.1]{HA}, the functor $G$ is a right adjoint and hence preserves limits, so the claim follows. The last part follows from the fact that if $\one[A] \in \cC$ is left dualizable, then $R[A]= R\otimes \one[A] \in \LMod_R(\cC)$ is left dualizable,  
    by \cite[Example 4.6.2.5]{HA}.
\end{proof}

\begin{example}
Let $\cC= \Sp$ and let $R=\FF$ be an ordinary field considered as a ring spectrum. \cref{Kunneth} then recovers the classical fact, that if a space $A$ has finite-dimensional homology with coefficients in $\FF$, then for every space $B$ we have a K\"{u}nneth isomorphism for the cohomology of $A\times B$ with coefficients in $\FF$.
\end{example}

Using affineness we can reduce the general case of the Eilenberg--Moore property to the existence of K\"{u}nneth isomorphisms for the fibers. 

\begin{prop}\label{Affineness_Kunneth_EM}
    Let $\cC\in\alg(\Prl)$, let $R\in\alg(\cC)$, and let $f\colon A \to B$ be a map of spaces. If $B$ is $\cC$-affine, and all the fibers of $f$ are Eilenberg--Moore with respect to $R$, then $f$ is Eilenberg--Moore with respect to $R$.
\end{prop}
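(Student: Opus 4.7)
The plan is to lift the entire Eilenberg--Moore comparison from $\cC$ up to the category of local systems $\cC^B$ via the symmetric monoidal equivalence provided by the affineness of $B$, and then verify the lifted map is an isomorphism pointwise using the fiberwise hypothesis.

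I would begin by fixing $g \colon B' \to B$ and forming the pullback, writing $g' \colon B' \times_B A \to A$ and $f' \colon B' \times_B A \to B'$. Let $q \colon B \to \pt$ be the terminal map and set $R_B := q^* R \in \calg(\cC^B)$, so that $q_* R_B = R^B$. Since $B$ is $\cC$-affine, \Cref{Affine_SM} applied to $q^*\colon \cC \to \cC^B$ yields a symmetric monoidal equivalence
\[
    q_\sharp \colon \Mod_{R_B}(\cC^B) \iso \Mod_{R^B}(\cC).
\]
Under $q_\sharp$, the three $R_B$-algebras $\widetilde{R^{B'}} := g_* g^* R_B$, $\widetilde{R^A} := f_* f^* R_B$, and $\widetilde{R^{B' \times_B A}} := (g f')_* (g f')^* R_B$ in $\cC^B$ correspond respectively to $R^{B'}$, $R^A$, and $R^{B' \times_B A}$ in $\cC$.

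Next, I would construct a canonical comparison map $\widetilde{R^{B'}} \otimes_{R_B} \widetilde{R^A} \to \widetilde{R^{B' \times_B A}}$ in $\Mod_{R_B}(\cC^B)$ --- for instance, from the two algebra maps $\widetilde{R^{B'}} \to \widetilde{R^{B' \times_B A}}$ and $\widetilde{R^A} \to \widetilde{R^{B' \times_B A}}$ induced by the units of the $f'^* \dashv f'_*$ and $g'^* \dashv g'_*$ adjunctions together with the identification $g \circ f' = f \circ g'$. Because $q_\sharp$ is symmetric monoidal, applying it to this map recovers the canonical Eilenberg--Moore comparison map downstairs. It therefore suffices to prove the upstairs map is an isomorphism in $\cC^B$, and this I would check pointwise: for each $b \in B$, the evaluation $\ev_b \colon \cC^B \to \cC$ is symmetric monoidal and colimit preserving (hence preserves relative tensor products), and satisfies $\ev_b(h_* h^* R_B) \simeq R^{h^{-1}(b)}$ by base change. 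The fiber of the comparison at $b$ thus becomes the Künneth map
\[
    R^{g^{-1}(b)} \otimes_R R^{f^{-1}(b)} \too R^{g^{-1}(b) \times f^{-1}(b)},
\]
which is an isomorphism by the Eilenberg--Moore hypothesis for $f^{-1}(b) \to \pt$ applied to the space $g^{-1}(b)$. A pointwise isomorphism of local systems is an isomorphism, and applying $q_\sharp$ concludes.

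The main obstacle I anticipate is the bookkeeping in the middle step: one must verify both that $q_\sharp$ carries the three explicit $R_B$-modules to the intended three $R^B$-modules, and that the canonical comparison map constructed upstairs corresponds under $q_\sharp$ to the canonical Eilenberg--Moore map downstairs. Both are formal consequences of the symmetric monoidality and naturality of the affineness equivalence together with the fact that $q_* h_* = (q \circ h)_*$ and $h^* q^* = (q \circ h)^*$, but tracking the identifications cleanly is the only non-routine part of the argument.
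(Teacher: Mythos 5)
Your proof is correct and takes essentially the same approach as the paper: both lift the Eilenberg--Moore square from $\cC$ to $\cC^B$ via the affineness of $B$, check the resulting condition pointwise using Beck--Chevalley, and reduce to the fiberwise K\"{u}nneth hypothesis. The only cosmetic difference is that you invoke \Cref{Affine_SM} and track the comparison maps by hand, whereas the paper packages that bookkeeping into \Cref{affine_functor_EM_right_adjoint} (that $q_*$ both preserves and reflects relative tensor squares), which slightly streamlines the reduction but rests on the same underlying \Cref{affine_relative_tensor}.
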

\begin{proof}
    Let $q\colon B\to \pt$ denote the projection, so that by our assumption $q$ is $\cC$-affine. For a map $g\colon B' \to B$, the square 
    \[
        \xymatrix{
        R^B \ar[r]  \ar[d]     & R^A \ar[d] \\
        R^{B'} \ar[r]   & R^{A\times_B B'}
        }
    \]
    in $\alg(\cC)$ is obtained from the square
    \[
        \xymatrix{
        q^*R \ar[r]  \ar[d]     & f_*f^*(q^*R) \ar[d] \\
        g_*g^*(q^*R) \ar[r]   & (f\times_B g)_*(f\times_B g)^*(q^*R)
        }
    \]
    in $\alg(\cC^B)$ by applying the functor $q_*$. Since $q^*$ is affine, by \Cref{affine_functor_EM_right_adjoint} it would suffice to show that the latter square is a relative tensor square. We can verify this after applying the functors $b^*$ for all $b\colon \pt \to  B$. 
    Using the Beck--Chevalley isomorphism for local systems, this reduces the claim to the case $B= \pt$, and $A =f^{-1}(b)$, which holds by assumption.
\end{proof}

\begin{rem}
We can informally summerize \Cref{Affineness_Kunneth_EM} by the slogan:
    \[
        \text{``Affineness (of the base) }+\text{ K\"{u}nneth (for the fibers) } \implies \text{Eilenberg--Moore''}.
    \]
\end{rem}

\subsubsection{Galois extensions}

We shall now discuss the implications of affineness to Galois theory in the sense of Rognes. Our main result is that if the classifying space $BG$ of a group $G$ is affine, then \emph{every} commutative algebra with a $G$-action is faithful Galois over its $G$-fixed points. We refer the reader to \cite{RognesGal} and \cite{AkhilGalois} for a discussion of Galois extensions in the context of stable homotopy theory. 

To make the connection with affineness more transparent, we shall rephrase Rognes's notion of a Galois extension in terms of the space $BG$ rather than the group $G$.  
For a space $A$ and a presentably symmetric monoidal $\infty$-category  $\cC$, we denote by 
\[
    \Delta\colon A\too A\times A \qquad,\qquad q\colon A\too \pt
\]
the diagonal and terminal maps of $A$, respectively. For an $A$-local system of commutative algebras $R\in \calg(\cC)^A$, the unit map $q^*\one \to R$ has a mate $\one \to q_*R$ with respect to the $q^* \dashv q_*$ adjunction. Considering the external product $R\boxtimes R \in \calg(\cC)^{A\times A}$, the multiplication map 
\[
    R\otimes R \simeq \Delta^*(R\boxtimes R) \too R
    \qin \calg(\cC)^A
\] 
has a mate  
\[ 
    R\boxtimes R \too \Delta_*R \qin \calg(\cC)^{A\times A}
\]
with respect to the adjunction $\Delta^* \dashv \Delta_*$.
\begin{defn} \label{def:Galois} 
Let $\cC\in \calg(\Prl)$ and let $A$ be a space. A local system $R\in \calg(\cC)^A$ is called an \tdef{$A$-Galois extension} (of the unit object $\one$) if it satisfies the following two properties: 
    \begin{enumerate}
        \item[(G1)] The mate $\one \to q_*R$ of the unit map is an isomorphism in $\calg(\cC)$.
        \item[(G2)] The mate 
        \(
            R\boxtimes R \to \Delta_*R
        \)
        of the multiplication map is an isomorphism in $\calg(\cC)^{A\times A}$. 
    \end{enumerate} 
    We say that a Galois extension is \textit{faithful} if the functor $R \otimes (-)\colon \cC \to \cC^A$ is conservative.  
\end{defn}

When $A$ is \textit{connected}, by choosing a basepoint for $A$, we get a grouplike $\EE_1$-space  $G=\Omega A$. Then, by definition, an $A$-local system of commutative algebras in $\cC$ is a commutative algebra endowed with a $G$-action. Via this identification, $q_*R$, which is the limit of $R$ over $A$, identifies with the fixed points $R^{hG}$ of the $G$-action on $R$. Similarly, $\Delta_*R$ identifies with the algebra of functions $R^G$, with a suitable $G\times G$-action. Under these identifications, the maps appearing in conditions (G1) and (G2) are easily seen to correspond to the maps 
\(
    \one \to R^{hG} 
\)
and 
\(
    R\otimes R \to R^G
\)
appearing in Rognes's definition of a Galois extension, see e.g., \cite[Definition 6.12]{AkhilGalois}. Thus, \Cref{def:Galois} is a base-point free reformulation of Rognes's notion of a Galois extension. Furthermore, allowing non-connected spaces $A$ provides a natural extension of this Galois theory ``downwards''.

\begin{example}
    For $\cC \in \calg(\Prl)$ and a finite discrete space $A$, it is easy to check that an $A$-Galois extension is a collection of idempotent rings $\{R_a \mid a \in A\}$ in $\calg(\cC)$, such that $\prod_{a \in A}R_a \simeq \one$ and $R_a \otimes R_b \simeq \pt$ for all $a\neq b$. In particular, if we denote by $\pi_0(\cC)$ the pro-finite set of connected components of $\one$, then $A$-Galois extensions of $\cC$ are classified by continuous maps $\pi_0(\cC) \to A$. In the stable case, this fits naturally into Akhil's Galois theory developed in \cite{AkhilGalois}.
\end{example}

The somewhat surprising observation is that if $A$ is $\cC$-affine, then it suffices to check only the condition (G1) to ensure that we have a faithful $A$-Galois extension.

\begin{prop} \label{automatic_Galois}
    Let $\cC\in \calg(\Prl)$ and let $A$ be a $\cC$-affine space. A local system $R\in \calg(\cC)^A$ is a faithful Galois extension if and only if the map $\one \to q_*R$ is an isomorphism. 
\end{prop}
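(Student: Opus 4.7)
The ``only if'' direction is condition (G1) in the Galois definition.

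For the ``if'' direction, assume $u \colon \one \iso q_*R$ in $\calg(\cC)$; the plan is to separately establish faithfulness and condition (G2), both using the $\cC$-affineness of $A$ in an essential way.

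For faithfulness, I would apply \Cref{Affine_SM} to the affine functor $q^*$ and the algebra $R \in \calg(\cC^A)$ to obtain a symmetric monoidal equivalence $q_\sharp \colon \Mod_R(\cC^A) \iso \Mod_{q_*R}(\cC)$, and then use the hypothesis to identify the target with $\cC$. The functor $R \otimes q^*(-) \colon \cC \to \cC^A$ factors as a free-module functor $\cC \to \Mod_R(\cC^A)$ followed by the conservative forgetful functor to $\cC^A$. Under $q_\sharp$, the free-module functor corresponds to $X \mapsto q_*(R \otimes q^*X)$, which by the projection formula (valid since $q^*$ is affine) and the hypothesis equals $q_*R \otimes X \simeq X$. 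Thus the free-module functor is an equivalence, making the composition conservative.

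For condition (G2), since all fibers of $\pi_2 \colon A \times A \to A$ are copies of $A$, \Cref{affineness_extensions}(3) implies that $\pi_2$ is $\cC$-affine, so $(\pi_2)_*$ is conservative. It therefore suffices to show that $(\pi_2)_*(R \boxtimes R \to \Delta_*R)$ is an isomorphism in $\cC^A$. Since $\pi_2 \circ \Delta = \id_A$, the target simplifies to $R$. Applying the projection formula for the affine $\pi_2$ and Beck--Chevalley for the pullback square formed from two copies of $q$, the source simplifies as
\[
(\pi_2)_*(\pi_1^*R \otimes \pi_2^*R) \simeq (\pi_2)_*\pi_1^*R \otimes R \simeq q^*q_*R \otimes R \simeq \one_{\cC^A} \otimes R = R,
\]
where the last identification uses $u$. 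Tracing the mate definition of $R \boxtimes R \to \Delta_*R$ through these identifications, the induced self-map of $R$ becomes the composite of the unit of $R$ (tensored with $\id_R$) with multiplication, which is the identity by unitality.

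The main obstacle is this last step: verifying that, under the various natural identifications above, the counit $q^*q_*R \to R$ corresponds via $u$ to the unit of $R$ as a $\cC^A$-algebra, so that the induced map reduces to a standard algebra axiom. The other ingredients (affineness properties, projection formula, Beck--Chevalley) are immediate consequences of what has already been established.
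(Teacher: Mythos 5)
Your proof is correct and takes essentially the same route as the paper: for (G2) you reduce along a projection $A\times A\to A$ (you use $\pi_2$, the paper uses $\pi_1$), simplify the source via the projection formula and Beck–Chevalley to $q^*q_*R\otimes R$, and observe that the resulting self-map of $R$ is the composite of the algebra unit with multiplication — exactly the paper's argument, including the same unverified diagram chase you flag at the end, which the paper likewise asserts via ``unwinding the definition of $\varphi$'' without spelling out. The one small variation is in faithfulness: the paper directly uses that $q_*$ is $\cC$-linear to conclude $q_*(R\otimes q^*(-))\simeq q_*R\otimes(-)\simeq\mathrm{id}$, giving a left inverse, whereas you route through the symmetric monoidal equivalence $q_\sharp\colon\Mod_R(\cC^A)\simeq\Mod_{q_*R}(\cC)\simeq\cC$ of \Cref{Affine_SM}; both hinge on the same projection-formula identity, with yours costing an extra citation but buying the stronger statement that $X\mapsto R\otimes q^*X$ is actually an equivalence onto $R$-modules.
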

\begin{proof}
    If $R$ is Galois then by condition (G1), the unit map $\one \to q_*R$ is an isomorphism. To prove the converse, we have to show that
    \begin{enumerate}
        \item The functor $R\otimes (-)\colon \cC \to \cC^A$ is conservative.
        \item The map $\varphi \colon R\boxtimes R \to \Delta_*R$, which is the mate of the multiplication, is an isomorphism.
    \end{enumerate}
    For $(1)$, by \Cref{criterion_affineness}, the functor $q_*\colon \cC^A \to \cC$ is $\cC$-linear. Consequently, the composition 
    \[
F\colon \cC \oto{R\otimes (-)} \cC^A \oto{\:\:q_*\:} \cC
    \]
    is a $\cC$-linear functor from $\cC$ to $\cC$, which is therefore given by tensoring with $F(\one)\simeq q_*R$. By our assumption this is the identity functor, so that $R\otimes (-)$ is left-invertible and in particular conservative. 
    
    For $(2)$, let $\pi_1,\pi_2 \colon A\times A \to A$ be the projections on the two factors. By \Cref{affineness_extensions}, the map $\pi_1$ is $\cC$-affine, and hence, by \Cref{criterion_affineness}, the functor $(\pi_1)_*$ is $\cC$-linear and conservative. Using the conservativity, it suffices to show that the map 
    \[
        (\pi_1)_*\varphi \colon (\pi_1)_*(R\boxtimes R) \too
        (\pi_1)_*\Delta_*R\simeq R
    \]
    is an isomorphism. 
    By the projection formula for $(\pi_1)_*$ and the Beck--Chevalley isomorphism, we have 
    \[
        (\pi_1)_*(R\boxtimes R) \simeq (\pi_1)_*(\pi_2^*R\otimes \pi_1^*R) \simeq  ((\pi_1)_*\pi_2^*R) \otimes R \simeq (q^*q_*R)\otimes R. 
    \]
    Unwinding the definition of $\varphi$, the map $(\pi_1)_*\varphi$ corresponds via this identification to the composition
    \[
        q^*q_*R \otimes R \oto{c\otimes 1}
R\otimes R \oto{\:m\:} R,
    \]
    where $c$ is the counit of the adjunction $q^*\dashv q_*$ and $m$ is the multiplication map. By the assumption that $q_*R\simeq \one_\cC$, we have an isomorphism $q^*q_*R\simeq \one_{\cC^A}$, so that this composite is an isomorphism and the result follows.  
\end{proof}

More generally, we define an $A$-Galois extension of any $S\in\calg(\cC)$, to be an $A$-Galois extension in the $\infty$-category $\Mod_S(\cC)$. That is, an object $R\in\calg_S(\cC)^A$ satisfying the analogues of (G1) and (G2) relative to $S$. From \Cref{automatic_Galois} we deduce the following:

\begin{cor}\label{Galois_Auto_Rel}
    Let $\cC\in \calg(\Prl)$ and let $A$ be a $\cC$-affine space. Every $R\in \calg(\cC)^A$ is a faithful Galois extension of $q_*R \in \calg(\cC)$ for $q\colon A \to \pt$ the terminal map. 
\end{cor}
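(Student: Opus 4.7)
The plan is to reduce to Proposition~\ref{automatic_Galois} by passing to $\cC' := \Mod_S(\cC)$, where $S := q_*R$. The counit map $q^*S = q^*q_*R \to R$ in $\calg(\cC)^A$ endows $R$ with the structure of a local system of commutative $S$-algebras, i.e., an object of $\calg((\cC')^A)$. To show that $R$ is a faithful $A$-Galois extension of $S$, it then suffices, by Proposition~\ref{automatic_Galois} applied inside $\cC'$, to establish:
\begin{enumerate}
    \item[(a)] $A$ is $\cC'$-affine, and
    \item[(b)] the mate $\one_{\cC'} \to q'_*R$ of the unit is an isomorphism in $\calg(\cC')$, where $q'_*\colon (\cC')^A \to \cC'$ denotes the right adjoint of $q'^*$.
\end{enumerate}

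Condition (b) is immediate from the construction. Since the conservative forgetful functor $\cC' \to \cC$ commutes with both right adjoints $q'_*$ and $q_*$, it suffices to check that the underlying map in $\cC$ is an isomorphism. This underlying map is the mate of the counit $q^*q_*R \to R$ under $q^*\dashv q_*$, which by the triangle identity is the identity $S = q_*R \to q_*R = S$.

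The main content is condition (a). By Proposition~\ref{criterion_affineness}, we must verify that $q'_*$ is colimit preserving, conservative, and $\cC'$-linear. The forgetful functors $\cC' \to \cC$ and $(\cC')^A \to \cC^A$ are conservative and preserve (and reflect) small colimits, and they intertwine $q_*$ with $q'_*$; this immediately reduces the first two properties of $q'_*$ to the corresponding properties of $q_*$, both of which hold by the assumed $\cC$-affineness of $A$ (via Proposition~\ref{criterion_affineness} again). The real obstacle is the $\cC'$-linearity of $q'_*$: what is given is the projection formula $q_*(M) \otimes N \iso q_*(M \otimes q^*N)$ in $\cC$, whereas what is needed is its relative version $q'_*(M) \otimes_S N \iso q'_*(M \otimes_{q^*S} q'^*N)$ in $\cC'$. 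One obtains it by expressing the $S$-relative tensor product as the geometric realization of the two-sided bar construction, applying $q_*$ termwise using its $\cC$-linearity, and commuting $q_*$ past the realization using its colimit preservation. Once (a) is established, the corollary follows formally from Proposition~\ref{automatic_Galois}.
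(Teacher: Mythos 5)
Your proposal is correct and follows the same reduction as the paper's (very terse) proof: pass to $\cC' = \Mod_S(\cC)$ with $S = q_*R$ and apply \Cref{automatic_Galois} there, observing that condition (G1) relative to $S$ holds tautologically by the triangle identity. The paper's one-line proof leaves implicit the step you correctly flag as the main content, namely that $A$ remains $\cC'$-affine. Within the paper's own toolkit this follows more cheaply from \Cref{Affine_SM}: applied with $R = q^*S$, the symmetric monoidal equivalence $q_\sharp\colon\Mod_{q^*S}(\cC^A)\iso\Mod_{S^A}(\cC)$ identifies with the functor $q'_\sharp\colon(\cC')^A\to\Mod_{S^A}(\cC')$ whose invertibility is exactly what $\cC'$-affineness of $A$ asks for. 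Your bar-resolution verification of the $\cC'$-linearity of $q'_*$ is a sound, self-contained alternative that establishes the same thing from scratch.
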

\begin{proof}
Let $S= q_* R$. By construction, the map $S \to q_*R$ is an isomorphism and hence the claim follows from \Cref{automatic_Galois}.
\end{proof}

\begin{example}
The constant $A$-local system on $\one\in \calg(\cC)$, i.e., $q^* \one \in \calg(\cC)^A$, is Galois over $\one^A= q_*q^* \one$.
\end{example}

In fact, the above example is \textit{universal}. Let 
$\calg_S^{A-\gal}(\cC) \sseq \calg_S(\cC)^A$
denote the space of faithful $A$-Galois extensions of $S \in \calg(\cC)$.

\begin{prop}\label{Galois_Affine}
    Let $\cC\in \calg(\Prl)$ and let $A$ be a $\cC$-affine space. For every $S \in \calg(\cC)$, there is a natural isomorphism
    \[
        \calg_S^{A-\gal}(\cC) \simeq 
        \Map_{\calg(\cC)}(\one^A, S).
    \]
    In other words, the object $\one^A$ corepresents $A$-Galois extensions of commutative algebras in $\cC$. 
\end{prop}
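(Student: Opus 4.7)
The plan is to identify both sides with the fiber of a single naturally occurring functor and then unwind the definitions. First, I observe that the affineness of $A$ transfers from $\cC$ to $\Mod_S(\cC)$: the right adjoint $q_* \colon \cC^A \to \cC$ is $\cC$-linear, colimit preserving and conservative by \Cref{criterion_affineness}, and these properties persist after the base change along $\one \to S$, so $q^* \colon \Mod_S(\cC) \to \Mod_S(\cC)^A$ is also affine. Applying \Cref{Galois_Auto_Rel} inside $\Mod_S(\cC)$ then shows that $R \in \calg_S(\cC)^A$ is a faithful $A$-Galois extension of $S$ if and only if the unit $S \to q_*R$ is an isomorphism in $\calg_S(\cC)$. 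Equivalently, $\calg_S^{A-\gal}(\cC)$ is the fiber over $S$ (viewed as its own initial $S$-algebra) of the functor $q_* \colon \calg_S(\cC)^A \to \calg_S(\cC)$.

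Next, I apply \Cref{Affine_SM} to the affine symmetric monoidal functor $q^*\colon \cC \to \cC^A$ to obtain a symmetric monoidal equivalence
\[
\calg(\cC^A) \simeq \calg(\cC)_{\one^A/},
\]
sending $R$ to $q_*R$ equipped with its canonical $\one^A$-algebra structure; in particular, under this equivalence $q_*$ corresponds to the forgetful functor. By the projection formula (which holds by affineness), the constant local system $q^*S \in \calg(\cC^A)$ corresponds to the coproduct $\one^A \otimes S \in \calg(\cC)$ equipped with its canonical inclusion from $\one^A$. Passing to slice categories then yields
\[
\calg_S(\cC)^A \simeq \calg_{q^*S}(\cC^A) \simeq \calg(\cC)_{\one^A \otimes S/},
\]
and the translated functor $q_* \colon \calg(\cC)_{\one^A \otimes S/} \to \calg(\cC)_{S/}$ becomes precomposition with the canonical map $S \to \one^A \otimes S$.

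Finally, I would compute the fiber of this precomposition functor over $\id_S \in \calg(\cC)_{S/}$. An object of this fiber is an algebra $T$ together with a map $\one^A \otimes S \to T$ and a trivialization of the composite $S \to \one^A \otimes S \to T$; the trivialization forces $T \simeq S$, and the residual data is precisely a map $\one^A \to S$ in $\calg(\cC)$. Assembling the above chain of identifications produces the desired natural equivalence $\calg_S^{A-\gal}(\cC) \simeq \Map_{\calg(\cC)}(\one^A, S)$. The main place requiring care is the bookkeeping around the identification $q^*S \simeq \one^A \otimes S$ as $\one^A$-algebras and the compatibility of \Cref{Affine_SM} with the formation of slice categories, but this is entirely mechanical once the setup is in place.
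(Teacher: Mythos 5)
Your proof is correct and takes essentially the same approach as the paper: both identify $\calg_S^{A\text{-}\gal}(\cC)$ and $\Map_{\calg(\cC)}(\one^A,S)$ with the fiber over $S$ of the global sections functor, via \Cref{Affine_SM}. Where the paper computes that fiber by observing the projection $\calg(\cC)_{\one^A/}\to\calg(\cC)$ is a left fibration, you instead pass to slice categories and identify $q^*S$ with $\one^A\otimes S$ via the projection formula; the two routes are equivalent, yours just being somewhat more laborious. One place where you are actually \emph{more} careful than the paper: you explicitly verify that affineness of $A$ passes from $\cC$ to $\Mod_S(\cC)$ (via base-change of the $\cC$-linear, colimit-preserving, conservative properties of $q_*$, or equivalently via cancellation in \Cref{affine_comp}), which the paper leaves implicit when it applies \Cref{automatic_Galois} to Galois extensions of $S$ rather than of $\one$. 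A minor slip: in your second step the statement you use is \Cref{automatic_Galois} (an iff), not \Cref{Galois_Auto_Rel} (which gives only the unconditional direction); the ``only if'' direction then comes from condition (G1) of \Cref{def:Galois}.
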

\begin{proof}
    By \Cref{Affine_SM}, we have an equivalence of $\infty$-categories
    $\calg(\cC^A) \simeq \calg_{\one^A}(\cC),$
    under which the global sections functor $q_* \colon \calg(\cC^A) \to \calg(\cC)$ corresponds to the forgetful functor $\calg_{\one^A}(\cC) \to \calg(\cC)$. This is further isomorphic to the canonical projection functor
    $\calg(\cC)_{\one^A/} \to \calg(\cC).$
    The latter is a left fibration whose fiber over $S\in\calg(\cC)$ is the space $\Map_{\calg(\cC)}(\one^A, S)$. Thus, this space is also isomorphic to the fiber of $q_*$. Namely, the space of objects $R \in \calg(\cC^A)$ with an isomorphism of commutative algebras $q_* R \simeq S$. By \Cref{automatic_Galois}, it is isomorphic to $\calg_S^{A-\gal}(\cC)$. 
\end{proof}

\begin{rem}
Rognes develops the theory of $A$-Galois extensions under the additional assumption that $G= \Omega A$ is \textit{dualizable} in $\cC$. This occurs, for example, when $A$ is \textit{weakly $\cC$-ambidextrous} (see \cite[Corollary 3.3.10]{TeleAmbi}). 
\end{rem}

\subsection{Affineness and ambidexterity}\label{ssec:affineambi}

\subsubsection{Semi-affineness and ambidexterity}

For \emph{truncated} maps of spaces, (semi-)affineness turns out to be closely related to ambidexterity.

\begin{prop} \label{Ambi_Semi_Affine}
    Let $\cC\in \alg(\Prl)$. A truncated map of spaces $f\colon A\to B$ is 
    $\cC$-ambidextrous if and only if $f$ and all of its iterated diagonals are $\cC$-semi-affine.
\end{prop}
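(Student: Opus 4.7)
I would prove the proposition by induction on the truncation level of $f$. For the base case, if $f$ is $(-2)$-truncated, it is an equivalence, so $f_! \simeq f^* \simeq f_*$ are all equivalences; thus $f$ is trivially $\cC$-ambidextrous and $f_*$ is trivially colimit-preserving and $\cC^B$-linear, while all iterated diagonals are also equivalences and hence semi-affine. Both conditions therefore hold simultaneously.

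For the inductive step on an $n$-truncated $f$, the forward direction $(\Rightarrow)$ is the easy one. If $f$ is $\cC$-ambidextrous, then by the Hopkins--Lurie definition of ambidexterity, $\Delta_f$ is $\cC$-ambidextrous and the norm $\Nm_f\colon f_! \to f_*$ is an isomorphism. Since $f_!$ is always colimit-preserving (being a left adjoint in a presentable setting) and $\cC^B$-linear (the projection formula for left adjoints is automatic), the same holds for $f_*$ via $\Nm_f$, so $f$ is semi-affine. Applying the induction hypothesis to the ambidextrous, $(n-1)$-truncated map $\Delta_f$, both $\Delta_f$ and all its iterated diagonals are semi-affine. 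As these are precisely the iterated diagonals of $f$ beyond the first, the forward direction follows.

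For the backward direction $(\Leftarrow)$, assume $f$ and all its iterated diagonals are semi-affine. The iterated diagonals of $\Delta_f$ are a subset of the iterated diagonals of $f$ and hence semi-affine, and so is $\Delta_f$ itself; the induction hypothesis yields that $\Delta_f$ is $\cC$-ambidextrous. Thus $f$ is weakly $\cC$-ambidextrous and the norm $\Nm_f\colon f_! \to f_*$ is defined. The remaining task is to show $\Nm_f$ is an isomorphism. I would first reduce to the case $B = \pt$: using Beck--Chevalley for local systems, semi-affineness is stable under base change along points $b \in B$, and the base change of $\Nm_f$ along $b$ is the norm $\Nm_{f_b}$ of the fiber $f_b\colon A_b \to \pt$; since norm maps are isomorphisms fiber-wise iff globally, we reduce to showing that for a truncated space $A$ with $\Delta\colon A\to A\times A$ ambidextrous and $q_*\colon \cC^A \to \cC$ colimit-preserving and $\cC$-linear, the map $\Nm_q\colon q_! \to q_*$ is an equivalence.

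\textbf{Main obstacle.} This last step is the technical heart of the argument. My plan is to exploit the self-duality of $\cC^A$ as a $\cC$-linear presentable $\infty$-category, which is the output of ambidexterity for $\Delta$ (as in the integration formalism of Hopkins--Lurie): under this duality, the $\cC$-linear colimit-preserving functors $\cC^A \to \cC$ are classified by objects of $\cC^A$, with $q_!$ corresponding to the unit $\one_A$ and any other such functor $G$ to its ``dualizing object'' $D_G \in \cC^A$ so that $G(X) \simeq q_!(X \otimes D_G)$. By semi-affineness, $q_*$ lies in this category and so corresponds to some $D \in \cC^A$, and the norm $\Nm_q$ corresponds to a canonical comparison map $\one_A \to D$. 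The key input to force this comparison to be an isomorphism is that the construction of $\Nm_q$ from the ambidexterity of $\Delta$ matches the unit of the self-dual pairing, so that the two $\cC$-linear colimit-preserving functors $q_!$ and $q_*$ agree under the duality precisely when their images at $\one_A$ do, reducing the question to a direct verification using the ambidexterity data on $\Delta$. I expect this identification of the comparison map with the structural unit under the self-duality---and hence the extraction of triviality of $D$ from mere semi-affineness plus ambidexterity of the diagonal---to be the main place where careful bookkeeping is required.
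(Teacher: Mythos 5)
Your inductive framework matches the paper's: induct on the truncation level of $f$, handle the base case $m=-2$ trivially, apply the inductive hypothesis to the $(m-1)$-truncated diagonal $\Delta_f$, and thereby reduce to a statement about weakly ambidextrous maps. Your forward direction is essentially the paper's argument (with the linearity of $f_!$ justified by citing \cite[Proposition 3.3.1]{TeleAmbi} rather than asserted as ``automatic,'' but the content is the same). The divergence is in the backward direction, and there your plan has a real problem.

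The paper resolves the backward direction in one line by invoking \cite[Proposition 4.3.9]{AmbiKn}: a weakly ambidextrous map $f$ is $\cC$-ambidextrous if and only if $f_*$ preserves colimits. Since semi-affineness of $f$ by definition includes $f_*$ preserving colimits, ambidexterity follows immediately once the inductive hypothesis has given weak ambidexterity. You are not using this result and are instead trying to re-derive it via the self-duality of $\cC^A$, which is substantially harder---essentially the content of the cited proposition---and your sketch of this step is flawed. Under the self-duality induced by ambidexterity of $\Delta$, colimit-preserving $\cC$-linear functors $G\colon \cC^A \to \cC$ are classified by objects $D_G \in \cC^A$ via $G(X) \simeq q_!(X \otimes D_G)$; the value $G(\one_A) \simeq q_!(D_G)$ does \emph{not} determine $D_G$, because $q_!$ is far from being injective (or conservative). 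So your claim that ``$q_!$ and $q_*$ agree under the duality precisely when their images at $\one_A$ do'' is false as stated, and ``careful bookkeeping'' won't rescue it---you would need to compare the full natural transformations, not the values at the unit. I'd also flag that your reduction to $B=\pt$ implicitly assumes semi-affineness of $f$ passes to the fiber maps $q_b\colon A_b \to \pt$; this is true (e.g.\ via Beck--Chevalley and the left Kan extension $\iota_!$ along the monic fiber inclusion), but it is an extra lemma you would need to state and prove, and it is precisely the kind of overhead the paper sidesteps by citing Hopkins--Lurie. The upshot: your route is not wrong in spirit, but it re-derives a known hard result with a gap, where the intended proof is a direct consequence of that result.
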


\begin{proof}
    We prove the claim by induction on the truncatedness level of $f$. For $m=-2$, the map $f$ is an isomorphism and the claim holds trivially. For $m\ge -1$, the diagonal of $f$ is $(m-1)$-truncated, so the claim holds for it by the inductive hypothesis. We are thus reduced to showing that if $f$ is weakly ambidextrous, then it is ambidextrous if and only if it is semi-affine. Recall that $f$ is $\cC$-semi-affine if and only if $f_*$ is colimit preserving and $\cC^A$-linear. By \cite[Proposition 4.3.9]{AmbiKn}, since $f$ is weakly ambidextrous, it is $\cC$-ambidextrous if and only if $f_*$ is colimit preserving. Hence, semi-affineness implies ambidexterity. Conversely, if $f$ is ambidextrous, then by \cite[Proposition 3.3.1]{TeleAmbi}, the functor $f_* \simeq f_!$ is $\cC^A$-linear, and hence $f$ is semi-affine.
\end{proof}

\begin{cor}\label{Affine_Sadd}
    Let $\cC\in \alg(\Prl)$ be semiadditive and let $p$ be a prime. The $\infty$-category $\cC$ is $p$-typically $m$-semiadditive if and only if the spaces $BC_p,B^2C_p,\dots ,B^mC_p$ are all $\cC$-semi-affine.
\end{cor}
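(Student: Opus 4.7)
The plan is to deduce this corollary from Proposition \ref{Ambi_Semi_Affine} via the standard reduction of $p$-typical $m$-semiadditivity to the ambidexterity of the Eilenberg--MacLane spaces $B^kC_p$. First, I would invoke the characterization established in the literature on higher semiadditivity (see \cite{AmbiHeight}, building on \cite{AmbiKn}): for a semiadditive $\cC \in \alg(\Prl)$, the $\infty$-category $\cC$ is $p$-typically $m$-semiadditive if and only if each of the terminal maps $f_k \colon B^k C_p \to \pt$ is $\cC$-ambidextrous for $k = 1, \dots, m$. Since $B^k C_p$ is $k$-truncated, each $f_k$ is a truncated map, so Proposition \ref{Ambi_Semi_Affine} applies.

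Next, I apply Proposition \ref{Ambi_Semi_Affine} to each $f_k$: its ambidexterity is equivalent to the semi-affineness of $f_k$ together with all of its iterated diagonals. The iterated diagonals of a terminal map on a pointed connected space $X$ are fibered in the based loop spaces of $X$; concretely, the $i$-th iterated diagonal of $f_k$ has typical fiber $\Omega^i(B^k C_p) \simeq B^{k-i} C_p$ (with the iteration terminating once $k - i \le 0$, where the fibers become discrete and hence automatically semi-affine in a semiadditive $\cC$). Thus the collection of fibers appearing across the iterated diagonals of $f_1, \dots, f_m$ is precisely $\{B^j C_p \mid 1 \le j \le m\}$ together with finite discrete spaces.

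The remaining step is to pass between semi-affineness of a map and of its fibers. The semi-affine analogue of Proposition \ref{affineness_extensions}(3) holds by the same argument: writing $f^* = \invlim_{b \in B} f_b^*$ exhibits $f_*$ as a limit of the $(f_b)_*$-s, and both colimit preservation and $\cD$-linearity of a right adjoint are closed under limits over spaces, exactly as in the proof of Proposition \ref{affine_limits} (conservativity is what distinguishes the affine case, but it is not required here). Consequently, if each $B^j C_p$ is $\cC$-semi-affine for $1 \le j \le m$, then every iterated diagonal of each $f_k$ for $k \le m$ has semi-affine fibers and is therefore itself semi-affine, yielding $\cC$-ambidexterity of $f_k$ by Proposition \ref{Ambi_Semi_Affine}. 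Conversely, if $\cC$ is $p$-typically $m$-semiadditive, then each $f_k$ is ambidextrous, hence in particular semi-affine, so each $B^k C_p$ is semi-affine.

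The main point to verify is the fiber-locality of semi-affineness, which is routine but needs stating since Proposition \ref{affineness_extensions}(3) is formulated for affine rather than semi-affine functors; the rest is a direct combination of Proposition \ref{Ambi_Semi_Affine} with the well-known reduction of higher semiadditivity to the tower $BC_p, B^2 C_p, \dots, B^m C_p$.
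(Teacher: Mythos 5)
Correct, and essentially the paper's argument: you reduce via \cite[Proposition 3.1.2]{AmbiHeight} to the ambidexterity of the spaces $B^kC_p$, then bootstrap from semi-affineness using Proposition \ref{Ambi_Semi_Affine} together with the observation that the iterated diagonals of $B^kC_p \to \pt$ are fibered in the lower $B^jC_p$. You are slightly more careful than the paper's terse proof in explicitly noting that a semi-affine analogue of Proposition \ref{affineness_extensions}(3) is required (the paper cites \ref{affineness_extensions}, which is stated for affine rather than semi-affine maps), and your derivation of that variant from the proof of Proposition \ref{affine_limits} (dropping the conservativity clause) is correct.
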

\begin{proof}
    By \cite[Proposition 3.1.2]{AmbiHeight}, it suffices to show that the spaces $ BC_p,B^2C_p,\dots ,B^mC_p$ are all $\cC$-ambidextrous. This follows from \cref{Ambi_Semi_Affine} and \cref{affineness_extensions}, as the diagonal map $B^kC_p \to B^kC_p \times B^kC_p$ has fiber $B^{k-1}C_p$.
\end{proof}

Under the assumption of ambidexterity, affineness reduces to the conservativity of the global sections functor.
\begin{cor} \label{Ambi_Affine}
    Let $\cC\in\alg(\Prl)$ and let $f\colon A\to B$ be a $\cC$-ambidextrous map of spaces. Then, $f$ is $\cC$-semi-affine. Furthermore, $f$ is $\cC$-affine if and only if $f_*\colon \cC^A\to \cC^B$ is conservative.
\end{cor}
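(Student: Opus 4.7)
The plan is to extract both claims directly from \cref{criterion_affineness}, which gives a clean three-condition criterion ($\cC^B$-linear, colimit preserving, and conservative) for affineness of $f^*$. The corollary will be obtained by using ambidexterity to give the first two conditions for free, leaving conservativity as the remaining content.

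For the first assertion, that $f$ is $\cC$-semi-affine, I would argue as follows. By definition of $\cC$-ambidexterity, the functor $f_*$ is canonically equivalent to the left adjoint $f_!$ of $f^*$. In particular, $f_*$ is itself a left adjoint (to the canonical norm map / to $f^*$ on the other side), hence preserves all colimits. For $\cC^B$-linearity, I would invoke the projection formula for ambidextrous maps established in \cite[Proposition 3.3.1]{TeleAmbi}, which is precisely the statement that the canonical map $f_*(X)\otimes Y \to f_*(X\otimes f^*Y)$ is an isomorphism when $f$ is ambidextrous. Taken together, these two facts say exactly that $f$ satisfies \cref{def:semi-affine}. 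This is the same mechanism already used inside the proof of \cref{Ambi_Semi_Affine}, so no new input is required.

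For the second assertion, I would simply appeal to \cref{criterion_affineness}: the functor $f^*\colon \cC^B\to \cC^A$ is affine if and only if its right adjoint $f_*$ is $\cC^B$-linear, colimit preserving, and conservative. The first two properties are automatic by the first part of the corollary, so the criterion collapses to the single condition that $f_*$ be conservative, which is the stated equivalence.

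There is no serious obstacle: the whole corollary is a bookkeeping assembly of \cref{criterion_affineness}, the identification $f_*\simeq f_!$ built into the definition of ambidexterity, and the projection formula from \cite[Proposition 3.3.1]{TeleAmbi}. The only point worth a moment of care is verifying that the left-adjoint structure on $f_*$ really suffices for colimit preservation in the correct variance (it does, since a functor between presentable $\infty$-categories that is a left adjoint preserves all small colimits), but this is immediate.
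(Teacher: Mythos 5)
Your argument is correct and matches the paper's proof in substance: both reduce to showing $f$ is $\cC$-semi-affine and then invoking \cref{criterion_affineness} to collapse the affineness criterion to conservativity of $f_*$. The only difference is that the paper simply cites \cref{Ambi_Semi_Affine} for semi-affineness, whereas you re-derive that step inline from the identification $f_*\simeq f_!$ and the projection formula of \cite[Proposition 3.3.1]{TeleAmbi} — which is exactly the mechanism inside \cref{Ambi_Semi_Affine}, as you yourself observe.
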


\begin{proof} 
     This follows from \Cref{Ambi_Semi_Affine} and the characterization of affine functors  given in \Cref{criterion_affineness}.  
\end{proof}

\begin{example} \label{affine_finite_sets}
For $\cC\in \alg(\Prl)$ semiadditive, every map of spaces $f\colon A\to B$ with finite discrete fibers is $\cC$-affine. Indeed, by \Cref{affineness_extensions} it suffices to show this when $B=\pt$ and by the above \Cref{Ambi_Affine} we only need to show that the functor $f_*\colon \cC^A\to \cC$ is conservative. This functor takes an $A$-indexed collection of objects $\left(X_a\right)_{a\in A}$ to their product $\prod_{a\in A} X_a$. The conservativity now follows from the fact that each projection $a^*\colon \cC^A\to \cC$ for $a\in A$ is a retract of $f_*$.
\end{example}
 
\begin{rem}
    For $f\colon A \to \pt$, the right adjoint $f_* \colon \cC^A \to \cC$ is conservative if and only if the image of the left adjoint $f^*\colon \cC \to \cC^A$ generates $\cC^A$ under colimits. That is if and only if every $\cC$-valued local system on $A$ can be constructed from constant ones by colimits. By analogy with representation theory, one might call such local systems \textit{unipotent}. Thus, we can rephrase the second part of \Cref{Ambi_Affine}, by saying that a $\cC$-ambidextrous space $A$ is $\cC$-affine if and only if all $\cC$-local systems on $A$ are unipotent.   
\end{rem}

\subsubsection{Ambidexterity and Eilenberg--Moore}

Under the assumption of ambidexterity, the relationship between affineness and the Eilenberg--Moore property can be further tightened. First, we have the following criterion for K\"{u}nneth isomorphisms:

\begin{prop}\label{Kunneth_Ambi}
    Let $\cC \in \alg(\Prl)$, and let $A$ be a space. If $A$ is $\cC$-ambidextrous, then $A$ has the Eilenberg--Moore property with respect to every $R \in \alg(\cC)$. That is, for every space $B$, we have a K\"{u}nneth isomorphism
    \[
        R^A\otimes_R R^B \iso R^{A\times B}.
    \]    
\end{prop}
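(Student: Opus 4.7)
My plan is to reduce the statement to \Cref{Kunneth} by showing that $\one[A]$ is dualizable in $\cC$ whenever $A$ is $\cC$-ambidextrous. Write $q\colon A \to \pt$ for the terminal map, with adjoint triple $q_! \dashv q^* \dashv q_*$. Composing the two adjunctions gives an adjunction of endofunctors $q_! q^* \dashv q_* q^*$ on $\cC$.

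Because $q^*$ is a symmetric monoidal left adjoint, the projection formula for $q_!$ holds automatically, so for every $Y \in \cC$,
\[
    q_! q^* (Y) \;\simeq\; q_!(q^*\one \otimes q^*Y) \;\simeq\; q_!(\one) \otimes Y \;=\; \one[A] \otimes Y.
\]
Now the hypothesis that $A$ is $\cC$-ambidextrous supplies the norm isomorphism $q_! \iso q_*$, which upgrades $q_*$ to a $\cC$-linear functor (this is the ``ambidextrous $\Rightarrow$ semi-affine'' implication appearing in \Cref{Ambi_Semi_Affine}, and ultimately rests on \cite[Proposition 3.3.1]{TeleAmbi}; it requires only the ambidexterity of $q$ itself, not any truncation hypothesis on $A$). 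In particular the projection formula holds for $q_*$ as well, yielding
\[
    q_* q^* (Y) \;\simeq\; q_*(\one) \otimes Y \;=\; \one^A \otimes Y.
\]

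Therefore the adjunction $q_!q^* \dashv q_* q^*$ reads as an adjunction of endofunctors
\[
    \one[A] \otimes (-) \;\dashv\; \one^A \otimes (-) \colon \cC \too \cC.
\]
Since both sides are given by tensoring with a fixed object, the unit and counit evaluated on $\one$ supply coevaluation and evaluation maps satisfying the zigzag identities, exhibiting $\one[A]$ as dualizable in $\cC$ with dual $\one^A$. The last assertion of \Cref{Kunneth} then yields the Künneth isomorphism for all $R \in \alg(\cC)$.

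The only technical subtlety is the step extracting $\cC$-linearity of $q_*$ from ambidexterity, since \Cref{Ambi_Semi_Affine} is stated only for truncated maps; but this half of the equivalence is not inductive and holds for any ambidextrous $q$. Once that is in hand, the rest of the argument is a purely formal consequence of the adjoint triple and the two projection formulas, so I do not expect any further obstacle.
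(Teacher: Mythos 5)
Your proposal is correct and reaches the same final reduction as the paper (invoke \Cref{Kunneth} once $\one[A]$ is known to be dualizable), but you prove the dualizability by hand, whereas the paper simply cites \cite[Corollary 3.3.10]{TeleAmbi} as a black box. Your route — realizing the two endofunctors $q_!q^*$ and $q_*q^*$ as tensoring functors via the projection formulas, so that the adjunction $q_!q^* \dashv q_*q^*$ becomes $\one[A]\otimes(-) \dashv \one^A\otimes(-)$ — is a nice self-contained alternative, and the parenthetical about extracting only the ``ambidextrous $\Rightarrow$ semi-affine'' direction from \Cref{Ambi_Semi_Affine} (equivalently, just citing \Cref{Ambi_Affine}) is on point.

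One step, however, is stated more breezily than it should be. From the existence of an adjunction between the tensoring functors $\one[A]\otimes(-)$ and $\one^A\otimes(-)$, one does \emph{not} automatically get that the unit/counit evaluated at $\one$ satisfy the zigzag identities: the triangle identity at $Z=\one$ reads $c_{\one[A]}\circ(\one[A]\otimes u_\one)=\Id$, and to turn $c_{\one[A]}$ into $c_\one\otimes\Id_{\one[A]}$ one needs the counit $c$ (and similarly the unit $u$) to be a $\cC$-\emph{linear} natural transformation, not merely a transformation between $\cC$-linear functors. This is where the hypothesis actually does its work: the projection formula for $q_!$ (automatic for a colimit-preserving monoidal $q^*$) makes $q_!\dashv q^*$ a $\cC$-linear adjunction, and the projection formula for $q_*$ (which you derive from ambidexterity) makes $q^*\dashv q_*$ a $\cC$-linear adjunction as well, by \cite[Remark 7.3.2.9]{HA}. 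The composite $q_!q^*\dashv q_*q^*$ is therefore an adjunction in $\Mod_\cC(\Prl)$, equivalently a duality datum in the monoidal $\infty$-category $\Fun^{L}_\cC(\cC,\cC)\simeq\cC$, and only then does evaluation at $\one$ hand you the coevaluation and evaluation for $\one[A]$. So the argument is sound, but the phrase ``since both sides are given by tensoring with a fixed object'' glosses over precisely the point that the projection formulas are buying you.
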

\begin{proof}
    Since $A$ is $\cC$-ambidextrous, $\one[A] \in \cC$ is dualizable by \cite[Corollary 3.3.10]{TeleAmbi}. Hence, the claim follows from \Cref{Kunneth}.
\end{proof}

The combination of \Cref{Kunneth_Ambi} and \Cref{Affineness_Kunneth_EM} shows that if $B$ is  $\cC$-ambidextrous, then the $\cC$-affineness of $B$ implies the Eilenberg--Moore property for a large class of maps $A\to B$. In fact, the ambidexterity assumption guarantees that affineness is also \textit{implied} by a very special case of the Eilenberg--Moore property. The situation can be summerized as follows:

\begin{thm}\label{Affiness_Eilenberg_Moore} 
Let $\cC\in\alg(\Prl)$, and let $B$ be a $\cC$-ambidextrous space. The following are equivalent:
    \begin{enumerate}
        \item The space $B$ is $\cC$-affine.
    
        \item Every $\cC$-ambidextrous map $f\colon A\to B$ is Eilenberg--Moore        with respect to every $R\in\alg(\cC)$.
    
        \item For every pair of points $a,b\in B$, the canonical map $\one\otimes_{\one^{B}}\one\to\one^{\{a\}\times_{B}\{b\}}$
        is an isomorphism. 
    \end{enumerate}
\end{thm}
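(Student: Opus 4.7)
The plan is to prove the cyclic chain $(1) \Rightarrow (2) \Rightarrow (3) \Rightarrow (1)$, with the bulk of the work in the last step.

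For $(1) \Rightarrow (2)$, the strategy is to combine affineness of the base with the Künneth formula on the fibers, via \cref{Affineness_Kunneth_EM}. Since $\cC$-ambidexterity is preserved under base change, the fibers of a $\cC$-ambidextrous map $f\colon A\to B$ are themselves $\cC$-ambidextrous spaces, and \cref{Kunneth_Ambi} supplies the Künneth isomorphism (which is exactly the Eilenberg--Moore property for a map to a point) at each fiber with respect to any $R\in\alg(\cC)$. Feeding this into \cref{Affineness_Kunneth_EM} yields the Eilenberg--Moore property for $f$ itself.

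For $(2) \Rightarrow (3)$, I would apply condition (2) to $f\colon \{a\}\hookrightarrow B$ and $g\colon \{b\}\hookrightarrow B$ with $R = \one$. Both point inclusions are $\cC$-ambidextrous because their fibers are path spaces in $B$, which are either empty or equivalent to $\Omega B$, and the latter is $\cC$-ambidextrous since $B$ is. The resulting Eilenberg--Moore isomorphism is precisely the statement of (3).

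The substantive content is $(3) \Rightarrow (1)$. Write $q\colon B\to \pt$ for the terminal map. By \cref{Ambi_Semi_Affine}, $B$ is $\cC$-semi-affine, so \cref{colim_lin_fully_faithful} applied with $R=\one$ shows that $q^\sharp\colon \Mod_{\one^B}(\cC)\to \cC^B$ is fully faithful. To obtain (1) it then suffices to show that the counit $\epsilon\colon q^\sharp q_\sharp \Rightarrow \id_{\cC^B}$ is an isomorphism. Let $\cC^B_0\subseteq \cC^B$ be the full subcategory on which $\epsilon$ is an isomorphism. Both $q^\sharp$ and $q_\sharp$ preserve colimits and are $\cC$-linear (the latter by semi-affineness), so $\cC^B_0$ is closed under colimits and under tensoring with constant local systems $q^*X$. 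Using the co-Yoneda decomposition $F \simeq \colim_{b\in B} b_!\ev_b F$ and the projection formula $b_!X \simeq b_*\one\otimes q^*X$ (where $b_*\simeq b_!$ by ambidexterity), we reduce to showing that the ``skyscrapers'' $b_*\one$ lie in $\cC^B_0$ for all $b\in B$.

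To verify this I would test $\epsilon_{b_*\one}$ against each evaluation functor $\ev_{b'}$, which are jointly conservative. Beck--Chevalley applied to the pullback square defining $\{b'\}\times_B\{b\}$ gives $\ev_{b'}(b_*\one) \simeq \one^{\{b'\}\times_B\{b\}}$. On the other side, \cref{Ev_Sharp} identifies $\ev_{b'}\circ q^\sharp$ with the base-change functor $\one_{b'}\otimes_{\one^B}(-)$, while unwinding the lax monoidal structure of $q_*$ identifies $q_\sharp(b_*\one)$ with $\one_b$ as an $\one^B$-module; hence $\ev_{b'}(q^\sharp q_\sharp(b_*\one)) \simeq \one\otimes_{\one^B}\one$. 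A naturality check then identifies $\ev_{b'}(\epsilon_{b_*\one})$ with the canonical Eilenberg--Moore comparison, which is an isomorphism precisely by hypothesis (3). The main obstacle I anticipate is the bookkeeping needed to pin down the $\one^B$-module structure on $q_\sharp(b_*\one)$ as $\one_b$, so that the counit really does unwind to the map appearing in (3).
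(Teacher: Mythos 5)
Your proposal is correct and follows essentially the same route as the paper's proof. The only genuine difference is a reorganization of $(3)\Rightarrow(1)$: you use the projection formula and $\cC$-linearity of $q_\sharp$ and $q^\sharp$ up front to reduce to the skyscrapers $b_*\one$, whereas the paper reduces to $b_!Y\simeq b_*Y$ directly (citing generation of $\cC^B$ under colimits by such objects) and only invokes $\cC$-linearity at the last moment to descend from general $Y$ to $Y=\one$; the two are the same argument with the linearity step moved. Your final computation of $\ev_{b'}(\epsilon_{b_*\one})$ via Beck--Chevalley and the identification of the $\one^B$-module structure on $q_\sharp(b_*\one)$ as $\one_b$ is the same bookkeeping the paper carries out explicitly, so the anticipated obstacle is real but handled exactly as you expect.
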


\begin{proof}
    We will show that (1)$\implies$(2)$\implies$(3)$\implies$(1). For every $\cC$-ambidextrous map $f\colon A \to B$, the fibers are Eilenberg--Moore with respect to every $R\in\alg(\cC)$ by \Cref{Kunneth_Ambi}. Thus, if $B$ is $\cC$-affine, then $f$ is Eilenberg--Moore with respect to every $R\in\alg(\cC)$ by \Cref{Affineness_Kunneth_EM}. That is, we have shown that (1) implies (2). Now, 
    (3) follows from (2) by taking $f\colon \{a\}\to B$ and $g\colon \{b\}\to B$. 
    
    It remains to show that (3) implies (1). To show that $B$ is $\cC$-affine, we need to show that the functor 
    \[
        q_{\sharp}\colon\cC^{B}\too\LMod_{\one^{B}}(\cC)
    \]
    is an equivalence, where $q\colon B\to \pt$ is the terminal map. By \cref{Ambi_Affine}, the map $q$ is $\cC$-semi-affine and hence by \Cref{colim_lin_fully_faithful}, the left adjoint $q^\sharp$ of $q_\sharp$ is fully faithful. 
    Hence, it remains to show that the counit map
    \[
\varepsilon \:\colon\: 
q^{\sharp}q_{\sharp}X= 
        q^{*}\one\relotimes{q^{*}q_{*}\one}q^{*}q_{*}X\too 
        X
        \tag{$*$}
    \]
    is an isomorphism for all $X\in\cC^{B}$. By \cite[Lemma 4.3.8]{AmbiKn}, the category
    $\cC^{B}$ is generated under colimits by objects of the form
    $b_{!}Y$ for $Y\in\cC$ and $b\colon \pt \to B$. By the $\cC$-ambidexterity of $q$, both sides of $(*)$
    preserve colimits and it therefore suffices to show that $\varepsilon$
is an isomorphism at local systems of the form $X= b_! Y$. Using the $\cC$-ambidexterity of the map $b\colon\pt\to B$, we can also identify $b_{!}Y$ with $b_{*}Y$. 
    
    Next, to show that $\varepsilon$ is an isomorphism at $b_*Y$, it suffices to
show that for every $a\colon \pt \to B$, the map $a^{*}\varepsilon$ is an isomorphism at $b_*Y$. Using the identities $b^{*}q^{*}=\Id$ and $q_{*}a_{*}=\Id$,
    the map 
    \[
        a^{*}\varepsilon \colon 
        a^*q^\sharp q_\sharp (b_*Y) \too 
        a^*(b_*Y)
    \]
    assumes the form
    \[
        \one\relotimes{\one^{B}}Y\simeq
        \one\relotimes{q_{*}\one}Y\too 
        b^{*}a_{*}Y\simeq Y^{\{a\}\times_{B}\{b\}}.
    \]
    Both the domain and the range of this map, when considered as functors in the $Y$-variable, are colimit preserving and $\cC$-linear. Indeed, for the domain it follows from the colimit preservation and $\cC$-linearity of the relative tensor product, and for the target by \Cref{Ambi_Affine} applied to the $\cC$-ambidextrous space $\{a\}\times_B \{b\}$. Moreover, $a^*\varepsilon$ is canonically a natural transformation of $\cC$-linear functors. Hence, it suffices to show that the above map is an isomorphism for $Y=\one$. In this case, we obtain precisely the map from condition (3)
    \[
        \one\relotimes{\one^{B}}\one \too
        \one^{\{a\}\times_{B}\{b\}},
    \]
    which is an equivalence by assumption. 
\end{proof}

\Cref{Affiness_Eilenberg_Moore}(3) provides a very practical criterion for checking affineness, which we shall use repeatedly. For now, we demonstrate its utility by deducing that affineness behaves well with respect to monoidal functors. 

\begin{prop} \label{affine_spaces_functors}
Let $F\colon \cC \to \cD$ be a functor in $\alg(\Prl)$ and let $B$ be a $\cC$-ambidextruous space. If $B$ is $\cC$-affine then it is $\cD$-affine. Conversely, if $B$ is $\cD$-affine and $F$ is conservative, then $B$ is $\cC$-affine. 
\end{prop}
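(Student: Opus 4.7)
The plan is to apply the criterion of \Cref{Affiness_Eilenberg_Moore}(3), which reduces $\cC$-affineness of the $\cC$-ambidextrous space $B$ to showing that for every pair of points $a,b \in B$, the comparison map
\[
    \alpha_\cC \colon \one_\cC \relotimes{\one_\cC^B} \one_\cC \too \one_\cC^{\{a\} \times_B \{b\}}
\]
is an isomorphism in $\cC$, with the analogous statement for $\cD$. To use the criterion on the $\cD$-side, first observe that since $F$ is a colimit-preserving monoidal functor, it sends $\cC$-ambidextrous spaces to $\cD$-ambidextrous spaces and intertwines the ambidextrous norm maps (the standard fact that such $F$ preserves ambidextrous integration, cf.\ \cite[\S 4]{AmbiKn}). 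In particular, $B$ and its iterated diagonals such as $\{a\} \times_B \{b\}\simeq \Omega B$ are both $\cC$- and $\cD$-ambidextrous, and the criterion of \Cref{Affiness_Eilenberg_Moore}(3) applies in both categories.

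Next I would show that $F$ carries $\alpha_\cC$ to a map canonically identified with $\alpha_\cD$. The ingredients of $\alpha_\cC$ are the algebra $\one_\cC^B = q_*\one_\cC$, its two $\one_\cC^B$-module structures on $\one_\cC$ induced by the points $a,b$, the relative tensor product, and the target $\one_\cC^{\{a\}\times_B \{b\}}$. Under ambidexterity, $q_*\one$ coincides with the colimit $q_!\one$, which is preserved by any colimit-preserving unital functor, and likewise for $\Delta_*\Delta^*\one$; the commutative algebra and module structures on these objects transport along $F$ because they are built from the monoidal structure together with the ambidextrous norm maps, both of which $F$ respects. Finally, the relative tensor product is computed as the realization of a bar construction, a sifted colimit preserved by the monoidal $F$. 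Assembling these identifications gives an isomorphism $F(\alpha_\cC) \simeq \alpha_\cD$ in $\cD$.

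The two directions of the proposition now fall out immediately. If $B$ is $\cC$-affine, then $\alpha_\cC$ is an isomorphism, so $\alpha_\cD \simeq F(\alpha_\cC)$ is an isomorphism, and hence $B$ is $\cD$-affine. Conversely, if $B$ is $\cD$-affine and $F$ is conservative, then $\alpha_\cD \simeq F(\alpha_\cC)$ is an isomorphism, so conservativity of $F$ forces $\alpha_\cC$ to be an isomorphism, and $B$ is $\cC$-affine.

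The main technical obstacle in turning this sketch into a formal proof is the careful verification that $F$ preserves the algebraic data underlying $\alpha_\cC$, i.e.\ the commutative algebra structure on $q_!\one \simeq q_*\one$ (constructed through the diagonal and the ambidextrous norm), its module structures at the two points, and the map into $\one^{\Omega B}$. All of this is ultimately formal given that $F$ is monoidal and colimit-preserving and that $B$ is $\cC$-ambidextrous, but spelling out the coherences is the substantive part of the argument.
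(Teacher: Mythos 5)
Your proof is correct and takes essentially the same approach as the paper: both reduce to criterion (3) of the Eilenberg--Moore characterization of affineness, observe that $B$-shaped and $\Omega B$-shaped limits are preserved by $F$ because they are ambidextrous, and then transport the relative tensor square along $F$, using conservativity for the converse. The paper resolves the coherence worry you flag in your last paragraph more directly by citing that $F$ preserves the relevant limits (so the entire square of commutative algebras is the image under $F$ of the corresponding $\cC$-square), rather than recasting $q_*\one$ as $q_!\one$ and reasoning through norm maps.
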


\begin{proof}
Since $F$ is monoidal and colimit preserving, $B$ is also $\cD$-ambidextrous, see \cite[Corollary 3.3.2]{TeleAmbi}. Moreover, $F$ preserves $B$-shaped limits as well as $\{a\}\times_B \{b\}$-shaped limits for every $a,b\in B$, see \cite[Corollary 3.2.4]{TeleAmbi}. It follows that for all $a,b\in B$ the square
\[
\xymatrix{
\one_\cD^B \ar[r]\ar[d] & \one_\cD \ar[d] \\ 
\one_\cD \ar[r]       & \one_\cD^{\{a\}\times_B \{b\}}
}
\]
is the image under $F$ of the square
\[
\xymatrix{
\one_\cC^B \ar[r]\ar[d] & \one_\cC \ar[d] \\ 
\one_\cC \ar[r]       & \one_\cC^{\{a\}\times_B \{b\}}
}.
\]
Since $F$ is is colimit preserving and monoidal, the latter is a relative tensor square if the former is a relative tensor square, and the converse holds if $F$ is conservative. 
\end{proof}
\subsubsection{Affineness and height}

In \Cref{Ambi_Affine}, we have seen that $\cC$-semi-affineness is closely related to $\cC$-ambidexterity. In particular, the $\cC$-semi-affineness of the Eilenberg--Maclane spaces $B^kC_p$ is closely related to the $p$-typical higher semiadditivity of $\cC$ (\cref{Affine_Sadd}). We shall now see that \textit{$\cC$-affineness} of these spaces is closely related to the \textit{semiadditive height} of $\cC$ in the sense of \cite[\S 3]{AmbiHeight}.

\begin{prop}\label{Affiness_Height}\label{affiness_Height_below}
    Let $p$ be a prime and let $\cC\in\alg(\Prl)$ be $p$-typically $n$-semiadditive.
    \begin{enumerate}
        \item  If $\cC$ is of height $\le n$, then every $(n+1)$-connected $\pi$-finite $p$-space is $\cC$-affine. 
        
        \item If $\cC$ is of height $\le n$ and  $B^{n+1}C_{p}$ is $\cC$-affine, then $\cC$ is of height $\le (n-1)$.\footnote{By convention,  an $\infty$-category is of height $\le -1$ if and only if it is the zero category.}
        
\item If the spaces $B^kC_p$ are $\cC$-affine for $k=0,\dots,n$,  
        then $\cC$ is of height $\ge n$.  
    \end{enumerate}
\end{prop}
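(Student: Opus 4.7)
The plan is to use Theorem~\ref{Affiness_Eilenberg_Moore}(3) as the central affineness criterion for each of the three claims, combined with the Postnikov tower structure of $\pi$-finite $p$-spaces. A key input throughout is the fact, recorded in \cite{AmbiHeight}, that an $n$-semiadditive $\cC$ of height $\le n$ is in fact higher semiadditive with respect to all $\pi$-finite $p$-spaces, so that every Eilenberg--MacLane space $B^k C_p$ is $\cC$-ambidextrous and the semiadditive cardinalities $|B^k C_p|$ are well-defined endomorphisms of $\one$.

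For (1), I first reduce to the case $A = B^k C_p$ with $k \ge n+2$. Any $(n+1)$-connected $\pi$-finite $p$-space admits a finite Postnikov tower whose successive fibers are Eilenberg--MacLane spaces $K(M,k)$ with $M$ a finite abelian $p$-group and $k \ge n+2$, and each $K(M,k)$ is itself a finite iterated principal fibration of copies of $B^k C_p$. Proposition~\ref{affineness_extensions}(2)--(3), which gives closure of $\cC$-affineness under composition and under maps with $\cC$-affine fibers, then accomplishes the reduction. For $B = B^k C_p$ with $k \ge n+2$, Theorem~\ref{Affiness_Eilenberg_Moore}(3) reduces the claim to the isomorphism
\[
\one\relotimes{\one^{B^k C_p}}\one \iso \one^{B^{k-1}C_p}.
\]
This I would establish by rewriting the left-hand side using the ambidexterity of $B^k C_p$ to identify homology and cohomology via the norm, and then invoking the vanishing of $|B^{n+1}C_p|$ imposed by the height $\le n$ hypothesis.

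For (2), I would apply Theorem~\ref{Affiness_Eilenberg_Moore}(3) directly to $B^{n+1}C_p$ at $a=b=*$, obtaining $\one\relotimes{\one^{B^{n+1}C_p}}\one \iso \one^{B^n C_p}$. The height $\le n$ hypothesis --- concretely the vanishing of the norm $|B^{n+1}C_p|$ --- provides enough structure on the $\one^{B^{n+1}C_p}$-algebra $\one$ to collapse the left-hand side to $\one$ itself. The resulting isomorphism $\one \iso \one^{B^n C_p}$ is precisely the characterization of height $\le n-1$ from \cite{AmbiHeight}.

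For (3), I proceed by induction on $n$, the base case $n=0$ being automatic from $\cC$ being non-zero. For the inductive step, assuming $B^k C_p$ is $\cC$-affine for $k \le n$, the inductive hypothesis applied to $k \le n-1$ gives that $\cC$ is of height $\ge n-1$. The criterion of Theorem~\ref{Affiness_Eilenberg_Moore}(3) applied to $B^n C_p$, combined with the already-established partial height, then forces the semiadditive cardinality $|B^{n-1}C_p|$ to be invertible as an endomorphism of $\one$ --- precisely the additional ingredient needed to upgrade to height $\ge n$. The principal obstacle throughout will be converting the abstract relative tensor product computations of Theorem~\ref{Affiness_Eilenberg_Moore}(3) into statements about the cardinality and norm operators defining the semiadditive height in \cite{AmbiHeight}. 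The fiber sequences $B^{k-1}C_p \to * \to B^k C_p$, together with careful tracking of the ambidexterity equivalences, supply the bridge, and the computation for $B^{n+2}C_p$ in (1) is the most delicate step, as it genuinely uses the promotion from $n$-semiadditivity to $\pi$-finite higher semiadditivity.
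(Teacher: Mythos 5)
Your part (2) matches the paper's argument. Parts (1) and (3) differ, and (3) contains a genuine error.

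For (1), the paper's proof is one line: since $\cC$ has height $\le n$, by \cite[Proposition~3.2.3]{AmbiHeight} the pullback $q^*\colon\cC\to\cC^A$ is already an \emph{equivalence} for every $(n+1)$-connected $\pi$-finite $p$-space $A$, and equivalences are trivially affine. Your Postnikov/Eilenberg--Moore reduction to $B^kC_p$ with $k\ge n+2$ is correct in principle (each side of the criterion of \cref{Affiness_Eilenberg_Moore}(3) degenerates to $\one$ for $k\ge n+2$), but it is a great deal of machinery to re-derive a triviality. It also implicitly relies on the same fact — that height $\le n$ trivializes $\one^{B^kC_p}$ for $k\ge n+1$ — so it does not avoid the input you are trying to circumvent.

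For (3), the argument breaks down. You claim that affineness of $B^nC_p$, together with the inductively established $\mathrm{height}\ge n-1$, forces $|B^{n-1}C_p|$ to be invertible, and that this is ``precisely the additional ingredient needed to upgrade to height $\ge n$''. But invertibility of $|B^{n-1}C_p|$ is the characterization of $\mathrm{height}\le n-1$, not $\mathrm{height}\ge n$; combined with $\mathrm{height}\ge n-1$ it would force $\cC=\cC_{\le n-1}=0$. This is contradicted by $\cC=\Sp_{K(n)}$: by \cite[Theorem~5.4.3]{AmbiKn} all $B^kC_p$ with $k\le n$ are $\Sp_{K(n)}$-affine, yet $\Sp_{K(n)}$ is nonzero and $|B^{n-1}C_p|$ is not invertible there. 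The paper's actual argument sidesteps the problem by changing the ambient category: it passes to the symmetric monoidal reflection $\cC\to\cC_{\le n-1}$ onto the full subcategory of objects of height $\le n-1$ (\cite[Proposition~5.2.16]{AmbiHeight}), observes via \cref{affine_spaces_functors} that the $B^kC_p$ remain $\cC_{\le n-1}$-affine, and then applies part~(2) \emph{to $\cC_{\le n-1}$} repeatedly with decreasing height parameter to conclude $\cC_{\le n-1}$ has height $\le -1$, i.e.\ is zero — which is exactly the statement $\mathrm{height}(\cC)\ge n$. In other words, the relevant cardinalities and tensor-collapse happen inside $\cC_{\le n-1}$ (where they are available for free from $\mathrm{height}\le n-1$), not on $\one_\cC$ itself.
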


\begin{proof}
    For (1), let $A$ be an $(n+1)$-connected $\pi$-finite $p$-space and let $q\colon A\to\pt$ be the terminal map. Since $\cC$ is of height $\le n$, we get by \cite[Proposition 3.2.3]{AmbiHeight}, that $q^{*}\colon\cC\to\cC^{A}$ is an equivalence and hence clearly affine. 

    For (2), assuming $B^{n+1}C_p$ is $\cC$-affine, we get by \Cref{Affiness_Eilenberg_Moore}, an isomorphism 
    \[
        \one\relotimes{\one^{B^{n+1}C_{p}}}\one\iso\one^{B^{n}C_{p}}.
    \]
    
    Since $\cC$ is of height $\le n$, by \cite[Proposition 3.2.1]{AmbiHeight}
    we also have an isomorphism  $\one\simeq\one^{B^{n+1}C_{p}}$. Combining the two isomorphisms we get
    \[
        \one \simeq 
        \one \otimes_{\one} \one \simeq 
        \one \otimes_{\one^{B^{n+1}C_{p}}}\one \simeq 
        \one^{B^{n}C_{p}}.
    \]
    By \cite[Proposition 3.2.1]{AmbiHeight} again, this implies that $B^{n-1}C_{p}$ is $\cC$-amenable. Namely that $\cC$ is of height $\le n-1$. 

For $(3)$, let $\cC_{\le n-1} \sseq \cC$ be the full subcategory spanned by the objects of height $\le n-1$, so that $\cC$ is of height $\ge n$ if and only if $\cC_{\le n-1}\simeq \pt$. By \cite[Proposition 5.2.16]{AmbiHeight}, the inclusion of $\cC_{\le n-1}$ into $\cC$ admits a symmetric monoidal reflection $\cC \to \cC_{\le n-1}$. Hence, the $\infty$-category $\cC_{\le n-1}$ is itself $p$-typically $n$-semiadditive and the space $B^kC_p$ for $k=0,\dots,n$ are also $\cC_{\le n-1}$-affine, by \Cref{affine_spaces_functors}. Now, applying (2) inductively, we find that $\cC_{\le n-1}$ is of height $-1$ and hence trivial.
\end{proof}

For $\cC$ of height $\le n$, we can roughly summarize the content of \Cref{Affiness_Height} regarding $\cC$-affineness of $\pi$-finite $p$-spaces as follows: 
\begin{enumerate}
    \item We have affineness \textit{above} level $n+1$ for trivial reasons. 
    
    \item To have affineness \textit{below} level $n+1$, the height of $\cC$ must be exactly $n$, in which case,
    
    \item There is no affineness at level \textit{exactly} $n+1$.
\end{enumerate}

This sill leaves open, however, the question of whether for $\cC$ of height exactly $n$ we actually have affineness below level $n+1$. In the chromatic world, Hopkins and Lurie proved in \cite[Theorem 5.4.3]{AmbiKn}, that this is indeed the case for $\cC= \Sp_{K(n)}$. Their argument is rather specific though, as it relies on explicit computations with the Lubin--Tate spectrum $E_n$. One of the goals of this paper is to bootstrap \cite[Theorem 5.4.3]{AmbiKn} to the telescopic localizations $\cC = \Sp_{T(n)}$. We achieve this by placing the approach of Hopkins and Lurie in the context of a \textit{higher semiadditive Fourier transform}, which we develop in the next section.


\section{The Higher Fourier Transform}\label{sec:higherfourier}

Let $\OR$ be a connective $p$-local commutative ring spectrum and $\cC$ a symmetric monoidal $\infty$-category. In this section, we study natural transformations
\[
    \one[M] \too
    \one^{\und{\Sigma^n I_{\QQ_p/\ZZ_p}M}} \qin 
    \calg(\cC),
\]
from the group algebra of a suitably finite connective $\OR$-module spectrum $M$ to the algebra of functions on its $n$-suspended Brown--Comenetz dual. We show that such natural transformations can be viewed as a generalization of the classical discrete Fourier transform and share many of its basic properties. In particular, we show that such maps are parameterized by a certain datum of an \textit{$\OR$-pre-orientation} on $\cC$, which generalizes roots of unity of $\one_\cC$ in the case $\OR = \ZZ/p^r$, and satisfy familiar relations with respect to augmentations, translation and duality.

\subsection{Pre-orientations}\label{ssec:pre-or}

\subsubsection{Shifted Brown--Comenetz duality}

The first ingredient in the construction of the higher Fourier transform is a spectral lift of Pontryagin duality for abelian groups, known as \textit{Brown--Comenetz duality}. We shall work throughout with the $p$-local variant of this theory with respect to a fixed prime $p$. By Brown representability, the contravariant functor $\hom_\Ab(\pi_{-*}(-),\QQ_p/\ZZ_p)$, from spectra to graded abelian groups, is represented by a spectrum $I_{\QQ_p/\ZZ_p}$, the $p$-local Brown--Comenetz dual (of the sphere). It is characterized by the following property: There is a natural isomorphism 
\[
    \pi_*(\hom_\Sp(M,I_{\QQ_p/\ZZ_p}))\simeq \hom_\Ab(\pi_{-*}(M),\QQ_p/\ZZ_p)
\]
for all $M \in \Sp$.
Mapping into $I_{\QQ_p/\ZZ_p}$ gives a contravariant endofunctor on $\Sp$. For our applications, it will be useful to introduce a certain connective shifted version of it.

\begin{defn}
    For $n \in \NN$ and $M\in \Sp^\cn$, we define the \tdef{$n$-shifted Brown--Comenetz dual} of $M$ by 
    \[
        \mdef{\Dual{M}{n}}:= \tau_{\ge 0}\hom(M, \Sigma^n I_{\QQ_p/\ZZ_p}) 
        \qin \Sp^\cn.
    \]
\end{defn}

For a connective commutative $p$-local ring spectrum $\OR$ and a connective $\OR$-module $M$, the $n$-shifted Brown--Comenetz dual $\Dual{M}{n}$ admits a canonical $\OR$-module structure via the action of $\OR$ on the source of the mapping spectrum $\hom(M,\Sigma^n I_{\QQ_p/\ZZ_p})$. In particular, $\Dual{\OR}{n}$ itself is a connective $\OR$-module. In fact, it represents the functor $\Dual{}{n}$ internally to connective $\OR$-modules. In the following,  $\hom_\OR^\cn$ stands for the internal hom functor in \emph{connective} $\OR$-modules. 

\begin{lem}
    Let $R\in \calg(\Sp_{(p)}^{\cn})$. For every $M \in \Mod_\OR^\cn$, we have
    \[
        \Dual{M}{n} \:\simeq\: \hom_\OR^\cn(M,\Dual{\OR}{n})
        \qin \Mod_\OR^\cn.
    \]
\end{lem}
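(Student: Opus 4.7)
The plan is to deduce the equivalence from the standard forgetful--cofree adjunction between $\Sp$ and $\Mod_{\OR}$, combined with a routine t-structure argument that lets us commute $\hom_{\OR}(M,-)$ past the connective cover that appears in the definition of the shifted Brown--Comenetz dual.

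First, I will use the adjunction $U \dashv V$, where $U\colon \Mod_{\OR}\to\Sp$ is the forgetful functor and $V := \hom_\Sp(\OR,-)$ is its right adjoint. This gives a natural equivalence $\hom_{\OR}(M,V(X))\simeq \hom_\Sp(M,X)$ for $M\in\Mod_{\OR}$ and $X\in\Sp$. Setting $X=\Sigma^n I_{\QQ_p/\ZZ_p}$ and applying $\tau_{\ge 0}$ on both sides yields
\[
\tau_{\ge 0}\hom_{\OR}\bigl(M,\,V(\Sigma^n I_{\QQ_p/\ZZ_p})\bigr) \;\simeq\; \tau_{\ge 0}\hom_\Sp(M,\Sigma^n I_{\QQ_p/\ZZ_p}) \;=\; \Dual{M}{n}.
\]

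Next, since $\Mod_{\OR}^\cn$ is the connective part of the standard t-structure on $\Mod_{\OR}$, its internal hom is computed as $\tau_{\ge 0}$ of the internal hom in $\Mod_{\OR}$, i.e.\ $\hom_{\OR}^\cn(M,-)=\tau_{\ge 0}\hom_{\OR}(M,-)$ on connective targets. Since $\Dual{\OR}{n}=\tau_{\ge 0}V(\Sigma^n I_{\QQ_p/\ZZ_p})$, the right-hand side of the lemma becomes $\tau_{\ge 0}\hom_{\OR}\bigl(M,\,\tau_{\ge 0}V(\Sigma^n I_{\QQ_p/\ZZ_p})\bigr)$. Comparing with the previous display, it suffices to show that the inner $\tau_{\ge 0}$ is harmless, i.e.\ that $\tau_{\ge 0}\hom_{\OR}(M,N) \simeq \tau_{\ge 0}\hom_{\OR}(M,\tau_{\ge 0}N)$ for $N := V(\Sigma^n I_{\QQ_p/\ZZ_p})$.

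For this last step, I will apply $\hom_{\OR}(M,-)$ to the fiber sequence $\tau_{\ge 0}N\to N\to \tau_{\le -1}N$ in $\Mod_{\OR}$. Because $M$ is connective, the functor $\hom_{\OR}(M,-)$ carries $(\Mod_{\OR})_{\le -1}$ into $\Sp_{\le -1}$, so $\hom_{\OR}(M,\tau_{\le -1}N)\in\Sp_{\le -1}$ and is annihilated by $\tau_{\ge 0}$; the resulting long fiber sequence in $\Sp^\cn$ then collapses to give the desired identification. Naturality in $M$ is automatic from the functoriality of all the ingredients, and the $\OR$-module structure on both sides is induced from the $\OR$-action on the source variable. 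The only mildly delicate point, hardly an obstacle, is the t-structure compatibility in the last step, which is standard for connective $M$.
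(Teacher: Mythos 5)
Your proof is correct and follows essentially the same route as the paper's: both reduce the claim to the tensor--hom (forgetful--cofree) adjunction $\hom_\OR(M,\hom(\OR,Y))\simeq\hom(M,Y)$ together with the observation that, for connective $M$, the outer $\tau_{\ge 0}$ allows one to drop the inner $\tau_{\ge 0}$ in the definition of $\Dual{\OR}{n}$. The only difference is expository: where the paper dispatches the commutation of $\tau_{\ge 0}$ past the inner truncation with the phrase ``since $M$ is connective,'' you spell out the underlying t-structure argument via the fiber sequence $\tau_{\ge 0}N\to N\to\tau_{\le -1}N$, which is precisely the justification the paper leaves implicit.
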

\begin{proof}
    We compute using the fact that $M$ is connective,
    \[
        \hom_\OR^\cn(M,\Dual{\OR}{n}) \simeq
        \tau_{\ge 0}\hom_\OR(M, \Dual{\OR}{n}) \simeq
        \tau_{\ge 0}\hom_\OR(M, \tau_{\ge 0}\hom(\OR, \Sigma^n I_{\QQ_p/\ZZ_p})) \simeq
    \]
    \[
        \tau_{\ge 0}\hom_\OR(M, \hom(\OR, \Sigma^n I_{\QQ_p/\ZZ_p})) \simeq
        \tau_{\ge 0}\hom(M, \Sigma^n I_{\QQ_p/\ZZ_p}) \simeq
        \Dual{M}{n}. \qedhere
    \]
\end{proof}

\begin{ter}\label{ter:characters}
With notation as above, we will refer to maps $M \to \Dual{\OR}{n}$ as \textit{characters} of $M$.
\end{ter}

Next, we observe that $\Dual{M}{n}$ is always $n$-truncated and depends only on the $n$-truncation of $M$. Thus, it makes sense to restrict $\Dual{}{n}$ to the full subcategory 
$\mdef{\Mod_\OR^{[0,n]}} \sseq \Mod_\OR^\cn$ 
of $n$-truncated (connective) $\OR$-modules, on which it is characterized by the property
\[
    \pi_k(\Dual{M}{n}) \:\simeq\: 
    \hom_\Ab(\pi_{n-k}M,\QQ_p/\ZZ_p),\qquad 
    k= 0,\dots,n.
\]

\begin{rem}\label{rem:n_trunc}
    For $\OR\in \calg(\Sp_{(p)}^\cn)$, both the spectrum $\Dual{\OR}{n}$ and the $\infty$-category $\Mod_\OR^{[0,n]}$ depend only on the $n$-truncation of $\OR$, 
    so we might as well assume that $\OR$ is $n$-truncated whenever it is convenient to do so. 
\end{rem}

When further restricting to the full subcategory 
$\mdef{\Modfin{\OR}{n}} \sseq \Mod_\OR^{[0,n]}$ 
of \tdef{$[0,n]$-finite} $\OR$-modules (i.e., connective $n$-truncated $\pi$-finite $\OR$-modules), the functor $\Dual{}{n}$ becomes a contravariant self-equivalence  
\[
    \Dual{}{n}\colon
    \Modfin{\OR}{n} \iso 
    (\Modfin{\OR}{n})^\op.
\]
Indeed, every module in $\Modfin{\OR}{n}$ is $\pi$-finite and $p$-local, hence its homotopy groups are finite $p$-groups, and the claim follows from the corresponding claim for the functor $\hom(-,\QQ_p/\ZZ_p)$. 

\begin{rem}
    In principle, we could work with the non-$p$-local Pontryagin dual $\hom(-,\QQ/\ZZ)$ and general (i.e., not necessarily $p$-local) connective ring spectra $\OR$. 
    However, in practice, the choice of the shift $n$ will match the semiadditive height of a given higher semiadditive $\infty$-category $\cC$, at the prime $p$ (introduced in \cite{AmbiHeight}). 
    As $\cC$ will usually have different heights at different primes, it will not make much sense to have a fixed shift $n$. instead, we observe that a connective $\pi$-finite spectrum $M$ decomposes as a direct sum of its $p$-localizations
    \(
        M \simeq \bigoplus_{p\text{ prime}} M_{(p)}.
    \)
    So for every vector $\vec{n}= (n_p)$ of integers, we can define the $\vec{n}$-shifted Brown--Comenetz dual of $M\in \Mod_\OR^{\pfin{\pi}}$ by
    \[
        \Dual{M}{\vec{n}}:= \bigoplus_{p\text{ prime}} \Dual{M_{(p)}}{n_p}
        \qin \Mod_\OR^{\pfin{\pi}}.
    \] 
    We chose to work $p$-locally in this paper to make things easier, but essentially everything can be generalized to this ``global'' setting. We note that for a stable $\infty$-category, the semiadditive height can be non-zero for at most one prime. On the other hand, there are interesting non-stable $\infty$-categories, such as those arising via categorification, for which the vector of semiadditive heights can be more complicated. 
\end{rem}

\subsubsection{Pre-orientations} 

Recall that the construction of the discrete Fourier transform for an $m$-torsion abelian group $M$ depends on a choice of an $m$-th root of unity. We can view the condition that $M$ is $m$-torsion as the existence of a (necessarily unique) structure of $\ZZ/m$-module on $M$. Analogously, for a connective ring spectrum $\OR$, the higher Fourier transform in an  $\infty$-category $\cC$, for  connective $\OR$-modules, requires a choice of auxiliary data. 

\begin{defn} \label{def:pre_orientations}
    Let $\cC \in \calg(\cat_\infty)$ and $\OR\in \calg(\Sp_{(p)}^\cn)$. For every $S \in \calg(\cC)$, the space of \tdef{$\OR$-pre-orientations} of height $n$ of $S$ is defined and denoted as follows:
    \[
        \mdef{\POr{\OR}{S;\cC}{n}}:= \Map_{\Sp^\cn}(\Dual{\OR}{n},S^\times),
    \]
    where $S^{\times}$ denotes the spectrum of units of $S$. An $\OR$ pre-orientation of $\cC$ is a pre-orientation of $\one_\cC$ and the space of such is denoted by $\POr{\OR}{\cC}{n}$. If $\OR$ is clear from contect, we will also refer to this data simply as a pre-orientation of $\cC$.
\end{defn}

\begin{example}\label{ex:rootsofunity}
    We have a canonical identification 
    $\Dual{\ZZ/p^r}{n}\simeq \Sigma^n \ZZ/p^r.$ 
    Hence, the notion of a $\ZZ/p^r$-pre-orientation of height $n$ identifies with that of a height $n$ root of unity of order $p^r$, in the sense of \cite[Defnition 4.2]{carmeli2021chromatic}. More precisely, for $\cC \in \calg(\cat_\infty)$ and $S\in \calg(\cC)$ we have 
    \[
        \POr{\ZZ/p^r}{S;\cC}{n} \simeq 
        \Map_\Sp(\Sigma^n \ZZ/p^r,S^\times) \simeq  
        \roots[S]{p^r}{n},
    \] 
    where $\roots[S]{p^r}{n}$ denotes the spectrum of height $n$ roots of unity of order $p^r$ in $S$. Note that, in particular, $\ZZ/p^r$-pre-orientations of height $0$ of a field $\FF$ correspond to $p^r$-th roots of unity in $\FF$.
\end{example}

\begin{rem}\label{rem:luriepreor}
    For a $p$-divisible group $\GG$ over an $\EE_\infty$-ring spectrum $R$, Lurie \cite[Definition 2.1.4]{Lurie_Ell3} defines a pre-orientation of $\GG$ as a map of $\ZZ$-modules $\Sigma\QQ_p/\ZZ_p \to \GG(R)$. 
    Since for the multiplicative group $\GG_m$ we have $\GG_m(E)\simeq \hom(\ZZ,R^\times)$, a pre-orientation of $\GG_m$ is the same thing as a map of spectra 
    \[
        \Sigma I_{\ZZ_{(p)}}\simeq \Sigma \QQ_p/\ZZ_p \to R^\times.
    \]
    Thus, we can identify the space of $\ZZ_{(p)}$-pre-orientations of height $1$ of $R$, with the space of pre-orientations of the $p$-divisible group $\GG_m$ over $R$. In general, we can view $\POr{\OR}{\cC}{n}$ as an ``$\OR$-linear, higher height analogue'' of pre-orientations for the multiplicative $p$-divisible group over the unit $\one$.
\end{rem}

There is an adjoint way of viewing pre-orientations. When $\cC$ is presentable, the functor
\[
    (-)^\times \colon \calg(\cC) \too \Sp^\cn
\] 
admits a left adjoint 
\[
    \mdef{\one[-]}\colon \Sp^\cn \too \calg(\cC),
\]
which takes a connective spectrum $M$ to its group-algebra $\one[M]$. 
Hence, we can identify a pre-orientation $\omega \colon \Dual{\OR}{n}\to \one^\times$ with an augmentation
\[
    \mdef{\varepsilon_\omega}\colon 
    \one[\Dual{\OR}{n}]\too 
    \one \qin \calg(\cC).
\]

Thus, $\OR$-pre-orientations of height $n$ for commutative algebras in $\cC$ are corepresented by $\one[\Dual{\OR}{n}]$.

\begin{example}
    Let $\cC$ be the category of complex vector spaces and let $R=\ZZ/p^r$. For an $m$-th root of unity $\omega \in \CC^\times$, the augmentation $\varepsilon_\omega\colon \CC[\ZZ/p^r]\to \CC$ is given by  
    \[
        \varepsilon_\omega\left (\sum_{k\in \ZZ/p^r} a_k [k]\right)= \sum_{k\in \ZZ/p^r} a_k \omega^k \qin \CC.
    \] 
    Hence, we can view $\varepsilon_\omega$ in general as the map which gives $\one$-valued ``exponential sums''.
\end{example}

\subsection{The Fourier transform}\label{ssec:fourier}

\subsubsection{Construction} 

Given an $\OR$-pre-orientation $\omega$ of height $n$, we now construct for every connective $\OR$-module $M$ a map of commutative algebras
$\Four_\omega \colon \one[M] \to \one^{\und{\Dual{M}{n}}}.$
In fact, we show that the space of such maps, that are natural in $M$, is \textit{parameterized} by pre-orientations.

\begin{prop}\label{Four_is_POr}
    Let $\cC \in \calg(\Prl)$ and let $\OR\in \calg(\Sp_{(p)}^\cn)$. The space of natural transformations $\one[-] \to \one^{\und{\Dual{(-)}{n}}}$ of functors $\Mod_\OR^{[0,n]} \to \calg(\cC)$ is naturally isomorphic to $\POr{\OR}{\cC}{n}$.
\end{prop}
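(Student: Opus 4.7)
The plan is to unwind the space of natural transformations via a chain of adjunctions and then apply the Yoneda lemma. Writing $U \colon \Mod_\OR^{[0,n]} \to \Sp^\cn$ for the forgetful functor, the free--units adjunction $\one[-] \dashv (-)^\times$ identifies natural transformations $\one[-] \to \one^{\und{\Dual{(-)}{n}}}$ of $\calg(\cC)$-valued functors with natural transformations $U \to (\one^{\und{\Dual{(-)}{n}}})^\times$ of $\Sp^\cn$-valued functors. Since $(-)^\times$ is a right adjoint, it commutes with cotensoring by spaces, so $(\one^{\und{\Dual{M}{n}}})^\times \simeq (\one^\times)^{\und{\Dual{M}{n}}}$.

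Next, the cotensor--tensor adjunction between spaces and spectra translates a map $M \to (\one^\times)^{\und{\Dual{M}{n}}}$ of connective spectra into a map $\und{\Dual{M}{n}} \to \Map_\Sp(M, \one^\times)$ of spaces, naturally in $M$. Consequently the previous mapping space is equivalent to the space of natural transformations of contravariant space-valued functors on $\Mod_\OR^{[0,n]}$ from $\und{\Dual{(-)}{n}}$ to $\Map_\Sp((-),\one^\times)$.

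Using the previous subsection's identification $\Dual{M}{n}\simeq\hom_\OR^\cn(M,\Dual{\OR}{n})$ together with the fact that $\Dual{\OR}{n}$ is itself connective and $n$-truncated (hence lies in $\Mod_\OR^{[0,n]}$), I rewrite $\und{\Dual{M}{n}} \simeq \Map_{\Mod_\OR^{[0,n]}}(M,\Dual{\OR}{n})$. The source of the natural transformation is thereby recognised as the contravariant Yoneda functor $\Map_{\Mod_\OR^{[0,n]}}(-,\Dual{\OR}{n})$, and the Yoneda lemma yields
\[
    \Map_\Sp(\Dual{\OR}{n},\one^\times) \:=\: \POr{\OR}{\cC}{n},
\]
as desired.

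The main thing to watch is the bookkeeping of naturality through the adjunction transpositions, to ensure that one ends up with a natural transformation of the intended space-valued functors so that the Yoneda step applies cleanly. Tracing the composite equivalence in reverse shows that a pre-orientation $\omega\colon\Dual{\OR}{n}\to\one^\times$ produces the expected Fourier natural transformation, assembled from the evaluation pairing $M\otimes\Dual{M}{n}\to\Dual{\OR}{n}$ and the counit $\Sigma^{\infty}_{+}\und{X}\to X$ postcomposed with $\omega$.
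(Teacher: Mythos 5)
Your proof is correct and amounts to the same underlying idea as the paper's. Where you strip the problem down through the adjunctions $\one[-]\dashv(-)^\times$ and $(-)^A \dashv \Map(A,-)$ and then invoke Yoneda for the representable functor $\und{\Dual{(-)}{n}}\simeq \Map_{\Mod_\OR^{[0,n]}}(-,\Dual{\OR}{n})$, the paper compresses the same steps into a single observation: $\one^{\und{\Dual{(-)}{n}}}$ is the right Kan extension of the constant functor $\one\colon \pt\to\calg(\cC)$ along the inclusion $\Dual{\OR}{n}\colon\pt\to\Mod_\OR^{[0,n]}$, whose universal property identifies natural transformations out of $\one[-]$ with augmentations of $\one[\Dual{\OR}{n}]$, i.e.\ with pre-orientations. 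Your chain of adjunctions and the Yoneda step are exactly what makes the Kan-extension formula representable and the universal property applicable, so the two proofs are presentational variants of the same argument.
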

\begin{proof}
    The functor that takes an $\OR$-module $M$ to the algebra $\charac{M}{n}$ is the right Kan extension of the functor $\pt \to \calg(\cC)$ corresponding to the object $\one$, along the functor $\pt \to \Mod_\OR^{[0,n]}$ corresponding to the object $\Dual{\OR}{n}$:
    \[\begin{tikzcd}
    	& {\pt} \\
    	{\Mod_\OR^{[0,n]}} && {\calg(\cC).}
        \arrow["\one", from=1-2, to=2-3]
        \arrow["{\Dual{\OR}{n}}"', from=1-2, to=2-1]
        \arrow["{\one^{\und{\Dual{(-)}{n}}}}"', dashed, from=2-1, to=2-3]
    \end{tikzcd}\]
    It follows that the space of natural transformations of the form
    $\one[-]\to \charac{-}{n}$
    is isomorphic to the space of augmentations of the algebra $\one[\Dual{\OR}{n}]$, which is the same as the space $\POr{\OR}{\cC}{n}$ of $\OR$-pre-orientations of height $n$. 
\end{proof}

\begin{defn}\label{Def_Four}
    Let $\cC \in \calg(\Prl)$ and let $\OR\in \calg(\Sp_{(p)}^\cn)$. For $\omega \in \POr{\OR}{\cC}{n}$, the associated \tdef{Fourier transform} is the natural transformation 
    \[
        \mdef{\Four_\omega} \colon \one[-] \too \one^{\und{\Dual{(-)}{n}}}
    \]
    of functors $\Mod_\OR^{[0,n]} \to \calg(\cC)$ corresponding to $\omega$ by \Cref{Four_is_POr}. We shall occasionally consider the Fourier transform as a functor $\Four_\omega \colon \Mod_\OR^{[0,n]} \to \calg(\cC)^{[1]}.$
\end{defn}

A couple of remarks on the definition of the Fourier transform are in order. 

\begin{rem}\label{Four_Explicit}
By unwinding the definition of the Kan extension, the passage from a pre-orientation to the associated Fourier transform and vice versa can be made more explicit. First, by evaluating $\Four_\omega$ on $M= \Dual{\OR}{n}$, we recover the augmentation that corresponds to $\omega \colon \Dual{\OR}{n} \to \one^\times$, as the composition
    \[
        \varepsilon_\omega \colon 
\one[\Dual{\OR}{n}] \oto{\:\Four_\omega\:}
\one^{\und{\Dual{{\Dual{\OR}{n}}}{n}}}=
        \one^{\Map_R(\Dual{\OR}{n},\Dual{\OR}{n})} \oto{\ev_{\Id}}
        \one.
    \]
    Conversely, let $\omega \colon \Dual{\OR}{n} \to \one^\times$ and let $M\in \Mod_\OR^{[0,n]}$. For every character $\varphi \colon M \to \Dual{\OR}{n}$, the composition $\omega \circ \varphi \colon M \to \one^\times$
    corresponds to an augmentation 
    $\varepsilon_{\omega \circ \varphi} \colon \one[M] \to \one.$ 
    These assemble into a map 
    \[
\und{\Dual{M}{n}}= \Map_R(M,\Dual{\OR}{n}) \too \Map_{\calg(\cC)}(\one[M],\one),
    \]
    which is the same data as the map 
    \(
        \Four_\omega\colon \one[M] \to \one^{\und{\Dual{M}{n}}}.
    \)
\end{rem}

\begin{rem}\label{Four_Pi_Finite}
    In \Cref{Four_is_POr}, one could replace $\Mod_\OR^{[0,n]}$ with a larger $\infty$-category such as $\Mod_R^{\cn}$ and thus define the Fourier transform for all connective $\OR$-modules. Though this might sometimes be technically convenient, we shall be interested primarily in the Fourier transform only for $n$-truncated modules. 
    In fact, eventually, we shall be interested only in the further restriction of the Fourier transform to the full subcategory 
    $\Modfin{\OR}{n} \sseq \Mod_\OR^{[0,n]}$
    of \textit{$[0,n]$-finite} modules.
    However, for establishing some of its formal properties, it is useful to use the characterization provided by \Cref{Four_is_POr}. Having said that, if $\OR$ itself happens to be $[0,n]$-finite (e.g., $\OR=\ZZ/p^r$), then 
    $\Dual{\OR}{n}\in \Modfin{\OR}{n}$ and one could also replace $\Mod_\OR^{[0,n]}$ with $\Modfin{\OR}{n}$.
\end{rem}

\Cref{Def_Four} generalizes the classical discrete Fourier transform in the following sense:
\begin{example}
    Let $\cC$ be the category of $\CC$-vector spaces. For an $m$-th root of unity $\omega \in \CC^\times$ and an $m$-torsion abelian group $M$, the map $\Four_\omega\colon \CC[M]\to \CC^{M^*}$ is given on $x\in M\subseteq \CC[M]$ by 
    \[
        \Four_\omega(x)\colon \varphi \mapsto \omega^{\varphi(x)}.
    \]
In particular, if $M\simeq \ZZ/p^r$ and we identify $M^*$ with $M$ using the generator $1\in M$, we recover the classical (inverse) Fourier matrix $\Four_\omega(k)(\ell)= \omega^{k\cdot \ell}$. 
\end{example}

The Fourier transform can be enhanced to take into account a bit more structure. A character
$\varphi \colon M \to \Dual{\OR}{n}$ induces a map $\omega \circ \varphi \colon M \to \one^\times$, which corresponds to an augmentation 
\[
    \varepsilon_{\omega \circ \varphi} \colon \one[M] \too \one
    \qin \calg(\cC).
\] 
On the other hand, thinking of $\varphi$ as a point in $\und{\Dual{M}{n}}$ yields an augmentation 
\[  
    \ev_{\varphi}\colon {\one^{\und{\Dual{M}{n}}}} \too \one
    \qin \calg(\cC).
\]
The Fourier transform intertwines these augmentations.

\begin{prop}\label{Four_Aug}
    Let $\cC \in \calg(\Prl)$ and let $\OR\in \calg(\Sp_{(p)}^\cn)$. For every $\omega \in \POr{\OR}{\cC}{n}$ and $\varphi \colon M \to \Dual{\OR}{n}$ in $\Mod_\OR^{[0,n]}$, the following diagram commutes:
    \[
        \begin{tikzcd}
    	{\one[M]} && {\one^{\und{\Dual{M}{n}}}} \\
    	& \one.
        \arrow["{\Four_\omega}", from=1-1, to=1-3]
        \arrow["{\varepsilon_{\omega \circ \varphi}}"'{pos=0.4}, from=1-1, to=2-2]
        \arrow["{\ev_\varphi}"{pos=0.4}, from=1-3, to=2-2]
        \end{tikzcd}   
    \]
    Moreover, $\Four_\omega$ promotes uniquely to a natural transformation of functors 
    \[
        (\Mod_\OR^{[0,n]})_{/\Dual{\OR}{n}} \too 
        \calg(\cC)_{/\one},
    \]
    whose component at $\varphi\colon M \to \Dual{\OR}{n}$ is given by the diagram above.
\end{prop}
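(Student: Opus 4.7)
The plan is to exploit the right Kan extension description of the functor $M \mapsto \one^{\und{\Dual{M}{n}}}$ that was already used in the proof of \Cref{Four_is_POr}. Concretely, this functor is the right Kan extension of the constant functor $\pt \to \calg(\cC)$ with value $\one$ along the inclusion $\{\Dual{\OR}{n}\} \hookrightarrow \Mod_\OR^{[0,n]}$, so its value at $M$ is the limit $\lim_{\varphi \in \Map_\OR(M,\Dual{\OR}{n})} \one = \one^{\und{\Dual{M}{n}}}$, and the augmentation $\ev_\varphi$ is by definition the $\varphi$-component of the limit cone. Thus a commutative triangle with vertex $\one$ below $\Four_\omega$ at $M$ is, by the universal property, the same data as, for each character $\varphi \colon M \to \Dual{\OR}{n}$, a choice of augmentation $\one[M] \to \one$, naturally in $(M,\varphi)$.

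With this reformulation in hand, I would first verify the diagram on the nose in the universal case $M = \Dual{\OR}{n}$ with $\varphi = \Id$. This is exactly the content of the first half of \Cref{Four_Explicit}: $\ev_{\Id}\circ \Four_\omega$ at $M = \Dual{\OR}{n}$ is precisely $\varepsilon_\omega$, and $\varepsilon_{\omega\circ \Id} = \varepsilon_\omega$. The general case then follows formally by naturality: for any $\varphi \colon M \to \Dual{\OR}{n}$, naturality of $\Four_\omega$ yields a commutative square relating its values at $M$ and at $\Dual{\OR}{n}$, in which the right vertical is the restriction along $\varphi$ and the left vertical is $\one[\varphi]$. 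Postcomposing with $\ev_{\Id}$ sends the right vertical to $\ev_\varphi$, while the composite $\varepsilon_\omega \circ \one[\varphi]$ equals $\varepsilon_{\omega\circ \varphi}$ by the $\one[-] \dashv (-)^\times$ adjunction. Combining these two identifications yields the stated commutative triangle.

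For the promotion to a natural transformation of functors valued in the overcategories, I would argue that both source and target of $\Four_\omega$ canonically lift along the forgetful projection $(\Mod_\OR^{[0,n]})_{/\Dual{\OR}{n}} \to \Mod_\OR^{[0,n]}$ to functors landing in $\calg(\cC)_{/\one}$. The target lifts tautologically via the Kan extension structure: the component $\ev_\varphi$ of the limit cone is natural in $(M,\varphi)$ by definition. The source lifts via $(M,\varphi) \mapsto \varepsilon_{\omega \circ \varphi}$, where the naturality in $(M,\varphi)$ is a direct consequence of the functoriality of $\one[-]$ together with the fact that $\omega\circ(-) \colon \Map_\OR(M, \Dual{\OR}{n}) \to \Map(M, \one^\times)$ is functorial in $M$. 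The triangle identity established above at each $(M,\varphi)$ then assembles, by the universal property of the Kan extension applied fibrewise over $(\Mod_\OR^{[0,n]})_{/\Dual{\OR}{n}}$, into the required natural transformation; uniqueness is immediate from the same universal property. The main subtlety is bookkeeping the coherences of the overcategory lifts, but no new ideas beyond naturality are needed.
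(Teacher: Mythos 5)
Your verification that the triangle commutes at each $(M,\varphi)$ is correct and matches the paper's argument precisely: the case $\varphi = \Id$ is \cref{Four_Explicit}, and the general case follows from naturality of $\Four_\omega$ applied to $\varphi\colon M \to \Dual{\OR}{n}$ plus the identity $\varepsilon_\omega\circ\one[\varphi] = \varepsilon_{\omega\circ\varphi}$.

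The gap is in the second half, the promotion to a natural transformation of functors valued in $\calg(\cC)_{/\one}$. You correctly observe that both the source $(M,\varphi)\mapsto(\one[M],\varepsilon_{\omega\circ\varphi})$ and the target $(M,\varphi)\mapsto(\one^{\und{\Dual{M}{n}}},\ev_\varphi)$ assemble into functors to $\calg(\cC)_{/\one}$, but you then assert that the pointwise triangles "assemble" into a natural transformation between them via "the universal property of the Kan extension applied fibrewise." This is not a valid mechanism: the right Kan extension universal property classifies natural transformations $F \Rightarrow \one^{\und{\Dual{-}{n}}}$ into the \emph{unsliced} target, and the naive attempt to recast $\tilde G$ itself as a right Kan extension along $(\Dual{\OR}{n},\Id)\colon \pt \to (\Mod_\OR^{[0,n]})_{/\Dual{\OR}{n}}$ produces the constant functor $\one$, not $\tilde G$. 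More fundamentally, in the $\infty$-categorical setting, verifying a diagram of spaces commutes pointwise does not automatically yield a coherent natural transformation — one must supply compatible homotopies over all morphisms of $(\Mod_\OR^{[0,n]})_{/\Dual{\OR}{n}}$, which is exactly the coherence you acknowledge but then dismiss as bookkeeping. The paper handles this with a precise tool you don't identify: since $\calg(\cC)_{/\one}\to\calg(\cC)$ is a \emph{right fibration}, the square
\[\begin{tikzcd}
    {(\calg(\cC)_{/\one})^{[1]}} & {(\calg(\cC))^{[1]}} \\
    {\calg(\cC)_{/\one}} & {\calg(\cC)}
    \arrow["{\ev_1}"', from=1-1, to=2-1]
    \arrow["{\ev_1}", from=1-2, to=2-2]
    \arrow[from=1-1, to=1-2]
    \arrow[from=2-1, to=2-2]
\end{tikzcd}\]
is a pullback, so the whiskered $\Four_\omega$ (a functor to $\calg(\cC)^{[1]}$) and the functor $(M,\varphi)\mapsto\ev_\varphi$ (a functor to $\calg(\cC)_{/\one}$), which agree after projecting to $\calg(\cC)$, lift \emph{uniquely} to a functor to $(\calg(\cC)_{/\one})^{[1]}$, i.e.\ the desired natural transformation; the pointwise computation is then used only to identify its component, not to build it. Your proposal is missing this right-fibration/pullback step, and without it the assembly and uniqueness claims are unsupported.
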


\begin{proof}
    One the one hand, the Fourier transform
    \(
        \Four_\omega\colon \one[M] \to  \charac{M}{n},
    \)
    whiskered by the forgetful functor 
    $(\Mod_\OR^{[0,n]})_{/\Dual{\OR}{n}} \to \Mod_\OR^{[0,n]}$, can be viewed as a functor
    \[
        (\Mod_\OR^{[0,n]})_{/\Dual{\OR}{n}} \too
        \calg(\cC)^{[1]}.
    \]
    On the other hand, the construction taking $\varphi\colon M \to \Dual{\OR}{n}$ to $\ev_\varphi \colon \one^{\und{\Dual{M}{n}}} \to \one$ assembles into a functor 
    \[
        (\Mod_\OR^{[0,n]})_{/\Dual{\OR}{n}} \longrightarrow 
        \calg(\cC)_{/\one}.
    \]
    Since the forgetful functor $\calg(\cC)_{/\one} \to \calg(\cC)$ is a right fibration, we have a pullback square of $\infty$-categories
    \[\begin{tikzcd}
    	{(\calg(\cC)_{/\one})^{[1]}} & {(\calg(\cC))^{[1]}} \\
    	{(\calg(\cC)_{/\one})} & {\calg(\cC).}
        \arrow["{\ev_1}"', from=1-1, to=2-1]
        \arrow["{\ev_1}", from=1-2, to=2-2]
        \arrow[from=1-1, to=1-2]
        \arrow[from=2-1, to=2-2]
        \arrow["\lrcorner"{anchor=center, pos=0.125}, draw=none, from=1-1, to=2-2]
    \end{tikzcd}\]

    It follows that the two functors above, which agree after projecting to $\calg(\cC)$, uniquely lift to a functor
    \[
        (\Mod_\OR^{[0,n]})_{/\Dual{\OR}{n}} \to (\calg(\cC)_{/\one})^{[1]}.
    \]
It remains to show that for every $M\in \Mod_\OR^{[0,n]}$, the map $\ev_{\varphi } \circ \Four_\omega$ is homotopic to $\varepsilon_{\omega \circ \varphi}$. By the naturality of $\Four_\omega$, we have a commutative diagram:
    \[\begin{tikzcd}
    	{ \one[M]} && {\one^{\und{\Dual{M}{n}}}} \\
    	{\one[\Dual{\OR}{n}]} && {\one^{\und{\Dual{{\Dual{\OR}{n}}}{n}}}} && { \one.}
\arrow["\varphi"', from=1-1, to=2-1]
\arrow["\varphi", from=1-3, to=2-3]
\arrow["{\Four_\omega}", from=1-1, to=1-3]
\arrow["{\Four_{\omega}}", from=2-1, to=2-3]
\arrow["{\ev_\Id}", from=2-3, to=2-5]
\arrow["{\ev_\varphi}"{pos=0.6}, curve={height=-12pt}, from=1-3, to=2-5]
    \end{tikzcd}\]    
By \cref{Four_Explicit}, the composition of the two bottom maps is $\varepsilon_\omega$ and $\varepsilon_\omega \circ \varphi= \varepsilon_{\omega \circ \varphi}$, so the claim follows.
\end{proof}

\subsubsection{Functoriality}

We now explain how the Fourier transform $\Four$ is functorial in the various arguments it depends on. First, given a map $f\colon \ORone\to \ORtwo$ in $\calg(\Sp_{(p)}^\cn)$, we can regard an $S$-module as an $\OR$-module by restriction of scalars along $f$, which we denote by 
$f_* \colon \Mod_\ORtwo^{[0,n]} \to \Mod_\ORone^{[0,n]}.$ 
Similarly, given an $\OR$-pre-orientation $\omega \colon \Dual{\ORone}{n} \to \one^\times$, by pre-composition with the $\OR$-module map $\Dual{\ORtwo}{n} \to \Dual{\OR}{n}$, we obtain an $\ORtwo$-pre-orientation, which we denote by $f_*\omega \colon \Dual{\ORtwo}{n} \to \one^\times$. 

\begin{prop} \label{or_funct_ring}
    Let $\cC\in  \calg(\Prl)$, let $f\colon \ORone\to \ORtwo$ in $\calg(\Sp_{(p)}^\cn)$, and let $\omega \colon \Dual{\ORone}{n}\to \one^\times$ be an $\ORone$-pre-orientation. There is a canonical isomorphism of natural transformations
    \[
        \Four_{f_*\omega} \:\simeq\: \Four_{\omega} f_*
    \]
    of functors $\Mod_\ORtwo^{[0,n]} \to \calg(\cC).$
\end{prop}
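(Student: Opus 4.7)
My plan is to apply the classification of Fourier natural transformations provided by \Cref{Four_is_POr} to both sides of the claimed isomorphism.

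First, I would observe that both $\Four_{f_*\omega}$ and $\Four_\omega \circ f_*$ are natural transformations between the same pair of functors $\Mod_\ORtwo^{[0,n]} \to \calg(\cC)$. Indeed, the underlying spectrum of $f_* M$ agrees with that of $M$, so the group algebra $\one[-]$ only depends on the underlying spectrum and is unaffected by restriction of scalars; similarly, since $\Dual{M}{n} = \tau_{\ge 0}\hom(M, \Sigma^n I_{\QQ_p/\ZZ_p})$ is defined using the internal hom of spectra, the space $\und{\Dual{M}{n}}$ depends only on $M$ as a spectrum, and hence $\one^{\und{\Dual{(-)}{n}}}$ gives the same functor on $\Mod_\ORtwo^{[0,n]}$ whether computed directly or via $f_*$. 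Concretely, the natural comparison $\und{\Dual{M}{n}} \iso \Map_\ORone(f_*M,\Dual{\ORone}{n})$ (coming from $\Dual{\ORone}{n}= \tau_{\ge 0}\hom(\ORone,\Sigma^nI_{\QQ_p/\ZZ_p})$) identifies the two possible targets.

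By \Cref{Four_is_POr}, natural transformations $\one[-] \to \one^{\und{\Dual{(-)}{n}}}$ of functors $\Mod_\ORtwo^{[0,n]}\to\calg(\cC)$ are classified by $\POr{\ORtwo}{\cC}{n}$, where the classifying pre-orientation is extracted (as in \Cref{Four_Explicit}) by evaluating at $M = \Dual{\ORtwo}{n}$ and composing with $\ev_\Id$. It therefore suffices to check that both sides correspond to the pre-orientation $f_*\omega \in \POr{\ORtwo}{\cC}{n}$. For $\Four_{f_*\omega}$ this is true by construction, so the only real content is the analogous statement for $\Four_\omega \circ f_*$.

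For this, I would track the identity endomorphism $\Id_{\Dual{\ORtwo}{n}}$, viewed as a point of $\und{\Dual{\Dual{\ORtwo}{n}}{n}}$, across the identification $\und{\Dual{\Dual{\ORtwo}{n}}{n}} \simeq \Map_\ORone(f_*\Dual{\ORtwo}{n}, \Dual{\ORone}{n})$. It corresponds to the canonical $\ORone$-module map $\varphi\colon f_*\Dual{\ORtwo}{n} \too \Dual{\ORone}{n}$ induced by pre-composition with $f\colon \ORone \to \ORtwo$. Applying \Cref{Four_Aug} to the $\ORone$-Fourier transform $\Four_\omega$ at the character $\varphi$ yields
\[
    \ev_\varphi \circ \Four_\omega(f_*\Dual{\ORtwo}{n}) \;\simeq\; \varepsilon_{\omega \circ \varphi}.
\]
Unwinding definitions, $\omega \circ \varphi$ is the composite $\Dual{\ORtwo}{n} \to \Dual{\ORone}{n} \oto{\omega} \one^\times$, which is precisely $f_*\omega$.

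The only delicate point is the careful bookkeeping in the previous paragraph: identifying $\Id_{\Dual{\ORtwo}{n}}$ with the comparison map $\varphi$ under the restriction-of-scalars equivalence of character spaces, rather than with its adjoint. Once this identification is nailed down, the conclusion follows immediately from \Cref{Four_Aug} and the universal characterization in \Cref{Four_is_POr}.
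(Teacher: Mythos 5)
Your proof is correct and follows essentially the same route as the paper's: reduce via \Cref{Four_is_POr} to comparing the associated augmentations, identify the point $\Id_{\Dual{\ORtwo}{n}}$ with the dualized map $\Dual{f}{n}$ (your $\varphi$) under the restriction-of-scalars identification of character spaces, and then invoke \Cref{Four_Aug} together with the definition of $f_*\omega$. The paper's write-up routes the last step through \Cref{Four_Explicit} a bit more explicitly, but the content is the same.
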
 

\begin{proof}
    First, observe that for every $M\in \Mod^\cn_\ORtwo$ we have
    \[
        \Map_\ORone(f_* M, \Dual{\ORone}{n}) \simeq
        \und{\Dual{M}{n}} \simeq
        \Map_\ORtwo( M, \Dual{\ORtwo}{n}).
    \]
    Hence, both $\Four_{f_*\omega}$ and $\Four_{\omega} f_*$ are maps $\one[-] \to \one^{\Dual{(-)}{n}}$ of functors $\Mod_\ORtwo^{[0,n]} \to \calg(\cC)$. Consequently, by \Cref{Four_is_POr}, it suffices to show that $\Four_{\omega} f_*$ is classified by the pre-orientation $f_*\omega$. For this, it suffices to identify the corresponding augmentations. By \Cref{Four_Explicit}, the augmentation associated with $\Four_{\omega} f_*$ is given by
    \[
        \varepsilon\colon \one[\Dual{\ORtwo}{n}] \oto{\Four_\omega} \one^{\Map_\ORone(\Dual{\ORtwo}{n},\Dual{\ORone}{n})} \iso \one^{\Map_\ORtwo(\Dual{\ORtwo}{n},\Dual{\ORtwo}{n})} \oto{\ev_\Id}
        \one.
    \]
    The isomorphism $\Map_\ORone(\Dual{\ORtwo}{n},\Dual{\ORone}{n})\iso \Map_\ORtwo(\Dual{\ORtwo}{n},\Dual{\ORtwo}{n})$ takes the map $\Dual{f}{n}\colon \Dual{\ORtwo}{n}\to \Dual{\ORone}{n}$, dual to $f\colon \ORone\to \ORtwo$, to the identity map of $\Dual{\ORtwo}{n}$. Hence, we can identify $\varepsilon$ with the composition
    \[
        \one[\Dual{\ORtwo}{n}] \oto{\Four_\omega} \one^{\Map_\ORone(\Dual{\ORtwo}{n},\Dual{\ORone}{n})}  \oto{\ev_{(\Dual{f}{n})}} \one.
    \]
    By \Cref{Four_Aug}, this composition is homotopic to the composition
    \[
        \one[\Dual{\ORtwo}{n}] \oto{\Dual{f}{n}}
        \one[\Dual{\ORone}{n}] \oto{\varepsilon_\omega} \one,
    \]
    which is by definition $\varepsilon_{f_*\omega}$. 
\end{proof}

\begin{example}
    Let $\cC\in \calg(\Prl)$, and let  $\omega \in \roots{p^{r+1}}{n}= \POr{\ZZ/p^{r+1}}{\cC}{n}$.
    The Pontryagin dual of the quotient map $\ZZ/p^{r+1}\onto \ZZ/p^r$ identifies with the inclusion $\ZZ/p^r\into \ZZ/p^{r+1}$. Hence, 
    if $M$ is a $\ZZ/p^r$-module, then by \Cref{or_funct_ring} we can identify the map
    \[
        \Four_{\omega^p}\colon \one[M]\too \one^{\und{\Dual{M}{n}}},
    \]
    with the map 
    \[
        \Four_\omega\colon \one[M]\too \one^{\und{\Dual{M}{n}}},
    \]
    obtained by regarding $M$ as a $\ZZ/p^{r+1}$-module.
\end{example}

Next, we discuss the functoriality of the Fourier transform with respect to the ambient $\infty$-category. A symmetric monoidal functor $F\colon \cC \to \cD$ induces a functor $\calg(\cC) \to \calg(\cD).$ Furthermore, given a pre-orientation $\omega \colon \Dual{\OR}{n} \to \one_\cC^\times$ in $\cC$, by post-composition with the map $F^\times \colon \one_\cC^\times \to \one_\cD^\times$, we obtain a pre-orientation in $\cD$, which we denote by $F(\omega) \colon \Dual{\OR}{n} \to \one_\cD^\times$. 

\begin{prop} \label{Four_Funct_C}
    Let $F\colon \cC\to \cD$ in $\calg(\Prl)$, let $\OR\in \calg(\Sp_{(p)}^\cn)$, and let $\omega \colon \Dual{\OR}{n}\to \one_\cC^\times$ be an $\OR$-pre-orientation of $\cC$. The following diagram commutes
    \[\begin{tikzcd}
    	{ F(\one_\cC[M])} && {F(\one_\cC^{\und{\Dual{M}{n}}})} \\
    	{ \one_\cD[M]} && {\one_\cD^{\und{\Dual{M}{n}}}}
\arrow[from=2-1, to=1-1]
\arrow[from=1-3, to=2-3]
\arrow["{F(\Four_\omega)}", from=1-1, to=1-3]
\arrow["{\Four_{F(\omega)}}", from=2-1, to=2-3]
    \end{tikzcd}\]
    naturally in $M \in \Mod_\OR^{[0,n]}$. Here, the vertical maps are induced by functoriality.
\end{prop}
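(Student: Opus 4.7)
The plan is to apply \Cref{Four_is_POr} in $\cD$ to reduce the square's commutativity to an equality of two pre-orientations. Observe that both compositions around the square define natural transformations of functors $\Mod_\OR^{[0,n]} \to \calg(\cD)$ from $M \mapsto \one_\cD[M]$ to $M \mapsto \one_\cD^{\und{\Dual{M}{n}}}$; by \Cref{Four_is_POr}, such transformations are parameterized by $\POr{\OR}{\cD}{n}$. I would check that both compositions correspond to the pre-orientation $F(\omega) = F^\times \circ \omega$. The lower path $\Four_{F(\omega)}$ does so by definition.

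To identify the pre-orientation associated to the upper composition $\beta \circ F(\Four_\omega) \circ \alpha$, where $\alpha \colon \one_\cD[-] \to F(\one_\cC[-])$ and $\beta \colon F(\one_\cC^{\und{\Dual{(-)}{n}}}) \to \one_\cD^{\und{\Dual{(-)}{n}}}$ are the canonical comparison maps, I would follow the recipe of \Cref{Four_Explicit}: evaluate at $M = \Dual{\OR}{n}$, post-compose with the evaluation $\ev_{\Id}$ at the identity character, and take the adjoint under $\one_\cD[-] \dashv (-)^\times$. The assembly map $\beta$ is natural and compatible with evaluation at every point, in the sense that $\ev_\varphi \circ \beta \simeq F(\ev_\varphi)$ for every character $\varphi$. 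Combined with the identity $\ev_{\Id} \circ \Four_\omega \simeq \varepsilon_\omega$ of \Cref{Four_Explicit} applied in $\cC$, the augmentation extracted from the upper path becomes
\[
    \one_\cD[\Dual{\OR}{n}] \xrightarrow{\;\alpha\;} F(\one_\cC[\Dual{\OR}{n}]) \xrightarrow{\;F(\varepsilon_\omega)\;} \one_\cD.
\]

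Finally, by construction $\alpha$ is the map in $\calg(\cD)$ adjoint to $F^\times \circ \eta_\cC \colon \Dual{\OR}{n} \to F(\one_\cC[\Dual{\OR}{n}])^\times$, where $\eta_\cC$ is the unit of $\one_\cC[-] \dashv (-)^\times$ in $\cC$. Hence the adjoint of $F(\varepsilon_\omega) \circ \alpha$ equals
\[
    F^\times \circ \varepsilon_\omega^\times \circ \eta_\cC \;\simeq\; F^\times \circ \omega \;=\; F(\omega),
\]
using the defining identity $\varepsilon_\omega^\times \circ \eta_\cC \simeq \omega$ of the pre-orientation. This matches the pre-orientation of the lower path, so \Cref{Four_is_POr} forces the two natural transformations to coincide. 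The only nontrivial ingredient is the naturality relation $\ev_\varphi \circ \beta \simeq F(\ev_\varphi)$ for the assembly map, which is a standard property of the lax compatibility of a colimit-preserving symmetric monoidal functor with cotensors; everything else is formal adjunction bookkeeping.
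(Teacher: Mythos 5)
Your proof is correct and follows essentially the same route as the paper's: both reduce, via \Cref{Four_is_POr}, to showing that the two composites around the square are classified by the same pre-orientation, and both identify that pre-orientation by unwinding the adjunction $\one_\cD[-] \dashv (-)^\times$ at $M = \Dual{\OR}{n}$. The paper states the augmentation identity directly, whereas you make explicit the intermediate bookkeeping (the comparison maps $\alpha$, $\beta$ and the compatibility $\ev_\varphi \circ \beta \simeq F(\ev_\varphi)$), but the underlying argument is the same.
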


\begin{proof}    
By \Cref{Four_is_POr}, it suffices to show that the following diagram commutes:
    \[\begin{tikzcd}
    	{ F(\one_\cC[\Dual{\OR}{n}])} && {F(\one_\cC)} \\
    	{ \one_\cD[\Dual{\OR}{n}]} && {\one_\cD}
\arrow[from=2-1, to=1-1]
\arrow[from=1-3, to=2-3]
\arrow["{F(\varepsilon_\omega)}", from=1-1, to=1-3]
\arrow["{\varepsilon_{F(\omega)}}", from=2-1, to=2-3]
    \end{tikzcd}\]
    Namely, that the augmentation $\varepsilon_{F(\omega)}$ is homotopic to the composition 
    \[
\one_\cD[\Dual{\OR}{n}] \simeq F(\one_\cC[\Dual{\OR}{n}]) \oto{\:F\circ \varepsilon_\omega\:} F(\one_\cC) \simeq \one_\cD.
    \]
    Via the adjunction $\one_\cD[-]\dashv (-)^\times$, this composition corresponds to the map 
    \[
\Dual{\OR}{n} \oto{\:\:\omega\:\:} \one_\cC^\times \oto{F^\times} \one_\cD^\times,
    \]
    which is by definition $F(\omega)$.
\end{proof}

\begin{rem}\label{Four_Iso_C}
    In the situation of \Cref{Four_Funct_C}, the left vertical map is always an isomorphism, since by assumption $F$ preserves colimits. If we additionally require $F$ to preserve $\und{\Dual{M}{n}}$-shaped limits, then the right vertical map is also an isomorphism and hence $F(\Four_\omega)$ identifies with $\Four_{F(\omega)}$. 
\end{rem}

We have one more type of functoriality to consider, which is given by rescaling the pre-orientation. For $\OR\in\calg(\Sp_{(p)}^\cn)$, the underlying space $\und{\OR}$ with its multiplicative monoid structure acts naturally on every $\OR$-module $M$. Thus, given $\cC \in \calg(\Prl)$ and a pre-orientation $\omega \in \POr{\OR}{\cC}{n}$, the naturality of the Fourier transform 
$\Four_\omega \colon \one[-] \to \one^{\und{\Dual{-}{n}}}$
makes it $\und{\OR}$-equivariant. In particular, for every $r \in \und{\OR}$, we have a commutative square 
\[\begin{tikzcd}
	{\one[M]} && {\one^{\und{\Dual{M}{n}}}} \\
	{\one[M]} && {\one^{\und{\Dual{M}{n}}}}
\arrow["{r \cdot}"', from=1-1, to=2-1]
\arrow["{r \cdot}", from=1-3, to=2-3]
\arrow["{\Four_\omega}", from=2-1, to=2-3]
\arrow["{\Four_\omega}", from=1-1, to=1-3]
\end{tikzcd}\]
natural in $M\in \Mod_\OR^{[0,n]}$. Now, the diagonal map given by (either of) the compositions is also a natural transformation of the form  
$\one[-] \to \one^{\und{\Dual{-}{n}}}$
and hence, by \Cref{Four_is_POr}, is classified by the associated augmentation
\[
\Dual{\OR}{n} \oto{\:\:r\cdot\:\:}\Dual{\OR}{n} \oto{\:\:\omega\:\:} \one^\times.
\]

\begin{defn}\label{def:Por_Scaling} 
    For $r\in \und{\OR}$, we denote by 
    \[
        \mdef{(-)^r} \colon 
        \POr{\OR}{\cC}{n} \too \POr{\OR}{\cC}{n}
    \]
        the pre-composition with
    \(
\Dual{\OR}{n}\oto{\:r\cdot\:} \Dual{\OR}{n}.
    \) 
\end{defn}

\begin{example}
    For $R= \ZZ/p^r$ and $n=0$, the operation $\omega \mapsto \omega^k$ for $k\in \ZZ/p^r$ is given by raising the $p^r$-th root of unity $\omega$ to the $k$-th power. 
\end{example}

For ease of reference we record the following immediate consequence of the above discussion:

\begin{prop} \label{Por_Scaling}
    Let $\cC\in \calg(\Prl)$, let $\OR\in \calg(\Sp_{(p)}^\cn)$, and let $\omega \in \POr{\OR}{\cC}{n}$. For every $r\in\und{\OR}$, we have a commutative triangle 
    \[\begin{tikzcd}
    	{\one[M]} && {\one[M]} \\
    	& {\one^{\und{\Dual{M}{n}}}}
        \arrow["r \cdot", from=1-1, to=1-3]
        \arrow["{\Four_{\omega^r}}"', from=1-1, to=2-2]
        \arrow["{\Four_{\omega}}", from=1-3, to=2-2]
    \end{tikzcd}\]    
    in $\calg(\cC)$ naturally in $M\in\Mod_\OR^{[0,n]}$.
\end{prop}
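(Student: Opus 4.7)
The plan is that this proposition is essentially a formal consequence of the classification result \Cref{Four_is_POr}, and the argument is implicit in the discussion preceding \Cref{def:Por_Scaling}. My strategy is to exhibit both composites in the triangle as natural transformations of the form $\one[-] \to \one^{\und{\Dual{-}{n}}}$ and then identify the pre-orientations that classify them.

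First, I would note that the map $r \cdot \colon M \to M$ is natural in the variable $M \in \Mod_\OR^{[0,n]}$, since $r \in \und{\OR} = \Map_\Sp(\Sph, \OR)$ acts by $\OR$-linear endomorphisms functorially on every $\OR$-module. Combining this naturality with the naturality of $\Four_\omega$ gives a commutative square
\[
\begin{tikzcd}
\one[M] \ar[r,"\Four_\omega"] \ar[d,"r\cdot"'] & \one^{\und{\Dual{M}{n}}} \ar[d,"r\cdot"] \\
\one[M] \ar[r,"\Four_\omega"'] & \one^{\und{\Dual{M}{n}}},
\end{tikzcd}
\]
whose diagonal is a natural transformation $\one[-] \to \one^{\und{\Dual{-}{n}}}$ of functors $\Mod_\OR^{[0,n]} \to \calg(\cC)$.

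By \Cref{Four_is_POr}, this diagonal is classified by a unique pre-orientation $\omega' \in \POr{\OR}{\cC}{n}$, and it suffices to show $\omega' = \omega^r$. To compute $\omega'$, I would use the recipe of \Cref{Four_Explicit}: evaluate the diagonal at $M = \Dual{\OR}{n}$ and post-compose with $\ev_\Id$ to obtain the corresponding augmentation $\varepsilon_{\omega'}$. Using one side of the square, this augmentation factors as $r\cdot$ on $\one[\Dual{\OR}{n}]$ followed by $\varepsilon_\omega$, which under the adjunction $\one[-] \dashv (-)^\times$ corresponds to the composition $\Dual{\OR}{n} \xrightarrow{r\cdot} \Dual{\OR}{n} \xrightarrow{\omega} \one^\times$. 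By \Cref{def:Por_Scaling}, this is precisely $\omega^r$, so $\omega' = \omega^r$ and we conclude $\Four_\omega \circ (r\cdot) \simeq \Four_{\omega^r}$.

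There is no substantial obstacle here; the argument is essentially bookkeeping. The one point that requires attention is correctly translating the action of $r$ on $M$ through the classifying equivalence of \Cref{Four_is_POr}, i.e., recognizing that multiplication by $r$ on the universal module $\Dual{\OR}{n}$ corresponds under the self-duality of $\Dual{}{n}$ to pre-composition with the $r$-multiplication endomorphism of $\Dual{\OR}{n}$, which is exactly the definition of $\omega \mapsto \omega^r$.
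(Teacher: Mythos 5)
Your proof is correct and follows the same route as the paper: the paper's own proof is the terse remark ``this follows immediately from the definition of $\omega^r$,'' because the preceding paragraph and \Cref{def:Por_Scaling} already exhibit the commutative square by $\und{\OR}$-equivariance of $\Four_\omega$, identify the diagonal via \Cref{Four_is_POr}, and define $\omega^r$ precisely as the pre-orientation classifying it. Your argument reconstructs exactly this chain, with the extra concrete step of computing the augmentation of the diagonal via \Cref{Four_Explicit} and translating $\varepsilon_\omega\circ\one[r\cdot]$ to $\omega\circ(r\cdot)=\omega^r$ under the adjunction $\one[-]\dashv(-)^\times$ — the right check. One small remark: your closing sentence invokes ``self-duality of $\Dual{}{n}$,'' which is not actually what is being used; the relevant ingredient is just the naturality of the adjunction $\one[-]\dashv(-)^\times$ in the first variable, not any duality statement. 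This does not affect the correctness of the computation you performed.
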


\begin{proof}
    This follows immediately from the definition of $\omega^r$. 
\end{proof}

\subsubsection{Duality} 

We conclude this section with a discussion of the symmetry of the Fourier transform with respect to monoidal duality. Given a presentably symmetric monoidal $\infty$-category $\cC$ and $\omega \in \POr{\OR}{\cC}{n}$, we have for every $M\in \Modfin{\OR}{n}$ a Fourier transform map
\[
    (\Four_\omega)_{M} \colon \one[M] \too
    \one^{\und{\Dual{M}{n}}}.
\]
Under the canonical identification $M \iso \Dual{({\Dual{M}{n}})}{n}$, we also have the Fourier transform map
\[
    (\Four_\omega)_{\Dual{M}{n}} \colon \one[\Dual{M}{n}] \too
    \one^{\und{M}}.
\]
We shall now show that when $\cC$ is $n$-semiadditive, $(\Four_\omega)_{\Dual{M}{n}}$ coincides with the monoidal \textit{dual} of $(\Four_\omega)_{M}$. In fact, we show more generally that even when $\cC$ is not assumed to be $n$-semiadditive, the two maps are the \textit{transpose} of one another in the following sense:

\begin{defn}
    Let $\cC \in \calg(\Prl)$.
    \begin{enumerate}
        \item For $X\in \cC$, we define the \tdef{(weak) dual} to be
        \[
            \mdef{X^\vee} := \hom(X,\one) \qin \cC.
        \]
        A map $f\colon X  \to Y^\vee$ can be identified with a map $X\otimes Y \to \one$.
        
        \item For a map $f\colon X\to Y^\vee$ in $\cC$, we define the \tdef{transpose} of $f$ to be the map 
        \[
            \mdef{f^t} \colon Y \too X^\vee \qin \cC,
        \]
        corresponding to $f$ via the isomorphism
        \[
            \Map_{\cC}(X,Y^\vee)\simeq \Map_{\cC}(X\otimes Y, \one) \simeq \Map_{\cC}(Y,X^\vee).
        \]
    \end{enumerate}
\end{defn}

Unwinding the definitions, $f^t$ is given by the composition
\[
f^t\colon Y \too (Y^\vee)^\vee \oto{\:\:f^\vee} X^\vee,
\]
where the first map is the canonical map from an object to its double dual. In particular, if $Y$ is dualizable, then the transpose $f^t$ coincides with the dual $f^\vee$ under the canonical identification $Y \iso Y^{\vee\vee}$. One can think of the transpose as a useful substitute for the dual when $Y$ is non-dualizable.

\begin{rem}
    In general, even if $f$ is an isomorphism, $f^t$ might not be. For example, for $V$ an infinite dimensional vector space over a field $k$, the transpose of the identity map $V^\vee \to V^\vee$ is the non-isomorphic embedding  $V\into (V^\vee)^\vee$ of $V$ into its double dual. This deficiency however disappears if we assume that $X$ and $Y$ are dualizable. 
\end{rem}

\begin{prop} \label{Four_Duality}
    Let $\cC \in \calg(\Prl)$, let $\OR\in \calg(\Sp_{(p)}^\cn)$ and let $\omega \in \POr{\OR}{\cC}{n}$. We have a natural isomorphism 
    \[
        (\Four_\omega)_{\Dual{M}{n}} \:\simeq\: (\Four_\omega)_{M}^t
    \] 
    as natural transformations of functors $\Mod_\OR^{[0,n]} \to \cC$. 
\end{prop}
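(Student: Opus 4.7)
The strategy is to reinterpret the Fourier transform as a pairing and exploit a symmetry of this pairing. The key technical ingredient is the identification, for any $N\in\Sp^{\cn}$, of the underlying object of the group algebra $\one[N]\in \cC$ as the indexed coproduct $\colim_{\und{N}}\one$ (since $\one[-]\colon \Sp^{\cn}\to \calg(\cC)$ is left adjoint, and this left adjoint factors through the indexed coproduct construction on underlying objects). Applying $\hom(-,\one)$ turns this colimit into the limit $\lim_{\und{N}}\one=\one^{\und{N}}$, yielding a canonical isomorphism $(\one[N])^{\vee}\simeq \one^{\und{N}}$ in $\cC$. Under this identification, the Fourier transform $(\Four_\omega)_M\colon \one[M]\to \one^{\und{\Dual{M}{n}}}$ corresponds by adjunction to a pairing $p_M\colon \one[M]\otimes \one[\Dual{M}{n}]\to \one$, and the transpose $(\Four_\omega)_M^t\colon \one[\Dual{M}{n}]\to \one^{\und{M}}$ corresponds to $p_M$ composed with the swap of tensor factors.

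To prove the pointwise agreement of $(\Four_\omega)_M^t$ with $(\Four_\omega)_{\Dual{M}{n}}$ (after using the canonical map $M\to \Dual{\Dual{M}{n}}{n}$ to identify their targets), note that both are maps into $\one^{\und{M}}\simeq \lim_{\und{M}}\one$, hence are determined by their post-compositions with evaluations $\ev_x\colon \one^{\und{M}}\to \one$ for $x\in \und{M}$. On one hand, \Cref{Four_Aug} applied to the Fourier transform at $N=\Dual{M}{n}$ and the character $\psi_x\colon \Dual{M}{n}\to \Dual{\OR}{n}$ corresponding to $x\colon \OR\to M$ (i.e., $\psi_x=\hom_{\OR}^\cn(x,\Dual{\OR}{n})$) gives $\ev_x\circ (\Four_\omega)_{\Dual{M}{n}}=\varepsilon_{\omega\circ \psi_x}$. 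On the other hand, composing $(\Four_\omega)_M^t$ with $\ev_x$ corresponds to fixing $x$ in the left factor of $p_M$; using \Cref{Four_Aug} for $(\Four_\omega)_M$ in the dual direction, $p_M(x,\varphi)=\varepsilon_{\omega\circ \varphi}(x)=\omega(\varphi(x))=(\omega\circ \psi_x)(\varphi)$, so $\ev_x\circ (\Four_\omega)_M^t=\varepsilon_{\omega\circ \psi_x}$ as well.

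The naturality in $M$ of the resulting homotopies follows because every ingredient is natural: the double-duality comparison $M\to \Dual{\Dual{M}{n}}{n}$, the identification $(\one[\Dual{M}{n}])^{\vee}\simeq \one^{\und{M}}$, the transpose operation $(-)^t$, and the original natural transformation $\Four_{\omega}$. The cleanest way to package this coherently is to run an argument parallel to \Cref{Four_is_POr}: show that the space of natural transformations $\one[\Dual{(-)}{n}]\to \one^{\und{(-)}}$ of functors $\Mod_\OR^{[0,n]}\to \cC$ is corepresented by the same Kan-extension datum as $\POr{\OR}{\cC}{n}$, and that both $(\Four_\omega)_{\Dual{(-)}{n}}$ and $(\Four_\omega)_{(-)}^t$ are classified by $\omega$. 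The main obstacle is this coherence step — verifying that pointwise homotopies assemble into a natural isomorphism in the $\infty$-categorical sense — but it is dispatched by the universal property, since the required natural transformation is seen to be determined by its augmentation at $M=\Dual{\OR}{n}$, which equals $\varepsilon_{\omega}$ on both sides by the pointwise computation above.
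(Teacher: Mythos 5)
Your proof is correct and takes essentially the same approach as the paper: both pass through the identification $(\one[N])^\vee\simeq\one^{\und{N}}$ to reinterpret the Fourier transform as the pairing $\omega\circ\ev\colon\und{M}\times\und{\Dual{M}{n}}\to\und{\one}$ and then read off the claim from the symmetry of this pairing under swapping factors together with the double-duality comparison. The difference is one of packaging: the paper works directly in the mapping space $\Map_{\Spc}(\und{M}\times\und{\Dual{M}{n}},\und{\one})$, where $(\Four_\omega)_M^t$ and $(\Four_\omega)_{\Dual{M}{n}}$ are visibly classified by the same element, so naturality is carried along automatically and no separate coherence step arises. Your pointwise verification via $\ev_x$ followed by a coherence patch is a detour; the patch is plausible but as stated has two inaccuracies worth correcting: since the natural transformations land in $\cC$ rather than $\calg(\cC)$, the classifying space in the Kan-extension argument is $\Map_{\cC}(\one[\Dual{\OR}{n}],\one)\simeq\Map_{\Spc}(\und{\Dual{\OR}{n}},\und{\one})$ rather than $\POr{\OR}{\cC}{n}$, and the universal point at which to evaluate is $M=\OR$ (then composing with $\ev_1$), not $M=\Dual{\OR}{n}$. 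Both sides are still classified by the same map, namely the underlying $\cC$-morphism of $\varepsilon_\omega$, so the conclusion stands.
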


\begin{proof}
    We have 
    \[
        \Map_\cC(\one[M],\one[\Dual{M}{n}]^\vee) \simeq 
        \Map_{\cC}(\one[M]\otimes \one[\Dual{M}{n}],\one) \simeq 
    \]
    \[  
        \Map_{\cC}(\one[M \times \Dual{M}{n} ],\one) \simeq
        \Map_{\Spc}(\und{M}\times \und{\Dual{M}{n}},\und{\one}).
    \]
    By construction, via this identification, the map $\Four_\omega \colon \one[M]\to \one[\Dual{M}{n}]^\vee$ corresponds to the map of spaces 
    \[
        \und{M}\times \und{\Dual{M}{n}} \oto{\ev} \und{\Dual{\OR}{n}} \oto{\omega} \und{\one}. 
    \]
    By the definition of the transpose, we deduce that $\Four_\omega^t$ corresponds to the map of spaces 
    \[
        \und{\Dual{M}{n}} \times \und{M} \iso 
        \und{M} \times \und{\Dual{M}{n}} \oto{\ev} 
        \und{\Dual{\OR}{n}} \oto{\omega} \und{\one}.
    \]
    Via the isomorphism $M \iso \Dual{(\Dual{M}{n})}{n}$, this map identifies with the composition 
    \[
        \und{\Dual{M}{n}} \times \und{\Dual{(\Dual{M}{n})}{n}} \oto{\ev} \und{\Dual{\OR}{n}}\oto{\omega} \und{\one}, 
    \]
    which is the map corresponding to the morphism 
    $\Four_{\omega}\colon \one[\Dual{M}{n}]\to \one^{\und{M}}$.   
\end{proof}

\begin{cor}\label{cor:Four_Duality}
In the situation of \cref{Four_Duality}, if $\cC$ is $n$-semiadditive and $M$ is $[0,n]$-finite, then 
    \[
(\Four_\omega)_{\Dual{M}{n}} \:\simeq\: (\Four_\omega)_{M}^\vee.
    \]
\end{cor}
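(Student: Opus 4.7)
My plan is to deduce the corollary from \cref{Four_Duality} by showing that in the $n$-semiadditive setting, the transpose map $(\Four_\omega)_M^t$ coincides with the monoidal dual $(\Four_\omega)_M^\vee$. Recall from the discussion preceding \cref{Four_Duality} that the transpose $f^t\colon Y\to X^\vee$ of a map $f\colon X\to Y^\vee$ factors as $Y\to Y^{\vee\vee}\oto{f^\vee} X^\vee$, so the two agree whenever the canonical map $Y\to Y^{\vee\vee}$ is an isomorphism, which holds when $Y$ is dualizable. Thus, it suffices to prove that $\one[\Dual{M}{n}]$ is dualizable in $\cC$.

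First, I would observe that since the functor $\Dual{(-)}{n}$ restricts to a self-equivalence of $\Modfin{\OR}{n}$, the assumption that $M$ is $[0,n]$-finite implies that $\Dual{M}{n}$ is $[0,n]$-finite as well. In particular, its underlying space $\und{\Dual{M}{n}} = \Omega^\infty \Dual{M}{n}$ is a connective $n$-truncated $\pi$-finite $p$-space, i.e., an $n$-finite $p$-space. Since by hypothesis $\cC$ is $n$-semiadditive (at the prime $p$), every such space is $\cC$-ambidextrous, and hence in particular weakly $\cC$-ambidextrous.

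Next, I would invoke \cite[Corollary 3.3.10]{TeleAmbi}, which asserts that for every weakly $\cC$-ambidextrous space $A$, the object $\one[A]\in\cC$ is dualizable. Applied to $A = \und{\Dual{M}{n}}$, this gives that $\one[\Dual{M}{n}] = \one[\und{\Dual{M}{n}}]$ is dualizable in $\cC$, which is exactly what we need. Combining the dualizability of $\one[\Dual{M}{n}]$ with the isomorphism $(\Four_\omega)_{\Dual{M}{n}} \simeq (\Four_\omega)_M^t$ from \cref{Four_Duality}, we obtain
\[
(\Four_\omega)_{\Dual{M}{n}} \:\simeq\: (\Four_\omega)_M^t \:\simeq\: (\Four_\omega)_M^\vee,
\]
completing the proof. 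The main ``obstacle'' is the identification of the transpose with the dual, which as explained reduces entirely to a dualizability statement; once we recognize that $n$-semiadditivity forces the group algebras of $[0,n]$-finite objects to be dualizable (via ambidexterity), the corollary follows directly from \cref{Four_Duality} with no further calculation.
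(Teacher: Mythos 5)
Your proof is correct and takes essentially the same route as the paper: both reduce the identification $(\Four_\omega)_M^t \simeq (\Four_\omega)_M^\vee$ to the dualizability of $\one[\Dual{M}{n}]$, and both obtain that dualizability from the $\cC$-ambidexterity of $\und{\Dual{M}{n}}$ via the results of \cite{TeleAmbi} (the paper cites Proposition 3.3.6 there where you cite Corollary 3.3.10, but the content is the same). Your additional spelling-out of why transpose equals dual via the double-dual map is a harmless elaboration of what the paper leaves implicit.
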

\begin{proof}
    If $\cC$ is $n$-semiadditive then the source and the target of both maps are dualizable \cite[Proposition 3.3.6]{TeleAmbi} and hence the maps $(\Four_\omega)_{M}$ and $(\Four_\omega)_{\Dual{M}{n}}$ are dual to one another.
\end{proof}

\subsection{co-Multiplicative properties}\label{ssec:co-mult}

So far, for a presentably symmetric monoidal $\infty$-category $\cC$ we have constructed the Fourier transform 
\[
    \Four_\omega\colon \one[M]\longrightarrow \one^{\und{\Dual{M}{n}}}
\]
only as a map of \textit{commutative algebras} in $\cC$. However, as in the classical case, under suitable finiteness and semiadditivity assumptions, both sides also admit natural co-multiplication and co-unit maps making them into \textit{Hopf algebras}, and the Fourier transform preserves this structure. 

\subsubsection{Hopf algebras}

We begin by recalling the definitions of coalgebras, bialgebras and Hopf algebras. First, given $\cC \in \calg(\cat_\infty)$, the $\infty$-category of cocommutative \tdef{coalgebras} in $\cC$ is given by
\[
    \mdef{\cocalg(\cC)} := \calg(\cC^\op)^\op.
\]
\begin{defn}
    Let $\cC \in \calg(\cat_\infty)$. We define the $\infty$-category of \tdef{bialgebras} in $\cC$ by
    \[
        \mdef{\bicalg(\cC)}:= \cocalg(\calg(\cC)),
    \]
    where $\calg(\cC)$ has the induced symmetric monoidal structure from $\cC$.
\end{defn}

Since the induced monoidal structure on $\calg(\cC)$ is coCartesian, we have equivalently
\[
    \bicalg(\cC) \simeq 
    (\CMon(\calg(\cC)^\op))^\op.
\]

\begin{defn}\label{Def_Hopf}
    Let $\cC \in \calg(\cat_\infty)$. We define the $\infty$-category of \tdef{Hopf algebras} in $\cC$ to be the following full subcategory
    \[
        \mdef{\Hopf(\cC)} := 
        (\CMon^\gp(\calg(\cC)^\op))^\op
    \]
   of $\bicalg(\cC)$.
\end{defn}

\begin{rem}
    By the Yoneda lemma, \Cref{Def_Hopf} is equivalent to \cite[Definition 3.9.7]{Lurie_Ell1} in terms of the functor $\Spec^\cC$.
    One can think of $\Hopf(\cC)$ as opposite to the category of commutative group objects in the $\infty$-category $\calg(\cC)^\op$ of 
    ``affine $\cC$-schemes''. 
\end{rem}

We now have the following consequence of \cite[Corollary 2.10]{GepUniv}:

\begin{prop}\label{Add_Hopf_Lift}
    Let $\cC \in \calg(\cat_\infty)$. For every additive $\infty$-category $\cE$, composition with the forgetful functor $\Hopf(\cC) \to \calg(\cC)$ induces an equivalence
    \[
        \Fun^\sqcup(\cE,\Hopf(\cC)) \iso \Fun^\sqcup(\cE,\calg(\cC)),
    \]
    where $\Fun^\sqcup$ denotes the $\infty$-category of co-product preserving functors. 
\end{prop}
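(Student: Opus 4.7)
The plan is to reduce the statement to \cite[Corollary 2.10]{GepUniv} by passing to opposite categories. By definition,
\[
    \Hopf(\cC) \:=\: (\CMon^\gp(\calg(\cC)^\op))^\op,
\]
so the forgetful functor $\Hopf(\cC) \to \calg(\cC)$ is the opposite of the forgetful functor $U\colon \CMon^\gp(\calg(\cC)^\op) \to \calg(\cC)^\op$. Under the canonical equivalences
\[
    \Fun^\sqcup(\cE, \Hopf(\cC)) \:\simeq\: \Fun^\times(\cE^\op, \CMon^\gp(\calg(\cC)^\op))
\]
and $\Fun^\sqcup(\cE, \calg(\cC)) \simeq \Fun^\times(\cE^\op, \calg(\cC)^\op)$, the map in question is identified with post-composition with $U$.

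Next I would verify the hypotheses required to apply \cite[Corollary 2.10]{GepUniv}. Since $\cE$ is additive, so is $\cE^\op$. Furthermore, $\calg(\cC)$ always admits finite coproducts, with the coproduct of two commutative algebras given by their tensor product; consequently $\calg(\cC)^\op$ admits finite products. The cited corollary asserts that for any additive $\infty$-category $\cA$ and any $\infty$-category $\cD$ with finite products, the forgetful functor induces an equivalence
\[
    \Fun^\times(\cA, \CMon^\gp(\cD)) \:\iso\: \Fun^\times(\cA, \cD).
\]
Applying this with $\cA = \cE^\op$ and $\cD = \calg(\cC)^\op$ yields the claim.

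In summary, the argument is essentially a dictionary translation: once one unravels the opposite categories in the definition of $\Hopf(\cC)$, the statement is precisely the assertion that every product-preserving functor from an additive $\infty$-category lifts uniquely to commutative group objects, which is the content of the cited result. There is no real obstacle; the only point that warrants a sentence of verification is that $\calg(\cC)^\op$ carries finite products, for which one observes that the tensor product of commutative algebras provides coproducts in $\calg(\cC)$ in the generality $\cC \in \calg(\cat_\infty)$.
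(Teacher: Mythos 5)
Your proposal is correct and takes essentially the same approach as the paper, which compresses the argument into a single sentence ("apply the opposite of \cite[Corollary 2.10]{GepUniv} to $\calg(\cC)$"); you simply unpack the opposite-category bookkeeping and explicitly verify that $\calg(\cC)^\op$ has finite products because $\calg(\cC)$ has finite coproducts given by the tensor product.
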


\begin{proof}
    This follows by applying the opposite of \cite[Corollary 2.10]{GepUniv} to the $\infty$-category $\calg(\cC)$.  
\end{proof}

\begin{cor}\label{Fourier_Hopf}
    Let $\cC \in \calg(\Prl^{\sad{n}})$ and let $\OR\in \calg(\Sp_{(p)}^\cn)$. For every $\omega \in \POr{\OR}{\cC}{n}$, the Fourier transform 
    \[
        \Four_\omega \colon \one[-] \too \one^{\und{\Dual{(-)}{n}}}
    \] 
    lifts uniquely to a natural transformation of functors 
    $\Modfin{\OR}{n} \to \Hopf(\cC)$, along the forgetful functor $\Hopf(\cC) \to \calg(\cC)$.
\end{cor}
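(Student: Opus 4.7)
The strategy is to invoke \Cref{Add_Hopf_Lift} with $\cE := \Modfin{\OR}{n}$. First I would observe that $\Modfin{\OR}{n}$ is additive, being a full subcategory of the stable $\infty$-category $\Mod_\OR$ closed under finite biproducts. Once this is set up, the task reduces to verifying that the source and target of $\Four_\omega$, viewed as functors $\Modfin{\OR}{n} \to \calg(\cC)$, preserve finite coproducts; for then $\Four_\omega$ becomes a morphism in $\Fun^\sqcup(\Modfin{\OR}{n}, \calg(\cC))$, and \Cref{Add_Hopf_Lift} supplies a unique lift along $\Hopf(\cC) \to \calg(\cC)$.

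For the source $\one[-]$, coproduct preservation is immediate: this functor is the restriction to $\Modfin{\OR}{n}$ of the left adjoint $\one[-] \colon \Sp^\cn \to \calg(\cC)$ to the units functor, hence carries the biproduct $M \oplus N$ to $\one[M] \otimes \one[N]$, which is the coproduct in $\calg(\cC)$. For the target $\one^{\und{\Dual{(-)}{n}}}$, the argument proceeds in three steps. Brown--Comenetz duality $\Dual{(-)}{n}$ is a contravariant self-equivalence of $\Modfin{\OR}{n}$, so it sends biproducts to biproducts; applying $\und{-} = \Omega^\infty$ (a right adjoint, hence product-preserving) then yields
\[
    \und{\Dual{M \oplus N}{n}} \:\simeq\: \und{\Dual{M}{n}} \times \und{\Dual{N}{n}}.
\]
Since $\Dual{M}{n}$ and $\Dual{N}{n}$ are $[0,n]$-finite, their underlying spaces are $n$-finite, and the $n$-semiadditivity hypothesis on $\cC$ ensures they are $\cC$-ambidextrous. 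The K\"{u}nneth isomorphism of \Cref{Kunneth_Ambi} then yields
\[
    \one^{\und{\Dual{M \oplus N}{n}}} \:\simeq\: \one^{\und{\Dual{M}{n}}} \otimes \one^{\und{\Dual{N}{n}}},
\]
identifying the product on the space side with the tensor product (i.e.\ coproduct) on the algebra side.

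Having established that both functors preserve finite coproducts, \Cref{Add_Hopf_Lift} provides an equivalence $\Fun^\sqcup(\Modfin{\OR}{n}, \Hopf(\cC)) \iso \Fun^\sqcup(\Modfin{\OR}{n}, \calg(\cC))$. Since this is an equivalence of $\infty$-categories (not merely of objects), the natural transformation $\Four_\omega$ between the two coproduct-preserving functors lifts uniquely to a natural transformation of functors into $\Hopf(\cC)$, as desired. The only point requiring care is the K\"{u}nneth step for the target functor, which is precisely where the $n$-semiadditivity hypothesis enters; everything else is formal from the adjoint/duality properties already established.
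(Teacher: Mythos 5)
Your proof is correct and takes essentially the same route as the paper's: establish that $\Modfin{\OR}{n}$ is additive, that $\one[-]$ preserves finite coproducts as a restricted left adjoint, that $\one^{\und{\Dual{(-)}{n}}}$ preserves finite coproducts via the contravariant self-equivalence $\Dual{(-)}{n}$ together with the K\"unneth isomorphism of \cref{Kunneth_Ambi} under $n$-semiadditivity, and then invoke \cref{Add_Hopf_Lift}. You simply spell out the K\"unneth step in more detail than the paper does, which is fine.
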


\begin{proof}
    The $\infty$-category $\Modfin{\OR}{n}$ is additive. The functor $\one[-]$ is coproduct preserving as a (restriction of a) left adjoint and the functor $\one^{\und{\Dual{(-)}{n}}}$ is coproduct preserving by the $n$-semiadditivity assumption on $\cC$ and \Cref{Kunneth_Ambi}. Thus, the claim follows from \cref{Add_Hopf_Lift}.
\end{proof}

\begin{rem}
    The co-multiplication of $\one[M]$ comes from the diagonal of $M$, while the multiplication uses the addition of $M$. In contrast, the multiplication of $\one^{\und{\Dual{M}{n}}}$ uses only the diagonal map of $\und{\Dual{M}{n}}$, while the co-multiplication uses the addition. 
\end{rem}

\subsubsection{Translation equivariance}

One of the main features of the classical Fourier transform is that it intertwines the translation operations on the function space $\CC^{M^*}$ with multiplication by characters on $\CC[M]$. In other words, the Fourier transform simultaneously diagonalizes the shift operators. 
We now derive a similar result for the higher Fourier transform.

\begin{defn}\label{Def_Translation_Aut}
    Given $\cC \in \calg(\cat_\infty)$ and $H \in \Hopf(\cC)$ with an augmentation 
    $\varepsilon \colon H  \to \one$ as a commutative algebra,
    we define the \tdef{translation automorphism} of $H$ (as a commutative algebra) by the composition
    \[
        \mdef{T_{\varepsilon}} \colon H \oto{\Delta} H\otimes H \oto{\Id\otimes\varepsilon} H\otimes \one= H.
    \]    
\end{defn}

This construction naturally assembles into a functor 
\[
    T_{(-)}\colon \Hopf^\aug(\cC) \to \calg(\cC)^{[1]},
\]
where $\Hopf^\aug(\cC)$ is the category of pairs $(H,\varepsilon)$, formally given as a pullback
\[
\Hopf^\aug(\cC):= \Hopf(\cC)\times_{\calg(\cC)} \calg(\cC)_{/\one}.
\]

\begin{rem}
    In algebro-geometric terms, $\varepsilon \colon H \to \one$ corresponds to a global element of the group scheme $\Spec(H)$, and $T_\varepsilon$ is the translation map by that element from $\Spec(H)$ to itself.
\end{rem}

Using the compatibility of the Fourier transform with augmentations (\cref{Four_Aug}) and Hopf algebra structures (\cref{Fourier_Hopf}), we deduce that it intertwines the corresponding translation automorphisms.
\begin{prop}\label{Translation_Inv}
    Let $\cC \in \calg(\Prl^{\sad{n}})$ and let $\OR\in \calg(\Sp_{(p)}^\cn)$. For every $\omega \in \POr{\OR}{\cC}{n}$, the associated Fourier transform $\Four_\omega$ promotes uniquely to a functor
    \[
        (\Modfin{\OR}{n})_{/\Dual{\OR}{n}} \too 
        \calg(\cC)^{[1]\times[1]},
    \]
    whose component at $\varphi\colon M \to \Dual{\OR}{n}$ is given by
    \[\begin{tikzcd}
	{ \one[M]} && {\one^{\und{\Dual{M}{n}}}} \\
	{\one[M]} && {\one^{\und{\Dual{M}{n}}}.}
    \arrow["{T_{(\varepsilon_{\omega\circ\varphi})}}"', from=1-1, to=2-1]
    \arrow["{\;T_{(\ev_\varphi)}}"{pos=0.6}, from=1-3, to=2-3]
    \arrow["{\Four_\omega}", from=1-1, to=1-3]
    \arrow["{\Four_\omega}", from=2-1, to=2-3]
    \end{tikzcd}\]
\end{prop}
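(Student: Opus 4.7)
My plan is to exhibit $\Four_\omega$, together with its source and target augmentations, as a morphism in the $\infty$-category $\Hopf^\aug(\cC):=\Hopf(\cC)\times_{\calg(\cC)}\calg(\cC)_{/\one}$ of augmented Hopf algebras, naturally in $\varphi$, and then invoke the functorial translation construction. Indeed, the formula $T_\varepsilon = (\Id\otimes\varepsilon)\circ\Delta$ assembles into a functor
\[
	T_{(-)}\colon \Hopf^\aug(\cC)\too \calg(\cC)^{[1]},
\]
which in turn induces a functor $T_{(-)}^{[1]}\colon \Hopf^\aug(\cC)^{[1]}\to \calg(\cC)^{[1]\times [1]}$. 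It therefore suffices to construct a single functor $\widehat{\Four}_\omega\colon (\Modfin{\OR}{n})_{/\Dual{\OR}{n}}\to \Hopf^\aug(\cC)^{[1]}$ whose source and target at $\varphi$ are $(\one[M],\varepsilon_{\omega\circ\varphi})$ and $(\one^{\und{\Dual{M}{n}}},\ev_\varphi)$, and whose underlying morphism in $\calg(\cC)$ is $\Four_\omega$.

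By the pullback definition of $\Hopf^\aug(\cC)$, producing $\widehat{\Four}_\omega$ amounts to combining two compatible refinements of $\Four_\omega$. The first is a lift of $\Four_\omega$ to a natural transformation of functors into $\Hopf(\cC)$, provided uniquely by \cref{Fourier_Hopf} (which uses the $n$-semiadditivity of $\cC$). The second is a lift, after whiskering with the forgetful functor $(\Modfin{\OR}{n})_{/\Dual{\OR}{n}}\to \Modfin{\OR}{n}$, to a natural transformation of functors into $\calg(\cC)_{/\one}$ whose source and target augmentations at $\varphi$ are $\varepsilon_{\omega\circ\varphi}$ and $\ev_\varphi$; this is provided uniquely by \cref{Four_Aug}. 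Both lifts sit over the same underlying natural transformation $\Four_\omega$ in $\calg(\cC)$, so the universal property of the defining pullback of $\Hopf^\aug(\cC)$ produces $\widehat{\Four}_\omega$, and uniqueness follows from the uniqueness clauses of the two cited results.

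Applying $T_{(-)}^{[1]}$ to $\widehat{\Four}_\omega$ yields the claimed functor into $\calg(\cC)^{[1]\times [1]}$; unwinding the definition of $T_{(-)}$ at a component $\varphi$ reproduces precisely the square in the statement, and its commutativity is automatic, being the image of a morphism under a functor. The main obstacle is the $\infty$-categorical bookkeeping required to combine the coalgebra refinement and the augmentation refinement of $\Four_\omega$ coherently; this is handled cleanly by the pullback description of $\Hopf^\aug(\cC)$, which reduces the coherence question to the tautology that both lifts project to the same $\Four_\omega$ in $\calg(\cC)$. Informally, the underlying algebraic content is that $\Four_\omega$ is simultaneously a map of coalgebras and of augmented algebras, and the translation $T_\varepsilon = (\Id\otimes\varepsilon)\circ \Delta$ is manufactured from exactly that data.
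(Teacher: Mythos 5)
Your proof is correct and follows essentially the same route as the paper: both combine the augmentation compatibility from \cref{Four_Aug} with the Hopf lift from \cref{Fourier_Hopf} via the pullback description of $\Hopf^\aug(\cC)$ (using that $(-)^{[1]}$ preserves pullbacks), then post-compose with $(T_{(-)})^{[1]}$.
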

 
\begin{proof}
    We have a commutative diagram of functors
    \[\begin{tikzcd}
    	{(\Modfin{\OR}{n})_{/\Dual{\OR}{n}}} & {(\calg(\cC)_{/\one})^{[1]}} \\
    	{\Hopf(\cC)^{[1]}} & { \calg(\cC)^{[1]},}
\arrow[from=1-1, to=1-2]
\arrow[from=1-2, to=2-2]
\arrow[from=2-1, to=2-2]
\arrow[from=1-1, to=2-1]
    \end{tikzcd}\]
    where the top one is provided by \Cref{Four_Aug}, the bottom one by \Cref{Fourier_Hopf}, and the other two are the canonical forgetful functors (and both compositions are the Fourier transform map). This corresponds to a functor from the top left corner into the pullback of the remaining diagram. Since raising to the power of $[1]$ preserves pullbacks, this is a functor 
    \[
        (\Modfin{\OR}{n})_{/\Dual{\OR}{n}} \to
        \Hopf^\aug(\cC)^{[1]}.
    \]
    Composing this functor with $(T_{(-)})^{[1]}$, yields a functor
    \[
        (\Modfin{\OR}{n})_{/\Dual{\OR}{n}} \longrightarrow
        \calg(\cC)^{[1]\times [1]}.
    \]
    
    Unwinding the definitions, the component at $\varphi\colon M\to \Dual{\OR}{n}$ is as claimed.
\end{proof} 

\section{Orientations and Orientability}\label{sec:orientations} 

The construction of the Fourier transform in the previous section depends on a choice of a pre-orientation, which plays the role of a root of unity in classical Fourier theory. Usually, one chooses the root of unity to be \textit{primitive}, so that the Fourier transform is an \textit{isomorphism}. In this section, we consider an analogous property of a pre-orientation and its relation to affineness and the Eilenberg--Moore property.

\subsection{Orientations}\label{ssec:orient} 

\subsubsection{Definition and functoriality}

A pre-orientation for which the associated Fourier transform is an isomorphism on all $\pi$-finite modules will be called an orientation. More precisely, we have the following:

\begin{defn} \label{def:orientation}
    Let $\cC\in \calg(\Prl)$, let $\OR\in \calg(\Sp_{(p)}^\cn)$, and let $\omega\in \POr{\OR}{\cC}{n}$ be a pre-orientation.
    \begin{enumerate}
        \item $M\in \Modfin{\OR}{n}$ is called \tdef{$\omega$-oriented} if $\Four_\omega \colon \one[M]\to \charac{M}{n}$ is an isomorphism. 
        \item $\omega$ is called an \tdef{orientation} if every $M\in\Modfin{\OR}{n}$ is $\omega$-oriented.
    \end{enumerate}
    We denote the subspace of $\OR$-orientations by $\mdef{\Or{\OR}{\cC}{n}}\subseteq \POr{\OR}{\cC}{n}$, and say that $\cC$ is \textit{$(\OR,n)$-orientable} if $\Or{\OR}{\cC}{n}\neq \es$.
\end{defn}


The following is the motivating example from the classical theory:

\begin{example}
    Let $\cC$ be the category of (ordinary) modules over a field $\FF$ of characteristic $0$. Then, $\ZZ/p^r$-pre-orientations of height $0$ of $\cC$ are just $p^r$-th roots of unity in $\FF$ (see \cref{ex:rootsofunity}). A root of unity defines an \textit{orientation} of $\cC$ exactly when it is \emph{primitive}. We note that the situation with \textit{higher} roots of unity is more subtle, as we shall discuss in \cref{ssec:zpror+higherroots}.
\end{example}

\begin{example}
    As in \cref{rem:luriepreor}, for $R\in \calg(\cC)$ a $\ZZ$-pre-orientation $\omega \colon \Sigma \QQ/\ZZ \to R^\times$ of $R$, in the sense of \Cref{def:pre_orientations} is the same datum as a pre-orientation
    $\widetilde{\omega}\colon \QQ/\ZZ \to \GG_m(R)$ for the divisible group $\GG_m$ over $R$, in the sense of \cite[Definition 2.6.8]{Lurie_Ell3}. One may show that $\omega$ is an orientation of $R$ if and only if $\widetilde{\omega}$ is an orientation of $\GG_m$ over $R$ in the sense of \cite[Definition 4.3.9]{Lurie_Ell2}
\end{example}

We now examine how the space of orientations behaves under the various operations on pre-orientations considered in \cref{sec:higherfourier}. First, orientations are preserved under restriction of scalars.

\begin{prop}\label{Or_Push}
    Let $\cC\in \calg(\Prl)$ and let 
    $f\colon \ORone\to \ORtwo$ in $\calg(\Sp_{(p)}^\cn)$. A module $M\in\Modfin{\ORtwo}{n}$ is $f_*\omega$-oriented if and only if $f_* M$ is $\omega$-oriented. Consequently, if $\omega$ is an $\ORone$-orientation, then $f_*\omega$ is an $\ORtwo$-orientation. 
\end{prop}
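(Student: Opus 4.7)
The plan is to deduce both assertions essentially directly from the ring-functoriality statement Proposition \ref{or_funct_ring}, which provides a canonical isomorphism $\Four_{f_*\omega} \simeq \Four_{\omega} \circ f_*$ of natural transformations of functors $\Mod_\ORtwo^{[0,n]} \to \calg(\cC)$. No new constructions are needed; the work is almost entirely in unpacking the equivalence at a given module.

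First I would observe that restriction of scalars $f_*\colon \Mod_\ORtwo \to \Mod_\ORone$ preserves underlying spectra, and hence preserves connectivity, $n$-truncatedness, and $\pi$-finiteness. In particular it restricts to a functor $f_*\colon \Modfin{\ORtwo}{n} \to \Modfin{\ORone}{n}$, so the statement ``$f_*M$ is $\omega$-oriented'' makes sense for every $M \in \Modfin{\ORtwo}{n}$.

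Next I would apply Proposition \ref{or_funct_ring} at a fixed $M \in \Modfin{\ORtwo}{n}$. Under the canonical identifications $\one[M] \simeq \one[f_*M]$ (both depending only on the underlying spectrum) and
\[
    \und{\Dual{M}{n}} \simeq \Map_{\ORtwo}(M,\Dual{\ORtwo}{n}) \simeq \Map_{\ORone}(f_*M,\Dual{\ORone}{n}) \simeq \und{\Dual{f_*M}{n}}
\]
coming from the $f^* \dashv f_*$ adjunction, the isomorphism $\Four_{f_*\omega} \simeq \Four_\omega \circ f_*$ identifies the two morphisms $\Four_{f_*\omega}(M)$ and $\Four_\omega(f_*M)$ in $\calg(\cC)^{[1]}$. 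Consequently one is an equivalence if and only if the other is, which is exactly the first claim.

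The ``consequently'' clause is then immediate: if $\omega$ is an $\ORone$-orientation, then for any $M \in \Modfin{\ORtwo}{n}$ the $\ORone$-module $f_*M$ lies in $\Modfin{\ORone}{n}$ and is $\omega$-oriented by hypothesis, so by the first part $M$ is $f_*\omega$-oriented. As this holds for all such $M$, we conclude $f_*\omega \in \Or{\ORtwo}{\cC}{n}$. There is no real obstacle here; the only thing meriting care is the stability of $[0,n]$-finiteness under $f_*$, which is clear because finiteness is a condition on the underlying spectrum.
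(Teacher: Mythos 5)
Your proof is correct and follows essentially the same route as the paper's: both reduce immediately to Proposition~\ref{or_funct_ring}, identify the component of $\Four_\omega$ at $f_*M$ with the component of $\Four_{f_*\omega}$ at $M$, and conclude by two-out-of-three for equivalences. Your extra remarks about $f_*$ preserving $[0,n]$-finiteness and the explicit identifications of source and target are helpful bookkeeping but do not constitute a different argument.
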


\begin{proof}
    Given $M\in \Modfin{\ORtwo}{n}$, by  \Cref{or_funct_ring}, the component of $\Four_\omega$ at $f_*M \in \Modfin{\ORone}{n}$ is homotopic to the component of $\Four_{f_*\omega}$ at $M$. Thus, $M$ is $f_*\omega$-oriented if and only if $f_*M$ is $\omega$-oriented.
\end{proof}

Orientations are also preserved under symmetric monoidal functors which preserve $n$-finite limits.

\begin{prop}\label{push_orientations_functor}
    Let $F\colon \cC \to \cD$ in $\calg(\Prl)$, and let $\omega \in \POr{\OR}{\cC}{n}$. If $M \in \Modfin{\OR}{n}$ is $\omega$-oriented and $F$ preserves $\und{\Dual{M}{n}}$-shaped limits, then $M$ is $F(\omega)$-oriented. In particular, if $\omega$ is an orientation on $\cC$ and $F$ preserves $n$-finite limits, then $F(\omega)$ is an orientation on $\cD$. 
\end{prop}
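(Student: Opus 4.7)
The plan is to reduce everything to the compatibility of the Fourier transform with symmetric monoidal functors established in \Cref{Four_Funct_C}, together with the observation in \Cref{Four_Iso_C}. By that proposition, for every $M \in \Modfin{\OR}{n}$ we have a commutative square
\[
\begin{tikzcd}
F(\one_\cC[M]) \ar[r, "{F(\Four_\omega)}"] & F(\one_\cC^{\und{\Dual{M}{n}}}) \\
\one_\cD[M] \ar[u] \ar[r, "{\Four_{F(\omega)}}"'] & \one_\cD^{\und{\Dual{M}{n}}} \ar[u]
\end{tikzcd}
\]
in $\calg(\cD)$, whose left vertical map is always an isomorphism (since $F$ preserves colimits and hence the group algebra construction). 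If $M$ is $\omega$-oriented, then $\Four_\omega$ is an isomorphism at $M$, hence so is $F(\Four_\omega)$. Under the assumption that $F$ preserves $\und{\Dual{M}{n}}$-shaped limits, the right vertical map is also an isomorphism, as noted in \Cref{Four_Iso_C}. A 2-out-of-3 argument then forces $\Four_{F(\omega)}$ to be an isomorphism at $M$, which is precisely the assertion that $M$ is $F(\omega)$-oriented.

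For the ``in particular'' clause, suppose $F$ preserves $n$-finite limits and $\omega$ is an orientation of $\cC$. For any $M \in \Modfin{\OR}{n}$, its Brown--Comenetz dual $\Dual{M}{n}$ is again $[0,n]$-finite, as recorded in Section~\ref{ssec:pre-or}, and therefore the underlying space $\und{\Dual{M}{n}}$ is $n$-finite. Hence $F$ preserves limits of that shape, and the previous paragraph applies to every $M \in \Modfin{\OR}{n}$, showing that $F(\omega)$ is an orientation on $\cD$.

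There is essentially no obstacle here: the content of the statement has already been packaged into the functoriality results \Cref{Four_Funct_C} and \Cref{Four_Iso_C}, and the only thing to verify beyond invoking them is the elementary observation that the $n$-finiteness of $M$ implies the $n$-finiteness of the space $\und{\Dual{M}{n}}$, so that the hypothesis on $F$ applies uniformly.
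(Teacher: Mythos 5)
Your proof is correct and follows essentially the same route as the paper's: both invoke the functoriality square of \Cref{Four_Funct_C}, use \Cref{Four_Iso_C} to identify $F(\Four_\omega)$ with $\Four_{F(\omega)}$ under the limit-preservation hypothesis, and conclude by two-out-of-three. The only addition you make is to spell out that $\und{\Dual{M}{n}}$ is an $n$-finite space, which the paper leaves implicit; this is a fine and harmless elaboration.
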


\begin{proof}
    The assumption that $F$ preserves $\und{\Dual{M}{n}}$-shaped limits implies that the $M$-components of $F(\Four_\omega)$ and $\Four_{F(\omega)}$ are isomorphic, see \Cref{Four_Iso_C}. Thus, if $M$ is $\omega$-oriented then it is also $F(\omega)$-oriented. If $F$ preserves all $n$-finite limits, then this is true for all $M \in \Modfin{\OR}{n}$.
\end{proof}

When $\cC$ and $\cD$ are $n$-semiadditive, every colimit preserving functor $F\colon\cC \to \cD$ preserves $n$-finite limits. If we further assume that the $\infty$-categories are stable, \Cref{push_orientations_functor} admits a partial converse.

\begin{defn}\label{defn:nilconservative}
Recall from \cite[Definition 4.4.1]{TeleAmbi}, that a functor $F\colon \cC \to \cD$ in $\alg(\Prl_\st)$ is called \tdef{nil-conservative} if for every $S\in \alg(\cC)$ for which $F(S)=0$, we have $S=0$. 
\end{defn}

\begin{prop}\label{nil_reflect_orientations}
    Let $F\colon \cC \to \cD$ in  $\calg(\Prl^{\sad{n}}_\st)$ be nil-conservative and let $\OR\in \calg(\Sp_{(p)}^\cn)$. 
    An $\omega \in \POr{\OR}{\cC}{n}$ is an orientation on $\cC$ if and only if $F(\omega)$ is an orientation on $\cD$.
\end{prop}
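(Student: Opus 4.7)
The forward direction I would dispatch immediately using \cref{push_orientations_functor}: since $\cC$ and $\cD$ are $n$-semiadditive and $F$ is colimit preserving, $F$ automatically preserves $n$-finite limits, so if $\omega$ is an orientation then so is $F(\omega)$. The content lies in the converse: assuming $F(\omega)$ is an orientation, I plan to show that $\Four_\omega$ is an isomorphism at every $M \in \Modfin{\OR}{n}$.

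For fixed such $M$, I would first note, via \cref{Four_Funct_C} combined with \cref{Four_Iso_C}, that $F$ preserves $\und{\Dual{M}{n}}$-shaped limits (the indexing space being $n$-finite), so that $F((\Four_\omega)_M)$ is canonically identified with $(\Four_{F(\omega)})_M$, which is an isomorphism by assumption. Setting $C := \mathrm{cofib}((\Four_\omega)_M) \in \cC$, this yields $F(C) \simeq 0$, and the remaining task is to promote this to $C \simeq 0$.

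The crux, and essentially the only genuine step, is to bridge the gap between nil-conservativity (a statement about \emph{algebras} vanishing) and the vanishing of the plain object $C$. My proposed mechanism is dualizability: both $\one_\cC[M]$ and $\one_\cC^{\und{\Dual{M}{n}}}$ are dualizable in $\cC$ by \cite[Proposition 3.3.6]{TeleAmbi}, since the spaces $\und{M}$ and $\und{\Dual{M}{n}}$ are $n$-finite, and dualizables are closed under cofibers in any stable symmetric monoidal $\infty$-category, so $C$ itself is dualizable. Consequently $\mathrm{End}_\cC(C) \simeq C \otimes C^\vee$ is a genuine $\EE_1$-algebra in $\cC$, and the symmetric monoidality of $F$ forces
\[
    F(\mathrm{End}_\cC(C)) \simeq F(C) \otimes F(C)^\vee \simeq 0.
\]
Nil-conservativity of $F$ then gives $\mathrm{End}_\cC(C) \simeq 0$, i.e.\ $\Id_C = 0$ in $\mathrm{End}_\cC(C)$, whence $C \simeq 0$ and $(\Four_\omega)_M$ is an isomorphism, completing the argument. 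The takeaway is that $n$-semiadditivity upgrades the algebra-level nil-conservativity of $F$ to conservativity on the dualizable objects that the Fourier construction produces; beyond this conceptual observation I do not foresee any real obstacle.
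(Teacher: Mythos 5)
Your proof is correct and takes essentially the same route as the paper's. The paper simply cites \cite[Proposition 4.4.4]{TeleAmbi} (a nil-conservative functor is conservative on dualizable objects) together with the dualizability of $\one[M]$ and $\one^{\und{\Dual{M}{n}}}$, whereas you re-derive that lemma inline via the cofiber $C$ and the vanishing of $\End_\cC(C) \simeq C \otimes C^\vee$ — the same mechanism unpacked one level.
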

\begin{proof}
    Since $F$ is $n$-semiadditive, it preserves $n$-finite limits
    and hence $F(\Four_\omega)$ is isomorphic to $\Four_{F(\omega)}$, see \Cref{Four_Iso_C}. We deduce that, for every $M\in \Modfin{\OR}{n}$, the map
    \[
        F(\Four_\omega)\colon F(\one[M]) \too F(\one^{\und{\Dual{M}{n}}})
    \] 
    is an isomorphism. Since $F$ is nil-conservative, it is conservative when restricted to the dualizable objects of $\cC$, see \cite[Proposition 4.4.4]{TeleAmbi}. By \cite[Proposition 2.5]{carmeli2021chromatic}, both $\one[M]$ and $\one^{\und{\Dual{M}{n}}}$ are dualizable in $\cC$, and we deduce that $\Four_\omega$ is an isomorphism at every $M\in \Modfin{\OR}{n}$ if and only $F(\Four_\omega)$ is.
\end{proof}

Finally, we have the following behaviour with respect to scaling: 

\begin{prop}\label{Or_Scaling}
    Let $\cC\in \calg(\Prl)$, let $\OR\in \calg(\Sp_{(p)}^\cn)$, and let $\omega\in \Or{\OR}{\cC}{n}$. For $r\in \und{\OR}$, the pre-orientation $\omega^r$ is an orientation, if $r \in \OR^\times$. The converse holds if $R$ is $\pi$-finite and $\cC$ is non-zero. 
\end{prop}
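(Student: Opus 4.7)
My plan is to leverage Proposition~\ref{Por_Scaling}, which gives a factorisation $\Four_{\omega^r} \simeq \Four_\omega \circ \one[r\cdot_M]$ for every $M\in\Modfin{\OR}{n}$, with $r\cdot_M\colon M\to M$ denoting multiplication by $r$ in the $\OR$-module structure. Since $\omega$ is an orientation, $\Four_\omega$ is an isomorphism at every $M$, so $\omega^r$ is an orientation if and only if $\one[r\cdot_M]\colon\one[M]\to\one[M]$ is an isomorphism for each such $M$. The easy direction follows immediately: when $r\in \OR^\times$, the map $r\cdot_M$ is already an isomorphism of $\OR$-modules, and hence so is $\one[r\cdot_M]$, giving $\omega^r\in\Or{\OR}{\cC}{n}$.

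For the converse, I will suppose $\OR$ is $\pi$-finite, $\cC\neq 0$, and $\omega^r$ is an orientation, and aim to derive a contradiction from the assumption that $r$ is not a unit in $\pi_0\OR$. Since $\pi_0\OR$ is a finite commutative $p$-local ring, I pick a maximal ideal $\fm\subseteq\pi_0\OR$ containing $r$ and set $\kappa:=\pi_0\OR/\fm$, a finite field of characteristic $p$ of cardinality $q\geq p\geq 2$. Viewed as a discrete $\OR$-module via $\OR\to\pi_0\OR\to\kappa$, $\kappa$ lies in $\Modfin{\OR}{n}$, and $r$ acts on it as zero. Consequently $r\cdot_\kappa$ factors through $0$ in $\Sp^\cn$, so $\one[r\cdot_\kappa]$ factors as $u\circ\epsilon$, where $\epsilon\colon\one[\kappa]\to\one$ is the augmentation and $u\colon\one\to\one[\kappa]$ is the unit. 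Because $\epsilon u=\Id_\one$ automatically, the composite $u\epsilon$ is idempotent; if it is also invertible, as the hypothesis on $\omega^r$ requires, it must equal the identity on $\one[\kappa]$. This forces $u$ and $\epsilon$ to be mutually inverse isomorphisms, so $\one[\kappa]\simeq\one$ in $\calg(\cC)$, and in particular as an object of $\cC$.

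The final step is to contradict this. By the universal property of $\one[-]$ applied to the Eilenberg--MacLane spectrum of the finite discrete abelian group $\kappa$, the underlying $\cC$-object of $\one[\kappa]$ is $\one\otimes\Sigma^\infty_+\kappa\simeq\bigoplus_{x\in\kappa}\one=\one^{\oplus q}$. The existence of the orientation $\omega$ already forces $\cC$ to be semiadditive---one sees this by applying $\Four_\omega$ to finite discrete abelian groups to get $\one^{\oplus q}\simeq\one^{\times q}$---whence $\End_\cC(\one^{\oplus q})\simeq M_q(\End_\cC(\one))$. Now $\End_\cC(\one)$ is commutative, as $\cC\in\calg(\Prl)$, and nonzero, as $\cC\neq 0$; but $M_q(\End_\cC(\one))$ is non-commutative for $q\geq 2$, contradicting the ring isomorphism $\End_\cC(\one^{\oplus q})\simeq\End_\cC(\one)$ induced by $\one\simeq\one^{\oplus q}$. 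The main obstacle is this last step, showing $\one\not\simeq\one^{\oplus q}$ in nonzero $\cC$; the matrix-ring obstruction handles it cleanly once the semiadditivity of $\cC$ is in hand.
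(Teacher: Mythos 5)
Your forward direction and the reduction of the converse to showing that $\one[r\cdot_M]$ cannot be an isomorphism for every $M$ are correct and match the paper. The gap is in the justification of semiadditivity: the claim that applying $\Four_\omega$ to a finite discrete abelian group of cardinality $q$ yields $\one^{\oplus q}\simeq\one^{\times q}$ is only true at height $n=0$. For $n>0$, the Fourier transform at a discrete $M$ reads $\one[M]\simeq\one^{\und{\Dual{M}{n}}}=\one^{B^n M^*}$, where $B^n M^*$ is a \emph{connected} space rather than a discrete $q$-point set, so the target is not $\one^{\times q}$; and applying it to $\Sigma^n M^*$ instead produces $\one[\Sigma^n M^*]\simeq\one^{\times q}$, but now the source $\one[B^n M^*]$ is not $\one^{\oplus q}$. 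Since the proposition imposes no semiadditivity hypothesis on $\cC\in\calg(\Prl)$, the matrix-ring identification $\End_\cC(\one^{\oplus q})\simeq M_q(\End_\cC(\one))$, which needs $0$-semiadditivity, is not available.

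The contradiction can be reached without it, along the lines of the paper's argument. From $u\epsilon=\Id_{\one[\kappa]}$, apply $\Map_\cC(-,\one)$. Because $\kappa$ is a finite discrete set with $q\ge p\ge 2$ elements, $\Map_\cC(\one[\kappa],\one)\simeq\und{\one}^{\kappa}$ is an honest $q$-fold product of spaces regardless of any semiadditivity of $\cC$, and precomposition with $u\epsilon$ is the composite $\und{\one}^{\kappa}\to\und{\one}\to\und{\one}^{\kappa}$ of evaluation at $0\in\kappa$ followed by the diagonal. On $\pi_0$ its image lies in the constant functions, so it can be the identity only if $\pi_0(\one)$ has at most one element, i.e., only if $\cC$ is zero. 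The paper's proof carries out this $\pi_0$-level argument directly with $r\cdot$ on $\one[R]$, using that $\pi_0(R)$ is a retract of $\und{R}$; your passage to a residue field, where $r$ acts as $0$, is a perfectly fine variant, but only after replacing the matrix-ring step by the $\pi_0$-level argument above.
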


\begin{proof}
    By \Cref{Por_Scaling}, for every $M \in \Modfin{\OR}{n}$ we have a commutative diagram 
    \[\begin{tikzcd}
    	{\one[M]} && {\one[M]} \\
    	& {\one^{\und{\Dual{M}{n}}}.}
        \arrow["r \cdot", from=1-1, to=1-3]
        \arrow["{\Four_{\omega^r}}"', from=1-1, to=2-2]
        \arrow["{\Four_{\omega}}", from=1-3, to=2-2]
    \end{tikzcd}\]    
    Since $\omega$ is an orientation, the right diagonal map is an isomorphism. By 2-out-of-3 for isomorphisms, $\omega^r$ is an orientation, i.e., the left diagonal map is an isomorphism if and only if $\one[M]\oto{\:r\cdot\:} \one[M]$ is an isomorphism for all $M \in \Modfin{\OR}{n}$. Clearly, if $r$ is invertible, then this is the case. Conversely, assuming $R$ is $\pi$-finite, we get that the map $\one[R]\oto{\:r\cdot\:} \one[R]$ is an isomorphism. Applying $\Map_\cC(-,\one)$ we get an isomorphism
    $\und{\one}^{\und{\OR}} \iso \und{\one}^{\und{\OR}}$. Since $\pi_0(R)$ is a retract of $\und{\OR}$, we get an isomorphism
    $\und{\one}^{\pi_0(R)} \iso \und{\one}^{\pi_0(R)}$,
    to which we can apply $\pi_0$, which preserves products, and get an isomorphism
    $\pi_0(\one)^{\pi_0(R)} \iso \pi_0(\one)^{\pi_0(R)}$.
    Since multiplication by $r$ is not invertible on $\pi_0(R)$, the last map can be an isomorphism only if $\pi_0(\one) \simeq \pt$, which would imply that $\cC$ is zero. 
\end{proof}

\subsubsection{Oriented modules} 

To study the question of whether a given $\OR$-pre-orientation $\omega$ is in fact an orientation, it is useful to know that the collection of $\omega$-oriented $\OR$-modules is closed under a variety of operations. We shall assume higher semiadditivity throughout.
We begin by showing that orientability is preserved under shifted Brown--Comenetz duality.
 
\begin{prop}\label{oriented_dual}
    Let $\OR\in\calg(\Sp_{(p)}^\cn)$, let $\cC \in \calg(\Prl^{\sad{n}})$, and let $\omega \in \POr{\OR}{\cC}{n}$. An $\OR$-module $M\in \Modfin{\OR}{n}$ is $\omega$-oriented if and only if $\Dual{M}{n} \in \Modfin{\OR}{n}$ is $\omega$-oriented. 
\end{prop}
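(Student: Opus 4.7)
The plan is to reduce this immediately to monoidal duality. By \Cref{cor:Four_Duality}, the $n$-semiadditivity of $\cC$ together with the $[0,n]$-finiteness of $M$ gives that the Fourier transform at $\Dual{M}{n}$ is the monoidal dual of the Fourier transform at $M$:
\[
    (\Four_\omega)_{\Dual{M}{n}} \:\simeq\: (\Four_\omega)_{M}^\vee.
\]

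Next, I would invoke the standard fact that a morphism between dualizable objects in a symmetric monoidal $\infty$-category is an isomorphism if and only if its dual is. To apply it here, I need both the source $\one[M]$ and the target $\one^{\und{\Dual{M}{n}}}$ of $\Four_\omega$ to be dualizable. This is exactly the content of \cite[Proposition 2.5]{carmeli2021chromatic} (already cited in \Cref{nil_reflect_orientations}): in an $n$-semiadditive presentably symmetric monoidal $\infty$-category, the group algebra on a $[0,n]$-finite module is dualizable, and the same then applies to the function algebra, which is isomorphic to the group algebra on $\Dual{M}{n}$ via the invertibility of Fourier. Actually, to avoid circularity, I note that dualizability of $\one^{\und{\Dual{M}{n}}} \simeq \one[\Dual{M}{n}]^{\vee}$ also follows directly from \cite[Proposition 2.5]{carmeli2021chromatic} applied to $\Dual{M}{n} \in \Modfin{\OR}{n}$, independently of Fourier.

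Combining these, $(\Four_\omega)_M$ is an isomorphism if and only if $(\Four_\omega)_M^\vee$ is, and by the identification above, this happens if and only if $(\Four_\omega)_{\Dual{M}{n}}$ is. This shows one direction; the converse direction uses the canonical isomorphism $\Dual{\Dual{M}{n}}{n} \simeq M$ in $\Modfin{\OR}{n}$, so that the same argument with $\Dual{M}{n}$ in place of $M$ gives the reverse implication, and we conclude.

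The main (very mild) obstacle is simply ensuring that the dualizability hypotheses are met in order to equate $(\Four_\omega)_M^t$ with $(\Four_\omega)_M^\vee$; this is precisely why we needed to pass from \Cref{Four_Duality} to \Cref{cor:Four_Duality} and to invoke $n$-semiadditivity together with the $[0,n]$-finiteness of $M$. Once that is in place, the equivalence of the two oriented conditions is a formal consequence of the symmetry in $M \leftrightarrow \Dual{M}{n}$ in Brown--Comenetz duality.
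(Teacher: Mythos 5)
Your proof is correct and follows essentially the same route as the paper's, which also appeals directly to \Cref{cor:Four_Duality} and then concludes from dualizability (the paper cites \cite[Proposition 3.3.6]{TeleAmbi} for this rather than \cite[Proposition 2.5]{carmeli2021chromatic}, but both give the needed fact). Your extra care in spelling out that a map between dualizable objects is an isomorphism iff its dual is, and in noting the symmetry $\Dual{\Dual{M}{n}}{n}\simeq M$, just makes explicit what the paper leaves implicit.
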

 
\begin{proof}
By \Cref{cor:Four_Duality}, the maps $(\Four_\omega)_{M}$ and $(\Four_\omega)_{\Dual{M}{n}}$
    are dual to one another.
    Hence one is an isomorphism if and only if the other is. 
\end{proof}

Oriented modules are also closed under finite direct sums.

\begin{prop} \label{Orientability_Sum}
    Let $\OR\in \calg(\Sp_{(p)}^\cn)$, let
    $\cC \in \calg(\Prl^{\sad{n}})$ and let $\omega \in \POr{\OR}{\cC}{n}$. For every $\omega$-oriented $M,N \in \Modfin{\OR}{n}$, the module $M\oplus N$ is also $\omega$-oriented.
\end{prop}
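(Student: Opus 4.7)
The strategy is to recognize the Fourier transform on $M \oplus N$ as the tensor product, in $\calg(\cC)$, of the Fourier transforms on $M$ and $N$ separately. Since an isomorphism tensored with an isomorphism is an isomorphism, the result will follow.

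First, I would observe that both functors
\[
    \one[-],\; \one^{\und{\Dual{(-)}{n}}} \colon \Modfin{\OR}{n} \too \calg(\cC)
\]
preserve finite coproducts. For $\one[-]$ this is immediate, as it is (the restriction of) a left adjoint. For the target, Brown--Comenetz duality is a contravariant self-equivalence of $\Modfin{\OR}{n}$, hence sends direct sums to direct sums of $\OR$-modules, so $\und{\Dual{(M \oplus N)}{n}} \simeq \und{\Dual{M}{n}} \times \und{\Dual{N}{n}}$ as spaces. Each of $\und{\Dual{M}{n}}$ and $\und{\Dual{N}{n}}$ is a $\pi$-finite space of level at most $n$, hence $\cC$-ambidextrous by the $n$-semiadditivity assumption, so by the K\"unneth isomorphism of \Cref{Kunneth_Ambi} we get
\[
    \one^{\und{\Dual{(M \oplus N)}{n}}} \:\simeq\: \one^{\und{\Dual{M}{n}}} \otimes \one^{\und{\Dual{N}{n}}} \qin \calg(\cC),
\]
using that pushouts in $\calg(\cC)$ are computed by the tensor product.

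Next, since $\Four_\omega$ is a natural transformation between two coproduct-preserving functors on the additive $\infty$-category $\Modfin{\OR}{n}$ (alternatively, invoking the Hopf algebra refinement of \Cref{Fourier_Hopf}), its component at $M \oplus N$ is canonically identified with the tensor product of its components at $M$ and at $N$:
\[
    (\Four_\omega)_{M \oplus N} \:\simeq\: (\Four_\omega)_M \otimes (\Four_\omega)_N.
\]
More formally, the inclusions $M \to M \oplus N \leftarrow N$ induce a map from the coproduct in $\calg(\cC)$, i.e., the pushout square expressing $\one[M] \otimes \one[N]$ is sent by $\Four_\omega$ to the analogous square for $\one^{\und{\Dual{M}{n}}} \otimes \one^{\und{\Dual{N}{n}}}$, and naturality produces the tensor-product identification above.

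Finally, by hypothesis both $(\Four_\omega)_M$ and $(\Four_\omega)_N$ are isomorphisms in $\calg(\cC)$, so their tensor product is an isomorphism as well, proving that $M \oplus N$ is $\omega$-oriented. No real obstacle is expected here; the only point requiring care is the K\"unneth step, which is precisely where the $n$-semiadditivity hypothesis on $\cC$ enters.
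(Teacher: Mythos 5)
Your proof is correct and follows essentially the same route as the paper: both $\one[-]$ and $\one^{\und{\Dual{(-)}{n}}}$ preserve finite coproducts (the former as a left adjoint, the latter via \Cref{Kunneth_Ambi} and $n$-semiadditivity), so the Fourier transform at $M\oplus N$ is identified with the coproduct of its components at $M$ and $N$ and is therefore an isomorphism. The paper states this quite tersely; your write-up just unpacks the same argument, including the identification of coproducts in $\calg(\cC)$ with tensor products.
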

\begin{proof}
    The Fourier transform $\Four_\omega \colon \one[-] \to \one^{\und{\Dual{(-)}{n}}}$ is a natural transformation between two functors, which both preserve finite co-products. Indeed, $\one[-]$ by being a (restriction of a)  left adjoint, and $\one^{\und{\Dual{(-)}{n}}}$ by the $n$-semiadditivity assumption on $\cC$ and \Cref{Kunneth_Ambi}.
\end{proof}
 
Next, we consider the behavior of orientability under cofibers and extensions of $\OR$-modules. 

\begin{prop} \label{Orientability_Ext_Cof}
    Let $\OR\in \calg(\Sp_{(p)}^\cn)$, let
    $\cC \in \calg(\Prl^{\sad{n}})$ and let $\omega \in \POr{\OR}{\cC}{n}$. Consider an exact sequence in $\Modfin{\OR}{n}$ of the form
    \[
        M_0 \too M_1 \too M_2, 
    \]
    such that $M_0$ is $\omega$-oriented and $\und{\Dual{M_0}{n}}$ is $\cC$-affine. Then, $M_1$ is $\omega$-oriented if and only if $M_2$ is $\omega$-oriented. 
\end{prop}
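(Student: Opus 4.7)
The strategy is to view the three Fourier transforms $\Four_0,\Four_1,\Four_2$ as the corner maps of a morphism of pushout squares in $\calg(\cC)$, and then use affineness to upgrade this to a fiberwise statement over $\und{\Dual{M_0}{n}}$.

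First I will produce the two pushout squares. The exact sequence $M_0 \to M_1 \to M_2$ extends to an exact (biCartesian) square in $\Modfin{\OR}{n}$ with zero in the bottom-left corner. Since $\one[-]\colon \Sp^\cn \to \calg(\cC)$ is a left adjoint, it preserves colimits and yields a pushout square
\[
\begin{tikzcd}
\one[M_0] \ar[r] \ar[d] & \one[M_1] \ar[d] \\
\one \ar[r] & \one[M_2]
\end{tikzcd}
\]
in $\calg(\cC)$. Dually, applying the contravariant equivalence $\Dual{(-)}{n}$ and then $\Omega^\infty$ produces a pullback square of spaces with base $\und{\Dual{M_0}{n}}$ and fiber $\und{\Dual{M_2}{n}}$. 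Since $\und{\Dual{M_0}{n}}$ is $\cC$-affine by hypothesis, and $\und{\Dual{M_2}{n}}$ is $\pi$-finite hence $\cC$-ambidextrous and K\"unneth in the $n$-semiadditive $\cC$ (\cref{Kunneth_Ambi}), the Eilenberg--Moore formula (\cref{Affineness_Kunneth_EM}) promotes $\one^{\und{(-)}}$ of this pullback square to a pushout square
\[
\begin{tikzcd}
\one^{\und{\Dual{M_0}{n}}} \ar[r] \ar[d] & \one^{\und{\Dual{M_1}{n}}} \ar[d] \\
\one \ar[r] & \one^{\und{\Dual{M_2}{n}}}
\end{tikzcd}
\]
in $\calg(\cC)$. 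Naturality of the Fourier transform supplies a morphism from the first square to the second, which is an isomorphism on the top-left corner by the $\omega$-orientedness of $M_0$ and the identity on the bottom-left.

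The forward direction ($M_1$ oriented $\Rightarrow$ $M_2$ oriented) is immediate: if the top-right map $\Four_1$ is also iso, then three non-apex corner maps are iso, so the induced map on the apex (pushout), namely $\Four_2$, is iso.

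For the backward direction, I will upgrade $\Four_1$ to a map of $\one[M_0]$-algebras and translate to local systems. Using the orientation-induced iso $\one[M_0]\simeq \one^{\und{\Dual{M_0}{n}}}$ together with the $\cC$-affineness of $\und{\Dual{M_0}{n}}$, \cref{Affine_SM} gives a symmetric monoidal equivalence $\Mod_{\one[M_0]}(\cC) \simeq \cC^{\und{\Dual{M_0}{n}}}$, under which $\Four_1$ corresponds to a morphism $\calL_1 \to \calL_1'$ of local systems on $\und{\Dual{M_0}{n}}$. By \cref{Ev_Sharp}, its fiber at the basepoint is $\Four_2$, which is iso by hypothesis. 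The remaining step is to show that the fiber at each $\chi \in \und{\Dual{M_0}{n}}$ is iso: this follows by transporting the basepoint iso across translations of $\und{\Dual{M_0}{n}}$, using the Hopf algebra lift of the Fourier transform (\cref{Fourier_Hopf}) and its translation compatibility (\cref{Translation_Inv}) to identify the fiber at $\chi$ with $\Four_2$ up to a translation automorphism. A natural transformation of local systems is an iso iff it is pointwise iso, so $\Four_1$ is iso and $M_1$ is $\omega$-oriented.

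The main obstacle is the identification, in the backward direction, of the fiber of $\Four_1$ at a general character $\chi$ with a translation-conjugate of $\Four_2$; the careful bookkeeping here requires combining the Hopf-algebra and translation equivariance statements of \cref{Fourier_Hopf} and \cref{Translation_Inv} with the affineness-driven equivalence $\Mod_{\one[M_0]}(\cC) \simeq \cC^{\und{\Dual{M_0}{n}}}$.
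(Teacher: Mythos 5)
Your overall architecture matches the paper's proof: both build the two pushout squares (one via $\one[-]$ being a left adjoint, the other via affineness of $\und{\Dual{M_0}{n}}$ and Eilenberg--Moore), handle the forward direction by 3-out-of-4 on the corner maps, and handle the backward direction by passing to local systems on $\und{\Dual{M_0}{n}}$, checking the fiber at the basepoint, and transporting to the other fibers by translation via \cref{Fourier_Hopf} and \cref{Translation_Inv}. So the approach is essentially the paper's.

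There is, however, a genuine gap in your treatment of the translation step. You write that you ``transport the basepoint iso across translations of $\und{\Dual{M_0}{n}}$,'' but the translation automorphisms supplied by \cref{Translation_Inv} live on $\one[M_1]$ and $\one^{\und{\Dual{M_1}{n}}}$, and they require as input a character $\bar\chi\colon M_1 \to \Dual{\OR}{n}$, i.e., a point of $\und{\Dual{M_1}{n}}$ --- not merely a point $\chi$ of $\und{\Dual{M_0}{n}}$. To move the fiber functor $\ev_\chi$ to $\ev_0$ on the base $\und{\Dual{M_0}{n}}$ in a way compatible with the square $\Four_0 \Rightarrow \Four_1$, you must choose a lift $\bar\chi$ of $\chi$ along $\und{\Dual{M_1}{n}} \to \und{\Dual{M_0}{n}}$ and apply the translation square for $M_1$ together with \cref{Four_Aug} for $M_0$. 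Such a lift is not free: it exists because the cofiber sequence $M_0 \to M_1 \to M_2$ with $M_2$ $n$-truncated forces $\pi_{n+1}M_2 = 0$, hence $\pi_n M_0 \to \pi_n M_1$ is injective, hence $\pi_0(\Dual{M_1}{n}) \to \pi_0(\Dual{M_0}{n})$ is surjective. This finiteness-of-truncation argument is the crux that your sketch leaves out, and without it the phrase ``up to a translation automorphism'' has no content. Fill in this lift (and the small diagram chase that identifies the fiber at $\chi$ with $\Four_2$ after the shear) and the proof is complete and identical in method to the paper's.
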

 
\begin{proof}
Consider the following commutative diagram in $\calg(\cC)$, where the diagonal arrows represent the components of the Fourier transform $\Four_\omega$ for the objects $0, M_0, M_1$ and $M_2$:
    \[\begin{tikzcd}\tag{$*$}
    	{ \one[M_0]} && { \one[M_1]} \\[-10pt]
    	& {\one^{\und{\Dual{M_0}{n}}}} && {\one^{\und{\Dual{M_1}{n}}}} \\
    	{ \one} && {\one[M_2]} \\[-10pt]
    	& \one && {\one^{\und{\Dual{M_2}{n}}}}
\arrow[from=1-1, to=3-1]
\arrow["\:"{description, pos=0.55}, from=3-1, to=3-3]
\arrow[from=1-1, to=1-3]
\arrow["\quad"{description, pos=0.42}, from=1-3, to=3-3]
\arrow[from=2-2, to=4-2]
\arrow[from=2-2, to=2-4]
\arrow[from=2-4, to=4-4]
\arrow[from=4-2, to=4-4]
\arrow[dashed, from=1-3, to=2-4]
\arrow[shorten <=4pt, shorten >=4pt, from=3-1, to=4-2]
\arrow[from=1-1, to=2-2]
\arrow[dashed, from=3-3, to=4-4]
\arrow["\sim"{marking}, shift left=1.5, draw=none, from=3-1, to=4-2]
\arrow["\sim"{marking}, shift left=1.5, draw=none, from=1-1, to=2-2]
    \end{tikzcd}\]
    The two solid diagonal maps are isomorphisms by assumption and we have to show that if either of the dashed diagonal maps is an isomorphism, then so is the other. Since the functor $\one[-]$ is a left adjoint, the back face of the diagram is a pushout. Since we assumed that $\und{\Dual{M_0}{n}}$ is $\cC$-affine, the front face is also a pushout (\Cref{Affiness_Eilenberg_Moore}). Thus, if the top dashed diagonal $(\Four_\omega)_{M_1}$ is an isomorphism, then so is the bottom one $(\Four_\omega)_{M_2}$. 
    
    Now, assume that the bottom dashed diagonal is an isomorphism. We can view the diagram as living in commutative algebras in $\cC$ under $\one^{\und{\Dual{M_0}{n}}}$ or equivalently, as commutative algebras in $\Mod_{\one^{\und{\Dual{M_0}{n}}}}(\cC)$. By the definition of affineness, we have an equivalence of categories 
    \[
        \Mod_{\one^{\und{\Dual{M_0}{n}}}}(\cC) \simeq
        \cC^{\und{\Dual{M_0}{n}}}.
    \]
    The collection of functors 
    $\varphi^*\colon\cC^{\und{\Dual{M_0}{n}}}\to \cC$
    for all $\varphi \in \und{\Dual{M_0}{n}}$ is jointly conservative.  By \Cref{Ev_Sharp}, these functors correspond under the above equivalence to the extension of scalars functors
    \[
F_\varphi:=
\one_{\varphi} \otimes_{\one^{\und{\Dual{M}{n}}}} (-) \:\colon\:
        \Mod_{\one^{\und{\Dual{M_0}{n}}}}(\cC) \too \cC,
    \]
    where $\one_\varphi$ is the unit $\one$ with the $\one^{\und{\Dual{M_0}{n}}}$-algebra structure given by 
    $\ev_{\varphi}\colon\one^{\und{\Dual{M_0}{n}}} \to \one.$
    For example, in the above cubical diagram, the left vertical map in the front face is  $\one^{\und{\Dual{M}{n}}} \to \one_0$. Thus, the fact that the back and the front faces are pushouts implies that 
    \[
F_0((\Four_\omega)_{M_1})= (\Four_\omega)_{M_2},
    \]
    which is, by assumption, an isomorphism. To show that $F_\varphi((\Four_\omega)_{M_1})$
is an isomorphism for all $\varphi \in \und{\Dual{M_0}{n}}$, we shall use the translation invariance of the Fourier transform and the case $\varphi=0$. 
    By the long exact sequence of homotopy groups associated with the cofiber sequence 
    \[
        M_0 \too M_1 \too M_2,
    \]
    the map $\pi_n M_0 \to \pi_n M_1$ is \textit{injective}, which implies that the map $\pi_0(\Dual{M_1}{n}) \to \pi_0 (\Dual{M_0}{n})$ is \textit{surjective}. Hence, we can lift $\varphi$ to an element $\cl{\varphi} \in \und{\Dual{M_1}{n}}$. By \Cref{Translation_Inv}, we get a commutative diagram
    \[\begin{tikzcd}
    	{ \one[M_0]} && { \one[M_1]} \\[-10pt]
    	& {\one^{\und{\Dual{M_0}{n}}}} && {\one^{\und{\Dual{M_1}{n}}}} \\
    	{ \one[M_0]} && { \one[M_1]} \\[-10pt]
    	& {\one^{\und{\Dual{M_0}{n}}}} && {\one^{\und{\Dual{M_1}{n}}},}
\arrow["\:"{description}, from=3-1, to=3-3]
\arrow[from=1-1, to=1-3]
\arrow["\quad"{description,pos=0.425}, from=1-3, to=3-3]
\arrow["{T_{(\ev_{\varphi})}}"{pos= 0.4}, from=2-2, to=4-2]
\arrow[from=2-2, to=2-4]
\arrow["\wr"'{pos= 0.35}, from=2-2, to=4-2]
\arrow[from=2-2, to=2-4]
\arrow["{T_{(\ev_{\cl{\varphi}})}}"{pos= 0.5}, from=2-4, to=4-4]
\arrow["\wr"'{pos= 0.45}, from=2-4, to=4-4]
\arrow[from=4-2, to=4-4]
\arrow[from=1-3, to=2-4]
\arrow[from=3-1, to=4-2]
\arrow[from=1-1, to=2-2]
\arrow[from=3-3, to=4-4]
\arrow["{T_{(\varepsilon_{\omega\circ\varphi})}}"{pos= 0.6}, from=1-1, to=3-1]
\arrow["\wr"'{pos= 0.55}, from=1-1, to=3-1]
\arrow["{T_{(\varepsilon_{\omega\circ\cl{\varphi}})}}"{pos= 0.7}, draw=none, from=1-3, to=3-3]
\arrow["\wr"'{pos= 0.65}, draw=none, from=1-3, to=3-3]
    \end{tikzcd}\]
    where the vertical maps are the respective translation automorphisms.
    Pasting this diagram on top of $(*)$ yields a cubical diagram analogous to $(*)$, where the left vertical map of the front face is 
    $\ev_\varphi \colon \one^{\und{\Dual{M_0}{n}}} \to \one.$
    As before, we get that 
    $F_\varphi((\Four_\omega)_{M_1})$
    identifies with $(\Four_\omega)_{M_2}$ and hence is an isomorphism. Thus, $(\Four_\omega)_{M_1}$ is itself an isomorphism.
\end{proof}

Similarly, we have a dual statement for fibers.
\begin{prop} \label{Orientability_Ext_Fib}
    Let $\OR\in \calg(\Sp_{(p)}^\cn)$, let
    $\cC \in \calg(\Prl^{\sad{n}})$ and let $\omega \in \POr{\OR}{\cC}{n}$. Consider an exact sequence in $\Modfin{\OR}{n}$ of the form
    \[
        M_0 \too M_1 \too M_2,
    \]
    such that $M_2$ is $\omega$-oriented and $\und{M_2}$ is $\cC$-affine. Then, $M_1$ is $\omega$-oriented if and only if $M_0$ is $\omega$-oriented. 
\end{prop}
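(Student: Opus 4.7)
The plan is to reduce \cref{Orientability_Ext_Fib} to its already-proven cofiber counterpart, \cref{Orientability_Ext_Cof}, by transporting the entire situation through the shifted Brown--Comenetz duality $\Dual{(-)}{n}$. Since this functor is a contravariant self-equivalence of $\Modfin{\OR}{n}$, it sends the given exact sequence $M_0 \to M_1 \to M_2$ to an exact sequence
\[
\Dual{M_2}{n} \too \Dual{M_1}{n} \too \Dual{M_0}{n} \qin \Modfin{\OR}{n}.
\]
I will think of this as an input to \cref{Orientability_Ext_Cof} with the role of ``$M_0$'' played by $\Dual{M_2}{n}$.

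Next, I would check that the two hypotheses of \cref{Orientability_Ext_Cof} are satisfied for the dualized sequence. First, \cref{oriented_dual} (which uses the $n$-semiadditivity assumption on $\cC$) tells me that $\Dual{M_2}{n}$ is $\omega$-oriented because $M_2$ is. Second, the canonical identification $M_2 \iso \Dual{(\Dual{M_2}{n})}{n}$ turns the hypothesis ``$\und{M_2}$ is $\cC$-affine'' into the statement ``$\und{\Dual{(\Dual{M_2}{n})}{n}}$ is $\cC$-affine'', which is exactly the affineness hypothesis required at the ``$M_0$'' slot of \cref{Orientability_Ext_Cof}.

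Applying \cref{Orientability_Ext_Cof} to this sequence then yields that $\Dual{M_1}{n}$ is $\omega$-oriented if and only if $\Dual{M_0}{n}$ is. A final application of \cref{oriented_dual} in each direction converts this back into the desired equivalence: $M_1$ is $\omega$-oriented if and only if $M_0$ is.

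There is no real obstacle here; the work has already been done in \cref{Orientability_Ext_Cof}, and the content of this proposition is that shifted Brown--Comenetz duality interchanges cofiber and fiber statements within $\Modfin{\OR}{n}$ while preserving both $\omega$-orientedness (via \cref{oriented_dual}) and the relevant affineness hypothesis (via involutivity of $\Dual{(-)}{n}$). The only thing to verify carefully is that the class of exact sequences considered in both statements is stable under $\Dual{(-)}{n}$, which is immediate from the fact that $\Dual{(-)}{n}\colon \Modfin{\OR}{n} \iso (\Modfin{\OR}{n})^\op$ is an equivalence of (stable-in-the-relevant-sense) $\infty$-categories.
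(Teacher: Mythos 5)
Your proposal is correct and follows exactly the same route as the paper's own proof: dualize the exact sequence via $\Dual{(-)}{n}$, invoke \cref{oriented_dual} to transport $\omega$-orientedness across the duality, and then apply \cref{Orientability_Ext_Cof} to the dualized sequence. The only difference is that you spell out the hypothesis-checking (in particular that $\und{\Dual{(\Dual{M_2}{n})}{n}} \simeq \und{M_2}$ supplies the required affineness) which the paper leaves implicit.
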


\begin{proof}
    By \cref{oriented_dual}, a module $\Modfin{\OR}{n}$ is $\omega$-oriented if and only if $\Dual{M}{n}$ is. Thus, the claim follows from \cref{Orientability_Ext_Cof} applied to the exact sequence
    \[
        \Dual{M_2}{n} \too \Dual{M_1}{n} \too \Dual{M_0}{n}.\qedhere
    \]
\end{proof}

\subsection{\texorpdfstring{$\OR$}{R}-Cyclotomic extensions}\label{ssec:cyclext} 

We now construct for every $\OR\in\calg(\Sp_{(p)}^\cn)$ and $\cC \in \calg(\Prl^{\sad{n}})$ a universal $\OR$-oriented commutative algebra (of height $n$) in $\cC$, denoted by $\orcyc{\OR}{n}$, which we call the \textit{$\OR$-cyclotomic extension} (of height $n$).

\subsubsection{Universally oriented categories} 

We begin by working one categorical level up, which does not require any higher semiadditivity assumptions. The adjunction unit $\Dual{\OR}{n} \to \one[\Dual{\OR}{n}]^\times$ exhibits $\one[\Dual{\OR}{n}]$ as the universal \textit{$R$-pre-oriented} (of height $n$) commutative algebra in $\cC$. Our first goal is to establish a categorification of this fact.

\begin{prop}\label{universal_pre_orientation}
    Let $\cC \in \calg(\Prl)$ and let $\OR\in \calg(\Sp_{(p)}^\cn)$. 
    The functor 
    \[
        \POr{\OR}{-}{n}\colon \calg(\Mod_\cC(\Prl)) \too \Spc
    \] 
    is co-representable by $\Mod_{\one[\Dual{\OR}{n}]}(\cC)$. 
\end{prop}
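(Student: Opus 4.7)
The plan is to match both sides of the claimed corepresentability to a common intermediate space by stacking three adjunctions. Fix a target $\cD\in\calg(\Mod_\cC(\Prl))$ with structure morphism $F\colon\cC\to\cD$. First, \Cref{def:pre_orientations} combined with the adjunction $\one_\cD[-]\dashv(-)^\times$ rewrites the pre-orientation space as a space of augmentations:
\[
    \POr{\OR}{\cD}{n} = \Map_{\Sp^\cn}(\Dual{\OR}{n},\one_\cD^\times) \simeq \Map_{\calg(\cD)}(\one_\cD[\Dual{\OR}{n}],\one_\cD).
\]

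Second, I will observe that the spectrum-of-units functor only sees the underlying $\EE_\infty$-ring, so the functor $(-)^\times\colon\calg(\cD)\to\Sp^\cn$ factors through the restriction-of-scalars functor $\calg(\cD)\to\calg(\cC)$ along $F$. Passing to left adjoints then identifies $\one_\cD[M]$ with $F(\one_\cC[M])$ for every $M\in\Sp^\cn$, and in particular yields $\one_\cD[\Dual{\OR}{n}]\simeq F(\one_\cC[\Dual{\OR}{n}])$ in $\calg(\cD)$.

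Third, and this is the main ingredient, I will invoke the universal property of $\Mod_A(\cC)$ as an object of $\calg(\Mod_\cC(\Prl))$: for every $A\in\calg(\cC)$ there is a natural equivalence
\[
    \Map_{\calg(\Mod_\cC(\Prl))}(\Mod_A(\cC),\cD) \simeq \Map_{\calg(\cD)}(F(A),\one_\cD),
\]
which exhibits $\Mod_A(\cC)$ as the free $\cC$-linear presentably symmetric monoidal $\infty$-category equipped with a $\cD$-algebra augmentation of $F(A)$. This is the categorification side of the adjunction whose review is announced in \S\ref{ssec:categorification}. Specializing to $A=\one_\cC[\Dual{\OR}{n}]$ and chaining the three equivalences, each of which is manifestly natural in $\cD$, produces the required corepresentation. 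The only step requiring more than pure adjunction bookkeeping is this last universal property, which I expect to be the main obstacle: it must be cited carefully from the modules-of-algebras formalism of \cite[\S 4.5, \S 4.8]{HA}, or equivalently extracted from the preliminary results reviewed in \S\ref{ssec:categorification}.
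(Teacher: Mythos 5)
Your proof is correct and takes essentially the same approach as the paper; both stack the same adjunctions, just traversed in opposite directions. The paper starts from $\Map_{\calg(\Mod_\cC(\Prl))}(\Mod_{\one[\Dual{\OR}{n}]}(\cC),\cD)$, applies the adjunction $\Mod_{(-)}(\cC)\dashv\End(\one_{(-)})$ to land in $\Map_{\calg(\cC)}(\one[\Dual{\OR}{n}],\End(\one_\cD))$, then applies $\one[-]\dashv(-)^\times$, and concludes with the observation that $\End(\one_\cD)^\times\simeq\one_\cD^\times$; your universal property $\Map_{\calg(\Mod_\cC(\Prl))}(\Mod_A(\cC),\cD)\simeq\Map_{\calg(\cD)}(F(A),\one_\cD)$ is the same $\Mod_{(-)}(\cC)\dashv\End(\one_{(-)})$ adjunction with $F\dashv G$ unwound once (since $\End(\one_\cD)=G(\one_\cD)$), and your observation that $(-)^\times$ factors through the lax monoidal right adjoint to $F$ is the paper's identification $\End(\one_\cD)^\times\simeq\one_\cD^\times$ in disguise.
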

\begin{proof}
    Using the adjunctions 
    \[
        \Mod_{(-)}(\cC)\colon \calg(\cC)  \adj 
        \calg(\Mod_\cC(\Prl)) \noloc \End(\one_{(-)}) 
    \]
    and
    \[
        \one[-]\colon \Sp^\cn \adj 
        \calg(\cC) \noloc (-)^\times,
    \]
    we get 
    \[
        \Map_{\calg(\Mod_\cC(\Prl))}(\Mod_{\one_\cC[\Dual{\OR}{n}]}(\cC),\cD) \simeq
        \Map_{\calg(\cC)}(\one[\Dual{\OR}{n}], \End(\one_\cD)) \simeq
    \]
    \[
        \Map_{\Sp^\cn}(\Dual{\OR}{n},\End(\one_\cD)^\times ) \simeq
        \Map_{\Sp^\cn}(\Dual{\OR}{n},\one_\cD^\times)=:
        \POr{\OR}{\cD}{n}
    \]
    naturally in $\cD\in \Mod_\cC$, which finishes the proof.
\end{proof}

We shall thus use the following notation: 

\begin{defn}
    Let $\cC\in \calg(\Prl)$ and let $\OR\in \calg(\Sp_{(p)}^\cn)$. We define
    \[
        \mdef{\puniv{\cC}{\OR}{n}} :=
        \Mod_{\one[\Dual{\OR}{n}]}(\cC)
    \]
    and denote by 
    \[
        \mdef{\omega_\taut} \colon 
        \Dual{\OR}{n} \too 
        \one_{\puniv{\cC}{\OR}{n}}^\times =
        \one[\Dual{\OR}{n}]^\times
    \] 
    the \tdef{tautological $\OR$-pre-orientation} of $\puniv{\cC}{\OR}{n}$ given by the unit of the adjunction $\one[-]\dashv (-)^\times.$
\end{defn} 

\begin{rem}
    By \Cref{universal_pre_orientation}, $\puniv{\cC}{\OR}{n}$ co-represents $\OR$-pre-orientations of height $n$ for $\cC$-linear presentably symmetric monoidal $\infty$-categories. Explicitly, given $F\colon \puniv{\cC}{\OR}{n} \to \cD$ in $\Mod_\cC(\Pr)$, the associated $\OR$-pre-orientation of $\cD$ is $F(\omega_\taut)$.
\end{rem}

We now consider the co-representability of the subfunctor $\Or{\OR}{-}{n}\subseteq \POr{\OR}{-}{n}$.

\begin{defn}
    Let $\cC\in \calg(\Prl)$ and let $\OR\in \calg(\Sp_{(p)}^\cn)$. We define 
    \(
        \mdef{\univ{\cC}{\OR}{n}} \subseteq \puniv{\cC}{\OR}{n}
    \)
    to be the full subcategory of objects $X\in\puniv{\cC}{\OR}{n}$, such that
    \[
        \Four_{\omega_\taut}^* \colon 
        \hom(\one^{\und{\Dual{M}{n}}},X) \too 
        \hom(\one[M],X) \qin \cC
    \]
    is an isomorphism for all $M\in \Modfin{\OR}{n}$.
\end{defn}

Equivalently, by \Cref{Four_Duality}, $\univ{\cC}{\OR}{n}$ can be identified with the left localization of $\puniv{\cC}{\OR}{n}$ with respect to the collection of morphisms of the form
\[
    \Id_Y \otimes \Four_{\omega_\taut} \colon 
    Y\otimes \one[M]\too 
    Y\otimes \one^{\und{\Dual{M}{n}}}.
\]
In particular, it is a $\otimes$-localization of $\puniv{\cC}{\OR}{n}$, and hence can be seen as an object of $\calg(\Mod_\cC(\Prl))$, and it is equipped with a symmetric monoidal localization functor $\Lor\colon \puniv{\cC}{\OR}{n} \to \univ{\cC}{\OR}{n}$.

\begin{prop}\label{universal_oriented_category}
    Let $\cC\in \calg(\Prl)$ and let $\OR\in \calg(\Sp_{(p)}^\cn)$. The localization functor
    \[
        \Lor\colon \puniv{\cC}{\OR}{n} \too 
        \univ{\cC}{\OR}{n}
    \] 
    co-represents the fully faithful embedding $\Or{\OR}{-}{n}\subseteq \POr{\OR}{-}{n}$.
\end{prop}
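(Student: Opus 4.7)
My plan is to combine the corepresentability of pre-orientations from \cref{universal_pre_orientation} with the universal property of the $\otimes$-ideal localization $\Lor$ defining $\univ{\cC}{\OR}{n}$. By \cref{universal_pre_orientation}, for any $\cD \in \calg(\Mod_\cC(\Prl))$, there is a canonical equivalence
\[
\Map_{\calg(\Mod_\cC(\Prl))}(\puniv{\cC}{\OR}{n}, \cD) \simeq \POr{\OR}{\cD}{n}, \qquad F \mapsto F(\omega_\taut).
\]
It therefore suffices to show that, under this identification, $F$ factors through $\Lor$ if and only if $F(\omega_\taut)$ is an orientation of $\cD$.

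By the universal property of the $\otimes$-ideal localization, $F$ factors through $\Lor$ iff it inverts every morphism of the form $\Id_Y \otimes \Four_{\omega_\taut}$ for $Y\in \puniv{\cC}{\OR}{n}$ and $M\in\Modfin{\OR}{n}$. Since $F$ is symmetric monoidal, so that $F(\Id_Y \otimes \Four_{\omega_\taut}) \simeq \Id_{F(Y)} \otimes F(\Four_{\omega_\taut})$, this reduces (taking $Y = \one_\puniv$) to the statement that $F(\Four_{\omega_\taut})$ is invertible for every $M \in \Modfin{\OR}{n}$.

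The remaining step is to identify this invertibility with the orientation condition on $F(\omega_\taut)$. For this, the naturality square from \cref{Four_Funct_C} applied to $F$ fits $F(\Four_{\omega_\taut})$ and $\Four_{F(\omega_\taut)}$ into a common commutative diagram whose left vertical is an isomorphism by colimit-preservation of $F$.

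The main obstacle is that the right vertical of this square, the canonical comparison $F(\one_\puniv^{\und{\Dual{M}{n}}}) \to \one_\cD^{\und{\Dual{M}{n}}}$, need not be an iso in general, since $F$ is not assumed to preserve this specific limit. To circumvent this, I would probe both conditions against every $X \in \cD$ via the $\cC$-enriched mapping objects and pass to the right adjoint $G$ of $F$ (which exists as $F$ is a colimit-preserving functor between presentable $\infty$-categories). Under the $\cC$-linear $F \dashv G$ adjunction, both conditions unwind to the single statement that $G(X) \in \puniv{\cC}{\OR}{n}$ is $\omega_\taut$-local, i.e., lies in $\univ{\cC}{\OR}{n}$, for every $X \in \cD$. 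The transpose-invariance of the Fourier transform from \cref{Four_Duality} is precisely what makes this possible: it ensures the equivalence between the two descriptions of $\univ{\cC}{\OR}{n}$ given just after its definition, and hence allows the two characterizations to match without requiring $F$ to preserve the relevant limits on the nose.
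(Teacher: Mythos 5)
Your overall reduction matches the paper's: you invoke \cref{universal_pre_orientation} to translate the statement into ``$F$ factors through $\Lor$ if and only if $F(\omega_\taut)$ is an orientation'', and you correctly identify that ``$F$ factors through $\Lor$'' is equivalent to $F(\Four_{\omega_\taut})$ being invertible for all $M$, which in turn is equivalent to the right adjoint $G$ landing in $\univ{\cC}{\OR}{n}$. You also correctly flag the central subtlety, namely that the comparison $F(\one^{\und{\Dual{M}{n}}}) \to \one_\cD^{\und{\Dual{M}{n}}}$ need not be an isomorphism, so one cannot pass freely between $F(\Four_{\omega_\taut})$ and $\Four_{F(\omega_\taut)}$.

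The gap is in your closing claim that ``both conditions unwind to the single statement that $G(X)\in\univ{\cC}{\OR}{n}$ for every $X$''. Only the first condition does this cleanly: invertibility of $F(\Four_{\omega_\taut})$ is, by Yoneda and the $F\dashv G$ adjunction, precisely locality of $G(X)$ for all $X$. The second condition --- that $\Four_{F(\omega_\taut)}\colon \one_\cD[M]\to\one_\cD^{\und{\Dual{M}{n}}}$ is an isomorphism --- does not unwind to a statement about $G(X)$, because $\Map_\cD(\one_\cD^{\und{\Dual{M}{n}}},X)$ is not expressible via $G$ without already knowing that the comparison map above is an equivalence; this is exactly the obstruction you identified a sentence earlier, so your circumvention re-enters the problem rather than solving it. Moreover, the role you assign to \cref{Four_Duality} is misplaced: the two descriptions of $\univ{\cC}{\OR}{n}$ (the internal-hom one and the $\Id_Y\otimes\Four_{\omega_\taut}$-local one) are equivalent by a routine $\Map(Z,-)$ probing argument and do not require transpose-invariance. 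The paper's proof does not try to prove an ``iff'' unwinding in one shot; it proves the two implications separately. For the forward direction it uses that $\Lor$, being the localization functor, carries the tautological pre-orientation to an orientation (via the identification $\Four_{\Lor(\omega_\taut)}\simeq\Lor(\Four_{\omega_\taut})$). For the converse it shows $G(X)\in\univ{\cC}{\OR}{n}$ directly: it unwinds the defining internal-hom condition for $\univ$, passes it across the monoidal $F\dashv G$ adjunction via $\hom_{\puniv}(Y,G(X))\simeq G(\hom_\cD(F(Y),X))$, and reduces to the hypothesis on $F(\omega_\taut)$. That asymmetric two-step structure is what your ``both unwind to the same thing'' elides.
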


\begin{proof}
    Since $\Lor$ is a localization, pre-composition with it exhibits $\Map_{\calg(\Mod_\cC(\Prl))}(\univ{\cC}{\OR}{n},-)$ as a subfunctor of 
    \[
    \Map_{\calg(\Mod_\cC(\Prl))}(\puniv{\cC}{\OR}{n},-)\simeq \POr{\OR}{-}{n}.
    \] 
    Thus, it would suffice to show that a functor $F\colon \puniv{\cC}{\OR}{n} \to \cD$ in $\calg(\Mod_\cC(\Prl))$ factors through the localization functor $\Lor$ if and only if $F(\omega_\taut)$ is an orientation of $\cD$. 
    
    By \Cref{Four_Funct_C} we have
    $\Four_{\Lor(\omega_\taut)} \simeq \Lor(\Four_{\omega_\taut})$
    on $\Modfin{\OR}{n}$, which is an isomorphism by the definition of $\univ{\cC}{\OR}{n}$. This implies that if $F$ factors through $\Lor$, then $F(\omega_\taut)$ is an orientation. 
    
    Conversely, assuming that $F(\omega_\taut)$ is an orientation we shall show that $F$ factors through $\Lor$. For this, it suffices to show that the essential image of the right adjoint $G\colon \cD \to \puniv{\cC}{\OR}{n}$ of $F$ lies in $\univ{\cC}{\OR}{n}\subseteq \puniv{\cC}{\OR}{n}$.
    By the definition of  $\univ{\cC}{\OR}{n}$, this is if and only if, for every $X\in \cD$ and every $M\in \Modfin{\OR}{n}$, the morphism
    \[
        \Four_{\omega_\taut}^*\colon 
        \hom(\one^{\und{\Dual{M}{n}}},G(X))\too
        \hom(\one[M],G(X)) 
    \]
    is an isomorphism. Since $F$ is symmetric monoidal, we have a natural isomorphism
    \[
        \hom_{\puniv{\cC}{\OR}{n}}(Y,G(X))\simeq 
        G(\hom_{\cD}(F(Y),X)) 
    \]
    for all $Y\in \puniv{\cC}{\OR}{n}$, so it would suffice to show that
    \[
        F(\Four_{\omega_\taut})^*\colon 
        \hom(F(\one^{\und{\Dual{M}{n}}}),X) \too
        \hom(F(\one[M]),X)
    \] 
    is an isomorphism for every $M\in \Modfin{\OR}{n}$. But this follows from our assumption that $F({\omega_\taut})$ is an orientation.
\end{proof}

\subsubsection{Universally oriented algebras} 

\Cref{universal_oriented_category} shows that there is a $\cC$-linear symmetric monoidal $\infty$-category $\univ{\cC}{\OR}{n}$ carrying a universal $\OR$-orientation of height $n$. We now show that if $\cC$ is $n$-semiadditive, then $\univ{\cC}{\OR}{n}$ is in fact the  $\infty$-category of modules over a universally $\OR$-oriented commutative algebra in $\cC$.

\begin{prop} \label{universal_oriented_ring} 
    Let $\cC\in \calg(\Prl^{\sad{n}})$ and let $\OR\in \calg(\Sp_{(p)}^\cn)$. Then, there is an idempotent  commutative $\one[\Dual{\OR}{n}]$-algebra $\orcyc{\OR}{n}$, such that 
    \[
        \univ{\cC}{\OR}{n}\simeq \Mod_{\orcyc{\OR}{n}}(\cC) \qin \calg(\Mod_{\puniv{\cC}{\OR}{n}}(\Prl)).
    \] 
\end{prop}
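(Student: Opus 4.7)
The plan is to exhibit the localization $\Lor \colon \puniv{\cC}{\OR}{n} \to \univ{\cC}{\OR}{n}$ from \Cref{universal_oriented_category} as a \emph{smashing} symmetric monoidal Bousfield localization. Once this is established, the general theory of smashing localizations yields an idempotent commutative algebra $\orcyc{\OR}{n} := \Lor(\one_{\puniv{\cC}{\OR}{n}})$ in $\puniv{\cC}{\OR}{n}$ --- equivalently, an idempotent commutative $\one[\Dual{\OR}{n}]$-algebra in $\cC$ --- such that $\Lor$ agrees with extension of scalars along $\one[\Dual{\OR}{n}] \to \orcyc{\OR}{n}$. This provides the desired $\puniv{\cC}{\OR}{n}$-linear symmetric monoidal equivalence
\[
\univ{\cC}{\OR}{n} \simeq \Mod_{\orcyc{\OR}{n}}(\puniv{\cC}{\OR}{n}) \simeq \Mod_{\orcyc{\OR}{n}}(\cC).
\]

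To verify that $\Lor$ is smashing, it suffices to show that its fully faithful right adjoint (the inclusion of the local subcategory) preserves colimits, i.e., that the full subcategory of $\Lor$-local objects is closed under colimits in $\puniv{\cC}{\OR}{n}$. By the very definition of $\univ{\cC}{\OR}{n}$, an object $X$ is $\Lor$-local precisely when
\[
\Four_{\omega_\taut}^* \colon \hom(\one^{\und{\Dual{M}{n}}}, X) \too \hom(\one[M], X)
\]
is an isomorphism for every $M \in \Modfin{\OR}{n}$. Thus it suffices to show that the internal hom functors $\hom(\one[M], -)$ and $\hom(\one^{\und{\Dual{M}{n}}}, -)$ preserve colimits for every such $M$.

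This in turn reduces to the dualizability of the source and target of $\Four_{\omega_\taut}$. Since $\cC$ is $n$-semiadditive, the module category $\puniv{\cC}{\OR}{n} = \Mod_{\one[\Dual{\OR}{n}]}(\cC)$ inherits $n$-semiadditivity, and for $M \in \Modfin{\OR}{n}$ the underlying spaces $\und{M}$ and $\und{\Dual{M}{n}}$ are $n$-truncated $\pi$-finite. Invoking \cite[Proposition 2.5]{carmeli2021chromatic}, both $\one[M]$ and $\one^{\und{\Dual{M}{n}}}$ are therefore dualizable in $\puniv{\cC}{\OR}{n}$. The internal hom out of a dualizable object is equivalent to tensoring with its dual, hence preserves colimits, completing the verification. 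The main content of the argument is thus the dualizability of these objects, which is the precise point where $n$-semiadditivity is used; the passage from smashness to an idempotent algebra presentation is a formal consequence of the general theory of smashing localizations (cf. \cite[\S 4.8.2]{HA}).
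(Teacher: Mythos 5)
Your approach is essentially the same as the paper's: the decisive input in both is the dualizability of $\one[M]$ and $\one^{\und{\Dual{M}{n}}}$ in $\puniv{\cC}{\OR}{n}$, coming from $n$-semiadditivity, which lets one rewrite the defining condition for $\univ{\cC}{\OR}{n}$ as the invertibility of $\Four_{\omega_\taut}^\vee \otimes X$. The paper invokes \cite[Theorem 7.6]{ragimovschlank2022}, which characterizes full subcategories of a presentably symmetric monoidal $\infty$-category of the form $\Mod_A(-)$ for idempotent $A$, and verifies the four required closure properties (limits, colimits, tensoring with arbitrary objects, internal hom from arbitrary objects). You take the route of showing that $\Lor$ is ``smashing'' and then invoking the general theory.

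The gap is in the step ``closure under colimits $\Rightarrow$ smashing $\Rightarrow$ idempotent algebra.'' In the non-stable presentably symmetric monoidal setting, knowing only that the fully faithful right adjoint preserves colimits is not enough to conclude that $\Lor(X) \simeq \Lor(\one)\otimes X$. One also needs the local subcategory to be a $\otimes$-ideal, i.e., closed under tensoring with \emph{arbitrary} objects of $\puniv{\cC}{\OR}{n}$, not merely under tensoring of two local objects. This is exactly what makes $\iota L$ into a $\puniv{\cC}{\OR}{n}$-linear functor, and hence what identifies $\iota L(X)$ with $L(\one)\otimes X$ via the projection formula; without it one cannot even conclude that $L(\one)\otimes X$ is local, which is the crux of the equivalence $\univ{\cC}{\OR}{n}\simeq\Mod_{\orcyc{\OR}{n}}(\cC)$. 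Your proof verifies colimit closure but never addresses this; \cite[\S 4.8.2]{HA} mostly goes the other direction (idempotent $\Rightarrow$ localization). The good news is that the very same dualizability observation you make closes the gap: since being local is equivalent to $\Four_{\omega_\taut}^\vee \otimes X$ being an isomorphism, this condition is visibly stable under $X \mapsto X\otimes Y$. Add this sentence (and, if you want to track precisely the hypotheses of whichever characterization theorem you invoke, also the easy closure under limits and under $\hom(Y,-)$, both of which are formal), and the argument becomes complete and matches the paper's.
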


\begin{proof}
    By \cite[Theorem 7.6]{ragimovschlank2022}, it would suffice to show that $\univ{\cC}{\OR}{n}$ is closed under all limits, all colimits, tensoring with any object of $\puniv{\cC}{\OR}{n}$ and taking internal $\hom$ from any object of $\puniv{\cC}{\OR}{n}$. Now, the functor $(X,Y)\mapsto \hom(Y,X)$ is limit preserving in the $X$-argument and satisfies 
    \[
        \hom(Z,\hom(Y,X))\simeq \hom(Y,\hom(Z,X)). 
    \]
    These imply that $\univ{\cC}{\OR}{n}$ is closed under limits and applying $\hom(Y,-)$ for $Y\in \puniv{\cC}{\OR}{n}$.  On the other hand, by the $n$-seimadditivity assumption on $\cC$, for every 
    $M\in \Modfin{\OR}{n}$ the objects $\one[M]$ and $\one^{\und{\Dual{M}{n}}}$ are \emph{dualizable}. Hence, we can identify the morphisms 
    \[
        \hom(\one^{\und{\Dual{M}{n}}},X) \too \hom(\one[M],X)  
    \]
    of pre-composition with $\Four_\omega$, with the tensor product 
    $\Four_\omega^\vee \otimes X$. This implies that $\univ{\cC}{\OR}{n}$ is also closed under all colimits and tensoring with any object of $\puniv{\cC}{\OR}{n}$. 
    
    Finally, since the functor
    \[
        \Mod_{(-)}(\cC) \colon \calg(\cC) \too \calg(\Mod_\cC(\Prl))
    \] 
    is fully faithful, it follows from \Cref{universal_oriented_category} that indeed $\OR$-orientations of commutative algebras in $\cC$ are co-represented by $\orcyc{\OR}{n}$.
\end{proof}

From the above proposition follows that $\OR$-orientations of commutative algebras in $\cC$ are co-represented by $\orcyc{\OR}{n}$. In other words, $\orcyc{\OR}{n}$ carries a universal $\OR$-orientation of height $n$, which motivates the following:

\begin{defn}
    Let $\cC\in \calg(\Prl^{\sad{n}})$ and let $\OR\in \calg(\Sp_{(p)}^\cn)$. We refer to $\mdef{\orcyc{\OR}{n}} \in \calg(\cC)$ as the \tdef{$\OR$-cyclotomic extension} of height $n$.
\end{defn}

In general, the functoriality of the construction $\orcyc[\one_\cC]{\OR}{n}$ in $\cC \in \calg(\Prl^{\sad{n}})$ is rather subtle. However, there is one relatively simple, yet useful, case.

\begin{prop} \label{Orcyc_Functorial}
    Let $F\colon \cC \to \cD$ in $\calg(\Prl^{\sad{n}})$ and let $\OR\in \calg(\Sp_{(p)}^\cn)$. If $F$ admits a conservative right adjoint, then  $F(\orcyc[\one_\cC]{\OR}{n})\simeq \orcyc[\one_\cD]{\OR}{n}$.
\end{prop}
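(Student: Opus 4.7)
The plan is to produce a comparison map by universality and then verify that it is an equivalence via symmetric monoidal descent of the cyclotomic localizations. First, being a colimit-preserving symmetric monoidal functor between $n$-semiadditive $\infty$-categories, $F$ preserves $n$-finite limits as well as colimits; in particular $F(\one_\cC[\Dual{\OR}{n}])\simeq \one_\cD[\Dual{\OR}{n}]$, and we may regard $F(\orcyc[\one_\cC]{\OR}{n})$ canonically as a commutative $\one_\cD[\Dual{\OR}{n}]$-algebra. Applying \cref{Four_Funct_C}, the Fourier transform over $F(\orcyc[\one_\cC]{\OR}{n})$ in $\cD$ with respect to the image under $F$ of the universal orientation on $\orcyc[\one_\cC]{\OR}{n}$ identifies with the image under $F$ of the Fourier transform over $\orcyc[\one_\cC]{\OR}{n}$ in $\cC$, which is an equivalence by construction. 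Hence $F(\orcyc[\one_\cC]{\OR}{n})$ is itself $\OR$-oriented, and the universal property supplied by \cref{universal_oriented_ring} yields a canonical map of oriented $\one_\cD[\Dual{\OR}{n}]$-algebras
\[
    \alpha \colon \orcyc[\one_\cD]{\OR}{n} \too F(\orcyc[\one_\cC]{\OR}{n}).
\]

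To show that $\alpha$ is an equivalence, I would pass to the induced symmetric monoidal functor $\widetilde{F}\colon \puniv{\cC}{\OR}{n}\to \puniv{\cD}{\OR}{n}$. By naturality of the $\one[-]\dashv (-)^\times$ adjunction, $\widetilde{F}$ carries the tautological pre-orientation $\omega_\taut$ of $\puniv{\cC}{\OR}{n}$ to that of $\puniv{\cD}{\OR}{n}$, and hence sends the Fourier maps inverted by $L_\cC$ to the Fourier maps inverted by $L_\cD$. Thus $L_\cD\circ \widetilde{F}$ factors uniquely through $L_\cC$ as a symmetric monoidal functor $\widetilde{F}'\colon \univ{\cC}{\OR}{n}\to \univ{\cD}{\OR}{n}$. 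Preservation of units by $\widetilde{F}'$, together with the identifications $\univ{\cC}{\OR}{n}\simeq \Mod_{\orcyc[\one_\cC]{\OR}{n}}(\cC)$ and $\univ{\cD}{\OR}{n}\simeq \Mod_{\orcyc[\one_\cD]{\OR}{n}}(\cD)$ from \cref{universal_oriented_ring}, then yields $L_\cD(F(\orcyc[\one_\cC]{\OR}{n}))\simeq \orcyc[\one_\cD]{\OR}{n}$. Combined with the Fourier-locality of $F(\orcyc[\one_\cC]{\OR}{n})$ established in the first paragraph (so that $F(\orcyc[\one_\cC]{\OR}{n})$ is unchanged by $L_\cD$), this produces an equivalence $F(\orcyc[\one_\cC]{\OR}{n})\simeq \orcyc[\one_\cD]{\OR}{n}$, which by the uniqueness clause in the universal property is precisely $\alpha$.

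The role of the conservativity hypothesis on the right adjoint $G$ is to guarantee that the two smashing localizations at $F(\orcyc[\one_\cC]{\OR}{n})$ and $\orcyc[\one_\cD]{\OR}{n}$ on $\puniv{\cD}{\OR}{n}$ cut out the same full subcategory; concretely, it allows one to detect $F(\orcyc[\one_\cC]{\OR}{n})$-module structures on objects of $\cD$ by reducing, via the adjunction $F\dashv G$, to the analogous $\orcyc[\one_\cC]{\OR}{n}$-module statement in $\cC$, so that the descended equivalence indeed lifts to an equivalence of $\one_\cD[\Dual{\OR}{n}]$-algebras in $\cD$. The main technical hurdle is the symmetric monoidal descent itself, which is handled cleanly once one pins down the identification $\widetilde{F}(\omega_\taut)\simeq \omega_\taut$; everything else then follows from the formal properties of symmetric monoidal Bousfield localizations and the uniqueness in the universal property of the cyclotomic extension.
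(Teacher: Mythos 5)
Your argument is correct and takes a genuinely different route from the paper's. The paper compares the two objects directly as corepresenting subfunctors of $\POr{\OR}{\cD;-}{n}$: it identifies $\Map(F(\orcyc[\one_\cC]{\OR}{n}),S)$ with $\Or{\OR}{G(S);\cC}{n}$ via the adjunction $F\dashv G$, uses that $G$ preserves $\pi$-finite colimits to identify the Fourier transform over $G(S)$ with $G$ of the Fourier transform over $S$, and then uses conservativity of $G$ to conclude that the two subfunctors agree. Your approach instead produces the comparison map $\alpha$ by pushing forward the universal orientation along $F$, and then concludes it is an equivalence by a Bousfield-localization/monoidal-descent argument on $\puniv{\cC}{\OR}{n}\to\puniv{\cD}{\OR}{n}$. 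What your route buys is a clean structural picture in terms of the symmetric monoidal localizations $L_\cC$, $L_\cD$; what the paper's route buys is a shorter computation with less machinery.

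One thing you should examine more carefully: as written, your argument never actually invokes the conservativity of $G$. Paragraph 1 (orientation-preservation of $F$) follows from $F$ being symmetric monoidal, colimit-preserving and $n$-semiadditive, and paragraph 2 (factorization of $L_\cD\circ\widetilde{F}$ through $L_\cC$ and unitality of $\widetilde{F}'$) uses only that $\widetilde{F}$ sends Fourier maps to Fourier maps plus the universal property of the monoidal Bousfield localization. Your final paragraph claims conservativity is needed to match the two smashing localizations, but that does not correspond to any step you have taken; you have in fact already produced both $F(\orcyc[\one_\cC]{\OR}{n})\simeq L_\cD(F(\orcyc[\one_\cC]{\OR}{n}))$ (from Fourier-locality) and $L_\cD(F(\orcyc[\one_\cC]{\OR}{n}))\simeq\orcyc[\one_\cD]{\OR}{n}$ (from unitality) without it. Either you have found a strengthening of the proposition (dropping conservativity), in which case you should say so explicitly and double-check there is no hidden appeal to it — for instance, in passing between $\alpha$ as a map of commutative algebras and as a map of $\one_\cD[\Dual{\OR}{n}]$-algebras, or in the unicity of the factorization $\widetilde{F}'$ — or there is a subtle gap. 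Either way, the vague final paragraph should be replaced by an explicit explanation of where (if anywhere) the hypothesis is used.
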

  
\begin{proof}
    Let $G$ be a right adjoint of $F$. Then, $G$ is lax symmetric monoidal and hence maps commutative algebras in $\cD$ to commutative algebras in $\cC$. Moreover, for $S\in \calg(\cD)$, we have a natural identification $G(S)^\times \simeq S^\times$ and hence we can identify pre-orientations of $S$ with pre-orientations of $G(S)$. 
      
    The object $\orcyc[\one_\cD]{\OR}{n}$ co-represents $\OR$-orientations of height $n$ in $\calg(\cD)$, while $F(\orcyc[\one_\cC]{\OR}{n})$ co-represents the functors $\calg(\cD)\to \Spc$ given by 
    \(
        S\mapsto \Or{\OR}{\cC;G(S)}{n}.
    \)
    Both these functors are sub-functors of $\POr{\OR}{\cD;-}{n}$. Hence, to identify them, it would suffice to show that, for $S\in \calg(\cD)$ with pre-orientation $\omega \colon \Dual{\OR}{n} \to S^\times \simeq G(S)^\times$ and every $M\in \Modfin{\OR}{n}$, the morphism
    \[
        \Four_\omega\colon S[M]\too 
        S^{\und{\Dual{M}{n}}} 
        \tag{$*$}
    \]
    is an isomorphism if and only if 
    \[
        \Four_\omega \colon G(S)[M]\too 
        G(S)^{\und{\Dual{M}{n}}}
        \tag{$**$}
    \]
    is an isomorphism.
    The functor $G$ is limit preserving and hence it preserves also $\pi$-finite colimits, see \cite[Corollary 3.2.4]{TeleAmbi}. Consequently, the map $(**)$ is the image under $G$ of the map $(*)$. Since $G$ is assumed to be conservative, we deduce that $(**)$ is an isomorphism if and only if $(*)$ is an isomorphism and the result follows.
\end{proof}

\begin{cor} \label{Orcyc_Scalar_Ext}
    Let $\cC \in \calg(\Prl^{\sad{n}})$ and let $\OR\in \calg(\Sp_{(p)}^\cn)$. For every $S \in \calg(\cC)$ we have, 
    \[
        \orcyc[S]{\OR}{n} \:\simeq\: S\otimes \orcyc{\OR}{n}
        \qin \calg_S(\cC).
    \]
\end{cor}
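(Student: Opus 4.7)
The plan is to derive this as a direct application of \Cref{Orcyc_Functorial} to the extension of scalars functor. Let $F = S\otimes(-)\colon \cC \to \Mod_S(\cC)$, which is a morphism in $\calg(\Prl)$ with forgetful right adjoint $U\colon \Mod_S(\cC) \to \cC$. I would first verify that this setup satisfies the hypotheses of \Cref{Orcyc_Functorial}: the functor $U$ is conservative (since modules over any algebra are detected by their underlying objects, cf.\ \cite[Corollary 4.2.3.2]{HA}); the $\infty$-category $\Mod_S(\cC)$ inherits presentability and a symmetric monoidal structure from $\cC$ (using the relative tensor product over $S$); and most importantly, $n$-semiadditivity is preserved under passage to modules over a commutative algebra, so $\Mod_S(\cC)\in \calg(\Prl^{\sad{n}})$.

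Next, I would note the identification of units: $F$ is symmetric monoidal and sends $\one_\cC$ to $S = \one_{\Mod_S(\cC)}$. Applying \Cref{Orcyc_Functorial} to $F$ then yields
\[
    S\otimes \orcyc[\one_\cC]{\OR}{n} \:=\: F(\orcyc[\one_\cC]{\OR}{n}) \:\simeq\: \orcyc[\one_{\Mod_S(\cC)}]{\OR}{n} \:=\: \orcyc[S]{\OR}{n}
\]
as commutative algebras in $\Mod_S(\cC) \simeq \calg_S(\cC)$ after applying $\calg(-)$. The final identification on the right uses the definition of $\orcyc[S]{\OR}{n}$ as the universal $\OR$-oriented commutative algebra in $\Mod_S(\cC)$ (via \Cref{universal_oriented_ring} applied to $\Mod_S(\cC)$).

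The only mild obstacle to address is a bookkeeping point: we must ensure that $\Mod_S(\cC)$ really lies in $\calg(\Prl^{\sad{n}})$. Higher semiadditivity of $\cC$ passes to $\Mod_S(\cC)$ because the forgetful functor $U$ is conservative and preserves (and detects) all limits and colimits, so the ambidexterity data for $\pi$-finite spaces in $\cC$ transports along $U$ to ambidexterity data in $\Mod_S(\cC)$. With this in hand, the proof reduces to a one-line invocation of \Cref{Orcyc_Functorial}, and I would present it as such.
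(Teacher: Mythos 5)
Your proof is correct and is essentially the paper's own proof, which simply applies \Cref{Orcyc_Functorial} to the extension of scalars functor $S\otimes(-)\colon \cC \to \Mod_S(\cC)$. One small remark: the cleanest justification that $\Mod_S(\cC)\in\calg(\Prl^{\sad{n}})$ goes through the colimit-preserving symmetric monoidal functor $S\otimes(-)$ and \cite[Corollary 3.3.2]{TeleAmbi} (as the paper does elsewhere), rather than trying to transport ambidexterity ``backwards'' along the non-monoidal forgetful functor $U$, which is a less standard and slightly more delicate argument.
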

\begin{proof}
    Apply \Cref{Orcyc_Functorial} to the extension of scalars functor
    \(
        S\otimes (-) \colon \cC \to \Mod_S(\cC).
    \)
\end{proof}  

\subsubsection{Equivariance and Galois}

Recall that $\one[\Dual{\OR}{n}]$ admits an action of the multiplicative monoid $\und{\OR}$. This induces an action of $\und{\OR}$ on the functor $\POr{\OR}{-;\cC}{n}$ it co-represents. This action is given by scaling and hence, by \Cref{Or_Scaling}, the action of the submonoid $\OR^\times \sseq \und{\OR}$ preserves the subspace 
\[
    \Or{\OR}{-;\cC}{n} \sseq \POr{\OR}{-;\cC}{n}.
\]
We thus obtain an action of $\OR^\times$ on the co-representing object $\orcyc{\OR}{n}$ and an $\OR^\times$-equivariant map 
\[
    \one[\Dual{\OR}{n}] \too \orcyc{\OR}{n}
    \qin \calg(\cC)^{B\OR^\times}.
\]
 
\begin{prop}\label{Orcyc_Galois}
    Let $\cC\in \calg(\Prl^{\sad{(n+1)}})$ and let $\OR\in \calg(\Sp_{(p)}^\cn)$ be $n$-finite. If the $\OR$-cyclotomic extension $\orcyc{\OR}{n}$ is faithful, then it is $\OR^\times$-Galois.
\end{prop}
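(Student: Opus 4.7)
The plan is to verify the two Galois conditions of \cref{def:Galois} for the local system $R := \orcyc{\OR}{n} \in \calg(\cC)^{B\OR^\times}$. By \cref{Orcyc_Scalar_Ext}, there is an $R$-algebra isomorphism $R \otimes R \simeq \orcyc[R]{\OR}{n}$, so the tensor square inherits the universal property of the $\OR$-cyclotomic extension of $R$. Over $R$ itself, the tautological orientation $\omega_R$ provides a base point for $\Or{\OR}{R;\cC}{n}$; scaling by units $r \in \und{\OR^\times}$ produces further orientations $\omega_R^r$ by \cref{Or_Scaling}. These assemble into a canonical $R$-algebra comparison map $R \otimes R \to R^{\und{\OR^\times}}$, and condition (G2) is precisely the claim that this map is an isomorphism.

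The technical heart is the following torsor claim: for every $T \in \calg_R(\cC)$, the map
\[
    \und{\OR^\times} \longrightarrow \Or{\OR}{T;\cC}{n}, \qquad r \longmapsto \omega_T^r,
\]
is an equivalence of spaces. To prove this, I would apply the Fourier isomorphism associated to the $n$-finite $\OR$-module $\Dual{\OR}{n}$ (which lies in $\Modfin{\OR}{n}$ because $\OR$ is $n$-finite by hypothesis) and the orientation $\omega_T$:
\[
    \Four_{\omega_T} \colon T[\Dual{\OR}{n}] \iso T^{\und{\OR}}.
\]
Passing to augmentations over $T$ on both sides and using the adjunction $\one[-]\dashv (-)^\times$, the left side identifies with $\POr{\OR}{T;\cC}{n}$ while the right side identifies with the space of $\und{\OR}$-valued points of $T$. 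Under this identification, the scaling $\und{\OR}$-action on pre-orientations (\cref{Por_Scaling}) matches multiplication on $\und{\OR}$, and \cref{Or_Scaling} cuts out orientations as exactly the subspace corresponding to $\und{\OR^\times}$. Combined with the universal property of $\orcyc[R]{\OR}{n}$, this yields (G2).

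For (G1), which asks for $\one \iso R^{h\OR^\times}$, I would iterate the (G2) isomorphism to obtain $R^{\otimes(k+1)} \simeq R^{(\und{\OR^\times})^{\times k}}$ coherently in $k$, matching the Amitsur cobar complex of $R$ with the standard cosimplicial object computing $R^{h\OR^\times}$. The faithfulness hypothesis on $R$ then implies $\Tot(R^{\otimes(\bullet+1)}) \simeq \one$ by descent, and hence $R^{h\OR^\times} \simeq \one$. Alternatively, once (G2) is established, one may deduce the $\cC$-affineness of $B\OR^\times$ and conclude via \cref{automatic_Galois}. The main obstacle is the torsor claim: one must carefully track, under the Fourier isomorphism over $T$, the correspondence between pre-orientations and $\und{\OR}$-valued points, and correctly match the subspace of orientations with $\und{\OR^\times}$. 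The $n$-finiteness of $\OR$ is what places $\Dual{\OR}{n}$ within the domain of the Fourier isomorphism, and is what makes this whole strategy available.
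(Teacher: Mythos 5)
Your high-level plan --- pass to the corepresented functor via Fourier and use the scaling action on orientations --- is close in spirit to the paper, but the ``torsor claim'' at the technical heart of your argument is false as stated, and the place it breaks is exactly the step the paper spends most of its proof handling carefully.

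Concretely, you claim that for \emph{every} $T\in\calg_R(\cC)$ the map $\und{\OR^\times}\to\Or{\OR}{T;\cC}{n}$, $r\mapsto\omega_T^r$, is an equivalence. Take $T=0$: then $0^\times=0$ and $\Or{\OR}{0;\cC}{n}$ is contractible, while $\und{\OR^\times}$ is not (e.g.\ for $\OR=\ZZ/p^2$). More generally, for a product $T=T_1\times T_2$ the orientation space splits as a product while your map is the diagonal, so it cannot be an equivalence unless $\und{\OR^\times}$ is contractible. What the Fourier isomorphism actually gives you is an identification $\Map_{\calg_T}(T^{\und{\OR}},T)\simeq\POr{\OR}{T;\cC}{n}$; the left side is \emph{not} ``the space of $\und{\OR}$-valued points of $T$'' in the naive sense of $\und{\OR}$ itself --- it depends on the idempotent decomposition of $T$. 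The real content you need is that, under this identification, a map $f\colon T^{\und{\OR}}\to T$ gives an \emph{orientation} if and only if $f$ factors through the projection $T^{\und{\OR}}\to T^{\OR^\times}$. This does not follow merely from citing \cref{Or_Scaling}: that result tells you, for a single orientation $\omega$ of a \emph{fixed nonzero} algebra, which scalars $r$ again produce an orientation. To get the factorization statement for arbitrary $f$ and $T$ one must decompose $T$ over $\und{\OR}$ (using that $\und{\OR}$ is $\cC$-affine, via \cref{orientation_affineness_for_R_modules} and \cref{Ev_Sharp}) and apply \cref{Or_Scaling} to each component $S_r := \one_r\otimes_{\one^{\und{\OR}}}T$ separately, concluding $S_r=0$ whenever $r$ is a non-unit. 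That componentwise argument is precisely the paper's step (5), and it is the part your write-up asserts rather than proves.

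Two smaller points. First, the paper starts by using the $(n+1)$-semiadditivity to observe that $\OR^\times$ and $B\OR^\times$ are $\cC$-ambidextrous, so the limits in (G1) and (G2) commute with $\otimes R$; this is what lets one reduce, by faithfulness, to proving \emph{split} Galois over $R$ itself, at which point (G1) and (G2) come for free from $R\simeq\one^{\OR^\times}$. Your (G1) argument via the cobar complex is unnecessary machinery and, more seriously, your ``alternative'' route via \cref{automatic_Galois} presupposes the $\cC$-affineness of $B\OR^\times$, which is not available a priori here. Second, when you do reduce to the oriented case, the split-Galois isomorphism should be an $\OR^\times$-equivariant map $\one[\Dual{\OR}{n}]\to\one^{\und{\OR}}\to\one^{\OR^\times}$ factoring through $\orcyc{\OR}{n}$, and one checks the resulting square is a pushout (non-equivariantly, by conservativity of the forgetful functor); this is the cleaner formulation of what you call ``combined with the universal property.''
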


\begin{proof}
By our assumptions, both $B\OR^\times$ and $\OR^\times$ are $\cC$-ambidextrous spaces. Thus, the tensor product of $\cC$ preserves the respective limits in conditions (G1) and (G2) of a Galois extension (\Cref{def:Galois}). Hence, since $\orcyc{\OR}{n}$ is faithful, it suffices to show that it is Galois after base-change along itself. Namely, after applying the functor 
    \[
        \orcyc{\OR}{n}\otimes (-)\colon \cC \too \Mod_{\orcyc{\OR}{n}}(\cC).
    \]
    In other words, we may assume without loss of generality that $\cC$ is $(\OR,n)$-orientable. We shall show that in this case, $\orcyc{\OR}{n}$ is in fact \textit{split Galois}. The Fourier transform, associated with any $\OR$-orientation $\omega $ of height $n$, provides an isomorphism
    \[
        \Four_\omega \colon \one[\Dual{\OR}{n}] \iso \one^{\und{\OR}},
    \]
    which is equivariant with respect to the multiplicative monoid $\und{\OR}$, and hence in particular with respect to $\OR^\times$. Consider the composition
    \[
        \one[\Dual{\OR}{n}] \oto{\:\Four_\omega\:} 
        \one^{\und{\OR}} \too
        \one^{\OR^\times}
        \qin \calg(\cC)^{B\OR^\times}.
    \]
where the second map is given by restriction along the inclusion $\OR^\times \into \und{\OR}$. Since $\one^{\OR^\times}$ is co-induced as an $\OR^\times$-object, this map corresponds to a non-equivariant map $\one[\Dual{\OR}{n}] \to \one$ given by evaluation at $1 \in \OR^\times$. By unwinding the definitions, this is exactly the orientation $\omega$, and hence factors through $\one[\Dual{\OR}{n}] \to \orcyc{\OR}{n}$. Consequently, we get the following commutative square in $\calg(\cC)^{B\OR^\times}$:
    \[\begin{tikzcd}
    	{\one[\Dual{\OR}{n}]} && {\one^{\und{\OR}}} \\
    	{\orcyc{\OR}{n}} && {\one^{\OR^\times}.}
        \arrow["{\Four_\omega}", from=1-1, to=1-3]
        \arrow[from=1-1, to=2-1]
        \arrow[from=2-1, to=2-3]
        \arrow[from=1-3, to=2-3]
    \end{tikzcd}\tag{$*$}\]
    Since the top map is an isomorphism, it would suffice to show that $(*)$ is a pushout square, as this would imply that the bottom map is an isomorphism as well. Furthermore, as the forgetful functor $\calg(\cC)^{B\OR^\times} \to \calg(\cC)$ is colimit preserving and conservative, it suffices to check that $(*)$ is a pushout square non-equivariantly.  
    Namely, we need to show that for every $f\colon \one^{\und{\OR}} \to S$ in $\calg(\cC)$, the composition 
    \[
\omega_f\:\colon\: 
        \one[\Dual{\OR}{n}] \oto{\Four_\omega} 
        \one^{\und{\OR}}\oto{f} 
        S,
    \]
    is an orientation on $S$ if and only if $f$ factors through the projection $\one^{\und{\OR}} \to \one^{\OR^\times}$. The `if' part is clear from the existence of the commutative square $(*)$. For the `only if' part, it suffices to show that if $\omega_f$ is an orientation, then the pushout
    \(
        \one^{\und{\OR} \smallsetminus \OR^\times} 
        \otimes_{\one^{\und{\OR}}} S
    \)
    vanishes.
    Since the space $\und{\OR}\smallsetminus \OR^\times$ is $\cC$-ambidextrous, it is $\cC$-semi-affine (\Cref{Ambi_Affine}), so by \Cref{colim_lin_fully_faithful} we have a fully faithful embedding 
    \[
        \Mod_{\one^{\und{\OR}\smallsetminus \OR^\times}}(\cC) \longhookrightarrow 
        \cC^{\und{\OR}\smallsetminus \OR^\times}.
    \]
    Furthermore, by \Cref{Ev_Sharp}, the jointly conservative functors 
    $r^* \colon \cC^{\und{\OR}\smallsetminus \OR^\times} \to \cC$
    correspond to the functors $\one_r\otimes_{\one^{\und{\OR}\smallsetminus \OR^\times}}(-)$. 
    Therefore, by the associativity of the relative tensor product, it suffices to show that
    \(
S_r:= \one_r \otimes_{\one^{\und{\OR}}} S
    \)
vanishes for every $r\in \und{\OR} \smallsetminus \OR^\times$. Now, consider the diagram, where the right square a pushout square and the left square commutes by \Cref{Por_Scaling} and \Cref{Four_Aug}:
    \[\begin{tikzcd}
    	{\one[\Dual{\OR}{n}]} && {\one^{\und{\OR}}} && S \\
    	{\one[\Dual{\OR}{n}]} && \one && {S_r.}
        \arrow["r\cdot"', from=1-1, to=2-1]
        \arrow["{\ev_r}", from=1-3, to=2-3]
        \arrow["{\Four_\omega}", from=1-1, to=1-3]
        \arrow["\omega", from=2-1, to=2-3]
        \arrow["f", from=1-3, to=1-5]
        \arrow["{f_r}", from=2-3, to=2-5]
        \arrow["{g_r}", from=1-5, to=2-5]
    \end{tikzcd}\]
    Comparing the composition along the top and then right maps with the composition along the left and the bottom maps provides an isomorphism
    \[
(g_r)_* \omega_f \:\simeq\: ((f_r)_*\omega_f)^r 
        \qin \POr{\OR}{S;\cC}{n}.
    \]
Since we assumed that $\omega_f$ is an orientation, it follows by \Cref{Or_Push}, that the left hand side is an orientation. However, since $R$ is $\pi$-finite and $r$ is assumed to be non-invertible, it follows by \Cref{Or_Scaling}, that the right hand side is \textit{not} an orientation unless $S_r=0$. 
\end{proof}

\subsection{Virtual orientability and affineness}
\label{ssec:affor}

In \Cref{Affiness_Height}, we have seen that having semiadditive height $n$ for an $\infty$-category $\cC$ implies the affineness of $(n+1)$-\emph{connected} $\pi$-finite spaces with respect to $\cC$. We shall now show that the existence of an $\OR$-orientation of height $n$ implies the affineness of all \textit{$n$-truncated} $\pi$-finite spaces which admit an $\OR$-module structure. 
In fact, many $\infty$-categories of interest $\cC$ are not themselves $(\OR,n)$-orientable, yet possess an $(\OR,n)$-orientable commutative algebra $S$, which is \textit{faithful} in the sense that the functor $S\otimes - \colon \cC \to \cC$ is conservative. This allows one to use the Fourier transform over $\Mod_{S}(\cC)$ together with ``faithful descent'' along the morphism $\one \to S$ to derive structural properties, such as affineness, for $\cC$. 

\subsubsection{Virtual orientability}
The above discussion leads to the following definition: 

\begin{defn}\label{def:virtually_R_orientable}
    Let $\cC\in \calg(\Prl)$ and let $\OR\in \calg(\Sp_{(p)}^\cn)$. We say that the $\infty$-category $\cC$ is \tdef{virtually $(\OR,n)$-orientable} if there exists a faithful commutative algebra $S$ in $\cC$, which admits an $\OR$-orientation of height $n$.
\end{defn}

In the higher semiadditive setting, we have the universal commutative algebra $\orcyc{\OR}{n}$ in $\cC$ that caries an $\OR$-orientation. In this case, its faithfulness is equivalent to virtual orientability of $\cC$. 

\begin{prop} \label{virtually_oriented_universal_faithful}
    Let $\cC\in \calg(\Prl^{\sad{n}})$ and let $\OR\in \calg(\Sp_{(p)}^\cn)$. Then, $\cC$ is virtually $(\OR,n)$-orientable if and only if $\orcyc{\OR}{n}$ is faithful. 
\end{prop}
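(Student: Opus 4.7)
The plan is to handle the two directions separately, with the nontrivial content really concentrated in the forward implication, which is essentially a transitivity-of-faithfulness argument passed through the universal property of the cyclotomic extension.

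For the easy direction, suppose $\orcyc{\OR}{n}$ is faithful. By \Cref{universal_oriented_ring}, the algebra $\orcyc{\OR}{n}$ co-represents $\OR$-orientations in $\calg(\cC)$, so its identity self-map corresponds to a canonical (tautological) $\OR$-orientation of height $n$ on $\orcyc{\OR}{n}$ itself. Thus $\orcyc{\OR}{n}$ witnesses virtual $(\OR,n)$-orientability of $\cC$ in the sense of \Cref{def:virtually_R_orientable}.

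For the converse, assume $\cC$ is virtually $(\OR,n)$-orientable and fix a faithful $S\in\calg(\cC)$ equipped with an $\OR$-orientation $\omega$ of height $n$. By the co-representability from \Cref{universal_oriented_ring}, the orientation $\omega$ corresponds to a map $\orcyc{\OR}{n}\to S$ in $\calg(\cC)$, making $S$ canonically into a commutative $\orcyc{\OR}{n}$-algebra. This yields a factorization of functors
\[
S\otimes(-) \:\simeq\: \bigl(S\otimes_{\orcyc{\OR}{n}}(-)\bigr)\circ \bigl(\orcyc{\OR}{n}\otimes(-)\bigr) \colon \cC \too \cC.
\]
Now I would verify conservativity of $\orcyc{\OR}{n}\otimes(-)$ directly: if $\orcyc{\OR}{n}\otimes X\simeq 0$ for some $X\in\cC$, then applying $S\otimes_{\orcyc{\OR}{n}}(-)$ gives $S\otimes X\simeq 0$, and faithfulness of $S$ forces $X\simeq 0$. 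Hence $\orcyc{\OR}{n}$ is faithful.

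The argument is short and has no real obstacle; the only subtlety worth noting is making sure that when I write the factorization above I am using the correct algebra structure on $S$ (the one induced by the universal map $\orcyc{\OR}{n}\to S$), so that the isomorphism $S\otimes_{\orcyc{\OR}{n}}(\orcyc{\OR}{n}\otimes X)\simeq S\otimes X$ holds by the standard base-change/associativity formula for relative tensor products in a presentably symmetric monoidal $\infty$-category. No $n$-semiadditivity is used in either direction beyond what is already needed to make sense of $\orcyc{\OR}{n}$ via \Cref{universal_oriented_ring}.
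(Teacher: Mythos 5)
Your proof is correct and takes essentially the same route as the paper's. The only organizational difference is that the paper passes through $\Cref{Orcyc_Functorial}$ to identify $\orcyc[S]{\OR}{n} \simeq S \otimes \orcyc{\OR}{n}$ and then uses the augmentation $S \otimes \orcyc{\OR}{n} \to S$ furnished by the orientation of $S$, whereas you invoke the co-representability of orientations directly to produce the map $\orcyc{\OR}{n} \to S$ and factor $S\otimes(-)$ through $\orcyc{\OR}{n}\otimes(-)$; these are two ways of packaging the same "faithfulness is inherited along algebra maps" observation, and your version is marginally more economical.
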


\begin{proof}
    Since $\orcyc{\OR}{n}$ admits an $\OR$-orientation of height $n$, if it is faithful, then $\cC$ is virtually $(\OR,n)$-orientable.     
    Conversely, assume that $\cC$ is virtually $(\OR,n)$-orientable, and let $S\in \calg(\cC)$ be a faithful algebra which admits an $\OR$-orientation of height $n$.
    The extension of scalars functor $S\otimes -\colon\cC \to \Mod_S(\cC)$ admits a conservative right adjoint, which, by \Cref{Orcyc_Functorial}, implies
    that $\orcyc[S]{\OR}{n}\simeq S\otimes \orcyc{\OR}{n}$. 
    Now, $S$ admits an $\OR$-orientation, which in turn provides an augmentation 
    \[
        S\otimes \orcyc{\OR}{n} \simeq \orcyc[S]{\OR}{n} \too S.
    \]
    Since $S$ is faithful, we deduce that $S\otimes \orcyc{\OR}{n}$ is faithful, and hence that $\orcyc{\OR}{n}$ is faithful.  
\end{proof}

\begin{example}\label{ex:Q_Virtually_Or}
    The field of rational numbers $\QQ$ does not admit a $\ZZ/p^r$-orientation of height 0, i.e., a primitive $p^r$-th root of unity, unless $p=2$ and $r=1$. However, it is still \textit{virtually} $(\ZZ/p^r ,0)$-orientable, as we can pass to the cyclotomic extension $\QQ(\omega_{p^r})$, which is clearly faithful over $\QQ$.
\end{example}

By localizing with respect to $\orcyc{\OR}{n}$, we obtain the universal virtually $(\OR,n)$-orientable localization of $\cC$.
 
\begin{defn}\label{def:Virt_Univ}
    Let $\cC\in \calg(\Prl^{\sad{n}})$ and let $\OR\in \calg(\Sp_{(p)}^\cn)$. We define $\mdef{\virt{\cC}{\OR}{n}}$ to be the Bousfield localization of $\cC$ with respect to $\orcyc{\OR}{n}$. 
\end{defn}

Namely, $\virt{\cC}{\OR}{n}$ is obtained from $\cC$ by inverting all the morphisms $X\to Y$ in $\cC$ for which the induced morphism $X\otimes \orcyc{\OR}{n}\to Y\otimes \orcyc{\OR}{n}$ is an isomorphism. The $\infty$-category $\virt{\cC}{\OR}{n}$ classifies the property of being virtually $(\OR,n)$-orientable among localizations of $\cC$. 
 
\begin{prop} \label{initial_virtually_R_oriented}
    Let $\cC\in \calg(\Prl^{\sad{n}})$ and let $\OR\in \calg(\Sp_{(p)}^\cn)$. The $\infty$-category $\virt{\cC}{\OR}{n}$ is the initial symmetric monoidal localization of $\cC$ in $\calg(\Prl^{\sad{n}})$ which is virtually $(\OR,n)$-orientable. 
\end{prop}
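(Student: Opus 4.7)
The plan decomposes the assertion into the virtual $(\OR,n)$-orientability of $\virt{\cC}{\OR}{n}$ and its universal property among such localizations. Both parts will hinge on Proposition \ref{Orcyc_Functorial} applied to symmetric monoidal localizations (whose right adjoints, being fully faithful, are automatically conservative), combined with the characterization of virtual orientability via faithfulness of the $\OR$-cyclotomic extension (Proposition \ref{virtually_oriented_universal_faithful}).

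Let $L\colon \cC \to \virt{\cC}{\OR}{n}$ denote the localization and set $A := \orcyc{\OR}{n}$. First I will apply Proposition \ref{Orcyc_Functorial} to $L$ to produce an isomorphism $L(A) \simeq \orcyc[\one_{\virt{\cC}{\OR}{n}}]{\OR}{n}$, so that by Proposition \ref{virtually_oriented_universal_faithful} the virtual orientability of $\virt{\cC}{\OR}{n}$ reduces to the faithfulness of $L(A)$ inside $\virt{\cC}{\OR}{n}$. Given a local object $X$ with $L(A)\otimes X = 0$, monoidality of $L$ unwinds this to $A\otimes X$ being $A$-acyclic in $\cC$, i.e.\ $A\otimes A\otimes X = 0$. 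The unit axiom says the composite $A\to A\otimes A \to A$ is the identity, so $A$ is a retract of $A\otimes A$; tensoring with $X$ then exhibits $A\otimes X$ as a retract of $0$, forcing $A\otimes X = 0$. Thus $X$ is simultaneously $A$-local and $A$-acyclic, and hence $X = 0$.

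For the universal property, let $L'\colon \cC \to \cE$ be any symmetric monoidal localization in $\calg(\Prl^{\sad{n}})$ with $\cE$ virtually $(\OR,n)$-orientable. A second application of Proposition \ref{Orcyc_Functorial} yields $L'(A) \simeq \orcyc[\one_\cE]{\OR}{n}$, which is faithful in $\cE$ by Proposition \ref{virtually_oriented_universal_faithful}. By the universal property of the Bousfield localization $L$ at $A$-equivalences, $L'$ factors (uniquely) through $L$ precisely when it inverts every $A$-equivalence $f$. But if $A\otimes f$ is an isomorphism, monoidality of $L'$ makes $L'(A)\otimes L'(f)$ an isomorphism as well, and the faithfulness of $L'(A)$ then forces $L'(f)$ itself to be an isomorphism.

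The only non-formal step is the retract argument of the first part. It is needed because $A = \orcyc{\OR}{n}$ is typically \emph{not} idempotent as a $\one_\cC$-algebra (it is only idempotent over $\one[\Dual{\OR}{n}]$), so $\virt{\cC}{\OR}{n}$ does not coincide with a smashing $\Mod_A$-localization; nevertheless, the unit-section trick turns $A\otimes A$-acyclicity into $A$-acyclicity, which is exactly what lets $L(A)$ detect zero objects in the localized category.
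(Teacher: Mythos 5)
Your proof is correct and follows essentially the same route as the paper's own: apply Proposition \ref{Orcyc_Functorial} to the (fully faithful right adjoint, hence conservative) localization to identify $L(\orcyc{\OR}{n})$ with the cyclotomic extension of the localized category, reduce both claims to faithfulness statements via Proposition \ref{virtually_oriented_universal_faithful}, and then use monoidality of the comparison functors to conclude. The only difference is one of detail: where the paper simply says that $L(\orcyc{\OR}{n})$ is faithful "by construction," you spell out the standard retract argument (an algebra $A$ is a retract of $A\otimes A$ via the unit, so $A$-acyclicity of $A\otimes X$ forces $A$-acyclicity of $X$), which is a good thing to make explicit since — as you correctly observe — $\orcyc{\OR}{n}$ is not idempotent over $\one_\cC$, so one cannot appeal to a smashing-localization picture.
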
 
 
\begin{proof}
    First, since $\virt{\cC}{\OR}{n}$ is a Bousfield localization of $\cC$, it is a symmetric monoidal localization of $\cC$. Let $L\colon \cC \to \virt{\cC}{\OR}{n}$ be the localization functor. 
    By \Cref{Orcyc_Functorial}, and since $L$ admits a fully faithful (and in particular conservative) right adjoint,  we have $L(\orcyc[\one_\cC]{\OR}{n})\simeq \orcyc[\one_{\virt{\cC}{\OR}{n}}]{\OR}{n}$.
    Since, by construction, $L(\orcyc[\one_\cC]{\OR}{n})$ is faithful in $\calg(\virt{\cC}{\OR}{n})$, we deduce from \Cref{virtually_oriented_universal_faithful} that $\virt{\cC}{\OR}{n}$ is virtually $(\OR,n)$-orientable. 
    It remains to show that $\virt{\cC}{\OR}{n}$ is initial with respect to being virtually $(\OR,n)$-orientable. Let $L_1\colon \cC \to \cD$ be any symmetric monoidal localization in $\calg(\Prl^{\sad{n}})$ for which $\cD$ is virtually $(\OR,n)$-orientable. By \Cref{Orcyc_Functorial} again, we have that $L_1(\orcyc[\one_\cC]{\OR}{n})\simeq \orcyc[\one_\cD]{\OR}{n}$, which is faithful by our assumption on $\cD$. This implies that $L_1$ factors through the Bousfield localization with respect to $\orcyc[\one_\cC]{\OR}{n}$, that is, through $L\colon \cC \to \virt{\cC}{\OR}{n}$. 
\end{proof}

\subsubsection{Affineness of modules}

The existence of an $\OR$-orientation for an $\infty$-category $\cC$ allows one to relate group algebras and algebras of functions on $\OR$-modules. Since the functor $\one[-]\colon \Mod_\OR \to\calg(\cC)$ preserves pushouts, we can use the Fourier transform to deduce Eilenberg--Moore type properties for $\cC$. In fact, \textit{virtual} orientability suffices.

\begin{prop} \label{Orientation_Then_EM}
    Let $\OR\in \calg(\Sp_{(p)}^\cn)$ and
    let $\cC\in\calg(\Prl^{\sad{n}})$ be virtually $(\OR,n)$-orientable. For every exact square $(*)$ in $\Modfin{\OR}{n}$, the associated square $(**)$ is a pushout in $\calg(\cC)$.    
    \[\begin{tikzcd}
    	{ M_0} & {M_1} & {(*)} && {\one^{\und{M_3}}} & { \one^{\und{M_1}}} & {(**)} \\
    	{ M_2} & {M_3,} &&& {\one^{\und{M_2}}} & {\one^{\und{M_0}}.}
        \arrow[from=1-1, to=1-2]
        \arrow[from=1-1, to=2-1]
        \arrow[from=2-1, to=2-2]
        \arrow[from=1-2, to=2-2]
        \arrow[from=1-5, to=2-5]
        \arrow[from=1-5, to=1-6]
        \arrow[from=1-6, to=2-6]
        \arrow[from=2-5, to=2-6]
    \end{tikzcd}\]
\end{prop}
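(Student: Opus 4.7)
The plan is to reduce to the case where $\one_\cC$ itself carries an $\OR$-orientation of height $n$, and then to combine Brown--Comenetz duality of the source square with the Fourier isomorphism.

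For the reduction, pick a faithful $S \in \calg(\cC)$ admitting an $\OR$-orientation of height $n$. The extension-of-scalars functor $-\otimes S \colon \calg(\cC) \to \calg_S(\cC)$ is a left adjoint, hence preserves pushouts, and is conservative on morphisms by faithfulness of $S$; consequently it both preserves and detects pushout squares in $\calg(\cC)$. For each $i$, the K\"unneth isomorphism (\Cref{Kunneth_Ambi}) applied to the $n$-finite space $\und{M_i}$ gives $\one^{\und{M_i}} \otimes S \simeq S^{\und{M_i}}$, so after base change the square $(**)$ becomes the analogous square for the unit of $\Mod_S(\cC)$. We may therefore assume without loss of generality that $\cC$ itself is $(\OR,n)$-orientable and fix an orientation $\omega$ of height $n$.

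Next, Brown--Comenetz duality $\Dual{(-)}{n}$ is a contravariant self-equivalence of $\Modfin{\OR}{n}$ (see \cref{ssec:pre-or}), so the dual of the exact square $(*)$ is again exact, and after relabeling its corners takes the form
\[
\begin{tikzcd}
\Dual{M_3}{n} \ar[r] \ar[d] & \Dual{M_1}{n} \ar[d] \\
\Dual{M_2}{n} \ar[r] & \Dual{M_0}{n}.
\end{tikzcd}
\]
This is, in particular, a pushout in $\Sp^\cn$, since the underlying functor $\Mod_\OR^\cn \to \Sp^\cn$ is colimit-preserving. Applying the left adjoint $\one[-] \colon \Sp^\cn \to \calg(\cC)$ yields a pushout square in $\calg(\cC)$. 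The Fourier transform associated to $\omega$ supplies natural isomorphisms $\Four_\omega \colon \one[\Dual{N}{n}] \iso \one^{\und{N}}$ for $N \in \Modfin{\OR}{n}$ (by combining \Cref{def:orientation} with the canonical equivalence $\Dual{\Dual{N}{n}}{n} \simeq N$). Evaluating at $N = M_i$ identifies the pushout square just obtained with $(**)$, which is therefore a pushout in $\calg(\cC)$.

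The only real subtlety is bookkeeping: Brown--Comenetz duality reverses arrows, which is precisely why $M_0$ and $M_3$ exchange corners between $(*)$ and $(**)$ while $M_1$ and $M_2$ stay on the anti-diagonal. Once this is tracked, the argument is a direct application of $(\OR,n)$-orientability together with the fact that $\one[-]$ is a left adjoint.
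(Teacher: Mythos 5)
Your proof is correct and follows the same route as the paper's: reduce to the orientable case by base change along a faithful $S$ (using a K\"unneth-type isomorphism to identify $(**)\otimes S$ with the corresponding square over $S$), pass to the Brown--Comenetz dual exact square, apply $\one[-]$ to get a pushout, and transport along the Fourier isomorphism. The only cosmetic difference is that the paper phrases the base-change step via preservation of $n$-finite limits by the tensor product rather than invoking \Cref{Kunneth_Ambi} directly, but this amounts to the same thing.
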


\begin{rem}
    The square $(*)$ is exact (see the conventions) in $\Modfin{\OR}{n}$ if and only if it is a pushout square in $\Mod_\OR(\Sp)$. 
\end{rem}

\begin{proof}
    Let $S\in \calg(\cC)$ be a faithful algebra which admits an $\OR$-orientation of height $n$. 
    Since the tensor product in $\cC$ preserves $n$-finite limits in each coordinate, tensoring the square $(**)$ with $S$ gives the analogous square in $\Mod_S(\cC)$. Since $S$ is faithful, we may replace $\cC$ by $\Mod_S(\cC)$ and assume without loss of generality that $\cC$ admits an $\OR$-orientation $\omega\colon \Dual{\OR}{n}\to \one^\times$.
    
The exact square $(*)$ in $\Modfin{\OR}{n}$ induces another exact square in $\Modfin{\OR}{n}$:
    \[
        \xymatrix{\Dual{M_3}{n}\ar[d]\ar[r] & \Dual{M_1}{n}\ar[d]\\
        \Dual{M_2}{n}\ar[r] & \Dual{M_0}{n},
        }
    \]
    which is also exact in $\Sp^\cn$. The functor 
    \[
        \one[-]\colon\Sp^\cn\to\calg(\cC) 
    \]
    preserves colimits, so that the square 
\begin{equation*}\label{eq:***}
        \xymatrix{\one[\Dual{M_0}{n}]\ar[d]\ar[r] & \one[\Dual{M_1}{n}]\ar[d] \\
        \one[\Dual{M_2}{n}]\ar[r] & \one[\Dual{M_3}{n}]
        }\tag{$***$}
    \end{equation*}
    is a pushout square in $\calg(\cC)$. 
    Now, the (inverse of the) Fourier transform 
    $\Four_\omega\colon \one[M] \iso \one^{\und{\Dual{M}{n}}}$ 
    identifies the pushout square $(***)$ with the square $(**)$ in the claim, which is therefore a pushout square as well.
\end{proof}

This implies the following affineness result:

\begin{prop} \label{orientation_affineness_for_R_modules}
    Let $\OR\in \calg(\Sp_{(p)}^\cn)$ and
    let $\cC \in \calg(\Prl^{\sad{n}})$ be virtually $(\OR,n)$-orientable. For every $M\in \Modfin{\OR}{n}$, the space $\und{M}$ is $\cC$-affine. 
\end{prop}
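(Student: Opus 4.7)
The plan is to verify condition (3) of \Cref{Affiness_Eilenberg_Moore} for the space $B := \und{M}$, which is $\cC$-ambidextrous because $M$ is $[0,n]$-finite and $\cC$ is $n$-semiadditive. First, I reduce to the connected case. As a space, $\und{M}$ decomposes as the disjoint union of its path components, $\und{M} \simeq \pi_0 M \times \und{\tau_{\geq 1} M}$. The finite discrete factor $\pi_0 M$ is $\cC$-affine by \Cref{affine_finite_sets}. Applying \Cref{affineness_extensions}(2),(3) to the projection $\pi_0 M \times \und{\tau_{\geq 1}M} \to \pi_0 M$, whose fibers are $\und{\tau_{\geq 1}M}$, it suffices to show that the identity component $\und{\tau_{\geq 1} M}$ is $\cC$-affine; note that $\tau_{\geq 1}M \in \Modfin{\OR}{n}$.

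Next, I reduce to the case where $\cC$ itself is $(\OR,n)$-orientable. By virtual $(\OR,n)$-orientability, choose a faithful $S \in \calg(\cC)$ admitting an $\OR$-orientation. The base change functor $S \otimes (-)\colon \cC \to \Mod_S(\cC)$ is symmetric monoidal, conservative (since $S$ is faithful and the forgetful functor is conservative), and preserves $n$-finite limits (both categories being $n$-semiadditive, in which $n$-finite limits agree with $n$-finite colimits, which are preserved by any left adjoint). It therefore commutes with the formation of both sides of the Eilenberg--Moore comparison map in \Cref{Affiness_Eilenberg_Moore}(3) for the $n$-finite space $\und{\tau_{\geq 1} M}$, and thus reduces us to the case where $\cC$ itself admits an $(\OR,n)$-orientation.

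Set $B = \und{\tau_{\geq 1} M}$ and pick $a, b \in B$. Since $B$ is connected, $\{a\}\times_B\{b\} \simeq \Omega B \simeq \und{\Omega \tau_{\geq 1} M}$, with $\Omega \tau_{\geq 1} M \in \Modfin{\OR}{n}$. Since $B$ is a grouplike $\EE_\infty$-space, the translation $T_{-a}\colon B \to B$ is a self-equivalence whose induced algebra automorphism $T_{-a}^*$ of $\one^B$ sends the pair of augmentations $(\ev_a, \ev_b)$ to $(\ev_0, \ev_{b-a})$, so the pushouts $\one \otimes_{\one^B}\one$ at these two pairs are canonically identified. Connectedness of $B$ then makes $\ev_{b-a}$ equivalent to $\ev_0$ in $\Map_{\calg(\cC)}(\one^B,\one)$: any path from $0$ to $b-a$ in $B$ yields such an equivalence, compatible with the equivalence $\{0\}\times_B\{b-a\}\simeq\Omega B$. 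This reduces the problem to the case $a = b = 0$. In this case, the required isomorphism $\one \otimes_{\one^B} \one \iso \one^{\und{\Omega \tau_{\geq 1} M}}$ is precisely the pushout square produced by \Cref{Orientation_Then_EM} from the exact square
\[
\xymatrix{
\Omega\tau_{\geq 1}M \ar[r] \ar[d] & 0 \ar[d] \\
0 \ar[r] & \tau_{\geq 1}M
}
\]
in $\Modfin{\OR}{n}$. The main delicate point is verifying that the translation and connectedness reductions commute coherently with the Eilenberg--Moore comparison map, so that the $(0,0)$-case genuinely implies the general $(a,b)$-case rather than merely identifying the two sides abstractly.
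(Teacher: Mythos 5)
Your proof is correct and takes essentially the same route as the paper: decompose $\und{M} \to \pi_0 M$ with fibers $\und{\tau_{\geq 1}M}$, invoke \Cref{affine_finite_sets} and the closure properties of \Cref{affineness_extensions}, reduce to the connected case, verify condition (3) of \Cref{Affiness_Eilenberg_Moore} via \Cref{Orientation_Then_EM}. Two small remarks. First, your reduction to $\cC$ itself being $(\OR,n)$-orientable is redundant: \Cref{Orientation_Then_EM} already handles virtual orientability internally (the base-change argument you give is in fact the first step of its proof), so you can apply it directly to get the pushout square for the exact square $\Omega\tau_{\geq 1}M \to 0 \to \tau_{\geq 1}M$. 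Second, the ``delicate point'' you flag at the end is not actually a gap: the Eilenberg--Moore comparison map $\one\otimes_{\one^B}\one \to \one^{\{a\}\times_B\{b\}}$ is natural in the cospan $\{a\} \to B \leftarrow \{b\}$ (it is the assembly map for a limit, assembled from the pullback square of spaces), and any two such cospans with $B$ connected are connected by a zig-zag of equivalences; hence the comparison is an isomorphism at one pair $(a,b)$ iff it is at all. The paper elides this entirely with ``all the path spaces $P_{a,b}\und M$ are isomorphic,'' and your translation-plus-path argument is one concrete way to exhibit the needed equivalence of diagrams, so you have been more careful rather than introducing a gap.
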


\begin{proof}
    First, we have a map $\und{M}\to \pi_0 \und{M}$ whose fibers are all isomorphic to  $\und{\tau_{\ge 1} M}$. Thus, by \Cref{affineness_extensions}((2) and (3)), it suffices to prove that $\und{\pi_0 M}$ and $\und{\tau_{\ge 1}M}$ are $\cC$-affine. Since $\und{\pi_0 M}$ is finite, it is $\cC$-affine by \Cref{affine_finite_sets}. Hence, we are reduced to the case that $M$ is connected. 
    For connected $M$, all the path spaces $P_{a,b}\und{M}$ for $a,b\in \und{M}$ are isomorphic to $\und{\Omega M}\simeq \Omega \und{M}$. By \Cref{Affiness_Eilenberg_Moore}, to show that $\und{M}$ is $\cC$-affine, it suffices to show that the square 
    \[
        \xymatrix{
        \one^{\und{M}} \ar[r]\ar[d] & \one \ar[d] \\ 
        \one \ar[r]               & \one^{\und{\Omega M}} 
        }
    \]
    is a pushout square in $\calg(\cC)$. This in turn follows from \Cref{Orientation_Then_EM}. 
\end{proof}

\begin{rem}
    The conclusion of \Cref{orientation_affineness_for_R_modules} is far from being the best possible. Since $\cC$-affine spaces are closed under extensions (\Cref{affineness_extensions}), to deduce that an $n$-finite space $A$ is $\cC$-affine, it suffices to be able to build $A$ from $\OR$-modules by iterated extensions. We exploit this fact in \cref{p_Affineness}, to show that already virtual $(\FF_p,n)$-orientability implies the $\cC$-affineness of \textit{all} $n$-finite $p$-spaces.    
\end{rem}

\subsubsection{Higher Kummer theory}

When one actually has an $\OR$-orientation of height $n$ we obtain a variant of Kummer theory, providing a classification of certain abelian Galois extensions of a ring in terms of its units.

\begin{prop}\label{Kummer}
    Let $\OR\in \calg(\Sp_{(p)}^\cn)$ and
    let $\cC \in \calg(\Prl^{\sad{n}})$ be $(\OR,n)$-orientable. For every $M\in \Modfin{\OR}{n}$ and $S\in\calg(\cC)$, we have an isomorphism of spaces
    \[
        \calg^{\und{M}-\gal}(S;\cC) \simeq 
        \Map_{\Sp^\cn}(\Dual{M}{n},S^\times).
    \]
\end{prop}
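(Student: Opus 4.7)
The plan is to chain together three isomorphisms, each of which is a direct consequence of a result established earlier in the paper. First, since $\cC$ is $(\OR,n)$-orientable it is in particular virtually $(\OR,n)$-orientable, so \Cref{orientation_affineness_for_R_modules} applies to yield that $\und{M}$ is $\cC$-affine. Combined with the higher semiadditivity of $\cC$, this puts us in the situation of \Cref{Galois_Affine}, which gives the corepresentability isomorphism
\[
\calg^{\und{M}-\gal}(S;\cC) \:\simeq\: \Map_{\calg(\cC)}\!\bigl(\one^{\und{M}},\, S\bigr).
\]

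Next, fix an $\OR$-orientation $\omega$ of $\cC$. Applying the Fourier transform to the module $\Dual{M}{n} \in \Modfin{\OR}{n}$ and using the canonical identification $\Dual{\Dual{M}{n}}{n}\simeq M$ (from the contravariant self-equivalence of $\Modfin{\OR}{n}$ under shifted Brown--Comenetz duality), we get an isomorphism
\[
\Four_\omega \colon \one\!\left[\Dual{M}{n}\right] \iso \one^{\und{M}} \qin \calg(\cC).
\]
Precomposing the corepresentability isomorphism above with $\Four_\omega$ identifies the target with $\Map_{\calg(\cC)}(\one[\Dual{M}{n}], S)$. Finally, the adjunction $\one[-] \dashv (-)^\times$ between $\Sp^\cn$ and $\calg(\cC)$ delivers the last isomorphism
\[
\Map_{\calg(\cC)}\!\bigl(\one[\Dual{M}{n}],\, S\bigr) \:\simeq\: \Map_{\Sp^\cn}\!\bigl(\Dual{M}{n},\, S^\times\bigr),
\]
completing the identification.

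There is no real obstacle here: the substantive inputs (affineness of $\und{M}$ for $M \in \Modfin{\OR}{n}$, the automatic-Galois principle for affine spaces, and invertibility of the Fourier transform on $[0,n]$-finite modules) have all been established in earlier sections, so the proof is essentially a one-line composition of three isomorphisms. The only thing worth being careful about is ensuring naturality in $S$ throughout (so that we obtain an isomorphism of spaces, not merely a bijection on $\pi_0$), but each step above is manifestly natural.
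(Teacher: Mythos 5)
Your proof is correct and follows exactly the same route as the paper: affineness of $\und{M}$ via \Cref{orientation_affineness_for_R_modules}, corepresentability of Galois extensions via \Cref{Galois_Affine}, the Fourier isomorphism $\one[\Dual{M}{n}] \iso \one^{\und{M}}$, and the $\one[-]\dashv (-)^\times$ adjunction. The only cosmetic difference is that you explicitly note that $(\OR,n)$-orientability implies virtual $(\OR,n)$-orientability before invoking \Cref{orientation_affineness_for_R_modules}, which the paper leaves implicit.
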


\begin{proof}
    By \Cref{orientation_affineness_for_R_modules}, the space $\und{M}$ is $\cC$-affine. Hence, by \Cref{Galois_Affine}, we have
    \[
        \calg_S^{\und{M}-\gal}(S;\cC) \simeq 
        \Map_{\calg(\cC)}(\one^{\und{M}}, S).
    \]
    Since $\cC$ is $(\OR,n)$-orientable, the Fourier transform, associated with any $\OR$-orientation $\omega$ of height $n$, provides an isomorphism
    \[
        \Four_\omega \colon 
        \one[\Dual{M}{n}] \iso
        \one^{\und{M}}
        \qin \calg(\cC).
    \]
    Plugging this into the above we get,
    \[
        \Map_{\calg(\cC)}(\one^{\und{M}}, S) \simeq 
        \Map_{\calg(\cC)}(\one[\Dual{M}{n}], S) \simeq
        \Map_{\Sp^\cn}(\Dual{M}{n}, S^\times).
    \]
\end{proof}

\begin{example}\label{ex:Kummer}
If $\cC$ is $(\FF_p,n)$-orientable for some $n\ge 1$, then applying the above to $M= \Sigma C_p$, we get
    \[
        \calg^{BC_p-\gal}(S;\cC) \simeq 
        \Map_{\Sp^\cn}(\Sigma^{n-1}C_p, S^\times) \simeq
\Omega^{n-1}\mu_p(S)=:
        \roots[S]{p}{n-1}.
    \]
    That is, $C_p$-Galois extensions are classified by $p$-th roots of unity of height $n-1$. 
\end{example}

\begin{rem}\label{Kummer_Lower}
    The case $n=0$ is excluded, and does not even make sense, in \Cref{ex:Kummer}. However, we have shown in \cite[Theorem 3.18]{carmeli2021chromatic}, that if $\cC$ is additive and admits a primitive $m$-th root of unity, then we have an isomorphism
    \[
        \calg^{BC_m-\gal}(S;\cC) \simeq \Map_{\Sp^\cn}(C_m, \pic(S)).
    \]
    Furthermore, we have explained how this recovers and extends classical Kummer theory. The above isomorphism and the one in \Cref{ex:Kummer} can be combined into a single uniform claim (see \Cref{Tn_Higher_Kummer}).
\end{rem}

\subsection{Detection for local rings}\label{ssec:localrings}

We shall now show that \cref{Or_Push} admits a partial converse, when $\cC$ is higher semiadditive and $f\colon \ORone \to \ORtwo$ is a strict map of local ring spectra in the following sense:

\begin{defn}\label{def:Local_Ring}
    Let $\OR$ be a connective commutative ring spectrum. We say that $\OR$ is \tdef{local} if $\pi_0(\OR)$ is local. In this case, we refer to the residue field $k$ of $\pi_0(\OR)$ as the residue field of $\OR$. We shall implicitly assume that $k$ is of characteristic $p$, which implies that $\OR$ is $p$-local.
    We say that a map $\ORone \to \ORtwo$ of local ring spectra is \tdef{strict}, if it induces an isomorphism on residue fields.
\end{defn}

For a local ring spectrum $\OR$ with residue field $k$, the $\infty$-categories $\Modfin{\OR}{n}$ and $\Modfin{k}{n}$ are closely related. 

\begin{defn}
    Let $\cE$ be a pointed $\infty$-category and let $\cE_0$ be a full subcategory of $\cE$. 
    \begin{enumerate}
        \item We say that $\cE_0 \subseteq \cE$ is \tdef{closed under extensions} if for every exact sequence 
        \[
            M \to N \to L
        \]
        in $\cE$ such that $M,L\in \cE_0$, we also have $N\in \cE_0$. 
        \item We say that $\cE_0$ \tdef{generates} $\cE$ \tdef{under extensions}, if the only subcategory of $\cE$ containing $\cE_0$ and closed under extensions is $\cE$ itself.
    \end{enumerate}
\end{defn}

\begin{prop} \label{local_ring_generators_modules}
    Let $\OR\in \calg(\Sp_{(p)}^\cn)$ be local with residue field $k$. 
    The $\infty$-category $\Modfin{\OR}{n}$ is generated under extensions from the essential image of the restriction of scalars functor 
    \[
        f_*\colon\Modfin{k}{n}\too \Modfin{\OR}{n}
    \]
    induced by the residue map $f\colon \OR \to k$. 
\end{prop}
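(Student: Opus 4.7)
The plan is a classical dévissage in two steps: first reduce from a general $M \in \Modfin{\OR}{n}$ to its discrete homotopy groups via the Postnikov tower; second reduce each finite $\pi_0(\OR)$-module to finite $k$-vector spaces via the $\mathfrak{m}$-adic filtration. Throughout, let $\cE \subseteq \Modfin{\OR}{n}$ denote the full subcategory generated under extensions from the essential image of $f_*$, so the goal is to show $\cE = \Modfin{\OR}{n}$. Note that $\cE$ contains $0$ (as $0 = f_*(0)$) and is closed under the shift $V \mapsto \Sigma V$, since $V \to 0 \to \Sigma V$ is an exact sequence in the stable $\infty$-category $\Mod_\OR$.

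For step one, given $M \in \Modfin{\OR}{n}$, the Postnikov filtration provides, for each $0 \le i \le n$, an exact sequence $\tau_{\le i-1} M \to \tau_{\le i} M \to \Sigma^i \pi_i M$ with all three terms in $\Modfin{\OR}{n}$. Inducting on $i$ starting from $\tau_{\le -1} M = 0 \in \cE$, it suffices to show $\Sigma^i \pi_i M \in \cE$, which by the shift closure above further reduces to $\pi_i M \in \cE$. We are therefore reduced to showing that every finite $\pi_0(\OR)$-module, viewed as a discrete $\OR$-module along the canonical map $\OR \to \pi_0(\OR)$, lies in $\cE$.

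For step two, let $N$ be such a finite $\pi_0(\OR)$-module and let $\mathfrak{m} \subseteq \pi_0(\OR)$ be the maximal ideal. The descending chain $N \supseteq \mathfrak{m} N \supseteq \mathfrak{m}^2 N \supseteq \cdots$ stabilizes by finiteness of $N$, and then must terminate in $0$ by Nakayama's lemma applied to the finitely generated module $\mathfrak{m}^r N$ satisfying $\mathfrak{m}(\mathfrak{m}^r N) = \mathfrak{m}^r N$. The associated short exact sequences $\mathfrak{m}^{i+1} N \to \mathfrak{m}^i N \to \mathfrak{m}^i N / \mathfrak{m}^{i+1} N$ of discrete $\pi_0(\OR)$-modules become exact sequences in $\Modfin{\OR}{n}$, and each subquotient, being annihilated by $\mathfrak{m}$, carries a canonical $k$-module structure and hence lies in $f_*(\Modfin{k}{n})$. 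Downward induction from $\mathfrak{m}^{r-1} N \in \cE$ using closure under extensions then yields $N \in \cE$.

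The argument is a routine dévissage and I do not expect a genuine obstacle. The main point to verify is that short exact sequences of discrete $\pi_0(\OR)$-modules really correspond to exact sequences in $\Mod_\OR$, which is standard via the Eilenberg--MacLane embedding; and that our notion of generation under extensions accommodates the suspension trick $V \to 0 \to \Sigma V$, which is transparent from the definition once $0 \in \cE$ is observed.
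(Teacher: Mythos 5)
Your overall dévissage strategy (Postnikov tower in step one, $\mathfrak{m}$-adic filtration in step two) is exactly the paper's. But the ``shift closure'' claim is wrong, and this is a genuine gap. You wrote that $\cE$ is closed under $V \mapsto \Sigma V$ because $V \to 0 \to \Sigma V$ is an exact sequence. However, in that sequence the \emph{middle} term is $0$ and the \emph{outer} terms are $V$ and $\Sigma V$. The definition of ``closed under extensions'' concludes about the middle term from the outer two: it would give you $0 \in \cE$ from $V, \Sigma V \in \cE$, which is vacuous, not $\Sigma V \in \cE$ from $V \in \cE$. There is no rotation of the sequence that puts $\Sigma V$ in the middle with controllable outer terms, and in fact closure of $\cE$ under suspension is not obviously true (suspending a fiber sequence whose middle term stays in $\Modfin{\OR}{n}$ can push the fiber term out of the truncation range), so you cannot simply take it for granted either.

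The fix is the one the paper uses: never reduce from $\Sigma^i \pi_i M$ to $\pi_i M$; instead, prove directly that $\Sigma^i N \in \cE$ for every discrete finite $\pi_0(\OR)$-module $N$ and every $0 \le i \le n$, by suspending your step two. The $\mathfrak{m}$-adic filtration $N \supseteq \mathfrak{m}N \supseteq \cdots \supseteq \mathfrak{m}^r N = 0$ (which your Nakayama argument correctly justifies; the paper uses the dual $\mathfrak{m}$-torsion filtration via $N[\mathfrak{m}]$, but either works) yields exact sequences $\Sigma^i\mathfrak{m}^{j+1}N \to \Sigma^i\mathfrak{m}^j N \to \Sigma^i(\mathfrak{m}^j N/\mathfrak{m}^{j+1}N)$ in $\Modfin{\OR}{n}$. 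Since each subquotient $\mathfrak{m}^j N/\mathfrak{m}^{j+1}N$ is a $k$-vector space and $\Sigma^i$ commutes with $f_*$, the right-hand term $\Sigma^i(\mathfrak{m}^j N/\mathfrak{m}^{j+1}N)$ lies in $f_*(\Modfin{k}{n})$ for $0 \le i \le n$. Downward induction on $j$, using closure under extensions on the middle term, then gives $\Sigma^i N \in \cE$, which feeds into your Postnikov step and closes the argument.
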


\begin{proof}
    Let $\cE$ denote the minimal subcategory of $\Modfin{\OR}{n}$ which is closed under extensions and contains the essential image of $f_*$. Given $M\in \Modfin{\OR}{n}$, we wish to show that $M\in \cE$. Consider the Postnikov tower 
    \[
M= \tau_{\le n} M \to \tau_{\le n-1} M \to \dots\to \tau_{\le 0} M \to  0 
    \]
    of $M$. For every $1\le t \le n$, we have an exact sequence 
    \[
        \Sigma^t \pi_t M \to \tau_{\le t} M \to \tau_{\le t-1} M 
    \]
    Hence, by induction on the tower, and using that $\cE$ is closed under extensions, it would suffice to show that for every $0\le t \le n$, the $\OR$-module $\Sigma^t\pi_t(M)$ belongs to $\cE$. In particular, it would suffice to show that for every discrete, finite $\pi_0(R)$ module $N$ and every $0\le t\le n$, we have $\Sigma^t N \in \cE$. 
    
    Let $\mathfrak{m}$ be the maximal ideal of $\pi_0(R)$.
Since $\pi_0(R)$ is a local ring and $N$ is finite, we have $\mathfrak{m}^\ell N= 0$ for some $\ell \ge 0$. We shall proceed by induction on $\ell$, where the case $\ell = 0$ holds trivially.  
    Let $N[\mathfrak{m}]$ denote the $\mathfrak{m}$-torsion in $N$, i.e., the submodule of elements killed by $\mathfrak{m}$. Then, we have an exact sequence 
    \[
        \Sigma^t N[\mathfrak{m}] \to\Sigma^t  N \to \Sigma^t N/N[\mathfrak{m}].
    \]
The object $\Sigma^tN[\mathfrak{m}]$ is a restriction of scalars of a $\kappa$-module, so it belongs to $\cE$. Also, by construction we have $\mathfrak{m}^{\ell - 1} \left(N/N[\mathfrak{m}]\right)= 0$. By our inductive hypothesis, this implies that $ \Sigma^t N/N[\mathfrak{m}]\in \cE$, and since $\cE$ is closed under extensions, we deduce that $\Sigma^t N\in \cE$.  
\end{proof}

In particular, this allows us to bootstrap affineness from $k$-modules to $\OR$-modules. 
\begin{cor}\label{orientable_residue_affine_modules}
    Let $\OR\in \calg(\Sp_{(p)}^\cn)$ be local with residue field $k$ and let $\cC \in \calg(\Prl^{\sad{n}})$ be virtually $(k,n)$-orientable. Then, for every $M\in \Modfin{\OR}{n}$, the space $\und{M}$ is $\cC$-affine. 
\end{cor}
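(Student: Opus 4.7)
The plan is to bootstrap affineness from $k$-modules to general $\OR$-modules, using \Cref{orientation_affineness_for_R_modules} as the base case, \Cref{local_ring_generators_modules} to reduce to closure under extensions, and \Cref{affineness_extensions} to carry out the extension step.

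First, I would apply \Cref{orientation_affineness_for_R_modules} with $\OR$ replaced by $k$: the assumption that $\cC$ is virtually $(k,n)$-orientable yields that $\und{N}$ is $\cC$-affine for every $N\in\Modfin{k}{n}$. If $f\colon \OR\to k$ denotes the residue map, then for every $M$ in the essential image of $f_*\colon \Modfin{k}{n}\to\Modfin{\OR}{n}$ the underlying space $\und{M}$ is the underlying space of the corresponding $k$-module and hence is $\cC$-affine.

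Next, let $\cF\subseteq\Modfin{\OR}{n}$ be the full subcategory consisting of those $M$ with $\und{M}$ being $\cC$-affine. By \Cref{local_ring_generators_modules}, $\Modfin{\OR}{n}$ is generated under extensions by the image of $f_*$, so in view of the previous paragraph it suffices to show that $\cF$ is closed under extensions. Concretely, given an exact sequence $M_1\to M_2\to M_3$ in $\Modfin{\OR}{n}$ with $M_1,M_3\in\cF$, we must show $M_2\in\cF$.

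For this, applying $\Omega^\infty$ to the cofiber (equivalently, fiber) sequence of connective spectra yields a fiber sequence of spaces $\und{M_1}\to\und{M_2}\to\und{M_3}$. Since all three spectra are connective, the long exact sequence of homotopy groups shows that $\pi_0 M_2\to\pi_0 M_3$ is surjective; hence the map $\und{M_2}\to\und{M_3}$ hits every connected component of $\und{M_3}$, and every fiber is equivalent to $\und{M_1}$, which is $\cC$-affine by hypothesis. By \Cref{affineness_extensions}(3), the map $\und{M_2}\to\und{M_3}$ is itself $\cC$-affine, and since $\und{M_3}\to\pt$ is $\cC$-affine by assumption, \Cref{affineness_extensions}(2) gives that $\und{M_2}\to\pt$ is $\cC$-affine, i.e.,~$M_2\in\cF$. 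The only mildly delicate point is the observation that the surjectivity of $\pi_0 M_2\to\pi_0 M_3$ rules out empty fibers in the map $\und{M_2}\to\und{M_3}$; once this is in hand, the three closure properties assemble immediately to the conclusion.
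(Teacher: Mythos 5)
Your proof is correct and follows the same structure as the paper's: use \Cref{orientation_affineness_for_R_modules} to get affineness for $k$-modules, \Cref{local_ring_generators_modules} to reduce to closure under extensions, and \Cref{affineness_extensions} (parts (2) and (3)) for the extension step. The only difference is one of detail: the paper simply asserts that \Cref{affineness_extensions} gives closure under extensions, while you spell out the fiber-sequence argument.

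One small remark: the ``mildly delicate point'' about ruling out empty fibers via surjectivity of $\pi_0 M_2 \to \pi_0 M_3$ is actually unnecessary. The empty space is $\cC$-affine (it has finite discrete fibers over $\pt$, so \cref{affine_finite_sets} applies; concretely, $\cC^{\emptyset} \simeq \pt \simeq \LMod_0(\cC)$), so even if some fiber of $\und{M_2}\to\und{M_3}$ were empty, it would still be $\cC$-affine, and \Cref{affineness_extensions}(3) would still apply. That said, the surjectivity observation does correctly identify the nonempty fibers as $\und{M_1}$ via translation, so your argument is sound as stated.
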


\begin{proof}
    By \Cref{orientation_affineness_for_R_modules}, the space $\und{M}$ is $\cC$-affine for $\OR$-module $M$ in the essential image of the functor $\Modfin{k}{n} \to \Modfin{\OR}{n}$. By \Cref{affineness_extensions}, the $\OR$-modules $M$ for which $\und{M}$ is $\cC$-affine are closed under extensions. The result now follows from \Cref{local_ring_generators_modules}. 
\end{proof}

Finally, we can show that the orientation property of a pre-orientation is detected at the residue field. 

\begin{thm}\label{local_ring_orientation}
    Let $\cC\in \calg(\Prl^{\sad{n}})$ and let $f\colon \ORone \to \ORtwo$ be a strict map in $\calg(\Sp_{(p)}^\cn)$. A pre-orientation $\omega \in \POr{\ORone}{\cC}{n}$ is an $\ORone$-orientation if and only if $f_*\omega \in \POr{\ORtwo}{\cC}{n}$ is an $\ORtwo$-orientation. 
\end{thm}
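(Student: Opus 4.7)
The plan is as follows. The ``only if'' direction is immediate from \Cref{Or_Push}, so the content is the converse. Assume $f_*\omega$ is an $\ORtwo$-orientation. Let $k$ be the common residue field, $f'\colon \ORtwo \to k$ the residue map of $\ORtwo$, and $g := f'\circ f\colon \ORone \to k$ the residue map of $\ORone$. Applying \Cref{Or_Push} to $f'$ gives that
\[
    g_*\omega \:=\: f'_*(f_*\omega) \qin \POr{k}{\cC}{n}
\]
is a $k$-orientation on $\cC$. In particular $\cC$ is $(k,n)$-orientable (and so virtually $(k,n)$-orientable via $S = \one$), so by \Cref{orientable_residue_affine_modules}, for every $M \in \Modfin{\ORone}{n}$ the space $\und{M}$ is $\cC$-affine; applied to $\Dual{M}{n} \in \Modfin{\ORone}{n}$, the space $\und{\Dual{M}{n}}$ is $\cC$-affine as well.

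Now let $\cE \subseteq \Modfin{\ORone}{n}$ be the full subcategory of $\omega$-oriented modules. The goal is to show $\cE = \Modfin{\ORone}{n}$. Since $g_*\omega$ is an orientation, a second application of \Cref{Or_Push} (in the ``module-wise'' form recorded there) says precisely that every object in the essential image of the restriction functor $g_*\colon \Modfin{k}{n} \to \Modfin{\ORone}{n}$ lies in $\cE$.

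Next I claim $\cE$ is closed under extensions. Given an exact sequence $M_0 \to M_1 \to M_2$ in $\Modfin{\ORone}{n}$ with $M_0, M_2 \in \cE$, the hypothesis of \Cref{Orientability_Ext_Cof} is satisfied: $M_0$ is $\omega$-oriented and $\und{\Dual{M_0}{n}}$ is $\cC$-affine by the affineness statement above. That proposition then gives $M_1 \in \cE$ from $M_2 \in \cE$. Combining the two previous paragraphs with \Cref{local_ring_generators_modules}, which says that $\Modfin{\ORone}{n}$ is generated under extensions by the essential image of $g_*$, yields $\cE = \Modfin{\ORone}{n}$ and hence that $\omega$ is an $\ORone$-orientation.

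The main obstacle, if any, is verifying that the hypothesis of \Cref{Orientability_Ext_Cof} genuinely propagates along the entire Postnikov-type devissage used in \Cref{local_ring_generators_modules}; but this is not a real difficulty, because the required $\cC$-affineness of $\und{\Dual{M_0}{n}}$ is uniform in $M_0 \in \Modfin{\ORone}{n}$, following from virtual $(k,n)$-orientability by \Cref{orientable_residue_affine_modules}. The whole argument is thus a clean reduction to the residue field via \Cref{Or_Push} followed by a generation-under-extensions argument.
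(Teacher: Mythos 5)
Your proof is correct and matches the paper's argument essentially step for step: both reduce to the residue field $k$ (the paper phrases this as ``it suffices to consider $f\colon R\to k$'', while you make the composite $g=f'\circ f$ explicit), both show that the image of restriction along the residue map consists of $\omega$-oriented modules via \Cref{Or_Push}, both invoke \Cref{orientable_residue_affine_modules} to get the affineness needed for \Cref{Orientability_Ext_Cof}, and both conclude by the generation-under-extensions statement \Cref{local_ring_generators_modules}.
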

 
\begin{proof}
    The `only if' direction follows from \cref{Or_Push}. Thus, for the `if' direction, it suffices to consider the case where $f\colon R \to k$ is the quotient map to the residue field.
    We now have to show that all objects of $\Modfin{\OR}{n}$ are $\omega$-oriented assuming that all objects of $\Modfin{k}{n}$ are $f_*\omega$-oriented. By \cref{Or_Push}, all the objects in the image of $f_*$ are $\omega$-oriented. Since these generate $\Modfin{\OR}{n}$ under extensions (\Cref{local_ring_generators_modules}), it would suffice to show that the collection of $\omega$-oriented $\OR$-modules is closed under extensions. Since for every $M\in \Modfin{\OR}{n}$, the space $\und{M}$ is $\cC$-affine (\Cref{orientable_residue_affine_modules}), this follows from \Cref{Orientability_Ext_Cof}. 
\end{proof}
 
\begin{rem}\label{Or_Liftintg}
    The above result reduces the verification of the $\OR$-orientability of an $S$-oriented $\infty$-category $\cC$ to a lifting problem. Namely, to $\ORone$-orient $\cC$, it will suffice to have \textit{any} map $\cl{\omega}\colon\Dual{\ORone}{n} \to \one^\times$ which makes the following diagram commutative:
    \[\begin{tikzcd}
    	{\Dual{\ORtwo}{n}} && {\Dual{\ORone}{n}} \\
        & {\one^\times.}
        \arrow[from=1-1, to=1-3]
        \arrow["\omega"', from=1-1, to=2-2]
        \arrow["\cl{\omega}",dashed, from=1-3, to=2-2]
    \end{tikzcd}\]
\end{rem}

\Cref{local_ring_orientation} has the following consequence regrading the functoriality of the construction $\orcyc[\one_\cC]{\OR}{n}$ in $\OR\in\calg(\Sp_{(p)}^\cn)$: 

\begin{prop} \label{local_map_universal_tensor}
    Let $\cC \in \calg(\Prl^{\sad{n}})$. For every strict map $f\colon \ORone\to \ORtwo$ in $\calg(\Sp_{(p)}^\cn)$, we have 
    \[
        \orcyc{\ORone}{n}\simeq \one[\Dual{\ORone}{n}]\otimes_{\one[\Dual{\ORtwo}{n}]} \orcyc{\ORtwo}{n}.
    \]
\end{prop}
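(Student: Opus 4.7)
The plan is to identify both sides with the functor $\Or{\ORone}{-;\cC}{n}\colon \calg(\cC)\to \Spc$ corepresented by $\orcyc{\ORone}{n}$, and then conclude via the Yoneda lemma. The argument is essentially formal once \Cref{local_ring_orientation} is in hand, and the only real work lies in matching up universal properties.

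First, I would unpack the universal property of the proposed right-hand side. For any $S\in \calg(\cC)$, by the defining universal property of the pushout in $\calg(\cC)$, the space of maps from $\one[\Dual{\ORone}{n}]\otimes_{\one[\Dual{\ORtwo}{n}]}\orcyc{\ORtwo}{n}$ to $S$ consists of a pair $(\omega,\eta)$, where $\omega\in\POr{\ORone}{S;\cC}{n}$ corresponds to a map $\one[\Dual{\ORone}{n}]\to S$ and $\eta\in\Or{\ORtwo}{S;\cC}{n}$ corresponds to a map $\orcyc{\ORtwo}{n}\to S$, together with a coherent identification of their common restrictions along the functorial map $\one[\Dual{\ORtwo}{n}]\to \one[\Dual{\ORone}{n}]$. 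By \Cref{or_funct_ring}, the restriction of $\omega$ along this map is precisely the pre-orientation $f_*\omega\in\POr{\ORtwo}{S;\cC}{n}$. Since $\Or{\ORtwo}{S;\cC}{n}\subseteq\POr{\ORtwo}{S;\cC}{n}$ is a full subspace, specifying such a pair is the same as specifying a single $\omega\in\POr{\ORone}{S;\cC}{n}$ for which $f_*\omega$ happens to lie in $\Or{\ORtwo}{S;\cC}{n}$.

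Finally, I would invoke \Cref{local_ring_orientation}, whose hypotheses are met precisely by the strictness assumption on $f$, to conclude that the condition ``$f_*\omega$ is an $\ORtwo$-orientation'' is equivalent to ``$\omega$ is an $\ORone$-orientation''. Hence the above mapping space identifies naturally in $S$ with $\Or{\ORone}{S;\cC}{n}=\Map_{\calg(\cC)}(\orcyc{\ORone}{n},S)$, and the Yoneda lemma produces the asserted isomorphism in $\calg(\cC)$. The only conceptual content beyond bookkeeping is the detection result \Cref{local_ring_orientation}; verifying the naturality of the identification is automatic from the functoriality of the Fourier transform in $\OR$ established in \Cref{or_funct_ring}.
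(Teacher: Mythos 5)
Your proof is correct and takes essentially the same approach as the paper's: both pass to corepresented functors, identify maps out of the pushout with pre-orientations $\omega$ such that $f_*\omega$ is an orientation, and then invoke \Cref{local_ring_orientation} to conclude this is the same as $\omega$ itself being an orientation. Your version is slightly more explicit in unpacking the pushout universal property and in citing \Cref{or_funct_ring}, but the argument is identical in substance.
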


\begin{proof}
    Passing to the co-representable functors, the claim is equivalent to the existence of a natural isomorphism 
    \[
        \Or{\OR}{T;\cC}{n} \simeq 
        \POr{\OR}{T;\cC}{n}\times_{\POr{\ORtwo}{T;\cC}{n}} \Or{\ORtwo}{T;\cC}{n}. 
    \]
    for $T\in \calg(\cC)$.
    Both sides are naturally subspaces of $\POr{\OR}{T;\cC}{n}$; the left hand side consisting of those pre-orientations which are orientations, and the right hand side those pre-orientations $\omega$ for which $f_*\omega$ is an orientations. By \Cref{local_ring_orientation}, these two subspaces are equal for every $T\in \calg(\cC)$, and the result follows.   
\end{proof}

\begin{cor}\label{Virt_Or_Push}
    Let $\cC\in \calg(\Prl^{\sad{n}})$ and let $f\colon \ORone\to \ORtwo$ be a strict map in $\calg(\Sp_{(p)}^\cn)$. If $\cC$ is virtually $(\ORone,n)$-orientable, then it is virtually $(\ORtwo,n)$-orientable.
\end{cor}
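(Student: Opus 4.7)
The plan is to reduce this to a statement about faithfulness of the cyclotomic extensions via the criterion of \Cref{virtually_oriented_universal_faithful}, and then exploit the tensor product description of \Cref{local_map_universal_tensor} to propagate faithfulness from $\orcyc{\ORone}{n}$ to $\orcyc{\ORtwo}{n}$.

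More specifically, by \Cref{virtually_oriented_universal_faithful} the assumption that $\cC$ is virtually $(\ORone,n)$-orientable is equivalent to the statement that $\orcyc{\ORone}{n}$ is faithful in $\cC$, and the desired conclusion is equivalent to $\orcyc{\ORtwo}{n}$ being faithful. From \Cref{local_map_universal_tensor}, the strict map $f$ induces an identification
\[
\orcyc{\ORone}{n}\:\simeq\: \one[\Dual{\ORone}{n}] \relotimes{\one[\Dual{\ORtwo}{n}]} \orcyc{\ORtwo}{n},
\]
which in particular produces a canonical map $\orcyc{\ORtwo}{n}\to \orcyc{\ORone}{n}$ in $\calg(\cC)$, exhibiting the former as a commutative subalgebra of the latter.

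Given this map, the final step is the standard observation that faithfulness descends: for any $X\in\cC$, we have
\[
X\otimes \orcyc{\ORone}{n} \:\simeq\: (X\otimes\orcyc{\ORtwo}{n}) \relotimes{\orcyc{\ORtwo}{n}} \orcyc{\ORone}{n},
\]
so that if $X\otimes \orcyc{\ORtwo}{n}\simeq 0$ then $X\otimes \orcyc{\ORone}{n}\simeq 0$, which by faithfulness of $\orcyc{\ORone}{n}$ forces $X\simeq 0$. Hence $\orcyc{\ORtwo}{n}$ is faithful, and another application of \Cref{virtually_oriented_universal_faithful} yields virtual $(\ORtwo,n)$-orientability of $\cC$.

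There is essentially no obstacle here, as the entire argument is a formal packaging of two previously established results; the only thing to keep in mind is that both inputs, \Cref{virtually_oriented_universal_faithful} and \Cref{local_map_universal_tensor}, require $\cC \in \calg(\Prl^{\sad{n}})$, which is part of our hypothesis, and that the strictness of $f$ is exactly what makes \Cref{local_map_universal_tensor} applicable.
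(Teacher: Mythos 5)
Your argument is correct and takes essentially the same route as the paper: both deduce from \Cref{local_map_universal_tensor} a map $\orcyc{\ORtwo}{n}\to\orcyc{\ORone}{n}$ and then combine the resulting descent of faithfulness with \Cref{virtually_oriented_universal_faithful}. One small imprecision: the map $\orcyc{\ORtwo}{n}\to\orcyc{\ORone}{n}$ need not be injective, so ``commutative subalgebra'' is not quite right, but your argument only uses the existence of the map, so this does not affect correctness.
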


\begin{proof}
    By \Cref{local_map_universal_tensor}, there is a map 
    $\orcyc{\ORtwo}{n} \to \orcyc{\ORone}{n}$.
    Therefore, if $\orcyc{\ORone}{n}$ is faithful, then so is $\orcyc{\ORtwo}{n}$, and the claim follows from \Cref{virtually_oriented_universal_faithful}.
\end{proof}



\section{Categorification and Redshift}\label{sec:categorification}

In this section, we shall study the interaction of the various notions developed in this paper (affineness, (pre)orientations and the Fourier transform) with \textit{categorification}.

\subsection{Categorification}\label{ssec:categorification}

For $\cC$ a presentably symmetric monoidal $\infty$-category, the $\infty$-category $\Mod_\cC(\Prl)$ of presentably $\cC$-linear $\infty$-categories admits a canonical symmetric monoidal structure (see \cite[Corollary 5.1.2.6]{HA}). Hence, we could try to apply the theory developed in the previous sections to $\Mod_\cC(\Pr)$ in place of $\cC$. However, $\Mod_\cC(\Pr)$ is usually \textit{not presentable} itself. To avoid set-theoretical complications, we follow the strategy of \cite[\S 5.3.2]{HA}
and adopt the following convention: 

\begin{convention}\label{Convention}
    For every presentable $\infty$-category $\cC$, there exists an uncountable regular cardinal $\kappa$, such that $\cC$ is $\kappa$-compactly generated. We shall always implicitly choose such $\kappa$ and treat $\cC$ as an object of the $\infty$-category $\Prl_\kappa$, which is presentable by \cite[Lemma 5.3.2.9]{HA}. If needed, we shall allow ourselves to implicitly replace $\kappa$ by some larger $\kappa'$ using the canonical (non-full) inclusion $\Prl_\kappa \into \Prl_{\kappa'}.$
    If $\cC$ is furthermore presentably $\EE_n$-monoidal, we let $\Mod_\cC$ be the presentably  $\EE_{n-1}$-monoidal $\infty$-category $\Mod_\cC(\Prl_\kappa)$, for $\kappa$ as in \cite[Lemma 5.3.2.12]{HA}. 
\end{convention}

\subsubsection{Semiadditivity of \(\Mod_\cC\)}

An important feature, for our discussion, of $\Prl$ is that it is $\infty$-semiadditive (see \cite[Example 4.3.11]{AmbiKn}). In view of \cref{Convention}, we shall need the analogous property of $\Prl_\kappa$.

\begin{prop}\label{Pr_k_Sadd}
    For every uncountable regular cardinal $\kappa$, the $\infty$-category $\Prl_\kappa$ is $\infty$-semiadditive. 
\end{prop}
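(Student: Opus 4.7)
The plan is to transfer $\infty$-semiadditivity from $\Prl$, which is established in \cite[Example 4.3.11]{AmbiKn}, to $\Prl_\kappa$ through the canonical (non-full) inclusion $\iota \colon \Prl_\kappa \hookrightarrow \Prl$ of \cref{Convention}. The two properties of $\iota$ that I would use are: first, $\iota$ is conservative, because an isomorphism on either side is just an equivalence of the underlying $\infty$-categories; second, $\iota$ preserves both small limits and small colimits, which follows from the standard recipe for colimits in $\Prl$ in terms of passing to right adjoints, together with the fact that $\kappa$-compactly generated $\infty$-categories are closed under such small (co)limits along $\kappa$-accessible left adjoints.

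Given these two properties, I would prove by induction on the truncatedness level of a $\pi$-finite space $A$ that $A$ is $\Prl_\kappa$-ambidextrous. The base case, that of finite discrete $A$, amounts to the semiadditivity of $\Prl_\kappa$, which is immediate: $\Prl_\kappa$ is pointed (the zero $\infty$-category is a zero object) and finite products and coproducts therein coincide and agree with those in $\Prl$. For the inductive step, the iterated diagonals of $A$ are of strictly smaller truncatedness level, so by the inductive hypothesis they are $\Prl_\kappa$-ambidextrous. This provides the weak-ambidexterity data needed to construct the norm map associated with $A$ inside $\Prl_\kappa$. By the naturality of the norm construction with respect to functors preserving both the relevant limits and colimits, the image of this norm map under $\iota$ coincides with the corresponding norm map in $\Prl$, which is an equivalence by the $\infty$-semiadditivity of $\Prl$. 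Conservativity of $\iota$ then forces the norm map in $\Prl_\kappa$ to be an equivalence as well, completing the induction.

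The main obstacle I anticipate is the verification that $\iota$ preserves small colimits. Unlike the preservation of limits, which follows immediately from the standard description of limits in $\Prl$, this requires showing that a small colimit in $\Prl$ of a diagram factoring through $\Prl_\kappa$ remains $\kappa$-compactly generated, and that its structure morphisms are $\kappa$-accessible left adjoints. This is essentially already built into the construction of $\Prl_\kappa$ as a presentable $\infty$-category in \cite[Lemma 5.3.2.9]{HA}, on which \cref{Convention} implicitly relies; it is also the only point at which the specifics of $\kappa$-compact generation enter the argument, the rest of the proof being purely formal transfer along a conservative, (co)limit-preserving functor.
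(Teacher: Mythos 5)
Your proposal takes a genuinely different route from the paper's. You transfer $\infty$-semiadditivity \emph{backward}, from $\Prl$ to $\Prl_\kappa$, along the non-full inclusion $\iota \colon \Prl_\kappa \to \Prl$, using conservativity, limit/colimit preservation, and an explicit norm-map induction. The paper instead transfers it \emph{forward}: it identifies $\Prl_\kappa \simeq \cat_{\psmall{\kappa}}$ (small $\infty$-categories with $\kappa$-small colimits) via HTT 5.5.7.10, starts from the $m$-semiadditivity of $\cat_{\pfin{m}}$ (Harpaz, Prop.\ 5.26), and pushes along the free-cocompletion functor $\mathcal{P}_{\pfin{m}}^{\psmall{\kappa}}\colon \cat_{\pfin{m}}\to \cat_{\psmall{\kappa}}$, which is a left adjoint (hence colimit-preserving) and symmetric monoidal, invoking the off-the-shelf transfer result \cite[Corollary~3.3.2]{TeleAmbi}.

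The step in your argument that needs real work, and is not supplied, is the claim that $\iota$ preserves all small colimits. Citing HA 5.3.2.9 does not cover it: that lemma establishes that $\Prl_\kappa$ is presentable, but says nothing about whether colimits there are computed as in $\Prl$. The honest justification runs through identifying $\iota$, under the equivalence $\Prl_\kappa \simeq \cat_{\psmall{\kappa}}$, with a free-cocompletion functor, which is exactly the left-adjoint mechanism the paper uses directly for $\mathcal{P}_{\pfin{m}}^{\psmall{\kappa}}$. In other words, once you properly justify colimit preservation of $\iota$, you have essentially rederived the key input that makes the paper's forward transfer work, so the paper's route is shorter. A second, smaller point: ``conservative $+$ preserves limits and colimits $\implies$ reflects ambidexterity'' is true but is its own inductive statement, and is not literally the content of the monoidal transfer results in \cite{TeleAmbi} that the paper relies on elsewhere; if you want to use it here you should either cite a precise reference or carry out the induction. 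Neither issue is fatal, but they are the places where the details are being skipped, and they are precisely what the paper's choice of transfer functor is designed to sidestep.
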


\begin{proof}
    Let $\cat_{\psmall{\kappa}} \subset \cat_\infty$ be the (non-full) subcategory of $\infty$-categories that admit $\kappa$-small colimits and functors preserving them. Since $\kappa$ is assumed to be uncountable, we have an equivalence of $\infty$-categories
    $\Prl_\kappa \simeq \cat_{\psmall{\kappa}},$ 
    by \cite[Proposition 5.5.7.10]{htt}. Similarly, for every integer $m$, let $\cat_{\pfin{m}} \subset \cat_\infty$ be the (non-full) subcategory of $\infty$-categories that admit $m$-finite colimits and functors preserving them. By \cite[Propoistion 5.26]{harpaz2020ambidexterity}, the $\infty$-category $\cat_{\pfin{m}}$ is $m$-semiadditive. Recall from \cite[\S 5.3.6]{htt}, the functor 
    \[
        \mathcal{P}_{\pfin{m}}^{\psmall{\kappa}} \colon 
        \cat_{\pfin{m}} \too \cat_{\psmall{\kappa}},
    \]
    which adds formally $\kappa$-small colimits while fixing the $m$-finite ones. This functor preserves all small, and in particular $m$-finite, colimits by \cite[Corollary 5.3.6.10]{htt}. Moreover, by \cite[Remark 4.8.1.8]{HA}, it is also symmetric monoidal with respect to the canonical symmetric monoidal structures on the source and target, which preserve colimits in each coordinate by \cite[Remark 4.8.1.6]{HA}. Thus, by \cite[Corollary 3.3.2]{TeleAmbi}, the $\infty$-category $\cat_{\psmall{\kappa}} \simeq \Prl_\kappa$ is $m$-semiadditive as well.
\end{proof}

\begin{cor}\label{Mod_C_Sadd}
    For every $n\ge2$ and $\cC \in \alg_{\EE_n}(\Prl)$, the $\infty$-category $\Mod_\cC$ is $\infty$-semiadditive. 
\end{cor}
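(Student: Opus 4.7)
The plan is to deduce this corollary from \Cref{Pr_k_Sadd} by transferring $\infty$-semiadditivity along the monadic free-forgetful adjunction
\[
    \cC \otimes (-) \,\colon\, \Prl_\kappa \adj \Mod_\cC(\Prl_\kappa) \,\noloc\, U,
\]
where $\kappa$ is chosen per \Cref{Convention} so that $\Mod_\cC = \Mod_\cC(\Prl_\kappa)$. First, I would verify that the forgetful functor $U$ is conservative, preserves all small limits (as a right adjoint), and preserves all small colimits. The latter holds because the monad $\cC \otimes (-)$ is itself colimit-preserving---since the tensor product on $\Prl_\kappa$ preserves colimits separately in each variable---so the forgetful functor out of modules over a colimit-preserving monad preserves colimits (see \cite[Corollary 4.2.3.5]{HA}). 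In particular, $U$ preserves all $m$-finite limits and colimits for every $m$.

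Given these properties of $U$, the claim reduces to a general transfer principle: $\infty$-semiadditivity descends along a conservative functor preserving all small limits and colimits. I would prove this principle by induction on $m$. The ambidexterity datum and norm map for an $m$-finite space $A$ in $\Mod_\cC(\Prl_\kappa)$ are canonically built out of $m$-finite (co)limit diagrams, so since $U$ preserves both, it sends the norm map of $A$ in $\Mod_\cC(\Prl_\kappa)$ to the corresponding norm map in $\Prl_\kappa$. The latter is an isomorphism by \Cref{Pr_k_Sadd}, and conservativity of $U$ then lifts this back to $\Mod_\cC(\Prl_\kappa)$, completing the inductive step. Note that the assumption $n \ge 2$ enters only to ensure that $\Mod_\cC$ carries its canonical $\EE_{n-1}$-monoidal structure as per \Cref{Convention}; the semiadditivity argument itself proceeds purely at the level of the underlying $\infty$-category.

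The main obstacle is the inductive verification that $U$ sends norm maps to norm maps in a way compatible with the weak and strong ambidexterity data at each stage. This is essentially formal, since the whole ambidexterity theory of \cite[\S 4]{AmbiKn} is developed in terms of finite (co)limits preserved by any bicontinuous functor, but it requires carefully unpacking the inductive construction of the norm transformations to confirm that their image under $U$ coincides with the norm transformations built intrinsically in $\Prl_\kappa$.
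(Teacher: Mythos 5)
Your proposal is correct in spirit but follows a genuinely different route from the paper. The paper's proof is a one-liner: since the free functor $\cC \otimes(-)\colon \Prl_\kappa \to \Mod_\cC(\Prl_\kappa)$ is a \emph{monoidal} colimit-preserving functor out of an $\infty$-semiadditive category, it pushes $\infty$-semiadditivity forward by a direct citation to \cite[Corollary 3.3.2]{TeleAmbi}. You instead work with the \emph{right} adjoint $U$ (the forgetful functor), appealing to its conservativity and bicontinuity, and invoking a ``descent'' principle: semiadditivity of the target of a conservative, limit- and colimit-preserving functor implies semiadditivity of the source. That principle is true, and your inductive sketch of it is the right shape (each norm map is built from the lower-level norm data and from unit/counit and Beck--Chevalley transformations, all of which are intertwined by a bicontinuous functor, so the norm squares commute and conservativity lifts the equivalence). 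But this descent lemma is not directly cited in the paper's references and does require the careful unpacking you flag at the end. Put another way: the paper exploits the monoidality of the left adjoint, which is free and makes \cite[Corollary 3.3.2]{TeleAmbi} immediately applicable, whereas you trade that away by using the (non-monoidal) right adjoint, and pay for it with the norm-compatibility verification. Both arguments are sound; the paper's is simply shorter because the key lemma is already on the shelf.
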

\begin{proof}
Let $\kappa$ be the cardinal for which $\Mod_\cC= \Mod_\cC(\Prl_\kappa)$. By \cref{Pr_k_Sadd}, the $\infty$-category $\Pr_\kappa$ is $\infty$-semiadditive. Therefore, the claim follows from \cite[Corollary 3.3.2]{TeleAmbi}, applied to the monoidal functor
    \[
        \cC \otimes (-) \colon 
        \Prl_\kappa \too
        \Mod_\cC(\Prl_\kappa). \qedhere
    \]
\end{proof}

\subsubsection{Affineness revisited}

By \cite[\S 4.8.5 and \S 5.3.2]{HA}, for every $1\le n \le \infty$ and a presentably $\EE_n$-monoidal $\infty$-category $\cC$, we have a monoidal, fully faithful embedding 
\[
    \Mod_{(-)} \colon \alg_{\EE_n}(\cC) \into
    \alg_{\EE_{n-1}}(\Mod_\cC),
\]
taking an $\EE_n$ algebra $R$ in $\cC$ to the $\infty$-category of left modules $\Mod_R$ in $\cC$. 
Furthermore, $\Mod_{(-)}$ admits a right adjoint, which takes an $\EE_{n-1}$-monoidal $\cC$-linear $\infty$-category $\cD$ to the endomorphism object of the unit 
$\End(\one_\cD) \in \alg_{\EE_n}(\cC).$ 
We shall reserve a special notation for the value of this right adjoint on morphisms.

\begin{notation}
    For $\cC\in \alg_{\EE_n}(\Prl)$ and $F\colon \cD \to \cE$ in $\alg_{\EE_{n-1}}(\Mod_\cC)$, we denote by 
    \[
        \mdef{\Dec{F}}\colon \End(\one_{\cD}) \to \End(\one_{\cE})\qin \alg_{\EE_n}(\cC)
    \]
    the map induced by $F$ between the endomorphism objects of the units of $\cD$ and $\cE$, and refer to it as the \tdef{decategorifcation} of $F$.
\end{notation}

\begin{rem}\label{Mod_Base_Change}
    By \cite[Proposition 4.8.5.1]{HA}, the functor $\Mod_{(-)}$ is compatible with base-change in the sense that for every $\cC \to \cD$ in $\alg_{\EE_n}(\Prl)$, we have a commutative square
    \[\begin{tikzcd}
    	{\alg_{\EE_{n}}(\cC)} && {\alg_{\EE_{n-1}}(\Mod_\cC)} \\
    	{\alg_{\EE_{n}}(\cD)} && {\alg_{\EE_{n-1}}(\Mod_\cD).}
\arrow["{\Mod_{(-)}}", from=1-1, to=1-3]
\arrow[from=1-1, to=2-1]
\arrow[from=1-3, to=2-3]
\arrow["{\Mod_{(-)}}", from=2-1, to=2-3]
    \end{tikzcd}\]
\end{rem}

Specializing the above discussion to $n=2$, we can characterize the essential image of $\Mod_{(-)}$ in terms of the \textit{affineness}.

\begin{prop} \label{affine_in_image_Mod}
    Let $\cC \in \alg_{\EE_2}(\Prl)$. An $\infty$-category $\cD\in \alg_{\EE_1}(\Mod_\cC)$ belongs to the essential image of the functor
    \[
        \Mod_{(-)}\colon 
        \alg_{\EE_2}(\cC) \to 
        \alg_{\EE_1}(\Mod_\cC),
    \]
    if and only if
    the unit functor $u^*\colon \cC \to \cD$ is affine.
\end{prop}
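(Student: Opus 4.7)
The plan is to apply the adjunction
\[
\Mod_{(-)} \colon \alg_{\EE_2}(\cC) \adj \alg_{\EE_1}(\Mod_\cC) \noloc \End(\one_{(-)})
\]
recalled immediately before the statement. Since $\Mod_{(-)}$ is fully faithful (\cite[\S 4.8.5]{HA}), an object $\cD \in \alg_{\EE_1}(\Mod_\cC)$ lies in its essential image if and only if the counit
\[
\varepsilon_\cD \colon \Mod_{\End(\one_\cD)}(\cC) \too \cD
\]
is an equivalence. The task therefore reduces to identifying $\varepsilon_\cD$ with the canonical comparison functor $u^\sharp$ associated, as in the paragraph preceding \cref{def:affine}, to the unit morphism $u^* \colon \cC \to \cD$, and then noting that $u^\sharp$ being an equivalence is equivalent to affineness of $u^*$.

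For the identification, I would set $R := \End(\one_\cD)$, which coincides with $u_* \one_\cD$. By construction, $\varepsilon_\cD$ is the unique morphism in $\alg_{\EE_1}(\Mod_\cC)$ corresponding under the adjunction to $\id_R$; by the universal property of $\Mod_R(\cC)$ as the free $\cC$-linear $\EE_1$-monoidal presentable $\infty$-category on an $R$-module structure on the unit (see \cite[Theorem 4.8.5.16]{HA}), this datum is captured by the $R$-action on $\one_\cD$ induced by the counit $u^* u_* \one_\cD \to \one_\cD$ of $u^* \dashv u_*$. Unwinding, $\varepsilon_\cD$ sends $M \in \Mod_R(\cC)$ to $\one_\cD \otimes_{u^* R} u^* M$, which is precisely the formula defining $u^\sharp$.

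Granting this, the remainder is formal. Since $u^\sharp \dashv u_\sharp$ is an adjunction, one of them is an equivalence if and only if the other is, in which case they are mutually inverse. Hence $u^\sharp$ is an equivalence iff $u_\sharp$ is an equivalence, and by \cref{def:affine} the latter is the very definition of $u^*$ being affine. Chaining the implications: $\cD$ lies in the essential image of $\Mod_{(-)}$ iff $\varepsilon_\cD$ is an equivalence iff $u^\sharp$ is an equivalence iff $u^*$ is affine.

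The only delicate step is the identification $\varepsilon_\cD \simeq u^\sharp$: both are colimit-preserving $\cC$-linear $\EE_1$-monoidal functors $\Mod_R(\cC) \to \cD$ sending $R \mapsto \one_\cD$, so they evidently agree at the level of underlying functors, but to compare them as morphisms in $\alg_{\EE_1}(\Mod_\cC)$ one must invoke the universal property of $\Mod_R(\cC)$ as an object of that $\infty$-category, not merely of $\Prl$. This is the main content of the comparison; everything else is an appeal to the general fact about essential images of fully faithful left adjoints together with the adjunction $u^\sharp \dashv u_\sharp$.
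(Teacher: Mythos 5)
Your proposal is correct and follows essentially the same route as the paper's proof: reduce to the counit being an equivalence via full faithfulness of $\Mod_{(-)}$, identify $\End(\one_\cD)\simeq u_*\one_\cD$, recognize the counit as $u^\sharp$, and conclude from the definition of affineness (via the adjunction $u^\sharp\dashv u_\sharp$). The only difference is that you spell out the identification $\varepsilon_\cD\simeq u^\sharp$ and the passage from $u^\sharp$ to $u_\sharp$ in more detail than the paper does.
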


\begin{proof}
    Since $\Mod_{(-)}$ is fully faithful, $\cD$ belongs to its essential image if and only if the counit map
    \(
        \Mod_{\End(\one_\cD)}(\cC) \to \cD
    \)
    is an isomorphism. Moreover, we have 
    \[
        \End(\one_\cD) \simeq 
        \hom(\one_\cD,\one_\cD) \simeq 
        \hom(u^*\one_\cC,\one_\cD) \simeq 
        \hom(\one_\cC,u_*\one_\cD) \simeq 
        u_*\one_\cD
    \]
    and the above counit map identifies with  
    \(
        u^\sharp\colon \Mod_{u_*\one_{\cD}}(\cC) \to \cD.
    \) 
    Thus, the claim follows from the very definition of affineness.
\end{proof}

The above characterisation of module categories has an immediate consequence for detecting equivalences of $\cC$-linear $\infty$-categories. 
\begin{prop}\label{decat_iso_affine_iso}
    Let $\cC \in \alg_{\EE_2}(\Prl)$ and let $F\colon \cD \to \cE$ in $\alg_{\EE_1}(\Mod_\cC)$, such that the unit functors $\cC \to \cD$ and $\cC \to \cE$ are affine. Then, $F$ is an equivalence if and only if the map 
    \[
        \Dec{F}\colon  \End(\one_\cD) \to \End(\one_\cE) \qin {\alg_{\EE_2}(\cC)}
    \]
    is an isomorphism. 
\end{prop}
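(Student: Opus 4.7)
The proof will follow almost immediately from \Cref{affine_in_image_Mod} together with the full faithfulness of the functor
\[
    \Mod_{(-)} \colon \alg_{\EE_2}(\cC) \into \alg_{\EE_1}(\Mod_\cC).
\]
The plan is to observe that the hypothesis that the unit functors $\cC \to \cD$ and $\cC \to \cE$ are affine places both $\cD$ and $\cE$ in the essential image of this embedding.

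More precisely, by \Cref{affine_in_image_Mod}, the counit maps of the adjunction $\Mod_{(-)} \dashv \End(\one_{(-)})$ provide canonical equivalences
\[
    \Mod_{\End(\one_\cD)}(\cC) \iso \cD, \qquad \Mod_{\End(\one_\cE)}(\cC) \iso \cE
\]
in $\alg_{\EE_1}(\Mod_\cC)$. Under these identifications, the morphism $F\colon \cD \to \cE$ corresponds to the morphism $\Mod_{\Dec{F}}$ induced by extension of scalars along $\Dec{F}\colon \End(\one_\cD) \to \End(\one_\cE)$. Since $\Mod_{(-)}$ is fully faithful, the map it induces on spaces of morphisms
\[
    \Map_{\alg_{\EE_2}(\cC)}(\End(\one_\cD), \End(\one_\cE)) \iso
    \Map_{\alg_{\EE_1}(\Mod_\cC)}(\cD, \cE)
\]
is an equivalence, and in particular it preserves and reflects isomorphisms. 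Therefore $F$ is an equivalence in $\alg_{\EE_1}(\Mod_\cC)$ if and only if $\Dec{F}$ is an isomorphism in $\alg_{\EE_2}(\cC)$, as claimed.

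There is no real obstacle here; the entire content lies in the preceding \Cref{affine_in_image_Mod}, which identifies the essential image of $\Mod_{(-)}$ with those $\cC$-linear categories whose unit functor is affine, and the standard fact that a fully faithful functor detects isomorphisms between objects in its essential image.
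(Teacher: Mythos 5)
Your proof is correct and takes essentially the same approach as the paper: invoke \Cref{affine_in_image_Mod} to place $\cD$ and $\cE$ in the essential image of the fully faithful embedding $\Mod_{(-)}$, then use the fact that a fully faithful functor detects equivalences between objects in its essential image (equivalently, that the right adjoint $\End(\one_{(-)})$ reflects isomorphisms there). The paper states it a bit more tersely via the right adjoint, but the content is identical.
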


\begin{proof}
    By \Cref{affine_in_image_Mod}, the $\infty$-categories $\cD$ and $\cE$ are in the essential image of the fully faithful embedding 
    $\Mod_{(-)}\colon \alg_{\EE_2}(\cC)\into \alg_{\EE_1}(\Mod_\cC).$
    Hence, $F$ is an equivalence if and only if its image under the right adjoint of $\Mod_{(-)}$ is an equivalence. Finally, this right adjoint is given by taking the endomorphism object of the unit and takes $F$ to $\Dec{F}$.
\end{proof}

We conclude this subsection by comparing affineness with respect to $\cC$ and affineness with respect to $\Mod_\cC$. 

\begin{prop}\label{affineness_categorification}
    Let $\cC \in \calg(\Prl)$ 
    and let $A$ be a $\pi$-finite $\cC$-ambidextrous space. If $A$ and $\Omega_a A$, for every $a\in A$, are $\cC$-affine, then $A$ is $\Mod_\cC$-affine.
\end{prop}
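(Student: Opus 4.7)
The plan is to apply \Cref{Affiness_Eilenberg_Moore}(3) to verify $\Mod_\cC$-affineness and reduce this to the same criterion in $\cC$ via decategorification. Since $\cC \in \calg(\Prl)$, the $\infty$-category $\Mod_\cC$ is $\infty$-semiadditive by \Cref{Mod_C_Sadd}, and as $A$ is $\pi$-finite it is automatically $\Mod_\cC$-ambidextrous. Thus \Cref{Affiness_Eilenberg_Moore} applies, and it suffices to check that for every pair of points $a,b\in A$, the canonical map
\[
    F \colon \cC \otimes_{\cC^A} \cC \too \cC^{P_{a,b}A} \qin \calg(\Mod_\cC),
\]
where $P_{a,b}A:=\{a\}\times_A\{b\}$, is an equivalence.

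To verify that $F$ is an equivalence, I plan to invoke \Cref{decat_iso_affine_iso}. The unit functor from $\cC$ into the source of $F$ is the extension of scalars along the map $\one_\cC\to \one_\cC \otimes_{\one_\cC^A} \one_\cC$ of commutative algebras in $\cC$, and such extensions of scalars are always affine; the unit functor from $\cC$ into the target is affine by our hypothesis, since $P_{a,b}A$ is either empty or equivalent to $\Omega_a A$. Thus, both unit functors are affine and \Cref{decat_iso_affine_iso} reduces the equivalence of $F$ to the question of whether its decategorification $\Dec F$ is an isomorphism in $\cC$.

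It therefore remains to identify $\Dec F$ explicitly. Using the $\cC$-affineness of $A$ and of $P_{a,b}A$, \Cref{Affine_SM} identifies $\cC^A \simeq \Mod_{\one^A}(\cC)$ and $\cC^{P_{a,b}A}\simeq \Mod_{\one^{P_{a,b}A}}(\cC)$ as objects of $\calg(\Mod_\cC)$. Since the fully faithful embedding $\Mod_{(-)}\colon \calg(\cC) \to \calg(\Mod_\cC)$ is left adjoint to $\End(\one_{(-)})$ and hence preserves colimits (in particular pushouts, which compute relative tensor products in $\calg$), it identifies
\[
    \cC \otimes_{\cC^A} \cC \:\simeq\: \Mod_{\one \otimes_{\one^A} \one}(\cC)
    \qin \calg(\Mod_\cC).
\]
Taking endomorphisms of the units, $\Dec F$ is thereby identified with the Eilenberg--Moore map
\[
    \one \otimes_{\one^A} \one \too \one^{P_{a,b}A} \qin \cC,
\]
which is an isomorphism by the equivalence (1)$\Leftrightarrow$(3) of \Cref{Affiness_Eilenberg_Moore} applied to the $\cC$-ambidextrous, $\cC$-affine space $A$. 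The main technical point to take care of is verifying that the symmetric monoidality and colimit-preservation of $\Mod_{(-)}$ really produce the claimed identification of $\cC \otimes_{\cC^A} \cC$ as a module category compatibly with the Eilenberg--Moore map $F$; once this naturality is in hand, the argument reduces mechanically to the chain of implications above.
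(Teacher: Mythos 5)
Your argument is correct and follows the same skeleton as the paper's proof: both reduce $\Mod_\cC$-affineness of $A$ to criterion (3) of \cref{Affiness_Eilenberg_Moore}, both exploit the $\cC$-affineness of $A$ and of $P_{a,b}A$ to recognize the relevant vertices as module categories (via \cref{Affine_SM} / \cref{affine_in_image_Mod}), and both use colimit-preservation of $\Mod_{(-)}$ to transport the relative tensor square along the categorification functor.

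The one place you detour from the paper is in invoking \cref{decat_iso_affine_iso}. Once you have the identifications $\cC \otimes_{\cC^A}\cC \simeq \Mod_{\one\otimes_{\one^A}\one}(\cC)$ and $\cC^{P_{a,b}A}\simeq\Mod_{\one^{P_{a,b}A}}(\cC)$, the map $F$ is visibly the image under the fully faithful embedding $\Mod_{(-)}$ of the Eilenberg--Moore map $\one\otimes_{\one^A}\one \to \one^{P_{a,b}A}$, so it is an equivalence iff that map is --- no appeal to \cref{decat_iso_affine_iso} is needed. (The paper streamlines this by phrasing everything in terms of relative tensor squares being preserved by the colimit-preserving $\Mod_{(-)}$, which avoids having to explicitly compute the pushout.) Also note a small logical re-ordering: you assert the affineness of the unit functor $\cC \to \cC\otimes_{\cC^A}\cC$ (as an extension of scalars) \emph{before} establishing the identification of the source as a module category; you should establish the identification first, since the extension-of-scalars description depends on it. Neither point affects the correctness of the proof.
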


\begin{proof}
    By \cref{Mod_C_Sadd}, the space $A$ is $\Mod_\cC$-ambidextrous, and so by \Cref{Affiness_Eilenberg_Moore},
    it would suffice to show that for every $a,b\in A$, the square 
    \[
        \xymatrix{
        \Mod_\cC^A \ar[r]\ar[d] & \Mod_\cC \ar[d] \\ 
        \Mod_\cC \ar[r]   & \Mod_\cC^{\{a\}\times_A \{b\}}
        } \quad (*)
    \]
    is a relative tensor square in $\alg_{\EE_1}(\Mod_\cC)$. 
    Now, the space $A$ is $\cC$-affine by assumption and the space $\{a\}\times_A \{b\}$ is either empty or isomorphic to $\Omega_a A$, and hence $\cC$-affine as well. Since $\pt$ is obviously $\cC$-affine, we can identify the square $(*)$ with the image under the functor 
    \[
        \Mod_{(-)}\colon \calg(\cC) \to \calg(\Mod_\cC)
    \]
    of the square 
    \[
        \xymatrix{
        \one^A \ar[r]\ar[d] & \one \ar[d] \\ 
        \one \ar[r] & \one^{\{a\}\times_A\{b\}}.
        } \quad (**)
    \]
    Since $\Mod_{(-)}$ is colimit preserving, and relative tensor squares of commutative algebras are pushout squares, $\Mod_{(-)}$ takes relative tensor squares in $\calg(\cC)$ to relative tensor squares in $\calg(\Mod_\cC)$. Thus, it would suffice to show that $(**)$ is a relative tensor square. Since, by our assumption, $A$ is $\cC$-affine, this follows again from \Cref{Affiness_Eilenberg_Moore}.
\end{proof}

\subsection{The categorical Fourier transform}\label{ssec:catfourier} 

In this subsection, we compare the Fourier transform for a presentably symmetric monoidal $\infty$-category $\cC$, with the Fourier transform for its categorification $\Mod_\cC$. 

\subsubsection{Looping pre-orientations}

We begin with the observation that height $n$ pre-orientations for $\cC$ are essentially the same thing as height $n+1$ pre-orientations for $\Mod_\cC$. More precisely, for $\OR\in \calg(\Sp_{(p)}^\cn)$, an $\OR$-pre-orientation of height $n+1$ for $\Mod_\cC$ is a map 
\[
    \omega \colon 
    \Dual{\OR}{n+1} \too 
\cC^\times=: \pic(\cC),
\] 
where $\pic(\cC)$ is the Picard spectrum of $\cC$, consisting of $\otimes$-invertible objects. 
By applying the functor $\Omega \colon \Sp^\cn \to \Sp^\cn$ to $\omega$,  we get a morphism
\[
    \Omega \omega \colon 
    \Dual{\OR}{n}\simeq \Omega \Dual{\OR}{n+1} \too 
\Omega \pic(\cC)= \one_\cC^\times,
\] 
which we can view as an $\OR$-pre-orientation of $\cC$ of height $n$. 
Under the (natural) assumption that $R$ is $n$-truncated, taking loops provides an \textit{isomorphism} between $\POr{\OR}{\Mod_{\cC}}{n+1}$ and $\POr{\OR}{\cC}{n}$.

\begin{prop}\label{Por_Cat_Obstruction}
    Let $\cC \in \calg(\Prl)$ and let $\OR\in \calg(\Sp_{(p)}^\cn)$. If $\OR$ is $n$-truncated, we get an isomorphism of spaces
    \[
        \Omega\colon 
        \POr{\OR}{\Mod_\cC}{n+1} \iso
        \POr{\OR}{\cC}{n}.
    \] 
\end{prop}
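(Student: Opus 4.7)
The plan is to reduce the statement to the suspension–loops adjunction on connective spectra, after identifying $\Dual{\OR}{n+1}$ with $\Sigma\Dual{\OR}{n}$ under the $n$-truncated hypothesis, and then checking that the resulting adjunction bijection coincides with the map labeled $\Omega$.

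First I would compute homotopy groups. By the characterization of Brown--Comenetz duality recalled in Section~3.1, we have $\pi_k\Dual{\OR}{n+1}\cong \hom_\Ab(\pi_{n+1-k}\OR,\QQ_p/\ZZ_p)$ for $k\in[0,n+1]$. The $n$-truncated assumption on $\OR$ gives $\pi_{n+1}\OR=0$, whence $\pi_0\Dual{\OR}{n+1}=0$, so $\Dual{\OR}{n+1}$ is $1$-connective. Since $\Sigma$ commutes with $\tau_{\ge 0}$ on connective spectra, there is a canonical comparison map $\Sigma\Dual{\OR}{n}\to\Dual{\OR}{n+1}$, and the homotopy group calculation in degrees $1,\dots,n+1$ shows that this map is an isomorphism.

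Second, I would apply the adjunction $\Sigma\dashv\tau_{\ge 0}\Omega$ on $\Sp^\cn$. Since $\pic(\cC)$ is connective, the identity $\tau_{\ge 0}\Omega\,\pic(\cC)=\one_\cC^\times$ holds by the very definition of the spectrum of units (the underlying space of $\one_\cC^\times$ is the basepoint component of invertible objects). Combining with the previous step,
\[
  \POr{\OR}{\Mod_\cC}{n+1}
  =\Map_{\Sp^\cn}(\Dual{\OR}{n+1},\pic(\cC))
  \simeq\Map_{\Sp^\cn}(\Sigma\Dual{\OR}{n},\pic(\cC))
  \simeq\Map_{\Sp^\cn}(\Dual{\OR}{n},\one_\cC^\times)
  =\POr{\OR}{\cC}{n}.
\]

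Third, I would verify that this adjunction isomorphism literally is the map denoted $\Omega$ in the statement. The unit of the $\Sigma\dashv\tau_{\ge 0}\Omega$ adjunction at a connective spectrum $X$ is the identity $X\to\tau_{\ge 0}\Omega\Sigma X=X$, because $\Sigma X$ is $1$-connective and $\Omega\Sigma=\Id$ on $\Sp$. Consequently, the adjunction sends a map $\omega\colon\Sigma\Dual{\OR}{n}\to\pic(\cC)$ to $\Omega\omega\colon\Dual{\OR}{n}\to\one_\cC^\times$, which is precisely the looping map described before the proposition. This identification is essentially tautological, but I would spell it out to justify that the proposition, and not merely an abstract equivalence of mapping spaces, holds.

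The argument has no real obstacle beyond the initial homotopy group calculation; the content is the observation that $n$-truncatedness of $\OR$ is exactly what kills $\pi_0$ of $\Dual{\OR}{n+1}$ and allows it to be delooped to $\Dual{\OR}{n}$, at which point the suspension--loops adjunction does the rest.
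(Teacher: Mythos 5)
Your proof is correct and follows essentially the same route as the paper: both arguments reduce to the observation that $n$-truncatedness of $\OR$ forces $\pi_0\Dual{\OR}{n+1}=0$, and then apply the suspension--loops adjunction on connective spectra, taking $M=\pic(\cC)$. The paper states this more tersely (via $\Map_{\Sp^\cn}(\Dual{\OR}{n+1},M)\simeq\Map_{\Sp^\cn}(\Dual{\OR}{n+1},\tau_{\ge1}M)\simeq\Map_{\Sp^\cn}(\Dual{\OR}{n},\Omega M)$), while you spell out the intermediate identification $\Sigma\Dual{\OR}{n}\iso\Dual{\OR}{n+1}$ and verify that the abstract adjunction bijection is the map denoted $\Omega$; this additional care is sound but not a departure from the paper's argument.
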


\begin{proof}
Since $\OR$ is $n$-truncated we get $\pi_0(\Dual{\OR}{n+1})= 0$. Consequently, for every $M \in \Sp^\cn$, 
    \[
        \Map_{\Sp^\cn}(\Dual{\OR}{n+1},M) \simeq 
        \Map_{\Sp^\cn}(\Dual{\OR}{n+1},\tau_{\ge1 }M) \simeq 
        \Map_{\Sp^\cn}(\Dual{\OR}{n},\Omega M). 
    \]
The result follows by taking $M= \pic(\cC)$. 
\end{proof}

\subsubsection{Categorical group algebras}

Given $\omega \in \POr{\OR}{\Mod_\cC}{n+1}$, we get for every $M\in \Modfin{\OR}{n+1}$
the \tdef{categorical Fourier transform}:
\[
    \Four_\omega \colon 
    \cC[M] \too
    \cC^{\und{\Dual{M}{n+1}}}
    \qin \calg(\Mod_\cC).
\]
The range is the functor category $\Fun(\und{\Dual{M}{n+1}}, \cC)$ endowed with the \textit{pointwise} symmetric monoidal structure. We shall now show that the domain, i.e., the categorical group algebra $\cC[M]$, is also the functor category $\Fun(\und{M},\cC)$, albeit with the \textit{Day convolution} symmetric monoidal structure. That is, the categorical Fourier transform is a $\cC$-linear, colimit preserving symmetric monoidal functor
\[
    \Four_\omega \colon
    \Fun(\und{M},\cC)_{\Day} \too
    \Fun(\und{\Dual{M}{n+1}},\cC)_{\Ptw},
\]
where the subscripts `Day' and `Ptw' stand for the Day convolution and pointwise symmetric monoidal structures respectively.

\begin{prop} \label{cat_group_alg_is_day}
    For every $\cC\in \calg(\Prl)$ and $M\in\Sp^\cn$, we have a natural isomorphism
    \[
\cC[M] \:\simeq\: \Fun(\und{M},\cC)_{\Day}
        \qin \calg(\Mod_{\cC}).
    \]
\end{prop}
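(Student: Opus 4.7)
The plan is to show that both sides co-represent the same functor on $\calg(\Mod_\cC)$, namely $\cD \mapsto \Map_{\Sp^\cn}(M, \pic(\cD))$, and then invoke the Yoneda lemma. For the left-hand side, the categorified group-algebra construction $\cC[-] \colon \Sp^\cn \to \calg(\Mod_\cC)$ is by definition the left adjoint to the Picard spectrum functor $(-)^\times = \pic$ (exactly as $\one[-] \dashv (-)^\times$ one categorical level down, cf.\ the preliminaries to \Cref{def:pre_orientations}). Hence
\[
\Map_{\calg(\Mod_\cC)}(\cC[M], \cD) \simeq \Map_{\Sp^\cn}(M, \pic(\cD))
\]
naturally in $M$ and $\cD$.

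For the right-hand side, I would invoke the universal property of Day convolution (cf.\ \cite[Proposition 4.8.1.10, Corollary 4.8.1.12]{HA}), which realizes $\PSh(\cA)_\Day$ as the free presentably symmetric monoidal $\infty$-category on a small symmetric monoidal $\infty$-category $\cA$. Applying this to $\cA = \und{M}$, equipped with the symmetric monoidal structure coming from the $\EE_\infty$-structure on $M$ (and using $\und{M}^\op \simeq \und{M}$ since $\und{M}$ is a groupoid), together with base change along $\Spc \to \Mod_\cC$ to obtain $\Fun(\und{M}, \cC)_\Day \simeq \PSh(\und{M})_\Day \otimes \cC$ in the $\cC$-linear presentable setting of \Cref{Convention}, gives
\[
\Map_{\calg(\Mod_\cC)}(\Fun(\und{M}, \cC)_\Day, \cD) \simeq \Fun^\otimes(\und{M}, \cD),
\]
the space of symmetric monoidal functors of $\infty$-categories from $\und{M}$ to $\cD$.

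The final step is that, since $\und{M} = \Omega^\infty M$ is group-like, every object of $\und{M}$ is tensor-invertible, and so any symmetric monoidal functor $\und{M} \to \cD$ factors uniquely through the sub-$\infty$-groupoid $\pic(\cD) \subseteq \cD$ of $\otimes$-invertible objects. Via the standard equivalence between connective spectra and group-like $\EE_\infty$-spaces, this identifies
\[
\Fun^\otimes(\und{M}, \cD) \simeq \Map_{\EE_\infty^{\gp}}(\und{M}, \Omega^\infty \pic(\cD)) \simeq \Map_{\Sp^\cn}(M, \pic(\cD)).
\]
Splicing the three identifications and applying Yoneda within $\calg(\Mod_\cC)$ yields the claimed natural isomorphism.

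The main obstacle I anticipate is the second step: deploying the universal property of Day convolution in the $\cC$-linear, $\kappa$-presentable setting of \Cref{Convention} requires verifying that Day convolution interacts well with base change along $\Spc \to \Mod_\cC$. This should follow formally from the symmetric monoidality and colimit-preservation of the free $\kappa$-cocompletion functor invoked in the proof of \Cref{Pr_k_Sadd}, but a small amount of bookkeeping is needed to ensure that the size hypotheses required to form Day convolution are satisfied.
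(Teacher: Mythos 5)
Your proof is correct and takes essentially the same route as the paper: both reduce via base change along $\Spc \to \Mod_\cC$ (which is exactly the \cite[Proposition 3.10]{moshe2021higher} step), appeal to the universal property of Day convolution, and use grouplikeness of $\und{M}$ to identify symmetric monoidal functors $\und{M}\to\cD$ with maps $M\to\pic(\cD)$; the paper merely packages this as first establishing the adjunction $\Fun(\und{-},\Spc)_\Day \dashv \pic$ and then tensoring with $\cC$, whereas you corepresent directly in $\calg(\Mod_\cC)$. One small caveat: the paper invokes \cite{hinich2021colimits} for the universal property of Day convolution, which is the cleaner reference for the \emph{monoidal} free cocompletion; the HA citations you give are for the non-monoidal presheaf universal property, so you'd want to either upgrade the reference or note that the monoidal enhancement requires a separate argument.
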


\begin{proof}
    On the one hand,
    \[
        \cC[M] \simeq \cC \otimes \Spc[M]
        \qin \calg(\Mod_{\cC}),
    \]
    and on the other, by \cite[Proposition 3.10]{moshe2021higher}, 
    \[
        \Fun(\und{M},\cC)_{\Day} \simeq 
        \cC \otimes \Fun(\und{M},\Spc)_{\Day}
        \qin \calg(\Mod_{\cC}).
    \]
Thus, the general case follows from the case $\cC= \Spc$. It remains to show that the functor
    \[
        \Fun(\und{-},\Spc)_{\Day} \colon
        \Sp^\cn \too \calg(\Prl) 
    \]
    is left adjoint to the functor
    \[
(-)^\times= \pic \colon \calg(\Prl) \to \Sp^\cn.
    \]
    By \cite{hinich2021colimits}, for every $\cD \in \calg(\Prl)$, we have 
    \[
        \Map_{\calg(\Pr)}(\Fun(\und{M},\Spc)_\Day,\cD) \simeq
        \Map_{\calg(\widehat{\cat}_{\infty})}(\und{M},\cD).
    \]
    Since as a symmetric monoidal $\infty$-category, $\und{M}$ is an $\infty$-groupoid with all objects $\otimes$-invertible, we have
    \[
        \Map_{\calg(\widehat{\cat}_{\infty})}(\und{M},\cD) \simeq
        \Map_{\Sp^{\cn}}(M,\pic(\cD)).
    \]
    The claim follows by stringing together the two isomorphisms above.
\end{proof}

When $M$ is connected, we can also identify the categorical group algebra $\cC[M]$ with the $\infty$-category $\Mod_{\one_\cC[\Omega M]}(\cC)$ of modules over the ordinary group algebra of $\Omega M$ in $\cC$ (as \textit{symmetric monoidal} $\infty$-categories).

\begin{prop}\label{Group_Algebra_Cat}
    For every $\cC \in \calg(\Prl)$ and a connected $M\in\Sp^\cn$, we have a natural isomorphism
    \[
\cC[M] \:\simeq \:
        \Mod_{\one[\Omega M]}(\cC)
        \qin \calg(\Mod_\cC).
    \]
\end{prop}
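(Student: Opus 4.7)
The plan is to identify both sides by comparing universal properties as corepresentable functors on $\calg(\Mod_\cC)$, since the preceding \cref{cat_group_alg_is_day} already gave a universal description of $\cC[M]$ via the adjunction $\Fun(\und{-},\Spc)_\Day \dashv \pic$. Concretely, for any $\cD\in \calg(\Mod_\cC)$, pairing that adjunction with the $\cC$-linear tensoring yields
\[
    \Map_{\calg(\Mod_\cC)}(\cC[M],\cD) \;\simeq\; \Map_{\Sp^\cn}(M,\pic(\cD)).
\]
So the task reduces to showing that $\Mod_{\one[\Omega M]}(\cC)$ corepresents the same functor when $M$ is connected.

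For this, I would chain together the three adjunctions already invoked in the paper. First, the categorification--decategorification adjunction $\Mod_{(-)} \dashv \End(\one_{(-)})$ of \cref{ssec:categorification} (in $\EE_2$-algebras, so with values in symmetric monoidal $\infty$-categories) gives
\[
    \Map_{\calg(\Mod_\cC)}(\Mod_{\one[\Omega M]}(\cC),\cD) \;\simeq\; \Map_{\calg(\cC)}(\one[\Omega M],\End(\one_\cD)).
\]
Second, the group-algebra adjunction $\one[-] \dashv (-)^\times$ of \cref{ssec:pre-or} converts this into $\Map_{\Sp^\cn}(\Omega M, \End(\one_\cD)^\times)$. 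Third, the standard identification of the units spectrum as the loops on the Picard spectrum, $\End(\one_\cD)^\times \simeq \Omega\pic(\cD)$, rewrites this as $\Map_{\Sp^\cn}(\Omega M,\Omega\pic(\cD))$.

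The final step uses the hypothesis that $M$ is connected: for such $M$, the counit $\Sigma\Omega M \to M$ is an equivalence in $\Sp^\cn$ (equivalently, $\Omega\colon \Sp^\cn_{\ge 1} \to \Sp^\cn$ is fully faithful), so
\[
    \Map_{\Sp^\cn}(\Omega M,\Omega\pic(\cD)) \;\simeq\; \Map_{\Sp^\cn}(\Sigma\Omega M,\pic(\cD)) \;\simeq\; \Map_{\Sp^\cn}(M,\pic(\cD)).
\]
Stringing the four natural isomorphisms together matches the universal property of $\cC[M]$, and the result then follows by Yoneda. The main bookkeeping point, and the only step where one must be careful, is the first one: one must apply the $\Mod_{(-)} \dashv \End(\one_{(-)})$ adjunction at the correct $\EE_n$-level (using \cite[\S 4.8.5]{HA} with $n=2$), and invoke \cref{Mod_Base_Change} to see that the adjunction over $\cC$ is compatible with the $\cC$-linear structure on both sides. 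Naturality in $M$ (and in $\cC$) is automatic since each arrow in the chain is a functorial adjunction unit or the connectedness equivalence $\Sigma\Omega M \simeq M$.
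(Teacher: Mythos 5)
Your proof is correct and follows essentially the same route as the paper: both identify the two objects via the Yoneda lemma by showing they corepresent the same functor $\cD \mapsto \Map_{\Sp^\cn}(\Omega M, \one_\cD^\times)$, using the $\Mod_{(-)} \dashv \End(\one_{(-)})$ adjunction, the $\one[-] \dashv (-)^\times$ adjunction, the identification $\one_\cD^\times \simeq \Omega\pic(\cD)$, and full faithfulness of $\Omega$ on connected connective spectra. Your extra remarks on bookkeeping (the $\EE_2$-level and \cref{Mod_Base_Change}) are fine but not required for the argument.
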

\begin{proof}
    We shall show that both objects co-represent naturally isomorphic functors from $\Mod_\cC$ into $\Spc$. For every $\cD \in \Mod_\cC$ we have a natural isomorphism
    \[
        \Map_{\calg(\Mod_\cC)}(\cC[M],\cD) \simeq
        \Map_{\Sp^\cn}(M,\pic(\cD)).
    \]
    Since $M$ is connected we have a natural isomorphism
    \[
        \Map_{\Sp^\cn}(M,\pic(\cD)) \simeq 
        \Map_{\Sp^\cn}(\Omega M, \Omega \pic(\cD)) \simeq 
        \Map_{\Sp^\cn}(\Omega M, \one_{\cD}^\times).
    \]
    On the other hand, we have 
    \[
        \Map_{\calg(\Mod_\cC)}(\Mod_{\one_{\cC}[\Omega M]}(\cC),\cD) \simeq
        \Map_{\calg(\cC)}(\one_{\cC}[\Omega M],\End(\one_\cD)) \simeq
        \Map_{\Sp^\cn}(\Omega M, \one_\cD^\times).
    \]
    Thus, both $\cC[M]$ and $\Mod_{\one_{\cC}[\Omega M]}(\cC)$ co-represent the functor $\Map_{\Sp^\cn}(\Omega M , \one_{(-)}^\times)$ and hence isomorphic by the Yoneda lemma.
\end{proof}

\begin{rem}\label{Group_Algebra_Cat_Explained}
    The isomorphism provided by the proof of \Cref{Group_Algebra_Cat} can be succinctly summarized as follows. The object $\cC[M]$ corepresents maps $M \to \pic(-)$ and the object $\Mod_{\one_{\cC}[\Omega M]}$ co-represents maps $\Omega M \to \one_{(-)}^\times$. When $M$ is connected, the two types of data are equivalent by taking $\Omega$ and using the identification $\one^\times \simeq \Omega\, \pic$. 
\end{rem}

\subsubsection{Decategorifying the Fourier transform}

We shall now explain how the Fourier transform for $\cC$ is essentially the decategorification of the Fourier transform for $\Mod_\cC$. Given $\omega \in \POr{\OR}{\cC}{n+1}$ and $M\in \Mod_R^{[0,n+1]}$, the categorical Fourier transform decategorifies to a natural transformation
\[
    \Dec{\Four}_\omega \colon
    \End(\one_{\cC[M]}) \too
    \End(\one_{\cC^{\und{\Dual{M}{n+1}}}})
    \qin \calg(\cC).
\]
We now interpret the source and target in terms of familiar commutative algebras. First, we have,
\[
\End(\one_{\cC^{\und{\Dual{M}{n+1}}}}) \:\simeq\:
    \one^{\und{\Dual{M}{n+1}}}
    \qin \calg(\cC).
\]
And second, when $M$ is connected, we have by \Cref{Group_Algebra_Cat},
\[
\End(\one_{\cC[M]}) \:\simeq\:
    \one[\Omega M]
    \qin \calg(\cC).
    \footnote{In fact, it can be easily deduced that this isomorphism also holds without the assumption that $M$ is connected.}
\]

Furthermore, for every $M \in \Mod_\OR^{[0,n]}$, we also have a natural isomorphism
\[
\Dual{(\Sigma M)}{n+1}=
    \hom^\cn(\Sigma M, \Dual{\OR}{n+1}) \simeq 
    \hom^\cn(M, \Omega (\Dual{\OR}{n+1})) \simeq 
\hom^\cn(M, \Dual{\OR}{n})=
    \Dual{M}{n}.
\]

Using these isomorphisms we have the following:

\begin{prop} \label{Fourier_Cat}
    Let $\cC \in \calg(\Prl)$, let $\OR\in \calg(\Sp_{(p)}^\cn)$ and let $\omega \in  \POr{\OR}{\Mod_\cC}{n+1}$. The following diagram of functors $\Mod_\OR^{[0,n]} \to \calg(\cC)$ commutes:
    \[\begin{tikzcd}
    	{\one[M]} && {\one^{\und{\Dual{M}{n}}}} \\
    	{\one[\Omega\Sigma M]} && {\one^{\und{\Dual{(\Sigma M)}{n+1}}}.}
        \arrow["\wr"', from=1-1, to=2-1]
        \arrow["\wr", from=1-3, to=2-3]
        \arrow["{\Four_{\Omega \omega}}", from=1-1, to=1-3]
        \arrow["{\Dec{\Four}_{\omega}}", from=2-1, to=2-3]
    \end{tikzcd}\]    
\end{prop}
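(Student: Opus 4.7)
My approach is to apply Proposition \ref{Four_is_POr} to reduce the commutativity of the square to a single identification of pre-orientations. After composing with the vertical isomorphisms, both sides of the diagram become natural transformations $\one[-] \to \one^{\und{\Dual{(-)}{n}}}$ of functors $\Mod_\OR^{[0,n]} \to \calg(\cC)$, and by Proposition \ref{Four_is_POr} such natural transformations are classified by $\POr{\OR}{\cC}{n}$. It therefore suffices to verify that the two corresponding pre-orientations $\Dual{\OR}{n} \to \one_\cC^\times$ agree. By construction, the top row $\Four_{\Omega\omega}$ is classified by $\Omega\omega$.

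To compute the pre-orientation classifying the bottom row, I would follow the recipe of Remark \ref{Four_Explicit}: evaluate the natural transformation at $M = \Dual{\OR}{n}$, post-compose with evaluation at $\Id \in \und{\Dual{\Dual{\OR}{n}}{n}} \simeq \und{\OR}$, and take the adjoint under $\one[-] \dashv (-)^\times$. Tracing the bottom path at $M = \Dual{\OR}{n}$ amounts to forming $\Sigma \Dual{\OR}{n} \in \Modfin{\OR}{n+1}$, applying $\Four_\omega$ to obtain a map $\cC[\Sigma \Dual{\OR}{n}] \to \cC^{\und{\Dual{\Sigma \Dual{\OR}{n}}{n+1}}} \simeq \cC^{\und{\OR}}$ in $\calg(\Mod_\cC)$, and then decategorifying. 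Composing with evaluation at $\Id$ before decategorification produces, again by Remark \ref{Four_Explicit} applied to $\omega$, the augmentation $\varepsilon_\omega \colon \cC[\Sigma \Dual{\OR}{n}] \to \cC$, which is adjoint to $\omega$ under $\cC[-] \dashv \pic(-)$.

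The remaining step is to identify $\Dec{\varepsilon_\omega}$, after the isomorphisms $\End(\one_{\cC[\Sigma\Dual{\OR}{n}]}) \simeq \one[\Omega\Sigma \Dual{\OR}{n}] \simeq \one[\Dual{\OR}{n}]$ from Proposition \ref{Group_Algebra_Cat} and $\End(\one_\cC) \simeq \one$, with the map $\one[\Dual{\OR}{n}] \to \one$ adjoint to $\Omega\omega$ under $\one[-] \dashv (-)^\times$. This is exactly the content of Remark \ref{Group_Algebra_Cat_Explained}: the isomorphism $\End(\one_{\cC[N]}) \simeq \one[\Omega N]$ was chosen so that the co-representation of maps $N \to \pic(\cD)$ on the categorified side matches the co-representation of $\Omega N \to \one_\cD^\times$ on the decategorified side via the canonical identification $\Omega\,\pic \simeq (-)^\times$. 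Specializing to $N = \Sigma \Dual{\OR}{n}$ and $\cD = \cC$ identifies the decategorification of the augmentation adjoint to $\omega$ with the map adjoint to $\Omega\omega$, as required.

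The main subtlety is ensuring that the four identifications in play ($\Omega\Sigma M \simeq M$, $\Dual{\Sigma M}{n+1} \simeq \Dual{M}{n}$, $\End(\one_{\cC[M]}) \simeq \one[\Omega M]$, and $\Omega\,\pic \simeq (-)^\times$) assemble coherently into an equivalence of natural transformations of functors on $\Mod_\OR^{[0,n]}$, rather than a mere pointwise agreement at $M = \Dual{\OR}{n}$. However, each of these identifications is natural in $M$, and this is precisely the reason why invoking Proposition \ref{Four_is_POr} at the outset is effective: it reduces the entire naturality question to the single comparison of pre-orientations carried out above.
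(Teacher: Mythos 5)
Your proof takes essentially the same approach as the paper's: reduce via \Cref{Four_is_POr} to a single comparison of pre-orientations, then identify the pre-orientation associated with the down--right--up path using \Cref{Group_Algebra_Cat} and \Cref{Group_Algebra_Cat_Explained}. The paper's proof is exactly this argument stated very tersely; you have simply unwound the classification via \Cref{Four_Explicit} more explicitly, which is fine (note only the minor slip that $\Sigma\Dual{\OR}{n}$ lives in $\Mod_\OR^{[0,n+1]}$, not $\Modfin{\OR}{n+1}$, since no $\pi$-finiteness is assumed on $\OR$).
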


\begin{proof}
    By \Cref{Four_is_POr}, it suffices to show that the pre-orientation $\Dual{\OR}{n} \to \one^\times$ associated with the path down--right--up is also $\Omega \omega$. This follows from \Cref{Group_Algebra_Cat} and 
    \Cref{Group_Algebra_Cat_Explained}
\end{proof}

\subsection{Orientations and categorification}\label{ssec:orcat}

For every $\cC \in \calg(\Prl)$, we have constructed a map 
\[
    \Omega\colon \POr{\OR}{\Mod_{\cC}}{n+1} \too \POr{\OR}{\cC}{n}
\]
between the respective spaces of $\OR$-pre-orientations.
Both the domain and the range have distinguished subspaces consisting of the $\OR$-\textit{orientations}. We shall now show that in the higher semiadditive setting these are preserved and detected by $\Omega$. 

\begin{prop} \label{orientability_categorification}
    Let $\cC \in \calg(\Prl)$, let $\OR\in\calg(\Sp_{(p)}^\cn)$ and let $\omega \in \POr{\OR}{\Mod_{\cC}}{n+1}$. For every $M\in\Modfin{\OR}{n}$, if $\Sigma M$ is $\omega$-oriented, then  $M$ is $\Omega \omega$-oriented. The converse holds if $\und{\Dual{M}{n}}$ is $\cC$-affine.
\end{prop}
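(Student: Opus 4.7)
The plan is to exploit \Cref{Fourier_Cat} to translate between the Fourier transform at $\Sigma M$ in the categorified world $\calg(\Mod_\cC)$ and the Fourier transform at $M$ in $\calg(\cC)$. Using the canonical isomorphisms $\Omega\Sigma M \simeq M$ in $\Sp^\cn$ and $\Dual{(\Sigma M)}{n+1} \simeq \Dual{M}{n}$, \Cref{Fourier_Cat} supplies a commutative square whose vertical maps are isomorphisms and which identifies $(\Four_{\Omega\omega})_M$ with the decategorification $\Dec{(\Four_\omega)_{\Sigma M}}$ as morphisms in $\calg(\cC)$. Consequently, $(\Four_{\Omega\omega})_M$ is an isomorphism if and only if $\Dec{(\Four_\omega)_{\Sigma M}}$ is.

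For the forward implication, if $\Sigma M$ is $\omega$-oriented, then $(\Four_\omega)_{\Sigma M}$ is an equivalence in $\calg(\Mod_\cC)$. Applying the decategorification functor (taking endomorphisms of units) yields an isomorphism in $\calg(\cC)$, and the observation of the previous paragraph then gives that $(\Four_{\Omega\omega})_M$ is an isomorphism, i.e., $M$ is $\Omega\omega$-oriented.

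For the converse, suppose $M$ is $\Omega\omega$-oriented and that $\und{\Dual{M}{n}}$ is $\cC$-affine. Then $\Dec{(\Four_\omega)_{\Sigma M}}$ is an isomorphism in $\calg(\cC)$, and the goal is to conclude that the categorified map $(\Four_\omega)_{\Sigma M}$ is itself an equivalence in $\calg(\Mod_\cC)$. I would invoke \Cref{decat_iso_affine_iso}: it suffices to verify that the unit functors $\cC \to \cC[\Sigma M]$ and $\cC \to \cC^{\und{\Dual{(\Sigma M)}{n+1}}}$ are both affine. The target's unit functor is affine precisely by the assumption that $\und{\Dual{M}{n}} \simeq \und{\Dual{(\Sigma M)}{n+1}}$ is $\cC$-affine. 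For the source, since $\Sigma M$ is connected (as $M$ is connective), \Cref{Group_Algebra_Cat} gives $\cC[\Sigma M] \simeq \Mod_{\one[\Omega\Sigma M]}(\cC) \simeq \Mod_{\one[M]}(\cC)$, so its unit functor is the extension-of-scalars along $\one \to \one[M]$, which is affine by the basic example following \Cref{def:affine}. Hence \Cref{decat_iso_affine_iso} applies and $\Sigma M$ is $\omega$-oriented.

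The main conceptual step is the decategorification criterion \Cref{decat_iso_affine_iso}; the affineness hypothesis on $\und{\Dual{M}{n}}$ is exactly what is needed to activate it, while the affineness of the source's unit functor is automatic from the module-category description of $\cC[\Sigma M]$. No finiteness or semiadditivity on $\cC$ beyond what is built into $\calg(\Prl)$ is required for this argument.
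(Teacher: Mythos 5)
Your proof is correct and follows essentially the same route as the paper's: both reduce to \Cref{Fourier_Cat} to identify $(\Four_{\Omega\omega})_M$ with the decategorification of $(\Four_\omega)_{\Sigma M}$, dispatch the forward direction by noting that functors preserve isomorphisms, and invoke \Cref{decat_iso_affine_iso} for the converse after verifying that the two unit functors $\cC \to \cC[\Sigma M]$ and $\cC \to \cC^{\und{\Dual{(\Sigma M)}{n+1}}}$ are affine, using \Cref{Group_Algebra_Cat} for the source and the hypothesis on $\und{\Dual{M}{n}}$ for the target.

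One small imprecision in your citation: to justify affineness of the unit functor $\cC \to \Mod_{\one[M]}(\cC)$, you point to ``the basic example following \Cref{def:affine},'' but that example treats extension-of-scalars for \emph{ordinary} commutative rings and quasi-coherent sheaves, not commutative algebras in a general presentably symmetric monoidal $\cC$. The paper instead invokes \Cref{affine_in_image_Mod}, which is exactly the general statement that module categories $\Mod_R(\cC)$ are characterized among $\cC$-linear categories by having an affine unit; equivalently, one could directly check via \Cref{criterion_affineness} that the forgetful functor is $\cC$-linear, colimit-preserving and conservative. The conclusion is the same, but the reference should be updated.
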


\begin{proof}
    By \Cref{Fourier_Cat}, $\Four_{\Omega \omega}$ is an isomorphism at $M$ if and only if $\Dec{\Four}_\omega$ is an isomorphism at $\Sigma M$. Now, $\Dec{\Four}_\omega$ is an isomorphism at $\Sigma M$, if $\Four_\omega$ is an isomorphism at $\Sigma M$ and the converse holds if the source and target of 
    \[
        \Four_\omega \colon
        \cC[\Sigma M] \too
        \cC^{\und{\Dual{(\Sigma M)}{n+1}}}
    \] 
    have affine units (\Cref{decat_iso_affine_iso}). For the source, since $\Sigma M$ is connected, the unit $\cC \to \cC[\Sigma M]$ is affine by \Cref{Group_Algebra_Cat} and \Cref{affine_in_image_Mod}. For the target, by assumption, the space 
    $\und{\Dual{(\Sigma M)}{n+1}} \simeq \und{\Dual{M}{n}}$  
    is $\cC$-affine, and hence the unit 
    $\cC \to \cC^{\und{\Dual{(\Sigma M)}{n+1}}}$
    is affine.
\end{proof}

\begin{thm}\label{Or_Cat}
    Let $\cC \in \calg(\Prl^{\sad{(n+1)}})$, let $\OR\in\calg(\Sp_{(p)}^\cn)$ and let $\omega \in \POr{\OR}{\Mod_{\cC}}{n+1}$. Then, $\omega$ is an orientation of $\Mod_\cC$ if and only if $\Omega \omega$ is an orientation of $\cC$. 
\end{thm}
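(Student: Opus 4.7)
The forward direction is immediate from \cref{orientability_categorification}: if $\omega$ is an orientation of $\Mod_\cC$, then $\Sigma M$ is $\omega$-oriented for every $M \in \Modfin{\OR}{n}$, so $M$ is $\Omega\omega$-oriented, making $\Omega\omega$ an orientation of $\cC$. For the reverse direction, assume $\Omega\omega$ is an orientation of $\cC$; then $\cC$ is $(\OR, n)$-orientable, so by \cref{orientation_affineness_for_R_modules} the underlying space $\und{N}$ is $\cC$-affine for every $N \in \Modfin{\OR}{n}$. The plan is to show that every $M' \in \Modfin{\OR}{n+1}$ is $\omega$-oriented in three steps.

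First, if $M' = \Sigma N$ with $N \in \Modfin{\OR}{n}$, I will invoke the converse direction of \cref{orientability_categorification}: since $\und{\Dual{N}{n}}$ is $\cC$-affine and $N$ is $\Omega\omega$-oriented, $\Sigma N$ is $\omega$-oriented. Second, for a discrete $P \in \Modfin{\OR}{n+1}$, the Brown--Comenetz dual $\Dual{P}{n+1} \simeq \Sigma^{n+1} P^{*}$ is connected and of the form $\Sigma(\Sigma^{n} P^{*})$ with $\Sigma^{n} P^{*} \in \Modfin{\OR}{n}$; here $\und{\Dual{\Sigma^{n} P^{*}}{n}} \simeq \und{P}$ is a finite discrete set (hence $\cC$-affine), so the first step shows $\Dual{P}{n+1}$ is $\omega$-oriented. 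Since $\Mod_\cC$ is $\infty$-semiadditive by \cref{Mod_C_Sadd}, Brown--Comenetz invariance of orientedness (\cref{oriented_dual} applied at height $n+1$) will then yield that $P$ itself is $\omega$-oriented. Third, for a general $M' \in \Modfin{\OR}{n+1}$, I will use the cofiber sequence $\tau_{\ge 1} M' \to M' \to \pi_0 M'$: both outer terms are $\omega$-oriented by the previous two steps, and \cref{Orientability_Ext_Cof} applied in $\Mod_\cC$ at height $n+1$ will give that $M'$ is $\omega$-oriented, provided that $\und{\Dual{\tau_{\ge 1} M'}{n+1}} \simeq \und{\Dual{\Omega M'}{n}}$ is $\Mod_\cC$-affine. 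This $\Mod_\cC$-affineness follows from \cref{affineness_categorification}: the space is the underlying space of an object of $\Modfin{\OR}{n}$, hence $\cC$-ambidextrous (as $\cC$ is $(n+1)$-semiadditive) and $\cC$-affine, and its loop spaces are likewise underlying spaces of objects in $\Modfin{\OR}{n}$ (or trivial for $n = 0$), hence also $\cC$-affine.

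The most delicate point is the discrete case: a more direct approach---applying \cref{Orientability_Ext_Cof} to the exact sequence $P \to 0 \to \Sigma P$---would require $\und{\Sigma P} = B\und{P}$ to be $\Mod_\cC$-affine, which is genuinely problematic in the $n = 0$ boundary case, where only a height zero orientation on $\cC$ is available and $B\und{P}$ need not even be $\cC$-affine. Passing through the Brown--Comenetz dual $\Sigma^{n+1} P^{*}$ sidesteps this difficulty by reducing the discrete case back to the already-handled connected case, so the argument runs uniformly for all heights $n \ge 0$.
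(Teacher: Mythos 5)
Your proof is correct and follows the same overall strategy as the paper: forward direction from \cref{orientability_categorification}, reverse direction by first handling suspensions via the converse of \cref{orientability_categorification}, then $n$-truncated modules by shifted Brown--Comenetz duality (\cref{oriented_dual}), then general modules via the Postnikov exact sequence $\tau_{\ge 1}M' \to M' \to \pi_0 M'$. The one place where your route diverges is the last step. You apply \cref{Orientability_Ext_Cof} with $M_0 = \tau_{\ge 1}M'$, which requires $\und{\Dual{\tau_{\ge 1}M'}{n+1}}$ to be $\Mod_\cC$-affine, and you obtain this by routing through \cref{affineness_categorification}; this is valid but costs you the extra check that the loop spaces involved are $\cC$-affine. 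The paper instead applies \cref{Orientability_Ext_Fib} with $M_2 = \pi_0 M'$, which only requires $\und{\pi_0 M'}$ to be $\Mod_\cC$-affine. Since $\und{\pi_0 M'}$ is a finite discrete set and $\Mod_\cC$ is $0$-semiadditive, this holds immediately by \cref{affine_finite_sets}, with no need for \cref{affineness_categorification}. Both are correct, but the paper's choice of which end of the exact sequence to constrain makes the affineness hypothesis essentially free; it is worth noticing that the two closure propositions are not symmetric in cost here, and picking the one whose side condition lands on a discrete space is the cheaper move.
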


\begin{proof}
    By \Cref{orientability_categorification}, if $\omega$ is an orientation on $\Mod_\cC$, then $\Omega \omega$ is an orientation on $\cC$. Conversely, assume that $\omega$ is an orientation on $\cC$. We first observe that by \Cref{orientation_affineness_for_R_modules}, for every $M \in \Modfin{\OR}{n}$, the space $\und{\Dual{M}{n}}$ is $\cC$-affine, and hence by \Cref{orientability_categorification} again, $\Sigma M$ is $\omega$-oriented. In other words, we get that all \textit{$1$-connective} $N \in \Mod_\OR^{[1,n+1]\text{-fin}}$ are $\omega$-oriented. By \Cref{oriented_dual}, $\omega$-oriented modules are closed under the operation $M \mapsto \Dual{M}{n+1}=N$, and hence also all $M \in \Modfin{\OR}{n}$ are $\omega$-oriented. Finally, for a general 
    $M \in \Modfin{\OR}{n+1}$, we have an exact sequence
    \[
        \tau_{\ge 1} M \to M \to \tau_{\le 0} M 
        \qin \Modfin{\OR}{n+1}.
    \]
    The modules $\tau_{\ge 1} M$ and $\tau_{\le 0} M$ are $\omega$-oriented by the above. Moreover, the finite set $\und{\tau_{\le 0} M}$ is $\Mod_\cC$-affine, as $\Mod_\cC$ is 0-semiadditive (\Cref{affine_finite_sets}). We conclude by \Cref{Orientability_Ext_Fib} that $M$ is $\omega$-oriented.
\end{proof}

\begin{cor}\label{Orientability_Cat}
    Let $\cC \in \calg(\Prl^{\sad{(n+1)}})$ and let $\OR\in\calg(\Sp_{(p)}^\cn)$. If $\OR$ is $n$-truncated, we get an isomorphism of spaces
    \[
        \Omega \colon 
        \Or{\OR}{\Mod_\cC}{n+1} \iso
        \Or{\OR}{\cC}{n}.
    \]
    In particular, $\cC$ is $(\OR,n)$-orientable if and only if $\Mod_\cC$ is $(\OR,n+1)$-orientable.
\end{cor}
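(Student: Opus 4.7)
The plan is to combine the two results immediately preceding the corollary, namely Proposition~\ref{Por_Cat_Obstruction} and Theorem~\ref{Or_Cat}. Recall that by Definition~\ref{def:orientation}, the subspace $\Or{\OR}{\cC}{n} \subseteq \POr{\OR}{\cC}{n}$ is a full sub-$\infty$-groupoid, determined by the property that the associated Fourier transform is an isomorphism on all $[0,n]$-finite $\OR$-modules. Hence, to produce an isomorphism of orientation spaces, it suffices to exhibit an isomorphism of pre-orientation spaces which preserves and detects this property.

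The first step is to invoke Proposition~\ref{Por_Cat_Obstruction}, which, under the $n$-truncatedness hypothesis on $\OR$, provides the desired isomorphism
\[
    \Omega \colon \POr{\OR}{\Mod_\cC}{n+1} \iso \POr{\OR}{\cC}{n}
\]
at the level of pre-orientations. Note that $\Mod_\cC$ is $(n+2)$-semiadditive under our assumption that $\cC$ is $(n+1)$-semiadditive (this follows from Corollary~\ref{Mod_C_Sadd} when $n\ge 0$, or more precisely from the general monoidal-functor principle; in any case, the hypotheses of Theorem~\ref{Or_Cat} are satisfied). Applying Theorem~\ref{Or_Cat}, a pre-orientation $\omega \in \POr{\OR}{\Mod_\cC}{n+1}$ is an orientation if and only if $\Omega\omega$ is an orientation. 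Therefore, the isomorphism $\Omega$ restricts to a bijection on the full sub-$\infty$-groupoids of orientations, yielding
\[
    \Omega \colon \Or{\OR}{\Mod_\cC}{n+1} \iso \Or{\OR}{\cC}{n},
\]
as claimed.

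The ``in particular'' statement is then immediate: one side is non-empty if and only if the other is. There is no real obstacle here; the entire content of this corollary is packaged in the two preceding results, and the only subtlety worth flagging is the verification that the ambient $\infty$-category $\Mod_\cC$ has sufficient semiadditivity to apply Theorem~\ref{Or_Cat}, which follows from the higher semiadditivity of $\cC$ together with Corollary~\ref{Mod_C_Sadd}.
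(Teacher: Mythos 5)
Your proof is correct and follows exactly the same two-step route as the paper: cite \Cref{Por_Cat_Obstruction} for the isomorphism of pre-orientation spaces under the $n$-truncatedness hypothesis, then cite \Cref{Or_Cat} to see that this isomorphism preserves and detects the orientation property. The one redundancy worth noting is the digression about $\Mod_\cC$ being $(n+2)$-semiadditive: \Cref{Or_Cat} is stated under the hypothesis that $\cC$ itself lies in $\calg(\Prl^{\sad{(n+1)}})$, which is assumed verbatim in the corollary, so no semiadditivity of $\Mod_\cC$ needs to be verified to invoke it (and the cited \Cref{Mod_C_Sadd} concerns the $\EE_k$-monoidal parameter $k\ge 2$, not the height, though since $\cC$ is $\EE_\infty$ this applies in any case).
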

\begin{proof}
    By \Cref{Or_Cat}, the analogous isomorphism of the corresponding spaces of pre-orientations, provided by \Cref{Por_Cat_Obstruction}, restricts to an isomorphism between the subspaces of orientations. In particular, the domain is non-empty if and only if the range is non-empty.
\end{proof}

When $\cC$ is $(\OR,n)$-orientable, we get for every 
$M \in \Modfin{\OR}{n+1}$ 
an equivalence of $\cC$-linear presentable symmetric monoidal $\infty$-categories
\[
    \Four_\omega \colon
    \Fun(\und{M},\cC)_{\Day} \iso
    \Fun(\und{\Dual{M}{n+1}},\cC)_{\Ptw}.
\]
By \Cref{Fourier_Cat}, we recover the ordinary Fourier transform at $M \in \Modfin{\OR}{n}$,
\[
    \Four_{\Omega \omega} \colon
    \one[M] \iso
    \one^{\und{\Dual{M}{n}}},
\]
by applying the functor $\End(\one_{(-)})$ to $\Four_\omega$ at $\Sigma M$. However, in the oriented case, we can also go the other way around. By \Cref{Group_Algebra_Cat}, we have
\[
    \Fun(\und{\Sigma M},\cC)_{\Day} \simeq 
    \Mod_{\one[M]}(\cC),    
\]
and by \Cref{orientation_affineness_for_R_modules}, the space $\und{\Dual{M}{n}}$ is $\cC$-affine, so that we have
\[
    \Fun(\und{\Dual{M}{n}},\cC)_{\Ptw} \simeq
    \Mod_{\one^{\und{\Dual{M}{n}}}}(\cC).
\]
Thus, the categorical Fourier transform $\Four_\omega$ at $\Sigma M$ can be recovered from the ordinary Fourier transform $\Four_{\Omega \omega}$ at $M$ by applying the functor $\Mod_{(-)}(\cC)$. 

\begin{rem}
    To be precise, the above procedure recovers $\Four_\omega$ at $\Sigma M$ from 
    $\Four_{\Omega \omega}$ at $M$ only as an equivalence of plain ($\cC$-linear) $\infty$-categories. The equivalence as \textit{symmetric monoidal} $\infty$-categories can be deduced from the fact that $\Four_{\Omega \omega}$ is an isomorphism of \textit{Hopf algebras} (\Cref{Fourier_Hopf}), though we shall not prove nor use this.
\end{rem}

In contrast, for \textit{non-connected} modules $M \in \Modfin{R}{n+1}$, the categorical Fourier transform provides some new information. 

\begin{example}
    When $M$ is discrete (i.e., a finite abelian group), the inverse of the categorical Fourier transform at its Pontryagin dual $M^*$ assumes the form
    \[
        \Four_\omega^{-1} \colon 
        \cC^{B^{n+1}M} \iso 
        \prod\nolimits_{\varphi \in M^*} \cC.
    \]
    This should be thought of as providing a decomposition of every $\cC$-valued representation of the group $\Omega B^{n+1} M= B^n M$ into a sum of characters. For $R = \ZZ/p^r$ and height $n=0$, this reproduces the Fourier transform considered in \cite[Definition 3.12]{carmeli2021chromatic}.
\end{example}

Another consequence of \Cref{Orientability_Cat} is that the categorical $\OR$-cyclotomic extension is simply the $\infty$-category of modules over the usual $\OR$-cyclotomic extension. 

\begin{cor}\label{Or_Cyc_Cat}
    Let $\cC \in \calg(\Prl^{\sad{(n+1)}})$ and let $\OR\in \calg(\Sp_{(p)}^\cn)$ be $n$-truncated. We have a natural isomorphism
    \[
        \orcyc[\cC]{\OR}{n+1} \:\simeq \:
        \Mod_{\orcyc{\OR}{n}}(\cC)
        \qin \calg(\Mod_\cC).
    \]
\end{cor}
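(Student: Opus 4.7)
The strategy is a Yoneda argument: I will show that both $\orcyc[\cC]{\OR}{n+1}$ and $\Mod_{\orcyc{\OR}{n}}(\cC)$ co-represent the same functor $\calg(\Mod_\cC) \to \Spc$. Since $\Mod_\cC$ is $\infty$-semiadditive (\Cref{Mod_C_Sadd}), \Cref{universal_oriented_ring} applied to $\Mod_\cC$ gives, for every $\cE \in \calg(\Mod_\cC)$, a natural equivalence
\[
\Map_{\calg(\Mod_\cC)}(\orcyc[\cC]{\OR}{n+1},\cE) \:\simeq\: \Or{\OR}{\cE;\Mod_\cC}{n+1}.
\]
Dually, the adjunction $\Mod_{(-)} \dashv \End(\one_{(-)})$ together with the universal property of $\orcyc{\OR}{n} \in \calg(\cC)$ yields
\[
\Map_{\calg(\Mod_\cC)}(\Mod_{\orcyc{\OR}{n}}(\cC),\cE) \:\simeq\: \Map_{\calg(\cC)}(\orcyc{\OR}{n},\End(\one_\cE)) \:\simeq\: \Or{\OR}{\End(\one_\cE);\cC}{n}.
\]

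The task therefore reduces to constructing a natural isomorphism
\[
\Or{\OR}{\cE;\Mod_\cC}{n+1} \:\simeq\: \Or{\OR}{\End(\one_\cE);\cC}{n},
\]
which generalizes \Cref{Orientability_Cat} from the case $\cE = \Mod_\cC$ (where $\End(\one_{\Mod_\cC}) = \one_\cC$) to arbitrary $\cE \in \calg(\Mod_\cC)$. On pre-orientations this is the looping identification of \Cref{Por_Cat_Obstruction}: since $\OR$ is $n$-truncated, $\pi_0(\Dual{\OR}{n+1}) = 0$, so $\Map(\Dual{\OR}{n+1}, \pic(\cE)) \simeq \Map(\Dual{\OR}{n}, \Omega\pic(\cE)) = \Map(\Dual{\OR}{n}, \End(\one_\cE)^\times)$, which is visibly natural in $\cE$.

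It remains to check that this looping identification preserves and reflects the orientation property. Here the argument is a straightforward adaptation of the proof of \Cref{Or_Cat}, with $\Mod_\cC$ replaced by $\cE$ and $\cC$ replaced by $\End(\one_\cE)$ throughout. The decategorification identity (\Cref{Fourier_Cat}, suitably relativized) identifies $\Dec{\Four}_\omega$ at $\Sigma M$ with $\Four_{\Omega\omega}$ at $M \in \Modfin{\OR}{n}$ in $\calg(\cC)$, making the forward implication (orientation of $\omega$ yields orientation of $\Omega\omega$) immediate. For the converse, one invokes \Cref{orientability_categorification} to transport $\Omega\omega$-orientability of $M$ back to $\omega$-orientability of $\Sigma M$, with the required affineness of $\und{\Dual{M}{n}}$ supplied by \Cref{orientation_affineness_for_R_modules} applied to the $\OR$-oriented algebra $\End(\one_\cE)$. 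One then extends orientability to all of $\Modfin{\OR}{n+1}$ using closure under Brown--Comenetz duality (\Cref{oriented_dual}) and fibers (\Cref{Orientability_Ext_Fib}), with the discrete part $\tau_{\le 0} M'$ of a general $M'$ handled via finite-set affineness in $0$-semiadditive $\Mod_\cC$ (\Cref{affine_finite_sets}). The main obstacle is verifying that the various affineness conditions transfer correctly to this general ambient setting, but since $\cE$ is $\cC$-linear and colimit-preserving symmetric monoidal, the necessary transfers are handled uniformly by \Cref{affine_spaces_functors}, and the argument of \Cref{Or_Cat} goes through essentially unchanged.
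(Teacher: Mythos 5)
Your reduction to the natural equivalence $\Or{\OR}{\cE;\Mod_\cC}{n+1}\simeq\Or{\OR}{\End(\one_\cE);\cC}{n}$ via Yoneda, the $\Mod_{(-)}\dashv\End(\one_{(-)})$ adjunction, and the pre-orientation looping from \Cref{Por_Cat_Obstruction} is exactly the structure of the paper's argument, so the overall strategy matches. The divergence comes at the point where you verify that looping preserves and reflects the orientation property, and your description of that step has a genuine type-checking problem.

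You propose to ``adapt the proof of \Cref{Or_Cat}, with $\Mod_\cC$ replaced by $\cE$ and $\cC$ replaced by $\End(\one_\cE)$ throughout.'' This substitution is not legitimate as stated. The proof of \Cref{Or_Cat}, and its inputs \Cref{orientability_categorification}, \Cref{Fourier_Cat}, \Cref{Group_Algebra_Cat}, and \Cref{affine_in_image_Mod}, all rely on $\Mod_\cC$ being literally a module $2$-category $\Mod_{\cC'}(\Prl_\kappa)$ over a presentably symmetric monoidal $1$-category $\cC'$. For a general $\cE\in\calg(\Mod_\cC)$ one has $\cE\simeq\Mod_{\End(\one_\cE)}(\cC)$ only when the unit $\cC\to\cE$ is affine (this is exactly \Cref{affine_in_image_Mod}), which can and does fail --- $\cE=\cC^{B^{n+1}C_p}$ is a concrete example. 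You identify ``the main obstacle'' as transferring affineness of spaces via \Cref{affine_spaces_functors}, but that is not the obstruction; the problem is upstream: the premises of \Cref{orientability_categorification} simply do not apply to $\cE$ directly.

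The paper sidesteps this by not re-running the proof at all: it invokes the already-proved \Cref{Orientability_Cat} with $\cD$ in place of $\cC$ (``base-changing''). Concretely, one observes that $\Or{\OR}{\cE;\Mod_\cC}{n+1}$ is intrinsic to $\cE$ (since $\Mod_\cE(\Mod_\cC)\simeq\Mod_\cE(\Prl_\kappa)$), that $\Or{\OR}{\End(\one_\cE);\cC}{n}$ coincides with the intrinsic $\Or{\OR}{\cE}{n}$ (apply the colimit-preserving symmetric monoidal counit $\Mod_{\End(\one_\cE)}(\cC)\to\cE$ in one direction, and its $n$-semiadditive right adjoint in the other, both of which carry the Fourier transform to the Fourier transform and in particular send isomorphisms to isomorphisms), and then applies \Cref{Orientability_Cat} to $\cE$ as the new base. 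This yields the desired identification for every $\cE$ with no affineness hypothesis. Your forward direction (decategorification via the relativized \Cref{Fourier_Cat}) is fine and is morally half of this; the converse is where you would need either this cleaner route or, if you insist on running \Cref{orientability_categorification}, to first pass to $\Mod_{\End(\one_\cE)}(\cC)$ (where it does apply) and then push forward along $\Mod_{\Mod_{\End(\one_\cE)}(\cC)}\to\Mod_\cE$ using \Cref{push_orientations_functor} --- a step your write-up omits and which the phrase ``$\Mod_\cC$ replaced by $\cE$'' obscures.
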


\begin{proof}
    Since $R$ is $n$-truncated, $\Dual{\OR}{n}$ is connected and hence by \Cref{Group_Algebra_Cat}, we have an isomorphism
    \[
        \cC[\Dual{\OR}{n+1}] \:\simeq \:
        \Mod_{\one[\Dual{\OR}{n}]}(\cC)
        \qin \calg(\Mod_\cC).
    \]
    This isomorphism corresponds the natural isomorphism of the associated co-represntable functors 
    \[
        \Omega \colon 
        \POr{\OR}{\cD ; \Mod_\cC}{n+1} \iso
        \POr{\OR}{\End(\one_\cD) ; \cC}{n}
    \]
for $\cD \in \calg(\Mod_\cC)$. \Cref{Orientability_Cat} implies that the above isomorphism restrict to an isomorphism between the corresponding subspaces of \textit{orientations}, when $\cD= \cC$. However, by base-changing from $\cC$ to $\cD$, this implies that the same holds for an arbitrary $\cD \in \calg(\Mod_\cC)$. From this we deduce the isomorphism in the claim as these are the objects co-representing the corresponding subspaces of orientations. 
\end{proof}

\section{Orientations for Thickenings of \texorpdfstring{$\FF_p$}{Fp}}\label{sec:examplesforspecificrings}

In the definition of an $\OR$-orientation of height $n$ of an $\infty$-category $\cC$, the main role of the ring spectrum $\OR$ is to determine which objects of $\Spfin{n}$ admit a Fourier transform, namely those admitting an $\OR$-module structure. It is natural to restrict attention to the subcategory of local rings $\OR$ with residue field $\FF_p$ and strict maps in the sense of \Cref{def:Local_Ring}. The  minimal choice $\OR = \FF_p$, i.e., the terminal object, gives rise to a Fourier transform for $\FF_p$-vector spaces. The maximal choice $\OR = \Sph_{(p)}$, i.e., the initial object, gives rise to a Fourier transform for all $p$-local spectra. In between, we also have the finite rings $\ZZ/p^r$, the $p$-local integers $\ZZ_{(p)}$, and the Postnikov truncations of the $p$-local sphere $\tau_{\le d} \Sph_{(p)}$. In this section, we study these particular cases, their special features, interrelations and implications. 

\subsection{\texorpdfstring{$\FF_p$}{Fp}-Orientations and affineness} 
\label{ssec:F_p_orientations}

We start with the minimal case $\OR= \FF_p$ and show that virtual $(\FF_p,n)$-orientability already has several significant consequences. 

\subsubsection{Semiadditive height} 

To begin with, in the definition of a pre-orientation the height was a free parameter. However, in the higher semiadditive setting, if the pre-orientation is an \textit{orientation}, then its height is strongly constrained by the higher semiadditive structure of the $\infty$-category. 

\begin{prop} \label{Fp_orientation_height}
    Let $\cC\in \calg(\Prl^{\sad{\infty}})$. If $\cC$ is virtually $(\FF_p,n)$-orientable, then $\cC$ is of semiadditive height $n$ at $p$.  
\end{prop}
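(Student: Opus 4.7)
The strategy is to establish both height bounds separately.

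\textbf{Lower bound (height $\ge n$).} Choose a faithful $(\FF_p, n)$-oriented commutative algebra $S \in \calg(\cC)$. For each $k \in \{0, \dots, n\}$, the module $\Sigma^k C_p$ lies in $\Modfin{\FF_p}{n}$, so by \Cref{orientation_affineness_for_R_modules} applied inside $\Mod_S(\cC)$, its underlying space $B^k C_p$ is $\Mod_S(\cC)$-affine. Faithfulness of $S$ makes $\cC \to \Mod_S(\cC)$ conservative, and \Cref{affine_spaces_functors} then yields that $B^k C_p$ is $\cC$-affine. Applying \Cref{Affiness_Height}(3) gives $\cC$ of height $\ge n$ at $p$.

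\textbf{Upper bound (height $\le n$).} This amounts to the amenability $\one_\cC \simeq \one_\cC^{B^{n+1} C_p}$. As $B^{n+1} C_p$ is $\cC$-ambidextrous (since $\cC$ is $\infty$-semiadditive), this property is preserved and reflected by the faithful, colimit-preserving, $\cC$-linear functor $S \otimes (-)$, so we may assume $\cC$ itself is $(\FF_p, n)$-oriented. Setting $G = B^n C_p$, the bar realization $B^{n+1} C_p \simeq |[k] \mapsto G^k|$, combined with the K\"unneth isomorphism (\Cref{Kunneth_Ambi}), yields
\[
    \one^{B^{n+1} C_p} \;\simeq\; \mathrm{Tot}\bigl\{[k] \mapsto (\one^G)^{\otimes k}\bigr\},
\]
the cobar complex of the Hopf algebra $\one^G$, which computes the endomorphism object of the trivial comodule in $\one^G$-comodules. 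The height-$n$ Fourier transform applied to the discrete module $C_p \in \Modfin{\FF_p}{n}$ supplies, via \Cref{Fourier_Hopf}, a Hopf-algebra isomorphism $\one[C_p] \iso \one^G$, transporting the comultiplication on $\one^G$ (arising from the group law on $G$) to the diagonal comultiplication $[g] \mapsto [g]\otimes [g]$ on $\one[C_p]$. Comodules over this grouplike coalgebra are precisely $C_p$-graded objects of $\cC$; the trivial comodule becomes $\one$ supported in degree $0$, whose endomorphism object in $\cC^{C_p}$ is just $\Map_\cC(\one,\one) \simeq \one$. Hence $\one^{B^{n+1} C_p} \simeq \one$, completing the upper bound.

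\textbf{Main obstacle.} The delicate point is the Fourier transportation at the Hopf-algebra level, and specifically the verification that the cosimplicial structure on $\one^G$ (encoding the group multiplication on $G$) maps to the grouplike diagonal on $\one[C_p]$ rather than, e.g., the convolution comultiplication. This identification is what collapses the cobar totalization to the trivial $\Ext$ computation in $\cC^{C_p}$, and genuinely requires the Hopf-algebra enhancement of \Cref{Fourier_Hopf} rather than just the commutative-algebra structure used in the definition of the Fourier transform.
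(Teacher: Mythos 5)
Your lower-bound argument matches the paper's; \Cref{orientation_affineness_for_R_modules} is already stated for \emph{virtually} orientable $\cC$, so your faithful-descent unwinding is just spelling out what that proposition encapsulates. The interesting divergence is in the upper bound, and there your argument has a genuine gap.

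The paper does not resolve $B^{n+1}C_p$ by the full bar/cobar construction. Instead it observes that $\Sigma^{n+1}C_p$ is the pushout of $0 \leftarrow \Sigma^n C_p \to 0$, so $\one[\Sigma^{n+1}C_p]$ is the pushout in $\calg(\cC)$ of $\one \xleftarrow{\varepsilon} \one[\Sigma^n C_p] \xrightarrow{\varepsilon} \one$. A single application of the Fourier transform (\Cref{Def_Four}, \Cref{Four_Aug}) at $M = \Sigma^n C_p$, together with $\Dual{\Sigma^n C_p}{n} \simeq C_p$, rewrites this as the pushout of $\one \xleftarrow{\ev_0} \one^{\und{C_p}} \xrightarrow{\ev_0} \one$, which by the $\cC$-affineness of the \emph{finite discrete set} $C_p$ (\Cref{affine_finite_sets}) and \Cref{Affiness_Eilenberg_Moore} equals $\one^{\{0\}\times_{C_p}\{0\}} \simeq \one$. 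One can view this as the degree-$\le 1$ truncation of your cobar resolution, with the Eilenberg--Moore/affineness input replacing the rest of the Tot.

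Your version instead asserts that the full cobar totalization
\[
\one^{B^{n+1}C_p} \simeq \Tot\bigl([k]\mapsto (\one^G)^{\otimes k}\bigr) \simeq \Tot\bigl([k]\mapsto \one[C_p]^{\otimes k}\bigr)
\]
equals $\one$ because the right-hand side ``computes the endomorphism object of the trivial comodule in $\one[C_p]$-comodules,'' and the latter is $\one$ since comodules over the grouplike coalgebra $\one[C_p]$ are $C_p$-graded objects. The gap is in passing from the Tot to the comodule mapping object. In general, for a coalgebra $C$ in a presentably symmetric monoidal stable $\infty$-category, the totalization of the cobar complex of the trivial comodule is \emph{not} automatically the mapping spectrum in $\mathrm{Comod}_C(\cC)$ --- that identification requires a convergence statement for the cobar (equivalently, a descent/comonadicity statement exhibiting $\mathrm{Comod}_C$ as the limit of the cosimplicial diagram $\Mod_{C^{\otimes \bullet}}$, together with compatibility of mapping objects with that limit). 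You have supplied no such convergence input, and it is not automatic here: the category $\cC$ is an arbitrary $\infty$-semiadditive presentably symmetric monoidal $\infty$-category. Without it, the Tot is just some limit that you haven't computed.

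There is a second, more minor soft spot: the rewriting $\one^{G^k}\simeq (\one^G)^{\otimes k}$ via K\"unneth must be carried out compatibly across all $k$ \emph{together with} the coface and codegeneracy maps, so that the resulting cosimplicial object is literally the cobar of the Hopf algebra $\one^G$; and then \Cref{Fourier_Hopf} needs to be upgraded to an equivalence of the entire cosimplicial objects rather than just a Hopf-algebra map. These are plausible but not checked. The paper's argument avoids all of this by only ever invoking the Fourier transform as a map of augmented commutative algebras and using affineness to close the pushout --- no convergence issue, no Hopf/cosimplicial bookkeeping. Your ``main obstacle'' correctly flags that the Hopf enhancement matters, but the deeper issue is the unaddressed cobar convergence.
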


\begin{proof}
First, by \Cref{orientation_affineness_for_R_modules}, the spaces $B^kC_p= \und{\Sigma^k C_p}$ are $\cC$-affine for all $k = 0,\dots ,n$. Hence, by \Cref{affiness_Height_below}(3), $\cC$ is of height $\ge n$ at $p$. To show that $\cC$ is also of height $\le n$ at $p$, it would suffice, by \cite[Proposition 3.2.1]{AmbiHeight}, to show that $\one[\Sigma^{n+1}C_p] \simeq \one \in \calg(\cC)$. Furthermore, it suffices to show this after extending scalars along a faithful commutative algebra, so without loss of generality, we may assume that $\cC$ is itself is $(\FF_p,n)$-orientable. 
    Now, the commutative algebra $\one[\Sigma^{n+1}C_p]$ is the pushout of the following diagram
    \[
        \one \xleftarrow{\quad\varepsilon\quad} 
        \one[\Sigma^n C_p] \xrightarrow{\quad\varepsilon\quad}
        \one.
    \]
    Applying the Fourier transform associated to any $\FF_p$-orientation of height $n$ of $\cC$, we get that $\one[\Sigma^{n+1}C_p]$ is also the pushout of the isomorphic diagram 
    \[
        \one \xleftarrow{\quad \ev_0\quad} 
        \one^{\und{C_p}} \xrightarrow{\quad \ev_0\quad}
        \one.
    \]
    Finally, since $\und{C_p}$ is $\cC$-affine (see \Cref{affine_finite_sets}), by \Cref{Affiness_Eilenberg_Moore}, we have 
    \[
        \one[\Sigma^{n+1}C_p] \simeq
        \one\otimes_{\one^{\und{C_p}}}\one \simeq 
        \one^{\Omega \und{C_p}}\simeq 
        \one.\qedhere
    \] 
\end{proof}

\subsubsection{Affineness for $p$-spaces} 

Virtual $(\FF_p,n)$-orientability also implies the affineness (and non-affineness) of a large class of spaces.

\begin{thm}\label{p_Affineness} 
    Let $\cC\in\calg(\Prl^{\sad{\infty}})$
be non-zero and virtually $(\FF_p,n)$-orientable. A $\pi$-finite $p$-space $A$  is $\cC$-affine if and only if $\pi_{n+1}(A,a)= 0$ for all $a\colon \pt \to A$.
\end{thm}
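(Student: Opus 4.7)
I prove both directions of the equivalence, treating the backward direction first since it serves as a lemma in the forward one.

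\textbf{Backward direction.} Assume $\pi_{n+1}(A,a)=0$ for all basepoints $a$, and pass to connected components to reduce to $A$ connected. The Postnikov fibration $\tau_{\ge n+2}A\to A\to\tau_{\le n}A$ has $(n+1)$-connected $\pi$-finite $p$-local fiber, hence $\cC$-affine by \cref{Affiness_Height}(1); by \cref{affineness_extensions}(3) the projection $A\to\tau_{\le n}A$ is $\cC$-affine, and by (2) it suffices to show $\tau_{\le n}A$ is $\cC$-affine. So I assume $A$ is connected and $n$-truncated, and climb its Postnikov tower: for $2\le k\le n$, the fiber $K(\pi_kA,k)=\und{\Sigma^k\pi_kA}$ with $\Sigma^k\pi_kA\in\Modfin{\ZZ/p^r}{n}$ (taking $r$ so that $p^r\pi_kA=0$) is $\cC$-affine by \cref{orientable_residue_affine_modules}. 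For the bottom stage $BG$ with $G=\pi_1(A)$ a finite $p$-group, pick a central series $1=G_0\triangleleft\cdots\triangleleft G_r=G$ with $G_i/G_{i-1}\simeq C_p$: each fibration $BC_p\to BG_i\to BG_{i-1}$ has $\cC$-affine fiber $BC_p=\und{\Sigma C_p}$, so induction via (2) and (3) gives $BG$ $\cC$-affine. Assembling through the Postnikov tower yields the desired $\cC$-affineness of $\tau_{\le n}A$.

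\textbf{Forward direction.} I argue by contraposition: assume $A$ is $\cC$-affine with $\pi_{n+1}(A,a_0)\ne 0$. Restricting to the component of $a_0$ renders $A$ connected. To reduce to the $n$-connected case, the backward direction shows $\tau_{\le n}A$ is $\cC$-affine, hence the map $\pt\to\tau_{\le n}A$ has $\cC$-affine fibers $\Omega\tau_{\le n}A$ (an $(n-1)$-truncated $\pi$-finite $p$-space, again handled by the backward direction), and so is $\cC$-affine by (3). Pulling back along $A\to\tau_{\le n}A$ yields the $\cC$-affine map $\tau_{\ge n+1}A\to A$; composing with $A\to\pt$ via (2) shows $\tau_{\ge n+1}A$ is $\cC$-affine, and since $\pi_{n+1}(\tau_{\ge n+1}A)=\pi_{n+1}(A)\ne 0$, I replace $A$ by its $n$-connected cover and assume $A$ is $n$-connected with $G:=\pi_{n+1}(A)\ne 0$.

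By \cref{Fp_orientation_height}, $\cC$ has semiadditive height $n$. Iterating \cite[Proposition 3.2.1]{AmbiHeight} on the Postnikov pieces $B^k\pi_kA$ with $k\ge n+1$ (together with closure of $\cC$-amenability under fibrations) shows that $A$ is $\cC$-amenable, i.e.\ $\one^A\simeq\one$. Combined with $\cC$-affineness, $\cC^A\simeq\Mod_{\one^A}(\cC)=\cC$, so every $\cC$-local system on $A$ is constant. I contradict this by base-changing to a faithful algebra $S$ carrying an $(\FF_p,n)$-orientation $\omega\colon\Sigma^n\FF_p\to S^\times$ (supplied by virtual orientability) and picking any surjection $G\onto\FF_p$. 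Letting $M$ be the connective spectrum with $\Omega^\infty M=\Omega A$, the composition
\[
  M\too \tau_{\le n}M=\Sigma^n G\too \Sigma^n\FF_p \oto{\omega} S^\times
\]
is nonzero on $\pi_n$ (the first map is the identity on $\pi_n$, the second is the surjection $G\onto\FF_p$, and the third is a primitive character), hence yields a non-constant $S$-local system on $A$. This contradicts $\Mod_S(\cC)^A\simeq\Mod_S(\cC)$, which follows from base-changing $\cC$-affineness and $\cC$-amenability to $\Mod_S(\cC)$.

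\textbf{Main obstacle.} The forward direction is the crux. Its most delicate step is the reduction to $A$ being $n$-connected, carried out by a pullback construction that leverages the backward direction as a lemma. The final contradiction then hinges on the interplay of $\cC$-affineness and $\cC$-amenability (both automatic in the reduced setting), which together collapse $\cC^A$ to $\cC$; this collapse cannot coexist with the non-constant local system manufactured from a primitive height-$n$ character supplied by virtual $(\FF_p,n)$-orientability.
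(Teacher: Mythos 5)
Your backward direction follows essentially the same route as the paper's: climb the Postnikov tower, using \cref{Affiness_Height}(1) for the pieces above level $n+1$, \cref{orientable_residue_affine_modules} (equivalently \cref{orientation_affineness_for_R_modules}) for the pieces up to level $n$, and closure under extensions. The central-series handling of $\pi_1$ is a fine way of implementing the refinement of the Postnikov tower that the paper describes more tersely, though note you wrote the fibration the wrong way around: a central series $1=G_0\triangleleft\cdots\triangleleft G_r=G$ gives fibrations $BG_{i-1}\to BG_i\to BC_p$, not $BC_p\to BG_i\to BG_{i-1}$ (you can instead choose a central $C_p\le G$ and proceed on $G/C_p$ to get the other form). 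This is harmless.

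Your forward direction is a genuinely different argument. The paper uses that $\cC$ has height $n$ (by \cref{Fp_orientation_height}), so $\cC^A\simeq\cC^{\tau_{\le n+1}A}$, reduces to $(n+1)$-finite $A$, peels off a $B^{n+1}C_p$ fiber, and invokes the already-established non-affineness of $B^{n+1}C_p$ from \cref{Affiness_Height}(2). You instead reduce to the $n$-connected cover, show $A$ is $\cC$-amenable (so affineness would force $\cC^A\simeq\cC$), and derive a contradiction by exhibiting a non-constant local system cooked up from a primitive $\omega\colon\Sigma^n\FF_p\to S^\times$. This is in effect a re-proof, in your special case, of the non-affineness of $B^{n+1}C_p$; the paper's version packages that once and for all in \cref{Affiness_Height}(2). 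Both approaches work, but the paper's is shorter precisely because it reuses that lemma, and it avoids the amenability digression and the descent of affineness/amenability to $\Mod_S(\cC)$.

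There is, however, one genuine gap in your forward direction. You introduce ``$M$, the connective spectrum with $\Omega^\infty M=\Omega A$,'' but there is no such spectrum in general: $\Omega A$ is a grouplike $\EE_1$-space, not an infinite loop space, for an arbitrary $n$-connected $\pi$-finite $p$-space $A$ (and even when it is, you need $A\simeq\Omega^\infty\Sigma M$, i.e.\ a \emph{delooping} of $\Omega A$ matching $A$, which requires $A$ to be an infinite loop space). The step that converts the map of spectra $M\to S^\times$ into a local system on $A$ is therefore unjustified. The fix is to use the truncation map $A\to\tau_{\le n+1}A\simeq B^{n+1}G=\Omega^\infty\Sigma^{n+1}G$, which \emph{is} an infinite loop space since $G=\pi_{n+1}A$ is abelian. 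The composite
\[
A\too B^{n+1}G\too\Omega^\infty\Sigma S^\times\sseq\Omega^\infty\pic(\Mod_S(\cC)),
\]
with the second map the infinite-loop delooping of $\Sigma^n G\to\Sigma^n\FF_p\oto{\omega}S^\times$, is nonzero on $\pi_{n+1}$ (the first map is an isomorphism there since $A$ is $n$-connected, and $\omega$ is nonzero on $\pi_n$ by primitivity), hence is not null as a pointed map into the connected component of $S$; this gives the desired non-constant local system. With that replacement your argument closes, but as written the construction of $M$ does not make sense.
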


\begin{proof}
    By \Cref{affineness_extensions}, the collection of $\cC$-affine spaces is closed under extensions. That is, given  a map of spaces $f\colon B\to B'$, if $B'$ and all the fibers of $f$ are $\cC$-affine, then so is $B$. Combined with \Cref{affine_finite_sets},   
    we are reduced to considering only connected $A$. Now, assuming that $\pi_{n+1}(A,a)= 0$, the Postnikov tower of $A$ can be refined to a tower
    \[
        A= \tau_{\le r} A \too 
        \tau_{\le {r-1}} A \too \dots \too 
        \tau_{\le 0} A \simeq 
        \pt,
    \]
    in which the fiber of each map is of the form $B^k C_p$ for $k \neq n+1$. By \cref{Fp_orientation_height}, we get that $\cC$ is of height $n$ at $p$. Therefore,
    for $k\ge n+2$, the spaces $B^k C_p$ are $\cC$-affine by \Cref{Affiness_Height}(1). For $k \le n$, the spaces $B^k C_p= \und{\Sigma^k C_p}$ are $\cC$-affine by \Cref{orientation_affineness_for_R_modules}. Thus, by \Cref{affineness_extensions} again, we conclude that $A$ is $\cC$-affine. 
    
    Conversely, by \Cref{Fp_orientation_height}, $\cC$ is of height $n$ at $p$ and hence by \cite[Proposition 3.2.3]{AmbiHeight}, we have $\cC^A \simeq \cC^{\tau_{\le n+1}A}$. We can thus assume without loss of generality that $A$ is $(n+1)$-finite. Hence,
    if $\pi_{n+1} A \neq 0$, then by refining the Postnikov tower of $A$ as above, we have a fiber sequence
    \[
        B^{n+1}C_p \too A \too B,
    \]
    with $B$ also $(n+1)$-finite. In particular, $\Omega B$, being $n$-finite, is $\cC$-affine by \cref{p_Affineness}. Thus, if $A$ were $\cC$-affine, by applying \cref{affineness_extensions}(2) to the map $B^{n+1}C_p \to A$, we would deduce that $B^{n+1}C_p$ is $\cC$-affine, contradicting \Cref{Affiness_Height}(2).
\end{proof}

\begin{rem}
\label{rem:non_p_spaces_affine}
    In \Cref{p_Affineness}, if $\cC$ is further assumed to be \textit{$p$-local}, as is often the case (e.g., if $\cC$ is stable and $n\ge1$), then the collection of $\cC$-affine spaces includes the larger class of $\pi$-finite spaces $A$, such that for every $a\colon \pt \to A$,
    \begin{enumerate}
        \item $\pi_{1}(A,a)$ is a $p$-group,
        \item  $\pi_{n+1}(A,a)$ is of order prime to $p$.
    \end{enumerate}
    The proof proceeds by the exact same argument as in the proof of \Cref{p_Affineness}, with the additional ingredient that for every prime $\ell \neq p$, the spaces $B^k C_\ell$ are $\cC$-affine for all $k\ge 2$. Indeed, if $\cC$ is $p$-local, then it is of semiadditive height $0$ at $\ell$, so the said claim follows again from \Cref{Affiness_Height}.
\end{rem}

\subsubsection{Bootstrapping virtual orientability}

By \Cref{Virt_Or_Push}, for every local ring spectrum $\OR$ with residue field $\FF_p$, virtual $(\OR,n)$-orientability implies virtual $(\FF_p,n)$-orientability. Conversely, we have the following bootstrap result:

\begin{prop}\label{Virt_Fp_Bootstrap_Rel}
Let $\cC\in\calg(\Prl^{\sad{\infty}})$ and let $\ORone \to \ORtwo$ be a strict map of local rings (in the sense of \cref{def:Local_Ring}) with residue field $\FF_p$, such that:
    \begin{enumerate}
        \item Its fiber is $\pi$-finite. 
        \item It is surjective on $\pi_0$ and $\pi_n$.
    \end{enumerate}
    Then, $\cC$ is virtually $(\ORone,n)$-orientable if and only if it is virtually $(\ORtwo,n)$-orientable.
\end{prop}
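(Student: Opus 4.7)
The easy direction, that virtual $(\ORone,n)$-orientability implies virtual $(\ORtwo,n)$-orientability, is immediate from \Cref{Virt_Or_Push} applied to the strict map $\ORone\to\ORtwo$. For the converse, my plan is to show that $T := \orcyc{\ORone}{n}$ is faithful in $\cC$, which by \Cref{virtually_oriented_universal_faithful} will yield virtual $(\ORone,n)$-orientability. As a first step I would base-change along the faithful algebra $\orcyc{\ORtwo}{n}$: this reduces us to the case that $\cC$ itself carries an $(\ORtwo,n)$-orientation $\omega\colon \Dual{\ORtwo}{n}\to \one^\times$, so that $\orcyc{\ORtwo}{n}\simeq \one$ and, by \Cref{local_map_universal_tensor},
\[
T \simeq \one \otimes_{\one[\Dual{\ORtwo}{n}]} \one[\Dual{\ORone}{n}],
\]
with left-hand structure map $\varepsilon_\omega$.

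The technical heart is to identify this pushout with the function algebra $\one^{\und{K}}$, where $K := \fib(\ORone\to\ORtwo)$. Brown--Comenetz dualizing the fiber sequence for $K$ yields a cofiber sequence of connective spectra $\Dual{\ORtwo}{n}\to \Dual{\ORone}{n}\to \Dual{K}{n}$, in which $\Dual{K}{n}$ is $[0,n]$-finite by (1). Presenting $\Dual{\ORone}{n}$ as the pushout $0\sqcup_{\Omega\Dual{K}{n}}\Dual{\ORtwo}{n}$, applying the colimit-preserving functor $\one[-]$ and rearranging the iterated pushout gives
\[
T\simeq \one \otimes_{\one[\Omega\Dual{K}{n}]} \one,
\]
with the two augmentations of $\one[\Omega\Dual{K}{n}]$ being the trivial one and the composite $\omega\circ\partial$ with the connecting map $\partial\colon \Omega\Dual{K}{n}\to \Dual{\ORtwo}{n}$. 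Condition (2) enters precisely here: surjectivity on $\pi_0$ and $\pi_n$ amounts to the vanishing of certain components of $\partial$ in low degrees (in particular $\pi_n$-surjectivity is equivalent to $\pi_1\Dual{K}{n}\to \pi_0\Dual{\ORtwo}{n}$ being zero), which is just what is needed to trivialize the class $\omega\circ\partial$ in the relative tensor product. Once the two augmentations coincide, the bar-type pushout becomes $\one[\Sigma\Omega\Dual{K}{n}]\simeq \one[\Dual{K}{n}]$, and the Fourier isomorphism associated to $\omega$ at the $[0,n]$-finite module $\Dual{K}{n}$ identifies it with $\one^{\und{K}}$.

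Granting the identification $T\simeq \one^{\und{K}}$, faithfulness is immediate. Virtual $(\ORtwo,n)$-orientability forces $\cC$ to be of semiadditive height $n$ at $p$, via \Cref{Virt_Or_Push} and \Cref{Fp_orientation_height}, so by \Cref{p_Affineness} the (suitable truncation of the) $\pi$-finite $p$-space $\und{K}$ is $\cC$-affine, indistinguishable from $\und{K}$ by height considerations. Since $\und{K}$ is non-empty (it contains the basepoint $0$), the functor $(-)^{\und{K}}\simeq \one^{\und{K}}\otimes(-)$ is conservative on $\cC$, whence $T$ is faithful.

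The principal obstacle, and the step where the hypotheses really bite, is the identification $T\simeq \one^{\und{K}}$. The Fourier isomorphism applies directly only to $[0,n]$-finite modules, whereas $\Dual{\ORone}{n}$ and $\Dual{\ORtwo}{n}$ are generally not $[0,n]$-finite; the content of the argument is that the relative tensor product defining $T$ only ``sees'' the $[0,n]$-finite difference $\Dual{K}{n}$. Hypothesis (1) provides that difference with the right finiteness to let Fourier apply, while hypothesis (2) is what ensures the obstruction $\omega\circ\partial$ does not prevent the simplification---making the pushout collapse, after the bar maneuver, to the function algebra on the underlying space of the fiber.
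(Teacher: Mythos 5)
The easy direction and the base-change reduction (replacing $\cC$ by $\Mod_{\orcyc{\ORtwo}{n}}(\cC)$ so that $\cC$ may be assumed $(\ORtwo,n)$-oriented, then invoking \Cref{local_map_universal_tensor}) are in the right spirit, and broadly match the paper's reduction to showing that $\one[\Dual{\ORtwo}{n}]\to\one[\Dual{\ORone}{n}]$ is faithful. The remainder of your argument, however, has a real gap: the bar-maneuver identification $T\simeq \one^{\und{K}}$ is false, and it fails for several independent reasons.

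First, the pushout presentation $\Dual{\ORone}{n}\simeq 0\sqcup_{\Omega\Dual{K}{n}}\Dual{\ORtwo}{n}$ does not hold in $\Sp^\cn$, since this category is not stable: the loop functor $\Omega=\tau_{\ge0}\Sigma^{-1}$ truncates. Take $n=0$, $\ORone=\ZZ/p^2\to\ORtwo=\ZZ/p$, so $K=\ZZ/p$: then $\Dual{\ORtwo}{0}=\ZZ/p$, $\Dual{\ORone}{0}=\ZZ/p^2$, $\Dual{K}{0}=\ZZ/p$, and $\Omega\Dual{K}{0}=0$, whence $\cofib(\Omega\Dual{K}{0}\to\Dual{\ORtwo}{0})=\ZZ/p\ne\ZZ/p^2$. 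Second, even granting that presentation, the relative tensor product $\one\otimes_{\one[\Omega\Dual{K}{n}]}\one$ has two \emph{a priori} distinct augmentations (trivial, and $\varepsilon_{\omega\circ\partial}$); surjectivity of $\ORone\to\ORtwo$ on $\pi_n$ only forces one graded piece of $\partial$ to vanish, not the map $\omega\circ\partial$. In the example above (with $\cC=\Mod_\CC$), $\orcyc{\ZZ/p^2}{0}\simeq\CC^{p(p-1)}$ while $\one^{\und K}\simeq\CC^p$, so they genuinely differ. Third, even if both augmentations were trivial, $\Sigma\Omega\Dual{K}{n}$ differs from $\Dual{K}{n}$ whenever $\pi_0\Dual{K}{n}=(\pi_nK)^*\ne0$, which is permitted by the hypotheses (e.g.\ $\tau_{\le1}\Sph_{(2)}\to\ZZ_{(2)}$ at $n=1$ has $\pi_1K=\ZZ/2$).

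The paper instead proves faithfulness of $\one[\Dual{\ORtwo}{n}]\to\one[\Dual{\ORone}{n}]$ by a d\'evissage: after handling the $\pi_0$ level by a free-module argument, it refines the relative Postnikov tower of the dualized map into a finite tower whose successive fibers are $\Sigma^dC_p$ with $0\le d\le n-1$, reducing to faithfulness of the augmentation $\one[\Sigma^dC_p]\to\one$. This single map \emph{is} identified by the Fourier transform with $\one^{B^{n-d}C_p}\xrightarrow{\ev_e}\one$, and faithfulness follows from the $\cC$-affineness of $B^{n-d}C_p$ together with the fact that $n-d\ge1$ makes it connected, so $e^*\colon\cC^{B^{n-d}C_p}\to\cC$ is conservative. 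The hypotheses on $\pi_0$ and $\pi_n$ enter precisely to ensure that the relative Postnikov fibers stay in the range $[0,n-1]$; no claim that a boundary class vanishes is needed. Your argument tries to collapse the entire d\'evissage into a single Fourier application, but the intermediate objects are no longer $[0,n]$-finite modules of the required shape, and the bar manipulation that was meant to fix this does not work in the unstable category $\Sp^\cn$.
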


\begin{proof} 
    By \Cref{Virt_Or_Push}, if $\cC$ is virtually $(\ORone,n)$-orientable, then it is virtually $(\ORtwo,n)$-orientable, so we only need to prove the converse. 
    By \Cref{virtually_oriented_universal_faithful}, we need to show that $\orcyc{\ORone}{n}$ is faithful under the assumption that $\orcyc{\ORtwo}{n}$ is faithful. Using \Cref{Virt_Or_Push} for the map $\ORtwo\to \FF_p$, we deduce that $\cC$ is virtually $(\FF_p,n)$-orientable. Moreover, since $\orcyc{\FF_p}{n}$ is faithful (again, by \Cref{virtually_oriented_universal_faithful}), it suffices to show that $\orcyc{\ORone}{n} \otimes \orcyc{\FF_p}{n}$ is faithful, so we can replace $\cC$ with the $(\FF_p,n)$-orientable $\infty$-category of $\orcyc{\FF_p}{n}$-modules in $\cC$. In other words, we can assume without loss of generality that $\cC$ itself is $(\FF_p,n)$-orientable. 
    
    We shall say that a map $A \to B$ in $\calg(\cC)$ is faithful, if the functor $B\otimes_A - \colon \Mod_A \to \Mod_B$ is conservative. In particular, $A$ is faithful if the unit map $\one \to A$ is faithful. Since faithful maps are clearly closed under composition, and $\orcyc{\ORtwo}{n}$ is faithful by assumption, it suffices to show that the map $\orcyc{\ORtwo}{n} \to \orcyc{\ORone}{n}$ is faithful.
    By \Cref{local_map_universal_tensor}, the map $\ORone \to \ORtwo$ induces the following pushout square in $\calg(\cC)$:
    \[\begin{tikzcd}
    	{\one[\Dual{\ORtwo}{n}]} && {\one[\Dual{\ORone}{n}]} \\
    	{\orcyc{\ORtwo}{n} } && {\orcyc{\ORone}{n}.}
        \arrow[from=1-1, to=2-1]
        \arrow[from=1-1, to=1-3]
        \arrow[from=1-3, to=2-3]
        \arrow[from=2-1, to=2-3]
    \end{tikzcd}\]
    It is easy to see that faithful maps are also closed under cobase-change, so it actually suffices to show that the map $\one[\Dual{\ORtwo}{n}] \to \one[\Dual{\ORone}{n}]$ is faithful. By \Cref{rem:n_trunc}, we may assume without loss of generality that both $\ORone$ and $\ORtwo$ are $n$-truncated, in addition to being connective. The assumption that $\ORone\to \ORtwo$ is surjective on $\pi_0$ and $\pi_n$ implies that $\Dual{\ORtwo}{n} \to \Dual{\ORone}{n}$ is a map of connective $n$-truncated spectra, which is injective on $\pi_n$ and $\pi_0$. 
    
    We shall prove, more generally, that for every map of $n$-truncated connective spectra $f\colon M \to N$, such that,
    \begin{enumerate}
        \item the fiber of $f$ is $p$-local and $\pi$-finite, and
        \item $f$ is injective on $\pi_0$ and $\pi_n$,
    \end{enumerate}
    the induced map $\one[M] \to \one[N]$ is faithful. The first stage of the relative Postnikov tower factors $f$ as $M\to N_0 \to N$, where $M\to N_0$ is a surjection on $\pi_0$ and $N_0 \to N$ is an injection on $\pi_0$ and an isomorphism on $\pi_k$ for $k\ge1$. 
    It follows that $\one[N]$ is a free $\one[N_0]$-module and hence the map $\one[N_0] \to \one[N]$ is faithful. It thus remains to show that the map $\one[M] \to \one[N_0]$ is faithful. Since $f$ is an injection on $\pi_0$, it follows that $M\to N_0$ is in fact an \textit{isomorphism} on $\pi_0$. In other words, we can assume without loss of generality that $f$ itself is an isomorphism on $\pi_0$. Consequently, the fiber of $f$ is connective. Moreover, since $M$ and $N$ are $n$-truncated and $f$ is injective on $\pi_n$, the fiber of $f$ is also $(n-1)$-truncated. Therefore, by refining the Postnikov tower of $f$ , we get a tower
    \[
        M= M_0 \too M_1 \too \dots \too M_k = N
    \]
    of connective $n$-truncated spectra, where the fiber of each map $M_i \to M_{i+1}$ is isomorphic to $\Sigma^d C_p$ for some $0\le d \le n-1$. Since faithful maps are closed under composition, we can assume without loss of generality that the fiber of $f$ itself is of this form. Now, the exact sequence
    \[
        \Sigma^d C_p \too M \too N
    \]
    induces a a pushout square in $\calg(\cC)$,
    \[\begin{tikzcd}
    	{\one[\Sigma^d C_p]} && {\one} \\
    	{\one[M]} && {\one[N].}
        \arrow[from=1-1, to=2-1]
        \arrow["\varepsilon",from=1-1, to=1-3]
        \arrow[from=1-3, to=2-3]
        \arrow[from=2-1, to=2-3]
    \end{tikzcd}\]
    By the same consideration as above, it would suffice to show that the map 
    \(
        \one[\Sigma^d C_p] \oto{\:\:\varepsilon\:\:} 
        \one
    \)
    is faithful. Applying the Fourier transform associated with any $\FF_p$-orientation of $\cC$, this map identifies, by \Cref{Four_Aug}, with  
    \[
        \one^{B^{n-d} C_p} \simeq 
        \one^{\und{\Sigma^{n-d} C_p}} \oto{\:\ev_e\:} 
        \one
    \]
    for $e\colon \pt \to B^{n-d}C_p$ the base point. Finally, by \Cref{orientation_affineness_for_R_modules}, the space $B^{n-d}C_p$ is $\cC$-affine and so the functor
    \[
        \one \otimes_{\one^{B^{n-d}C_p}} (-) \colon 
        \Mod_{\one^{B^{n-d}C_p}}(\cC) \too
        \cC
    \]  
    identifies, by \Cref{Ev_Sharp}, with
    \[
        e^* \colon \cC^{B^{n-d}C_p} \too \cC,
    \]
    which is conservative because $n-d\ge1$ and hence $B^{n-d}C_p$ is connected. 
\end{proof}

In particular, virtual $(\FF_p,n)$-orientability bootstraps to $(\OR,n)$-orientability for a large class of ring spectra $\OR$.

\begin{cor}\label{Virt_Fp_Bootstrap}
    If $\cC\in\calg(\Prl^{\sad{\infty}})$ is virtually $(\FF_p,n)$-orientable, then it is virtually $(\OR,n)$-orientable for every $\pi$-finite local ring spectrum $\OR$ with residue field $\FF_p$ (e.g., $\OR=\ZZ/p^r$).
\end{cor}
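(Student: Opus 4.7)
The plan is to reduce the statement immediately to \Cref{Virt_Fp_Bootstrap_Rel} by applying it to the quotient map $f\colon \OR \to \FF_p$ onto the residue field. Since $\cC$ is already assumed to be virtually $(\FF_p,n)$-orientable, the ``if and only if'' of that proposition will yield virtual $(\OR,n)$-orientability once its hypotheses are verified for $f$.

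The first thing to check is that $f$ is a strict map of local ring spectra with residue field $\FF_p$ in the sense of \Cref{def:Local_Ring}. This is essentially tautological: by the assumption that $\OR$ is local with residue field $\FF_p$, the map $\pi_0(\OR) \to \FF_p$ is the canonical quotient to the residue field, so $f$ induces an isomorphism on residue fields.

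Next, I would verify the two technical conditions. For (1), the fiber of $f$ is the fiber of a map between two $\pi$-finite spectra (using that $\OR$ is $\pi$-finite by hypothesis and $\FF_p$ is so trivially), hence $\pi$-finite. For (2), the map on $\pi_0$ is by construction the surjection onto the residue field, and for every $k \ge 1$ the target $\pi_k(\FF_p) = 0$, so surjectivity on $\pi_n$ is automatic when $n \ge 1$ and coincides with the $\pi_0$ condition when $n = 0$.

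No obstacle is expected: with the hypotheses of \Cref{Virt_Fp_Bootstrap_Rel} confirmed for $\ORone = \OR$ and $\ORtwo = \FF_p$, that proposition directly supplies the equivalence between virtual $(\OR,n)$-orientability and virtual $(\FF_p,n)$-orientability of $\cC$, and the corollary follows. The case $\OR = \ZZ/p^r$ highlighted in the statement is then simply an instance, since $\ZZ/p^r$ is discrete (hence $\pi$-finite and connective), local with residue field $\FF_p$.
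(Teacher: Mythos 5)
Your proof is correct and takes exactly the same route as the paper, which in fact gives the one-line proof ``Apply \Cref{Virt_Fp_Bootstrap_Rel} to the map $\OR \to \FF_p$.'' Your more detailed verification of the hypotheses (strictness, $\pi$-finiteness of the fiber, surjectivity on $\pi_0$ and $\pi_n$) is accurate and simply fills in the details the paper leaves implicit.
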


\begin{proof}
    Apply \Cref{Virt_Fp_Bootstrap_Rel} to the map $\OR \to \FF_p$.
\end{proof}

\subsection{\texorpdfstring{$\ZZ/p^r$}{Z/pr}-Orientations and higher roots of unity}\label{ssec:zpror+higherroots}

\subsubsection{Higher roots of unity}

We now turn to the case $\OR= \ZZ/p^r$ for some $r\in\NN$. Since the shifted Brown--Comenetz dual of $\ZZ/p^r$ is given by 
\[
    \Dual{{(\ZZ/p^r)}}{n} \:\simeq\: \Sigma^n\ZZ/p^r
    \qin \Sp^\cn,
\]
a $\ZZ/p^r$-pre-orientation of height $n$ of $\cC$ is the same thing as a $p^r$-th root of unity of height $n$ in $\one_\cC$, in the sense of \cite[Definition 4.2]{carmeli2021chromatic} (see \Cref{ex:rootsofunity}). To compare the theory developed in this paper with the one in \cite{carmeli2021chromatic}, we shall further assume that $\cC$ is \textit{stable}.
At height $n=0$, a $\ZZ/p^r$-pre-orientation is an \textit{orientation} if and only if the corresponding root of unity is \textit{primitive}. At higher heights the situation is in general more subtle, but this does continue to hold under the assumption that $\cC$ is virtually $(\FF_p,n)$-orientable. More precisely, we have the following:

\begin{prop} \label{Orientation_Primitive}
    Let $\cC \in \calg(\Prl^{\sad{\infty}}_\st)$ and let 
    $\omega \colon \Sigma^n\ZZ/p^r \to \one^\times$. If $\omega$ is an orientation, then it is a primitive $p^r$-th root of unity. The converse holds if $\cC$ is virtually $(\FF_p,n)$-orientable.
\end{prop}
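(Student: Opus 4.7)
The forward direction is essentially by definition. Under the identification $\POr{\ZZ/p^r}{\cC}{n} \simeq \roots{p^r}{n}$ from \Cref{ex:rootsofunity}, primitivity of $\omega$ in the sense of \cite[Definition 4.2]{carmeli2021chromatic} translates to the statement that the Fourier component $\Four_\omega\colon \one[\ZZ/p^r] \to \one^{B^n\ZZ/p^r}$ is an isomorphism, i.e., that $\ZZ/p^r \in \Modfin{\ZZ/p^r}{n}$ is $\omega$-oriented in the sense of \Cref{def:orientation}. If $\omega$ is an orientation of $\cC$, this holds as a special case.

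For the converse, assume $\omega$ is primitive and $\cC$ is virtually $(\FF_p,n)$-orientable. Since $\ZZ/p^r$ is local with residue field $\FF_p$, \Cref{orientable_residue_affine_modules} ensures that $\und{M}$ is $\cC$-affine for every $M \in \Modfin{\ZZ/p^r}{n}$. Consequently, by \Cref{Orientability_Ext_Cof} and \Cref{Orientability_Ext_Fib}, the class $\cS \subseteq \Modfin{\ZZ/p^r}{n}$ of $\omega$-oriented modules is closed under exact triangles on either side, and by \Cref{Orientability_Sum} it is also closed under finite direct sums. The task is thereby reduced to generating $\Modfin{\ZZ/p^r}{n}$ from the single module $\ZZ/p^r$ using these closure operations.

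Starting from $\ZZ/p^r \in \cS$, the cofiber sequence $\ZZ/p^r \xrightarrow{\:p\:} \ZZ/p^r \to \FF_p$ places $\FF_p$ in $\cS$, and rotating (or shifting and reapplying the argument) yields $\Sigma^k\ZZ/p^r, \Sigma^k\FF_p \in \cS$ for all $0 \le k \le n$. An arbitrary $M \in \Modfin{\ZZ/p^r}{n}$ is built from these via its Postnikov filtration together with the decomposition of each finite $\pi_k M$ as an iterated extension of copies of $\FF_p$, so iteration gives $M \in \cS$. The main point requiring care in this plan is the translation between the primitivity condition of \cite{carmeli2021chromatic} and the condition that $\ZZ/p^r$ is $\omega$-oriented; once that identification is in hand, both directions reduce to the closure machinery assembled in Section~\ref{ssec:orient}.
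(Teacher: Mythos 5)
Your proposal rests on the claim that primitivity of $\omega$ in the sense of \cite[Definition 4.2]{carmeli2021chromatic} is ``by definition'' the statement that $\Four_\omega$ is an isomorphism at $\ZZ/p^r$, i.e., that $\ZZ/p^r$ is $\omega$-oriented. This is not a definitional translation but is essentially the substance of the proposition, and the implication ``primitive $\Rightarrow$ $\ZZ/p^r$ is $\omega$-oriented'' is in fact false without the virtual $(\FF_p,n)$-orientability hypothesis: primitivity in \cite{carmeli2021chromatic} is the condition $\one[\Sigma^n C_{p^{r-1}}]\otimes_{\one[\Sigma^n C_{p^r}]}\one\simeq 0$ (cf.\ the proof of \Cref{Prim_Roots_Functor}), and \Cref{war:_Allen} records examples where a primitive root of unity fails to be an orientation, precisely because the cyclotomic extension fails to be Galois. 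The paper's argument for the forward direction is genuinely different: it shows directly that if $\omega_S^{p^{r-1}}=1$ then $S=0$, using the scaling identity of \Cref{Por_Scaling} and an augmentation factorization, not any identification with $\ZZ/p^r$-orientedness. The converse is proved by first passing to a faithful $(\ZZ/p^r,n)$-orientable extension, choosing a known orientation $\zeta$, and then performing an idempotent decomposition of $\one$ comparing $\omega$ with $\zeta$; on each factor $\omega$ is either a unit multiple of $\zeta$ (hence an orientation) or the factor vanishes by primitivity.

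Even if the biconditional were granted, your bootstrapping plan from the single module $\ZZ/p^r$ does not close. First, $\ZZ/p^r\xrightarrow{\,p\,}\ZZ/p^r\to\FF_p$ is not a cofiber sequence: the cofiber of multiplication by $p$ on $\ZZ/p^r$ (in $\Sp$, or in $\Mod_{\ZZ/p^r}$) has $\pi_0\cong\pi_1\cong\FF_p$, not just $\pi_0$. Second, the closure lemmas \Cref{Orientability_Ext_Cof} and \Cref{Orientability_Ext_Fib} let you pass orientedness between \emph{two} terms of an exact sequence only when the remaining (outer) term is already known to be oriented. Starting from $\ZZ/p^r\in\cS$ alone, you can produce the derived quotients $\ZZ/p^r/p^k\in\cS$ (via the sequences $\ZZ/p^r\xrightarrow{p^k}\ZZ/p^r\to\ZZ/p^r/p^k$), but the natural sequence $\Sigma\FF_p\to\ZZ/p^r/p\to\FF_p$ requires one of the outer terms $\Sigma\FF_p$ or $\FF_p$ to already lie in $\cS$ before you can conclude anything about the other — which is circular. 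The paper avoids this entirely by not trying to generate $\Modfin{\ZZ/p^r}{n}$ from a single module; instead it reduces (by faithful descent) to the case where $\cC$ is already $(\ZZ/p^r,n)$-orientable and then compares $\omega$ to the existing orientation componentwise. To repair your approach you would at minimum need to independently establish that $\FF_p$ is $\omega$-oriented (say, by a Galois/idempotent argument as in \Cref{Fp_Virt_Or_Char}), at which point you would have reconstructed most of the paper's argument.
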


\begin{proof}
    Assume that $\omega$ is an orientation. First, by \Cref{Fp_orientation_height}, $\cC$ is of height $n$ at $p$. Second, it follows by \Cref{push_orientations_functor}, that for every commutative algebra
    $S \in \calg(\cC)$, the composition 
    $\Sigma^n\ZZ/p^r \oto{\omega } \one^\times \to S^\times$, which we denote by $\omega_S$, is an orientation on $S$. Now, if 
    \[
        \omega_S^{p^{r-1}}= 1 \qin \roots[S]{p^r}{n},
    \]
    then we have a commutative triangle in $\calg(\cC)$,
    \[\begin{tikzcd}
    	{S[C_{p^r}]} && {S^{B^n C_{p^r}}} \\
    	& S.
        \arrow["{\Four_{\omega_S^{p^{r-1}}}}", from=1-1, to=1-3]
        \arrow["\varepsilon_0"', from=1-1, to=2-2]
        \arrow[from=2-2, to=1-3]
    \end{tikzcd}\]
    But then, by \Cref{Por_Scaling}, the map $S[C_{p^r}] \oto{p^{r-1}} S[C_{p^r}]$ factors through the augmentation $S[C_{p^r}] \oto{\;\varepsilon_0\;} S$, which implies $S=0$. 
    
    Conversely, assume that $\omega$ is primitive and that $\cC$ is virtually $(\FF_p,n)$-orientable. By \Cref{Virt_Fp_Bootstrap}, $\cC$ is virtually $(\ZZ/p^r,n)$-orientable and since we can check that $\omega$ is an orientation after a faithful extension of scalars, we can in fact assume without loss of generality that $\cC$ is $(\ZZ/p^r,n)$-orientable. Choose some $\zeta \in \Or{\ZZ/p^r}{\cC}{n}$ and consider the composition
    \[
        \one^{\ZZ/{p^r}} \xrightarrow{\:\Four_\zeta^{-1}\:}
        \one[\Sigma^n \ZZ/{p^r}] \oto{\;\:\varepsilon_\omega \;\:}
        \one.
    \]
    This map is represented by a sequence of orthogonal idempotents $e_0,\dots,e_{p^r-1} \in \pi_0(\one)$, with the property $e_0 + \dots + e_{p^r-1}= 1$. Thus, we get a decomposition
    \[
        \one \: \simeq \: 
        \one[e_0^{-1}] \times \dots \times \one[e_{p^r-1}^{-1}] 
        \qin \calg(\cC).
    \]
    Since the subfunctor 
    \[
        \Or{\ZZ/p^r}{-;\cC}{n} \sseq \POr{\ZZ/p^r}{-;\cC}{n}
    \] 
    is co-representable, it preserves products. It therefore suffices to show that each of the compositions
    \[
        \one[\Sigma^n \ZZ/p^r] \oto{\:\varepsilon_\omega\:} 
        \one \oto{\:\pi_j \:} \one[e_j^{-1}],
    \]
    for $j=0,\dots ,p^r-1$, corresponds to an orientation. We observe that by \Cref{Por_Scaling}, for each such $j$, we have 
    \[
        \pi_j(\omega) \:=\: 
        \pi_j(\zeta^j) 
        \qin \roots[{\one[e_j^{-1}]}]{p^r}{n}. 
    \]
    If $j$ is divisible by $p$, then 
    \[
        \pi_j(\omega)^{p^{r-1}}= 
        \pi_j(\omega^{p^{r-1}})= 
        \pi_j(\zeta^{j\cdot p^{r-1}})= 
        \pi_j(1)= 1.
    \]
    By the primitivity of $j$, we get that $\one[e_j^{-1}]= 0$ and hence $\pi_j(\omega)$ is trivially an orientation. For $j$ that is not divisible by $p$, the induced map 
    $\one[\Sigma^n \ZZ/p^r] \oto{\: j \:} \one[\Sigma^n \ZZ/p^r]$ 
    is an isomorphism and hence $\zeta^j$ is an orientation. Consequently, $\pi_j(\omega)= \pi_j(\zeta^j)$ is an orientation as well. 
\end{proof}

\subsubsection{Higher cyclotomic extensions}

In \Cref{universal_oriented_ring}, we have shown that $\ZZ/p^r$-orientations of height $n$ are classified by a certain commutative algebra called $\orcyc{\ZZ/p^r}{n}$, which was constructed in a rather indirect manner. In contrast, primitive $p^r$-th roots of unity of height $n$ are classified by the higher cyclotomic extension $\orcyc{p^r}{n}$, which was constructed in \cite[Definition 4.7]{carmeli2021chromatic} by a fairly explicit formula; namely, by splitting a certain idempotent in the group algebra $\one[\Sigma^n \ZZ/p^r]$. As a consequence of \Cref{Orientation_Primitive} these two algebras coincide under the assumption of virtual $(\FF_p,n)$-orientability. 

\begin{cor}\label{Cyclo_Univ_Or}
    Let $\cC \in \calg(\Prl^{\sad{\infty}}_\st)$ be virtually $(\FF_p,n)$-orientable. For every $r\in\NN$,
    \[
\orcyc{\ZZ/p^r}{n} \:\simeq\: \orcyc{p^r}{n} 
        \qin \calg(\cC).  
    \]
\end{cor}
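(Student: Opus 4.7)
The strategy is to show that both $\orcyc{\ZZ/p^r}{n}$ and $\orcyc{p^r}{n}$ corepresent the same subfunctor of $\POr{\ZZ/p^r}{-;\cC}{n} \simeq \roots[(-)]{p^r}{n}$ on $\calg(\cC)$, and then conclude by the Yoneda lemma. By construction (\Cref{universal_oriented_ring}), $\orcyc{\ZZ/p^r}{n}$ corepresents the subfunctor $S \mapsto \Or{\ZZ/p^r}{S;\cC}{n}$ of $\ZZ/p^r$-orientations. On the other hand, by \cite[Definition 4.7]{carmeli2021chromatic}, $\orcyc{p^r}{n}$ corepresents the subfunctor $S \mapsto \roots[S]{p^r}{n,\prim}$ of primitive $p^r$-th roots of unity of height $n$. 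Both are subfunctors of the same presheaf.

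To show the two subfunctors agree pointwise, I would fix $S \in \calg(\cC)$ and verify that under the identification of $\ZZ/p^r$-pre-orientations of height $n$ in $S$ with $p^r$-th roots of unity of height $n$ in $S$ (\Cref{ex:rootsofunity}), the oriented ones correspond exactly to the primitive ones. This is precisely the content of \Cref{Orientation_Primitive} applied inside the $\infty$-category $\Mod_S(\cC)$. So the main subtlety is to verify that the hypothesis of \Cref{Orientation_Primitive}, namely virtual $(\FF_p,n)$-orientability, is inherited from $\cC$ by $\Mod_S(\cC)$ for every $S \in \calg(\cC)$.

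For this, let $T \in \calg(\cC)$ be faithful and $(\FF_p,n)$-orientable. Then $S \otimes T \in \calg_S(\cC)$ is faithful over $S$, since extension of scalars along a faithful algebra is faithful. Moreover, by \Cref{push_orientations_functor} applied to the symmetric monoidal functor $S \otimes - \colon \Mod_T(\cC) \to \Mod_{S \otimes T}(\cC)$ (which is colimit preserving and in particular preserves all $n$-finite limits by higher semiadditivity), the $(\FF_p,n)$-orientation of $T$ pushes forward to an $(\FF_p,n)$-orientation of $S \otimes T$ in $\Mod_S(\cC)$. Hence $\Mod_S(\cC)$ is itself virtually $(\FF_p,n)$-orientable, and \Cref{Orientation_Primitive} applies to yield the equality of subfunctors at $S$.

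Assembling these identifications naturally in $S$, we obtain an isomorphism of the corepresentable functors $\Map_{\calg(\cC)}(\orcyc{\ZZ/p^r}{n},-) \simeq \Map_{\calg(\cC)}(\orcyc{p^r}{n},-)$, and the Yoneda lemma produces the desired isomorphism $\orcyc{\ZZ/p^r}{n} \simeq \orcyc{p^r}{n}$ in $\calg(\cC)$. There is no substantial obstacle: the whole argument reduces to verifying that the hypotheses of \Cref{Orientation_Primitive} transfer from $\cC$ to each $\Mod_S(\cC)$, which is an immediate consequence of the closure of virtual orientability under base change.
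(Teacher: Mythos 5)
Your proof is correct and follows essentially the same approach as the paper's, which invokes \Cref{Orientation_Primitive} to show both objects corepresent the same subfunctor of $\POr{\ZZ/p^r}{-;\cC}{n}$ and then applies Yoneda. The extra step you spell out — that applying \Cref{Orientation_Primitive} at an arbitrary $S \in \calg(\cC)$ amounts to applying it in $\Mod_S(\cC)$, and that virtual $(\FF_p,n)$-orientability is stable under base change to $\Mod_S(\cC)$ — is implicit in the paper's terser argument; making it explicit is harmless and confirms the identification of subfunctors is natural in $S$.
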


\begin{proof}
    By \Cref{Orientation_Primitive}, both objects co-represent the same subfunctor of 
    \[
        \POr{\OR}{-;\cC}{n} \colon \calg(\cC) \too \Spc.
    \]
    Hence, the claim follows from the Yoneda lemma.
\end{proof}

Using higher cyclotomic extensions and the comparison between higher roots of unity and orientations, we can also show that nil-conservative functors detect virtual $(\FF_p,n)$-orientability. We first need a lemma regarding the functoriality of primitive roots of unity.

\begin{lem}\label{Prim_Roots_Functor}
    Let $F\colon \cC \to \cD$ in $\calg(\Prl^{\sad{\infty}}_\st)$. For every $R \in \calg(\cC)$ of height $n$ at $p$, the induced map 
    $\roots[R]{p^r}{n} \to \roots[F(R)]{p^r}{n}$
    takes primitive roots to primitive roots. 
\end{lem}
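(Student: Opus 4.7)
The plan is to use the characterization of primitive higher roots of unity in terms of the higher cyclotomic extension $\orcyc{p^r}{n}$ developed in \cite{carmeli2021chromatic}. Recall that $\orcyc{p^r}{n}$ is the universal commutative algebra in $\cC$ carrying a primitive $p^r$-th root of unity of height $n$, constructed as a symmetric monoidal localization (equivalently, an idempotent algebra extension) of the group algebra $\one_\cC[\Sigma^n\ZZ/p^r]$ obtained by splitting a specific idempotent $e_n^{p^r}\in \pi_0(\one_\cC[\Sigma^n\ZZ/p^r])$ built from higher semiadditive integration data on $\Sigma^n\ZZ/p^r$. Consequently, for $R\in\calg(\cC)$ of height $n$ at $p$, a root of unity $\omega\in \roots[R]{p^r}{n}$ is primitive precisely when its classifying algebra map $\varepsilon_\omega\colon \one_\cC[\Sigma^n\ZZ/p^r]\to R$ sends $e_n^{p^r}$ to $1\in \pi_0(R)$, or equivalently factors through the localization $\one_\cC[\Sigma^n\ZZ/p^r]\to \orcyc{p^r}{n}$.

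The main step is then to show that $F$ preserves the cyclotomic extension, in the sense that the canonical comparison gives an equivalence
\[
F(\orcyc[\one_\cC]{p^r}{n}) \iso \orcyc[\one_\cD]{p^r}{n}
\qin \calg(\cD).
\]
Unwinding the construction, this reduces to verifying that under the canonical identification $F(\one_\cC[\Sigma^n\ZZ/p^r])\simeq \one_\cD[\Sigma^n\ZZ/p^r]$, we have $F(e_n^{p^r}) = e_n^{p^r}$ in $\pi_0(\one_\cD[\Sigma^n\ZZ/p^r])$. This holds because the idempotent is defined purely in terms of the symmetric monoidal structure and higher semiadditive integration along the $\pi$-finite space $\Sigma^n\ZZ/p^r$, both of which are preserved by any functor in $\calg(\Prl^{\sad{\infty}}_\st)$. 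As splitting idempotents is a (filtered) colimit in a stable $\infty$-category, and $F$ preserves such colimits and the tensor product, the desired equivalence follows.

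Given this, the conclusion is immediate. A primitive $\omega\in\roots[R]{p^r}{n}$ yields a factorization $\one_\cC[\Sigma^n\ZZ/p^r]\to \orcyc[\one_\cC]{p^r}{n}\to R$; applying $F$ and composing with the identification above produces a factorization $\one_\cD[\Sigma^n\ZZ/p^r]\to \orcyc[\one_\cD]{p^r}{n}\to F(R)$ of the classifying map for $F(\omega) = F^\times\circ\omega$, so $F(\omega)$ is primitive in $F(R)$. The main obstacle is the naturality statement for $\orcyc{p^r}{n}$: in contrast to \Cref{Orcyc_Functorial}, where functoriality required a conservative right adjoint, here we must leverage the explicit, intrinsically higher-semiadditive construction of the cyclotomic idempotent from \cite{carmeli2021chromatic}, which transports along any $F\in\calg(\Prl^{\sad{\infty}}_\st)$ without further hypotheses.
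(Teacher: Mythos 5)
Your proposal takes a genuinely different route from the paper's proof, and in doing so introduces complications that the paper avoids. The paper uses the direct characterization from \cite{carmeli2021chromatic} that a root of unity $\omega$ of $R$ is primitive if and only if the pushout
\[
\one_\cC[\Sigma^n C_{p^{r-1}}] \otimes_{\one_\cC[\Sigma^n C_{p^r}]} R
\]
vanishes (where the left map comes from the surjection $\ZZ/p^r \twoheadrightarrow \ZZ/p^{r-1}$ and the right map is $\varepsilon_\omega$). Since $F$ is symmetric monoidal and colimit preserving, it carries this pushout to the analogous one for $F(R)$, and it sends zero to zero. That is all that is needed for the one-directional conclusion — no height hypothesis on $\cC$, $\cD$, or $F(R)$, and no transport of idempotents.

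Your route, by contrast, goes through the cyclotomic idempotent $e^{p^r}_n$ and the identification $F(\orcyc[\one_\cC]{p^r}{n}) \simeq \orcyc[\one_\cD]{p^r}{n}$. Two issues arise. First, the idempotent from \cite{carmeli2021chromatic} is constructed from higher semiadditive integration data \emph{in a category of height $n$}: it is well-defined (and has the expected meaning) in $\one_\cC[\Sigma^n\ZZ/p^r]$ only after assuming $\cC$ has height $n$ at $p$, and similarly for $\cD$. But the hypotheses only constrain the height of $R$; neither $\cC$, $\cD$, nor $F(R)$ is assumed to have height $n$, so the objects $\orcyc{p^r}{n}$ you manipulate may not be defined on both sides. (The paper is careful to form $\orcyc{p^r}{n}$ only after establishing the relevant height, as in the proof of \Cref{nil_cons_virt_F_p}.) Second, even granting heights, you assert without proof that $F(e^{p^r}_n) = e^{p^r}_n$. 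This is plausible — the cardinalities entering the construction are indeed natural under colimit-preserving symmetric monoidal functors between higher semiadditive categories — but it is a nontrivial claim about the construction in \cite{carmeli2021chromatic}, and the paper itself only establishes functoriality of $\orcyc{\OR}{n}$ under the stronger hypothesis of a conservative right adjoint (\Cref{Orcyc_Functorial}). Your proof plan concedes this point but does not actually close it. The pushout-vanishing argument is both shorter and bypasses both obstacles; you should use that characterization of primitivity rather than the idempotent one here.
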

\begin{proof}
    Using the adjunction 
    \[
        \one_\cC[-] \colon \Sp^\cn \adj \calg(\cC) \colon (-)^\times,
    \]
    a root $\omega\colon \one_\cC[\Sigma^n C_{p^r}] \to R$ is primitive if and only if 
    \[
        \one_\cC[\Sigma^n C_{p^{r-1}}] 
        \otimes_{\one_\cC[\Sigma^n C_{p^r}]} R \simeq 0.
    \]
    Since $F$ is symmetric monoidal and colimit preserving, applying $F$ to the left hand side we obtain 
    \[
        \one_\cD[\Sigma^n C_{p^{r-1}}] 
        \otimes_{\one_\cD[\Sigma^n C_{p^r}]} F(R),
    \]
    which is zero if and only if $F(\omega)$ is primitive. 
\end{proof}

\begin{prop} \label{nil_cons_virt_F_p}
    Let $F\colon \cC \to \cD$ in $\calg(\Prl^{\sad{\infty}}_\st)$ be nil-conservative. If $\cD$ is virtually $(\FF_p,n)$-orientable, then so is $\cC$.  
\end{prop}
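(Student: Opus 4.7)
The plan is to exhibit a faithful $\FF_p$-oriented commutative algebra in $\cC$, which is exactly the datum witnessing virtual $(\FF_p,n)$-orientability. My candidate is $R:=\orcyc[\one_\cC]{p}{n}$, the universal algebra carrying a primitive $p$-th root of unity of height $n$ from \cite[Definition 4.7]{carmeli2021chromatic}, constructed by splitting an explicit idempotent in $\pi_0(\one_\cC[B^nC_p])$. Since $F$ is symmetric monoidal and colimit preserving, it takes this idempotent to the analogous one in $\cD$ (as ensured by \Cref{Prim_Roots_Functor}), so $F(R)\simeq \orcyc[\one_\cD]{p}{n}$; applying \Cref{Cyclo_Univ_Or} to the virtually $(\FF_p,n)$-orientable $\cD$ identifies this further with $\orcyc[\one_\cD]{\FF_p}{n}$, which is faithful in $\cD$ by \Cref{virtually_oriented_universal_faithful}.

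First I would upgrade $R$ from merely carrying a primitive root of unity to being $\FF_p$-oriented in $\cC$. Base-changing $F$ along $R$ produces a symmetric monoidal colimit-preserving functor $F_R\colon \Mod_R(\cC)\to \Mod_{F(R)}(\cD)$ that remains nil-conservative, and its target is itself $(\FF_p,n)$-orientable (being the module category of the universal $\FF_p$-oriented algebra of $\cD$). Hence by \Cref{Orientation_Primitive} the tautological primitive root of unity in $\Mod_{F(R)}(\cD)$ is already an orientation, and applying \Cref{nil_reflect_orientations} to $F_R$ pulls this back to an $\FF_p$-orientation of the unit of $\Mod_R(\cC)$, equivalently an $\FF_p$-orientation of $R$ as an algebra in $\cC$.

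The hard part is showing $R$ is faithful in $\cC$. The key inputs are that $R$ is dualizable in $\cC$ (as a retract of the dualizable $\one_\cC[B^nC_p]$), that by \cite[Proposition 4.4.4]{TeleAmbi} nil-conservative functors are conservative on dualizable objects, and that $F(R)$ is faithful in $\cD$. Combining these yields ``$R\otimes X=0 \Rightarrow X=0$'' whenever $X$ is dualizable, by chasing $F(R)\otimes F(X)=0 \Rightarrow F(X)=0 \Rightarrow X=0$. The delicate point is extending this to arbitrary $X\in \cC$; I expect this will use either a reduction to a generating set of dualizable objects, or, in the spirit of the proof of \Cref{Virt_Fp_Bootstrap_Rel}, an argument exploiting the concrete idempotent description of $R$ inside $\one_\cC[B^nC_p]$ together with the higher semiadditivity of $\cC$. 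Granted these, $R$ is a faithful $\FF_p$-oriented commutative algebra in $\cC$, establishing virtual $(\FF_p,n)$-orientability.
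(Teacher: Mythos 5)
Your construction of the orientation on $R=\orcyc[\one_\cC]{p}{n}$ is essentially the paper's argument: push the tautological primitive root of unity to $\cD$ via $F$, observe it is an orientation there by \Cref{Orientation_Primitive} (since $\cD$ is virtually $(\FF_p,n)$-orientable), and pull the orientation property back along $F$ using \Cref{nil_reflect_orientations}. That part is correct. (Minor: before forming $\orcyc[\one_\cC]{p}{n}$ the paper first records that $\cC$ has height $n$ at $p$, deduced from $\cD$ via nil-conservativity and \cite[Proposition 4.4.2]{AmbiHeight}; you should note this too, though it is a small fix.)

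The genuine gap is exactly the one you flagged: faithfulness of $R$ in $\cC$. Your strategy of transporting faithfulness of $F(R)$ back through $F$ only proves $R\otimes X=0\Rightarrow X=0$ for dualizable $X$, since nil-conservative functors are only conservative on dualizable objects. This is not, in general, enough: the dualizable objects of $\cC$ need not generate $\cC$, so the implication does not automatically extend to arbitrary $X$. The strategies you propose to close it (a generating set of dualizables, or a devissage using the idempotent description) are not developed, and it is not clear either would go through. The paper avoids the issue altogether: it cites \cite[Proposition 4.9(2)]{carmeli2021chromatic}, which asserts that the $p$-th cyclotomic extension $\orcyc{p}{n}$ is \emph{always} faithful in a $1$-semiadditive stable presentably symmetric monoidal $\infty$-category of height $n$ at $p$ --- a fact intrinsic to $\cC$ that does not pass through $F$ or $\cD$ at all. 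In short: faithfulness of $R$ should be obtained internally, not transported from $\cD$.
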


\begin{proof}
    First, by \Cref{Fp_orientation_height}, $\cD$ is of height $n$ at $p$ and therefore so is $\cC$, by \cite[Proposition 4.4.2]{AmbiHeight}. It follows that we can consider $\orcyc[\one_\cC]{p}{n} \in \calg(\cC)$, the $p$-th cyclotomic extension of height $n$ in $\cC$. By \cite[Proposition 4.9(2)]{carmeli2021chromatic}, the commutative algebra $\orcyc[\one_\cC]{p}{n}$ is faithful, hence if we show that it is $(\FF_p,n)$-orientable, the claim will follow. Consider the tautological $\FF_p$-pre-orientation, i.e., root of unity, 
    $\omega \colon \Sigma^n\FF_p \to \orcyc[\one_\cC]{p}{n}^\times$. Since $F$ is nil-conservative, by \Cref{nil_reflect_orientations}, for $\omega$ to be an orientation it suffices that 
    $F(\omega) \colon \Sigma^n\FF_p \to \orcyc[\one_\cD]{p}{n}^\times$
    is an orientation. Since $\omega$ is a primitive root of unity, by \Cref{Prim_Roots_Functor} so is $F(\omega)$, which is therefore an orientation by \Cref{Orientation_Primitive}.
\end{proof}

\begin{rem}
    In fact, under the assumptions of \Cref{Cyclo_Univ_Or}, one can write an ``explicit formula'' for $\orcyc{\OR}{n}$, for any local ring spectrum $\OR$ with residue field $\FF_p$. Indeed, let $\varepsilon_p \in \pi_0 (\one[\Sigma^n \FF_p])$ be the idempotent for which $\orcyc{p}{n} \simeq \one[\Sigma^n \FF_p][\varepsilon_p^{-1}]$ and denote by $\varepsilon_R \in \pi_0(\one[\Dual{\OR}{n}])$ its image under the map $\one[\Sigma^n \FF_p ]\simeq\one[\Dual{\FF_p}{n}] \to \one[\Dual{\OR}{n}].$ 
    By \Cref{local_map_universal_tensor}, we have
    \[
        \orcyc{\OR}{n} \simeq 
        \one[\Dual{\OR}{n}] \otimes_{\one[\Dual{\FF_p}{n}]} \orcyc{\FF_p}{n} \simeq
        \one[\Dual{\OR}{n}] \otimes_{\one[\Sigma^n \FF_p]} \one[\Sigma^n \FF_p][\varepsilon_p^{-1}] \simeq
        \one[\Dual{\OR}{n}][\varepsilon_R^{-1}].
    \]
\end{rem}

The results above show also that virtual $(\FF_p,n)$-orientability implies that the higher cyclotomic extensions are \textit{Galois}. 

\begin{prop}\label{Cyc_Galois}
     Let $\cC \in \calg(\Prl^{\sad{\infty}}_\st)$ be virtually $(\FF_p,n)$-orientable. For every $r\in\NN$, the cyclotomic extension $\orcyc{p^r}{n}$ is faithful $(\ZZ/p^r)^\times$-Galois.
\end{prop}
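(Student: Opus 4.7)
The plan is to chain together results already proved in this section and in Section 4. First, since $\cC$ is virtually $(\FF_p,n)$-orientable, \Cref{Virt_Fp_Bootstrap} applied to the $\pi$-finite local ring spectrum $\OR = \ZZ/p^r$ (which has residue field $\FF_p$) tells us that $\cC$ is also virtually $(\ZZ/p^r,n)$-orientable. By \Cref{virtually_oriented_universal_faithful}, this is precisely the statement that the universal $\OR$-cyclotomic extension $\orcyc{\ZZ/p^r}{n}$ is faithful in $\cC$.

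Next, I would invoke \Cref{Cyclo_Univ_Or} to identify $\orcyc{\ZZ/p^r}{n} \simeq \orcyc{p^r}{n}$ as commutative algebras in $\cC$, transporting faithfulness from the universally $\ZZ/p^r$-oriented algebra to Westerland's higher cyclotomic extension. Finally, I would apply \Cref{Orcyc_Galois} to $\OR = \ZZ/p^r$: note that $\ZZ/p^r$ is discrete hence $n$-finite, that $\cC \in \calg(\Prl^{\sad{\infty}}_\st)$ in particular lies in $\calg(\Prl^{\sad{(n+1)}})$, and that $\OR^\times = (\ZZ/p^r)^\times$ with the standard multiplicative group structure. Since $\orcyc{\ZZ/p^r}{n}$ has just been shown to be faithful, \Cref{Orcyc_Galois} produces the desired $(\ZZ/p^r)^\times$-Galois structure, which transfers to $\orcyc{p^r}{n}$ via the isomorphism from \Cref{Cyclo_Univ_Or}.

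There is no real obstacle here beyond the bookkeeping — all the substantive content has been built up already. The only point that warrants a brief remark in the written proof is that the Galois action obtained on $\orcyc{p^r}{n}$ through this route agrees with the natural $(\ZZ/p^r)^\times$-action on Westerland's higher cyclotomic extension coming from its construction (scaling of the tautological root of unity), which is immediate from the fact that the isomorphism in \Cref{Cyclo_Univ_Or} is induced by the universal property classifying the tautological pre-orientation and the $(\ZZ/p^r)^\times$-action on both sides is the scaling action of \Cref{def:Por_Scaling}.
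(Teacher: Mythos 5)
Your proof is correct and follows exactly the same route as the paper: invoke \Cref{Cyclo_Univ_Or} to identify $\orcyc{p^r}{n}$ with $\orcyc{\ZZ/p^r}{n}$, establish faithfulness via \Cref{Virt_Fp_Bootstrap} and \Cref{virtually_oriented_universal_faithful}, and conclude with \Cref{Orcyc_Galois}. The paper is even a bit terser — it does not spell out the compatibility of the Galois actions under the identification, which you address but which indeed follows from the Yoneda-type argument in \Cref{Cyclo_Univ_Or} and the fact that both actions arise by scaling as in \Cref{def:Por_Scaling}.
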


\begin{proof}
    By \Cref{Cyclo_Univ_Or}, we have $\orcyc{p^r}{n} \simeq \orcyc{\ZZ/p^r}{n} $. By \Cref{Virt_Fp_Bootstrap}, $\cC$ is virtually $(\ZZ/p^r,n)$-orientable, so $\orcyc{\ZZ/p^r}{n}$ is faithful  by \cref{virtually_oriented_universal_faithful}. Thus, the claim follows from \Cref{Orcyc_Galois}.
\end{proof}

\begin{war}\label{war:_Allen}
    For a general, non virtually $(\FF_p,n)$-orientable, $\cC \in \calg(\Prl^{\sad{\infty}}_\st)$, it may happen that $\orcyc{p^r}{n}$ fails to be Galois. In particular, it is possible that
    $\orcyc{\ZZ/p^r}{n} \not\simeq \orcyc{p^r}{n}$, and so that a primitive root of unity fails to be an orientation. This occurs in some examples constructed by Allen Yuan using the Segal conjecture, see \cite{Yuan2022TheSO}.
\end{war}

We have shown that a virtually $(\FF_p,n)$-orientable 
$\cC \in \calg(\Prl^{\sad{\infty}}_\st)$ enjoys the following two, seemingly unrelated, properties:
\begin{enumerate}
    \item All the higher cyclotomic extensions $\orcyc{p^r}{n}$ are $(\ZZ/p^r)^\times$-Galois (\Cref{Cyc_Galois}).
    
    \item All $n$-finite $p$-spaces are $\cC$-affine (\Cref{p_Affineness}).
\end{enumerate}
However, taken together, they turn out to \textit{characterize} virtual $(\FF_p,n)$-orientability in the stable setting. In fact, the converse implication requires only the following a priori weaker versions of the above two properties:

\begin{prop}\label{Fp_Virt_Or_Char}
Let $\cC \in \calg(\Prl^{\sad{\infty}}_\st)$ be of height $n$ at $p$. Then $\cC$ is virtually $(\FF_p,n)$-orientable if and only if the following holds:
    \begin{enumerate}
        \item The higher cyclotomic extension $\orcyc{p}{n}$ is $\FF_p^\times$-Galois.
        \item The spaces $B^k C_p$, for 
$k= \lceil\frac{n}{2}\rceil + 1,\dots,n$, are $\cC$-affine.
    \end{enumerate}
\end{prop}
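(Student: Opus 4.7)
The forward implication is direct: if $\cC$ is virtually $(\FF_p,n)$-orientable, then $\orcyc{p}{n} \simeq \orcyc{\FF_p}{n}$ by \Cref{Cyclo_Univ_Or} and is faithful $\FF_p^\times$-Galois by \Cref{Cyc_Galois}, giving (1); each space $B^k C_p$ with $k \le n$ satisfies $\pi_{n+1}(B^k C_p) = 0$, so it is $\cC$-affine by \Cref{p_Affineness}, giving (2).

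For the converse, the plan is to show that the tautological primitive $p$-th root of unity $\omega\colon \Sigma^n\FF_p \to R^\times$ on $R := \orcyc{p}{n}$ is in fact an $\FF_p$-orientation. Since $R$ is always faithful (by \cite[Proposition~4.9(2)]{carmeli2021chromatic}), this will suffice by \Cref{virtually_oriented_universal_faithful}. I will work in $\cC' := \Mod_R(\cC)$, where the affineness provided by (2) is preserved by \Cref{affine_spaces_functors}. The base case is that $\Sigma^n\FF_p$ is $\omega$-oriented: from the standard decomposition $\one[\Sigma^n\FF_p] \simeq \one \times \orcyc{p}{n}$ (arising from the complementary idempotents defining $\orcyc{p}{n}$), base changing to $R$ and applying the Galois isomorphism $R \otimes R \simeq R^{\FF_p^\times}$ from (1) identifies the Fourier map $R[\Sigma^n\FF_p] \to R^{\FF_p}$ with the product of the identity on $R$ (for the trivial-root factor) and the Galois comparison (for the primitive-root factor), both of which are isomorphisms.

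Starting from this base case, I descend the Postnikov degree: for each $k \in \{\lceil n/2 \rceil, \ldots, n-1\}$, \Cref{Orientability_Ext_Fib} applied to the exact sequence $\Sigma^k\FF_p \to 0 \to \Sigma^{k+1}\FF_p$ (with $\Sigma^{k+1}\FF_p$ oriented by induction and $\und{\Sigma^{k+1}\FF_p} = B^{k+1}C_p$ being $\cC'$-affine by (2)) yields that $\Sigma^k\FF_p$ is $\omega$-oriented. Combined with Brown--Comenetz self-duality (\Cref{oriented_dual}), this gives that $\Sigma^k\FF_p$ is $\omega$-oriented for every $k \in \{0, \ldots, n\}$.

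To promote the orientation from the $\Sigma^k\FF_p$'s to all of $\Modfin{\FF_p}{n}$ by a Postnikov tower argument, I still need $B^j C_p$ to be $\cC'$-affine for $j \in \{1,\ldots,\lceil n/2\rceil\}$; these will be bootstrapped from the orientations just obtained. By \Cref{Affiness_Eilenberg_Moore}, $\cC'$-affineness of $B^j C_p$ reduces to verifying the isomorphism $R \otimes_{R^{B^j C_p}} R \simeq R^{B^{j-1}C_p}$, and the Fourier isomorphisms $R^{B^j C_p} \simeq R[\Sigma^{n-j}\FF_p]$ and $R^{B^{j-1}C_p} \simeq R[\Sigma^{n-j+1}\FF_p]$ (both indices lie in $\{0,\ldots,n\}$) reduce this further to the pushout identity $R \otimes_{R[\Sigma^{n-j}\FF_p]} R \simeq R[\Sigma^{n-j+1}\FF_p]$, which holds since $R[-]$ preserves pushouts and both augmentations are trivial, with \Cref{Four_Aug} matching the canonical comparison map with the Fourier isomorphism. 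With $B^k C_p$ now $\cC'$-affine for all $k \in \{0,\ldots,n\}$, a Postnikov tower induction using \Cref{Orientability_Sum} (for finite sums of copies of $\Sigma^k\FF_p$) and \Cref{Orientability_Ext_Fib} (at each Postnikov stage) extends orientation from the $\Sigma^k\FF_p$'s to every $M \in \Modfin{\FF_p}{n}$. The main obstacle is the base case: pinning down the algebra splitting $\one[\Sigma^n\FF_p] \simeq \one \times \orcyc{p}{n}$ and verifying that, after base change to $R$, the Fourier map is indeed the product of the identity and the Galois isomorphism; the remaining stages are systematic bookkeeping with the closure and duality results developed in \Cref{sec:orientations}.
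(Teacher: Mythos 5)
Your proof is correct, and the overall strategy — isolate the tautological root of unity on $R=\orcyc{p}{n}$, verify it orients $\Sigma^n\FF_p$ via the split-Galois isomorphism, descend the shifts using condition (2), and close up with duality — matches the paper. Two small points of divergence are worth noting.

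First, for faithfulness of $R$ you cite \cite[Proposition 4.9(2)]{carmeli2021chromatic} unconditionally, whereas the paper instead derives faithfulness from the Galois hypothesis (1) together with $1$-semiadditivity of $\cC$. Both routes are valid for $\cC\in\calg(\Prl^{\sad{\infty}}_\st)$ of height $n$, so this is merely a stylistic difference.

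Second, and more substantively, your final step is considerably heavier than necessary. Once you know $\Sigma^k\FF_p$ is $\omega$-oriented for all $k=0,\dots,n$, the paper simply observes that $\FF_p$ is a field, so every object of $\Modfin{\FF_p}{n}$ splits canonically as a finite direct sum $\bigoplus_k\Sigma^k\pi_kM$ of shifted finite-dimensional $\FF_p$-vector spaces, i.e.\ a finite direct sum of copies of the $\Sigma^k\FF_p$'s. Then \Cref{Orientability_Sum} finishes the job immediately — no Postnikov tower, no extensions, and hence no need for affineness of $B^jC_p$ beyond what hypothesis (2) provides. Your bootstrap of affineness for $j\le\lceil n/2\rceil$ via the newly-obtained Fourier isomorphisms and \Cref{Affiness_Eilenberg_Moore}, followed by a Postnikov tower induction with \Cref{Orientability_Ext_Fib}, is a valid argument (and the pushout computation you sketch does go through), but it does work that would only be required if the base ring were not a field. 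Keeping the field-coefficients shortcut in mind avoids this detour.
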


\begin{proof}
    As noted above, virtual $(\FF_p,n)$-orientability implies conditions (1) and (2) by \Cref{Cyc_Galois} and \Cref{p_Affineness} respectively, so it remains to prove the converse. Since $\cC$ is $1$-semiadditive and $\orcyc{p}{n}$ is Galois, it is also faithful (\cite[Remark 2.3]{carmeli2021chromatic} using \cite[Proposition 6.3.3]{RognesGal}). Thus, it will suffice to show that the canonical primitive $p$-th root of unity of height $n$ of $\orcyc{p}{n}$ is an $\FF_p$-orientation. Equivalently, by base-changing to $\orcyc{p}{n}$, it suffices to show that if $\cC$ itself admits a primitive $p$-th root of unity 
    $\omega \colon \orcyc{p}{n} \to \one$, then $\omega$ is an $\FF_p$-orientation. We will first use condition (1) to show that $\Sigma^n \FF_p$ is an $\omega$-oriented $\FF_p$-module and then use condition (2), and the various closure properties of oriented modules, to deduce that all the modules in $\Modfin{\FF_p}{n}$ are $\omega$-oriented.
    
    Since $\orcyc{p}{n}$ is assumed to be Galois, the existence of the augmentation $\omega \colon \orcyc{p}{n} \to \one$ implies that it is \textit{split Galois}. That is, we have an isomorphism
    $\orcyc{p}{n} \iso \one^{\FF_p^\times},$
    which in the $k$-th coordinate is given by 
    $\omega^k \colon \orcyc{p}{n} \to \one.$ 
    Thus, we get the following composite isomorphism
    \[
        \one[\Sigma^n \FF_p] \iso 
        \orcyc{p}{n} \times \one \iso
        \one^{\FF_p^\times} \times \one \iso
        \one^{\und{\FF_p}}.
    \]
    Observing that the canonical augmentation $\one[\Sigma^n \FF_p] \to \one$ can be thought of as $\omega^0$, it follows by \Cref{Por_Scaling}, that the above isomorphism coincides with $\Four_\omega$ at $\Sigma^n \FF_p$. In other words, the module $\Sigma^n \FF_p$ is $\omega$-oriented. 
    
Next, for each $k= \lceil{\frac{n}{2}}\rceil + 1,\dots, n$, consider the exact sequence
    \[
        \Sigma^{k-1} \FF_p \too 0 \too \Sigma^k \FF_p
        \qin \Mod_{\FF_p}^{[0,n]}.
    \]
By assumption, $B^k C_p= \und{\Sigma^k \FF_p}$ is $\cC$-affine. Hence, by \Cref{Orientability_Ext_Fib}, if $\Sigma^k \FF_p$ is $\omega$-orientable, then $\Sigma^{k-1}\FF_p$ is $\omega$-orientable as well. Thus, by descending induction starting form $k=n$, we get that $\Sigma^k \FF_p$ is $\omega$-oriented for all $k=\lceil{\frac{n}{2}}\rceil,\dots,n$. 
Furthermore, by \Cref{oriented_dual}, if $\Sigma^k \FF_p$ is $\omega$-oriented, then so is $\Sigma^{n-k}\FF_p \simeq \Dual{(\Sigma^k \FF_p)}{n}$. Therefore,  $\Sigma^k \FF_p$ is $\omega$-oriented for all $k=0,\dots, n$. 
Finally, every object of $\Modfin{\FF_p}{n}$ can be written as a finite direct sum of objects of the form $\Sigma^k \FF_p$ for $k=0,\dots,n$. Thus, we deduce from the above and \Cref{Orientability_Sum}, that all such modules are $\omega$-oriented and hence that $\omega$ is an orientation.  
\end{proof}

\begin{rem}
Every $\cC \in \calg(\Prl^{\sad{\infty}}_\st)$ of height $n=0$ is virtually $(\FF_p,0)$-orientable. At height $n=1$, condition (2) of \Cref{Fp_Virt_Or_Char} is vacuous. Hence, $\cC$ is virtually $(\FF_p,1)$-orientable if and only if $\orcyc{p}{1}$ is Galois, which is however not always the case (see \Cref{war:_Allen}). At heights $n\ge2$, condition (2) is non-vacuous, but we don't know whether it is implied by condition (1) or not. 
\end{rem}

\subsection{\texorpdfstring{$\ZZ_{(p)}$}{Z(p)}-Orientations}
\label{ssec:Z_p-orientations and hyper-completeness}

\subsubsection{$\ZZ_{(p)}$-orientability}

In this subsection, we shall consider orientations for the ring $\OR = \ZZ_{(p)}$.
We begin with the observation that, for every height, the data of a $\ZZ_{(p)}$-pre-orientation is nothing but a compatible sequence of $\ZZ/p^r$-pre-orientations for all $r\in \NN$. The quotient maps $\ZZ_{(p)} \onto \ZZ/p^r$ induce a system of compatible maps $\Dual{\ZZ/p^r}{n} \to \Dual{\ZZ_{(p)}}{n}$.

\begin{lem}\label{lem:bcdcolim}
    The assembly map $\colim \Dual{{(\ZZ/p^r)}}{n}\to \Dual{\ZZ_{(p)}}{n}$ is an isomorphism, so that we have 
    \[
        \Dual{\ZZ_{(p)}}{n} \:\simeq\: \Sigma^n \QQ_p/\ZZ_p
        \qin \Sp^\cn.
    \]
\end{lem}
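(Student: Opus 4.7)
The plan is to compute both sides directly from the defining property of $I_{\QQ_p/\ZZ_p}$ and compare them via the canonical map.

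First I would compute $\Dual{\ZZ_{(p)}}{n}$. Unfolding the definition,
\[
\pi_k \hom_\Sp(\ZZ_{(p)}, \Sigma^n I_{\QQ_p/\ZZ_p}) \:\simeq\: \hom_\Ab(\pi_{n-k}\ZZ_{(p)}, \QQ_p/\ZZ_p).
\]
Since $\ZZ_{(p)}$ is discrete, the right-hand side vanishes unless $k = n$, in which case it equals $\hom_\Ab(\ZZ_{(p)}, \QQ_p/\ZZ_p)$. Now $\QQ_p/\ZZ_p$ is $p$-local (in fact $p$-torsion), so it is canonically a $\ZZ_{(p)}$-module, and any abelian group map out of $\ZZ_{(p)}$ into it is determined by its value at $1$; hence $\hom_\Ab(\ZZ_{(p)}, \QQ_p/\ZZ_p) \simeq \QQ_p/\ZZ_p$. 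This already shows the mapping spectrum is $n$-connective, so the connective cover is redundant, and $\Dual{\ZZ_{(p)}}{n} \simeq \Sigma^n \QQ_p/\ZZ_p$.

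Next I would compute the colimit side. The same computation applied to $\ZZ/p^r$ recovers $\Dual{{(\ZZ/p^r)}}{n} \simeq \Sigma^n \ZZ/p^r$, as already noted in the excerpt. The transition maps $\Dual{{(\ZZ/p^r)}}{n} \to \Dual{{(\ZZ/p^{r+1})}}{n}$ are induced by precomposition with the quotients $\ZZ/p^{r+1} \to \ZZ/p^r$; Pontryagin-dually these correspond to the inclusions $\ZZ/p^r \hookrightarrow \ZZ/p^{r+1}$ identifying $\ZZ/p^r$ with the unique $p^r$-torsion subgroup of $\ZZ/p^{r+1}$, i.e.\ to multiplication by $p$. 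Since colimits commute with suspension in $\Sp^\cn$ and $\QQ_p/\ZZ_p = \colim_r\bigl(\ZZ/p \xrightarrow{p} \ZZ/p^2 \xrightarrow{p} \cdots\bigr)$ in abelian groups, we obtain $\colim_r \Dual{{(\ZZ/p^r)}}{n} \simeq \Sigma^n \QQ_p/\ZZ_p$.

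Finally I would check that the assembly map matches these two identifications. The canonical maps $\Dual{{(\ZZ/p^r)}}{n} \to \Dual{\ZZ_{(p)}}{n}$ come from the quotients $\ZZ_{(p)} \to \ZZ/p^r$; under the identifications above, they assemble into the standard maps $\Sigma^n \ZZ/p^r \to \Sigma^n \QQ_p/\ZZ_p$ sending $1 \mapsto p^{-r}$, which by construction exhibit $\Sigma^n \QQ_p/\ZZ_p$ as the colimit. There is no substantive obstacle here: the entire argument is a direct computation of homotopy groups, and the only point that requires a moment of attention is the bookkeeping on the transition maps (tracking the Pontryagin dual of $\ZZ/p^{r+1} \twoheadrightarrow \ZZ/p^r$ as multiplication by $p$), which is what makes the colimit equal $\QQ_p/\ZZ_p$ rather than anything else.
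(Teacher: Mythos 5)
Your proof is correct and takes essentially the same route as the paper: both identify $\Dual{(\ZZ/p^r)}{n}\simeq\Sigma^n\ZZ/p^r$, recognize the colimit over $r$ as $\Sigma^n\QQ_p/\ZZ_p$, and verify the assembly map is an equivalence by checking it on homotopy groups via $\colim\hom_\Ab(\ZZ/p^r,\QQ_p/\ZZ_p)\iso\hom_\Ab(\ZZ_{(p)},\QQ_p/\ZZ_p)$. The only stylistic difference is that you compute both sides independently and then match them up, whereas the paper goes directly through the induced map on $\pi_*$.
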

\begin{proof}
    Under the identification $\Dual{{(\ZZ/p^r)}}{n} \simeq \Sigma^n \ZZ/p^r$, the tower of maps $\ZZ/p^{r+1} \onto \ZZ/p^r$ induces the sequence of maps $\Sigma^n \ZZ/p^r \into \Sigma^n \ZZ/p^{r+1}$, whose colimit is $\Sigma^n \QQ_p/\ZZ_p$.
    Since the canonical map
    \[
        \colim \hom_\Ab(\ZZ/p^r, \QQ_p/\ZZ_p) \too 
        \hom_\Ab(\ZZ_{(p)}, \QQ_p/\ZZ_p) = \QQ_p / \ZZ_{p}
    \]
    is an isomorphism, the map
    \(
        \colim \Dual{{(\ZZ/p^r)}}{n} \to
        \Dual{\ZZ_{(p)}}{n}
    \)
    induces an isomorphism on homotopy groups and is thus an isomorphism as well.
\end{proof}

\begin{rem}\label{Cnt_Dual}
    It might seem more natural to speak of $\ZZ_p$-(pre-)orientations, rather than $\ZZ_{(p)}$-(pre-)orientations, but that would require us to take into account the $p$-adic topology of $\ZZ_p$ in the construction of the Pontryagin/Brown--Comenetz dual, which is a technical complication we preferred to avoid. Namely, while the composition 
    \[
        \colim \hom_\Ab(\ZZ/p^r, \QQ_p/\ZZ_p) \too 
        \hom_\Ab(\ZZ_p, \QQ_p/\ZZ_p) \too
        \hom_\Ab(\ZZ_{(p}), \QQ_p/\ZZ_p)
    \]
    is an isomorphism, the first map is merely an inclusion, which identifies the left hand side, and hence $\hom_\Ab(\ZZ_{(p)}, \QQ_p/\ZZ_p)$, with the subgroup of \textit{continuous} group homomorphisms $\ZZ_p \to \QQ_p/\ZZ_p$.
    The most adequate general framework appears to be that of commutative  \textit{pro-$[0,n]$-finite} ring spectra, but it is outside the scope of this paper. 
\end{rem}

We deduce the corresponding statement for orientations.

\begin{prop}\label{universal_algebra_for_Zp}
    Let $\cC\in \calg(\Prl^{\sad{\infty}})$. We have a canonical isomorphism
    \[
        \orcyc{\ZZ_{(p)}}{n} \:\simeq\: \colim \orcyc{\ZZ/p^r}{n} \qin \calg(\cC).
    \]
\end{prop}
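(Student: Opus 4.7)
The plan is to prove the isomorphism by comparing the functors corepresented by both sides on $\calg(\cC)$. By the Yoneda lemma, it suffices to produce, naturally in $S\in\calg(\cC)$, an isomorphism $\Or{\ZZ_{(p)}}{S;\cC}{n} \simeq \lim_r \Or{\ZZ/p^r}{S;\cC}{n}$, where the transition maps in the limit are induced by restriction along the ring maps $f_r\colon \ZZ_{(p)}\twoheadrightarrow \ZZ/p^r$ via \Cref{Or_Push}. Since $\Map_{\Sp^\cn}$ turns colimits in its first variable into limits, \Cref{lem:bcdcolim} immediately yields the analogous identification for the ambient pre-orientation spaces, $\POr{\ZZ_{(p)}}{S;\cC}{n} \simeq \lim_r \POr{\ZZ/p^r}{S;\cC}{n}$. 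Both orientation spaces sit as subfunctors of this common pre-orientation functor, and my task therefore reduces to matching these subfunctors pointwise.

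The crucial claim I will establish is that a pre-orientation $\omega \in \POr{\ZZ_{(p)}}{S;\cC}{n}$ is a $\ZZ_{(p)}$-orientation if and only if each restriction $f_{r,*}\omega \in \POr{\ZZ/p^r}{S;\cC}{n}$ is a $\ZZ/p^r$-orientation. The forward implication will be an immediate invocation of \Cref{Or_Push}. The converse is the substantive content of the argument: I plan to exploit the case $r=1$, which makes $f_{1,*}\omega$ an $\FF_p$-orientation of $S$, so that $\Mod_S(\cC)$ becomes an $(\FF_p,n)$-orientable (and in particular virtually $(\FF_p,n)$-orientable) presentably symmetric monoidal $\infty$-category. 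By \Cref{p_Affineness}, every $n$-finite $p$-space is then $\Mod_S(\cC)$-affine, which supplies the affineness hypotheses needed below.

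With this affineness in hand, I will show that every $M \in \Modfin{\ZZ_{(p)}}{n}$ is $\omega$-oriented by inducting up its Postnikov tower. The successive fibers take the form $\Sigma^k \pi_k M$, where $\pi_k M$ is a finite $p$-group annihilated by some $p^{r_k}$; consequently each such fiber lies in the essential image of the restriction $\Modfin{\ZZ/p^{r_k}}{n} \to \Modfin{\ZZ_{(p)}}{n}$ and is thus $\omega$-oriented by a second application of \Cref{Or_Push}. Each space $\und{\Dual{\Sigma^k \pi_k M}{n}}$ is an $n$-finite $p$-space, hence $\Mod_S(\cC)$-affine, which is precisely the hypothesis required to apply \Cref{Orientability_Ext_Cof} and propagate $\omega$-orientability from $\tau_{\le k-1}M$ to $\tau_{\le k}M$. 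The hard part is this converse direction of the central claim; everything else is a routine assembly of corepresentability and naturality in $S$, which will then deliver the desired isomorphism in $\calg(\cC)$.
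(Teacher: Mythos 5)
Your proposal is correct, and it lands on the same Yoneda/corepresentability reduction that underlies the paper's own argument. The difference is in how the key iff is established. The paper has already packaged the hard work into \Cref{local_map_universal_tensor}, which is itself a reformulation of \Cref{local_ring_orientation}; the proof then applies that lemma twice (to $\ZZ_{(p)}\onto\FF_p$ and to $\ZZ/p^r\onto\FF_p$) and invokes \Cref{lem:bcdcolim} together with the colimit-preservation of $\one[-]$ and of the relative tensor product to assemble the answer at the object level. You instead re-derive, by hand, the special case of \Cref{local_ring_orientation} you need: your Postnikov induction using \Cref{p_Affineness}, \Cref{Or_Push}, and \Cref{Orientability_Ext_Cof} is precisely the argument embedded in the proofs of \Cref{local_ring_generators_modules} and \Cref{local_ring_orientation}, specialized to $\ZZ_{(p)}$. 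This is valid, but you could shortcut the entire converse direction of your central claim by citing \Cref{local_ring_orientation} directly: if $f_{1,*}\omega$ is an $\FF_p$-orientation, then $\omega$ is already a $\ZZ_{(p)}$-orientation since $\ZZ_{(p)}\to\FF_p$ is a strict map of local rings; the conditions for the other $r$ are then automatic by \Cref{Or_Push}. What your more explicit route buys is self-containedness and transparency about exactly which affineness and extension-closure properties drive the proof; what the paper's approach buys is brevity and a cleanly reusable intermediate formula $\orcyc{\ORone}{n}\simeq\one[\Dual{\ORone}{n}]\otimes_{\one[\Dual{\ORtwo}{n}]}\orcyc{\ORtwo}{n}$ that immediately also identifies $\orcyc{\ZZ/p^r}{n}$ in the same diagram. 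Both are sound.
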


\begin{proof}
By \Cref{lem:bcdcolim} and the fact that the group algebra functor preserves colimits, we have 
    \[
\one[\Dual{\ZZ_{(p)}}{n}]\:\simeq\: \colim \one[\Dual{{(\ZZ/p^r)}}{n}]
        \qin \calg(\cC).
    \]
    Thus, by applying \Cref{local_map_universal_tensor} to $\ZZ_{(p)} \onto \FF_p$, we have
    \[
        \orcyc{\ZZ_{(p)}}{n} \simeq
        \one[\Dual{\ZZ/p}{n}] \otimes_{\one[\Dual{\FF_p}{n}]} 
        \orcyc{\FF_p}{n} \simeq
     \]  
    \[
        (\colim \one[\Dual{{(\ZZ/p^r)}}{n}]) \otimes_{\one[\Dual{\FF_p}{n}]} 
        \orcyc{\FF_p}{n} \simeq
        \colim (\one[\Dual{{(\ZZ/p^r)}}{n}] \otimes_{\one[\Dual{\FF_p}{n}]} 
        \orcyc{\FF_p}{n}).
    \]
    Applying \Cref{local_map_universal_tensor} to $\ZZ/p^r \onto \FF_p$, we get
    \[
        \one[\Dual{{(\ZZ/p^r)}}{n}] \otimes_{\one[\Dual{\FF_p}{n}]} \orcyc{\FF_p}{n} \simeq
        \orcyc{\ZZ/{p^r}}{n}.
    \]
    Combining this with the above we obtain the claimed isomorphism. 
\end{proof}

As in \cite[Definition 4.10]{carmeli2021chromatic}, we set     
\[
\orcyc{p^\infty}{n} \::=\: \colim \orcyc{p^r}{n} \qin \calg(\cC).
\]
In the stable setting, we then obtain the following: 

\begin{cor}\label{Zp_Univ_Cyclo}
    Let $\cC\in \calg(\Prl^{\sad{\infty}}_\st)$ be virtually $(\FF_p,n)$-orientable. We have a canonical isomorphism
    \[
\orcyc{\ZZ_{(p)}}{n} \:\simeq\: \orcyc{p^\infty}{n} \qin \calg(\cC).
    \]
\end{cor}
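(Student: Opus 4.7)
The plan is to chain together two results already established in the excerpt, since the statement essentially amounts to the compatibility of the colimit formula for $\orcyc{\ZZ_{(p)}}{n}$ with the identification of $\orcyc{\ZZ/p^r}{n}$ with the higher cyclotomic extension $\orcyc{p^r}{n}$.

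First, I would invoke \Cref{universal_algebra_for_Zp}, which gives (for any $\cC \in \calg(\Prl^{\sad{\infty}})$, no stability or virtual orientability needed) a canonical isomorphism
\[
    \orcyc{\ZZ_{(p)}}{n} \:\simeq\: \colim_r \orcyc{\ZZ/p^r}{n} \qin \calg(\cC).
\]
Next, I would use the assumption that $\cC \in \calg(\Prl^{\sad{\infty}}_\st)$ is virtually $(\FF_p,n)$-orientable to apply \Cref{Cyclo_Univ_Or} levelwise: for every $r \in \NN$ we have a canonical isomorphism $\orcyc{\ZZ/p^r}{n} \simeq \orcyc{p^r}{n}$ in $\calg(\cC)$. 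Under these isomorphisms, the transition maps in the $r$-indexed system on the left are those coming from the surjections $\ZZ/p^{r+1} \onto \ZZ/p^r$ (equivalently, the inclusions $\Sigma^n \ZZ/p^r \hookrightarrow \Sigma^n\ZZ/p^{r+1}$), while the transition maps on the right are those defined in \cite{carmeli2021chromatic}; by the Yoneda lemma both systems are identified with the system of corepresenting objects for the tower of subfunctors of $p^r$-th roots of unity of height $n$, so the identification is natural in $r$.

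Passing to the colimit and invoking the definition $\orcyc{p^\infty}{n} := \colim_r \orcyc{p^r}{n}$, we conclude
\[
    \orcyc{\ZZ_{(p)}}{n} \:\simeq\: \colim_r \orcyc{\ZZ/p^r}{n} \:\simeq\: \colim_r \orcyc{p^r}{n} \:=\: \orcyc{p^\infty}{n}.
\]
There is no real obstacle here; the only point that deserves a sentence of care is the naturality in $r$ of the levelwise identification, to ensure that the two $\NN$-indexed diagrams are actually equivalent (and not merely objectwise so) before taking colimits. This naturality is immediate from the Yoneda argument underlying \Cref{Cyclo_Univ_Or}, since the subfunctor $\Or{\ZZ/p^r}{-;\cC}{n} \subseteq \POr{\ZZ/p^r}{-;\cC}{n}$ coincides, under virtual $(\FF_p,n)$-orientability, with the subfunctor of primitive $p^r$-th roots of unity of height $n$ in a way that is compatible with the maps $\ZZ/p^{r+1} \onto \ZZ/p^r$.
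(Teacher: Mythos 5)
Your proof is correct and is essentially identical to the paper's argument: the paper's proof simply says ``The claim follows from the combination of \Cref{universal_algebra_for_Zp} and \Cref{Cyclo_Univ_Or},'' which is precisely the chain you spell out, including the naturality point that makes the levelwise identification compatible with the transition maps.
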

\begin{proof}
    The claim follows from the combination of \Cref{universal_algebra_for_Zp} and \Cref{Cyclo_Univ_Or}.
\end{proof}

\subsubsection{Virtual $\ZZ_{(p)}$-orientability}

At height $n=0$, \Cref{Zp_Univ_Cyclo} implies that if $\cC$ is stable and virtually $(\FF_p,0)$-orientable, then it is also virtually $(\ZZ_{(p)},0)$-orientable. Indeed, 
$\orcyc{\ZZ_{(p)}}{0}$ is then the ordinary infinite cyclotomic extension $\one[\omega_{p^\infty}]$, which is always faithful (since, say, it has $\one$ as a retract). However, for higher heights $n$, virtual $(\FF_p,n)$-orientability need \textit{not} imply virtual $(\ZZ_{(p)},n)$-orientability, even in the stable setting. Namely, even though each $ \orcyc{p^r}{n}$ is faithful, the filtered colimit $\orcyc{p^\infty}{n}= \colim \orcyc{p^r}{n}$
may no longer be. Nevertheless, there are still some general observations that can be made. 
Recall from \cref{def:Virt_Univ}, that the Bousfield localization of $\cC$ with respect to $\orcyc{p^\infty}{n}$ is denoted by $\virt{\cC}{\ZZ_{(p)}}{n}$. We shall  show that this localization is smashing and identify its unit. 
The commutative algebra $\orcyc{p^\infty}{n}$ is acted on by the group $\ZZ_p^\times$, whose torsion subgroup is 
\[
T_p= 
    \begin{cases}
        \ZZ/(p-1) & p \text{ is odd}. \\
        \ZZ/2 & p=2.
    \end{cases}
\]

Furthermore, using the $p$-adic logarithm we get an isomorphism 
$\ZZ_p^\times \simeq T_p \times \ZZ_p,$
which provides a distinguished dense subgroup
\[
    G:= T_p \times \ZZ \:\sseq\: 
    T_p \times \ZZ_p \:\simeq\: 
    \ZZ_p^\times.
\]

 \begin{prop} \label{Z_p_virt_smashing}
    Let $\cC\in \calg(\Prl^{\sad{\infty}}_\st)$ be virtually $(\FF_p,n)$-orientable for some $n\ge1$. Then,  $\virt{\cC}{\ZZ_{(p)}}{n}$ is a smashing localization of $\cC$ with unit
    $\orcyc{p^\infty}{n}^{h G}.$
    In other words, the corresponding localization functor 
    $L\colon \cC \to \virt{\cC}{\ZZ_{(p)}}{n}$
    is given by 
    \[
L(X)= \orcyc{p^\infty}{n}^{h G} \otimes X 
        \qin \cC.
    \]
\end{prop}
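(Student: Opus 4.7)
Set $A := \orcyc{p^\infty}{n} = \colim_r A_r$ with $A_r := \orcyc{p^r}{n}$. By \cref{Cyc_Galois}, each $A_r$ is faithful $(\ZZ/p^r)^\times$-Galois over $\one$, and the compatible actions assemble into a $\ZZ_p^\times$-action on $A$, factoring through $(\ZZ/p^r)^\times$ on each $A_r$. Restricting along $G \hookrightarrow \ZZ_p^\times$ yields a $G$-action on $A$, hence a commutative algebra $B := A^{hG}$ with canonical unit map $\eta \colon \one \to B$.

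The plan is to establish three claims: \emph{(a)} $B$ is $A$-local; \emph{(b)} $\eta$ is an $A$-equivalence, so together with (a), $B \simeq L\one$; and \emph{(c)} $B$ is an \emph{idempotent} algebra, i.e.\ the multiplication $B \otimes B \to B$ is an isomorphism. Granting these, the Bousfield localization with respect to $B$ is smashing with unit $B$, and $B$-locality coincides with $A$-locality by (a) and (b), so $L \simeq B \otimes -$ as desired. Claim (a) is immediate: $A$ itself is $A$-local, because $\Map_\cC(W,A) \simeq \Map_{\Mod_A}(W \otimes A, A) = 0$ whenever $W \otimes A = 0$, and $A$-local objects are closed under limits, so $B = \lim_{BG}A$ is $A$-local.

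Claims (b) and (c) reduce to computing $A \otimes B$ and $B \otimes B$ respectively. The key technical input is the interchange of the filtered colimit defining $A$, the tensor product, and the $G$-fixed points. This is legitimate because: each $A_r$ is dualizable (as a faithful Galois extension), giving a projection formula $A_r \otimes X^{hG} \simeq (A_r \otimes X)^{hG}$; and $(-)^{hG} = ((-)^{hT_p})^{h\ZZ}$ commutes with filtered colimits in $\cC$, since $T_p$ is finite (so $(-)^{hT_p}$ is a finite limit) and $(-)^{h\ZZ} \simeq \fib(g-1)$ is a two-term fiber sequence, both commuting with filtered colimits in the stable setting. With these in hand, $A \otimes B \simeq (A \otimes A)^{hG}$ and a parallel computation handles $B \otimes B$; both are evaluated using the Galois splitting $A_r \otimes A_s \simeq \prod_{(\ZZ/p^r)^\times}A_s$ (for $s \ge r$, with the Galois group acting on the second factor through $(\ZZ/p^s)^\times \twoheadrightarrow (\ZZ/p^r)^\times$).

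The main obstacle is the identification $(A\otimes A)^{hG} \simeq A$ required for (b). This necessitates carefully organizing $A \otimes A$ as a filtered colimit of $(\ZZ/p^r)^\times$-Galois descent data while tracking the contribution of the $\ZZ$-factor of $G$, which encodes the failure of $G$ to coincide with the full profinite $\ZZ_p^\times$. One must verify that, after passing to $G$-fixed points, the two-term $(-)^{h\ZZ}$-contributions on the descent data and on its $T_p$-fixed points conspire to leave only $A$. The hypothesis $n \ge 1$ is essential: it ensures $\cC$ is at least $1$-semiadditive, so that finite group actions are ambidextrous, the projection formulas for the $A_r$ apply uniformly, and Galois descent is well-behaved across the tower.
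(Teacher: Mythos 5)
Your proposal has the right overall shape — commute $(-)^{hG}$ past colimits and tensor products, using the projection formula for the dualizable Galois pieces $A_r$ and the finite-limit description $(-)^{hG}\simeq((-)^{hT_p})^{h\ZZ}$ in a stable $\infty$-semiadditive category — and the paper's argument uses precisely this technical input. But there is a genuine gap at the step you yourself flag as the ``main obstacle.'' You assert that one must verify an identification of the form $(A\otimes A)^{hG}\simeq A$, and then describe what this would entail without carrying it out. That verification \emph{is} the proof; gesturing at the bookkeeping of the $T_p$- and $\ZZ$-factors does not substitute for it.

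There is also a structural inefficiency worth pointing out. Your claim (b), that $\eta\colon\one\to B$ is an $A$-equivalence, is strictly stronger than what the argument actually needs, and in fact it is essentially equivalent to the conclusion: once $B$ is idempotent and Bousfield equivalent to $A$, one gets $A\simeq LA=B\otimes A$ for free since $A$ is $A$-local. So proving (b) head-on means proving a consequence of the theorem rather than a usable lemma. The paper's decomposition avoids this: the Bousfield equivalence is established cheaply in both directions — $A$-acyclic implies $B$-acyclic via your projection formula $B\otimes X\simeq(A\otimes X)^{hG}$, and $B$-acyclic implies $A$-acyclic simply because the unit map $B\to A$ makes $A$ a $B$-module. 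No computation of $A\otimes B$ is required for this.

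What remains is the idempotence of $B$, and here the paper supplies the concrete step your proposal lacks. Writing $C(\ZZ_p^\times;X)=\colim_r X^{(\ZZ/p^r)^\times}$ and $C(\ZZ_p;X)=\colim_r X^{\ZZ/p^r}$, Galois splitting and the commutation of $(-)^{hG}$ with colimits and tensors give $B\otimes A\simeq C(\ZZ_p^\times;B)$ and hence $B\otimes B\simeq C(\ZZ_p^\times;B)^{hG}\simeq C(\ZZ_p;B)^{h\ZZ}$ after absorbing the finite factor $T_p$. Idempotence thus reduces to showing that the unit $B\to C(\ZZ_p;B)^{h\ZZ}$ is an isomorphism. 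The decisive observation — and the one missing from your sketch — is that this last claim is a universal statement holding for any commutative algebra in any $p$-complete stable presentably symmetric monoidal $\infty$-category, so it may be checked for $\Sph_{(p)}\in\calg(\widehat{\Sp}_p)$; since both sides are connective there, one tensors with $\FF_p$ and the assertion becomes the exactness of the explicit short sequence of abelian groups
\[
0\too\FF_p\too C(\ZZ_p;\FF_p)\xrightarrow{\ \Id-\sigma\ }C(\ZZ_p;\FF_p)\too 0,
\]
where $\sigma$ is translation by a topological generator. This elementary computation is what closes the argument, and without it (or an equivalent concrete verification) your proposal does not constitute a proof.
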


\begin{proof}
We need to show that $R:= \orcyc{p^\infty}{n}^{h G}$ is an idempotent algebra in the same Bousfield class as $\orcyc{p^\infty}{n}$. We first observe that since $\cC$ is both $\infty$-semiadditive and stable, we have
    \[
        (-)^{hT_p} \simeq (-)_{hT_p} \qquad\text{and}\qquad
        (-)^{h\ZZ} \simeq \Sigma^{-1}(-)_{h\ZZ},
    \]
    hence the operation 
    $(-)^{h G} \simeq ((-)^{hT_p})^{h\ZZ}$ 
    commutes with colimits and the tensor product. 
    In particular, for every $X\in \cC$, we have
    \[
R \otimes X=
        \orcyc{p^\infty}{n}^{h G} \otimes X \simeq 
        (\orcyc{p^\infty}{n} \otimes X)^{h G},
    \]
    where we set $R:= \orcyc{p^\infty}{n}^{h G}$. Now, on the one hand, we have a map 
    \[
R= \orcyc{p^\infty}{n}^{h G} \too
        \orcyc{p^\infty}{n}
        \qin \calg(\cC).
    \]
    Hence, every $R$-acyclic object is also $\orcyc{p^\infty}{n}$-acyclic. On the other hand, if $X\in\cC$ is $\orcyc{p^\infty}{n}$-acyclic, then we have
    \[
        R \otimes X \simeq 
        (\orcyc{p^\infty}{n} \otimes X)^{h G} \simeq 0,
    \]
    so $X$ is also $R$-acyclic. Consequently, $R$ and $\orcyc{\ZZ_{(p)}}{n}$ are Bousfield equivalent. 
    
    Next, we show that $R$ is an idempotent algebra. Using the notation
    \[
        C(\ZZ_p^\times ; X):= \colim X^{(\ZZ/p^r)^\times} \quad,\quad
        C(\ZZ_p ; X):= \colim X^{\ZZ/p^r},
    \]
    we have
    \[
        \orcyc[R]{p^\infty}{n}:=
        R\otimes \orcyc{p^\infty}{n} \simeq 
        (\orcyc{p^\infty}{n} \otimes \orcyc{p^\infty}{n})^{h G} \simeq
        C(\ZZ_p^\times; \orcyc{p^\infty}{n})^{h G} \simeq 
        C(\ZZ_p^\times; R),
    \] 
    and thus
    \[
        R\otimes R \simeq 
        (\orcyc[R]{p^\infty}{n})^{h G} \simeq 
        C(\ZZ_p^\times; R)^{h G} \simeq 
        C(\ZZ_p^\times / T_p; R)^{h\ZZ} \simeq 
        C(\ZZ_p; R)^{h\ZZ}.
    \] 
    It remains to show that the commutative $R$-algebra unit map $R \to C(\ZZ_p; R)^{h\ZZ}$ is an isomorphism. In fact, this holds for every stable $p$-complete presentably symmetric monoidal $\infty$-category $\cC$ and $R\in\calg(\cC)$, such as our $\cC$ (by \Cref{Fp_orientation_height} and the assumption $n\ge1$).    
    It suffices to check this in the universal case $\cC= \widehat{\Sp}_p$ and for the unit $R= \Sph_{(p)}$. Furthermore, since both sides are connective, it suffices to check this after tensoring with $\FF_p \in \calg(\widehat{\Sp}_p)$. Denoting by $\sigma$ the action of $1\in \ZZ$ on $C(\ZZ_p; \FF_p)$, the claim becomes equivalent to the exactness of the short sequence of abelian groups
    \[
        0 \too \FF_p \too C(\ZZ_p; \FF_p) \oto{\Id - \sigma} C(\ZZ_p; \FF_p) \too 0,
    \]
    which can be easily verified by an explicit computation.
\end{proof}

Finally, virtual $(\ZZ_{(p)},n)$-orientability can be bootstrapped ``all the way up''.  

\begin{prop}\label{Virt_Zp_Bootstrap}
    Let $\cC\in\calg(\Prl^{\sad{\infty}})$. If $\cC$ is virtually $(\ZZ_{(p)},n)$-orientable, then it is virtually $(\OR,n)$-orientable for every local ring spectrum $R$ with residue field $\FF_p$.
\end{prop}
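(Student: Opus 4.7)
The plan is to reduce to the universal case $\OR=\Sph_{(p)}$ and then transfer virtual orientability from $\ZZ_{(p)}$ to $\Sph_{(p)}$ via the Postnikov truncation $\tau_{\le n}\Sph_{(p)}\to \ZZ_{(p)}$, applying the bootstrap result \Cref{Virt_Fp_Bootstrap_Rel}.

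First, I would observe that for any local ring spectrum $\OR$ with residue field $\FF_p$, the unit map $\Sph_{(p)}\to \OR$ is a strict map of local ring spectra (in the sense of \Cref{def:Local_Ring}), since $\pi_0\Sph_{(p)}=\ZZ_{(p)}$ has residue field $\FF_p$ and the induced map on residue fields is thus an isomorphism. By \Cref{Virt_Or_Push}, virtual $(\Sph_{(p)},n)$-orientability implies virtual $(\OR,n)$-orientability. Hence it suffices to treat the single case $\OR=\Sph_{(p)}$.

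Next, I would invoke \Cref{rem:n_trunc}, which asserts that $\Dual{\OR}{n}$ and $\Modfin{\OR}{n}$, and therefore also the universal $\OR$-oriented algebra $\orcyc{\OR}{n}$, depend only on the $n$-truncation of $\OR$. Combined with \Cref{virtually_oriented_universal_faithful}, this lets me replace $\Sph_{(p)}$ by its $n$-truncation $\tau_{\le n}\Sph_{(p)}$. It therefore remains to show that virtual $(\ZZ_{(p)},n)$-orientability implies virtual $(\tau_{\le n}\Sph_{(p)},n)$-orientability.

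For this, I would apply \Cref{Virt_Fp_Bootstrap_Rel} to the Postnikov truncation
\[
    f \colon \tau_{\le n}\Sph_{(p)} \too \tau_{\le 0}\Sph_{(p)} = \ZZ_{(p)}.
\]
This is a strict map of local ring spectra with residue field $\FF_p$, whose fiber $\tau_{[1,n]}\Sph_{(p)}$ has homotopy groups the $p$-local stable stems $\pi_k\Sph_{(p)}$ for $1\le k\le n$; these are finite by Serre's theorem, so the fiber is $\pi$-finite. Moreover $f$ is an isomorphism on $\pi_0$, and for $n\ge 1$ it is trivially surjective on $\pi_n$ because $\pi_n\ZZ_{(p)}=0$ (the case $n=0$ is vacuous since $f$ is then the identity). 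All the hypotheses of \Cref{Virt_Fp_Bootstrap_Rel} being verified, virtual $(\tau_{\le n}\Sph_{(p)},n)$-orientability is equivalent to virtual $(\ZZ_{(p)},n)$-orientability, completing the chain of implications.

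There is no substantial obstacle here: the argument is purely formal once one recognizes $\Sph_{(p)}$ as the initial object among local ring spectra with residue field $\FF_p$ and uses its Postnikov truncation to bridge to $\ZZ_{(p)}$ through a map whose fiber is $\pi$-finite, allowing the earlier bootstrap machinery to apply.
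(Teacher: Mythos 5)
Your proof is correct and follows the paper's argument exactly: reduce to $\OR=\Sph_{(p)}$ via \Cref{Virt_Or_Push}, replace by $\tau_{\le n}\Sph_{(p)}$ via \Cref{rem:n_trunc}, then apply \Cref{Virt_Fp_Bootstrap_Rel} to the truncation map $\tau_{\le n}\Sph_{(p)}\to\ZZ_{(p)}$. Your added verification that this map satisfies the hypotheses (finiteness of stable stems, surjectivity on $\pi_0$ and $\pi_n$) is a welcome explicitation of what the paper leaves implicit.
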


Namely, by replacing virtual $(\FF_p,n)$-orientability with virtual $(\ZZ_{(p)},n)$-orientability, we can remove the \textit{$\pi$-finiteness} assumption from \Cref{Virt_Fp_Bootstrap}.

\begin{proof}
    By \Cref{Virt_Or_Push}, it suffices to consider the initial case $R = \Sph_{(p)}$, and by \Cref{rem:n_trunc}, we can further reduce to $R = \tau_{\le n} \Sph_{(p)}$. The result now follows from \Cref{Virt_Fp_Bootstrap_Rel} applied to the map
    \[
        \tau_{\le n} \Sph_{(p)} \too \tau_{\le0} \Sph_{(p)} \simeq \ZZ_{(p)}. \qedhere
    \]
\end{proof}

\subsection{\texorpdfstring{$\tau_{\le d}\Sph_{(p)}$}{tau<=d S(p)}-Orientations and connectedness}\label{ssec:connectedness}

By \Cref{Virt_Zp_Bootstrap}, a $\ZZ_{(p)}$-orientation of height $n$ for $\cC\in\calg(\Prl^{\sad{\infty}})$ can be lifted to an $\Sph_{(p)}$-orientation, after a faithful extension of scalars. To study the existence of such a lift in $\cC$ itself, we proceed in steps by climbing up the Postnikov tower of $\Sph_{(p)}$. In general, for each $d=0,\dots,n$, there will be an obstruction for lifting a $\tau_{\le d-1}\Sph_{(p)}$-orientation to a $\tau_{\le d}\Sph_{(p)}$-orientation. Here, by convention, we set 
\[
\tau_{\le-1}\Sph_{(p)} := \FF_p.
\]
To study this extension problem, we shall introduce a certain \textit{$d$-connectedness} property of the $\infty$-category $\cC$ that will be closely related to the vanishing of these obstructions.

\subsubsection{Spherical cyclotomic extensions}

We begin by showing that the truncated spherical cyclotomic extensions are \textit{pro-Galois} extensions. We proceed by approximating $\Sph_{(p)}$ by $\pi$-finite local rings, using an argument we learned form Dustin Clausen. Consider the standard cosimplicial resolution (aka Amitsur complex) of the map $\Sph_{(p)}\to\FF_p$:
\[\begin{tikzcd}
	{\Sph_{(p)}} & {\FF_p} & {\FF_p\otimes_{\Sph}\FF_p} & {\FF_p\otimes_{\Sph}\FF_p\otimes_{\Sph}\FF_p} & {}
	\arrow[shift right=1, from=1-2, to=1-3]
	\arrow[shift left=1, from=1-2, to=1-3]
	\arrow[shorten <=4pt, shorten >=4pt, from=1-3, to=1-2]
	\arrow[from=1-3, to=1-4]
	\arrow[shift right=2, from=1-3, to=1-4]
	\arrow[shift left=2, from=1-3, to=1-4]
	\arrow[shift left=1, shorten <=4pt, shorten >=4pt, from=1-4, to=1-3]
	\arrow[shift right=1, shorten <=4pt, shorten >=4pt, from=1-4, to=1-3]
	\arrow[dotted, no head, from=1-4, to=1-5]
	\arrow[from=1-1, to=1-2]
\end{tikzcd}
\]
The partial totalizations
$\Tot{s} := \mathrm{Tot}^s (\FF_p^{\otimes_\Sph (\bullet + 1)})$ for $s\ge0$ assemble into a tower in $\calg(\Sp_{(p)}^\cn)$ under $\Sph_{(p)}$.

\begin{lem}\label{Sph_Tot}
    For every $s \ge 0$, the partial totalization $\Tot{s}$ has finite homotopy groups and is a local ring spectrum with residue field $\FF_p$. Moreover, for every $t\ge 0$, we have $\invlim \pi_t(\Tot{s}) \simeq \pi_t(\Sph_p)$.
\end{lem}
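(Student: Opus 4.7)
My approach is to verify all three claims via an analysis of the Bousfield--Kan Tot tower. Let $C^\bullet$ denote the cosimplicial commutative algebra $\FF_p^{\otimes_\Sph(\bullet+1)}$, and write $N^s$ for its $s$-th normalized cochain, so that for each $s\ge1$ there is a standard fiber sequence
\[
    \Omega^s N^s \too \Tot{s} \too \Tot{s-1}
\]
in $\calg(\Sp)$ under $\Sph_{(p)}$.

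First, I would show by induction on $s$ that $\Tot{s}$ is connective with finite homotopy groups in each degree. The base case $\Tot{0}=\FF_p$ is immediate. For the inductive step, the key input is that $\pi_*(\FF_p\otimes_\Sph \FF_p)$ is the dual Steenrod algebra, which is concentrated in non-negative degrees, finite-dimensional over $\FF_p$ in each degree, and has $\pi_0=\FF_p$. Consequently the augmentation fiber $\bar{\FF}_p:=\mathrm{fib}(\FF_p\otimes_\Sph \FF_p\oto{\:\mu\:}\FF_p)$ is $1$-connective with degreewise finite homotopy. Since $N^s$ admits the usual identification (as an $\FF_p$-module) with a suitable iterated tensor product of $\bar{\FF}_p$ over $\FF_p$, the connective K\"unneth formula over $\FF_p$ shows that $N^s$ is $s$-connective with degreewise finite homotopy. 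Therefore $\Omega^s N^s$ is connective with degreewise finite homotopy, and the inductive step follows.

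Second, for the local ring property I would analyze the effect of this fiber sequence on $\pi_0$. Connectivity of $\Omega^s N^s$ gives $\pi_{-1}(\Omega^s N^s)=0$, so the transition map $\pi_0(\Tot{s})\to \pi_0(\Tot{s-1})$ is surjective. Hence $\{\pi_0(\Tot{s})\}$ is an inverse system of finite groups with surjective transition maps. By convergence of the $\FF_p$-Adams tower (Bousfield/nilpotence), $\lim_s \Tot{s}\simeq \Sph_p$, and the Milnor sequence combined with the vanishing of $\lim^1$ for inverse systems of finite groups gives $\pi_0(\Sph_p)=\ZZ_p\iso \lim_s \pi_0(\Tot{s})$. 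By Mittag--Leffler for surjective systems of finite groups, the canonical map $\ZZ_p\to \pi_0(\Tot{s})$ is surjective, so $\pi_0(\Tot{s})\simeq \ZZ/p^{k(s)}$ for some $k(s)\ge1$. This is a local ring with residue field $\FF_p$, and the ring map $\Tot{s}\to\Tot{0}=\FF_p$ realizes the residue map, so $\Tot{s}$ is a local ring spectrum with residue field $\FF_p$.

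Third, for the inverse limit identification on each $\pi_t$, I would apply the Milnor short exact sequence
\[
    0 \too {\lim}^{1}_s \pi_{t+1}(\Tot{s}) \too \pi_t(\Sph_p) \too \lim_s \pi_t(\Tot{s}) \too 0.
\]
By the first part $\pi_{t+1}(\Tot{s})$ is finite for every $s$, so the $\lim^1$ term vanishes and we obtain the desired isomorphism. The main technical obstacle is identifying $N^s$ cleanly enough to conclude both $s$-connectivity and degreewise finiteness; once this standard bar-construction computation over the connective ring $\FF_p$ is in hand, the remaining claims are formal consequences of the Milnor sequence and Adams-tower convergence.
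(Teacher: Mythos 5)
Your proof is correct, but it takes a genuinely different route from the paper in two of the three parts. For finiteness, the paper simply observes that $\Tot{s}$ is a finite limit (over a truncation of $\Delta$) of the level spectra $\FF_p^{\otimes_{\Sph}(k+1)}$, each of which has finite homotopy by the computation of the dual Steenrod algebra, and then invokes closure of finiteness under finite limits; you instead run an induction along the Tot tower using the normalized-cochain fiber sequence $\Omega^s N^s \to \Tot{s}\to\Tot{s-1}$ together with connectivity of $\bar{\mathcal{A}}^\vee$ and K\"{u}nneth over $\FF_p$. For locality, the paper argues categorically: $(-)^\times$ is a right adjoint, so it commutes with the limit defining $\Tot{s}$, and an element of $\pi_0(\Tot{s})$ is a unit iff its image in each $\pi_0(\FF_p^{\otimes_\Sph(k+1)})\simeq\FF_p$ is nonzero, exhibiting the kernel of $\pi_0(\Tot{s})\to\FF_p$ as the unique maximal ideal. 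You instead pin down $\pi_0(\Tot{s})$ explicitly as $\ZZ/p^{k(s)}$ via surjectivity of the tower and the identification $\lim_s\pi_0(\Tot{s})\simeq\ZZ_p$. Both are valid; your computation is more precise (you identify the actual ring) but at the cost of invoking the Adams-tower convergence and Milnor sequence already for the locality step, whereas the paper's argument is local to each stage and defers those convergence inputs entirely to the final claim. Interestingly, the paper explicitly remarks that the lemma can alternatively be established ``from a more computational perspective'' via the multiplicative spectral sequence of the finite filtration by lower partial totalizations --- your argument is close in spirit to that alternative, though phrased through fiber sequences rather than a spectral sequence. One small slip: the fiber sequence $\Omega^s N^s\to\Tot{s}\to\Tot{s-1}$ lives in $\Sp$, not in $\calg(\Sp)$ under $\Sph_{(p)}$ --- the map $\Tot{s}\to\Tot{s-1}$ is a ring map, but its fiber is not a ring.
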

\begin{proof}
    By the computation of the dual Steenrod algebra, the ring spectrum  $\mathcal{A}^\vee = \FF_p \otimes_\Sph \FF_p$ has finite homotopy groups and $\pi_0 \mathcal{A}^\vee \simeq \FF_p$. 
    Hence, the same holds for
    $\FF_p^{\otimes_\Sph (k+1)} \simeq (\mathcal{A}^\vee)^{\otimes_{\FF_p} k}$ for every $k \ge 0$.  Now, each 
    $\Tot{s}$
    can be written as a finite limit of the rings $\FF_p^{\otimes_\Sph (k+1)}$ for $k=0,\dots s$. From this we deduce that the homotopy groups of $\Tot{s}$ are also finite. Moreover, since the functor of units $(-)^\times$ is a right adjoint, it preserves limits. It follows that an element in (the underlying space of) $\Tot{s}$ is invertible if and only if its image is invertible in each $\FF_p^{\otimes_\Sph (k+1)}$. We conclude that $\Tot{s}$ is local with residue field $\FF_p$. 
    Alternatively, from a more computational perspective, the above facts may be deduced from the multiplicative spectral sequence arising from the finite filtration of $\Tot{s}$ by the lower partial totalizations.
    
    Finally, by the convergence of the Adams spectral sequence at $\Sph_{p}$, we have $\invlim \Tot{s} \simeq \Sph_p$. Since all the homotopy groups of all the $\Tot{s}$-s are finite, the Mittag-Leffler condition is satisfied and therefore for all $t \ge0$, we have $\invlim \pi_t(\Tot{s}) \simeq \pi_t(\Sph_p)$.
\end{proof}

Next, we show that the Brown--Comenetz duals of (the truncations of) $\Tot{s}$ also approximate the Brown--Comenetz dual of (the truncations of) $\Sph_{(p)}$.

\begin{lem}\label{Spherical_Dual_Approx}
    For all $0 \le d \le n$, the augmented tower 
    \[
        \tau_{\le d}(\Sph_{(p)}) \too (\dots \too
        \tau_{\le d}(\Tot{2}) \too 
        \tau_{\le d}(\Tot{1}) \too 
        \tau_{\le d}(\Tot{0})) ,
    \]
    induces an isomorphism 
    \[
        \colim_s (\Dual{\tau_{\le d}(\Tot{s})}{n}) \iso 
        \Dual{\tau_{\le d}(\Sph_{(p)})}{n}
        \qin \Sp^\cn.
    \]
\end{lem}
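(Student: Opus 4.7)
The plan is to verify the claimed isomorphism degree by degree on homotopy groups. By \Cref{Sph_Tot}, each $\Tot{s}$ has finite homotopy groups, so every $\tau_{\le d}(\Tot{s})$ is connective, $d$-truncated and $\pi$-finite (in particular $[0,n]$-finite, since $d \le n$). Consequently each $\Dual{\tau_{\le d}(\Tot{s})}{n}$ is itself $[0,n]$-finite with
\[
    \pi_k\Dual{\tau_{\le d}(\Tot{s})}{n} \:\simeq\: \hom_\Ab(\pi_{n-k}(\Tot{s}), \QQ_p/\ZZ_p) \qquad\text{for } n-d \le k \le n,
\]
and $0$ otherwise. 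Since filtered colimits in $\Sp$ commute with homotopy groups, the $k$-th homotopy group of the left hand side is $\colim_s \hom_\Ab(\pi_{n-k}\Tot{s}, \QQ_p/\ZZ_p)$, while the $k$-th homotopy group of the right hand side is $\hom_\Ab(\pi_{n-k}\Sph_{(p)}, \QQ_p/\ZZ_p)$; both vanish outside the range $n-d \le k \le n$.

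The next step will be to pass the colimit through the Pontryagin dual. The tower $\{\pi_{n-k}\Tot{s}\}$ consists of finite abelian groups with inverse limit $\pi_{n-k}\Sph_p$ (again by \Cref{Sph_Tot}), so Pontryagin duality for profinite versus discrete torsion abelian groups yields a canonical isomorphism
\[
    \colim_s \hom_\Ab(\pi_{n-k}\Tot{s}, \QQ_p/\ZZ_p) \:\iso\: \hom_{\mathrm{cts}}(\pi_{n-k}\Sph_p, \QQ_p/\ZZ_p),
\]
naturally compatible with the maps induced by $\Sph_{(p)} \to \Sph_p \simeq \invlim_s \Tot{s}$. It therefore remains to identify this continuous dual with $\hom_\Ab(\pi_{n-k}\Sph_{(p)}, \QQ_p/\ZZ_p)$ for $0 \le n-k \le d$.

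For $n-k \ge 1$, this is immediate: by Serre's finiteness theorem $\pi_{n-k}\Sph$ is finite, so $\pi_{n-k}\Sph_{(p)} \simeq \pi_{n-k}\Sph_p$ is a finite $p$-group, and continuity is automatic. The delicate case, which will be the main obstacle, is $n-k = 0$, where $\pi_0\Sph_{(p)} = \ZZ_{(p)}$ is strictly smaller than $\pi_0\Sph_p = \ZZ_p$. Here one checks directly that evaluation at $1$ yields canonical isomorphisms
\[
    \hom_\Ab(\ZZ_{(p)}, \QQ_p/\ZZ_p) \:\iso\: \QQ_p/\ZZ_p \:\iso\: \hom_{\mathrm{cts}}(\ZZ_p, \QQ_p/\ZZ_p),
\]
the second one because the image of $1$ under any continuous hom lies in the torsion group $\QQ_p/\ZZ_p$ (so continuity is automatic for any abstract hom out of $\ZZ_p$ into $\QQ_p/\ZZ_p$), and the composite is induced by $\ZZ_{(p)} \to \ZZ_p$. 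Stringing together all the identifications, naturality in $s$ ensures that the resulting isomorphism on each $\pi_k$ is the one induced by the assembly map of the claim, completing the proof.
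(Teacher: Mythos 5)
Your proof takes essentially the same route as the paper: compute homotopy groups on both sides via the defining property of $\Dual{-}{n}$, then pass the filtered colimit through the Pontryagin dual of the profinite system $\{\pi_t\Tot{s}\}$. The chain of identifications in the delicate $n-k=0$ case, namely $\hom_\Ab(\ZZ_{(p)}, \QQ_p/\ZZ_p) \simeq \QQ_p/\ZZ_p \simeq \hom_{\mathrm{cts}}(\ZZ_p, \QQ_p/\ZZ_p)$ via evaluation at $1$, is correct and is exactly the point the paper flags in \cref{Cnt_Dual}.

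However, you should excise the parenthetical ``so continuity is automatic for any abstract hom out of $\ZZ_p$ into $\QQ_p/\ZZ_p$'' --- it is false. Since $\QQ_p/\ZZ_p$ is injective, the exact sequence $0 \to \ZZ \to \ZZ_p \to \ZZ_p/\ZZ \to 0$ presents $\hom_\Ab(\ZZ_p, \QQ_p/\ZZ_p)$ as an extension of $\QQ_p/\ZZ_p$ by $\hom_\Ab(\ZZ_p/\ZZ, \QQ_p/\ZZ_p)$, and the latter is nonzero: $\ZZ_p/\ZZ$ is divisible with a large $\QQ$-vector space summand, and $\hom_\Ab(\QQ, \QQ_p/\ZZ_p)$ is itself a nontrivial extension of $\QQ_p/\ZZ_p$ by $\ZZ_p$. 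So there are many non-continuous homomorphisms $\ZZ_p \to \QQ_p/\ZZ_p$. Fortunately your argument never uses this false claim: you only need that evaluation at $1$ identifies the \emph{continuous} dual with $\QQ_p/\ZZ_p$, which is correct, and the composite with $\hom_\Ab(\ZZ_{(p)}, \QQ_p/\ZZ_p) \simeq \QQ_p/\ZZ_p$ is indeed induced by $\ZZ_{(p)} \to \ZZ_p$. One further place where the paper is a bit more careful than you: it justifies why the pro-system $\{\pi_t\Tot{s}\}$ (whose transition maps need not be surjective) nevertheless computes the standard profinite topology on $\pi_t\Sph_p$, so that the duality applies, by noting that $\ZZ_p$ and all finite abelian groups are topologically finitely generated. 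Your appeal to ``Pontryagin duality for profinite versus discrete torsion abelian groups'' tacitly assumes this compatibility of pro-objects but does not address it.
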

\begin{proof}
    By \Cref{Sph_Tot}, for all $t \ge0$,
    \[
        \invlim_s (\pi_t\Tot{s}) \simeq 
        \pi_t\Sph_p =
            \begin{cases}
                \ZZ_p & t = 0 \\
                p\mathrm{-finite} & t > 0.
            \end{cases}
    \]
    Since both $\ZZ_p$ and all finite abelian groups are topologically finitely generated, we deduce that the above isomorphism holds as pro-finite abelian groups, and therefore induces an isomorphism on continuous Pontryagin duals. This implies that the composition 
    \[
        \colim_s \hom_\Ab(\pi_t\Tot{s},\QQ_p/\ZZ_p) \to
        \hom_\Ab(\pi_t \Sph_p,\QQ_p/\ZZ_p) \to 
        \hom_\Ab(\pi_t \Sph_{(p)},\QQ_p/\ZZ_p)
    \]
    is an isomorphism (see \Cref{Cnt_Dual}), which implies the claim.
\end{proof}

We deduce that the (truncated) spherical cyclotomic extension can be well approximated by the (truncated) $\Tot{s}$-cyclotomic extensions. 

\begin{prop}\label{Spherical_Cyc_Approx}
    Let $\cC \in \calg(\Prsad)$. For all $0 \le d \le n$, there is an isomorphism 
    \[
        \orcyc{\tau_{\le d}\Sph_{(p)}}{n} \:\simeq\:
        \colim \orcyc{\tau_{\le d}\Tot{s}}{n} 
        \qin \calg(\cC).
    \]
\end{prop}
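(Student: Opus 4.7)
The plan is to reduce this isomorphism to the approximation result for Brown--Comenetz duals (\Cref{Spherical_Dual_Approx}) by rewriting both sides via the base-change formula for cyclotomic extensions under strict maps of local rings (\Cref{local_map_universal_tensor}).

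First, I would verify the strictness hypotheses. By \Cref{Sph_Tot}, each $\Tot{s}$ is local with residue field $\FF_p$, and $\Sph_{(p)}$ is likewise local with residue field $\FF_p$. Truncation preserves these properties (since $\tau_{\le d}$ with $d\ge 0$ does not alter $\pi_0$), so each of the residue maps
\[
\tau_{\le d}\Sph_{(p)} \too \FF_p \qquad\text{and}\qquad \tau_{\le d}\Tot{s} \too \FF_p
\]
is a strict map in $\calg(\Sp_{(p)}^\cn)$. Moreover, the tower maps $\tau_{\le d}\Tot{s+1} \to \tau_{\le d}\Tot{s}$ and the augmentations $\tau_{\le d}\Sph_{(p)} \to \tau_{\le d}\Tot{s}$ are strict, being compatible over $\FF_p$.

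Next, I would apply \Cref{local_map_universal_tensor} to each of these strict maps, obtaining natural isomorphisms
\[
\orcyc{\tau_{\le d}\Sph_{(p)}}{n} \simeq \one[\Dual{\tau_{\le d}\Sph_{(p)}}{n}] \otimes_{\one[\Dual{\FF_p}{n}]} \orcyc{\FF_p}{n},
\]
\[
\orcyc{\tau_{\le d}\Tot{s}}{n} \simeq \one[\Dual{\tau_{\le d}\Tot{s}}{n}] \otimes_{\one[\Dual{\FF_p}{n}]} \orcyc{\FF_p}{n},
\]
compatibly with the tower.

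Finally, I would commute the colimit through these expressions. The group algebra functor $\one[-]\colon \Sp^\cn \to \calg(\cC)$ is a left adjoint to $(-)^\times$ and hence preserves colimits; combined with \Cref{Spherical_Dual_Approx} this gives
\[
\colim_s \one[\Dual{\tau_{\le d}\Tot{s}}{n}] \simeq \one\bigl[\colim_s \Dual{\tau_{\le d}\Tot{s}}{n}\bigr] \simeq \one[\Dual{\tau_{\le d}\Sph_{(p)}}{n}].
\]
Since the relative tensor product in $\calg(\cC)$ preserves filtered colimits in each variable, we conclude
\[
\colim_s \orcyc{\tau_{\le d}\Tot{s}}{n} \simeq \one[\Dual{\tau_{\le d}\Sph_{(p)}}{n}] \otimes_{\one[\Dual{\FF_p}{n}]} \orcyc{\FF_p}{n} \simeq \orcyc{\tau_{\le d}\Sph_{(p)}}{n}.
\]
No step here is really an obstacle—all the substantive work has already been absorbed into \Cref{Spherical_Dual_Approx} (the pro-finiteness argument ensuring that continuous Pontryagin duals are computed correctly in the filtered colimit) and \Cref{local_map_universal_tensor} (which requires higher semiadditivity of $\cC$, guaranteed here by the hypothesis $\cC\in\calg(\Prsad)$). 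The only care needed is to make sure the various base-change identifications are assembled compatibly as $s$ varies, which is automatic from the naturality of \Cref{local_map_universal_tensor}.
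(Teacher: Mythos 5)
Your proof is correct and follows essentially the same route as the paper's: apply the colimit-preserving group-algebra functor $\one[-]$ to the isomorphism from \Cref{Spherical_Dual_Approx}, then use \Cref{local_map_universal_tensor} to write each cyclotomic extension as a base change of $\orcyc{\FF_p}{n}$ along $\one[\Dual{\FF_p}{n}]$, and commute the filtered colimit through the relative tensor product. The only difference is cosmetic: you spell out the strictness verifications (residue fields $\FF_p$, truncation preserving locality) that the paper leaves implicit, which is fine.
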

\begin{proof}
    Applying the colimit preserving functor $\one[-]$ to the isomorphism in \Cref{Spherical_Dual_Approx}, we get 
    \[
        \colim \one[\Dual{\tau_{\le d}\Tot{s}}{n}] \iso 
        \one[\Dual{\tau_{\le d}\Sph_{(p)}}{n}] 
        \qin \calg(\cC).
    \]
    Since all the maps in the augmented tower are strict maps of local rings, by \Cref{local_map_universal_tensor}, we get the desired isomorphism by tensoring with $\orcyc{\FF_p}{n}$ over $\one[\Dual{\FF_p}{n}]$.
\end{proof}

\begin{thm}\label{Spherical_Cyc_Galois}
    Let $\cC \in \calg(\Prsad)$ be virtually $(\FF_p.n)$-orientable. For every $0 \le d \le n$, the truncated spherical cyclotomic extension $\orcyc{\tau_{\le d}\Sph_{(p)}}{n}$ is pro-Galois with respect to the pro-$\pi$-finite group $\tau_{\le d}\Sph_p^\times = \invlim_s (\tau_{\le d}\Tot{s}^\times)$.
\end{thm}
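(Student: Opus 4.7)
The plan is to bootstrap the pro-Galois property from the individual finite stages $\tau_{\le d}\Tot{s}$, which are $\pi$-finite local rings to which the Galois criterion of \Cref{Orcyc_Galois} applies, using the colimit presentation provided by \Cref{Spherical_Cyc_Approx}.

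First, I would verify that each finite stage is Galois. By \Cref{Sph_Tot}, $\Tot{s}$ is a local ring spectrum with finite homotopy groups and residue field $\FF_p$, so its $d$-truncation $\tau_{\le d}\Tot{s}$ is a $d$-truncated (in particular $n$-truncated, since $d\le n$) $\pi$-finite local ring with residue field $\FF_p$. Since $\cC$ is virtually $(\FF_p,n)$-orientable, \Cref{Virt_Fp_Bootstrap} yields that $\cC$ is virtually $(\tau_{\le d}\Tot{s}, n)$-orientable. By \Cref{virtually_oriented_universal_faithful}, this means that $\orcyc{\tau_{\le d}\Tot{s}}{n}$ is faithful. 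Invoking \Cref{Orcyc_Galois}, which applies since $\cC$ is $\infty$-semiadditive (hence $(n+1)$-semiadditive) and $\tau_{\le d}\Tot{s}$ is $n$-finite, we conclude that $\orcyc{\tau_{\le d}\Tot{s}}{n}$ is $(\tau_{\le d}\Tot{s})^\times$-Galois.

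Second, I would assemble these Galois extensions into a compatible tower. The transition maps $\Tot{s+1}\to \Tot{s}$ are strict maps of local rings with residue field $\FF_p$, and after truncating remain such. By \Cref{local_map_universal_tensor}, these induce maps $\orcyc{\tau_{\le d}\Tot{s}}{n}\to \orcyc{\tau_{\le d}\Tot{s+1}}{n}$ intertwining the Galois actions along the map of groups $(\tau_{\le d}\Tot{s+1})^\times\to (\tau_{\le d}\Tot{s})^\times$. Passing to the filtered colimit, \Cref{Spherical_Cyc_Approx} identifies this colimit with $\orcyc{\tau_{\le d}\Sph_{(p)}}{n}$. Since $(-)^\times$ preserves limits and each $\tau_{\le d}\Tot{s}$ has finitely many finite homotopy groups, the convergence $\Sph_p\simeq \invlim_s \Tot{s}$ from \Cref{Sph_Tot} (combined with Mittag-Leffler) yields $\tau_{\le d}\Sph_p^\times \simeq \invlim_s (\tau_{\le d}\Tot{s})^\times$, establishing the claimed identification of the limiting Galois group.

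The main conceptual obstacle is the pro-Galois interpretation itself: one must verify that the colimit of a compatible tower of $G_s$-Galois extensions, with transition maps equivariant along a compatible tower of group maps $G_{s+1}\to G_s$, is pro-Galois for the pro-group $G=\invlim_s G_s$. Concretely, this amounts to checking that the unit and multiplication conditions (G1) and (G2) of \Cref{def:Galois} are exhibited at the level of this inverse system, which reduces to the fact that the cyclotomic extension functor $\orcyc{-}{n}$ is compatible with filtered colimits of strict maps of $n$-finite local ring spectra (an instance of \Cref{Orcyc_Functorial} and \Cref{local_map_universal_tensor}), together with the fact that the $G_s$-equivariance at each stage is induced by scaling of orientations in the sense of \Cref{def:Por_Scaling}, which is manifestly compatible with the tower.
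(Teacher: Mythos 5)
Your proposal is correct and follows essentially the same route as the paper's proof: establish that each finite stage $\orcyc{\tau_{\le d}\Tot{s}}{n}$ is $(\tau_{\le d}\Tot{s})^\times$-Galois via \Cref{Sph_Tot}, \Cref{Virt_Fp_Bootstrap}, \Cref{virtually_oriented_universal_faithful} and \Cref{Orcyc_Galois}; observe that the transition maps are compatible; and identify the inverse limit of the Galois groups using \Cref{Sph_Tot} together with the fact that truncation and $(-)^\times$ preserve limits. You spell out the pro-Galois bookkeeping a bit more carefully than the paper, but the underlying argument is the same.
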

\begin{proof}
    Each $\tau_{\le d}\Tot{s}$ is $n$-truncated and $\pi$-finite. Hence, by \cref{Virt_Fp_Bootstrap}, $\cC$ is virtually $(\Tot{s},n)$-orientable, and by \Cref{Orcyc_Galois}, $\orcyc{\tau_{\le d}\Sph_{(p)}}{n}$ is $\tau_{\leq d}\Tot{s}^\times$-Galois. By naturality, these actions are compatible when $s$ varies. It remains to observe that the limit of $\tau_{\le d}\Tot{s}^\times$ is $\tau_{\le d}\Sph_p^\times$. Indeed, by \Cref{Sph_Tot}, $\invlim \tau_{\le d}\Tot{s} \simeq \tau_{\le d}\Sph_p$, and taking the spectrum of units $(-)^\times$ preserves limits.
\end{proof}

\subsubsection{Categorical connectedness}

To study the problem of constructing truncated spherical orientations we introduce the following notion:

\begin{defn}
    Let $\cC\in \calg(\Prl)$ and let $d\ge -2$ be an integer. We say that,
    \begin{enumerate}
        \item A space $A$ is \tdef{$\cC$-reflective} if the canonical map  
        \[
            A \too \Map_{\calg(\cC)}(\one^A,\one) 
            \qin \Spc
        \]
        is an isomorphism.
        
        \item $\cC$ is said to be \tdef{$d$-connected} at a prime $p$ if every $d$-finite $p$-space $A$ is $\cC$-reflective.
    \end{enumerate} 
\end{defn}
 
\begin{rem}
    If $A$ happens to be $\cC$-affine, then by \Cref{Galois_Affine}, we have an isomorphism
    \[
\Map_{\calg(\cC)}(\one^A,\one) \:\simeq\:
        \calg^{A-\gal}(\cC)
    \]
    and the canonical inclusion of $A$ into the above space corresponds to the \textit{trivial} $A$-Galois extensions. Thus, $A$ is $\cC$-reflective if and only if all the $A$-Galois extensions of $\one$ are trivial. Furthermore, if \textit{every} $d$-finite $p$-space $A$ is $\cC$-affine, for example if $\cC$ is virtually $(\FF_p,n)$-orientable for some $n\ge d$ (\Cref{p_Affineness}), then $\cC$ is $d$-connected at $p$ if and only if it has no non-trivial Galois extensions over such spaces. 
\end{rem} 
 
The first few values of $d$ recover some familiar notions:

\begin{example}
    First, since $\one^{\pt}= \one$ is initial in $\calg(\cC)$, every $\cC$ is ($-2$)-connected. Second, since $\one^{\emptyset} = 0$ is the zero ring, $\cC$ is ($-1$)-connected if and only if $\cC \neq 0$. 
\end{example}

Next, we have the following characterization of $0$-connectedness:
    
\begin{prop}\label{0_connected}
For $\cC\in \calg(\Prl^{\sad{0}})$, the following are equivalent:
    \begin{enumerate}
        \item $\cC$ is $0$-connected.
        \item $\one_\cC$ is indecomposable.
        \item For every decomposition $1 = \varepsilon + \delta$  in $\pi_0(\one_\cC)$ such that $\varepsilon$ and $\delta$ are idempotents with $\varepsilon\delta = 0$, we have either $\varepsilon = 1$ and $\delta = 0$ or vice versa.
    \end{enumerate}
\end{prop}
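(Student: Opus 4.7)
The plan is to establish $(2) \Leftrightarrow (3)$ as a standard ring-theoretic reformulation, and then prove $(1) \Leftrightarrow (2)$ by identifying $\Map_{\calg(\cC)}(\one^A, \one)$ with the set of idempotent decompositions of $\one_\cC$.

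For $(2) \Leftrightarrow (3)$, I would invoke the classical characterisation of indecomposable commutative algebras: since the presentable $\infty$-category $\cC$ is idempotent complete, every pair of orthogonal idempotents $\varepsilon, \delta \in \pi_0(\one_\cC)$ with $\varepsilon + \delta = 1$ and $\varepsilon\delta = 0$ splits $\one_\cC$ as a product $\one_\cC[\varepsilon^{-1}] \times \one_\cC[\delta^{-1}]$ in $\calg(\cC)$, with both factors non-zero exactly when $\varepsilon, \delta$ are non-trivial. Conversely, any non-trivial decomposition $\one_\cC \simeq R_1 \times R_2$ yields such a pair as the images of $(1,0)$ and $(0,1)$.

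For $(1) \Leftrightarrow (2)$, I would first identify, for any finite set $A$, the space $\Map_{\calg(\cC)}(\one^A, \one)$ as the discrete set of $A$-indexed orthogonal idempotent systems $(\varepsilon_a)_{a\in A}$ in $\pi_0(\one_\cC)$ satisfying $\sum_a \varepsilon_a = 1$ and $\varepsilon_a \varepsilon_b = 0$ for $a \ne b$. Here $0$-semiadditivity identifies $\one^A$ with the product $\prod_{a \in A} \one$ in $\calg(\cC)$, and the well-known discreteness of the space of idempotents in a commutative algebra in $\cC$ (they split uniquely) identifies the mapping space as claimed. Under this identification, the canonical map $A \to \Map_{\calg(\cC)}(\one^A, \one)$ sends $a_0$ to the system with $\varepsilon_{a_0} = 1$ and all other entries zero.

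With the mapping space so described, $(2) \Rightarrow (1)$ becomes immediate: indecomposability of $\one_\cC$ forces every orthogonal system summing to $1$ to have exactly one entry equal to $1$, so the canonical map from any finite $p$-set $A$ is an equivalence, and hence every $0$-finite $p$-space is $\cC$-reflective. For $(1) \Rightarrow (2)$, I would argue contrapositively: a non-trivial idempotent $\varepsilon$ with complement $\delta := 1 - \varepsilon$ would yield a non-canonical orthogonal idempotent system on any $0$-finite $p$-space $A$ with $|A| \ge 2$ (for instance, a set of cardinality $p$) by setting two distinct entries to $\varepsilon$ and $\delta$ and the rest to zero, contradicting reflectivity of $A$. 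The main technical point I expect to need care with is the discreteness of $\Map_{\calg(\cC)}(\one^A, \one)$ rather than merely its identification on $\pi_0$; I would treat this as a consequence of the standard fact that the space of idempotents in a commutative algebra in a symmetric monoidal $\infty$-category is discrete.
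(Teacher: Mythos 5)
Your proof is correct and follows essentially the same route as the paper: both hinge on identifying $\Map_{\calg(\cC)}(\one^A,\one)$ with the discrete set of $A$-indexed orthogonal idempotent decompositions of $\one$, with condition (3) forcing exactly one entry of each system to equal $1$. The only difference is organizational — the paper runs the cycle $(1)\Rightarrow(2)\Rightarrow(3)\Rightarrow(1)$ and obtains the discreteness of the mapping space from the decomposition $\Map_{\calg(\cC)}(\one^A,\one)\simeq\coprod_{a\in A}\Map_{\calg(\cC)}(\one^A[\varepsilon_a^{-1}],\one)\simeq\coprod_{a\in A}\pt$ rather than invoking discreteness of the space of idempotents as a standing fact, as you do.
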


\begin{proof}
    To show that (1) implies (2), we assume by contradiction that $\one$ decomposes as $\one \simeq R\times S $ with both $R$ and $S$ non-zero. Then, the set 
    \begin{align*}
        \pi_0\Map_{\calg(\cC)}(\one\times \one ,\one) & \simeq  \pi_0\Map_{\calg(\cC)}(R^2\times S^2 ,R \times S) \\
        & \simeq \pi_0\Map_{\calg(\cC)}(R^2\times S^2 ,R )\times \pi_0\Map_{\calg(\cC)}(R^2\times S^2 , S)
    \end{align*} 
    contains at least 4 elements. It follows that the set with two elements is not $\cC$-reflective, contradicting (1). 

    Now, (2) implies (3) because for every pair of idempotents $\varepsilon,\delta \in \pi_0(\one)$ as in (3), we have a decomposition of commutative rings
    $\one \simeq \one[\varepsilon^{-1}]\times \one[\delta^{-1}]$, and (2) implies that one of the factors is zero and hence $\varepsilon = 1$ and $\delta = 0$ or vice versa. 

    It remains to prove that (3) implies (1). For a finite set $A$, the ring $\pi_0(\one^A)$ admits a collection of orthogonal idempotents $\varepsilon_a \in \pi_0(\one^A)$ for $a\in A$, such that $\sum_{a\in A}\varepsilon_a= 1$. Thus, by (3),  every map $\one^A \to \one$
    has to send exactly one of the $\varepsilon_a$-s to $1 \in \pi_0(\one)$. We get 
    \[
        \Map_{\calg(\cC)}(\one^A ,\one) \simeq  \coprod_{a\in A} \Map_{\calg(\cC)}(\one^A[\varepsilon_a^{-1}] ,\one) \simeq \coprod_{a\in A} \Map_{\calg(\cC)}(\one ,\one) \simeq A.\qedhere
    \] 
\end{proof}

\begin{example}
    For an ordinary commutative ring $R$, \Cref{0_connected} implies that the $\infty$-category $\Mod_R$ is $0$-connected if and only if the scheme $\Spec(R)$ is connected. 
\end{example}

As an application of \cref{0_connected}, we deduce that $0$-connectedness interacts well with categorification. 

\begin{cor}\label{lem:0_connected_cat}
    Let $\cC\in \calg(\Prl^{\sad{0}})$. Then, $\cC$ is $0$-connected if and only if $\Mod_{\cC}$ is $0$-connected.
\end{cor}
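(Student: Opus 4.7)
The plan is to reduce the statement, via \cref{0_connected}, to the equivalence between indecomposability of $\one_\cC$ in $\calg(\cC)$ and indecomposability of $\one_{\Mod_\cC}= \cC$ in $\calg(\Mod_\cC)$. To apply \cref{0_connected} to $\Mod_\cC$, we first note that by \cref{Convention} and \cref{Mod_C_Sadd} (together with the fact that $0$-semiadditivity is a special case of $\infty$-semiadditivity), $\Mod_\cC$ lies in $\calg(\Prl_\kappa^{\sad{0}})$ for a suitable $\kappa$; the proof of \cref{0_connected} only uses the symmetric monoidal structure and semiadditivity, so it carries over to this setting without modification.

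For the forward direction, I would assume $\cC$ is $0$-connected and suppose for contradiction that $\cC \simeq \cD \times \cE$ in $\calg(\Mod_\cC)$ with both $\cD$ and $\cE$ nonzero. Applying the right adjoint $\End(\one_{-}) \colon \calg(\Mod_\cC) \to \calg(\cC)$ of $\Mod_{(-)}$, which preserves products, yields $\one_\cC \simeq \End(\one_\cD) \times \End(\one_\cE)$ in $\calg(\cC)$. Each factor is nonzero: if $\End(\one_\cD) = 0$, then the identity map of $\one_\cD$ would be zero, forcing $\one_\cD = 0$ and hence $\cD = 0$. This contradicts the indecomposability of $\one_\cC$.

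For the backward direction, assume $\Mod_\cC$ is $0$-connected and that $\one_\cC \simeq R \times S$ is a decomposition in $\calg(\cC)$. I would use the standard fact that for a product algebra $R \times S$, splitting every module via the orthogonal idempotents $(1,0)$ and $(0,1)$ produces a symmetric monoidal equivalence of $\cC$-linear $\infty$-categories
\[
\Mod_{R\times S}(\cC) \:\simeq\: \Mod_R(\cC)\times \Mod_S(\cC).
\]
Combined with $\cC\simeq \Mod_{\one_\cC}(\cC)$, this would produce a decomposition of $\cC$ in $\calg(\Mod_\cC)$ whose factors are nonzero whenever $R$ and $S$ are (since $R\in \Mod_R(\cC)$ and $S\in \Mod_S(\cC)$ are nonzero), contradicting $0$-connectedness of $\Mod_\cC$ via \cref{0_connected}.

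The main step requiring some care is the backward direction's identification $\Mod_{R\times S}(\cC)\simeq \Mod_R(\cC)\times \Mod_S(\cC)$ as symmetric monoidal $\cC$-linear $\infty$-categories. This is a classical fact for ordinary rings, and in our setting follows from the observation that $R$ and $S$, viewed as commutative algebras in $\Mod_{R\times S}(\cC)$, are orthogonal tensor-idempotents whose tensor product is $\one$ (and hence induce a symmetric monoidal product decomposition). Everything else is a direct application of \cref{0_connected} together with the adjunction $\Mod_{(-)} \dashv \End(\one_{-})$ from \cref{ssec:categorification}.
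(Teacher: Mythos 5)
Your proposal is correct and takes essentially the same approach as the paper. In the forward direction, the paper observes directly that the unit $\one_\cC = (\one_{\cC_0}, \one_{\cC_1})$ splits as $(\one_{\cC_0}, 0) \times (0, \one_{\cC_1})$ in $\calg(\cC_0 \times \cC_1)$; you reach the same conclusion by applying the right adjoint $\End(\one_{-})$ to the product decomposition, which is a mildly more formal but entirely equivalent route. In the backward direction both you and the paper invoke the decomposition $\Mod_{R_0 \times R_1}(\cC) \simeq \Mod_{R_0}(\cC) \times \Mod_{R_1}(\cC)$; the paper attributes this to \cite[Proposition~5.1.11]{AmbiHeight} while you sketch the idempotent-splitting argument directly. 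One small caution: this decomposition genuinely needs the $0$-semiadditivity of $\cC$ (so that the binary product of algebras is also a coproduct, making $R_0$ and $R_1$ orthogonal idempotent algebras with tensor product $\one$). You do set up $0$-semiadditivity of $\Mod_\cC$ at the outset, but when you invoke the ``classical fact'' you should make the dependence on $0$-semiadditivity of $\cC$ itself explicit, or simply cite the same reference the paper uses.
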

    
\begin{proof}
By \Cref{0_connected} it suffices to show that $\one$ is indecomposable in $\cC$ if and only if $\cC$ is indecomposable in $\Mod_\cC$. First, assume $\one$ is indecomposable and let $\cC= \cC_0\times \cC_1$ in $\calg(\Mod_\cC)$. We get 
    \[
        (\one_{
\cC_0} ,0) \times (0,\one_{\cC_1})= (\one_{
        \cC_0},\one_{
\cC_1})= \one_{\cC} \in \calg(\cC)
    \]
By our assumption either $\one_{\cC_0}= 0$ or $\one_{\cC_1} = 0$ and hence either $\cC_0\simeq 0$ or $\cC_1 \simeq 0$.
    
    Conversely, given $R \simeq R_0\times R_1$ in $\calg(\cC)$, then as in the proof of \cite[Proposition 5.1.11]{AmbiHeight}, the 0-semiadditivity  of $\cC$ implies that  
    \[
        \cC \simeq \Mod_{R_0}(\cC) \times \Mod_{R_1}(\cC). 
    \] 
By the indecomposability of $\cC$, we get either $\Mod_{R_0}(\cC)= 0$ or $\Mod_{R_1}(\cC) = 0$, which implies either $R_0 = 0$ or $R_1 = 0$. 
\end{proof}

The higher notions of connectedness depend on the prime $p$ and are more subtle. A classical example is provided by Mandell's theorem \cite{mandell}.
\begin{example}[Mandell]
    The $\infty$-category $\Mod_{\cl{\mathbb{F}}_p}$ is $\infty$-connected at $p$.
\end{example}

\subsubsection{Connectedness and higher semiadditivity}

We shall, however, be interested in higher connectedness primarily in the higher semiadditive setting. In this case, we first observe that the semiadditive height gives an \textit{upper bound} on connectedness.

\begin{prop}\label{Conn_Height}
    Let $\cC \in \calg(\Prsad)$. If the height of $\cC$ at $p$ is $\le n$, then $\cC$ is not ($n+1$)-connected at $p$. 
\end{prop}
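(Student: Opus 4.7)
The plan is to produce an explicit $(n{+}1)$-finite $p$-space that fails to be $\cC$-reflective, namely $A = B^{n+1}C_p$. This space is $(n{+}1)$-truncated with a single nontrivial homotopy group $C_p$ in degree $n{+}1$, so it qualifies as $(n{+}1)$-finite and $p$-primary.

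The key input is that, under the hypothesis that $\cC$ has height $\le n$ at $p$, the higher-semiadditive ``amenability'' criterion \cite[Proposition 3.2.1]{AmbiHeight} (also used in the proof of \Cref{Affiness_Height}(2)) yields a canonical isomorphism
\[
\one \:\iso\: \one^{B^{n+1}C_p} \qin \calg(\cC).
\]
Consequently
\[
\Map_{\calg(\cC)}(\one^{B^{n+1}C_p},\one) \:\simeq\:
\Map_{\calg(\cC)}(\one,\one) \:\simeq\: \pt,
\]
the last isomorphism holding because $\one$ is initial in $\calg(\cC)$. The canonical comparison map from the definition of reflectivity is therefore a map
\[
B^{n+1}C_p \too \pt,
\]
which is not an isomorphism since $\pi_{n+1}(B^{n+1}C_p) \simeq C_p \neq 0$. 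Hence $B^{n+1}C_p$ is not $\cC$-reflective, and so $\cC$ is not $(n{+}1)$-connected at $p$.

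I do not expect any real obstacles here: the statement is essentially a direct unpacking of the definitions against the amenability fact cited above. The only mild subtlety is the degenerate case $\cC = 0$ (which by convention has height $-1 \le n$); in that case $\Map_{\calg(\cC)}(\one,\one) \simeq \pt$ still holds, while $B^{n+1}C_p$ is still non-contractible, so the same argument applies uniformly.
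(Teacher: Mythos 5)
Your proof is correct and takes essentially the same route as the paper: both use the space $B^{n+1}C_p$, invoke \cite[Proposition 3.2.1]{AmbiHeight} to get $\one^{B^{n+1}C_p} \simeq \one$, note that $\one$ is initial in $\calg(\cC)$ so the mapping space is contractible, and conclude that the canonical comparison map out of the non-contractible $B^{n+1}C_p$ cannot be an isomorphism. The aside on the degenerate case $\cC = 0$ is harmless but not needed.
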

\begin{proof}
Since the height of $\cC$ at $p$ is at most $n$, the $n$-connected space $B^{n+1}C_p$ is $\cC$-acyclic, so that $\one^{B^{n+1}{C_p}}= \one$, see \cite[Proposition 3.2.1]{AmbiHeight}. Thus, the canonical map 
    \[
B^{n+1}{C_p} \to \Map_{\calg(\cC)}(\one^{B^{n+1}{C_p}},\one)= \Map_{\calg(\cC)}(\one,\one) = \pt
    \]
    is \textit{not} an isomorphism. 
\end{proof}

To further analyze the collection of $\cC$-reflective spaces, we use the Eilenberg--Moore property of affine spaces. 
\begin{prop}\label{Mandell_Pullback}
    Let $\cC \in \calg(\Prsad)$. The functor
    \[
\Map_{\calg(\cC)}(\one^{(-)},\one) \:\colon\:
        \Spc \too \Spc
    \]
    preserves pullbacks of diagrams 
    \(
        A \to B \from C
    \)
    of $\pi$-finite spaces, where $B$ is $\cC$-affine. 
\end{prop}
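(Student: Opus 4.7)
The plan is to deduce this from the Eilenberg--Moore property established earlier in \Cref{Affineness_Kunneth_EM}, combined with the elementary fact that $\Map_{\calg(\cC)}(-,\one)$ sends pushouts to pullbacks.

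First, given a diagram $A \to B \from C$ of $\pi$-finite spaces with $B$ being $\cC$-affine, I would form the pullback square
\[
\begin{tikzcd}
A \times_B C \ar[r] \ar[d] & C \ar[d] \\
A \ar[r]                   & B
\end{tikzcd}
\]
and claim that the associated square in $\calg(\cC)$,
\[
\begin{tikzcd}
\one^B \ar[r] \ar[d] & \one^A \ar[d] \\
\one^C \ar[r]        & \one^{A \times_B C},
\end{tikzcd}
\]
is a pushout. To verify this, I would apply \Cref{Affineness_Kunneth_EM} to the map $f\colon A \to B$: by hypothesis $B$ is $\cC$-affine, while the fibers of $f$ are $\pi$-finite (since $A$ and $B$ are), hence $\cC$-ambidextrous by the $\infty$-semiadditivity of $\cC \in \calg(\Prsad)$, and therefore Eilenberg--Moore with respect to $\one$ by \Cref{Kunneth_Ambi}. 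This shows that $f$ itself is Eilenberg--Moore with respect to $\one$, which is precisely the assertion that the above square is a relative tensor square, i.e., that the canonical map
\[
\one^A \otimes_{\one^B} \one^C \iso \one^{A \times_B C}
\]
is an isomorphism. In particular, since relative tensor products compute pushouts in $\calg(\cC)$, the displayed square is a pushout.

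Finally, applying the contravariant functor $\Map_{\calg(\cC)}(-,\one)$, which sends pushouts in $\calg(\cC)$ to pullbacks in $\Spc$, yields the desired pullback square
\[
\begin{tikzcd}
\Map_{\calg(\cC)}(\one^{A\times_B C},\one) \ar[r] \ar[d] & \Map_{\calg(\cC)}(\one^A,\one) \ar[d] \\
\Map_{\calg(\cC)}(\one^C,\one) \ar[r] & \Map_{\calg(\cC)}(\one^B,\one),
\end{tikzcd}
\]
completing the argument. There is no genuine obstacle here: the only nontrivial input is \Cref{Affineness_Kunneth_EM}, which we may freely invoke, and the rest is a formal manipulation (pushouts to pullbacks under a representable contravariant functor).
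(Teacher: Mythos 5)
Your proof is correct and takes essentially the same route as the paper: the paper cites \Cref{Affiness_Eilenberg_Moore} (specifically the implication (1)$\Rightarrow$(2)), but that implication is itself proved in the paper by exactly the combination of \Cref{Kunneth_Ambi} and \Cref{Affineness_Kunneth_EM} that you invoke directly, followed by the same formal observation that $\Map_{\calg(\cC)}(-,\one)$ sends pushouts to pullbacks.
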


\begin{proof}
    As pushouts in $\calg(\cC)$ are given by relative tensor products, this follows from \Cref{Affiness_Eilenberg_Moore} and the fact that representable functors take pushouts to pullbacks. 
\end{proof}

This implies that under the assumption of affineness $\cC$-reflective spaces are closed under extensions and formation of fibers.

\begin{prop}\label{Mandell_Ext}
    Let $\cC \in \calg(\Prsad) $ and let $f\colon A \to B$ be a map of $\pi$-finite spaces, where $B$ is $\cC$-reflective and $\cC$-affine. Then, $A$ is $\cC$-reflective if and only if all the fibers of $f$ are $\cC$-reflective.
\end{prop}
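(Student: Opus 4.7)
The plan is to compare the space $A$ with its \emph{reflection} $\Map_{\calg(\cC)}(\one^A,\one)$ fiberwise over $B$, using that $B$ is already reflective and affine. Concretely, consider the commutative square
\[
\begin{tikzcd}
A \ar[r] \ar[d, "f"'] & \Map_{\calg(\cC)}(\one^A,\one) \ar[d, "{(f^*)^*}"] \\
B \ar[r, "\sim"'] & \Map_{\calg(\cC)}(\one^B,\one)
\end{tikzcd}
\]
in which the bottom horizontal map is an isomorphism by the assumption that $B$ is $\cC$-reflective. I will show that the top horizontal map is an isomorphism if and only if it is an isomorphism after taking fibers over each point $b\in B$, and that these fiberwise maps identify with the canonical maps $F_b\to\Map_{\calg(\cC)}(\one^{F_b},\one)$ for the fibers $F_b:=\{b\}\times_BA$ of $f$.

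For the first point, note that a map of spaces over $B$ is an equivalence if and only if it induces equivalences on fibers over every $b\in B$. Thus it suffices to identify the fiber over $b$ of the right vertical map with $\Map_{\calg(\cC)}(\one^{F_b},\one)$ in such a way that the induced map from $F_b$ is the canonical reflection map. For this second point, I apply \Cref{Mandell_Pullback} to the pullback diagram $\{b\}\to B\leftarrow A$: since $B$ is $\cC$-affine by hypothesis and all spaces involved are $\pi$-finite, the functor $\Map_{\calg(\cC)}(\one^{(-)},\one)$ preserves this pullback, giving
\[
\Map_{\calg(\cC)}(\one^{F_b},\one) \:\simeq\: \Map_{\calg(\cC)}(\one^{\{b\}},\one) \times_{\Map_{\calg(\cC)}(\one^B,\one)} \Map_{\calg(\cC)}(\one^A,\one).
\]
Since $\Map_{\calg(\cC)}(\one^{\{b\}},\one)\simeq\Map_{\calg(\cC)}(\one,\one)\simeq\pt$ sits over the image of $b$ under the reflectivity isomorphism for $B$, the right-hand side is exactly the fiber of $(f^*)^*$ over $b$. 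The naturality of the reflection map identifies the resulting map $F_b\to \Map_{\calg(\cC)}(\one^{F_b},\one)$ with the canonical one.

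Combining the two steps, the top horizontal map is an equivalence if and only if, for every $b\in B$, the canonical map $F_b\to\Map_{\calg(\cC)}(\one^{F_b},\one)$ is an equivalence; this is precisely the claim that $A$ is $\cC$-reflective iff every fiber of $f$ is. There is no serious obstacle here: the whole argument is a formal consequence of \Cref{Mandell_Pullback} together with the elementary fact that equivalences of spaces over $B$ are detected on fibers. The only point requiring slight care is checking that the identification of fibers genuinely intertwines the two reflection maps, which is immediate from the naturality of the construction $X\mapsto\Map_{\calg(\cC)}(\one^X,\one)$.
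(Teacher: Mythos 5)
Your argument is correct and is essentially the same as the paper's: both proofs compare $A$ with $\Map_{\calg(\cC)}(\one^A,\one)$ over $B\simeq\Map_{\calg(\cC)}(\one^B,\one)$, invoke \Cref{Mandell_Pullback} on the diagram $\{b\}\to B\leftarrow A$ to identify the fibers of the right-hand vertical map with $\Map_{\calg(\cC)}(\one^{F_b},\one)$, use naturality to match the induced fiberwise maps with the canonical reflection maps for the $F_b$, and conclude by detecting the equivalence fiberwise over $B$. You spell out a couple of steps (detection of equivalences on fibers, the naturality check) that the paper leaves implicit, but the approach and the key lemma used are identical.
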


\begin{proof}
    Consider the canonical natural transformation 
    \[
        (-) \too \Map_{\calg(\cC)}(\one^{(-)},\one)
    \]
    of functors $\Spc \to \Spc$. The domain, being the identity functor, clearly preserves pullbacks. By \Cref{Mandell_Pullback}, the codomain preserves the pullbacks of all diagrams of the form 
$A \oto{f} B \ofrom{\:b\:} \pt.$ 
    Now, consider the commutative diagram 
    \[\begin{tikzcd}
    	{ A} && {\Map_{\calg(\cC)}(\one^A,\one)} \\
    	B && {\Map_{\calg(\cC)}(\one^B,\one),}
\arrow[from=1-1, to=1-3]
\arrow[from=1-3, to=2-3]
\arrow[from=1-1, to=2-1]
\arrow["\sim", from=2-1, to=2-3]
    \end{tikzcd}\]
    where the bottom arrow is an isomorphism since $B$ is $\cC$-reflective. It follows that the induced map on the fibers of the vertical maps at $b\in B$ can be identified with the canonical map
    \[
        f^{-1}(b) \too \Map_{\calg(\cC)}(\one^{f^{-1}(b)},\one).
    \]
    Thus, $A$ is $\cC$-reflective if and only if $f^{-1}(b)$ is $\cC$-reflective for all $b\in B$.
\end{proof}

Assuming the affineness of \textit{all} $d$-finite $p$-spaces, $d$-connectedness can be reduced to the $\cC$-reflectivity of a single space.

\begin{prop}\label{Proto_Hurewicz}
    Let $\cC \in \calg(\Prsad)$, such that all $d$-finite $p$-spaces are $\cC$-affine. Then, $\cC$ is $d$-connected at $p$ if and only if  $B^d C_p$ is $\cC$-reflective.
\end{prop}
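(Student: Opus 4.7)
The ``only if'' direction is immediate from the definition, so the content lies in the converse. My plan is to use \Cref{Mandell_Ext} as the main propagator of reflectivity and to build every $d$-finite $p$-space from the spaces $B^k C_p$ by iterated fibrations.

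The first step is to establish that $B^k C_p$ is $\cC$-reflective for every $0 \le k \le d$. The case $k = d$ is the hypothesis; the remaining cases follow by descending induction. Given that $B^{k+1} C_p$ is $\cC$-reflective (and $\cC$-affine, as it is a $d$-finite $p$-space), apply \Cref{Mandell_Ext} to the basepoint inclusion $\pt \to B^{k+1} C_p$: the unique (up to homotopy) fiber is $\Omega B^{k+1} C_p \simeq B^k C_p$, and the reflectivity of both $\pt$ and $B^{k+1} C_p$ forces the reflectivity of $B^k C_p$.

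The second step handles an arbitrary $d$-finite $p$-space $A$ by refining its Postnikov tower into a sequence
\[
    A = X_m \too X_{m-1} \too \cdots \too X_0 = \pt
\]
in which each map is a fibration with fiber $B^{k_i} C_p$ for some $0 \le k_i \le d$. Such a refinement is obtained by interspersing the Postnikov stages with a composition series for each finite abelian $p$-group $\pi_k A$, $k \ge 1$, and at the level of $\pi_0$ by iterated fibrations expressing a finite $p$-set of size $p^r$ as a $C_p$-fibration over one of size $p^{r-1}$. Each intermediate $X_{i-1}$ is itself a $d$-finite $p$-space, hence $\cC$-affine by hypothesis; each fiber is $\cC$-reflective by the first step. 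Starting from $X_0 = \pt$ (trivially reflective) and applying \Cref{Mandell_Ext} repeatedly up the tower, one concludes that $A = X_m$ is $\cC$-reflective.

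The only point requiring care is the construction of such a tower with $B^k C_p$-fibers, and in particular the $\pi_0$ aspect where the homotopy ``group'' is merely a $p$-set; modulo this routine observation, the proof is a direct two-step induction powered by \Cref{Mandell_Ext}.
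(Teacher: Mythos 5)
Your proof is correct and takes essentially the same route as the paper: propagate reflectivity from $B^d C_p$ down to all $B^k C_p$ ($0\le k\le d$), then up through all $d$-finite $p$-spaces using \Cref{Mandell_Ext}. The only cosmetic differences are that for the descent step the paper invokes \Cref{Mandell_Pullback} to see that the representable functor commutes with loops whereas you apply \Cref{Mandell_Ext} directly to the basepoint inclusion $\pt \to B^{k+1}C_p$ (the same calculation packaged differently), and for the ascent step the paper simply asserts that $d$-finite $p$-spaces are generated under extensions by the $B^k C_p$ whereas you spell out the Postnikov/composition-series tower.
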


\begin{proof}
    If $\cC$ is $d$-connected at $p$, then by definition all $d$-finite $p$-spaces are $\cC$-reflective and in particular $B^d C_p$.
    Conversely, assume that $B^d C_p$ is $\cC$-reflective.
    The functor 
    \[
\Map_{\calg(\cC)}(\one^{(-)},\one) \:\colon\:
        \Spc \too \Spc
    \] 
    takes $\pt$ to $\pt$, and preserves pullbacks of $\pi$-finite spaces with a $\cC$-affine base (\cref{Mandell_Pullback}).
Hence, it commutes with taking loops for  $d$-finite $p$-spaces. It follows that  $B^\ell C_p$ is $\cC$-reflective for all $\ell= 0,\dots d$. Finally, by \Cref{Mandell_Ext}, the $\cC$-reflective $d$-finite $p$-spaces are closed under extensions. Since all the $d$-finite $p$-spaces are generated under extensions by the spaces $C_p, BC_p,\dots,B^d C_p,$ it follows that all of them are $\cC$-reflective, so $\cC$ is $d$-connected at $p$.
\end{proof}

It will be useful for the sequel to have a slight variant of the above.

\begin{prop}\label{Hurewicz}
    Let $\cC \in \calg(\Prsad)$, such that all $d$-finite $p$-spaces are $\cC$-affine. Then, $\cC$ is $d$-connected at $p$ if and only if $\cC$ is 0-connected and the space $\Map_{\calg(\cC)}(\one^{B^d C_p},\one)$
    is $d$-connected, which then in particular implies that 
    \[
        B^d C_p \:\simeq\: \Map_{\calg(\cC)}(\one^{B^d C_p},\one).
    \]  
\end{prop}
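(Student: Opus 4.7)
The plan is to reduce \Cref{Hurewicz} to \Cref{Proto_Hurewicz} via a delooping argument. Set $F := \Map_{\calg(\cC)}(\one^{(-)}, \one)$ and write $\eta_A \colon A \to F(A)$ for the canonical comparison map, whose invertibility is the definition of $\cC$-reflectivity of $A$. The forward direction is immediate: if $\cC$ is $d$-connected at $p$, then $\eta_{B^d C_p}$ is an equivalence, so $F(B^d C_p) \simeq B^d C_p$ inherits the $d$-connectedness of the latter, while reflectivity of the finite $p$-set $C_p$ forces $\one_\cC$ to be indecomposable, hence $\cC$ to be $0$-connected, by \Cref{0_connected}.

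For the backward direction, I would exploit how $F$ interacts with loops on $d$-finite $p$-spaces. Since by hypothesis every such space is $\cC$-affine, \Cref{Mandell_Pullback} implies that $F$ preserves the pullbacks $B^{k-1} C_p \simeq \pt \times_{B^k C_p} \pt$ for each $1 \le k \le d$, producing natural equivalences $\Omega F(B^k C_p) \simeq F(B^{k-1} C_p)$. Iterating $d$ times yields $\Omega^d F(B^d C_p) \simeq F(C_p)$, and the $0$-connectedness of $\cC$ identifies $F(C_p) \simeq C_p$ via the canonical map $\eta_{C_p}$, again by \Cref{0_connected}. By naturality of $\eta$, the map $\Omega^d \eta_{B^d C_p}$ corresponds under these identifications to $\eta_{C_p}$, and is therefore an equivalence.

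Combining this with the hypothesis that $F(B^d C_p)$ is $d$-connected, i.e.\ has no homotopy in degrees below $d$, and with the observation that $\Omega^d F(B^d C_p) \simeq C_p$ is discrete, which forces all homotopy of $F(B^d C_p)$ above degree $d$ to vanish as well, we conclude that $F(B^d C_p)$ is an Eilenberg--MacLane space $K(C_p, d) \simeq B^d C_p$, with $\eta_{B^d C_p}$ inducing the matched isomorphism on $\pi_d$. Hence $B^d C_p$ is $\cC$-reflective, and by \Cref{Proto_Hurewicz} this upgrades to full $d$-connectedness of $\cC$ at $p$; the identification $B^d C_p \simeq \Map_{\calg(\cC)}(\one^{B^d C_p}, \one)$ claimed in the ``in particular'' clause is precisely the realization of $\eta_{B^d C_p}$ as an equivalence.

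The main technical point to verify is that $\Omega^d \eta_{B^d C_p}$ coincides with $\eta_{C_p}$ under the chain of identifications supplied by \Cref{Mandell_Pullback}. This reduces to the naturality of $\eta$ in the argument $A$ together with the naturality of the pullback-preservation isomorphism produced by \Cref{Mandell_Pullback}; it is routine but requires some bookkeeping. Once this compatibility is in hand, the Postnikov-style comparison in the last step is elementary.
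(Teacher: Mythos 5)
Your proof is correct and follows the same route as the paper's: both exploit that $A \mapsto \Map_{\calg(\cC)}(\one^A,\one)$ commutes with loops on $d$-finite $p$-spaces via \Cref{Mandell_Pullback}, reduce the comparison map at $B^d C_p$ to the one at $C_p$ by applying $\Omega^d$, invoke $0$-connectedness to identify the latter as an equivalence, and then appeal (implicitly in the paper, explicitly in your write-up) to \Cref{Proto_Hurewicz}. The only cosmetic difference is that you explicitly compute the homotopy groups of $\Map_{\calg(\cC)}(\one^{B^d C_p},\one)$ to pin it down abstractly as $B^d C_p$, whereas the paper argues more directly that the canonical map between $(d-1)$-connected spaces is an equivalence once its $d$-fold loops are.
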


The subtle difference from \Cref{Proto_Hurewicz} is that we do not, a priori, require the above isomorphism to be provided by the canonical map, at the expense of requiring $0$-connectedness in advance.

\begin{proof}
    The `only if' part is clear. For the `if' part, consider the canonical map 
    \[
        B^d C_p \too \Map_{\calg(\cC)}(\one^{B^d C_p},\one).
    \]  
    By assumption, both the source and target are $(d-1)$-connected, so it suffices to show that we get an isomorphism after applying the $d$-fold loop space functor. Now, as in the proof of \Cref{Proto_Hurewicz}, the functor $\Map_{\calg(\cC)}(\one^{(-)},\one)$ commutes with taking loops for $d$-finite $p$-spaces. Thus, applying $\Omega^d$ we get the canonical map
    \[
        C_p \too \Map_{\calg(\cC)}(\one^{C_p},\one),
    \]  
    which is an isomorphism by the assumption that $\cC$ is $0$-connected.
\end{proof}

\subsubsection{Connectedness and orientations}

The notions of connectedness at $p$ and $\FF_p$-orientability interact in a non-trivial way. 

\begin{prop}\label{Or_Conn_Bound}
    Let $\cC \in \calg(\Prsad)$. If $\cC$ is virtually $(\FF_p,n)$-orientable, then it is not $(n+1)$-connected at $p$.  
\end{prop}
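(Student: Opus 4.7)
The plan is to deduce this as a direct combination of two results already established earlier in the paper, namely \cref{Fp_orientation_height} and \cref{Conn_Height}. The key observation is that virtual $(\FF_p,n)$-orientability pins down the semiadditive height of $\cC$ at $p$ to be exactly $n$, and having height $\le n$ is already enough to forbid $(n+1)$-connectedness.

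More concretely, I would argue in two steps. First, invoke \cref{Fp_orientation_height} to conclude that $\cC$ has semiadditive height $n$ at $p$; in particular, $\cC$ has height $\le n$ at $p$. Second, apply \cref{Conn_Height}, whose conclusion is precisely that any $\cC \in \calg(\Prsad)$ of height $\le n$ at $p$ fails to be $(n+1)$-connected at $p$. Assembling these two gives the statement with no further work.

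It is worth unpacking briefly why \cref{Conn_Height} applies cleanly here: since $\cC$ has height $\le n$ at $p$, the $n$-connected space $B^{n+1}C_p$ is $\cC$-acyclic by \cite[Proposition~3.2.1]{AmbiHeight}, so $\one^{B^{n+1}C_p} \simeq \one$ and hence $\Map_{\calg(\cC)}(\one^{B^{n+1}C_p},\one) \simeq \Map_{\calg(\cC)}(\one,\one) \simeq \pt$. The canonical comparison map $B^{n+1}C_p \to \pt$ is manifestly not an equivalence, so $B^{n+1}C_p$ is not $\cC$-reflective, witnessing the failure of $(n+1)$-connectedness at $p$. There is no real obstacle, since the substantive content (both the height-$n$ conclusion from orientability and the non-reflectivity of $B^{n+1}C_p$ at height $\le n$) has already been extracted in the referenced propositions.
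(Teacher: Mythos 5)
Your proposal is exactly the paper's own argument: the proof in the paper reads simply ``Combine \cref{Conn_Height} and \cref{Fp_orientation_height}.'' Your additional unpacking of why \cref{Conn_Height} applies is just a restatement of its proof, so there is nothing further to add.
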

\begin{proof}
    Combine \Cref{Conn_Height} and \Cref{Fp_orientation_height}.
\end{proof}

Furthermore, by \Cref{p_Affineness}, when $\cC$ is virtually $(\FF_p,n)$-orientable, all $n$-finite $p$-spaces are $\cC$-affine. Hence, by \Cref{Hurewicz}, for every $d\le n$, the $d$-connectedness of $\cC$ depends only on the properties of the commutative algebra $\one^{B^d C_p} \in \calg(\cC)$.
When $\cC$ is actually $(\FF_p,n)$-oriented, we can further reformulate the $d$-connectedness property in terms of the space 
\[
\mu_p(\one):= \Map_{\Sp^\cn}(C_p, \one^\times)
\] 
of $p$-th roots of unity of $\one \in \cC$. 

\begin{prop}\label{Conn_Mu_p}
    Let $\cC \in \calg(\Prsad)$ be $0$-connected and $(\FF_p,n)$-orientable. For every $d\le n$, the $\infty$-category $\cC$ is $d$-connected at $p$ if and only if
    \[
\tau_{\ge n-d}(\mu_p(\one_\cC)) \:\simeq\:  B^n C_p
        \qin \Spc.
    \]
\end{prop}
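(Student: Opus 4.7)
The plan is to compute $\Map_{\calg(\cC)}(\one^{B^d C_p},\one)$ via the Fourier transform and then invoke \Cref{Hurewicz}.

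First, since $\cC$ is $0$-connected it is in particular nonzero, and being $(\FF_p,n)$-oriented it is a fortiori virtually $(\FF_p,n)$-orientable. Hence \Cref{p_Affineness} applies and shows that every $d$-finite $p$-space is $\cC$-affine for $d\le n$. This puts us in the setting of \Cref{Hurewicz}, which asserts that $\cC$ is $d$-connected at $p$ if and only if the mapping space $\Map_{\calg(\cC)}(\one^{B^d C_p},\one)$ is $d$-connected (using the given $0$-connectedness of $\cC$); in that case the canonical map $B^d C_p \to \Map_{\calg(\cC)}(\one^{B^d C_p},\one)$ is an equivalence.

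Next, I would compute the right-hand side using the chosen orientation $\omega \in \Or{\FF_p}{\cC}{n}$. Under the natural identification $\Dual{(\Sigma^{n-d}\FF_p)}{n} \simeq \Sigma^d\FF_p$, the Fourier isomorphism gives
\[
\Four_\omega \colon \one[\Sigma^{n-d}\FF_p] \iso \one^{\und{\Sigma^d\FF_p}} = \one^{B^d C_p} \qin \calg(\cC).
\]
Applying $\Map_{\calg(\cC)}(-,\one)$, the adjunction $\one[-]\dashv(-)^\times$, and the definition $\mu_p(\one_\cC) = \Map_{\Sp^\cn}(\FF_p,\one_\cC^\times)$, I obtain
\[
\Map_{\calg(\cC)}(\one^{B^d C_p},\one) \simeq \Map_{\Sp^\cn}(\Sigma^{n-d}\FF_p,\one_\cC^\times) \simeq \Omega^{n-d}\mu_p(\one_\cC).
\]

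Finally, the translation between the two formulations is routine. An equivalence $B^d C_p \simeq \Omega^{n-d}\mu_p(\one_\cC)$ amounts, on homotopy groups, to the vanishing of $\pi_i\mu_p(\one_\cC)$ for $i \ge n-d$, $i\ne n$, together with $\pi_n\mu_p(\one_\cC) \simeq C_p$, which is exactly the condition $\tau_{\ge n-d}\mu_p(\one_\cC) \simeq B^n C_p$. Conversely, this condition implies $\Omega^{n-d}\mu_p(\one_\cC) \simeq B^d C_p$, which is in particular $d$-connected, so \Cref{Hurewicz} closes the argument. I do not foresee any significant obstacle: the proof is essentially a bookkeeping combination of the Fourier identification of $\one^{B^d C_p}$ with the affineness-based criterion of \Cref{Hurewicz}.
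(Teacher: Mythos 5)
Your argument follows the paper's proof exactly: invoke \Cref{p_Affineness} to get affineness of $d$-finite $p$-spaces, apply \Cref{Hurewicz} to reduce $d$-connectedness to the identification of $\Map_{\calg(\cC)}(\one^{B^d C_p},\one)$, compute this space via the Fourier isomorphism $\one[\Sigma^{n-d}C_p]\simeq\one^{B^d C_p}$ and adjunctions as $\Omega^{n-d}\mu_p(\one)$, and then translate between $\Omega^{n-d}\mu_p(\one)\simeq B^d C_p$ and $\tau_{\ge n-d}\mu_p(\one)\simeq B^n C_p$. The proposal is correct and mirrors the paper's reasoning.
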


\begin{proof}
    From the $(\FF_p,n)$-orientability, we deduce that all the $d$-finite $p$-spaces are $\cC$-affine (\Cref{p_Affineness}). Thus, by \Cref{Hurewicz}, $\cC$ is $d$-connected if and only if it is 0-connected, and the space $\Map_{\calg(\cC)}(\one^{B^d C_p},\one)$ is isomorphic to $B^d C_p$. 
    Using the Fourier transform we get
    \[
        \Map_{\calg(\cC)}(\one^{B^d C_p},\one) \simeq
        \Map_{\calg(\cC)}(\one[{\Sigma^{n-d} C_p}],\one).
    \]
    Now, applying the various adjunctions, we get
    \[
        \Map_{\calg(\cC)}(\one[{\Sigma^{n-d} C_p}],\one) \simeq 
        \Map_{\Sp^\cn}({\Sigma^{n-d}C_p},\one^{\times}) \simeq
    \]
    \[
        \Omega^{n-d}\Map_{\Sp^\cn}(C_p,\one^\times) \simeq
        \Omega^{n-d}\mu_p(\one).
    \]
    Thus, $\cC$ is $d$-connected if and only if it is $0$-connected and the space $\Omega^{n-d}\mu_p(\one)$ is isomorphic to $B^d C_p$, which completes the proof.
\end{proof}

\begin{rem}\label{Or_Torsor}
    In particular, for $\cC \in \calg(\Prsad)$ which is $0$-connected and $(\FF_p,n)$-oriented, the space of $\FF_p$-pre-orientations of height $n$ is isomorphic to the discrete set $C_p$. Its subspace of \textit{orientations} consists of the non-zero elements $C_p \smallsetminus \{0\}$, and is a torsor for the action of $\FF_p^\times$ by scaling. 
\end{rem}

The characterization of $d$-connectedness in terms of $p$-th roots of unity also implies that it interacts well with categorification. 

\begin{cor}\label{Conn_Cat}
    Let $\cC \in \calg(\Prl)$ be $0$-connected and $(\mathbb{F}_p,n)$-orientable. For every $d\leq n$, the $\infty$-category $\cC$ is $d$-connected at $p$ if and only if $\Mod_\cC$ is $d$-connected at $p$. 
\end{cor}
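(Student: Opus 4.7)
The plan is to reduce this to \Cref{Conn_Mu_p}, which characterises $d$-connectedness (in the presence of $0$-connectedness and $(\FF_p,m)$-orientability) purely in terms of the connective cover of the spectrum $\mu_p$ of $p$-th roots of unity. Applied to $\cC$ at height $n$ and to $\Mod_\cC$ at height $n+1$, the statement will then boil down to a spectrum-level comparison between $\mu_p(\one_\cC)$ and $\mu_p(\one_{\Mod_\cC})$.

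First, I would verify that the hypotheses of \Cref{Conn_Mu_p} apply to $\Mod_\cC$ at height $n+1$: it is $\infty$-semiadditive by \Cref{Mod_C_Sadd}, it is $0$-connected by \Cref{lem:0_connected_cat} (applied to $\cC$ being $0$-connected), and it is $(\FF_p,n+1)$-orientable by \Cref{Orientability_Cat}, using that $\FF_p$ is $n$-truncated. Since $d\le n < n+1$, \Cref{Conn_Mu_p} then yields the two equivalences
\[
\cC \text{ is $d$-connected at $p$} \iff \tau_{\ge n-d}\mu_p(\one_\cC) \simeq B^n C_p,
\]
\[
\Mod_\cC \text{ is $d$-connected at $p$} \iff \tau_{\ge (n+1)-d}\mu_p(\one_{\Mod_\cC}) \simeq B^{n+1} C_p.
\]

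Next, I would identify $\mu_p(\one_\cC)$ with $\Omega\mu_p(\one_{\Mod_\cC})$. Indeed, $\one_{\Mod_\cC}^\times = \cC^\times = \pic(\cC)$ and $\one_\cC^\times \simeq \Omega\pic(\cC)$, so
\[
\mu_p(\one_\cC) = \Map_{\Sp^\cn}(C_p,\Omega\pic(\cC)) \simeq \Omega\,\Map_{\Sp^\cn}(C_p,\pic(\cC)) = \Omega\mu_p(\one_{\Mod_\cC}).
\]
On homotopy groups this gives $\pi_k\mu_p(\one_\cC) \simeq \pi_{k+1}\mu_p(\one_{\Mod_\cC})$, whence $\tau_{\ge n-d}\mu_p(\one_\cC) \simeq B^n C_p$ if and only if $\tau_{\ge (n+1)-d}\mu_p(\one_{\Mod_\cC}) \simeq B^{n+1}C_p$. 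Combining with the two equivalences above closes the loop.

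There is no genuine obstacle here; all the substantive work has already been done in establishing \Cref{Conn_Mu_p}, \Cref{Orientability_Cat} and \Cref{lem:0_connected_cat}. The only point that requires mild care is the bookkeeping of truncations and homotopy-group indices in the final loop-desuspension step, and checking that the $0$-connectedness hypothesis genuinely transfers in both directions (which is precisely what \Cref{lem:0_connected_cat} provides).
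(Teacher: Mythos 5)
Your proof is correct and follows essentially the same route as the paper's: both reduce to \Cref{Conn_Mu_p} applied to $\cC$ at height $n$ and to $\Mod_\cC$ at height $n+1$, verify the hypotheses for $\Mod_\cC$ via \Cref{lem:0_connected_cat} and \Cref{Orientability_Cat}, and then identify $\mu_p(\one_\cC) \simeq \Omega\,\mu_p(\one_{\Mod_\cC})$ through $\one_\cC^\times \simeq \Omega\,\pic(\cC)$. Your writeup is slightly more explicit in spelling out the truncation/homotopy-group bookkeeping and in citing \Cref{Mod_C_Sadd} for the $\infty$-semiadditivity of $\Mod_\cC$, but there is no difference in substance.
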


\begin{proof}
By \cref{lem:0_connected_cat}, $\Mod_\cC$ is $0$-connected and by \cref{Orientability_Cat}, $\Mod_\cC$ is $(\mathbb{F}_p,n+1)$ orientable. In addition, we have  
    \[
\Omega (\one_{\Mod_\cC})^{\times}= 
        \Omega \Pic(\cC) \simeq
        \one_{\cC}^{\times}.
    \] 
    Hence,
    \[
        \Omega^{(n+1) - d} \mu_p(\one_{\Mod_\cC}) \simeq 
        \Omega^{n - d} \mu_p(\one_{\cC}).
    \]
    So the claim follows from \Cref{Conn_Mu_p}.
\end{proof}

\begin{rem}
    While an $(\FF_p,n)$-orientable $\cC$ is at most $n$-connected, its categorification $\Mod_\cC$ can be $(n+1)$-connected. Furthermore, we shall see that $(n+1)$-connectedness of $\Mod_\cC$ can have interesting implications for $\cC$ itself.
\end{rem}

\subsubsection{Extending orientations}

Our interest in $d$-connectedness steams from the fact that it allows one to extend $\FF_p$-orientations to truncated $\Sph_{(p)}$-orientations. We begin with the somewhat more general setting. In \Cref{local_ring_orientation}, we have seen that given a strict map of local ring spectra $f\colon \ORone\to \ORtwo$, an $\ORone$-preorientation $\omega$ is an orientation if and only if $f_*\omega$ is an orientation. We shall now show, that under more restrictive assumptions on $f$, the property of $d$-connectedness implies the stronger conclusion that every $\ORtwo$-orientation can be lifted to an $\ORone$-orientation.  
\begin{defn}
    Let $f\colon \ORone \to \ORtwo$ be a strict map of local rings in $\Sp^\cn$. We say that $f$ is \tdef{$d$-small} if the fiber of $f$ is $[0,d-1]$-finite and admits an $\ORtwo$-module structure.
\end{defn}

\begin{prop}\label{Small_Ext}
    Let $\cC \in \calg(\Prsad)$ be $d$-connected at $p$ and let $f\colon \ORone \to \ORtwo$ be a $d$-small map between local ring spectra with residue field $\FF_p$. For $n\ge d$, every $\ORtwo$-orientation of $\cC$ of height $n$ extends to an $\OR$-orientation.
\end{prop}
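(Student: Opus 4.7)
The approach is to recast the problem as a spectral obstruction problem that is killed by the $d$-connectedness hypothesis. First, by \Cref{Or_Liftintg} combined with the detection principle \Cref{local_ring_orientation}, it is enough to produce \emph{any} lift $\tilde\omega\colon \Dual{\ORone}{n}\to \one^{\times}_{\cC}$ of $\omega_2$ along the canonical map $\Dual{\ORtwo}{n}\to \Dual{\ORone}{n}$. Indeed, the pushforward of $\omega_2$ along $\ORtwo\to \FF_p$ is an $\FF_p$-orientation by \Cref{Or_Push}, and any such $\tilde\omega$ has the same $\FF_p$-pushforward, so \Cref{local_ring_orientation} promotes $\tilde\omega$ automatically to an $\ORone$-orientation.

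Next, I would set up the obstruction. By \Cref{rem:n_trunc} I may assume $\ORone,\ORtwo$ are $n$-truncated. Letting $K$ be the fiber of $f$ in $\Sp^\cn$, by hypothesis $K\in\Modfin{\ORtwo}{d-1}$. Applying $\hom(-,\Sigma^n I_{\QQ_p/\ZZ_p})$ to the fiber sequence $K\to \ORone\to \ORtwo$ and using the standing assumption $n\ge d$ (which forces $\Dual{K}{n}$ to be concentrated in degrees $[n-d+1,n]$, hence $1$-connective), one obtains a cofiber sequence in $\Sp^\cn$,
\[
\Sigma^{-1}\Dual{K}{n} \too \Dual{\ORtwo}{n} \too \Dual{\ORone}{n}.
\]
The existence of $\tilde\omega$ is then equivalent to the vanishing of the obstruction class in $[\Sigma^{-1}\Dual{K}{n},\one^{\times}_\cC]$, so it suffices to prove that this entire abelian group vanishes.

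Finally, the vanishing will follow from $d$-connectedness via \Cref{Conn_Mu_p}. By \Cref{local_ring_generators_modules} applied to $\ORtwo$, the module $K$ is generated under extensions by shifts $\Sigma^i\FF_p$ with $i\in[0,d-1]$. Applying $\Dual{-}{n}$ and then suspending by $-1$, one sees that $\Sigma^{-1}\Dual{K}{n}$ is correspondingly generated under extensions by $\Sigma^i\FF_p$ with $i\in[n-d,n-1]$. An inductive argument then reduces the vanishing to showing that $[\Sigma^i\FF_p,\one^{\times}_\cC]\simeq \pi_i\mu_p(\one_\cC)=0$ for each such $i$. Now $\cC$ is $(\FF_p,n)$-orientable (by the $\FF_p$-pushforward of $\omega_2$) and $d$-connected; the trivial cases being $d\le 0$, we may assume $d\ge 1$, in which case $d$-connectedness also yields $0$-connectedness. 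The hypotheses of \Cref{Conn_Mu_p} are thus satisfied and give $\tau_{\ge n-d}\mu_p(\one_\cC)\simeq B^n C_p$, whose homotopy vanishes precisely in the range $[n-d,n-1]$.

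The main point requiring care is the degree-shift bookkeeping that matches the extension filtration of $\Sigma^{-1}\Dual{K}{n}$ with the exact range in which \Cref{Conn_Mu_p} forces the vanishing of $\pi_i\mu_p(\one_\cC)$; once this alignment is established, the obstruction dies and the desired lift $\tilde\omega$ exists.
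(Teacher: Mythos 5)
Your proof is correct, and it shares the same first half with the paper's argument (reduce via \Cref{Or_Liftintg} and \Cref{local_ring_orientation} to the vanishing of the obstruction group $\pi_0\Map_\Sp(\Sigma^{-1}\Dual{K}{n},\one^\times)$), but it handles the vanishing by a genuinely different route. The paper keeps the fiber $X$ intact as a $\pi$-finite $\ORtwo$-module, applies the $\ORtwo$-Fourier transform associated to $\omega_2$ to identify $\one[\Dual{\Sigma X}{n}]\simeq \one^{\und{\Sigma X}}$, and then uses the \emph{definition} of $d$-connectedness (every $d$-finite $p$-space is $\cC$-reflective) to compute $\Map_{\calg(\cC)}(\one^{\und{\Sigma X}},\one)\simeq \und{\Sigma X}$, whose $\pi_0$ is $\pi_0(\Sigma X)=0$. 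You instead run a d\'{e}vissage of $K$ into shifts $\Sigma^i\FF_p$ (via \Cref{local_ring_generators_modules}, dualize, shift), reducing the vanishing to $\pi_i\mu_p(\one_\cC)=0$ for $i\in[n-d,n-1]$, which you read off from the $\mu_p$-characterization of $d$-connectedness in \Cref{Conn_Mu_p}. Both routes are valid and closely related — \Cref{Conn_Mu_p} is itself derived from the $\FF_p$-Fourier transform plus $\cC$-reflectivity, so your argument effectively "unrolls" that one-step identification into a filtration. The paper's version is somewhat shorter and makes transparent why the obstruction group is zero (it is $\pi_0$ of a connected $d$-finite space); yours only needs the $\FF_p$-orientation and the $\mu_p$ criterion after the d\'{e}vissage, at the cost of the bookkeeping with the extension filtration and the resulting long exact sequences. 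Two minor remarks: (i) you should be explicit that, having replaced $\ORone,\ORtwo$ by their $n$-truncations, $K$ is $n$-truncated, so $\hom(-,\Sigma^n I_{\QQ_p/\ZZ_p})$ really is already connective on the fiber sequence and the displayed cofiber sequence lands in $\Sp^\cn$ without a further connective-cover step changing the fiber; and (ii) in the d\'{e}vissage, the graded pieces are direct sums of shifts $\Sigma^i\FF_p$ (not single shifts), but that is harmless since the relevant obstruction groups multiply out.
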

\begin{proof}
    Let $\omega \colon \Dual{\ORtwo}{n} \to \one^\times$ be an $\ORtwo$-orientation. 
By \Cref{local_ring_orientation}, it would suffice to construct a lift as in the diagram:
    \[\begin{tikzcd}
    	{\Dual{\ORtwo}{n}} && {\Dual{\OR}{n}} \\
    	& {\one^\times,}
\arrow[from=1-1, to=1-3]
\arrow["\omega"', from=1-1, to=2-2]
\arrow["\cl{\omega}",dashed, from=1-3, to=2-2]
    \end{tikzcd}
    \]
    see \cref{Or_Liftintg}.
    Let $X$ be the fiber of $f\colon \ORone \to \ORtwo$, as a map of spectra. 
    The obstruction for the existence of $\cl{\omega}$ lies in the group 
    \[
        \pi_0\Map_{\Sp}(\Sigma^{-1}\Dual{X}{n},\one^{\times}).
    \] 
    By the assumption that $f$ is $d$-small, $X$ is in particular connective and $(n-1)$-truncated. Hence, 
    $\Sigma^{-1}(\Dual{X}{n}) \simeq \Dual{(\Sigma X)}{n}$ 
    is connective. Consequently, we have 
    \[
        \Map_{\Sp}(\Dual{(\Sigma X)}{n},\one^{\times}) \simeq \Map_{\Sp^{\cn}}(\Dual{(\Sigma X)}{n},\one^{\times}) \simeq \Map_{\calg(\cC)}(\one[\Dual{\Sigma X}{n}],\one).
    \]
    By assumption, $X$ is $\pi$-finite and admits an $\ORtwo$-module structure. Thus, by the Fourier transform associated with $\omega$, we have
    \[
        \Map_{\calg(\cC)}(\one[\Dual{(\Sigma X)}{n}],\one)  \simeq \Map_{\calg(\cC)}(\one^{\und{\Sigma X}},\one).
    \]    
    Finally, by our assumptions, $\und{X}$ is a $(d-1)$-finite $p$-space, so $\und{\Sigma X}$ is a $d$-finite $p$-space, and hence the $d$-connectedness of $\cC$ implies that
    \[
        \Map_{\calg(\cC)}(\one^{\und{\Sigma X}},\one) \simeq \und{\Sigma X}.
    \]
Since $\pi_0(\und{ \Sigma X})= \pi_0(\Sigma X) \simeq 0$, we see that there is no obstruction to construct $\cl{\omega}$. 
\end{proof}

This specializes to the following criterion for extending $\FF_p$-orientations to truncated $\Sph_{(p)}$-orientations:

\begin{prop}\label{Conn_Ext}
    Let $\cC \in \calg(\Prsad)$ be $d$-connected at $p$. Every $\FF_p$-orientation of height $n$ of $\cC$ extends to a $\tau_{\leq d-1}\Sph_{(p)}$-orientation.
\end{prop}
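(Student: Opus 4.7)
The plan is to lift the given $\FF_p$-orientation $\omega_0$ step by step through the tower
\[
    \FF_p \:\leftarrow\: \ZZ/p^2 \:\leftarrow\: \cdots \:\leftarrow\: \ZZ_{(p)} = \tau_{\le 0}\Sph_{(p)} \:\leftarrow\: \tau_{\le 1}\Sph_{(p)} \:\leftarrow\: \cdots \:\leftarrow\: \tau_{\le d-1}\Sph_{(p)},
\]
applying \Cref{Small_Ext} at each individual arrow. The case $d=0$ is vacuous, so we assume $d\ge 1$. Observe that, since $\cC$ already carries an $\FF_p$-orientation of height $n$, it is (with faithful algebra $\one$) virtually $(\FF_p,n)$-orientable, and \Cref{Or_Conn_Bound} then forces $d\le n$; hence the ``$n\ge d$'' hypothesis of \Cref{Small_Ext} is automatic at every step.

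For the ``mod-$p^r$'' portion, each strict map $\ZZ/p^{r+1}\to \ZZ/p^r$ has fiber $\FF_p$, which is $[0,0]$-finite and inherits a $\ZZ/p^r$-module structure through the quotient $\ZZ/p^r\twoheadrightarrow \FF_p$; thus this map is $1$-small. Since $d$-connectedness implies $1$-connectedness, \Cref{Small_Ext} extends any $\ZZ/p^r$-orientation to a $\ZZ/p^{r+1}$-orientation, so we inductively construct a compatible system $(\omega_r)_{r\ge 1}$ of $\ZZ/p^r$-orientations with $\omega_1=\omega_0$ and $\omega_{r+1}$ restricting to $\omega_r$. By \Cref{lem:bcdcolim}, $\Dual{\ZZ_{(p)}}{n}\simeq \colim_r \Dual{(\ZZ/p^r)}{n}$, so the $\omega_r$'s assemble into a $\ZZ_{(p)}$-pre-orientation $\omega\colon \Dual{\ZZ_{(p)}}{n}\to \one^\times$. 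Its pushforward along the strict map $\ZZ_{(p)}\to\FF_p$ is $\omega_0$, which is an orientation, so by the detection result \Cref{local_ring_orientation}, $\omega$ itself is a $\ZZ_{(p)}$-orientation.

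For the Postnikov portion above $\ZZ_{(p)}$, each strict map $\tau_{\le k}\Sph_{(p)}\to \tau_{\le k-1}\Sph_{(p)}$ for $k=1,\dots,d-1$ has fiber $\Sigma^k\pi_k\Sph_{(p)}$, which is $[k,k]$-finite (hence $[0,d-1]$-finite since $k\le d-1$) and admits a canonical $\tau_{\le k-1}\Sph_{(p)}$-module structure via the $\pi_0$-action factoring through $\pi_0\Sph_{(p)}=\ZZ_{(p)}$; this makes each such map $d$-small. \Cref{Small_Ext} then iteratively extends the $\ZZ_{(p)}$-orientation obtained above to a $\tau_{\le d-1}\Sph_{(p)}$-orientation, completing the argument. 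The main subtlety lies in the first leg $\ZZ_{(p)}\to\FF_p$: it is not a Postnikov stage and its fiber $\ZZ_{(p)}$ is not $\pi$-finite, so \Cref{Small_Ext} cannot be applied to it directly; interpolating through the finite quotients $\ZZ/p^r$ and passing to the colimit via \Cref{lem:bcdcolim}, then promoting the resulting pre-orientation via \Cref{local_ring_orientation}, is the only genuinely non-formal input.
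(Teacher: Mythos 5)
Your proof is correct and follows the same strategy as the paper: handle the torsion tower $\ZZ/p^r$ by iterated application of \Cref{Small_Ext}, assemble the compatible $\ZZ/p^r$-orientations into a $\ZZ_{(p)}$-orientation via \Cref{lem:bcdcolim}, then climb the Postnikov tower of $\Sph_{(p)}$ up to level $d-1$ using \Cref{Small_Ext} again. The only (minor and inessential) difference is the verification that the colimit pre-orientation over $\ZZ_{(p)}$ is actually an orientation: you invoke the detection result \Cref{local_ring_orientation}, while the paper notes directly that $\Modfin{\ZZ_{(p)}}{n} = \bigcup_r \Modfin{\ZZ/p^r}{n}$ so the Fourier isomorphism at each finite stage already gives the claim — both are equally valid.
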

\begin{proof}  
    The claim holds vacuously for $d=0$, so we may assume $d \ge 1$. Let $\omega\colon \Dual{\FF_p}{n} \to \one^\times$ be an $\FF_p$-orientation of $\cC$. 
    First, we extend $\omega$ to a $\ZZ_{(p)}$-orientation. For all $r\ge 1$, the quotient map $\ZZ/p^{r+1} \onto \ZZ/p^r$ satisfies the assumptions of \Cref{Small_Ext}. Thus, applying it iteratively, we obtain a compatible sequence of $\ZZ/p^r$-orientations extending $\omega$. Since $\Dual{\ZZ_{(p)}}{n} \simeq \colim \Dual{{(\ZZ/p^r)}}{n}$ by \cref{lem:bcdcolim}, and 
    \[
        \Modfin{\ZZ_{(p)}}{n} = 
        \bigcup\nolimits_{r \in \NN} \Modfin{\ZZ/p^r}{n},
    \] 
    the colimit of the associated diagram is the desired extension of $\omega$ to a $\ZZ_{(p)}$-orientation:
    \[
    \begin{tikzcd}
    	{\Dual{{(\ZZ/p)}}{n}} & {\Dual{{(\ZZ/p^2)}}{n}} & {\dots} & {\Dual{{(\ZZ/p^r)}}{n}} & {\dots } & {\Dual{\ZZ_{(p)}}{n}} \\
    	&&&&& {\one^\times}
        \arrow[from=1-1, to=1-2]
        \arrow[from=1-2, to=1-3]
        \arrow[from=1-3, to=1-4]
        \arrow[from=1-4, to=1-5]
        \arrow[dashed, from=1-5, to=1-6]
        \arrow["\omega"',pos=0.3,curve={height=12pt}, from=1-1, to=2-6]
        \arrow[curve={height=6pt}, from=1-2, to=2-6]
        \arrow[curve={height=6pt}, from=1-4, to=2-6]
        \arrow["\omega_{\le 0}",dashed, from=1-6, to=2-6]
    \end{tikzcd}
    \]
    
    Next, we proceed by induction on the Postnikov tower 
    \[
        \tau_{\leq d-1}\Sph_{(p)} \too 
        \dots \too \tau_{\leq 1}\Sph_{(p)} \too 
        \tau_{\leq 0}\Sph_{(p)}= \ZZ_{(p)}.
    \]
    Again, each map in the tower satisfies the conditions of \Cref{Small_Ext}, and thus the $\tau_{\leq 0}\Sph_{(p)}$-orientation $\omega_{\le 0}$ can be extended inductively all the way to a $\tau_{\leq d-1}\Sph_{(p)}$-orientation.
\end{proof}

If $\cC$ admits an $\FF_p$-orientation of height $n$, then it is \textit{not} $(n+1)$-connected (\Cref{Or_Conn_Bound}). Thus, the most we can get from \Cref{Conn_Ext} is when $d=n$, which when combined with \Cref{Conn_Mu_p} gives us the following:

\begin{cor}\label{Conn_Ext_Mu_p}
    Let $\cC \in \calg(\Prsad)$, such that
    \[
        \mu_p(\one)= 
        \Map_{\Sp^\cn}(C_p,\one^\times) 
        \simeq B^n C_p.
    \]
    If $\cC$ is $(\FF_p,n)$-orientable, then it is 
    $(\tau_{\le n-1}\Sph_{(p)},n)$-orientable.
\end{cor}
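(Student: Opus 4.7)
The plan is to reduce the statement to an application of \cref{Conn_Ext} with $d = n$, which extends any $\FF_p$-orientation of height $n$ to a $\tau_{\le n-1}\Sph_{(p)}$-orientation provided $\cC$ is $n$-connected at $p$. The case $n = 0$ is vacuous, since by convention $\tau_{\le -1}\Sph_{(p)} = \FF_p$, so I henceforth assume $n \ge 1$. For the $n$-connectedness, I plan to invoke \cref{Conn_Mu_p} at $d = n$: because $\mu_p(\one)$ is already connective, the condition $\tau_{\ge 0}\mu_p(\one) \simeq B^n C_p$ is precisely the standing hypothesis, and the only missing input is $0$-connectedness of $\cC$.

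The main obstacle is therefore to prove that $\cC$ is $0$-connected. By \cref{0_connected}, this is equivalent to indecomposability of $\one_\cC$ in $\calg(\cC)$. I argue by contradiction: if $\one_\cC \simeq R \times S$ with $R, S \neq 0$, then $\cC$ splits as a product $\cC_1 \times \cC_2$ in $\calg(\Prl^{\sad{\infty}})$ with both factors nonzero (as in the proof of \cref{lem:0_connected_cat}). Under such a splitting, an $\FF_p$-pre-orientation of $\cC$ corresponds to a pair of pre-orientations of the $\cC_i$, and its Fourier transform is a pair of Fourier transforms with invertibility checked coordinatewise. Thus the given $(\FF_p, n)$-orientation of $\cC$ restricts to $(\FF_p, n)$-orientations of both factors, and in particular each $\cC_i$ admits a nonzero $\FF_p$-pre-orientation of height $n$, since the trivial pre-orientation cannot be an orientation (its Fourier transform at $M = \FF_p$ factors through the zero augmentation). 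Consequently $\pi_n \mu_p(\one_{\cC_i}) \neq 0$ for both $i$.

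On the other hand, the product decomposition of $\one_\cC^\times$ as a spectrum and the limit-preservation of $\Map_{\Sp^\cn}(C_p,-)$ give that $\mu_p$ converts products into products, hence
\[
    \pi_n \mu_p(\one_\cC) \:\simeq\: \pi_n \mu_p(\one_{\cC_1}) \oplus \pi_n \mu_p(\one_{\cC_2}),
\]
while by hypothesis the left-hand side equals $\pi_n B^n C_p = \ZZ/p$, a simple abelian group which cannot be expressed as a direct sum of two nonzero subgroups. This contradiction establishes the $0$-connectedness of $\cC$. With this in hand \cref{Conn_Mu_p} yields $n$-connectedness at $p$, and finally \cref{Conn_Ext} with $d = n$ produces the desired $\tau_{\le n-1}\Sph_{(p)}$-orientation extending the chosen $\FF_p$-orientation.
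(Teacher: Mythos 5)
Your proof is correct and follows the paper's intended route: deduce $n$-connectedness at $p$ from \cref{Conn_Mu_p} (with $d=n$, so the relevant condition is $\tau_{\ge 0}\mu_p(\one)\simeq B^n C_p$, which is the hypothesis), then apply \cref{Conn_Ext} with $d=n$ to extend the given $\FF_p$-orientation to a $\tau_{\le n-1}\Sph_{(p)}$-orientation. The paper presents the corollary as a direct combination of these two results and does not address the $0$-connectedness hypothesis that \cref{Conn_Mu_p} carries; you correctly identify this as the one genuine gap in the compressed argument and supply the missing step.

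Your $0$-connectedness argument is sound. If $\one\simeq R\times S$ in $\calg(\cC)$ with both factors nonzero, then $\cC\simeq\Mod_R(\cC)\times\Mod_S(\cC)$ in $\calg(\Prl^{\sad\infty})$, and since the projections preserve all limits, \cref{push_orientations_functor} gives $(\FF_p,n)$-orientations of both factors. The observation that an orientation must be nonzero as a class in $\pi_n\mu_p(\one_{\cC_i})$ is cleanest via \cref{Or_Scaling}: the zero pre-orientation is $\omega^0$, and $\omega^0$ is an orientation only if the nonzero category vanishes. Since $\mu_p$ of a product is the product of the $\mu_p$'s, this forces $\pi_n\mu_p(\one_\cC)\simeq\ZZ/p$ to split as a direct sum of two nonzero subgroups, a contradiction. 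Note also that the hypothesis $\mu_p(\one)\simeq B^n C_p$ automatically excludes $\cC=0$ (where $\mu_p(\one)=0$), which is tacitly needed for the indecomposability criterion in \cref{0_connected}; you may wish to record this explicitly.

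One very minor stylistic point: rather than arguing that the trivial pre-orientation ``cannot be an orientation because its Fourier transform at $M=\FF_p$ factors through the zero augmentation,'' it is cleaner to cite \cref{Or_Scaling} directly as above; the informal argument you sketch can be completed, but it requires a small computation that the cited proposition has already done.
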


This raises a natural question of when does a spectrum 
$M \in \Spfin{n}_{(p)}$ admit an action of $\tau_{\leq n-1}\Sph_{(p)}$.
In general, being a module over a truncated sphere is a (rather subtle) structure. However, for spectra concentrated in degrees $0$ to $n$, this structure degenerates to a property, which can moreover be detected on the level of homotopy groups.

\begin{prop}\label{Truncated_Sph_Crit}
    A spectrum $M\in \Sp^{[0,n]}$ admits a module structure over $\tau_{\le n-1}\Sph$ if and only if the action map 
    \[
        \pi_n\Sph\otimes \pi_0 M  \oto{\:\alpha_M\:} \pi_n M
        \qin \Ab
    \] 
    is zero. Moreover, this   module structure is then unique. 
\end{prop}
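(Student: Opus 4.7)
The plan is to identify the space of $R := \tau_{\le n-1}\Sph$-module structures on $M$ extending its canonical $\Sph$-module structure with the space of null-homotopies of a specific action map of spectra, and then to evaluate this via a connectivity computation. Setting $F := \tau_{\ge n}\Sph$, the cofiber sequence $F \to \Sph \to R$ together with the fact that $\Sph$ is initial in $\EE_1$-rings suggests that the homotopy fiber of the forgetful functor $\Mod_R \to \Sp$ over $M$ is equivalent to the space of null-homotopies of the canonical action map $a\colon F \otimes M \to M$. The main obstacle is that the datum of an $R$-module structure is more than just a unital action map, also encompassing all higher associativity coherences; I return to this below.

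The core computation is then to show that $\Map_\Sp(F \otimes M, M) \simeq \hom_\Ab(\pi_n\Sph \otimes \pi_0 M, \pi_n M)$ as a discrete space, with $a$ corresponding to $\alpha_M$. Since $F$ is $n$-connective and $M$ is connective, $F \otimes M$ is $n$-connective with $\pi_n(F \otimes M) \simeq \pi_n\Sph \otimes \pi_0 M$. Since $M$ is also $n$-truncated, a standard argument (reducing to $\tau_{\le n}(F \otimes M) \simeq \Sigma^n H(\pi_n\Sph \otimes \pi_0 M)$ on the source side and to $\tau_{\ge n} M \simeq \Sigma^n H \pi_n M$ on the target) shows that $\pi_j \,\hom_\Sp(F \otimes M, M) = 0$ for $j > 0$ while $\pi_0 \simeq \hom_\Ab(\pi_n\Sph \otimes \pi_0 M, \pi_n M)$. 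Tracing through definitions, $a$ corresponds precisely to $\alpha_M$, so the space of null-homotopies of $a$ is empty if $\alpha_M \neq 0$ and a single point if $\alpha_M = 0$, yielding both the iff criterion and the uniqueness.

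To dispose of the higher-coherence issue I would employ a parallel connectivity estimate: the higher associativity obstructions live in mapping spaces of the form $\Map_\Sp(F^{\otimes k} \otimes M, M)$ for $k \ge 2$. Since $F^{\otimes k}$ is $(kn)$-connective by a K\"unneth argument and $kn > n$ whenever $k \ge 2$ and $n \ge 1$, the same computation as above gives $\pi_j\, \hom_\Sp(F^{\otimes k}\otimes M, M) = 0$ for all $j \ge 0$, so these mapping spaces are contractible. Hence all higher coherences are automatic, and the full space of $R$-module structures coincides with the space of null-homotopies of $a$ alone. The case $n = 0$ is degenerate since $\tau_{\le -1}\Sph = 0$, and both sides of the claimed equivalence collapse to the condition $M = 0$.
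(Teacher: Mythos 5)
Your approach is genuinely different from the paper's: you attempt a direct obstruction-theoretic count of the space of $R$-module structures, whereas the paper observes that $\tau_{\le n-1}\Sph$ is an \emph{idempotent} algebra in the truncated symmetric monoidal category $\Sp^{[0,n]}$. By \cite[Proposition 4.8.2.10]{HA}, idempotence makes the forgetful functor $\Mod_{\tau_{\le n-1}\Sph}(\Sp^{[0,n]}) \to \Sp^{[0,n]}$ fully faithful, so ``admits a module structure'' becomes a \emph{property} whose essential image is detected by checking that $M \otimes u$ is an equivalence (where $u$ is the unit map) — which is then immediately equivalent to $\alpha_M = 0$. This sidesteps all questions of higher coherence, and the uniqueness falls out of full faithfulness for free.

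The connectivity computations in your sketch are correct: $\Map_\Sp(F\otimes M, M)$ is indeed discrete and identified with $\hom_{\Ab}(\pi_n\Sph \otimes \pi_0 M, \pi_n M)$, the action map $a$ does correspond to $\alpha_M$, and for $k \ge 2$ and $n \ge 1$ the spaces $\Map_\Sp(F^{\otimes k}\otimes M, M)$ are contractible since $F^{\otimes k}\otimes M$ is $kn$-connective while $M$ is $n$-truncated. However, there is a genuine gap in the structural claim on which the whole argument rests: you assert that the space of $R$-module structures on $M$ decomposes as the space of null-homotopies of $a$ together with higher-coherence data living in $\Map_\Sp(F^{\otimes k}\otimes M, M)$, but this is asserted rather than proved. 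Making this precise requires identifying the correct filtration of the monad $R\otimes(-)$ (or of $\Map_{\alg_{\EE_1}}(R, \End(M))$) whose associated graded is governed by tensor powers of $F$; this is nontrivial since $R = \Sph \oplus \Sigma F$ does not split, $F$ is not a square-zero ideal, and the ``reduced'' layers of the relevant cosimplicial object are actually $(\Sigma F)^{\otimes k}$ rather than $F^{\otimes k}$ (though this only helps your connectivity bound). Without pinning down the exact obstruction-theoretic framework and a convergence argument for the resulting tower, the argument is incomplete. The paper's idempotence trick gets the same conclusion with no obstruction theory at all, which is why it is preferable here.
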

\begin{proof}
    The $\infty$-category $\Sp^{[0,n]}$ admits a symmetric monoidal structure for which the truncation functor $\tau_{\le n}\colon \Sp^\cn \to \Sp^{[0,n]}$ is symmetric monoidal. In particular, the unit is $\tau_{\le n}\Sph \in \Sp^{[0,n]}$, and the tensor product of $X, Y \in \Sp^{[0,n]}$ is given by $\tau_{\le n} (X\otimes Y) \in \Sp^{[0,n]}$. 
    We divide the claim into two parts:
        \begin{enumerate}
            \item $\tau_{\le n-1}\Sph$ is an idempotent algebra in $\Sp^{[0,n]}$, in the sense of \cite[Definition 4.8.2.8]{HA};
            \item $\tau_{\le n-1}\Sph$ classifies the property that $\alpha_M$ is zero, in the sense that $M \in \Sp^{[0,n]}$ is an $\tau_{\le n-1}\Sph$-module if and only if $\alpha_M=0$.
        \end{enumerate}
    Consider the exact sequence
    \[
        \Sigma^n \pi_n \Sph \oto{\:\:f\:\:}
        \tau_{\le n}\Sph \oto{\:\:u\:\:} 
        \tau_{\le n-1}\Sph,
    \]
    where the truncation map $u$ is the unit map for $\tau_{\le n-1}\Sph$ as a commutative algebra in $\Sp^{[0,n]}$.
    For every $M\in \Sp^{[0,n]}$, we get an exact sequence
    \[
        M \otimes \Sigma^n \pi_n \Sph \oto{1_M\otimes f} 
        M \otimes \tau_{\le n}\Sph \oto{1_M \otimes u} 
        M \otimes \tau_{\le n-1}\Sph.
    \]
    The left most spectrum is $n-1$ connected, and on $\pi_n$ the map $1_M\otimes f$ induces the map
    \[
        \pi_n(M \otimes \Sigma^n \pi_n \Sph) \simeq 
        \pi_0 M \otimes \pi_n \Sph \oto{\:\alpha_M\:}
        \pi_n M \simeq 
        \pi_n(M \otimes \tau_{\le n}\Sph).
    \]
    Thus, by the long exact sequence in homotopy groups, the map $1_M \otimes u$ becomes an isomorphism after applying $\tau_{\le n}$ if and only if $\alpha_M = 0$. Now, for $M=\tau_{\le n-1}\Sph$, the target of $\alpha_M$ is the zero group and hence it is the zero map. Thus $1_{\tau_{\le n-1}\Sph} \otimes u\colon \tau_{\le n-1}\Sph \otimes \tau_{\le n}\Sph \to \tau_{\le n-1}\Sph \otimes \tau_{\le n-1}\Sph$ is an isomorphism, so $\tau_{\le n-1}\Sph$ is an idempotent algebra in $\Sp^{[0,n]}$; this verifies (1). By \cite[Proposition 4.8.2.10]{HA}, this implies that the forgetful functor
    \[
        \Mod_{\tau_{\le n-1}\Sph}(\Sp^{[0,n]}) \too 
        \Sp^{[0,n]}
    \]
    is fully faithful, and the essential image is precisely the objects $M \in \Sp^{[0,n]}$, whose tensor with $u$ in $\Sp^{[0,n]}$ is an isomorphism, which by the above is equivalent to $\alpha_M = 0$. Therefore, we have shown that (2) holds as well.
\end{proof}

\begin{cor}\label{Truncated_Sph_p_Crit}
    A spectrum $M \in \Spfin{n}_{(p)}$ admits a module structure over $\tau_{\le n-1}\Sph_{(p)}$ if and only if the action map
    \[
        \pi_n\Sph_{(p)} \otimes \pi_0 M  \oto{\:\alpha_M\:} \pi_n M
        \qin \Ab
    \] 
    is zero. Moreover, this module structure is then unique. 
\end{cor}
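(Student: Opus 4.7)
The plan is to deduce this corollary from the previous Proposition \ref{Truncated_Sph_Crit} by an essentially verbatim $p$-local analogue of its proof. Since $\Sp_{(p)}$ is a smashing localization of $\Sp$, the truncation $\Sp_{(p)}^{[0,n]}$ inherits a symmetric monoidal structure for which the truncation functor $\tau_{\le n}\colon \Sp_{(p)}^{\cn}\to \Sp_{(p)}^{[0,n]}$ is symmetric monoidal, and in which the unit is $\tau_{\le n}\Sph_{(p)}$. The proof of Proposition \ref{Truncated_Sph_Crit} uses only this formal setup together with the exact sequence
\[
    \Sigma^n \pi_n \Sph_{(p)} \too \tau_{\le n}\Sph_{(p)} \oto{\:u\:} \tau_{\le n-1}\Sph_{(p)},
\]
so it transports directly to $\Sp_{(p)}^{[0,n]}$: the map $u$ is the unit of an idempotent algebra, its modules are precisely those $M$ for which the obstruction map
\[
    \pi_n \Sph_{(p)} \otimes \pi_0 M \oto{\:\alpha_M\:} \pi_n M
\]
vanishes, and the module structure (when it exists) is automatically unique because it is the datum of being a module over an idempotent algebra.

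Alternatively, one may deduce the statement directly from Proposition \ref{Truncated_Sph_Crit}. Indeed, $\pi_0 M$ and $\pi_n M$ are $p$-local, so the obstruction map appearing in the corollary agrees with the one over $\Sph$ via the canonical identification $\pi_n\Sph \otimes \pi_0 M \simeq \pi_n\Sph_{(p)} \otimes \pi_0 M$ (since $\pi_0 M$ is $p$-local). Hence $M$ admits a unique $\tau_{\le n-1}\Sph$-module structure in $\Sp^{[0,n]}$, and since $M$ is $p$-local, this structure factors uniquely through the map $\tau_{\le n-1}\Sph \to \tau_{\le n-1}\Sph_{(p)}$, giving the desired $\tau_{\le n-1}\Sph_{(p)}$-module structure. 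Conversely, any $\tau_{\le n-1}\Sph_{(p)}$-module structure restricts along the above map to a $\tau_{\le n-1}\Sph$-module structure, so Proposition \ref{Truncated_Sph_Crit} forces $\alpha_M = 0$.

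There is no real obstacle here; the content is entirely carried by Proposition \ref{Truncated_Sph_Crit}, and the only thing to verify is that $p$-localization is compatible with the argument, which is immediate from the fact that $\Sp_{(p)} \subseteq \Sp$ is a smashing symmetric monoidal localization.
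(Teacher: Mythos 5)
Your proposal is correct, and your second route is essentially identical to the paper's own proof: the paper cites \cite[Proposition 4.8.2.10]{HA} (idempotent algebras have fully faithful module-forgetful functors with image the local objects), which is precisely the content of your observation that the $\tau_{\le n-1}\Sph$-module structure furnished by \Cref{Truncated_Sph_Crit} factors uniquely through $\tau_{\le n-1}\Sph \to \tau_{\le n-1}\Sph_{(p)}$ because $M$ is $p$-local, together with the elementary identification $\pi_n\Sph\otimes\pi_0 M\simeq\pi_n\Sph_{(p)}\otimes\pi_0 M$. Your first route (rerunning the proof of \Cref{Truncated_Sph_Crit} inside $\Sp_{(p)}^{[0,n]}$) is also valid but is more than the paper needs.
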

\begin{proof}
    In view of \cite[Proposition 4.8.2.10]{HA}, this follows immediately from \Cref{Truncated_Sph_Crit}.
\end{proof}

Sadly, \Cref{Conn_Ext_Mu_p} stops short from implying 
$(\tau_{\leq n}\Sph_{(p)},n)$-orientability, which will be already the same as $(\Sph_{(p)},n)$-orientability. This obstacle, however, can be overcome using \textit{categorification}. 

\begin{prop}\label{Conn_Ext_Cat}
    Let $\cC \in \calg(\Prsad)$ such that 
    \[
        \Map_{\Sp^\cn}(C_p,\pic(\cC)) \simeq B^{n+1} C_p.
    \]
    If $\cC$ is $(\FF_p,n)$-orientable, then it is $(\Sph_{(p)},n)$-orientable.
\end{prop}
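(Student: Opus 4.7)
The plan is to reduce to \Cref{Conn_Ext_Mu_p} after passing to the categorification $\Mod_\cC$. The point is that \Cref{Conn_Ext_Mu_p} applied directly to $\cC$ only yields $(\tau_{\le n-1}\Sph_{(p)},n)$-orientability: the final Postnikov stage of $\Sph_{(p)}$, i.e., the extension across the fiber $\Sigma^n(\pi_n\Sph_{(p)})$ of $\tau_{\le n}\Sph_{(p)}\to\tau_{\le n-1}\Sph_{(p)}$, is precisely the step that fails to be $n$-small in the sense of \Cref{Small_Ext}. The assumption on $\Map_{\Sp^\cn}(C_p,\pic(\cC))$ is designed exactly to bypass this by shifting the whole problem one categorical level up, where the critical fiber becomes $(n+1)$-small.

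Concretely, I will carry out the following steps. First, observe that $\pic(\cC)=\one_{\Mod_\cC}^\times$, so the hypothesis translates into
\[
    \mu_p(\one_{\Mod_\cC}) \:\simeq\: \Map_{\Sp^\cn}(C_p,\pic(\cC)) \:\simeq\: B^{n+1}C_p.
\]
Second, by \cref{Mod_C_Sadd} the $\infty$-category $\Mod_\cC$ lies in $\calg(\Prsad)$, and by \Cref{Orientability_Cat} (applied with $\OR=\FF_p$, which is $n$-truncated) the $(\FF_p,n)$-orientability of $\cC$ upgrades to the $(\FF_p,n+1)$-orientability of $\Mod_\cC$.

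Third, apply \Cref{Conn_Ext_Mu_p} to $\Mod_\cC$ at height $n+1$: the two hypotheses just established yield that $\Mod_\cC$ is $(\tau_{\le n}\Sph_{(p)},n+1)$-orientable. Finally, since $\tau_{\le n}\Sph_{(p)}$ is $n$-truncated, \Cref{Orientability_Cat} applies again with $\OR=\tau_{\le n}\Sph_{(p)}$ and produces a $(\tau_{\le n}\Sph_{(p)},n)$-orientation of $\cC$ itself; by \Cref{rem:n_trunc}, orientations of height $n$ depend only on the $n$-truncation of $\OR$, so this is the same data as an $(\Sph_{(p)},n)$-orientation.

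The only real content is the categorification step: the obstruction for lifting a $\tau_{\le n-1}\Sph_{(p)}$-orientation on $\cC$ along the fiber $\Sigma^n\pi_n\Sph_{(p)}$ is controlled by $\Map(C_p,\one_\cC^\times)=\mu_p(\one_\cC)\simeq B^n C_p$, which is insufficiently connected to kill it; but after categorifying, the analogous obstruction for the corresponding step one level up is controlled by $\mu_p(\one_{\Mod_\cC})\simeq B^{n+1}C_p$, which is $n$-connected and hence kills the obstruction by the connectedness argument embedded in \Cref{Small_Ext}. I do not anticipate any other obstacle; once the bookkeeping of truncation levels and the translations between $\pic$, $\mu_p$, and orientations are aligned, the result is a direct concatenation of \Cref{Orientability_Cat}, \Cref{Conn_Ext_Mu_p}, and \Cref{Orientability_Cat} again.
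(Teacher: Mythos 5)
Your proposal is correct and follows essentially the same route as the paper: categorify via \Cref{Orientability_Cat}, apply the connectedness criterion at height $n+1$ to obtain $(\tau_{\le n}\Sph_{(p)},n+1)$-orientability of $\Mod_\cC$, then decategorify and invoke \Cref{rem:n_trunc}. The only cosmetic difference is that you cite \Cref{Conn_Ext_Mu_p} directly, whereas the paper unrolls it into \Cref{Conn_Mu_p} (to get $(n+1)$-connectedness of $\Mod_\cC$) followed by \Cref{Conn_Ext}; since \Cref{Conn_Ext_Mu_p} is exactly the packaging of these two results, the arguments coincide.
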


\begin{proof}
    By \Cref{Orientability_Cat}, if $\cC$ is $(\FF_p,n)$-orientable, then $\Mod_\cC$ is $(\FF_p,n+1)$-orientable. Since we can identify $\Map_{\Sp^\cn}(C_p,\pic(\cC))$ with $\mu_p(\one_{\Mod_\cC})$, we get by \Cref{Conn_Mu_p}, that $\Mod_\cC$ is $(n+1)$-connected. Hence, by \Cref{Conn_Ext}, the $\infty$-category $\Mod_\cC$ is $(\tau_{\le n}\Sph_{(p)},n+1)$-orientable. It follows, by \Cref{Orientability_Cat} again, that $\cC$ is $(\tau_{\le n}\Sph_{(p)},n)$-orientable and hence, by \Cref{rem:n_trunc}, $(\Sph_{(p)},n)$-orientable. 
\end{proof}

\subsubsection{Torsion units and $(d+\frac{1}{2})$-connectedness}

To further study extensions of $\FF_p$-orientation to truncated $\Sph_{(p)}$-orientations, it will be convenient to use the following terminology:

\begin{defn}\label{def:Conn_Frac}
    Let $\cC \in \calg(\Prsad)$ be $(\FF_p,n)$-orientable. We say that $\cC$ is \tdef{$(d+\frac{1}{2})$-connected}, if it is $d$-connected and $(\tau_{\le d}\Sph_{(p)},n)$-orientable. 
\end{defn}  

\begin{rem}
    If $\cC \in \calg(\Prsad)$ is $(\FF_p,n)$-oriented and $0$-connected, then by \Cref{Or_Torsor}, the space of its $\FF_p$-orientations of height $n$ is a  torsor for the group $(\ZZ/p)^\times$. Thus, if any of the $\FF_p$-orientations of $\cC$ extends to a $\tau_{\le d}\Sph_{(p)}$-orientation, then all of them do. 
\end{rem}

As a sanity check, we observe that 
$(d+1)$-connectedness implies $(d+\frac{1}{2})$-connectedness by \Cref{Conn_Ext}, which in turn implies $d$-connectedness by definition. We further note that while an $(\FF_p,n)$-orientable $\cC$ can not be $(n+1)$-connected (\Cref{Or_Conn_Bound}), it \textit{can} be $(n+\frac{1}{2})$-connected. The following is a useful criterion for that:

\begin{prop}\label{Pic_Conn_Frac}
    Let $\cC \in \calg(\Prsad)$ such that 
    \[
        \Map_{\Sp^\cn}(C_p,\pic(\cC)) \simeq B^{n+1} C_p.
    \]
    If $\cC$ is $(\FF_p,n)$-orientable, then $\cC$ is $(n+\frac{1}{2})$-connected.
\end{prop}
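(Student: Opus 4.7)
The target of being $(n+\frac{1}{2})$-connected unpacks by \cref{def:Conn_Frac} into two independent assertions: (a) $n$-connectedness of $\cC$, and (b) $(\tau_{\le n}\Sph_{(p)},n)$-orientability of $\cC$. Part (b) will follow immediately from \Cref{Conn_Ext_Cat}, which under our precise hypotheses produces an $(\Sph_{(p)},n)$-orientation; by \cref{rem:n_trunc} this is the same datum as a $(\tau_{\le n}\Sph_{(p)},n)$-orientation. For part (a), looping the hypothesis provides $\mu_p(\one_\cC) \simeq \Omega B^{n+1}C_p \simeq B^n C_p$, so \Cref{Conn_Mu_p} reduces the task to showing that $\cC$ is $0$-connected.

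The main obstacle is this $0$-connectedness, which I plan to establish by contradiction: suppose $\one_\cC \simeq A \times B$ in $\calg(\cC)$ with both $A$ and $B$ non-zero, giving a splitting $\cC\simeq \cC_A \times \cC_B$ in $\calg(\Prsad)$ into non-zero factors that both inherit $(\FF_p,n)$-orientability via projection. Since $\pic$ distributes over products, the hypothesis rewrites as
\[
    \Map_{\Sp^\cn}(C_p,\pic(\cC_A))\,\times\,\Map_{\Sp^\cn}(C_p,\pic(\cC_B)) \;\simeq\; B^{n+1}C_p.
\]
The connective spectrum $B^{n+1}C_p$ has a single non-zero homotopy group, $\pi_{n+1}=\FF_p$, which is simple as an abelian group; so one of the two factors on the left must be contractible. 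Without loss of generality the second is, and looping gives $\mu_p(\one_{\cC_B}) \simeq 0$. This forces the space $\Map_{\Sp^\cn}(\Sigma^n\FF_p, \one_{\cC_B}^\times)$ to be contractible, leaving the zero map as the only $\FF_p$-pre-orientation of $\cC_B$.

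The crux of the argument is then to rule out $\omega=0$ as an orientation of the non-zero $\cC_B$. Unpacking \cref{Four_Explicit}: since $\omega\circ\varphi = 0$ for every character $\varphi$, the Fourier transform at $M=\FF_p$ factors through $\one_{\cC_B}$ as
\[
    \Four_0 \;\colon\; \one_{\cC_B}[\FF_p] \xrightarrow{\;\varepsilon_0\;} \one_{\cC_B} \xrightarrow{\;c\;} \one_{\cC_B}^{B^n C_p},
\]
where $c$ is the constant-functions unit map. Evaluation at any basepoint $\pt \to B^n C_p$ supplies a commutative-algebra retraction of $c$; so if $\Four_0$ were an isomorphism, a short diagram chase would show that $c$ itself is an isomorphism, forcing $B^n C_p$ to be $\cC_B$-acyclic. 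This contradicts \Cref{Fp_orientation_height} and \cite[Proposition 3.2.1]{AmbiHeight}, which together ensure that the non-zero $(\FF_p,n)$-orientable $\cC_B$ has semiadditive height exactly $n$ at $p$, and in particular that $B^n C_p$ is not $\cC_B$-acyclic. The resulting contradiction establishes the $0$-connectedness of $\cC$, and combining this with \Cref{Conn_Mu_p} and \Cref{Conn_Ext_Cat} completes the proof.
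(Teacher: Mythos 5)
Your proof follows the paper's strategy precisely for both halves of the conclusion: $(\tau_{\le n}\Sph_{(p)},n)$-orientability via \cref{Conn_Ext_Cat}, and $n$-connectedness by looping the hypothesis to get $\mu_p(\one_\cC)\simeq B^nC_p$ and invoking \cref{Conn_Mu_p}. You have, however, identified a real subtlety that the paper's proof passes over silently: the ``if'' direction of \cref{Conn_Mu_p} is stated under the standing hypothesis that $\cC$ is $0$-connected, and neither the proof of the present statement nor that of \cref{Conn_Ext_Cat} (which invokes \cref{Conn_Mu_p} the same way) explicitly checks this.

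Your supplemental argument correctly fills in this step. If $\cC\simeq\cC_A\times\cC_B$ with both factors non-zero, then since $\pic$ commutes with finite products and $\Sigma^{n+1}C_p$ has a single non-vanishing homotopy group which is a simple abelian group, one of the factors, say $\cC_B$, must satisfy $\Map_{\Sp^\cn}(C_p,\pic(\cC_B))\simeq 0$; looping gives $\Omega^n\mu_p(\one_{\cC_B})\simeq\pt$, so the zero map is the only $\FF_p$-pre-orientation of $\cC_B$. By \cref{push_orientations_functor} the factor $\cC_B$ inherits $(\FF_p,n)$-orientability, so the zero pre-orientation would have to be an orientation. Your retraction argument—$\Four_0$ factors through $\one_{\cC_B}$, the constant-functions unit $c\colon\one\to\one^{B^nC_p}$ is split mono via evaluation at the basepoint, hence $\Four_0$ an isomorphism forces $c$ to be one as well, making $B^nC_p$ amenable in $\cC_B$ and contradicting the semiadditive-height-$n$ constraint of \cref{Fp_orientation_height}—is sound. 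A marginally shorter route to the same contradiction is available: the unique pre-orientation $\omega=0$ satisfies $\omega=\omega^0$, and the converse direction of \cref{Or_Scaling} (with $\OR=\FF_p$ $\pi$-finite and $\cC_B$ non-zero) says $\omega^r$ can be an orientation only for $r\in\FF_p^\times$, whereas $0\notin\FF_p^\times$. In sum, you take essentially the same route as the paper and supply a correct completion of the $0$-connectedness verification.
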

\begin{proof}
    By \Cref{Conn_Ext_Cat}, we get that $\cC$ is $(\tau_{\le n}\Sph_{(p)},n)$-orientable. By looping once the isomorphism in the hypothesis, we get 
    \[
        \mu_p(\one_\cC) = \Map_{\Sp^\cn}(C_p,\one_\cC^\times) \simeq B^{n} C_p.
    \]
    Thus, $\cC$ is $n$-connected by \cref{Conn_Mu_p}.
\end{proof}

Our next goal is to show that just like $d$-connectedness can be characterized in terms of the $p$-th roots of unity of $\one$ (\Cref{Conn_Mu_p}), the property of $(d+\frac{1}{2})$-connectedness can be similarly characterized in terms of all of the ``$p$-local $\pi$-finite units'' of $\one$. 

\begin{defn}\label{def:pitor}
    Let $\mdef{\Sp_{(p)}^{\ptors{\pi}}} \sseq \Sp^\cn$ be the full subcategory generated under (filtered) colimits by $\Sp_{(p)}^{\pfin{\pi}} \sseq \Sp^\cn$, and denote the right adjoint of the embedding by 
    \[
        (-)_{(p)}^{\ptors{\pi}} \colon 
        \Sp^\cn \too 
        \Sp_{(p)}^{\ptors{\pi}}.
    \]
    We say that a spectrum $X \in \Sp^\cn$ is \tdef{($p$-local) $\pi$-torsion} if it belongs to $\Sp_{(p)}^{\ptors{\pi}}$. 
\end{defn}

The above notion of `$p$-local $\pi$-torsion' is related to $p$-torsion in the usual sense by the following:

\begin{prop}\label{Cp_Tor_Crit}
    Let $X \in \Sp^\cn$. If $X\in \Sp_{(p)}^{\ptors{\pi}}$, then $X[p^{-1}]= 0$. The converse holds if $X$ is bounded above. 
\end{prop}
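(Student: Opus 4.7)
The forward direction is immediate: if $X \simeq \colim_\alpha X_\alpha$ with $X_\alpha \in \Sp_{(p)}^{\pfin{\pi}}$, then each $X_\alpha$ has finite $p$-power torsion homotopy groups, so $X_\alpha[p^{-1}] \simeq 0$; since inverting $p$ is a smashing localization, hence commutes with filtered colimits, we conclude $X[p^{-1}] \simeq 0$.

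For the converse, the plan is to induct on the truncation level $n$ of $X$, which by hypothesis is a bounded-above connective spectrum with $p$-power torsion homotopy groups. In the base case $n = 0$, $X$ is a discrete $p$-power torsion abelian group, hence the filtered colimit of its finite subgroups, each a finite abelian $p$-group lying in $\Sp_{(p)}^{\pfin{\pi}}$. For the inductive step, the Postnikov cofiber sequence $\Sigma^n \pi_n X \to X \to \tau_{\le n-1} X$ exhibits $X$ as an extension whose outer terms lie in $\Sp_{(p)}^{\ptors{\pi}}$ by the base case and the inductive hypothesis. The whole argument therefore reduces to the following key closure property: \emph{the subcategory $\Sp_{(p)}^{\ptors{\pi}} \subseteq \Sp^\cn$ is closed under extensions.}

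To prove the closure, given $A \to B \to C$ with $A, C \in \Sp_{(p)}^{\ptors{\pi}}$, I would write $A \simeq \colim_i A_i$ and $C \simeq \colim_j C_j$ with $A_i, C_j \in \Sp_{(p)}^{\pfin{\pi}}$, and realize $B$ as the fiber of the classifying map $k \colon C \to \Sigma A$. The essential input is that every $C_j$ is $\omega$-compact in $\Sp^\cn$: via d\'{e}vissage along its finite Postnikov tower, this reduces to the compactness of $H\ZZ$ and $H\ZZ/p^r$, which in turn follows from the identifications $\Map_{\Sp^\cn}(H\ZZ, M) \simeq \Omega^\infty M$ (using that $\ZZ$ is the free grouplike $\EE_\infty$-space on one generator) and $\Map_{\Sp^\cn}(H\ZZ/p^r, M) \simeq \mathrm{fib}(p^r \colon \Omega^\infty M \to \Omega^\infty M)$, together with the fact that $\Omega^\infty$ and finite limits preserve filtered colimits in $\Sp^\cn$. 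Consequently, each restriction $k_j \colon C_j \to \Sigma A$ factors through some $\Sigma A_{i(j)}$, and a standard cofinality argument assembles these into a filtered diagram of maps $k_j \colon C_j \to \Sigma A_{i(j)}$ in $\Sp_{(p)}^{\pfin{\pi}}$ whose colimit is $k$. Since fibers commute with filtered colimits and $\Sp_{(p)}^{\pfin{\pi}}$ is closed under fibers, $B \simeq \colim_j \mathrm{fib}(k_j)$ lies in $\Sp_{(p)}^{\ptors{\pi}}$.

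The main technical obstacle is the cofinality step: one must coherently refine the indexing so that the individual factorizations assemble into a genuinely filtered diagram of arrows recovering $k$ as a colimit, rather than merely a pointwise family. This is standard but requires care, and can be handled by passing to the filtered category of compatible factorizations, or equivalently by extracting a cofinal filtered subdiagram from the identification $\Map_\Sp(C, \Sigma A) \simeq \lim_j \colim_i \Map_\Sp(C_j, \Sigma A_i)$.
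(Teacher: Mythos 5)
Your forward direction is correct and is the same as the paper's. Your overall strategy for the converse (Postnikov d\'evissage reducing to closure of $\Sp_{(p)}^{\ptors{\pi}}$ under extensions) is sound and is in fact a more careful unpacking of what the paper says very tersely. The problem lies in the ``essential input'' you single out: $\pi$-finite $p$-local spectra are \emph{not} compact in $\Sp^\cn$, and the argument you give for this is wrong.

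Concretely: the free grouplike $\EE_\infty$-space on one generator is $QS^0 = \Omega^\infty\Sph$, not $\ZZ$; the discrete abelian group $\ZZ$ is only its $0$-truncation. Hence $\Map_{\Sp^\cn}(H\ZZ, M) \not\simeq \Omega^\infty M$ in general (that identification holds with $\Sph$ in place of $H\ZZ$), and the claimed compactness of $H\ZZ$ and $H\ZZ/p^r$ does not follow. Indeed it fails: the compact objects of $\Sp^\cn$ are retracts of finite connective spectra, which have finite-dimensional $\FF_p$-homology, while $H_*(H\FF_p;\FF_p)$ is the dual Steenrod algebra, infinite-dimensional. So $C_p = H\FF_p$, and more generally any nontrivial $\pi$-finite $p$-local spectrum, is \emph{not} $\omega$-compact in $\Sp^\cn$, and the factorizations $k_j \colon C_j \to \Sigma A_{i(j)}$ need not exist as you set things up.

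The fix is to do the compactness/cofinality step in a \emph{truncated} ambient category. Since $X$ is bounded above, all spectra in play ($A$, $B$, $C$, their approximants, $\Sigma A$) are $d$-truncated for a fixed $d$, so one can work in $\Sp^{[0,d]}$ (or in $\Sptors{d}_{(p)}$). There, every $d$-truncated $\pi$-finite spectrum \emph{is} compact, because it can be written as the $d$-truncation of a finite spectrum, and $\tau_{\le d}$ preserves compact objects (its right adjoint, the inclusion, preserves filtered colimits). This is exactly the content of \cref{pTors_Ind}, which gives $\Sptors{d}_{(p)} \simeq \Ind(\Spfin{d}_{(p)})$; with that in hand your ``factor $k_j$ through a finite stage, assemble into a filtered diagram of arrows, and pass to fibers'' argument goes through, since the arrow category of an $\Ind$-category of a finitely cocomplete category is again $\Ind$ of the arrow category and filtered colimits commute with finite limits in $\Sp^\cn$.

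For what it is worth, the paper's own proof is considerably shorter: it observes that the finite Postnikov tower of $X$ has fibers $\Sigma^k\pi_kX$ which are (de)suspensions of $p$-power torsion abelian groups, hence visibly filtered colimits of $\pi$-finite $p$-local spectra, and then appeals to generation from $C_p$ by colimits. That proof also silently uses some form of closure under extension (a nontrivial point in a non-stable category like $\Sp^\cn$), so your instinct to make this explicit is a good one; you just need to locate the compactness in the correct (truncated) category rather than in all of $\Sp^\cn$.
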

\begin{proof}
    For the `if' part, observe that the collection of spectra $X$ for which $X[p^{-1}]= 0$, contains $C_p$ and is closed under colimits in $\Sp$. Since all $p$-local $\pi$-finite spectra are generated by $C_p$ under colimits, the same holds for all $p$-local $\pi$-torsion spectra and the claim follows. 
    Conversely, if $X\in\Sp^\cn$ is $p$-torsion and bounded above, it is the colimit of its Postnikov truncations, for which the successive fibers are (de)suspensions of $p$-torsion abelian groups, which in turn are all generated from $C_p$ by colimits in $\Sp$.
\end{proof}

\begin{rem}
    Some additional hypothesis is necessary for the converse part in \Cref{Cp_Tor_Crit}, as $X= \Sph/p$ is an example of a connective spectrum with $X[p^{-1}] = 0$, but $X \notin \Sp_{(p)}^{\ptors{\pi}}$.
\end{rem}

\begin{defn}
    For $\cC \in \calg(\cat_\infty)$ and $S\in \calg(\cC)$, the \tdef{$p$-local $\pi$-torsion units} of $S$ are given by 
    \[
        \mdef{\disc[S]}:= (S^\times)_{(p)}^{\ptors{\pi}}
        \qin \Sp_{(p)}^{\ptors{\pi}}.
    \]
\end{defn}

In many examples of interest, such as 
$\OR= \tau_{\le d} \Sph_{(p)},$ the spectrum $\Dual{\OR}{n}$ belongs to $\Sp_{(p)}^{\ptors{\pi}}$. In these cases, pre-orientations $\Dual{\OR}{n} \to S^\times$ factor uniquely through $\disc[S] \to S^\times$ and are thus in bijection with maps $\Dual{\OR}{n} \to \disc[S]$.
As with ordinary roots of unity, the semiadditive height gives an upper bound on the truncatedness of the spectrum $\disc[S]$.

\begin{prop}\label{Pi_Units_Height}
    Let $\cC \in \calg(\cat^{\sad{\infty}})$ and let $S\in\calg(\cC)$. If $S$ is of height $\le n$ at $p$, then the spectrum $\disc[S]$ is $n$-truncated.
\end{prop}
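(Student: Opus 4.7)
The plan is to show that $\pi_k(\disc[S]) = 0$ for every $k > n$. First, I would reduce to the case where $S = \one_\cC$ and $\cC$ itself is of semiadditive height $\le n$ at $p$, by passing to the $\infty$-category $\Mod_S(\cC)$: the spectrum $S^\times$ (and hence $\disc[S]$) is insensitive to this change of base, since the adjunction $\one_{\Mod_S}[-] \dashv (-)^\times$ matches with its analogue in $\cC$ via restriction of scalars. Since $\disc[S] \in \Sp_{(p)}^{\ptors{\pi}}$ is by definition a filtered colimit of $p$-local $\pi$-finite spectra, each group $\pi_k(\disc[S])$ is $p$-primary torsion and therefore satisfies
\[
    \pi_k(\disc[S]) \:=\: \colim_r \pi_k(\disc[S])[p^r].
\]
Mapping the cofiber sequence $\Sigma^k \Sph \xrightarrow{p^r} \Sigma^k \Sph \to \Sigma^k \Sph/p^r$ into $\disc[S]$ produces a surjection $[\Sigma^k \Sph/p^r, \disc[S]] \twoheadrightarrow \pi_k(\disc[S])[p^r]$, and since $\Sigma^k \Sph/p^r$ lies in $\Sp_{(p)}^{\pfin{\pi}} \subseteq \Sp_{(p)}^{\ptors{\pi}}$, the universal property of $\disc[S]$ together with the adjunction $\one[-] \dashv (-)^\times$ identifies the left-hand side with $\pi_0 \Map_{\calg(\cC)}(\one[\Sigma^k \Sph/p^r], \one)$.

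The crux is then to show that $\one[X] \simeq \one_\cC$ in $\calg(\cC)$ for every $(n+1)$-connective $\pi$-finite $p$-local spectrum $X$. Granting this for $X = \Sigma^k \Sph/p^r$ (which is $(n+1)$-connective whenever $k > n$), the relevant mapping space becomes contractible, forcing $[\Sigma^k \Sph/p^r, \disc[S]] = 0$; passing to the colimit in $r$ then yields $\pi_k(\disc[S]) = 0$, as desired.

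The crux itself will be established by induction on the Postnikov tower of $X$. The anchoring case $X = \Sigma^{n+1} C_p$ is precisely the assumption that $\cC$ is of height $\le n$ at $p$, i.e., the $\cC$-amenability of $B^{n+1} C_p$. For $X = \Sigma^j C_{p^r}$ with $j \ge n+2$ the underlying space is $(n+1)$-connected and hence directly $\cC$-amenable. For $X = \Sigma^{n+1} C_{p^r}$ with $r > 1$, I would induct on $r$ using the cofiber sequence $\Sigma^{n+1} C_p \to \Sigma^{n+1} C_{p^r} \to \Sigma^{n+1} C_{p^{r-1}}$ and the fact that $\one[-]$, as a left adjoint, converts it into a pushout
\[
    \one[\Sigma^{n+1} C_{p^{r-1}}] \:\simeq\: \one[\Sigma^{n+1} C_{p^r}] \otimes_{\one[\Sigma^{n+1} C_p]} \one
    \qin \calg(\cC).
\]
The main technical obstacle will be arranging the equivalences $\one[A] \simeq \one$ to be compatible with the canonical augmentation and unit maps, so that the pushout formula propagates the inductive hypothesis correctly at each step. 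Once this compatibility is in hand, iterating along the full Postnikov tower of $X$, whose successive fibers $\Sigma^j \pi_j X$ with $j \ge n+1$ are all covered by the cases above, completes the argument.
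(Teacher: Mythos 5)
Your proposal contains a genuine gap at the step where you write that $\Sigma^k\Sph/p^r$ lies in $\Sp_{(p)}^{\pfin{\pi}}\sseq\Sp_{(p)}^{\ptors{\pi}}$. The spectrum $\Sph/p^r$ has infinitely many nonzero homotopy groups (its homotopy sees the $p$-torsion in $\pi_*\Sph$ in unboundedly high degrees), so it is not $\pi$-finite in the paper's sense. It is not even in $\Sp_{(p)}^{\ptors{\pi}}$: the paper explicitly records this failure, pointing out right after \cref{Cp_Tor_Crit} that $\Sph/p$ is a connective spectrum with $(\Sph/p)[p^{-1}]=0$ which nevertheless is \emph{not} $p$-local $\pi$-torsion. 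Consequently, the universal property of the right adjoint $(-)_{(p)}^{\ptors{\pi}}$ does \emph{not} identify $\Map(\Sigma^k\Sph/p^r,\disc[S])$ with $\Map(\Sigma^k\Sph/p^r, S^\times)$, and your adjunction step collapses. For the same reason the "crux" statement you set out to prove (namely that $\one[X]\simeq\one$ for $(n+1)$-connective $\pi$-finite $p$-local $X$) cannot be applied to $X=\Sigma^k\Sph/p^r$, and your Postnikov induction cannot reach it either, since $\Sph/p^r$ has an infinite Postnikov tower.

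The fix is essentially what the paper does: rather than trying to hit $\pi_k(\disc[S])$ directly with Moore spectra, test against \emph{all} $\pi$-finite $p$-local $M$. Since $S$ has height $\le n$, one has $S[\Sigma^{n+1}M]\simeq S$ (this is precisely [\cite{AmbiHeight}, Proposition 3.2.3], which you were re-deriving by Postnikov induction in your "crux"; that re-derivation is fine as far as it goes, but it already exists). Combined with the universal property of $\disc[S]$ and the adjunction $\one[-]\dashv(-)^\times$, this gives $\Map_{\Sp^\cn}(M,\Omega^{n+1}\disc[S])\simeq\pt$ for all $M\in\Sp_{(p)}^{\pfin{\pi}}$. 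Now $\Omega^{n+1}\disc[S]$ lies in $\Sp_{(p)}^{\ptors{\pi}}$ (pass $\tau_{\ge 0}\Sigma^{-(n+1)}$ through a filtered colimit presentation), and $\Sp_{(p)}^{\pfin{\pi}}$ generates $\Sp_{(p)}^{\ptors{\pi}}$ under colimits, so the left-orthogonality above forces $\Omega^{n+1}\disc[S]=0$, i.e.~$\disc[S]$ is $n$-truncated. Your initial reduction to $S=\one_\cC$ is harmless but unnecessary: the argument works directly with $S$ using relative group algebras $S[-]$.
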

\begin{proof}
    For every $M\in \Sp_{(p)}^{\pfin{\pi}}$, the fold map $S[\Sigma^{n+1} M] \to S$ is an isomorphism by \cite[Proposition 3.2.3]{AmbiHeight}. Thus, applying the various adjunctions, we have
    \[
        \Map_{\Sp^{\cn}}(M, \Omega^{n+1}\disc[S]) \simeq
        \Map_{\Sp^{\cn}}(\Sigma^{n+1} M, \disc[S]) \simeq
    \]
    \[
        \Map_{\Sp^{\cn}}(\Sigma^{n+1} M, S^{\times}) \simeq
        \Map_{\calg_S(\cC)}(S[\Sigma^{n+1} M], S) \simeq
        \Map_{\calg_S(\cC)}(S, S) \simeq 
        \pt.
    \]
Since $\Sp_{(p)}^{\pfin{\pi}}$ generates $\Sp_{(p)}^{\ptors{\pi}}$ under colimits, it follows that $\Omega^{n+1}\disc[S]= 0$ and hence $\disc[S]$ is $n$-truncated.
\end{proof}

We therefore focus on the full subcategory $\mdef{\Sptors{n}_{(p)}} \sseq \Sp_{(p)}^{\ptors{\pi}}$ of $n$-truncated objects. Furthermore, passing from $\Spfin{n}_{(p)}$ to $\Sptors{n}_{(p)}$ is a purely formal operation. 

\begin{lem}\label{pTors_Ind}
    For every prime $p$ and integer $n\ge 0$, we have an equivalence of $\infty$-categories 
    \[
        \Sptors{n}_{(p)}\, \simeq\, \Ind(\Spfin{n}_{(p)}).
    \]
\end{lem}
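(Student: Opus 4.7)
The plan is to apply the universal property of Ind-completion. The fully faithful inclusion $\iota\colon \Spfin{n}_{(p)} \hookrightarrow \Sptors{n}_{(p)}$ extends uniquely to a filtered-colimit-preserving functor
\[
F \colon \Ind(\Spfin{n}_{(p)}) \too \Sptors{n}_{(p)}.
\]
To show $F$ is an equivalence, it suffices to check two things: (i) every object of $\Sptors{n}_{(p)}$ is a filtered colimit of objects in $\Spfin{n}_{(p)}$ (generation), and (ii) every object of $\Spfin{n}_{(p)}$ is compact as an object of $\Sptors{n}_{(p)}$ (compactness). Both are instances of the standard characterization of $\Ind(\cC)$ as the free cocompletion under filtered colimits.

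For generation, recall from \cref{def:pitor} that $\Sp_{(p)}^{\ptors{\pi}}$ is by definition generated under filtered colimits by $\Sp_{(p)}^{\pfin{\pi}}$ inside $\Sp^\cn$. Since the truncation functor $\tau_{\le n}\colon \Sp^\cn \to \Sp^{[0,n]}$ preserves filtered colimits, any $X \in \Sptors{n}_{(p)}$ written as $X \simeq \colim_i X_i$ with $X_i \in \Sp_{(p)}^{\pfin{\pi}}$ yields
\[
X \:\simeq\: \tau_{\le n}X \:\simeq\: \colim_i \tau_{\le n}X_i,
\]
and each $\tau_{\le n}X_i$ lies in $\Spfin{n}_{(p)}$.

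For compactness, any $M \in \Spfin{n}_{(p)}$ admits a finite Postnikov-style filtration in $\Sp^{[0,n]}$ whose successive cofibers are of the form $\Sigma^k H A$ with $A$ a finite abelian $p$-group and $0 \le k \le n$. Since the class of compact objects in $\Sp^{[0,n]}$ is closed under finite colimits, it suffices to verify that $HA$, and ultimately $H\ZZ/p^r$, is compact in $\Sp^{[0,n]}$. The cofiber sequence $H\ZZ \xrightarrow{p^r} H\ZZ \to H\ZZ/p^r$ reduces this further to the compactness of $H\ZZ$ in $\Sp^{[0,n]}$. Writing $H\ZZ = \colim_r Z_r$ as a filtered colimit of its CW approximations, for any $n$-truncated $Y$ the mapping spectra $\Map(Z_r,Y)$ stabilize once $r$ exceeds $n$, because the additional cells of $Z_r$ above the $(n+1)$-skeleton contribute only in degrees irrelevant to $Y$. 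Hence $\Map(H\ZZ, Y) \simeq \Map(Z,Y)$ for a finite CW spectrum $Z$, and the latter commutes with filtered colimits; this gives compactness of $H\ZZ$ in $\Sp^{[0,n]}$, which transfers to $\Sptors{n}_{(p)}$ as it is closed there under filtered colimits.

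The main obstacle is the final compactness verification: it requires the technical observation that only finitely many cells of a CW-approximation to $H\ZZ$ are relevant when mapping into an $n$-truncated target. Once this is in hand, the proof is a formal application of the Ind-completion universal property.
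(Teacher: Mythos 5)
Your generation step is exactly the paper's and is correct. The issue lies in the compactness step.

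You write that any $M \in \Spfin{n}_{(p)}$ ``admits a finite Postnikov-style filtration in $\Sp^{[0,n]}$ whose successive cofibers are of the form $\Sigma^k HA$'' and that, since compact objects are closed under finite colimits, it therefore suffices to treat $HA$. This is where the gap is. The Postnikov filtration glues the layers together by \emph{extensions}: the cofiber sequence $\Sigma^k H\pi_k M \to \tau_{\le k}M \to \tau_{\le k-1}M$ lets you recover $\tau_{\le k-1}M$ as a \emph{cofiber} of a map out of $\Sigma^k H\pi_k M$, but to climb the tower the other way you need to recover $\tau_{\le k}M$ from $\tau_{\le k-1}M$ and $\Sigma^k H\pi_k M$ — and that is a fiber, not a colimit. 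In a stable category this distinction is invisible because compact objects form a thick subcategory, but $\Sp^{[0,n]}$ is not stable, so closure of compacts under finite colimits does not give closure under extensions. (Closure under extensions is in fact true here, but it requires a separate argument — e.g.\ noting that the fibers of the comparison maps $\colim \hom(-,Y_i) \to \hom(-,\colim Y_i)$ form a fiber sequence and applying the long exact sequence in homotopy — and you neither state nor prove it.) As written, the reduction to $HA$, and hence the rest of the devissage, is unjustified.

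The paper's proof avoids this entirely with a one-line observation: every $M \in \Spfin{n}_{(p)}$ is $\tau_{\le n}Z$ for some \emph{finite} spectrum $Z$ (take the $(n+1)$-skeleton of a CW model for $M$, which is finite because all $\pi_k M$ are finite). Then for any $Y \in \Sp^{[0,n]}$, the truncation adjunction gives $\Map_{\Sp^{[0,n]}}(M,Y) \simeq \Map_{\Sp^\cn}(Z,Y) \simeq \Map_\Sp(Z,Y)$, and since $Z$ is compact in $\Sp$ and filtered colimits in $\Sp^{[0,n]}$ are computed in $\Sp$, the claim is immediate. Notice that your CW-approximation argument for $H\ZZ$ at the end of the proposal is in effect exactly this observation applied to $H\ZZ$; had you applied the same skeleton argument directly to $M$, you would have recovered the paper's proof without ever passing through the problematic Postnikov/$H\ZZ/p^r$/$H\ZZ$ devissage.
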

\begin{proof}
    By definition, the $\pi$-finite objects in $\Sp_{(p)}^{\ptors{\pi}}$ generate it under colimits. Since $n$-truncation preserves colimits and the property of being $\pi$-finite, the same holds for $\Sptors{n}_{(p)}$. It hence remains to show that the $\pi$-finite objects are compact in $\Sptors{n}_{(p)}$. This follows from the fact that any $n$-truncated $\pi$-finite spectrum can be represented as the $n$-truncation of a finite spectrum.   
\end{proof}

With these preliminaries, we are ready to prove the following analogue of \Cref{Conn_Mu_p}:

\begin{thm}\label{orientation_discrepency}
    Let $\cC \in \calg(\Prsad)$ be $0$-connected and $(\FF_p,n)$-oriented. For every $d\le n$, the $\infty$-category $\cC$ is $(d+\frac{1}{2})$-connected at $p$ if and only if
    \[
\tau_{\ge n-d} (\disc) \:\simeq\: 
        \tau_{\ge n-d} (\Dual{\Sph_{(p)}}{n}) 
        \qin \Sp^\cn. 
    \]
\end{thm}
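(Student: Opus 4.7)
The plan is to prove the equivalence by induction on $d$, treating one layer of the Postnikov tower $\FF_p = \tau_{\le -1}\Sph_{(p)} \leftarrow \tau_{\le 0}\Sph_{(p)} \leftarrow \cdots \leftarrow \tau_{\le d}\Sph_{(p)}$ at a time. Set $X := \Dual{\Sph_{(p)}}{n}$ and $X_d := \Dual{\tau_{\le d}\Sph_{(p)}}{n}$, so that $X_d \simeq \tau_{\ge n-d}X$, and the Postnikov fiber sequence $\Sigma^d \pi_d\Sph_{(p)} \to \tau_{\le d}\Sph_{(p)} \to \tau_{\le d-1}\Sph_{(p)}$ dualizes (since all $\pi_d\Sph_{(p)}$ for $d \geq 1$ are finite, with the $d=0$ case handled separately) to the connective fiber sequence
\[
X_{d-1} \too X_d \too \Sigma^{n-d}\hom(\pi_d\Sph_{(p)}, \QQ_p/\ZZ_p),
\]
capturing the single Postnikov layer in degree $n-d$. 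The base case $d=-1$ is vacuous by \Cref{Pi_Units_Height}, as both $\tau_{\ge n+1}\disc$ and $\tau_{\ge n+1}X$ vanish. The inductive step thus reduces to comparing a single homotopy group in degree $n-d$.

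For the forward direction, observe first that $(d+\tfrac{1}{2})$-connectedness implies $(d-1+\tfrac{1}{2})$-connectedness, since $d$-connectedness trivially implies $(d-1)$-connectedness and $\tau_{\le d}\Sph_{(p)}$-orientations restrict along the quotient $\tau_{\le d}\Sph_{(p)} \onto \tau_{\le d-1}\Sph_{(p)}$ via \Cref{Or_Push}. By the inductive hypothesis, the restriction of any chosen orientation $\omega\colon X_d \to \disc$ realizes the isomorphism $\tau_{\ge n-d+1}X \iso \tau_{\ge n-d+1}\disc$, so it suffices to verify that $\pi_{n-d}\omega$ is an isomorphism. The key technical step is to extract this from the orientation property of $\omega$: by \Cref{Virt_Fp_Bootstrap} and \Cref{Orientation_Primitive}, the orientation condition reduces to compatible primitivity at each finite layer, and the Fourier isomorphism applied to the $\tau_{\le d}\Sph_{(p)}$-module spectrum $\Sigma^{n-d}\pi_d\Sph_{(p)}$ --- whose $n$-shifted Brown--Comenetz dual is $\Sigma^d\hom(\pi_d\Sph_{(p)}, \QQ_p/\ZZ_p)$ --- supplies the nondegenerate pairing that forces $\pi_{n-d}X \to \pi_{n-d}\disc$ to be bijective.

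For the backward direction, the hypothesized isomorphism $\tau_{\ge n-d}\disc \simeq \tau_{\ge n-d}X$ restricts in degrees $\ge n-d+1$ to yield $(d-1+\tfrac{1}{2})$-connectedness by induction. Composing the inverse with the canonical inclusion $\tau_{\ge n-d}\disc \hookrightarrow \disc$ produces a $\tau_{\le d}\Sph_{(p)}$-preorientation $X_d \to \disc$ whose pushforward to $\FF_p$ is an injective (hence nonzero) $\FF_p$-preorientation --- and therefore an orientation by \Cref{Or_Torsor}. By \Cref{local_ring_orientation} applied along the tower of strict local maps, the lifted preorientation is itself an orientation, establishing $(\tau_{\le d}\Sph_{(p)}, n)$-orientability. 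To upgrade $(d-1)$-connectedness to $d$-connectedness, note that $\mu_p(\one) = \Map_{\Sp^\cn}(C_p, \one^\times) = \Map_{\Sp^\cn}(C_p, \disc)$ (since $C_p$ is $p$-local $\pi$-torsion), so $\tau_{\ge n-d}\mu_p$ depends only on $\tau_{\ge n-d}\disc$; the hypothesized isomorphism then reduces the criterion $\tau_{\ge n-d}\mu_p \simeq B^n C_p$ from \Cref{Conn_Mu_p} to the analogous formal computation $\Map_{\Sp^\cn}(C_p, \tau_{\ge n-d}X) \simeq \Sigma^n\FF_p$, which follows from the Pontryagin-dual description of $X$.

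The main obstacle is the verification that $\pi_{n-d}\omega$ is an isomorphism in the forward direction, as it requires translating the abstract orientation property into concrete homotopy-group information about $\disc$ via the Fourier isomorphism applied to a carefully chosen $\tau_{\le d}\Sph_{(p)}$-module built from the $d$-th stable stem; the remaining steps are formal consequences of the functorialities developed in earlier sections and direct applications of \Cref{Orientation_Primitive}, \Cref{local_ring_orientation}, and \Cref{Conn_Mu_p}.
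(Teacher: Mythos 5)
Your backward ("if") direction is essentially correct, and matches the structure of the paper's argument, although it detours through an unnecessary induction: the paper directly uses the assumed isomorphism to compute $\tau_{\ge n-d}\mu_p(\one)$ and invoke \Cref{Conn_Mu_p} for $d$-connectedness, and then solves the lifting problem of \Cref{local_ring_orientation} by factoring a chosen $\FF_p$-orientation through $\tau_{\ge n-d}\disc \simeq \Dual{\tau_{\le d}\Sph_{(p)}}{n}$ and correcting by an automorphism. Your variant of constructing the preorientation directly from the inclusion $\tau_{\ge n-d}\disc \to \disc$ and checking at the residue field via \Cref{Or_Torsor} is a valid alternative, and the step $\tau_{\ge n-d}\mu_p(\one) \simeq B^nC_p$ by the Pontryagin-dual computation of $\hom(C_p, \Dual{\Sph_{(p)}}{n}) \simeq \Sigma^n C_p$ is correct.

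The forward ("only if") direction, however, has a genuine gap, which you yourself flag as "the main obstacle." Your inductive setup is not well-posed: the inductive hypothesis furnished by the theorem at level $d-1$ is a mere existence of an isomorphism $\tau_{\ge n-d+1}\disc \simeq \tau_{\ge n-d+1}X$, not the claim that the restriction of a chosen orientation map $\omega\colon X_d \to \disc$ realizes it — that stronger statement would need to be built into a strengthened inductive hypothesis with its own justification. More seriously, the key step of showing $\pi_{n-d}\omega$ is an isomorphism is only gestured at via "the nondegenerate pairing" from the Fourier isomorphism applied to $\Sigma^{n-d}\pi_d\Sph_{(p)}$. Unpacking what that Fourier isomorphism actually yields about $\disc$ requires applying $\Map_{\calg(\cC)}(-,\one)$ and then invoking the $\cC$-reflectivity of $\und{\Dual{M}{d}}$, i.e.\ $d$-connectedness — at which point one is essentially re-deriving the paper's argument, but restricted to a single module $M$ rather than all of $\Spfin{d}_{(p)}$, which is insufficient. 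The paper avoids all of this by observing that both $\Omega^{n-d}(\Dual{\Sph_{(p)}}{n})$ and $\Omega^{n-d}(\disc)$ lie in $\Sptors{d}_{(p)} \simeq \Ind(\Spfin{d}_{(p)})$ and proving, by a Yoneda argument over all of $\Spfin{d}_{(p)}$, that both corepresent the functor $\und{\Dual{(-)}{d}}$: the first by the formal Brown--Comenetz adjunction, the second via the Fourier isomorphism coming from the $\tau_{\le d}\Sph_{(p)}$-orientation together with $d$-connectedness, which supplies $\Map_{\calg(\cC)}(\one^{\und{\Dual{M}{d}}},\one) \simeq \und{\Dual{M}{d}}$. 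Note also that your appeal to \Cref{Orientation_Primitive} is misplaced here since that result requires $\cC$ stable, an assumption not made in this theorem.
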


\begin{proof}
Throughout the proof we shall write $\Omega \colon \Sp^\cn \to \Sp^\cn$  for the loops functor from the $\infty$-category of connective spectra to \textit{itself}. With this convention, the above isomorphism is equivalent to the following one:
    \[
        \Omega^{n-d} (\disc) \simeq 
        \Omega^{n-d} (\Dual{\Sph_{(p)}}{n}) 
        \qin \Sp^\cn. 
    \]
    We begin with the `if' part. Assuming the above isomorphism we get
    \[
\Omega^{n-d}\mu_p(\one)=
        \Omega^{n-d}\hom_{\Sp^\cn}(C_p, \one^\times) \simeq 
        \Omega^{n-d}\hom_{\Sp^\cn}(C_p, \disc) \simeq
    \] 
    \[
        \hom_{\Sp^\cn}(C_p, \Omega^{n-d}\disc) \simeq
        \hom_{\Sp^\cn}(C_p, \Omega^{n-d} (\Dual{\Sph_{(p)}}{n})) \simeq
    \] 
    \[
        \Omega^{n-d} \hom_{\Sp^\cn}(C_p, (\Dual{\Sph_{(p)}}{n})) \simeq
        \Omega^{n-d} (\Dual{C_p}{n}) \simeq 
        \Omega^{n-d} (\Sigma^n C_p) \simeq
        \Sigma^d C_p.
    \]
    Hence, by \Cref{Conn_Mu_p}, $\cC$ is $d$-connected. It remains to show that $\cC$ is $(\tau_{\le d}\Sph_{(p)},n)$-orientable. Let $\omega$ be an $\FF_p$-orientation of height $n$ for $\cC$. By \Cref{local_ring_orientation}, it suffices to solve the lifting problem 
    \[\begin{tikzcd}
    	{\Dual{\FF_p}{n}} && {\Dual{\tau_{\le d}\Sph_{(p)}}{n}} \\
    	& {\disc.}
\arrow[from=1-1, to=1-3]
\arrow["\omega"', from=1-1, to=2-2]
\arrow["\cl{\omega}",dashed, from=1-3, to=2-2]
    \end{tikzcd}\]
    
    Since $\Dual{\FF_p}{n}$ is $(n-1)$-connected, it is in particular $(n-d-1)$-connected. Hence, $\omega$ factors through $\tau_{\ge n-d}(\disc)$, which we assumed to be isomorphic to 
    $\tau_{\ge n-d}(\Dual{\Sph_{(p)}}{n}) \simeq \Dual{\tau_{\le d}\Sph_{(p)}}{n}$. We shall be done by showing that every two non-zero maps 
    $\Dual{\FF_p}{n} \to \tau_{\ge n-d}(\Dual{\Sph_{(p)}}{n})$ 
    differ by an automorphism of the target. Indeed, we have
    \[
        \Map_{\Sp^\cn}(\Dual{\FF_p}{n}, \tau_{\ge n-d}(\Dual{\Sph_{(p)}}{n})) \simeq
        \Map_{\Sp^\cn}(\Dual{\FF_p}{n}, \Dual{\Sph_{(p)}}{n}) \simeq
        \Map_{\Sp^\cn}(\Sph_{(p)}, \FF_p) \simeq \FF_p.
    \]
    The non-zero maps correspond to $\FF_p^\times$ and multiplication by the Teichmuller lifts $\FF_p^\times \to \Sph_{p}^\times$ permutes them.
    
    We now show the `only if' part. Assume that $\cC$ is $(d+\frac{1}{2})$-connected. We have
    \[
        \Omega^{n-d}(\Dual{\Sph_{(p)}}{n}) \quad,\quad 
        \Omega^{n-d}(\disc)
        \qin \Sptors{d}_{(p)},
    \]
    by definition and \Cref{Pi_Units_Height} respectively.
    By the Yoneda lemma, it would thus suffice to show that both objects represent the same functor on $\Sptors{d}_{(p)}$. 
    In fact, since 
    $\Sptors{d}_{(p)} \simeq \Ind(\Spfin{d}_{(p)})$ 
    (\Cref{pTors_Ind}), it suffices to show that on $\Spfin{d}_{(p)}$, both objects represent the functor $\und{\Dual{(-)}{d}}$.
    Given $M\in\Spfin{d}_{(p)}$, we have on the one hand a chain of natural isomorphisms
    \[
        \Map_{\Sp^\cn}(M, \Omega^{n-d}(\Dual{\Sph_{(p)}}{n})) \simeq
        \Map_{\Sp^\cn}(\Sigma^{n-d}M, \Dual{\Sph_{(p)}}{n}) \simeq
        \und{\Dual{(\Sigma^{n-d}M)}{n}} \simeq 
        \und{\Dual{M}{d}}.
    \]
    On the other hand, we have a chain of natural isomorphisms
    \[
        \Map_{\Sp^\cn}(M, \Omega^{n-d}(\disc)) \simeq
        \Map_{\Sp^\cn}(\Sigma^{n-d}M, \disc) \simeq
    \]
    \[
        \Map_{\Sp^\cn}(\Sigma^{n-d}M, \one^\times) \simeq
        \Map_{\calg(\cC)}(\one[\Sigma^{n-d}M], \one).
    \]
    We now use the assumption that $\omega$ extends to a $\tau_{\le d}\Sph_{(p)}$-orientation. This gives a Fourier transform natural isomorphism
    \[
        \one[\Sigma^{n-d}M] \iso
        \one^{\und{\Dual{(\Sigma^{n-d}M)}{n}}} \simeq
        \one^{\und{\Dual{M}{d}}}.
    \]
    Using this, and the $d$-connectedness of $\cC$ at $p$, we can extend the above chain of natural isomorphisms by
    \[
        \Map_{\calg(\cC)}(\one[\Sigma^{n-d}M], \one) \simeq 
        \Map_{\calg(\cC)}(\one^{\und{\Dual{M}{d}}}, \one) \simeq
        \und{\Dual{M}{d}}.
    \]
\end{proof}

\begin{cor} \label{connected_torsion_units}
    $\cC \in \calg(\Prsad)$ is $(n+\frac{1}{2})$-connected if and only if it is $0$-connected, $(\FF_p,n)$-orientable
    and
    \[
\disc \:\simeq\: 
        \Dual{\Sph_{(p)}}{n}
        \qin \Sp^\cn. 
    \]
\end{cor}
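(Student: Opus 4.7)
The plan is to reduce the corollary to the special case $d = n$ of \Cref{orientation_discrepency}. The key simplification is that $\tau_{\ge n-d} = \tau_{\ge 0}$ when $d = n$, and since both $\disc$ and $\Dual{\Sph_{(p)}}{n}$ are connective spectra, applying $\tau_{\ge 0}$ is the identity on them. Therefore the isomorphism $\tau_{\ge n-d}(\disc) \simeq \tau_{\ge n-d}(\Dual{\Sph_{(p)}}{n})$ in the theorem becomes $\disc \simeq \Dual{\Sph_{(p)}}{n}$ in the corollary.

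For the forward implication, I will unpack $(n+\frac{1}{2})$-connectedness via \Cref{def:Conn_Frac}: it means $\cC$ is $n$-connected (hence in particular $0$-connected) and $(\tau_{\le n}\Sph_{(p)},n)$-orientable. The latter implies $(\FF_p,n)$-orientability by applying \Cref{Or_Push} to the strict map $\tau_{\le n}\Sph_{(p)} \to \FF_p$ of local rings with residue field $\FF_p$. Fixing any $\FF_p$-orientation to make $\cC$ into an $(\FF_p,n)$-oriented category, I then invoke \Cref{orientation_discrepency} with $d = n$ to obtain the desired isomorphism.

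For the converse, given the three conditions on the right-hand side, I choose an $\FF_p$-orientation (which exists by orientability) and feed the hypotheses into \Cref{orientation_discrepency} with $d = n$ to conclude $(n+\frac{1}{2})$-connectedness.

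I do not expect any real obstacle; the argument is essentially bookkeeping. The only point worth checking is that the notion of $(n+\frac{1}{2})$-connectedness (which a priori involves a chosen $\FF_p$-orientation through the meaning of $(\tau_{\le n}\Sph_{(p)},n)$-orientability) is actually independent of the chosen orientation. This is already built into the framework: by \Cref{Or_Torsor}, the space of $\FF_p$-orientations of a $0$-connected $(\FF_p,n)$-oriented $\cC$ is an $\FF_p^\times$-torsor under scaling, and by \Cref{Or_Scaling} the extendability to a $\tau_{\le n}\Sph_{(p)}$-orientation is preserved under scaling by units, so picking any orientation suffices.
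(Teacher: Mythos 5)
Your proof is correct and is exactly the paper's argument, which simply cites the case $d=n$ of \Cref{orientation_discrepency} and uses that $\tau_{\ge 0}$ is the identity on connective spectra. Your final worry about orientation-independence is slightly misplaced---$(\tau_{\le n}\Sph_{(p)},n)$-orientability in \Cref{def:Conn_Frac} is an existence statement not depending on a chosen $\FF_p$-orientation, and the conclusion of \Cref{orientation_discrepency} is likewise orientation-independent, so no extra check is needed---but this does not affect the validity of your reduction.
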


\begin{proof}
    This follows immediately from the case $n=d$ in \Cref{orientation_discrepency}.
\end{proof}

\section{Chromatic Applications}\label{sec:chromatic}

In this final section, we apply the general theory of Fourier transforms and orientations to chromatic homotopy theory, specifically to the study of the monochromatic categories $\Sp_{K(n)}$, $\Sp_{T(n)}$, and $\LocMod_{E_n}$. In particular, we deduce theorems A-F stated in the introduction of the paper. 

    
    


\subsection{Chromatic preliminaries}\label{ssec:chromaticpreliminaries} 

We begin with a rapid review of some material from chromatic stable homotopy theory, geared towards our applications in the subsequent subsections. For a more comprehensive survey, we refer the interested to \cite{BB2020chromaticsurvey}. In the end of this section we also review briefly the theory of higher cyclotomic extensions in the monochromatic categories from \cite{carmeli2021chromatic}, and their relationship with Westerland's ring spectrum $R_n$ introduced and studied in \cite{Westerland}. 

\subsubsection{Telescopic localizations}\label{sssec:telescopiclocalization}

Let $\Sp$ be the symmetric monoidal $\infty$-category of spectra and let $p$ be a fixed prime. The thick subcategory theorem of Hopkins and Smith \cite{nilp2} classifies the thick subcategories of the $\infty$-category $\Sp_{(p)}^\omega$ of finite $p$-local spectra. These subcategories assemble into a strictly ascending \textit{chromatic filtration}
\[
    (0) = 
    \cC^\omega_{\infty} \subset 
    \ldots \subset 
    \cC^\omega_n \subset 
    \cC^\omega_{n-1} \subset 
    \ldots \subset 
    \cC^\omega_0 = 
    \Sp_{(p)}^\omega,
\]
which plays a fundamental role in chromatic stable homotopy theory. The objects of $\cC^\omega_n \smallsetminus \cC^\omega_{n+1}$ are said to be ($p$-local) \emph{finite spectra of type $n$}; by the thick subcategory theorem, any choice of finite type $n$ spectrum generates $\cC^\omega_n$ as a thick subcategory. Furthermore, the periodicity theorem says that any finite spectrum $F(n)$ of type $n$ admits a $v_n$-self map $\nu\colon \Sigma^{|\nu|}F(n) \to F(n)$ of some non-negative degree $|\nu|$ depending on $F(n)$. We will denote the corresponding \emph{height $n$ telescope} by $T(n)= F(n)[\nu^{-1}]$. For $n=0$, we use the convention that the self-map is taken to be multiplication by $p$. In particular one can take $F(0) = \Sph_{(p)}$ and $T(0) = \Sph_{(p)}[\inv{p}]\simeq \QQ$.

The chromatic filtration extends to the category of all $p$-local spectra $\Sp_{(p)}$, by considering for each height $n$ the subcategory $\cC_n \sseq \Sp_{(p)}$
generated by $\cC^\omega_n$ (equivalently, by $F(n)$)) under all colimits. 
The ``complement'' of $\cC_{n+1}$ is given by the Verdier quotient 
\[
    \cC_{n+1} \too 
    \Sp_{(p)} \too \Sp_n^f,
\]
and we write 
\[
    L_n^f \colon
    \Sp \to
    \Sp_{(p)} \to
    \Sp_n^f \into
    \Sp
\]
for the corresponding finite localization functor (leaving the prime $p$ implicit). 
The subcategory $\Sp_n^f \sseq \Sp_{(p)}$ consists of the $(T(0)\oplus \cdots \oplus T(n))$-local spectra in the sense of Bousfield. 
In particular, if $X\in \Sp$ is $T(m)$-acyclic for $0<m\le n$, then 
\[
    L_n^fX\simeq L_0^fX = X\otimes \QQ.  
\]
For example, as we shall repeatedly use, this is the case when $X$ is bounded above. 

We denote by $C_n^f$ the $L_n^f$-acyclification functor. Thus, for every $X\in \Sp$ we have a canonical fiber sequence of spectra 
\[
    C_n^fX \too 
    X_{(p)} \too 
    L_n^fX.
\]
Finally, we note that the functors $L_n^f$ and $(-)_{(p)}$, and hence $C_n^f$, are smashing. This means that they preserve all colimits (as endofunctors of $\Sp$). 

The $\infty$-categories $\Sp_{n}^f$ form a strictly ascending filtration
\[
    \Sp_{\QQ}= \Sp_{0}^f \subset \ldots \subset \Sp_{n}^f \subset \Sp_{n+1}^f \subset \ldots  \subset \Sp_{(p)},
\]
interpolating between the $\infty$-categories of rational and $p$-local spectra. The $n$-th filtration quotient can then be identified with the Bousfield localization of $\Sp_{(p)}$ at a height $n$ telescope $T(n)$:
\[
    \Sp_{n}^f/\Sp_{n-1}^f \iso  \Sp_{T(n)}.
\]
The localization $\Sp_{T(n)}$ inherits a symmetric monoidal structure, given by the $T(n)$-localized smash product. Moreover, by \cite[Theorem~A]{TeleAmbi}, $\Sp_{T(n)}$ is $\infty$-semiadditive of height $n$.

\subsubsection{$K(n)$-local homotopy}\label{sssec:k(n)localization}

There is a variant of the chromatic filtration that is constructed from the Morava $K$-theories. Continuing to work with an implicit fixed prime $p$, for every finite height $n\ge1$, let $K(n)$ denote the \emph{$n$-th Morava $K$-theory} spectrum associated to a formal group law $\Gamma$ of height $n$ over $\FF_p$. It has the structure of a complex oriented $\mathbb{E}_1$-ring spectrum, and its coefficient ring is given by
\[
    K(n)_* \cong \FF_p[v_n^{\pm 1}],
\] 
where $v_n$ is of degree $2(p^n-1)$. By convention, we set $K(0) = \QQ$ and $K(\infty) = \FF_p$. As a consequence of the nilpotence theorem \cite{nilp1, nilp2}, the Morava $K$-theories form the prime fields of $\Sp_{(p)}$.

Let $L_n\colon \Sp \to \Sp$ be the Bousfield localization functor with respect to $(K(0)\oplus\cdots\oplus K(n))$. Hopkins and Ravenel proved that $L_n$ is smashing as well. 
Setting $\Sp_{n} = L_n\Sp$, we obtain a filtration 
\[
    \Sp_{\QQ}= \Sp_{0} \subset \ldots \subset \Sp_{n} \subset \Sp_{n+1} \subset \ldots  \subset \Sp_{(p)},
\]
which is compatible with the chromatic filtration discussed above, in the sense that $\Sp_n \sseq\Sp_n^f$.
The $n$-th filtration quotients of this filtration are symmetric monoidally equivalent to the $K(n)$-local categories:
\[
    \Sp_{n}/\Sp_{n-1} \iso \Sp_{K(n)}.
\]
The question whether the localization functor $L_{K(n)}\colon \Sp_{T(n)} \to \Sp_{K(n)}$ is an equivalence is the content of Ravenel's telescope conjecture.

By construction, the $\infty$-category $\Sp_{K(n)}$ is compactly generated by the $K(n)$-localization of $F(n)$ for any finite spectrum $F(n)$ of type $n$. However, for $n\ge1$, the unit $\Sph_{K(n)} \in \Sp_{K(n)}$ is not compact. Finally, as the telescopic categories, each $\Sp_{K(n)}$ is $\infty$-semiadditive of height $n$ as well, as was proven previously by Hopkins and Lurie in \cite{AmbiKn}. 

\subsubsection{Lubin--Tate spectra and cyclotomic extensions}\label{sssec:lubintatespectra}

For $n\ge1$, let $E_n$ be the \emph{$n$-th Lubin--Tate spectrum} (or \emph{Morava $E$-theory spectrum}) at the prime $p$ associated to a formal group law $\Gamma$ of height $n$ over $\FF_p$. Namely,  it is the Landweber exact ring spectrum attached to the universal deformation of the base-change $\overline{\Gamma}$ of $\Gamma$ over $\overline{\FF}_p$ and has ring of coefficients
\[
    \pi_*E_n \simeq 
    \WW(\cl{\FF}_p)[[ u_1,\ldots, u_{n-1}]][u^{\pm 1}].
\]
Here, $\WW(\cl{\FF}_p)$ denotes the ring of Witt vector on $\overline{\FF}_p$, the power series variables $u_i$ have degree 0, and $u$ has degree $-2$. The spectrum $E_n$ is $K(n)$-local and has the same Bousfield class as $\bigoplus_{i=0}^nK(i)$; in particular, there is a natural equivalence of localization functors $L_{n} \simeq L_{E_n}$. 
By Goerss--Hopkins obstruction theory \cite{goerss2004moduli}, $E_n$ admits an essentially unique $\EE_{\infty}$-ring spectrum structure (see also \cite{Lurie_Ell2}). We write $\LocMod_{E_n}$ for the $\infty$-semiadditive height $n$ symmetric monoidal $\infty$-category of $K(n)$-local modules over $E_n$. 

While other conventions exist in the literature, for the purposes of this paper it is convenient to define the Lubin--Tate spectrum at height $0$ to be the even periodic commutative ring spectrum\footnote{see also \cite{Null} for further motivation for this convention.} 
\[
E_0 := \cl{\QQ}[u^{\pm 1}] \qin \calg(\Sp_\QQ),
\] where $u$ is in degree $-2$. 

For $n\ge1$, the $n$-th \emph{Morava stabilizer group}, defined as the profinite group
\[
    \GG_n = 
    \aut(\cl{\Gamma}/\cl{\FF}_p) \rtimes \Gal(\cl{\FF}_p/\FF_p),
\]
acts continuously on $E_n$ through $\EE_{\infty}$-ring maps, in a sense made precise by Devinatz and Hopkins in \cite{DH}. In fact, $E_n$ is algebraically closed and the canonical unit map $\Sph_{K(n)} \to E_n$ exhibits the target as a $K(n)$-local pro-Galois extension with Galois group $\GG_n$ (in fact, the algebraic closure); see for example \cite{RognesGal}. Note, however, that $E_0$ is \emph{not} a Galois extension of $\Sph_{K(0)} = \QQ$.  

This brings into focus intermediate extensions of $\Sph_{K(n)}$. One particularly relevant such extension appeared in work of Westerland \cite{Westerland}, where he constructed a certain $\ZZ_p^{\times}$-extension $R_n$ of $\Sph_{K(n)}$ for all odd primes and positive heights. 
In fact, as explained in \cite{carmeli2021chromatic}, this extension identifies with the \emph{(infinite) higher cyclotomic extension} $\orcyc[\Sph_{K(n)}]{p^\infty}{n}$, which we therefore denote by $R_n$ at all primes and heights (including $p=2$ and $n=0$). The cyclotomic extensions 
$R_n$ are given as the filtered colimits of the corresponding finite cyclotomic extensions $R_{n,r}:=\orcyc[\Sph_{K(n)}]{p^r}{n}$,   
\[
R_n   \simeq \colim R_{n,r} \qin \calg(\Sp_{K(n)}),
\]
where $R_{n,r}$ is a $(\ZZ/p^r)^\times$-Galois extension in $\Sp_{K(n)}$.
For $n\ge1$, under the $K(n)$-local Galois correspondence of \cite[Theorem 10.9]{AkhilGalois}, the Galois extensions $R_{n,r}$ correspond to group homomorphisms $\chi_{p,r}\colon \GG_n \to (\ZZ/p^r)^{\times}$. 
Consequently, $R_n$ corresponds to a continuous homomorphism
\[
    \chi_p\colon \GG_n \too \ZZ_p^{\times}
\] 
called the \textit{$p$-adic cyclotomic character 
}
of $\Sp_{K(n)}$, see \cite[\S5.2]{carmeli2021chromatic}. 
The kernel of $\chi_p$, denoted by $\GG_n^0 \triangleleft \GG_n$, is a closed subgroup, and we have an equivalence 
$
    R_n \simeq E_n^{h\GG_n^0} 
$ 
of faithful $\ZZ_p^{\times}$-Galois extensions of $\Sph_{K(n)}$; here, $(-)^{h\GG_n^0}$ stands for the \emph{continuous} fixed points.
For $n=0$, the ring $R_n$ is the classical cyclotomic extension $\QQ(\omega_{p^\infty})$, and it corresponds to the usual $p$-adic cyclotomic character of the absolute Galois group of $\QQ$. 


One advantage of the cyclotomic approach from \cite{carmeli2021chromatic} is that it allows for a telescopic lifts of $R_{n,r}$ and $R_n$. 
The corresponding $p^r$-th finite cyclotomic extensions $R_{n,r}^f:=\orcyc[\Sph_{T(n)}]{p^r}{n}$, constructed  in parallel to the finite cyclotomic extensions $R_{n,r}$, are again faithful $(\ZZ/p^r)^\times$-Galois for every $r$ (\cite[Proposition 5.2]{carmeli2021chromatic}). They assemble into an infinite cyclotomic extension 
\[
R_n^f := \orcyc[\Sph_{T(n)}]{p^\infty}{n} = \colim R_{n,r}^f \qin \calg(\Sp_{T(n)})
\]
which is a pro-finite $\ZZ_p^\times$-Galois extension in $\Sp_{T(n)}$. It is, however, not known whether $R_n^f$ is a \emph{faithful} Galois extension; we return to this point at the end of \cref{ssec:ortelescopic}.

\subsection{Orientations of the Lubin--Tate ring spectrum}\label{ssec:orlubintate}

We begin our discussion of the Fourier transform in chromatic homotopy theory with the case of $E_n$-modules. After reinterpreting \Cref{HL_orientation_Intro} of Hopkins and Lurie as providing $E_n$ with a $\ZZ_{(p)}$-orientation and descending it to $R_n$, we apply the results of \Cref{sec:examplesforspecificrings} to extend it to a spherical orientation (\Cref{Sphere_Or_Intro}), and the results of \cref{sec:categorification} to further categorify it  (\Cref{E_n_Categorification_Intro}). We then construct the pro-$\pi$-finite Galois $K(n)$-local spherical cyclotomic extension (\Cref{Sph_Gal_Intro}), and conclude the subsection  with the computation of the connective cover of the  $p$-localized discrepancy spectrum of $E_n$ (\Cref{Discrepancy_Intro}).

\subsubsection{$\ZZ_{(p)}$-Orientability}

As we shall now explain, the $(\ZZ_{(p)},n)$-orientability of $E_n$ is essentially equivalent to the following result of Hopkins and Lurie:
\begin{thm}[{{\cite[Corollary 5.3.26]{AmbiKn}}}] 
\label{HL_orientation}
    For every $n\ge 1$, there is a natural isomorphism
    \[
        E_n[M] \iso 
        E_n^{\und{\Sigma^n M^*}} 
        \qin \calg(\Sp_{K(n)}),
    \]
    for connective $\pi$-finite $p$-local $\ZZ$-modules $M$, where 
$M^*= \hom_\ZZ(M,\QQ/\ZZ)$. 
\end{thm}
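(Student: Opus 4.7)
The plan is to extract from $E_n$ a $\ZZ_{(p)}$-orientation of height $n$, from which the claim follows formally via the Fourier transform machinery developed in the paper. By \cref{Four_is_POr}, natural transformations $E_n[-] \to E_n^{\und{\Dual{(-)}{n}}}$ of functors on $\Modfin{\ZZ_{(p)}}{n}$ are in bijection with $\ZZ_{(p)}$-pre-orientations of height $n$ of $E_n$. For $M \in \Modfin{\ZZ_{(p)}}{n}$ we have a canonical identification $\Dual{M}{n} \simeq \Sigma^n M^{*}$, so the existence of a natural isomorphism as in the statement is, by \cref{def:orientation}, equivalent to exhibiting a $\ZZ_{(p)}$-orientation on $E_n$.

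First, I would construct a $\ZZ_{(p)}$-pre-orientation $\omega \colon \Sigma^n \QQ_p/\ZZ_p \to E_n^{\times}$. Using the identification $\Dual{\ZZ_{(p)}}{n} \simeq \Sigma^n\QQ_p/\ZZ_p \simeq \colim_r \Sigma^n \ZZ/p^r$ (\cref{lem:bcdcolim}), such a map is equivalent to a compatible family of higher roots of unity $\omega_r \colon \Sigma^n \ZZ/p^r \to E_n^{\times}$. These are produced from the Lubin--Tate formal group $\GG$ over $E_n$: concretely, a normalization $\Alt_n(\GG) \simeq \QQ_p/\ZZ_p$ of its top alternating power (as in the remark following the statement) yields precisely such a compatible system, via the general mechanism relating maps out of $\Alt_n(\GG)$ to height $n$ roots of unity of $E_n$.

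Next, I would show $\omega$ is an orientation. By \cref{universal_algebra_for_Zp}, it suffices to verify that each $\omega_r$ is a $\ZZ/p^r$-orientation on $\LocMod_{E_n}$. Applying \cref{Orientation_Primitive} reduces this to primitivity of the $p^r$-th root of unity $\omega_r$, \emph{provided} $\LocMod_{E_n}$ is virtually $(\FF_p,n)$-orientable. The latter holds because the $(\ZZ/p^r)^{\times}$-Galois higher cyclotomic extension $R_{n,r}$ is faithful in $\Sp_{K(n)}$ and carries a primitive higher root of unity, so its base-change $R_{n,r}\otimes E_n$ is a faithful $E_n$-algebra witnessing virtual $(\FF_p,n)$-orientability of $\LocMod_{E_n}$.

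The main obstacle is Step 3, namely proving primitivity of each $\omega_r$. There are two natural routes. The first is to read off primitivity directly from the normalization of $\GG$: since $\Alt_n(\GG)$ is identified with the constant $p$-divisible group $\QQ_p/\ZZ_p$, the induced system of higher roots of unity is primitive by construction at every finite level. The second is to apply \cref{local_ring_orientation} to the strict map of local ring spectra $E_n \to \kappa(E_n) \simeq \cl{\FF}_p[u^{\pm 1}]$, reducing the orientation/primitivity question to the residue field, where the Lubin--Tate formal group law over $\cl{\FF}_p$ has height exactly $n$ and the relevant computation is explicit and classical. Either route supplies primitivity, and hence $\omega$ is a genuine $\ZZ_{(p)}$-orientation; the associated Fourier isomorphism $\Four_{\omega}$ of \cref{Def_Four}, combined with the identification $\Dual{M}{n}\simeq \Sigma^n M^*$, delivers the desired natural isomorphism $E_n[M] \iso E_n^{\und{\Sigma^{n}M^{*}}}$.
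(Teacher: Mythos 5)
Your proposal attempts to \emph{prove} Theorem~\ref{HL_orientation}, but in the paper this result is \emph{cited} from Hopkins--Lurie's \cite[Corollary 5.3.26]{AmbiKn} and is not reproved: the paper's strategy is the reverse of yours, namely to take this isomorphism as given and from it \emph{deduce} the $(\ZZ_{(p)},n)$-orientability of $\LocMod_{E_n}$ (\cref{LT_Zp_Or}) via the parameterization of \cref{Four_is_POr}.

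More seriously, your argument is circular at its crux. Your first route needs virtual $(\FF_p,n)$-orientability of $\LocMod_{E_n}$ in order to invoke \cref{Orientation_Primitive}, and you claim this is witnessed by $R_{n,r}\otimes E_n$ being faithful and carrying a primitive higher root of unity. But a primitive root of unity is a pre-orientation, not yet an $\FF_p$-orientation: the implication ``primitive $\Rightarrow$ orientation'' is precisely what \cref{Orientation_Primitive} gives \emph{under the hypothesis of virtual orientability}, which is the very thing you are trying to establish. The paper is explicit that this implication can fail without that hypothesis (\cref{war:_Allen}), and in the paper's own logical chain the virtual $(\FF_p,n)$-orientability of $\LocMod_{E_n}$ is extracted from \cref{HL_orientation}, not established independently of it. Your second route misapplies \cref{local_ring_orientation}: that theorem is about a strict map $\ORone\to\ORtwo$ of the \emph{coefficient} rings (here $\ZZ_{(p)}\to\FF_p$) governing which modules $M$ admit a Fourier transform; it has nothing to do with passing from $E_n$ to its residue field $\cl{\FF}_p$, and does not let you reduce the orientation question to a computation over $\cl{\FF}_p[u^{\pm1}]$. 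The idea of building the pre-orientation from a normalization of the Lubin--Tate $p$-divisible group is indeed how the original Hopkins--Lurie argument proceeds (as the remark following \cref{HL_orientation_Intro} indicates), but verifying that the resulting Fourier map is an isomorphism requires the substantive formal-group computations of \cite{AmbiKn}, which cannot be obtained for free from the formal framework of this paper.
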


Indeed, while this isomorphism is not constructed in \cite{AmbiKn} using Fourier theory, all natural transformations of the above form are essentially Fourier transforms. Thus, this result can be reformulated (and extended to height $n=0$) as follows: 
\begin{cor}\label{LT_Zp_Or}
    For every $n\ge 0$, the $\infty$-category $\LocMod_{E_n}$ is $(\ZZ_{(p)},n)$-orientable.
\end{cor}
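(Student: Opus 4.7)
The plan is to reinterpret \Cref{HL_orientation} as providing the required orientation for $n \ge 1$, and to handle the base case $n=0$ via the classical discrete Fourier transform over an algebraically closed field of characteristic $0$.

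For $n \ge 1$, first I would observe that for every $M \in \Modfin{\ZZ_{(p)}}{n}$, the homotopy groups $\pi_\ast M$ are finite $p$-groups, so the canonical comparison map $\Sigma^n M^\ast \to \Dual{M}{n}$ (where $M^\ast = \hom_\ZZ(M,\QQ/\ZZ)$) is an isomorphism, essentially because $\hom_\Ab(-,\QQ/\ZZ)$ and $\hom_\Ab(-,\QQ_p/\ZZ_p)$ agree on finite $p$-torsion groups. In particular, the Hopkins--Lurie natural isomorphism restricts to a natural isomorphism of functors $\Modfin{\ZZ_{(p)}}{n} \to \calg(\LocMod_{E_n})$ of the form
\[
E_n[M] \iso E_n^{\und{\Dual{M}{n}}}.
\]
By \Cref{Four_is_POr}, such natural transformations are classified by $\ZZ_{(p)}$-pre-orientations of height $n$ of $\LocMod_{E_n}$, so we obtain a canonical $\omega \in \POr{\ZZ_{(p)}}{\LocMod_{E_n}}{n}$ whose associated Fourier transform (as in \Cref{Def_Four}) recovers the restricted Hopkins--Lurie map. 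Since the latter is an isomorphism for every $M \in \Modfin{\ZZ_{(p)}}{n}$, the pre-orientation $\omega$ is an orientation by \Cref{def:orientation}, and hence $\LocMod_{E_n}$ is $(\ZZ_{(p)},n)$-orientable.

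For the base case $n=0$, recall that $E_0 = \cl{\QQ}[u^{\pm 1}]$. The algebraic closure $\cl{\QQ} \subseteq \pi_0 E_0$ supplies a compatible system of primitive $p^r$-th roots of unity for all $r$, i.e., a map
\[
\QQ_p/\ZZ_p \simeq \Dual{\ZZ_{(p)}}{0} \too E_0^\times
\qin \Sp^\cn,
\]
which is a $\ZZ_{(p)}$-pre-orientation $\omega_0$ of height $0$. By the classical discrete Fourier transform over $\cl{\QQ}$ (i.e., \Cref{ex:rootsofunity} applied componentwise), the associated Fourier transform is an isomorphism for every finite abelian $p$-group, which in the stable setting forces $\omega_0$ to be an orientation (cf.~\Cref{Orientation_Primitive}).

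There is no hard step here: once \Cref{Four_is_POr} is in place, the proof is pure bookkeeping, identifying the natural transformation constructed by Hopkins and Lurie with the Fourier transform of a canonical pre-orientation, and reading off orientability from the invertibility of that natural transformation. The only point warranting care is the identification of the dualities $\Sigma^n M^\ast$ and $\Dual{M}{n}$ on the class $\Modfin{\ZZ_{(p)}}{n}$, which is immediate from the definitions.
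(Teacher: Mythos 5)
Your approach is essentially the paper's, and the $n=0$ case is fine (the paper first passes to $\cl{\QQ}$ via \cref{push_orientations_functor}, you work with $E_0$ directly; both are valid, and the invocation of \cref{Orientation_Primitive} is unnecessary---the direct verification on finite abelian $p$-groups already settles it). However, there is a real, if routine, gap in the $n\ge 1$ case at the step ``By \cref{Four_is_POr}, such natural transformations are classified by $\ZZ_{(p)}$-pre-orientations.'' \cref{Four_is_POr} classifies natural transformations of functors on $\Mod_{\OR}^{[0,n]}$, and \cref{Four_Pi_Finite} allows restriction of the source to $\Modfin{\OR}{n}$ only when $\OR$ is itself $[0,n]$-finite, precisely because the Kan-extension argument needs $\Dual{\OR}{n}\in\Modfin{\OR}{n}$. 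For $\OR=\ZZ_{(p)}$ this fails: $\Dual{\ZZ_{(p)}}{n}\simeq\Sigma^n\QQ_p/\ZZ_p$ is not $\pi$-finite, so $\Dual{\ZZ_{(p)}}{n}\notin\Modfin{\ZZ_{(p)}}{n}$, and moreover $\Mod_{\ZZ_{(p)}}^{[0,n]}$ is strictly larger than (even the Ind-completion of) $\Modfin{\ZZ_{(p)}}{n}$, since e.g.\ $\ZZ_{(p)}$ is not a filtered colimit of $\pi$-finite modules. So the Hopkins--Lurie natural isomorphism, which lives only on the $\pi$-finite modules, does not \emph{a priori} determine a $\ZZ_{(p)}$-pre-orientation via \cref{Four_is_POr} as cited.

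The paper avoids this by not invoking \cref{Four_is_POr} directly for $\OR=\ZZ_{(p)}$. Instead, since $\ZZ/p^r$ \emph{is} $\pi$-finite, restricting the Hopkins--Lurie isomorphism to $\Modfin{\ZZ/p^r}{n}$ legitimately yields a $\ZZ/p^r$-orientation $\omega_r\colon \Sigma^n\ZZ/p^r\to E_n^\times$ for each $r$ via \cref{Four_is_POr} and \cref{Four_Pi_Finite}; naturality makes these compatible, and by \cref{lem:bcdcolim} they assemble into a map $\Sigma^n\QQ_p/\ZZ_p\simeq\colim\Sigma^n\ZZ/p^r\to E_n^\times$, with orientability following from \cref{universal_algebra_for_Zp} and the identification $\Modfin{\ZZ_{(p)}}{n}=\bigcup_r\Modfin{\ZZ/p^r}{n}$. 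Your underlying idea---read a pre-orientation off the Hopkins--Lurie isomorphism and conclude orientability from invertibility---is exactly right, but you need this intermediate $\ZZ/p^r$ step and colimit to produce $\omega$ rigorously.
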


\begin{proof}
    We start with the case $n=0$. The commutative ring spectrum $E_0$ is an algebra over $\cl{\QQ}$, and hence it suffices to show that $\cl{\QQ}$ is $(\ZZ_{(p)},0)$-orientable by \cref{push_orientations_functor}. Using \Cref{lem:bcdcolim}, the compatible system of $p$-power roots of unity $\exp(\frac{2\pi i}{p^k})\in \cl{\QQ}$ (viewed as a subfield of $\CC$)
     gives a $\ZZ_{(p)}$-pre-orientation of $\cl{\QQ}$ of height $0$. For a finite abelian $p$-group $M$, the resulting Fourier transform $\cl{\QQ}[M]\to \cl{\QQ}^{M^*}$ is the classical discrete Fourier transform, hence an isomorphism. 
    
    We turn to the case $n\ge 1$. 
    First, a $\pi$-finite $p$-local $\ZZ$-module is the same thing as a $\pi$-finite $\ZZ_{(p)}$-module. Furthermore, if $M$ is concentrated between degrees $0$ and $n$ then $\Sigma^n M^*\simeq \Dual{M}{n}$.  Hence, an isomorphism as in \Cref{HL_orientation} restricts to a natural isomorphism $E_n[-]\simeq E_n^{\und{\Dual{-}{n}}}$ of functors $\Modfin{\ZZ/p^r}{n} \to \calg(\Sp_{K(n)})$ for every $r\in\NN$. By \Cref{Four_is_POr} (and \Cref{Four_Pi_Finite}), such an isomorphism is the Fourier transform associated with an essentially unique $\ZZ/p^r$-orientation $\omega \colon \Sigma^n \ZZ/p^r \to E_n^\times$. Since these orientations are compatible with each other, they assemble into a $\ZZ_{(p)}$-orientation of $E_n$ of height $n$ by \Cref{universal_algebra_for_Zp}.
\end{proof}

\begin{rem}
The construction of the isomorphism in \cref{HL_orientation} depends on a choice of a \textit{normalization} $\nu$ of the $p$-divisible group $\Gamma$ associated with $E_n$, in the sense of \cite[Definition 5.3.1]{AmbiKn}. Hence, identifying this isomorphism with the Fourier transform, we associate to a normalization $\nu$ of $\Gamma$ a $(\ZZ_{(p)},n)$-orientation $\omega_\nu$. It is not hard to show that the association $\nu \mapsto \omega_\nu$ furnishes a \textit{bijection} between normalizations of $\Gamma$ and $(\ZZ_{(p)},n)$-orientations of $E_n$.
\end{rem}

The orientability of $\LocMod_{E_n}$ implies \textit{virtual} orientability of $\Sp_{K(n)}$.

\begin{cor}\label{Kn_Zp_Or}
    For every $n\ge 0$, the $\infty$-category $\Sp_{K(n)}$ is virtually $(\ZZ_{(p)},n)$-orientable (hence, in particular, virtually $(\FF_p,n)$-orientable).
\end{cor}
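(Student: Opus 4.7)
The plan is to exhibit $E_n$ itself as the required faithful, $\ZZ_{(p)}$-oriented commutative algebra. Concretely, I would take $S = E_n \in \calg(\Sp_{K(n)})$ and verify the two clauses of \Cref{def:virtually_R_orientable}: that $E_n$ admits a $\ZZ_{(p)}$-orientation of height $n$, and that $E_n$ is faithful in $\Sp_{K(n)}$.

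For the first point, recall that an $\OR$-orientation of a commutative algebra $S\in\calg(\cC)$ is by convention an orientation of the presentably symmetric monoidal $\infty$-category $\Mod_S(\cC)$. In our situation $\Mod_{E_n}(\Sp_{K(n)}) \simeq \LocMod_{E_n}$, so \Cref{LT_Zp_Or} immediately says that $E_n$ carries a $\ZZ_{(p)}$-orientation of height $n$ as an object of $\calg(\Sp_{K(n)})$; no further work is required here.

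For the faithfulness, I would split according to height. For $n\ge1$, the unit map $\Sph_{K(n)} \to E_n$ is a faithful pro-$\GG_n$-Galois extension, as recalled in \cref{sssec:lubintatespectra}, so the tensor-product functor $E_n\otimes(-)\colon \Sp_{K(n)}\to \LocMod_{E_n}$ is conservative. For $n=0$, the map $\Sph_{K(0)}= \QQ \to E_0= \cl{\QQ}[u^{\pm1}]$ is a composition of a faithfully flat field extension with adjoining invertible generators, which is also faithful. Together with the previous paragraph, this establishes virtual $(\ZZ_{(p)},n)$-orientability. The parenthetical assertion of virtual $(\FF_p,n)$-orientability then follows at once by applying \Cref{Virt_Or_Push} to the strict map of local ring spectra $\ZZ_{(p)} \onto \FF_p$.

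I do not anticipate any real obstacle: the result is essentially a repackaging of \Cref{LT_Zp_Or} via the identification $\Mod_{E_n}(\Sp_{K(n)})\simeq \LocMod_{E_n}$, together with the standard faithfulness properties of $E_n$. The only ``content'' is keeping track of the definitional translation between orientations of the module category and orientations of the underlying algebra, which has already been set up in \cref{ssec:affor}.
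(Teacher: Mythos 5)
Your proposal is correct and follows the same route as the paper: the paper's proof is the one-liner ``Since $E_n$ is faithful in $\Sp_{K(n)}$, this follows from \Cref{LT_Zp_Or},'' and you have simply spelled out the faithfulness of $E_n$ for $n\ge1$ (Galois) and $n=0$ (flat extension of $\QQ$) and the passage to $\FF_p$-orientability via \Cref{Virt_Or_Push}, all of which the paper leaves implicit.
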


\begin{proof}
    Since $E_n$ is faithful in $\Sp_{K(n)}$, this follows from \Cref{LT_Zp_Or}. 
\end{proof}

While $\Sp_{K(n)}$ is virtually $(\ZZ_{(p)},n)$-orientable, it is not $(\ZZ_{(p)},n)$-\emph{orientable}. 
Namely, one can not replace $E_n$ with $\Sph_{K(n)}$ in the isomorphism of \Cref{HL_orientation}. However, the Fourier theoretic point of view does allow us to descend this isomorphism from $E_n$ to the intermediate extension $R_n$.

\begin{thm} \label{R_n_fourier}
    For every $n\ge 0$, there is a natural isomorphism
    \[
        R_n[M] \iso 
        R_n^{\und{\Sigma^n M^*}} 
        \qin \calg(\Sp_{K(n)})
    \]
    for connective $\pi$-finite $p$-local $\ZZ$-module spectra $M$. 
\end{thm}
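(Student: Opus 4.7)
The plan is to identify $R_n$ with the universal $(\ZZ_{(p)}, n)$-oriented commutative algebra in $\Sp_{K(n)}$ and then directly invoke the formal Fourier theory developed in the preceding sections, together with a Postnikov truncation argument to extend the conclusion from $[0,n]$-finite modules to all connective $\pi$-finite $p$-local $\ZZ$-module spectra.

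First, I would observe that, by \Cref{Kn_Zp_Or}, the $\infty$-category $\Sp_{K(n)}$ is virtually $(\FF_p, n)$-orientable; this is a direct repackaging of Hopkins--Lurie's \Cref{HL_orientation} via the faithfulness of $E_n$. With this hypothesis in hand, I would apply \Cref{Zp_Univ_Cyclo} (which combines \Cref{universal_algebra_for_Zp} and \Cref{Cyclo_Univ_Or}) to obtain a canonical identification
\[
R_n \simeq \orcyc[\Sph_{K(n)}]{p^\infty}{n} \simeq \orcyc[\Sph_{K(n)}]{\ZZ_{(p)}}{n} \qin \calg(\Sp_{K(n)}).
\]
Being the universal $(\ZZ_{(p)}, n)$-oriented commutative algebra, $R_n$ carries a tautological $\ZZ_{(p)}$-orientation of height $n$, and, by the very definition of an orientation (\Cref{def:orientation}), the associated Fourier transform
\[
\Four\colon R_n[M] \iso R_n^{\und{\Dual{M}{n}}}
\]
is an isomorphism, natural in $M \in \Modfin{\ZZ_{(p)}}{n}$.

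Next, I would extend this isomorphism from $[0,n]$-finite $\ZZ_{(p)}$-modules to arbitrary connective $\pi$-finite $p$-local $\ZZ$-module spectra. On the source side, since $\Sp_{K(n)}$ has semiadditive height $n$ at $p$, a standard devissage based on the identity $\one[\Sigma^{n+1} C_p] \simeq \one$ (cf.~\cite[Proposition 3.2.1]{AmbiHeight}) together with the fact that $R_n[-]$ preserves pushouts shows that $R_n[N] \simeq R_n$ via the augmentation for every $n$-connected $\pi$-finite $p$-local spectrum $N$. Applied to $N = \tau_{\ge n+1} M$ in the Postnikov cofiber sequence $\tau_{\ge n+1} M \to M \to \tau_{\le n} M$, this produces a natural isomorphism $R_n[M] \iso R_n[\tau_{\le n} M]$. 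On the target side, the evident identification $\tau_{\ge 0}(\Sigma^n M^*) \simeq \Sigma^n(\tau_{\le n} M)^* \simeq \Dual{\tau_{\le n} M}{n}$, combined with the equality $\und{X} = \und{\tau_{\ge 0} X}$ valid for any spectrum $X$, gives
\[
R_n^{\und{\Sigma^n M^*}} \simeq R_n^{\und{\Dual{\tau_{\le n} M}{n}}}.
\]
Splicing these identifications with the Fourier isomorphism for $\tau_{\le n} M$ yields the desired natural isomorphism. The same argument specializes uniformly to height $n=0$, recovering the classical discrete Fourier transform over $R_0 = \QQ(\omega_{p^\infty})$.

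The only non-formal input is the identification $R_n \simeq \orcyc{\ZZ_{(p)}}{n}$, which ultimately rests on the Hopkins--Lurie orientability of $E_n$; once this is in place, every remaining step is a routine consequence of the machinery developed in \cref{sec:orientations,sec:examplesforspecificrings}, so I do not anticipate any genuine conceptual obstacle.
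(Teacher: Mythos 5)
Your proof is correct and follows the same route as the paper's: the heart of the argument is the identification $R_n \simeq \orcyc[\Sph_{K(n)}]{\ZZ_{(p)}}{n}$ via \cref{Kn_Zp_Or} and \cref{Zp_Univ_Cyclo}, after which the universal orientation supplies the Fourier isomorphism. The paper's proof ends there, leaving implicit the passage from $[0,n]$-finite $\ZZ_{(p)}$-modules (the range covered by \cref{def:orientation}) to all connective $\pi$-finite $p$-local $\ZZ$-module spectra; your explicit Postnikov-truncation devissage using $\one[\Sigma^{n+1}C_p]\simeq\one$ fills in that routine step cleanly.
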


\begin{proof}
Since $\Sp_{K(n)}$ is virtually $(\FF_p,n)$-orientable, by \Cref{Zp_Univ_Cyclo} we obtain that 
    \[
        R_n = 
        \orcyc[\Sph_{K(n)}]{p^\infty}{n} \simeq
        \orcyc[\Sph_{K(n)}]{\ZZ_{(p)}}{n} \qin \calg(\Sp_{K(n)}).
    \]
    In particular, the universal $(\ZZ_{(p)},n)$-orientation on $R_n$ provides the desired isomorphism.
\end{proof}

\begin{rem}
    The fact that $R_n$ carries the \emph{universal} $(\ZZ_{(p)},n)$-orientation among $K(n)$-local commutative ring spectra, shows that for $n\ge1$ the isomorphism in \Cref{HL_orientation} is obtained from the one in \Cref{R_n_fourier} by scalar extension along a map $R_n \to E_n$. This map identifies with the inclusion of the fixed point algebra
    $R_n \simeq E_n^{h\GG_n^0} \to E_n$, up to possibly pre-composing with an automorphism of $R_n$ (that is, an element of $\ZZ_{p}^\times$). Hence, \Cref{R_n_fourier} is essentially the claim that the isomorphism in \Cref{HL_orientation} is $\GG_n^0$-equivariant.   
\end{rem}

\subsubsection{$\Sph_{(p)}$-orientability}

Since $\Sp_{K(n)}$ is virtually $\ZZ_{(p)}$-orientable (by \cref{Kn_Zp_Or}), it is also virtually $\Sph_{(p)}$-orientable (by \cref{Virt_Zp_Bootstrap}). Namely, the $K(n)$-local spherical cyclotomic extension $\orcyc[\Sph_{K(n)}]{\Sph_{(p)}}{n}$ is \textit{faithful}. Our general results imply that it is a pro-$\pi$-finite Galois extension of $\Sph_{K(n)}$. 

\begin{thm}\label{Kn_Pro_Galois}
    For every $n \ge 0$, the commutative algebra $\orcyc[\Sph_{K(n)}]{\Sph_{(p)}}{n}$ is a pro-Galois extension of $\Sph_{K(n)}$ for the group $\tau_{\le n}\Sph_{(p)}^\times$, viewed as a pro-$\pi$-finite group.
\end{thm}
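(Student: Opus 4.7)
The plan is to reduce the theorem to a direct application of Proposition~\ref{Spherical_Cyc_Galois}, which handles the truncated spherical cyclotomic extension in any virtually $(\FF_p,n)$-orientable setting.

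First, I would observe that the shifted Brown--Comenetz dual and the associated cyclotomic extension depend only on the $n$-truncation of the coefficient ring: by Remark~\ref{rem:n_trunc}, the spectrum $\Dual{\Sph_{(p)}}{n}$ and the category $\Mod_{\Sph_{(p)}}^{[0,n]}$ coincide with $\Dual{\tau_{\le n}\Sph_{(p)}}{n}$ and $\Mod_{\tau_{\le n}\Sph_{(p)}}^{[0,n]}$. Consequently the universal $(\Sph_{(p)},n)$-oriented $K(n)$-local commutative algebra agrees with its truncated cousin:
\[
\orcyc[\Sph_{K(n)}]{\Sph_{(p)}}{n} \:\simeq\: \orcyc[\Sph_{K(n)}]{\tau_{\le n}\Sph_{(p)}}{n} \qin \calg(\Sp_{K(n)}).
\]
Thus it suffices to analyze the right-hand side.

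Second, I would invoke Corollary~\ref{Kn_Zp_Or}, which provides that $\Sp_{K(n)}$ is virtually $(\ZZ_{(p)},n)$-orientable, and hence virtually $(\FF_p,n)$-orientable. This places $\cC=\Sp_{K(n)}$ in the hypotheses of Proposition~\ref{Spherical_Cyc_Galois}. Applying that proposition with $d=n$ yields immediately that $\orcyc[\Sph_{K(n)}]{\tau_{\le n}\Sph_{(p)}}{n}$ is pro-Galois with respect to the pro-$\pi$-finite group
\[
\invlim_s \tau_{\le n}(\Tot{s})^\times \:\simeq\: \tau_{\le n}\Sph_p^\times,
\]
where the identification of the limit comes from Lemma~\ref{Sph_Tot}. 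The tower $\{\tau_{\le n}\Tot{s}\}_s$ consists of $n$-truncated $\pi$-finite local rings with residue field $\FF_p$, and the corresponding finite cyclotomic extensions $\orcyc[\Sph_{K(n)}]{\tau_{\le n}\Tot{s}}{n}$ are honest Galois extensions by the combination of Corollary~\ref{Virt_Fp_Bootstrap} and Proposition~\ref{Orcyc_Galois}; Proposition~\ref{Spherical_Cyc_Approx} then realizes the spherical cyclotomic extension as the filtered colimit of this tower.

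There is no genuine obstacle here: all the hard work has been carried out in Section~\ref{ssec:connectedness}, and the present theorem is essentially a specialization of Proposition~\ref{Spherical_Cyc_Galois} to $\cC = \Sp_{K(n)}$. The only point requiring care is a minor bookkeeping issue about whether the Galois group is written as $\tau_{\le n}\Sph_p^\times$ (as in Proposition~\ref{Spherical_Cyc_Galois}) or $\tau_{\le n}\Sph_{(p)}^\times$; since the pro-$\pi$-finite structure is detected by the $n$-truncations of the $\pi$-finite rings $\Tot{s}$, and since a $\pi$-finite spectrum is $p$-local if and only if it is $p$-complete, both conventions describe the same pro-$\pi$-finite group.
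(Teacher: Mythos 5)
Your proof is correct and follows exactly the route the paper takes: invoke Corollary~\ref{Kn_Zp_Or} for virtual $(\FF_p,n)$-orientability of $\Sp_{K(n)}$ and then specialize Theorem~\ref{Spherical_Cyc_Galois} at $d=n$. Your additional preliminary step — using Remark~\ref{rem:n_trunc} to identify $\orcyc[\Sph_{K(n)}]{\Sph_{(p)}}{n}$ with $\orcyc[\Sph_{K(n)}]{\tau_{\le n}\Sph_{(p)}}{n}$ before applying the truncated result — is a harmless but genuinely useful bookkeeping clarification that the paper leaves implicit.
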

\begin{proof}
    By \cref{Kn_Zp_Or}, $\Sp_{K(n)}$ is virtually $(\FF_p,n)$-orientable. Thus, the result follows from \Cref{Spherical_Cyc_Galois}.
\end{proof}

While $\orcyc[\Sph_{K(n)}]{\Sph_{(p)}}{n}$ is the universal spherically oriented $K(n)$-local commutative algebra, we do not have an explicit description of it. In contrast, combining the theory of categorical connectedness from \Cref{sec:examplesforspecificrings} with the results on $\pic(E_n)$ from \cite{Null}, we can also construct a (non-universal) spherical orientation on $E_n$, which for $n\ge 1$ is an ordinary (pro-finite) Galois extension of $\Sph_{K(n)}$.

\begin{thm} \label{Sphere_or_E_n}
    For every $n\ge 0$, the $\infty$-category $\LocMod_{E_n}$ is $(n+\frac{1}{2})$-connected, hence in particular $(\Sph_{(p)},n)$-orientable. 
\end{thm}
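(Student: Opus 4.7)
The plan is to apply Proposition \ref{Pic_Conn_Frac} to $\cC = \LocMod_{E_n}$. That proposition requires two hypotheses: first, that $\LocMod_{E_n}$ be $(\FF_p,n)$-orientable, and second, that $\Map_{\Sp^\cn}(C_p,\pic(E_n))\simeq B^{n+1}C_p$. Granting both, Proposition \ref{Pic_Conn_Frac} yields $(n+\tfrac{1}{2})$-connectedness of $\LocMod_{E_n}$. The $(\Sph_{(p)},n)$-orientability then follows formally: Definition \ref{def:Conn_Frac} packages $(n+\tfrac{1}{2})$-connectedness with $(\tau_{\le n}\Sph_{(p)},n)$-orientability, and Remark \ref{rem:n_trunc} identifies the latter with $(\Sph_{(p)},n)$-orientability at height $n$ since $\Dual{\Sph_{(p)}}{n}$ depends only on $\tau_{\le n}\Sph_{(p)}$.

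The first hypothesis is straightforward: Corollary \ref{LT_Zp_Or} supplies $\LocMod_{E_n}$ with a $(\ZZ_{(p)},n)$-orientation (a repackaging of the Hopkins--Lurie isomorphism of \cref{HL_orientation}), and pushing this forward along the strict map $\ZZ_{(p)}\to\FF_p$ via Proposition \ref{Or_Push} produces an $(\FF_p,n)$-orientation. For the second hypothesis, when $n\ge 1$, it is precisely the computation of $\pic(E_n)$ given in \cite[Proposition 8.14]{Null}, which is a consequence of the chromatic nullstellensatz of Burklund--Schlank--Yuan. Connectivity of $\pic(E_n)$ ensures that $\Map_\Sp(C_p,\pic(E_n))$ and $\Map_{\Sp^\cn}(C_p,\pic(E_n))$ agree, so the hypothesis of Proposition \ref{Pic_Conn_Frac} is indeed satisfied.

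For the base case $n=0$ the cited Picard computation does not apply uniformly (notably at $p=2$), so a separate, elementary treatment is required. Here $E_0 \simeq \cl{\QQ}[u^{\pm 1}]$, and the $(\ZZ_{(p)},0)$-orientation produced in Corollary \ref{LT_Zp_Or} is already an $(\Sph_{(p)},0)$-orientation because $\tau_{\le 0}\Sph_{(p)}\simeq\ZZ_{(p)}$. The $0$-connectedness of $\LocMod_{E_0}$ reduces, via Proposition \ref{0_connected}, to the indecomposability of $E_0$, which is clear since $\cl{\QQ}$ is a field. Together these two observations show directly that $\LocMod_{E_0}$ is $(0+\tfrac{1}{2})$-connected.

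The substantive obstacle is the external Picard input \cite[Proposition 8.14]{Null}: the isomorphism $\Map_\Sp(C_p,\pic(E_n))\simeq B^{n+1}C_p$ encodes genuinely deep information about invertible $K(n)$-local $E_n$-modules, well beyond what the general formalism of Sections \ref{sec:orientations}--\ref{sec:examplesforspecificrings} can itself produce. Once that ingredient is granted, however, the argument is a purely formal application of Proposition \ref{Pic_Conn_Frac} combined with the orientability bootstrap of Corollary \ref{LT_Zp_Or}.
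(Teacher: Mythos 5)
Your proof follows the paper's own argument almost verbatim: the paper too proves the theorem by checking the hypotheses of \Cref{Pic_Conn_Frac}, invoking \Cref{LT_Zp_Or} for $(\FF_p,n)$-orientability and citing \cite[Proposition 8.14]{Null} for the Picard computation $\Map_{\Sp^\cn}(C_p,\pic(E_n))\simeq B^{n+1}C_p$, and the deduction of $(\Sph_{(p)},n)$-orientability from $(n+\tfrac12)$-connectedness via \Cref{def:Conn_Frac} and \Cref{rem:n_trunc} is exactly as you say.

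The one place you deviate is in carving out a separate elementary treatment of $n=0$. The paper does \emph{not} do this: it cites \cite[Proposition 8.14]{Null} uniformly for all $n\ge 0$, and the convention $E_0=\cl{\QQ}[u^{\pm1}]$ is chosen precisely to align with that reference (see the footnote where $E_0$ is defined). So the separate case is not needed in the paper's reading of the external input. That said, your $n=0$ argument is correct on its own terms: $(\tfrac12)$-connectedness unwinds (via \Cref{def:Conn_Frac} with $d=n=0$ and $\tau_{\le 0}\Sph_{(p)}=\ZZ_{(p)}$) to $(\FF_p,0)$-orientability, $0$-connectedness, and $(\ZZ_{(p)},0)$-orientability, and you supply the first and third from \Cref{LT_Zp_Or} (plus \Cref{Or_Push}) and the second from \Cref{0_connected} via indecomposability of $E_0$, which holds because $\pi_0 E_0 = \cl{\QQ}$ is a field. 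This is a perfectly good fallback that avoids relying on the $n=0$ case of the Picard computation, and arguably makes the proof more self-contained; it just isn't how the paper argues.
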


\begin{proof}
    We check the assumptions of \Cref{Pic_Conn_Frac}. By \cref{LT_Zp_Or}, $\LocMod_{E_n}$ is $(\FF_p,n)$-orientable, and by \cite[Proposition 8.14]{Null}, we have
    \[
        \Map_{\Sp^\cn}(C_p,\pic(E_n)) \simeq B^{n+1}C_p.\qedhere
    \]
\end{proof}

Applying the general results on categorification of orientations from \Cref{sec:categorification}, we also get the following:

\begin{cor}\label{Ninga_Orientation}
    There is a natural equivalence of symmetric monoidal $\infty$-categories: 
    \[
        \Fun(\Omega^\infty M, \LocMod_{E_n})_\Day \iso 
        \Fun(\Omega^{\infty - (n+1) } (I_{\QQ_p/\ZZ_p} M), \LocMod_{E_n})_\Ptw,
    \]
    for $M$ a connective $(n+1)$-finite $p$-local spectrum, provided that the action map 
    \[
        \pi_{n+1}\Sph \otimes \pi_0 M \too \pi_{n+1} M
    \] 
    is zero. 
\end{cor}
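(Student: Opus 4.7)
The plan is to bootstrap the spherical orientation of $\LocMod_{E_n}$ from \Cref{Sphere_or_E_n} up one categorical level, and then instantiate the resulting categorified Fourier transform at $M$. The hypothesis on the action map will be exactly what allows $M$ to carry the module structure required to plug into the Fourier machinery at height $n+1$.

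First, I would upgrade the orientation. By \Cref{Sphere_or_E_n}, $\LocMod_{E_n}$ is $(n+\tfrac{1}{2})$-connected, which by \Cref{def:Conn_Frac} means it is $(\tau_{\le n}\Sph_{(p)},n)$-orientable. Since $\LocMod_{E_n}$ lies in $\calg(\Prl^{\sad{\infty}})$, and since $\OR:=\tau_{\le n}\Sph_{(p)}$ is $n$-truncated, \Cref{Orientability_Cat} furnishes a canonical isomorphism
\[
    \Omega \colon \Or{\tau_{\le n}\Sph_{(p)}}{\Mod_{\LocMod_{E_n}}}{n+1} \iso \Or{\tau_{\le n}\Sph_{(p)}}{\LocMod_{E_n}}{n}.
\]
In particular, an $(n+\tfrac{1}{2})$-connected structure on $\LocMod_{E_n}$ categorifies to a $(\tau_{\le n}\Sph_{(p)},n+1)$-orientation $\bar{\omega}$ on $\Mod_{\LocMod_{E_n}}$.

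Second, I would upgrade $M$ to a $\tau_{\le n}\Sph_{(p)}$-module. By \Cref{Truncated_Sph_p_Crit} applied with $n$ replaced by $n+1$, the vanishing of the action map $\pi_{n+1}\Sph_{(p)}\otimes\pi_0M\to\pi_{n+1}M$ is equivalent to the existence (and uniqueness) of a $\tau_{\le n}\Sph_{(p)}$-module structure on $M$. Hence $M$ belongs to $\Modfin{\tau_{\le n}\Sph_{(p)}}{n+1}$, and we may feed it into the Fourier transform associated with $\bar\omega$. By \Cref{def:orientation}, this produces an equivalence in $\calg(\Mod_{\LocMod_{E_n}})$ of the form
\[
    \Four_{\bar\omega}\colon \LocMod_{E_n}[M] \iso \LocMod_{E_n}^{\und{\Dual{M}{n+1}}}.
\]

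Third, I would identify the two sides with the functor categories in the statement. By \Cref{cat_group_alg_is_day}, the source is naturally equivalent to $\Fun(\und{M},\LocMod_{E_n})_\Day=\Fun(\Omega^\infty M,\LocMod_{E_n})_\Day$. The target is by definition the pointwise functor category $\Fun(\und{\Dual{M}{n+1}},\LocMod_{E_n})_\Ptw$. Finally, since $M$ is connective and $(n+1)$-truncated, the spectrum $\Sigma^{n+1}I_{\QQ_p/\ZZ_p}M$ is already connective, so $\Dual{M}{n+1}\simeq\Sigma^{n+1}I_{\QQ_p/\ZZ_p}M$ and hence $\und{\Dual{M}{n+1}}\simeq\Omega^{\infty-(n+1)}(I_{\QQ_p/\ZZ_p}M)$, yielding the claimed symmetric monoidal equivalence.

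The main conceptual step, and the one I would be most careful about, is the passage from $(\tau_{\le n}\Sph_{(p)},n)$-orientability of $\LocMod_{E_n}$ to $(\tau_{\le n}\Sph_{(p)},n+1)$-orientability of $\Mod_{\LocMod_{E_n}}$ via \Cref{Orientability_Cat}: this requires the hypothesis that the auxiliary ring is $n$-truncated, which is precisely why we work with $\tau_{\le n}\Sph_{(p)}$ (and therefore why we can only handle those $M$ admitting a module structure over it). Everything else amounts to unwinding the source and target of the Fourier transform via the identifications already established in \Cref{cat_group_alg_is_day} and the definition of the shifted Brown--Comenetz dual.
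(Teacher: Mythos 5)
Your proposal is correct and follows essentially the same argument as the paper: pass from the $(\tau_{\le n}\Sph_{(p)},n)$-orientation of $\LocMod_{E_n}$ (given by \Cref{Sphere_or_E_n}) to a $(\tau_{\le n}\Sph_{(p)},n+1)$-orientation of $\Mod_{\LocMod_{E_n}}$ via \Cref{Orientability_Cat}, then use \Cref{Truncated_Sph_p_Crit} to recast the hypothesis on the action map as a module structure on $M$ over $\tau_{\le n}\Sph_{(p)}$. The paper states this tersely while you spell out the identifications of the source and target via \Cref{cat_group_alg_is_day} and the computation of $\Dual{M}{n+1}$, but the underlying argument is identical.
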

\begin{proof}
    Since $\LocMod_{E_n}$ is $(\tau_{\le n}\Sph_{(p)},n)$-orientable (by \Cref{Sphere_or_E_n}), we get by \Cref{Orientability_Cat} that $\Mod_{\LocMod_{E_n}}$ is $(\tau_{\le n}\Sph_{(p)},n+1)$-orientable. By  \Cref{Truncated_Sph_p_Crit}, this translates to the above. 
\end{proof}   

We expect that the technical condition on $M$ can be removed:

\begin{conjecture}\label{con:cat_E}
    The $\infty$-category $\Mod_{\LocMod_{E_n}}$  is $(\Sph_{(p)},n+1)$-orientable.
\end{conjecture}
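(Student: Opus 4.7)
The plan is to mimic the proof of \Cref{Sphere_or_E_n}, one categorical level up. Specifically, I would invoke \Cref{Pic_Conn_Frac} applied to $\cC = \Mod_{\LocMod_{E_n}}$, which lies in $\calg(\Prsad)$ by \Cref{Mod_C_Sadd}, with the height $n$ there replaced by $n+1$. The targeted conclusion is that $\Mod_{\LocMod_{E_n}}$ is $((n+1)+\tfrac12)$-connected; by \Cref{def:Conn_Frac} and \Cref{rem:n_trunc} this is precisely the $(\Sph_{(p)}, n+1)$-orientability asserted by the conjecture.

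The $(\FF_p, n+1)$-orientability hypothesis of \Cref{Pic_Conn_Frac} is already in hand: \Cref{LT_Zp_Or} provides a $(\ZZ_{(p)}, n)$-orientation of $\LocMod_{E_n}$, \Cref{Or_Push} pushes it along $\ZZ_{(p)} \to \FF_p$ to an $(\FF_p, n)$-orientation, and \Cref{Orientability_Cat} categorifies the latter to an $(\FF_p, n+1)$-orientation of $\Mod_{\LocMod_{E_n}}$. Thus the entire content of the conjecture boils down to establishing the Picard-level hypothesis
\[
\Map_{\Sp^\cn}\bigl(C_p,\, \pic(\Mod_{\LocMod_{E_n}})\bigr) \;\simeq\; B^{n+2}C_p,
\]
which, under the identification of $\pic(\Mod_{\LocMod_{E_n}})$ with the Brauer spectrum $\mathrm{br}(E_n)$, is the Brauer-level analogue of the Picard computation \cite[Proposition~8.14]{Null} that fed into \Cref{Sphere_or_E_n}, and is precisely the condition flagged in \Cref{HL_Trunc_Or_Intro} of the introduction.

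The main obstacle is therefore this Brauer computation. Since $\Omega\, \pic(\Mod_{\LocMod_{E_n}}) \simeq \pic(\LocMod_{E_n})$, looping once and applying \cite[Proposition~8.14]{Null} already yields
\[
\Omega\, \Map_{\Sp^\cn}\bigl(C_p,\, \pic(\Mod_{\LocMod_{E_n}})\bigr) \;\simeq\; \Map_{\Sp^\cn}\bigl(C_p,\, \pic(\LocMod_{E_n})\bigr) \;\simeq\; B^{n+1}C_p,
\]
so the mapping spectrum is automatically $(n+1)$-connected, and the missing ingredient is the vanishing of $\pi_0 \Map_{\Sp^\cn}(C_p, \pic(\Mod_{\LocMod_{E_n}}))$, i.e., the triviality of every $C_p$-graded Brauer class of $E_n$. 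I would attack this by pushing the nullstellensatz-type arguments of \cite{Null} one categorical step further, combining Goerss--Hopkins obstruction theory on $\LocMod_{E_n}$-linear presentable $\infty$-categories (equivalently, on Azumaya $E_n$-algebras) with an algebraic model for the Brauer space of Landweber exact even periodic ring spectra in the spirit of Baker--Richter--Szymik and Gepner--Lawson. A complementary approach suggested by the Fourier framework itself is to construct the target $(\Sph_{(p)}, n+1)$-orientation of $\Mod_{\LocMod_{E_n}}$ and its categorified Fourier transform at $M = \Sigma^{n+1} C_p$ directly by obstruction theory on the $\infty$-category of $E_n$-linear local systems on $B^{n+2} C_p$, and extract the Brauer computation as a consequence rather than as an input.
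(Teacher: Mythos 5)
This statement is labeled a \emph{conjecture} in the paper, and the paper offers no proof of it; the surrounding text (including \Cref{HL_Trunc_Or_Intro}) explicitly flags that the missing ingredient is the Brauer-level computation $\Map(C_p, \mathrm{br}(E_n)) \simeq B^{n+2}C_p$, which is precisely what your reduction arrives at. Your reduction is therefore correct and matches the paper's own view exactly: you apply \Cref{Pic_Conn_Frac} one categorical level up, supply the $(\FF_p,n+1)$-orientability of $\Mod_{\LocMod_{E_n}}$ via \Cref{Orientability_Cat}, observe that $\Omega\,\pic(\Mod_{\LocMod_{E_n}}) \simeq \pic(\LocMod_{E_n})$ combined with \cite[Proposition~8.14]{Null} handles everything in positive degrees, and isolate the vanishing of $\pi_0\Map_{\Sp^\cn}(C_p,\pic(\Mod_{\LocMod_{E_n}}))$ (equivalently, triviality of $C_p$-graded Brauer classes of $E_n$) as the sole remaining obstruction.

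But the proposal is not a proof of the conjecture. The two strategies you sketch for the Brauer computation --- pushing the chromatic nullstellensatz arguments of \cite{Null} one categorical level higher, or an obstruction-theoretic construction of the categorified Fourier transform at $M = \Sigma^{n+1}C_p$ --- are plausible directions, but neither is carried out, and neither is known to go through. The argument as written reduces the conjecture to an open Brauer-group vanishing, which is exactly why the authors state it as a conjecture rather than a theorem. Be careful to present this as a clean reduction plus an identified open problem, not as a proof.
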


At least in height $0$, this conjecture can be verified by an explicit computation:
\begin{prop} 
    The $\infty$-category 
    $\Mod_{\Mod_{E_0}}$ 
    is $(\Sph_{(p)},1)$-orientable (for every prime $p$). 
\end{prop}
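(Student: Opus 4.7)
The plan is to apply \Cref{connected_torsion_units} to show that $\Mod_{\Mod_{E_0}}$ is $(1+\frac{1}{2})$-connected at every prime $p$, which by \Cref{def:Conn_Frac} and \Cref{rem:n_trunc} yields the desired $(\Sph_{(p)},1)$-orientability. That corollary requires three verifications: $0$-connectedness, $(\FF_p,1)$-orientability, and an identification $\disc \simeq \Dual{\Sph_{(p)}}{1}$ in $\Sp^\cn$.

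The first two conditions are formal. Since $\pi_0 E_0=\cl{\QQ}$ is a field, $E_0$ is indecomposable, and two successive applications of \Cref{lem:0_connected_cat} give $0$-connectedness of $\Mod_{\Mod_{E_0}}$. Similarly, $\cl{\QQ}$ contains all $p$-th roots of unity, so $\Mod_{E_0}$ is $(\FF_p,0)$-orientable, and \Cref{Orientability_Cat} (with $\OR=\FF_p$, which is $0$-truncated) upgrades this to an $(\FF_p,1)$-orientation on $\Mod_{\Mod_{E_0}}$.

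The main computation is of $\disc = (\one^\times)^{\ptors{\pi}}_{(p)}$, where $\one^\times = \pic(\Mod_{E_0})$. Since $E_0$ is even-periodic with $\pi_0 E_0 = \cl{\QQ}$, we have $\pi_0 \pic(\Mod_{E_0}) = \Pic(E_0) = \ZZ/2$ (generated by $\Sigma E_0$), $\pi_1 = \pi_0 E_0^\times = \cl{\QQ}^\times$, and $\pi_i = \pi_{i-1} E_0$ for $i \ge 2$, each of which is either $0$ or $\cl{\QQ}$. These higher groups are uniquely divisible, hence carry no $p$-torsion, so $\disc$ is $1$-truncated with $\pi_0 = (\ZZ/2)_{(p)}$ and $\pi_1 = \mu_{p^\infty}(\cl{\QQ}) = \QQ_p/\ZZ_p$. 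For odd $p$ we have $\pi_0 = 0$, so $\disc \simeq \Sigma\QQ_p/\ZZ_p \simeq \Dual{\Sph_{(p)}}{1}$ (the second isomorphism since $\pi_1\Sph_{(p)}=0$); equivalently, the map $\tau_{\le 1}\Sph_{(p)} \to \ZZ_{(p)}$ is an equivalence and the claim reduces to the $(\ZZ_{(p)},1)$-orientability of \Cref{Ninga_Orientation}.

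The main obstacle is the case $p=2$, where both $\disc$ and $\Dual{\Sph_{(2)}}{1}$ have $\pi_0 = \ZZ/2$ and $\pi_1 = \QQ_2/\ZZ_2$, and one must match their Postnikov $k$-invariants. The group of such $k$-invariants for spectra with these homotopy groups is $\pi_0\Map_\Sp(H\ZZ/2, \Sigma^2 H\QQ_2/\ZZ_2) \cong \ZZ/2$. On the sphere side, this class is Brown--Comenetz dual to the Hopf map $\eta \in \pi_1\Sph_{(2)}$ and is nonzero. On the Picard side, the $k$-invariant of $\tau_{\le 1}\pic(\Mod_{E_0})$ is forced by the symmetric monoidal braiding of the generator $\Sigma E_0 \in \Pic(E_0)$: its self-swap isomorphism $\Sigma E_0 \otimes_{E_0} \Sigma E_0 \simeq E_0$ carries the universal sign $-1 \in \cl{\QQ}^\times$, which naturally factors through the $2$-torsion subgroup $\mu_2(\cl{\QQ}) = \ZZ/2 \subset \cl{\QQ}^\times$ and produces the unique nonzero class in the above group. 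Both $k$-invariants are therefore the unique nonzero class, hence agree, completing the verification of (iii) and the proof.
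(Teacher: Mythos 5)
Your proof is correct and takes essentially the same approach as the paper: you reduce to identifying the $p$-torsion $\pi$-finite units of $\pic(\Mod_{E_0})$ with $\Dual{\Sph_{(p)}}{1}$, compute the relevant homotopy groups, and for $p=2$ pin down the nontrivial $k$-invariant via the multiplication-by-$\eta$/braiding sign argument — the paper phrases this last step as the dimension formula $\eta\cdot[\Sigma E_0]=\dim(\Sigma E_0)=-1$ from \cite[Proposition 3.20]{carmeli2021chromatic}, which is precisely your self-swap sign. The only cosmetic differences are that you verify the $0$-connectedness hypothesis of \Cref{connected_torsion_units} explicitly (a slight improvement), and that the paper dispatches the odd primes directly from $(\ZZ_{(p)},1)$-orientability rather than routing them through the discrepancy computation.
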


\begin{proof}
    It suffices to show that $\Mod_{\Mod_{E_0}}$ 
    is $(\tau_{\le 1}\Sph_{(p)},1)$-orientable (see \Cref{rem:n_trunc}). By \Cref{LT_Zp_Or}, $\Mod_{E_0}$ is $(\ZZ_{(p)},0)$-orientatable, and hence by \Cref{Orientability_Cat}, $\Mod_{\Mod_{E_0}}$ is $(\ZZ_{(p)},1)$-orientable. 
    For $p\neq 2$, we have $\tau_{\le 1}\Sph_{(p)}\simeq \ZZ_{(p)}$, so the result holds for odd primes. 
    It remains to treat the case $p=2$. In fact, we shall show that $\Mod_{\Mod_{E_0}}$ is $(1+\frac{1}{2})$-connected at $p=2$, which implies $(\tau_{\le 1}\Sph_{(p)},1)$-orientability, by definition.
    
    By \Cref{orientation_discrepency} applied to $\Mod_{\Mod_{E_0}}$, it would suffice to show that 
    $
        \pic(E_0)^{\ptors{\pi}}_{(2)} \:\simeq\: 
        I_2^{(1)}\Sph_{(2)}.
    $ 
    Since $\pi_*E_0$ is a 2-periodic even graded field, $\pi_0\pic(E_0) \simeq \ZZ/2$, with the non-zero element given by the isomorphism class of $\Sigma E_0$ (see, e.g., \cite[Theorem 37]{bakerrichter2005invertible}).
    Since 
    $\Omega \pic(E_0) \simeq E_0^\times$
    we conclude that 
    \[
        \pi_t(\pic(E_0)) =  
        \left\{\begin{array}{lr}
            \ZZ/2\ZZ &  t=0 \\
            \cl{\QQ}^\times &  t = 1\\
            \cl{\QQ} & t>1 \text{ odd} \\
            0 & \text{otherwise}
        \end{array}\right.
    \] 
    Let $X$ denote the fiber of the map 
    $\pic(E_0)\to \pic(E_0)[\inv{2}]$. Since 
    $\pic(E_0)[\inv{2}]^{\ptors{\pi}}_{(2)} \simeq 0,$ 
    we obtain that 
    $\pic(E_0)^{\ptors{\pi}}_{(2)}\simeq X^{\ptors{\pi}}_{(2)},$ 
    and we shall compute the latter. 
    
    From the long exact sequence of homotopy groups associated with the fiber sequence 
    \[
        X \to \pic(E_0) \to \pic(E_0)[\inv{2}],
    \]
    we see that the homotopy groups of $X$ are as follows:
    \[
        \pi_t(X) =  
        \left\{\begin{array}{lr}
            \ZZ/2\ZZ &  t=0 \\
            \QQ_2/\ZZ_2 & t = 1\\
            0 & t\ge 2.
        \end{array}\right.
    \]
    In particular, by \Cref{Cp_Tor_Crit}, $X \simeq X^{\ptors{\pi}}_{(2)}$, so it remains to show that $X\simeq I_2^{(1)}\Sph_{(2)}$.

    Since $X$ is an extension of $\ZZ/2\ZZ$ by $\Sigma \QQ_2/\ZZ_2$, it is classified by a map $\ZZ/2\ZZ \to \Sigma^2 \QQ_2 /\ZZ_2$ in $\Sp$. The collection of homotopy classes of such maps is given by
    \[
    \pi_0\Map_{\Sp}(\ZZ/2\ZZ, \Sigma^{2}\QQ_2/\ZZ_2) \simeq \pi_0\Map_{\Mod_\ZZ}(\ZZ/2\ZZ\otimes_\Sph \ZZ, \Sigma^{2}\QQ_2/\ZZ_2) 
    \simeq (\pi_2(\ZZ/2\ZZ\otimes_\Sph \ZZ))^* \simeq \ZZ/2\ZZ,
    \]
    where the last isomorphism follows from the classical computation of the integral (dual) Steenrod algebra. 
    Consequently, there are only two possible extensions of $\ZZ/2\ZZ$ by $\Sigma \QQ_2/\ZZ_2$, and we want to show that $X$ is the non-split one. 
    
    These two extensions can be distinguished using the multiplication-by-$\eta$ map from $\pi_0$ to $\pi_1$, which is $0$ for the split extension and the inclusion $\ZZ/2\ZZ\into \QQ_2/\ZZ_2$ for the non-split one. 
    Since the morphism $X\to \pic(E_0)$ induces an isomorphism on $\pi_0$ and an injection on $\pi_1$, it suffices to show that multiplication by $\eta$ is non-zero already on $\pi_0\pic(E_0)$. This follows from the fact that, for the class $[\Sigma E_0]\in \pi_0(P)$, we have (e.g., by \cite[Proposition 3.20]{carmeli2021chromatic}), 
    \[
        \eta\cdot[\Sigma E_0] = 
        \dim(\Sigma E_0) = 
        -1 \qin 
        \pi_1(\pic(E_0))\simeq \cl{\QQ}^\times,
    \]
    which is non-trivial.
\end{proof}

\subsubsection{The discrepancy spectrum} 

Another application of the categorical connectedness result of \Cref{Sphere_or_E_n}, is an explicit description of the spectrum $\disc[E_n]$. 

\begin{thm} \label{E_n_tor_units_bc}
    For every $n\ge 0$ there is an isomorphism
    \[
        \disc[E_n] \:\simeq\:
        \tau_{\ge 0}(\Sigma^n I_{\QQ_p/\ZZ_p})
        \qin \Sp_{(p)}^\cn.
    \]
\end{thm}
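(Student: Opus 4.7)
The proof is essentially a direct application of the connectedness theory from \cref{ssec:connectedness} to the specific input about $E_n$ coming from \cref{Sphere_or_E_n}.

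The starting point is \cref{Sphere_or_E_n}, which asserts that $\LocMod_{E_n}$ is $(n+\tfrac{1}{2})$-connected at $p$. Note that $\LocMod_{E_n}$ lies in $\calg(\Prsad)$ since $\Sp_{K(n)}$ does. Applying the forward direction of \cref{connected_torsion_units} (the ``only if'' direction) to $\cC = \LocMod_{E_n}$ then yields an isomorphism
\[
\disc[E_n] \:\simeq\: \Dual{\Sph_{(p)}}{n} \qin \Sp^\cn.
\]

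It remains to identify $\Dual{\Sph_{(p)}}{n}$ with $\tau_{\ge 0}(\Sigma^n I_{\QQ_p/\ZZ_p})$. By definition,
\[
\Dual{\Sph_{(p)}}{n} \::=\: \tau_{\ge 0} \hom_\Sp(\Sph_{(p)}, \Sigma^n I_{\QQ_p/\ZZ_p}).
\]
Since the homotopy groups of $I_{\QQ_p/\ZZ_p}$ are $p$-local (being of the form $\hom_\Ab(\pi_{-*}\Sph, \QQ_p/\ZZ_p)$), the spectrum $\Sigma^n I_{\QQ_p/\ZZ_p}$ is itself $p$-local, and therefore mapping out of $\Sph_{(p)}$ agrees with mapping out of $\Sph$. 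Consequently,
\[
\hom_\Sp(\Sph_{(p)}, \Sigma^n I_{\QQ_p/\ZZ_p}) \:\simeq\: \Sigma^n I_{\QQ_p/\ZZ_p},
\]
and taking the connective cover gives the desired identification, completing the proof.

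No single step is an obstacle: the whole content of the theorem has been packaged into \cref{Sphere_or_E_n} (which itself rests on the input $\Map_{\Sp}(C_p, \pic(E_n)) \simeq B^{n+1}C_p$ from \cite{Null}) and \cref{connected_torsion_units} (whose proof in turn uses the Fourier transform machinery to convert $p$-th roots of unity in $E_n$ into information about all $\pi$-torsion units).
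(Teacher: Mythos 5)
Your proof is correct and takes essentially the same route as the paper: it cites \cref{Sphere_or_E_n} to get $(n+\frac{1}{2})$-connectedness of $\LocMod_{E_n}$, applies \cref{connected_torsion_units}, and unwinds the definition of $\Dual{\Sph_{(p)}}{n}$. The only difference is that you spell out the $p$-locality argument identifying $\Dual{\Sph_{(p)}}{n}$ with $\tau_{\ge 0}(\Sigma^n I_{\QQ_p/\ZZ_p})$, which the paper leaves as a parenthetical remark.
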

\begin{proof}
    The $\infty$-category $\LocMod_{E_n}$ is $(n+\frac{1}{2})$-connected by \Cref{Sphere_or_E_n}, so the claim follows by \Cref{connected_torsion_units}, keeping in mind that $\Dual{\Sph_{(p)}}{n} \simeq \tau_{\ge 0}(\Sigma^n I_{\QQ_p/\ZZ_p})$.
\end{proof}

The spectrum $\disc[E_n]$ turns out to be closely related to the \textit{discrepancy spectrum} of $E_n$. In \cite{stringorientation}, Ando, Hopkins, and Rezk defined the discrepency spectrum of an arbitrary $L_n$-local commutative ring spectrum $R$ as the fiber of the localization map $R^\times \to L_n R^\times$. 
For such a ring spectrum $R$, it essentially follows from \cite[Theorem 4.11]{stringorientation} that its discrepency spectrum agrees with its $p$-torsion $\pi$-finite units $\disc[R]$, after taking the connective cover and $p$-localizing. 
As explained in \cite{stringorientation} (see the discussion below \cite[Lemma 4.12]{stringorientation}), if $R$ is $L_n$-local then $L_nR^\times\simeq L_n^fR^\times$, so one can use $L_n^fR^\times$ instead of $L_nR^\times$ in the definition of the discrepancy spectrum. This variant has the advantage of providing a well-behaved notion of a discrepancy spectrum defined for all \emph{$L_n^f$-local} commutative ring spectra. 
In particular, for this definition, the $p$-localization of the discrepancy spectrum of $R\in \calg(L_n^f\Sp)$ is given by $C_n^fR^\times$.

Our goal in this subsection is to review the relation between the discrepancy spectrum and the $p$-torsion $\pi$-finite units of $L_n$-local commutative ring spectra from $\cite{stringorientation}$, and generalize it to the context of $L_n^f$-local commutative ring spectra. In fact, we will even work in the wider generality of almost $L_n^f$-local commutative ring spectra, in the following sense:
\begin{defn}\label{Almost_Lnf_Local}
    A $p$-local spectrum $X$ is \tdef{almost $L_n^f$-local}, if $\hom(Z,X)$ is bounded above for some (and hence all) finite spectra $Z$ of type $n+1$.
\end{defn}

Equivalently, $X$ is almost $L_n^f$-local if $X\otimes Z$ is bounded above for some (and hence all) finite spectra $Z$ of type $n+1$. Indeed, $\hom(Z,X) \simeq \DD Z \otimes X$ and $\DD Z$ is of type $n+1$ if and only if $Z$ is. Note also that the collection of almost $L_n^f$-local spectra itself forms a thick subcategory of $\Sp_{(p)}$.

\begin{rem}
    If we replace in \Cref{Almost_Lnf_Local} `bounded above' with `$\pi$-finite', we arrive at the stronger notion of \textit{fp-type $n$} in the sense of Mahowald and Rezk \cite{MR1999bcduality}. 
\end{rem}

\begin{example}
    \label{ex:lnf_almost_lnf}
    Every $L_n^f$-local (and in particular $L_n$-local) spectrum is almost $L_n^f$-local. Indeed, a $p$-local spectrum $X$ is $L_n^f$-local if and only if $\hom(Z,X)= 0$ for some finite spectrum $Z$ of type $n+1$.
\end{example}

\begin{example} 
    \label{ex:bounded_above_almost_lnf}
    Every $p$-local bounded above spectrum is almost $L_n^f$-local. Indeed, if $X$ is a bounded above spectrum and $Z$ is \emph{any} (in particular, type $n+1$) finite spectrum, then $Z\otimes X$ is also bounded above.    
\end{example}    

The example above implies that the almost $L_n^f$-locality of a spectrum can be checked after passing to an arbitrary connected cover of it. 
In fact, one can verify this property using only an arbitrary connected cover of its \emph{underlying space}. 

\begin{prop}\label{Almost_Lnf_Local_Eventual}
    Let $X,Y \in \Sp_{(p)}$ be such that 
    \[
        \Omega^{\infty + d} X \simeq \Omega^{\infty+d} Y \qin \Spc_*
    \] 
    for some (and hence all sufficiently large) $d \ge 0$. Then, $X$ is almost $L_n^f$-local if and only if $Y$ is.
\end{prop}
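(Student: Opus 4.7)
\textit{Plan.} The proof proceeds by two reductions followed by the core step, which establishes that almost $L_n^f$-locality of a connective $p$-local spectrum is determined by its underlying pointed space.

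First, since $\Omega^d(X\otimes Z)\simeq (\Omega^dX)\otimes Z$ for any finite $Z$, and bounded-aboveness is shift-invariant, $X$ is almost $L_n^f$-local iff $\Omega^dX$ is. Replacing $X,Y$ by $\Omega^dX,\Omega^dY$, we reduce to the case $d=0$, where the hypothesis becomes $\Omega^\infty X\simeq\Omega^\infty Y$ in $\Spc_*$.

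Second, the fibre sequence $\tau_{\ge 0}X\to X\to \tau_{<0}X$ has $\tau_{<0}X$ bounded above (with $\pi_k=0$ for $k\ge 0$); hence $\tau_{<0}X\otimes Z$ is bounded above for any finite $Z$, and consequently $X$ is almost $L_n^f$-local iff $\tau_{\ge 0}X$ is. Since $\Omega^\infty$ factors through the connective cover, the hypothesis restricts to $\Omega^\infty\tau_{\ge 0}X\simeq\Omega^\infty\tau_{\ge 0}Y$, so we may assume $X,Y$ are themselves connective.

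\textit{Core step.} For connective $p$-local $X,Y$ with $\Omega^\infty X\simeq\Omega^\infty Y$ in $\Spc_*$, one obtains an isomorphism $\pi_mX\cong\pi_mY$ of abelian groups for each $m\ge 0$. Fix a finite type-$(n+1)$ spectrum $Z$. A cellular decomposition of $Z$ presents $Z\otimes X$ as a finite iterated cofibre of shifted copies of $X$, so $\pi_\ast(Z\otimes X)$ admits a finite filtration whose subquotients are subquotients of shifts of $\pi_\ast X$. In particular, the eventual vanishing of $\pi_\ast(Z\otimes X)$ is governed by the eventual vanishing behaviour of $\pi_\ast X$ together with the cell structure of $Z$.

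\textit{Main obstacle.} The subtle point is that the cellular attaching maps of $Z$ act on $\pi_\ast X$ through its $\pi_\ast\Sph$-module structure, which is not manifestly preserved by a mere pointed-space equivalence of underlying spaces. The resolution is to exploit the freedom to take $d$ \emph{sufficiently large}: then $\Omega^{\infty+d}X$ is a highly looped space whose equivalence class as a pointed space, together with the automatic iterated-loop structure, controls enough of the infinite-loop (and hence spectrum) structure to detect the vanishing behaviour of $\pi_\ast(Z\otimes X)$ from that of $\pi_\ast(Z\otimes Y)$. Equivalently, one invokes a K\"unneth-type spectral sequence computing $\pi_\ast(Z\otimes X)$ whose $E_2$-page is bounded and whose asymptotic vanishing is extracted from the graded abelian-group data transported across the hypothesis, yielding almost $L_n^f$-locality for $Y$.
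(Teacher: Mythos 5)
Your two reductions are fine, but the "core step" does not actually close, and you recognize this yourself in the \emph{Main obstacle} paragraph without resolving it. Knowing that $\pi_m X\cong\pi_m Y$ as abelian groups tells you nothing about $\pi_*(Z\otimes X)$ versus $\pi_*(Z\otimes Y)$: the cellular filtration of $Z\otimes X$ has differentials given by the attaching maps of $Z$ acting through the $\pi_*\Sph$-module structure on $\pi_*X$, and that module structure is not transported by a pointed-space equivalence of $\Omega^\infty X$ and $\Omega^\infty Y$ (nor by such an equivalence of arbitrarily high connected covers). Appealing to a K\"unneth-type spectral sequence does not help for the same reason — its differentials are precisely the data you do not have — so "exploit the freedom to take $d$ sufficiently large" remains a gesture, not an argument.

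The idea the proof actually turns on is different in kind, and is worth recording because it sidesteps the module-structure issue entirely. Work with $\hom(Z,W)$ rather than $Z\otimes W$ (these give equivalent characterizations since $\hom(Z,W)\simeq \mathbb{D}Z\otimes W$ and duality preserves type). Choose the finite type-$(n+1)$ spectrum $Z$ to be the reduced suspension spectrum $Z=\overline{\Sph}[A]$ of a finite pointed space $A$ — possible since any finite spectrum becomes a suspension spectrum after enough suspensions, and suspension preserves type. Then boundedness above of $\hom(Z,W)$ is equivalent to $\Omega^{\infty+e}\hom(Z,W)\simeq\pt$ for some $e\in\NN$, and the adjunction $\overline{\Sph}[-]\dashv\Omega^\infty$ gives
\[
\Omega^{\infty+e}\hom(Z,W)\simeq\Map_{\Spc_*}\bigl(A,\Omega^{\infty+e}W\bigr).
\]
The right-hand side depends only on the pointed space $\Omega^{\infty+e}W$. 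Once $e\ge d$, the hypothesis $\Omega^{\infty+d}X\simeq\Omega^{\infty+d}Y$ (looped $e-d$ times) gives $\Omega^{\infty+e}X\simeq\Omega^{\infty+e}Y$, hence $\Map_{\Spc_*}(A,\Omega^{\infty+e}X)\simeq\Map_{\Spc_*}(A,\Omega^{\infty+e}Y)$, and the contractibility transfers. This is why one passes to mapping spaces out of a finite complex rather than attempting to control the cell filtration of a smash product; no control over the attaching maps' action on $\pi_*X$ is needed.
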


\begin{proof}
    Let $A\in \Spc_*$ be a finite pointed space whose reduced suspension spectrum $Z:= \cl{\Sph}[A]$ is of type $n+1$. Then, by definition, a spectrum $W$ is almost $L_n^f$-local if and only if $\hom(Z,W)$ is bounded above. This is the case if and only if there exists $e\in \NN$ such that 
    \[
            \Omega^{\infty + e}\hom(Z,W) \simeq \pt \quad \tag{$*$}
    \]
    By applying the functor $\Omega$ to the above isomorphism, if $(*)$ holds for some $e$ then it holds for any larger value of $e$ as well.
    Unwinding the definitions, we get
    \[
        \Omega^{\infty + e}\hom(Z,W) \simeq
        \Omega^{e}\Map_\Sp(Z,W) \simeq 
        \Omega^{e}\Map_{\Spc_*}(A,\Omega^\infty W) \simeq 
        \Map_{\Spc_*}(A,\Omega^{\infty+e} W).
    \]

    Now, if $\Omega^{\infty + d} X \simeq \Omega^{\infty+d} Y$ and $X$ is almost $L_n^f$-local, we may choose $e$ satisfying $(*)$ for $X$ such that $e \ge d$. But then 
    \[
    \Map_{\Spc_*}(A,\Omega^{\infty + e}Y)\simeq \Map_{\Spc_*}(A,\Omega^{\infty+e} X)\simeq \pt
    \] 
    and we deduce that $Y$ is almost $L_n^f$-local as well. By symmetry, if $Y$ is almost $L_n^f$-local then so is $X$ and therefore they are almost $L_n^f$-local together.  
\end{proof}
Given $R \in \calg(\Sp_{(p)})$, in addition to the  ``additive'' underlying $p$-local spectrum $R\in \Sp_{(p)}$, we can form the ``multiplicative'' $p$-localized  spectrum of units $R^\times_{(p)}\in \Sp_{(p)}$. Although these are very different $p$-local spectra in general, \Cref{Almost_Lnf_Local_Eventual} implies that they are almost $L_n^f$-local together. 
\begin{cor}\label{Almost_Lnf_Local_Units}
    A $p$-local commutative ring spectrum $R$ is almost $L_n^f$-local if and only if the $p$-localization of its spectrum of units $R^\times$ is almost $L_n^f$-local.   
\end{cor}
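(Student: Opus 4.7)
The plan is to reduce the corollary to \cref{Almost_Lnf_Local_Eventual} by exhibiting a pointed homotopy equivalence $\Omega^{\infty+1}R \simeq \Omega^{\infty+1}R^\times_{(p)}$. Both $R$ and $R^\times_{(p)}$ are $p$-local (the former by hypothesis, the latter by construction), so once such an equivalence is produced for $d = 1$, the preceding proposition immediately yields the conclusion.

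First, I would reduce to the connective case: since $\Omega^{\infty+1}R$ depends only on $\tau_{\ge 0}R$, replacing $R$ by its connective cover does not change the property of almost $L_n^f$-locality (by \cref{Almost_Lnf_Local_Eventual} applied to $R$ and $\tau_{\ge 0}R$, or equivalently by the fact that almost $L_n^f$-local spectra form a thick subcategory and \cref{ex:bounded_above_almost_lnf}). With $R$ connective, the inclusion of the invertible components identifies the pointed space $\Omega^\infty R^\times$, based at $1$, with the collection of $\pi_0$-invertible components of $\Omega^\infty R$. Taking the base-point component on each side, the inclusion $(\Omega^\infty R^\times)_1 \hookrightarrow (\Omega^\infty R)_1$ is a map of simply-connected spaces inducing an isomorphism on all higher homotopy groups (because $\pi_i R^\times \simeq \pi_i R$ for $i \ge 1$), hence an equivalence. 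Looping once and using the additive translation $(\Omega^\infty R)_1 \simeq (\Omega^\infty R)_0$ gives a pointed equivalence $\Omega^{\infty+1}R^\times \simeq \Omega^{\infty+1}R$.

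Second, I would handle the discrepancy between $R^\times$ and $R^\times_{(p)}$. The $p$-localization map $R^\times \to R^\times_{(p)}$ is an isomorphism on $\pi_i$ for $i \ge 1$, since $\pi_i R^\times \simeq \pi_i R$ is already $p$-local by assumption on $R$; only $\pi_0$ can change. Hence the induced map $\Omega^{\infty+1}R^\times \to \Omega^{\infty+1}R^\times_{(p)}$ is a weak homotopy equivalence of pointed spaces. Composing the two equivalences yields $\Omega^{\infty+1}R \simeq \Omega^{\infty+1}R^\times_{(p)}$, and \cref{Almost_Lnf_Local_Eventual} concludes the proof.

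There is no serious obstacle here; the only subtle point is the passage between the additive and multiplicative infinite loop space structures on the identity component, but after looping once this becomes a purely homotopy-theoretic identification of simply-connected spaces with matching homotopy groups, so no exponential or logarithm is required.
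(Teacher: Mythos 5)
Your proof is correct and takes the same route as the paper, which simply asserts the chain of isomorphisms $\Omega^{\infty+1}R \simeq \Omega^{\infty+1}R^\times \simeq \Omega^{\infty+1}(R^\times_{(p)})$ and invokes \cref{Almost_Lnf_Local_Eventual}; you fill in exactly these two identifications. One small slip: the basepoint components $(\Omega^\infty R^\times)_1$ and $(\Omega^\infty R)_1$ are connected but not in general simply-connected—harmless here, since the inclusion of the unit component of $\Omega^\infty R^\times$ into $\Omega^\infty R$ is literally an identification of spaces and no Whitehead-type argument is required.
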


\begin{proof}
    This follows from \Cref{Almost_Lnf_Local_Eventual} and the fact that 
    \[
        \Omega^{\infty +1} R \simeq 
        \Omega^{\infty +1} R^\times \simeq
        \Omega^{\infty +1}(R^\times_{(p)}). \qedhere
    \]
\end{proof}

Note that, in particular, $R^\times_{(p)}$ is almost $L_n^f$-local for every $R\in \calg(\Sp_n)$. We thus obtain the following: 

\begin{example}\label{ex:E_n_units_almost_lnf}
    The spectrum $(E_n^\times)_{(p)}$ is almost $L_n^f$-local.
\end{example}

We proceed by analysing the behavior of the functor $C_n^f$ on arbitrary almost $L_n^f$-local spectra. 

\begin{prop}\label{Almost_Lnf_Local_Cnf}
    For an almost $L_n^f$-local spectrum $X$, the spectrum $C_n^fX$ is a filtered colimit of bounded above $p$-torsion spectra.
\end{prop}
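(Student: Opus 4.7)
The plan is to reduce the statement to a property of $C_n^f \Sph_{(p)}$ itself, using that $L_n^f$ (and hence $C_n^f$) is smashing, and then exhibit $C_n^f \Sph_{(p)}$ as a filtered colimit of finite spectra of type $\geq n+1$ using the thick subcategory theorem.

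Concretely, since $C_n^f$ preserves colimits, we have $C_n^f X \simeq X \otimes C_n^f \Sph_{(p)}$. Let $\mathcal{A}_n \subseteq \Sp_{(p)}$ denote the full subcategory of $L_n^f$-acyclic spectra; this is the essential image of the smashing colocalization $C_n^f$, so it is a localizing subcategory that contains $C_n^f \Sph_{(p)}$. The compact objects of $\mathcal{A}_n$ (which coincide with the compact objects of $\Sp_{(p)}$ that land in $\mathcal{A}_n$, since $\mathcal{A}_n$ is closed under filtered colimits in $\Sp_{(p)}$) are exactly the finite $p$-local spectra of type $\geq n+1$ by the Hopkins--Smith thick subcategory theorem. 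These generate $\mathcal{A}_n$ by the same theorem, so $\mathcal{A}_n$ is compactly generated, and hence every object is a filtered colimit of compact ones. In particular we can write $C_n^f \Sph_{(p)} \simeq \colim_\alpha F_\alpha$ for some filtered diagram of finite $p$-local spectra $F_\alpha$ of type $\geq n+1$. Consequently
\[
    C_n^f X \:\simeq\: X \otimes \colim_\alpha F_\alpha \:\simeq\: \colim_\alpha (X \otimes F_\alpha).
\]

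It remains to observe that each $X \otimes F_\alpha$ is bounded above and $p$-torsion. Boundedness follows from almost $L_n^f$-locality of $X$: pick any finite spectrum $Z$ of type $n+1$, so $X \otimes Z$ is bounded above by assumption, and since each $F_\alpha$ is of type $\geq n+1$ it lies in the thick subcategory generated by $Z$, so $X \otimes F_\alpha$ is built from $X \otimes Z$ by finitely many shifts, cofibers and retracts, all of which preserve boundedness above. For $p$-torsion, note that $F_\alpha$ is a finite $p$-local spectrum with $F_\alpha \otimes \QQ \simeq 0$ (since $F_\alpha$ is $T(0)$-acyclic), so $F_\alpha[p^{-1}] \simeq F_\alpha \otimes \QQ = 0$, hence $F_\alpha$ is $p$-torsion and so is $X \otimes F_\alpha$.

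The only step requiring a mild amount of care is the compact generation of $\mathcal{A}_n$, but this is standard and a direct consequence of the smashing nature of $L_n^f$ together with the thick subcategory theorem. No genuinely hard obstacle arises; the argument is essentially an unwinding of definitions once one recognises $C_n^f \Sph_{(p)}$ as a filtered colimit of finite type $\geq n+1$ spectra.
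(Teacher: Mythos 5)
Your proof is correct, and at bottom it rests on the same mechanism as the paper's: present $C_n^f X$ as a filtered colimit of $X \otimes F_\alpha$ with $F_\alpha$ finite $p$-local of type $\geq n+1$, then use almost $L_n^f$-locality to see each term is bounded above (and finiteness of the $F_\alpha$ to see each term is $p$-torsion). The difference is how you obtain this presentation. The paper quotes Hovey's concrete formula $C_n^f X \simeq \colim_\alpha \hom(Z_\alpha, X)$, with $Z_\alpha$ a cofiltered system of finite type $n+1$ spectra (generalized Moore spectra), which is the same statement after Spanier--Whitehead duality since $\hom(Z_\alpha,X) \simeq \DD Z_\alpha \otimes X$. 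You instead derive the presentation abstractly: $\mathcal{A}_n$, being the localizing subcategory generated by a single compact type $n+1$ object, is compactly generated with compact objects exactly $\cC_{n+1}^\omega$ (by Hopkins--Smith plus the usual Neeman--Thomason argument), so $C_n^f\Sph_{(p)} \in \mathcal{A}_n \simeq \Ind(\cC_{n+1}^\omega)$ is automatically a filtered colimit of finite type $\geq n+1$ spectra, and one then smashes with $X$. Your route is a bit longer but self-contained; the paper's is shorter by outsourcing the filtered presentation to a citation. The two approaches buy the same thing and differ only in packaging.
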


\begin{proof}
    This follows from the standard fact (see \cite[Proposition 7.10]{hovey1999morava}) that $C_n^fX$ can be written as
    \(
    \colim \hom(Z_\alpha, X),      
    \)
    where the $Z_\alpha$-s are a cofiltered diagram of finite spectra of type $n+1$. Since $X$ is almost $L_n^f$-local and the $Z_\alpha$-s are finite  $p$-torsion spectra, each $\hom(Z_\alpha, X) \simeq \hom(Z_\alpha,C_n^fX)$ is bounded above and $p$-torsion.
\end{proof}


The functor $C_n^f$ is compatible with connective covers in the following sense:

\begin{lem}\label{lem:truncating}
    For $X\in \Sp$, the canonical map 
    \[ 
        \tau_{\ge 0} (C_n^f \tau_{\ge 0}X) \to 
        \tau_{\ge 0} (C_n^f X)
        \qin \Sp_{(p)}^\cn
    \] 
    is an isomorphism. 
\end{lem}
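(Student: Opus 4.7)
The plan is to reduce the claim to showing that $\tau_{\ge 0}(C_n^f \tau_{\le -1}X) = 0$. Since $C_n^f$ is smashing and hence exact, applying it to the fiber sequence $\tau_{\ge 0}X \to X \to \tau_{\le -1}X$ yields a fiber sequence, and applying the right adjoint $\tau_{\ge 0}$ gives a fiber sequence
\[
\tau_{\ge 0}(C_n^f \tau_{\ge 0}X) \to \tau_{\ge 0}(C_n^f X) \to \tau_{\ge 0}(C_n^f \tau_{\le -1}X).
\]
Hence the map in question is an isomorphism if and only if the rightmost term vanishes; equivalently, writing $Y := \tau_{\le -1}X \in \Sp_{\le -1}$, one must show that $C_n^f Y \in \Sp_{\le -1}$.

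Next, I would invoke the observation recalled at the end of \ref{sssec:telescopiclocalization} (and used repeatedly in the text): since $Y$ is bounded above, it is $T(m)$-acyclic for every $1 \le m \le n$, because $Y \otimes T(m)$ is a sequential colimit of desuspensions of the bounded-above spectrum $Y \otimes F(m)$, which necessarily vanishes in the limit. Consequently $L_n^f Y \simeq L_0^f Y \simeq Y \otimes \QQ$, and $C_n^f Y$ fits into the fiber sequence
\[
C_n^f Y \to Y_{(p)} \to Y \otimes \QQ.
\]

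Finally, I would conclude by computing homotopy groups. Both $p$-localization and rationalization act on homotopy by tensoring with $\ZZ_{(p)}$ and $\QQ$, respectively, so $Y_{(p)}$ and $Y \otimes \QQ$ both lie in $\Sp_{\le -1}$. The long exact sequence
\[
\pi_{k+1}(Y \otimes \QQ) \to \pi_k(C_n^f Y) \to \pi_k(Y_{(p)})
\]
then shows $\pi_k(C_n^f Y) = 0$ for all $k \ge 1$ for free, while for $k = 0$ one uses additionally that $\pi_1(Y \otimes \QQ) = \pi_1(Y) \otimes \QQ = 0$ (since $Y$ is $(-1)$-truncated). Thus $C_n^f Y \in \Sp_{\le -1}$, finishing the proof. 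There is no substantial obstacle here; the argument is a direct exercise in the exactness of $C_n^f$ and the behavior of $L_n^f$ on bounded-above spectra.
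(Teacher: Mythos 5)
Your proof is correct and follows essentially the same approach as the paper: reduce to showing $\tau_{\ge 0}(C_n^f Y) = 0$ for coconnective $Y$, then use that $L_n^f Y \simeq Y \otimes \QQ$ for bounded-above $Y$ together with the defining fiber sequence $C_n^f Y \to Y_{(p)} \to L_n^f Y$. The only cosmetic difference is that the paper concludes by noting $\tau_{\ge 0}$ preserves fibers while both $\tau_{\ge 0}Y_{(p)}$ and $\tau_{\ge 0}L_n^f Y$ vanish, whereas you run the long exact sequence of homotopy groups; these are equivalent. (One small remark: the ``if and only if'' in your reduction is an overstatement — the forward implication from a fiber sequence in $\Sp^\cn$ is not automatic — but you only use the ``if'' direction, which is the correct one.)
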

\begin{proof}
    Since $C_n^f$ is an exact functor, we have a cofiber sequence 
    \[ 
        C_n^f \tau_{\ge 0}X \too 
        C_n^f X \too
        C_n^f \tau_{\leq -1} X
        \qin \Sp_{(p)}.
    \]
    Applying the limit preserving functor 
    $\tau_{\ge 0}\colon \Sp_{(p)} \to \Sp_{(p)}^\cn$ 
    we obtain a fiber sequence
    \[ 
        \tau_{\ge0}C_n^f \tau_{\ge 0}X \too 
        \tau_{\ge0}C_n^f X \too
        \tau_{\ge0}C_n^f \tau_{\leq -1} X
        \qin \Sp_{(p)}^\cn.
    \]
    Thus to show that the first map is an isomorphism, it suffices to show that
    $\tau_{\ge 0}C_n^f \tau_{\leq -1} X \simeq 0$. Equivalently, for $Y\in \Sp$, we wish to show that if $\tau_{\ge 0}Y=0$ then $\tau_{\ge 0}C_n^f Y = 0$. First, such a $Y$ is bounded above, so $L_n^fY\simeq \QQ \otimes Y$ (as explained in \cref{ssec:chromaticpreliminaries}). 
    Consequently, 
    \[
        \tau_{\ge 0}L_n^f Y \simeq 
        \tau_{\ge 0}(\QQ \otimes Y)\simeq 
        \QQ \otimes \tau_{\ge 0}Y \simeq 0, 
    \]
    and similarly
    \[
        \tau_{\ge 0}Y_{(p)}\simeq 
        (\tau_{\ge 0}Y)_{(p)} \simeq 0. 
    \]
    Finally, $C_n^fY$ is the fiber of a map $Y_{(p)}\to L_n^fY$, and since the functor $\tau_{\ge 0}\colon\Sp\to \Sp^\cn$ preserves fibers, we get $\tau_{\ge 0} C_n^fY \simeq 0$.  \end{proof}
    
    Recall that $\disc[R]$ is defined as the $p$-local $\pi$-torsion part of $R^\times$. The relation between $\disc[R]$ and the discrepancy spectrum of $R$ is deduced from the following general fact:
    \begin{prop}\label{Almost_Lnf_Local_Connective_Cover}
    Let $X$ be an almost $L_n^f$-local spectrum. Then 
    \[
        \tau_{\ge 0}(C_n^fX) \simeq 
        (\tau_{\ge 0}X)^{\ptors{\pi}}_{(p)}
        \qin \Sp_{(p)}^\cn.
    \]
\end{prop}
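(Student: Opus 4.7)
The plan is to identify $\tau_{\ge 0}(C_n^fX)$ with $X^{\ptors{\pi}}_{(p)}$ by verifying its universal property. By \cref{lem:truncating}, and since $C_n^f$ factors through $p$-localization (being the fiber of $X_{(p)} \to L_n^fX$), I may reduce to the case $X \in \Sp_{(p)}^\cn$. Then \cref{Almost_Lnf_Local_Cnf} expresses $C_n^fX$ as a filtered colimit of bounded-above $p$-torsion spectra, so by \cref{Cp_Tor_Crit} and the fact that $\tau_{\ge 0}$ preserves filtered colimits, $\tau_{\ge 0}(C_n^fX) \in \Sp_{(p)}^{\ptors{\pi}}$. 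The canonical augmentation $C_n^fX \to X$ then induces a map $\tau_{\ge 0}(C_n^fX) \to X$ in $\Sp^\cn$, which by the universal property of the right adjoint $(-)^{\ptors{\pi}}_{(p)}$ factors as $\tau_{\ge 0}(C_n^fX) \to X^{\ptors{\pi}}_{(p)} \to X$.

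To show this first map is an isomorphism, it suffices to verify that $\tau_{\ge 0}(C_n^fX)$ itself satisfies the same universal property: for every $Y \in \Sp_{(p)}^{\ptors{\pi}}$ the induced map
\[
    \Map_{\Sp^\cn}(Y, \tau_{\ge 0}(C_n^fX)) \too \Map_{\Sp^\cn}(Y, X)
\]
should be an isomorphism. The adjunction between the inclusion $\Sp^\cn \hookrightarrow \Sp$ and $\tau_{\ge 0}$ identifies the source with $\Map_{\Sp}(Y, C_n^fX)$, so using the cofiber sequence $C_n^fX \to X \to L_n^fX$ the claim reduces to the vanishing $\hom_\Sp(Y, L_n^fX) = 0$.

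The main obstacle is thus the following key vanishing: $L_n^fY = 0$ for every $Y \in \Sp_{(p)}^{\ptors{\pi}}$, which would imply the required assertion since $L_n^fX$ is $L_n^f$-local. The argument proceeds in three steps: (i) $L_n^f$ is smashing and hence preserves filtered colimits, reducing to $Y \in \Sp_{(p)}^{\pfin{\pi}}$; (ii) every such $Y$ lies in the thick subcategory of $\Sp$ generated by $H\FF_p$, via its finite Postnikov tower with fibers $\Sigma^i H A_i$ for finite abelian $p$-groups $A_i$; and (iii) one verifies $L_n^f H\FF_p = 0$ by checking acyclicity with respect to each summand of $T(0) \oplus T(1) \oplus \cdots \oplus T(n)$ --- the rational factor $T(0) = \QQ$ is handled by the $p$-torsion of $H\FF_p$, while for $1 \le m \le n$ the telescope $T(m) \otimes H\FF_p = F(m) \otimes H\FF_p[v_m^{-1}]$ vanishes because $F(m) \otimes H\FF_p$ is bounded with finite homotopy groups and $v_m$ has strictly positive degree.
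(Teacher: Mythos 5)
Your proof is correct and follows essentially the same architecture as the paper's: reduce to connective $X$ via \cref{lem:truncating}, deduce $\tau_{\ge 0}(C_n^fX) \in \Sp^{\ptors{\pi}}_{(p)}$ from \cref{Almost_Lnf_Local_Cnf} and \cref{Cp_Tor_Crit}, and then verify the universal property of the right adjoint $(-)^{\ptors{\pi}}_{(p)}$ by showing that every $\pi$-torsion spectrum is $L_n^f$-acyclic. The only local divergence is in that last acyclicity step, where the paper invokes the standing fact recorded in \S\ref{sssec:telescopiclocalization} that $L_n^fY\simeq Y\otimes\QQ$ for bounded-above $Y$ (which vanishes once $Y$ is $p$-torsion), while you instead reduce through the thick subcategory generated by $H\FF_p$ and compute $T(m)\otimes H\FF_p = 0$ directly; both ultimately hinge on the same observation that the positive degree of the $v_m$-self map kills bounded-above spectra in the telescope, so the mathematical content is identical.
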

\begin{proof}
    First, we can assume without loss of generality that $X$ is connective, by replacing it with $\tau_{\ge 0}X$. Indeed, by \Cref{ex:bounded_above_almost_lnf}, the spectrum $\tau_{\ge 0}X$ is also almost $L_n^f$-local, and by 
    \Cref{lem:truncating}, we have 
    $\tau_{\ge 0} (C_n^f X) \simeq \tau_{\ge 0} (C_n^f \tau_{\ge 0}X)$. 
    Now, by the definition of $(-)_{(p)}^{\ptors{\pi}}$, it would suffice to show that: 
    \begin{enumerate}
        \item $\tau_{\ge 0}C_n^fX \in \Sp_{(p)}^{\ptors{\pi}}$.
        \item For every $Z\in \Sp_{(p)}^{\ptors{\pi}}$, the map $\tau_{\ge 0}C_n^fX \to X$ induces an isomorphism 
        \[
            \Map(Z,\tau_{\ge 0}C_n^fX)\simeq
            \Map(Z,X). 
        \]
    \end{enumerate}
    
    For $(1)$, by \Cref{Almost_Lnf_Local_Cnf}, there is a filtered colimit presentation $C_n^f X \simeq \colim X_\alpha$ such that each $X_\alpha$ is bounded above and $p$-torsion. Since the formation of connective covers preserves filtered colimits, we obtain that
    \[
        \tau_{\ge 0} (C_n^f X) \simeq \colim \tau_{\ge 0}X_\alpha.
    \]
    Each $\tau_{\ge 0}X_\alpha$ is connective, bounded above and $p$-torsion. By \Cref{Cp_Tor_Crit}, $\tau_{\ge 0}X_\alpha$ belongs to $\Sp_{(p)}^{\ptors{\pi}}$, and hence so does $\tau_{\ge 0} (C_n^f X)$.
    
    For $(2)$, by definition, every $Z \in \Sp_{(p)}^{\ptors{\pi}}$ is a filtered colimit of bounded above $p$-torsion spectra. A bounded above spectrum $Y$ satisfies 
    $L_n^f Y \simeq Y\otimes \QQ$ and if $Y$ is also $p$-torsion, then it is $L_n^f$-acyclic. Since $L_n^f$-acyclic spectra are closed under colimits, we deduce that $Z$ itself is $L_n^f$-acyclic, hence $\Sp_{(p)}^{\ptors{\pi}} \subseteq C_n^f(\Sp_{(p)})$. Consequently,
    \[
        \Map(Z,X) \simeq 
        \Map(Z,C_n^f X) \simeq
        \Map(Z,\tau_{\ge 0} (C_n^f X)),
    \]
    where the composite of these isomorphisms is given by post-composing with the canonical map $\tau_{\ge 0}C_n^fX \to X$. 
\end{proof}

Putting everything together we get the main result of this subsection.

\begin{thm}[cf.~{\cite[Theorem 4.11]{stringorientation}}] \label{discrepency_fiber}
    For all $n \ge 0$ and for every almost $L_n^f$-local commutative ring spectrum $R$, we have 
    \[
        \disc[R]\simeq 
        \tau_{\ge 0}C_n^f(R^\times)
        \qin \Sp_{(p)}^\cn.
    \]
\end{thm}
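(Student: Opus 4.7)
The plan is to derive this theorem as a direct corollary of Proposition \ref{Almost_Lnf_Local_Connective_Cover}, which already packages essentially all of the technical content. The target identity reads $\disc[R] \simeq \tau_{\ge 0}C_n^f(R^\times)$, and since both sides involve only $p$-local data (the right-hand side through the $p$-localization built into the definition of $C_n^f$, the left-hand side through the definition of $(-)^{\ptors{\pi}}_{(p)}$), it is natural to work with $X := R^\times_{(p)}$, which is connective, and then recover the stated formulation at the end.

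The first step is to verify the hypothesis of Proposition \ref{Almost_Lnf_Local_Connective_Cover} for $X = R^\times_{(p)}$. By Corollary \ref{Almost_Lnf_Local_Units}, the assumption that $R$ is almost $L_n^f$-local is equivalent to $R^\times_{(p)}$ being almost $L_n^f$-local, so this step is immediate. Since $R^\times_{(p)}$ is connective, we have $\tau_{\ge 0}(R^\times_{(p)}) = R^\times_{(p)}$, and Proposition \ref{Almost_Lnf_Local_Connective_Cover} gives
\[
\tau_{\ge 0}C_n^f(R^\times_{(p)}) \;\simeq\; (R^\times_{(p)})^{\ptors{\pi}}_{(p)}.
\]

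It then remains to match each side of this isomorphism to the corresponding side of the claim. For the left-hand side, the fiber sequence $C_n^fY \to Y_{(p)} \to L_n^f Y$ depends on $Y$ only through $Y_{(p)}$ (since both $(-)_{(p)}$ and $L_n^f$ factor through $p$-localization), so $C_n^f(R^\times) \simeq C_n^f(R^\times_{(p)})$. For the right-hand side, the functor $(-)^{\ptors{\pi}}_{(p)}\colon \Sp^\cn \to \Sp_{(p)}^{\ptors{\pi}}$ is right adjoint to a fully faithful inclusion that factors through $\Sp_{(p)}^\cn \subseteq \Sp^\cn$, so $(-)^{\ptors{\pi}}_{(p)}$ coincides with the composite of $p$-localization followed by the right adjoint out of $\Sp_{(p)}^\cn$; in particular $(R^\times)^{\ptors{\pi}}_{(p)} \simeq (R^\times_{(p)})^{\ptors{\pi}}_{(p)} = \disc[R]$. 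Combining these three identifications yields the desired equivalence.

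I do not expect any real obstacle; essentially all of the analytic content (the identification of $\tau_{\ge 0}C_n^f$ with $(-)^{\ptors{\pi}}_{(p)}$ on almost $L_n^f$-local spectra) has already been extracted in Proposition \ref{Almost_Lnf_Local_Connective_Cover}. The only care needed is the elementary bookkeeping around $p$-localization of the unit spectrum, handled by Corollary \ref{Almost_Lnf_Local_Units} and the fact that both $C_n^f$ and $(-)^{\ptors{\pi}}_{(p)}$ factor through $p$-localization.
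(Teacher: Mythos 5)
Your proof is correct and takes essentially the same route as the paper: apply Corollary \ref{Almost_Lnf_Local_Units} to see that $R^\times_{(p)}$ is almost $L_n^f$-local, then Proposition \ref{Almost_Lnf_Local_Connective_Cover}, then identify both sides of the resulting isomorphism with the quantities in the claim. One small point worth flagging: the assertion that $(-)^{\ptors{\pi}}_{(p)}$ coincides with ``$p$-localization followed by the right adjoint out of $\Sp_{(p)}^\cn$'' is not a formal consequence of the factorization of the inclusion, since $(-)_{(p)}$ is a \emph{left} adjoint to $\Sp_{(p)}^\cn \subseteq \Sp^\cn$ and composing a left adjoint with a right adjoint does not automatically yield a right adjoint of the composite. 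The needed fact --- that the map $Y \to Y_{(p)}$ induces an isomorphism on $\Map(Z,-)$ for every $Z \in \Sp_{(p)}^{\ptors{\pi}}$ --- does hold, because such $Z$ is a filtered colimit of $\pi$-finite ($p$-local, compact in $\Sp^\cn$) spectra on which all primes $q\neq p$ act invertibly, so $\Map(Z, Y[q^{-1}]) \simeq \Map(Z,Y)$ for each such $q$ and the comparison map is an isomorphism. The paper performs the same silent identification $(R^\times)^{\ptors{\pi}}_{(p)} \simeq ((R^\times)_{(p)})^{\ptors{\pi}}_{(p)}$, so this is not a genuine gap, only a step where the adjunction-theoretic phrasing overstates what is formal.
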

\begin{proof}
    By \Cref{Almost_Lnf_Local_Units}, the $p$-localization of the spectrum $R^\times$ is almost $L_n^f$-local as well. Hence, by \Cref{Almost_Lnf_Local_Connective_Cover}, we have
    \[
        \tau_{\ge 0}C_n^f(R^\times)= \tau_{\ge 0}C_n^f((R^\times)_{(p)}) \simeq (R^\times)^{\ptors{\pi}}_{(p)} := \disc[R]. 
    \] 
\end{proof}

\begin{cor}
    The connective cover of the $p$-localized discrepancy spectrum of $E_n$ is isomorphic to $\tau_{\ge 0}(\Sigma^n I_{\QQ_p/\ZZ_p})$.
\end{cor}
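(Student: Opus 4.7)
The proof will be essentially a direct combination of the two main results that immediately precede this corollary, so the work has really already been done — what remains is to unwind the definition of the discrepancy spectrum and match it with the quantities appearing in those theorems.

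First I would recall that, by the Ando--Hopkins--Rezk definition (as reviewed in the subsection on the discrepancy spectrum), the discrepancy spectrum of an $L_n$-local commutative ring spectrum $R$ is the fiber of $R^\times \to L_n R^\times$, and for $L_n^f$-local $R$ this coincides with the fiber of $R^\times \to L_n^f R^\times$. Since the functors $(-)_{(p)}$ and $L_n^f$ are both smashing, $p$-localizing this fiber sequence yields precisely $C_n^f(R^\times)$. Applied to $R = E_n$, which is $L_n$-local and hence $L_n^f$-local, the $p$-localized discrepancy spectrum of $E_n$ is $C_n^f(E_n^\times)$, and its connective cover is $\tau_{\ge 0}C_n^f(E_n^\times)$.

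Next I would invoke Example~\ref{ex:E_n_units_almost_lnf} (equivalently, Corollary~\ref{Almost_Lnf_Local_Units} applied to the $L_n$-local ring $E_n$) to ensure that the hypothesis of Theorem~\ref{discrepency_fiber} is satisfied. Theorem~\ref{discrepency_fiber} then gives the identification
\[
\tau_{\ge 0}C_n^f(E_n^\times) \:\simeq\: \disc[E_n].
\]
Finally, Theorem~\ref{E_n_tor_units_bc} identifies the right-hand side with $\tau_{\ge 0}(\Sigma^n I_{\QQ_p/\ZZ_p})$, completing the proof.

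There is no real obstacle here: all the conceptual content sits in Theorem~\ref{Sphere_or_E_n} (which feeds Theorem~\ref{E_n_tor_units_bc} via the $(n+\tfrac12)$-connectedness of $\LocMod_{E_n}$) and in Theorem~\ref{discrepency_fiber} (which expresses the classical Ando--Hopkins--Rezk construction in terms of $\pi$-torsion units). The only thing to be a little careful about is the bookkeeping between the two equivalent definitions of the discrepancy spectrum (via $L_n$ versus $L_n^f$) and the fact that $p$-localization commutes with $C_n^f$, both of which are addressed in the preliminary discussion of almost $L_n^f$-local spectra.
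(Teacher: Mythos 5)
Your proof is correct and follows essentially the same route as the paper: identify the $p$-localized discrepancy spectrum of $E_n$ with $C_n^f(E_n^\times)$, apply \Cref{discrepency_fiber} to rewrite its connective cover as $\disc[E_n]$, and conclude via \Cref{E_n_tor_units_bc}. The extra bookkeeping you do (explicitly invoking \Cref{ex:E_n_units_almost_lnf}/\Cref{Almost_Lnf_Local_Units} and the $L_n$-versus-$L_n^f$ comparison) is all contained in the paper's preliminary discussion and is left implicit in the paper's own proof.
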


\begin{proof}
    Recall that the connective cover of the $p$-localized discrepancy spectrum of $E_n$ is isomorphic to $\tau_{\ge 0}C_n^f(E_n^\times)$. By \Cref{discrepency_fiber}, we have $\tau_{\ge 0}C_n^f(E_n^\times) \simeq \disc[E_n]$. Hence, the result follows from \Cref{E_n_tor_units_bc}. 
\end{proof}

\begin{rem}
    Using the vanishing of the telescopic homology of sufficiently connected Eilenberg--MacLane spaces, established in \cite[Theorem E]{TeleAmbi}, the same argument as in the proof of
    \Cref{Pi_Units_Height}, shows that $\disc[R]$ is $n$-truncated for every $R \in \calg(L_n^f \Sp)$. Together with \Cref{discrepency_fiber}, this constitutes a telescopic generalization of \cite[Theorem 4.11]{stringorientation}.
\end{rem}

As another consequence of \Cref{discrepency_fiber}, we obtain the following property of the functor $\disc[-]$.
\begin{cor}\label{discrepency_filtered_colimits}
    The functor $\disc[-]\colon \calg(L_n^f\Sp)\to \Sp_{(p)}^{\ptors{\pi}}$ preserves filtered colimits. 
\end{cor}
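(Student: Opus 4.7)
The strategy is to express $\disc[-]$ as a composite of three simpler functors, each preserving filtered colimits. Since any $L_n^f$-local commutative ring spectrum is almost $L_n^f$-local (\Cref{ex:lnf_almost_lnf}), \Cref{discrepency_fiber} furnishes a natural isomorphism
\[
\disc[R]\;\simeq\;\tau_{\ge 0}\,C_n^f(R^\times)
\]
for all $R\in\calg(L_n^f\Sp)$. It thus suffices to show that each of the three functors
\[
(-)^\times\colon \calg(\Sp)\longrightarrow \Sp^\cn,\qquad C_n^f\colon\Sp\longrightarrow\Sp,\qquad \tau_{\ge0}\colon\Sp\longrightarrow \Sp^\cn
\]
preserves filtered colimits. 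Along the way, I would point out that filtered colimits in $\calg(L_n^f\Sp)$ and in $\Sp_{(p)}^{\ptors{\pi}}$ are computed in $\calg(\Sp)$ and in $\Sp^\cn$ respectively, since both $L_n^f\Sp \subseteq \Sp$ and $\Sp_{(p)}^{\ptors{\pi}} \subseteq \Sp^\cn$ are closed under colimits (the former as a smashing localization, the latter by construction).

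The middle two functors are immediate: $C_n^f$ is a smashing colocalization and hence preserves all colimits of spectra, while $\tau_{\ge 0}$ preserves filtered colimits because $\pi_k$ does and a map of connective spectra is an equivalence iff it induces isomorphisms on all homotopy groups.

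The main step is to verify that the units functor $(-)^\times$ preserves filtered colimits. Given a filtered diagram $\{R_\alpha\}$ in $\calg(\Sp)$ with colimit $R$, I would check that the canonical comparison map $\phi\colon \colim R_\alpha^\times \to R^\times$ of connective spectra is an isomorphism by inspecting homotopy groups. For $k\ge 1$, the identifications $\pi_k R_\alpha^\times \simeq \pi_kR_\alpha$ and $\pi_k R^\times\simeq \pi_k R$ (via the identity component of the units space) reduce the claim to the standard commutation of $\pi_k$ with filtered colimits of spectra. For $k=0$, $\phi$ induces the natural map
\[
\colim \pi_0(R_\alpha)^\times \longrightarrow (\colim \pi_0 R_\alpha)^\times = \pi_0(R)^\times.
\]
This is the main (and only substantive) point: it is an isomorphism by the classical observation that in a filtered colimit of commutative rings, every unit is witnessed by an element invertible at a finite stage. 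Combining these three steps proves the corollary.
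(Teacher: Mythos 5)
Your proof is correct and follows essentially the same route as the paper: both factor $\disc[-]$ through $(-)^\times$, $C_n^f$, and $\tau_{\ge 0}$, note that the first and last functors are colimit-friendly because the embeddings $L_n^f\Sp\subseteq\Sp$ and $\Sp_{(p)}^{\ptors{\pi}}\subseteq\Sp^\cn$ are closed under (filtered) colimits, and reduce the units functor to a $\pi_0$-statement about units in filtered colimits of commutative rings. The only cosmetic difference is in how the $(-)^\times$ step is packaged: the paper passes through $\CMon(\Spc)$ and the group-completion adjunction (echoing Mathew--Stojanoska), whereas you verify the comparison map is an isomorphism directly on homotopy groups; both arguments come down to the same observation on $\pi_0$, so the gain is only presentational.
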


\begin{proof}
First, observe that the fully faithful embedding $\Sp^{\ptors{\pi}}_{(p)} \into \Sp_{(p)}^\cn$ is conservative and preserves filtered colimits, so it suffices to prove the claim when regarding $\disc[-]$ as a functor into $p$-local connective spectra. 

By \Cref{discrepency_fiber}, we have $\disc[R]\simeq \tau_{\ge 0}C_n^f(R^\times)$ for $R\in \calg(L_n^f\Sp)$. Namely,  we may write the functor 
$\disc[-]$ as the composition  
\[
    \calg(L_n^f\Sp) \xhookrightarrow[(1)]{\quad} 
    \calg(\Sp) \xrightarrow[(2)]{(-)^\times} 
    \Sp \xrightarrow[(3)]{\:\: C_n^f\:\:} 
    \Sp_{(p)} \xrightarrow[(4)]{\tau_{\ge 0}}
    \Sp_{(p)}^{\cn}. 
\]
We will proceed by showing that each of the functors in this composite preserve filtered colimits. 
\begin{enumerate}

    \item The embedding $\calg(L_n^f\Sp) \into \calg(\Sp)$ is obtained from the lax symmetric monoidal, colimit preserving, fully faithful embedding $L_n^f\Sp \into \Sp$ by applying $\calg(-)$, and hence it preserves filtered colimits. 
    
    \item The argument for the functor  $(-)^\times \colon \calg(\Sp) \to \Sp$ is similar to \cite[Proposition 2.3.3]{mathew2016picard}, and we give it for completeness. The lax symmetric monoidal functor $\Omega^\infty \colon \Sp \to \Spc$ preserves filtered colimits, and therefore so does the induced functor $\calg(\Sp) \to \CMon(\Spc),$ taking a commutative algebra to its underlying space with the multiplicative commutative monoid structure. Furthermore, the functor $\CMon(\Spc) \to \Sp^\cn$, right adjoint to the inclusion of connective spectra as group-like commutative monoids, also preserves filtered colimits. Indeed, this can be checked after composing with the conservative filtered colimit preserving forgetful functor $\CMon(\Spc) \to \Spc$, and the invertible elements form a connected summand, so the claim can be easily verified on $\pi_0$. 
    
    \item The functor $C_n^f\colon \Sp\to \Sp_{(p)}$ is the acyclification functor associated with the \emph{smashing} localization $L_n^f$ and hence it preserves all small colimits. 
    \item The functor $\tau_{\ge 0} \colon  \Sp_{(p)}\to \Sp_{(p)}^\cn$ preserves filtered colimits since the formation of homotopy groups preserves filtered colimits. 
\end{enumerate}

We deduce that their composition $\disc[-]$ preserves filtered colimits. 
\end{proof}

\subsection{Virtual orientability of \texorpdfstring{$\Sp_{T(n)}$}{Sp T(n)}}\label{ssec:ortelescopic}

We now turn to the application of the theory of orientations and the Fourier transform to the $T(n)$-local setting. In particular, we construct a $T(n)$-local lift of the chromatic Fourier transform (\cref{Tn_orientation_Intro}) and prove the corresponding $T(n)$-local affineness, Eilenberg--Moore and Galois results (\cref{Tn_Applications_Intro} and \Cref{Higher_Kummer_Intro}).

\subsubsection{Virtual $\FF_p$-orientability \& applications}

In \cite[\S 5]{AmbiKn}, the $(\ZZ_{(p)},n)$-orientability of $E_n$ was used to prove affineness results for $\Sp_{K(n)}$, as well as Eilenberg--Moore type formulas for the cohomology of $\pi$-finite spaces with $K(n)$-local coefficients. From the Fourier-theoretic perspective, this is a formal consequence of $\Sp_{K(n)}$ being virtually $(\FF_p,n)$-orientable. Similarly, the next proposition allows us to lift all of these results to $\Sp_{T(n)}$.

\begin{prop}\label{Tn_Virt_Or}
For all $n \ge 0$, the $\infty$-category $\Sp_{T(n)}$ is virtually $(\FF_p,n)$-orientable. 
\end{prop}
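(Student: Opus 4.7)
The plan is to deduce the virtual $(\FF_p,n)$-orientability of $\Sp_{T(n)}$ from that of $\Sp_{K(n)}$ via the $K(n)$-localization functor, using the nil-conservativity criterion established in \Cref{nil_cons_virt_F_p}.

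First, I would observe that the $K(n)$-localization provides a functor
\[
    L_{K(n)} \colon \Sp_{T(n)} \too \Sp_{K(n)}
\]
in $\calg(\Prl^{\sad{\infty}}_\st)$. Both $\infty$-categories are stable and presentably symmetric monoidal, and both are $\infty$-semiadditive (the $K(n)$-local case being a theorem of Hopkins--Lurie, the $T(n)$-local case being the main result of \cite{TeleAmbi}); the functor $L_{K(n)}$ is symmetric monoidal and colimit preserving since $\Sp_{K(n)}$ is a symmetric monoidal Bousfield localization of $\Sp_{T(n)}$.

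The main point is then to show that $L_{K(n)}$ is nil-conservative in the sense of \cref{defn:nilconservative}. Equivalently, I must verify that for every $R \in \alg(\Sp_{T(n)})$ with $L_{K(n)}R \simeq 0$, one has $R \simeq 0$. This is a well-known consequence of the Hopkins--Smith nilpotence theorem: indeed, if $R$ is nonzero in $\Sp_{T(n)}$, then its unit $1 \in \pi_0 R$ is nonzero, and by nilpotence applied to a type $n$ generator $F(n)$ of $\Sp_{T(n)}$ (or, more directly, to the class $1 \in \pi_0 R$ acting on $R$), the ring $R$ must have nontrivial Morava $K$-theory, and in particular nontrivial $K(n)$-localization. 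This is the same input used in \cite[Theorem 4.4.4]{TeleAmbi} to show that $L_{K(n)}$ is conservative on dualizable objects, and nil-conservativity is precisely the ring-theoretic analogue.

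Having established that $L_{K(n)}$ is nil-conservative, I would combine this with \cref{Kn_Zp_Or}, which asserts that $\Sp_{K(n)}$ is virtually $(\ZZ_{(p)},n)$-orientable, and hence \textit{a fortiori} virtually $(\FF_p,n)$-orientable (by \cref{Virt_Or_Push} applied to $\ZZ_{(p)} \to \FF_p$). A direct application of \cref{nil_cons_virt_F_p} to $L_{K(n)}$ then yields the desired virtual $(\FF_p,n)$-orientability of $\Sp_{T(n)}$. The only nontrivial step is the nil-conservativity of $L_{K(n)}$, which is where the nilpotence theorem enters; the rest of the argument is a formal application of the machinery developed in \cref{sec:examplesforspecificrings}.
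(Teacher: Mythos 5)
Your proof is correct and follows essentially the same strategy as the paper: combine \cref{nil_cons_virt_F_p} with a nil-conservative functor landing in a virtually $(\FF_p,n)$-orientable target. The only genuine difference is the choice of that functor. You use $L_{K(n)}\colon \Sp_{T(n)} \to \Sp_{K(n)}$ (together with \cref{Kn_Zp_Or} for the target); the paper instead uses $E_n\otimes(-)\colon \Sp_{T(n)}\to \LocMod_{E_n}$, whose target is outright $(\FF_p,n)$-orientable by \cref{LT_Zp_Or}, and cites the nil-conservativity of $E_n \otimes (-)$ from \cite[Corollary~5.1.17]{TeleAmbi}. Your route is a thin layer on top of theirs, since $\Sp_{K(n)}$'s virtual orientability is itself derived from $E_n$'s.

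One thing to tighten: your justification of the nil-conservativity of $L_{K(n)}$ is not quite as immediate as you suggest. The nilpotence theorem on its own tells you that a nonzero ring spectrum $R$ has \emph{some} nonvanishing Morava $K$-theory $K(m)_*R$; it does not directly single out $K(n)$, and ruling out $m\neq n$ for a $T(n)$-local ring requires additional argument. Also, \cite[Theorem~4.4.4]{TeleAmbi} goes in the opposite direction (nil-conservative $\Rightarrow$ conservative on dualizables), so it is not the result you want. The cleanest way to get nil-conservativity of $L_{K(n)}$ is precisely by factoring $E_n\otimes(-)$ through $L_{K(n)}$: if a composite is nil-conservative, so is the first factor. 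In other words, the fact you are relying on is exactly the one the paper uses directly, which is why the paper skips the intermediate stop at $\Sp_{K(n)}$.
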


\begin{proof}
    The $T(n)$-local commutative ring spectrum $E_n$ is $(\ZZ_{(p)},n)$-orientable by \Cref{LT_Zp_Or} and hence in particular $(\FF_p,n)$-orientable by \Cref{Or_Push}. Since the functor 
    \[
        E_n\otimes(-)\colon 
        \Sp_{T(n)} \too 
        \LocMod_{E_n}
    \] 
    is nil-conservative (see, e.g., \cite[Corollary 5.1.17]{TeleAmbi}\footnote{The height $0$ case, while not covered by the referred corollary, follows easily from the fact that $\overline{\QQ}$ is a retract of $E_0$ in $\Sp_\QQ$.}), we deduce from \Cref{nil_cons_virt_F_p} that $\Sp_{T(n)}$ is virtually $(\FF_p,n)$-orientable.
\end{proof}

\begin{rem}
    The virtual $(\FF_p,n)$-orientability of $\Sp_{T(n)}$ depends crucially on the fact that it is $\infty$-semiadditive of semiadditive height $n$ (\cite[Theorem 4.4.5]{AmbiHeight}). 
    However, those properties alone do not suffice to guarantee that a stable $\infty$-category is virtually $(\FF_p,n)$-orientable. Indeed, for $n=1$, the universal example of such an $\infty$-category is $\tsadi_1$ (constructed in \cite[Theorem 5.3.6]{AmbiHeight}). In \cite{Yuan2022TheSO}, Yuan constructs a commutative algebra $\Sph_p^{\tsadi_1}\in \calg(\tsadi_1)$, whose $p$-th cyclotomic extension is not Galois (\cite[Proposition 3.9]{Yuan2022TheSO}). By \Cref{Cyc_Galois}, this implies that $\tsadi_1$ is \emph{not} virtually $(\FF_p,1)$-orientable.  
\end{rem}

The virtual $(\FF_p,n)$-orientability of $\Sp_{T(n)}$ implies the following affineness result:

\begin{thm}
\label{Tn_Affineness}
    Let $n\ge 0$, and let $A$ be a $\pi$-finite space for which $\pi_1(A,a)$ is a $p$-group and $\pi_{n+1}(A,a)$ is of order prime to $p$, for every $a\in A$. Then, $A$ is $\Sp_{T(n)}$-affine.
\end{thm}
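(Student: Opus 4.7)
The plan is to derive \Cref{Tn_Affineness} directly by combining \Cref{Tn_Virt_Or}, which establishes that $\Sp_{T(n)}$ is virtually $(\FF_p,n)$-orientable, with \Cref{rem:non_p_spaces_affine}, which extends the conclusion of \Cref{p_Affineness} from $\pi$-finite $p$-spaces to the strictly larger class of $\pi$-finite spaces $A$ such that $\pi_1(A,a)$ is a $p$-group and $\pi_{n+1}(A,a)$ has order prime to $p$, provided the ambient $\infty$-category is $p$-local. Since $T(n)$ is $p$-local, the $\infty$-category $\Sp_{T(n)}$ is $p$-local as well, so for $n \ge 1$ all hypotheses of \Cref{rem:non_p_spaces_affine} are met and the result follows.

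The case $n = 0$ must be treated separately, because $\Sp_{T(0)} = \Sp_{\QQ}$ is not $p$-local. However, the hypotheses then force $\pi_1(A,a)$ to be simultaneously a $p$-group and of order coprime to $p$, hence trivial; combined with $\pi$-finiteness this means $A$ is a finite set, which is automatically $\Sp_{\QQ}$-affine by \Cref{affine_finite_sets}.

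The substantive content for $n \ge 1$ is already packaged in \Cref{rem:non_p_spaces_affine}, whose argument mirrors that of \Cref{p_Affineness}: using \Cref{affineness_extensions} one reduces to the case of connected $A$ and refines the Postnikov tower of $A$ into stages whose fibers are Eilenberg--MacLane spaces $B^k C_q$. The hypotheses on $\pi_1$ and $\pi_{n+1}$ ensure that the only fibers that arise are $B^k C_p$ with $k \ne n+1$, and $B^k C_q$ with $q \ne p$ and $k \ge 2$. The $B^k C_p$ pieces are handled exactly as in \Cref{p_Affineness}—via \Cref{Affiness_Height}(1) when $k \ge n+2$ and via \Cref{orientation_affineness_for_R_modules} when $k \le n$—while the new fibers $B^k C_q$ with $q \ne p$ and $k \ge 2$ are $\Sp_{T(n)}$-affine by \Cref{Affiness_Height}(1), using that $\Sp_{T(n)}$ has semiadditive height $0$ at every prime $q \ne p$ by $p$-locality. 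Closure of $\cC$-affineness under extensions then assembles these into the $\cC$-affineness of $A$.

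I do not anticipate any genuine obstacle: the potentially troublesome fiber $BC_q$ with $q \ne p$ (which is not handled by \Cref{Affiness_Height}) is precisely the one ruled out by the hypothesis that $\pi_1(A,a)$ is a $p$-group. All remaining ingredients—$\infty$-semiadditivity, semiadditive height, and the affineness consequences of virtual $(\FF_p,n)$-orientability—have already been developed in the earlier sections and apply verbatim.
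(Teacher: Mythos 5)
Your approach for $n\ge 1$ matches the paper's proof exactly: invoke \Cref{Tn_Virt_Or} and then apply \Cref{p_Affineness} together with \Cref{rem:non_p_spaces_affine}. The paper's actual proof is a one-liner along precisely these lines, and it treats all $n\ge 0$ uniformly.

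Your separate handling of $n=0$, however, contains two errors. First, $\Sp_{T(0)}=\Sp_\QQ$ \emph{is} $p$-local: the unit $\QQ$ is a $\Sph_{(p)}$-algebra (in fact every integer acts invertibly on $\QQ$), so multiplication by any prime $\ell\ne p$ is invertible in $\Sp_\QQ$, which is exactly what \Cref{rem:non_p_spaces_affine} needs in order to conclude that $\Sp_\QQ$ has semiadditive height $0$ at every $\ell\ne p$. The parenthetical ``e.g., if $\cC$ is stable and $n\ge 1$'' in the remark gives a sufficient condition that happens not to cover $n=0$, but it is not a necessary one; the uniform argument goes through. Second, and more seriously, the claim that the hypotheses force $A$ to be a finite set when $n=0$ is false. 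For $n=0$ the conditions read $\pi_1(A,a)$ is a $p$-group and $\pi_1(A,a)$ has order prime to $p$, which together force $\pi_1(A,a)=0$, but they say nothing about $\pi_k(A,a)$ for $k\ge 2$. For instance $A=B^2C_q$ (for any prime $q$) satisfies the hypotheses and is not discrete. Such $A$ are still $\Sp_\QQ$-affine, but by \Cref{Affiness_Height}(1) applied at each relevant prime, not by \Cref{affine_finite_sets}. Your conclusion for $n=0$ is therefore correct but the argument you give for it is not; the correct route is simply to drop the case split and apply the remark as for $n\ge 1$.
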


Recall that by \Cref{criterion_affineness}, this implies that for every  $R\in \alg(\Sp_{T(n)})^A$, the global sections functor induces an isomorphism
\[
    \Mod_R(\Sp_{T(n)}^A) \iso \Mod_{A_* R}(\Sp_{T(n)}).
\]

\begin{proof}
By \Cref{Tn_Virt_Or}, $\Sp_{T(n)}$ is virtually $(\FF_p,n)$-orientable, and as it is also stable and $p$-local, the $\Sp_{T(n)}$-affineness of $A$ follows from \Cref{p_Affineness} and \Cref{rem:non_p_spaces_affine}. 
\end{proof}

The affineness of the spaces in \Cref{Tn_Affineness} gives in turn corresponding Eilenberg--Moore type results:

\begin{cor}
\label{Tn_EM}
    Let $A$ be as in \Cref{Tn_Affineness} (e.g., an $n$-finite $p$-space) and let $R\in\alg(\Sp_{T(n)})$. For every $\pi$-finite space $B$ and an arbitrary space $B'$ mapping to $A$, we have
    \[
        R^B \otimes_{R^A} R^{B'} \simeq 
        R^{B\times_A B'} \qin \Sp_{T(n)}
    \]
\end{cor}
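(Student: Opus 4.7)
The plan is to reduce this to \cref{Affineness_Kunneth_EM}, using the affineness result just established (\cref{Tn_Affineness}) together with the K\"unneth formula on $\pi$-finite fibers coming from ambidexterity. By symmetry of the formula under swapping $B$ and $B'$, I may assume that $B$ is the $\pi$-finite one and $B'$ is arbitrary. Viewing the map $f\colon B \to A$ as the ``interesting'' map and $g\colon B'\to A$ as the ``test'' map, the statement becomes precisely that $f$ is Eilenberg--Moore with respect to $R$ in the sense of \cref{Eilenberg_Moore}.

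First, I would observe that $A$ is $\Sp_{T(n)}$-affine. This is exactly the content of \cref{Tn_Affineness}, using that $A$ is $\pi$-finite with $\pi_1(A,a)$ a $p$-group and $\pi_{n+1}(A,a)$ of order prime to $p$ (and \cref{rem:non_p_spaces_affine} for the non-$p$-part); alternatively, when $A$ itself is an $n$-finite $p$-space, \cref{p_Affineness} applies directly since $\Sp_{T(n)}$ is virtually $(\FF_p,n)$-orientable by \cref{Tn_Virt_Or}.

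Next, I would show that each fiber $F_a = f^{-1}(a)$ of $f\colon B \to A$ is Eilenberg--Moore with respect to $R$. Since $A$ is $\pi$-finite and $B$ is $\pi$-finite, the long exact sequence on homotopy groups shows that each $F_a$ is $\pi$-finite as well. The $\infty$-category $\Sp_{T(n)}$ is $\infty$-semiadditive by \cite[Theorem~A]{TeleAmbi}, so every $\pi$-finite space is $\Sp_{T(n)}$-ambidextrous. Hence \cref{Kunneth_Ambi} gives a K\"unneth isomorphism
\[
    R^{F_a} \otimes_R R^{X} \iso R^{F_a \times X}
\]
for every space $X$ and every $R \in \alg(\Sp_{T(n)})$, which is precisely the condition that $F_a$ is Eilenberg--Moore with respect to $R$ (i.e., the map $F_a \to \pt$ is Eilenberg--Moore).

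Finally, I would invoke \cref{Affineness_Kunneth_EM}: since $A$ is $\Sp_{T(n)}$-affine and all fibers of $f\colon B\to A$ are Eilenberg--Moore with respect to $R$, the map $f$ itself is Eilenberg--Moore with respect to $R$. Unwinding the definition, this yields the desired natural isomorphism
\[
    R^B \otimes_{R^A} R^{B'} \iso R^{B\times_A B'}
\]
for every $g\colon B' \to A$, with no finiteness hypothesis on $B'$. There is no serious obstacle here: the work was front-loaded into proving \cref{Tn_Affineness}; once $\Sp_{T(n)}$-affineness of the base $A$ is in hand, this corollary is a formal consequence of the general affineness-plus-K\"unneth slogan of \cref{Affineness_Kunneth_EM}.
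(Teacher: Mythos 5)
Your proof is correct and follows essentially the same route as the paper's: the paper invokes \cref{Affiness_Eilenberg_Moore} directly (using that $A$ is $\Sp_{T(n)}$-affine by \cref{Tn_Affineness} and that $A$ and $B\to A$ are $\Sp_{T(n)}$-ambidextrous by $\infty$-semiadditivity), and its implication (1)$\Rightarrow$(2) is itself proved by exactly the combination of \cref{Kunneth_Ambi} and \cref{Affineness_Kunneth_EM} that you spell out. In other words, you have simply unrolled the proof of \cref{Affiness_Eilenberg_Moore}(1)$\Rightarrow$(2) inline; the substance is identical.
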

\begin{proof}
    The space $A$ and the map $B \to A$ are $\Sp_{T(n)}$-ambidextrous by the $\infty$-semiadditivity of $\Sp_{T(n)}$ (\cite[Theorem A]{TeleAmbi}), and $A$ is  $\Sp_{T(n)}$-affine, by \Cref{Tn_Affineness}. Thus, the claim follows from \Cref{Affiness_Eilenberg_Moore}.
\end{proof}

We also obtain the following result on the ubiquity of $T(n)$-local Galois extensions:

\begin{cor}\label{Tn_Galois}
    Let $A$ be as in \Cref{Tn_Affineness} (e.g., an $n$-finite $p$-space). Every $R \in \calg(\Sp_{T(n)})^A$ is an $A$-Galois extension of $A_*R$, in the sense of \Cref{def:Galois}. 
\end{cor}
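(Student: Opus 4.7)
The plan is to reduce this immediately to the general affineness-implies-Galois principle already established in \Cref{Galois_Auto_Rel}, once affineness is in hand from \Cref{Tn_Affineness}.

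More precisely, I would first invoke \Cref{Tn_Affineness} to conclude that under the stated hypotheses on $A$ (the fundamental group is a $p$-group at each basepoint and $\pi_{n+1}$ has order prime to $p$), the space $A$ is $\Sp_{T(n)}$-affine. This is the only non-formal input: it encodes the virtual $(\FF_p,n)$-orientability of $\Sp_{T(n)}$ (\Cref{Tn_Virt_Or}) together with the $p$-local version of \Cref{p_Affineness} (see \Cref{rem:non_p_spaces_affine}) applied inside the stable, $p$-local, $\infty$-semiadditive $\infty$-category $\Sp_{T(n)}$.

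With $\Sp_{T(n)}$-affineness of $A$ established, the rest is purely formal. By definition $A_*R$ is the global sections of the local system $R$, i.e.\ $A_*R \simeq q_*R$ for the terminal map $q\colon A\to \pt$. Then \Cref{Galois_Auto_Rel}, applied to $\cC = \Sp_{T(n)}$ and to the $\cC$-affine space $A$, says that every $R \in \calg(\cC)^A$ is automatically a faithful $A$-Galois extension of $q_*R = A_*R$. This yields both conditions (G1) and (G2) of \Cref{def:Galois} for free, as well as the faithfulness of the resulting Galois extension.

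Since no step beyond the invocation of these two prior results is needed, there is no real obstacle here: all the work has been done upstream, in establishing virtual $(\FF_p,n)$-orientability of $\Sp_{T(n)}$ (the genuine content) and in the abstract argument of \Cref{automatic_Galois} that upgrades condition (G1) to a full faithful Galois extension in the presence of affineness. The single line of proof is: ``Combine \Cref{Tn_Affineness} with \Cref{Galois_Auto_Rel}.''
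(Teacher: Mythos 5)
Your proof is correct and matches the paper's own argument essentially verbatim: invoke \Cref{Tn_Affineness} to get $\Sp_{T(n)}$-affineness of $A$, then apply \Cref{Galois_Auto_Rel}. Nothing further to add.
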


In particular, for $n \ge 1$ and $G$ a finite $p$-group, every $R \in \calg(\Sp_{T(n)})^{BG}$ is a $G$-Galois extension of its fixed point algebra $R^{hG}$. 

\begin{proof}
    By \Cref{Tn_Affineness}, $A$ is $\Sp_{T(n)}$-affine. Hence, the claim follows from \Cref{Galois_Auto_Rel}.
\end{proof}

\begin{rem}
    The $K(n)$-local analogues of the above results follow easily, either  using the colimit preserving symmetric monoidal functor $L_{K(n)}\colon \Sp_{T(n)}\to \Sp_{K(n)}$ or by an identical argument starting from the virtual $(\FF_p,n)$-orientability of $\Sp_{K(n)}$. In particular, we recover the affineness result of \cite[Theorem 5.4.3]{AmbiKn} and the Eilenberg--Moore type result of \cite[Theorem 5.4.8]{AmbiKn}. 
\end{rem}

The virtual $\FF_p$-orientability of $\Sp_{T(n)}$, given by \Cref{Tn_Virt_Or}, bootstraps automatically to virtual $\OR$-orientability for every connective $n$-truncated $\pi$-finite $p$-local commutative ring spectrum $\OR$, by \Cref{Virt_Fp_Bootstrap}. Namely, the corresponding $\OR$-cyclotomic extensions, over which we have a Fourier transform isomorphism, are \textit{faithful}. Specializing to $\OR = \ZZ/p^r$, these are precisely the $T(n)$-local higher cyclotomic extensions $R_{n,r}^f$ constructed in \cite{carmeli2021chromatic}. We thus obtain Fourier isomorphisms over these extensions. 

\begin{thm} \label{Tn_Fourier}
    For every $n\ge 0$ and $r\ge 1$, there is a natural isomorphism of $T(n)$-local commutative $R^f_{n,r}$-algebras
    \[
        \Four_{\omega \colon 
        }R_{n,r}^f[M] \iso 
        (R_{n,r}^f)^{\und{\Sigma^n M^*}},
    \]
    where $M$ is a connective $\pi$-finite $\ZZ/p^r$-module and
    $M^*$ is its Pontryagin dual.
\end{thm}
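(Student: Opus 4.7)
The plan is to deduce the theorem as a direct application of the abstract machinery of Section~\ref{sec:orientations}, using \Cref{Tn_Virt_Or} as the key input, and then to extend from $[0,n]$-finite modules to arbitrary connective $\pi$-finite $\ZZ/p^r$-modules via a semiadditive height argument.

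First, I would upgrade the virtual $(\FF_p,n)$-orientability of $\Sp_{T(n)}$ (\Cref{Tn_Virt_Or}) to virtual $(\ZZ/p^r,n)$-orientability by applying \Cref{Virt_Fp_Bootstrap} to the strict quotient map $\ZZ/p^r \to \FF_p$. Consequently, the universal $\ZZ/p^r$-cyclotomic extension $\orcyc[\Sph_{T(n)}]{\ZZ/p^r}{n}$ is faithful in $\Sp_{T(n)}$ (\Cref{virtually_oriented_universal_faithful}). Since $\Sp_{T(n)}$ is stable, $\infty$-semiadditive and virtually $(\FF_p,n)$-orientable, \Cref{Cyclo_Univ_Or} provides a canonical identification
\[
\orcyc[\Sph_{T(n)}]{\ZZ/p^r}{n} \:\simeq\: \orcyc[\Sph_{T(n)}]{p^r}{n} \:=:\: R_{n,r}^f.
\]

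Next, the tautological $(\ZZ/p^r,n)$-orientation $\omega$ on $R_{n,r}^f$ furnished by the universal property, combined with the general construction of \Cref{Def_Four}, yields a natural transformation of commutative $R_{n,r}^f$-algebras
\[
\Four_\omega \colon R_{n,r}^f[M] \too (R_{n,r}^f)^{\und{\Dual{M}{n}}}
\]
on $\Modfin{\ZZ/p^r}{n}$. Since $\omega$ is an orientation (the co-represented universal one after base change along the faithful unit $\Sph_{T(n)} \to R_{n,r}^f$), this map is an isomorphism by the very definition of an orientation (\Cref{def:orientation}). For $[0,n]$-finite $\ZZ/p^r$-modules $M$, a direct homotopy group computation gives a natural isomorphism $\Dual{M}{n} \simeq \Sigma^n M^*$, recovering the stated form of the Fourier transform in this range.

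Finally, to extend to an arbitrary connective $\pi$-finite $\ZZ/p^r$-module $M$, I would use the Postnikov cofiber sequence
\[
\tau_{\ge n+1} M \too M \too \tau_{\le n} M
\]
and argue that both the source and the target of the Fourier transform are insensitive to the fiber $\tau_{\ge n+1} M$. On the target side, $(\tau_{\ge n+1} M)^*$ has homotopy concentrated in degrees $\le -n-1$, so $\und{\Sigma^n (\tau_{\ge n+1} M)^*} \simeq \pt$, yielding $\und{\Sigma^n M^*} \simeq \und{\Sigma^n (\tau_{\le n} M)^*}$. On the source side, the underlying space $\und{\tau_{\ge n+1} M}$ is an $(n+1)$-connected $\pi$-finite $p$-space, so by \cite[Proposition~3.2.3]{AmbiHeight} applied to $\Sp_{T(n)}$ (which is of height $n$ at $p$), it is $\Sp_{T(n)}$-acyclic; hence the group algebra $R_{n,r}^f[\tau_{\ge n+1} M] \simeq R_{n,r}^f$ and the pushout describing $R_{n,r}^f[M]$ collapses to $R_{n,r}^f[\tau_{\le n} M]$. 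The hard part, such as it is, is to assemble these reductions naturally in $M$ so that the extended transformation is canonically identified with the Fourier transform coming from the orientation, but this is essentially bookkeeping given the colimit-preservation of $\one[-]$ and the higher-semiadditive collapse on the cotensor side.
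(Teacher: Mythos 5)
Your proof is correct and follows essentially the same route as the paper, which simply cites \Cref{Cyclo_Univ_Or} to identify $R_{n,r}^f$ with the universal $(\ZZ/p^r,n)$-oriented algebra $\orcyc[\Sph_{T(n)}]{\ZZ/p^r}{n}$ and then reads off the conclusion from the definition of an orientation; your steps invoking \Cref{Virt_Fp_Bootstrap} and \Cref{virtually_oriented_universal_faithful} are not needed (Cyclo\_Univ\_Or's hypothesis is only virtual $(\FF_p,n)$-orientability, already supplied by \Cref{Tn_Virt_Or}), though they are not incorrect. Your final paragraph extending the isomorphism from $[0,n]$-finite modules to all connective $\pi$-finite $\ZZ/p^r$-modules via the height-$n$ collapse of $\tau_{\ge n+1}M$ is a correct and welcome elaboration of a step the paper leaves implicit.
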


\begin{proof}
    By \Cref{Cyclo_Univ_Or}, $R_{n,r}^f$ is the universal $(\ZZ/p^r,n)$-oriented commutative algebra in $\Sp_{T(n)}$.
\end{proof}


Finally, we also obtain a higher chromatic height analogue of Kummer theory. 

\begin{thm} 
    \label{Tn_Higher_Kummer}
    For every $n\ge 0$, every $R\in \calg(\Sp_{T(n)})$ admitting a primitive higher $p^r$-th root of unity, and every $M \in \Modfin{\ZZ/p^r}{n}$, there is a natural isomorphism of spaces
    \[
        \calg^{\und{M}-\gal}(R;\Sp_{T(n)}) \:\simeq \:
        \Map_{\Sp^\cn}(\Dual{M}{n},R^\times).
    \]
\end{thm}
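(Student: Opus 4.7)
The plan is to reduce the statement to \Cref{Kummer} applied to the base-changed $\infty$-category $\Mod_R(\Sp_{T(n)})$. The main input is to establish that this $\infty$-category is $(\ZZ/p^r,n)$-orientable once $R$ carries a primitive higher $p^r$-th root of unity.

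First, by \Cref{Tn_Virt_Or}, the $\infty$-category $\Sp_{T(n)}$ is virtually $(\FF_p,n)$-orientable. Since $R$ is faithful over itself, \Cref{Orcyc_Scalar_Ext} implies that $\orcyc[R]{\FF_p}{n}\simeq R\otimes \orcyc{\FF_p}{n}$ is faithful in $\Mod_R(\Sp_{T(n)})$, so by \Cref{virtually_oriented_universal_faithful} the category $\Mod_R(\Sp_{T(n)})$ is also virtually $(\FF_p,n)$-orientable. Since $\Sp_{T(n)}$ (and hence $\Mod_R(\Sp_{T(n)})$) is stable and $\infty$-semiadditive (hence of semiadditive height $n$ by \Cref{Fp_orientation_height}), we may apply \Cref{Orientation_Primitive}: the hypothesis that $R$ admits a primitive higher $p^r$-th root of unity means exactly that the unit of $\Mod_R(\Sp_{T(n)})$ carries a primitive $p^r$-th root of unity, and by that proposition this is equivalent to a $(\ZZ/p^r,n)$-orientation on $\Mod_R(\Sp_{T(n)})$.

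Next, applying \Cref{Kummer} with $\cC = \Mod_R(\Sp_{T(n)})$, $\OR = \ZZ/p^r$, and $S$ the unit algebra $R$, yields a natural isomorphism
\[
    \calg^{\und{M}-\gal}_R(\Mod_R(\Sp_{T(n)})) \:\simeq\:
    \Map_{\Sp^\cn}(\Dual{M}{n}, R^\times).
\]
By the definition of $A$-Galois extensions of a commutative algebra $S$ recalled after \Cref{automatic_Galois}, the left-hand side is tautologically $\calg^{\und{M}-\gal}(R;\Sp_{T(n)})$, and the spectrum of units of $R$ computed inside $\Mod_R(\Sp_{T(n)})$ agrees with the spectrum of units of $R$ computed inside $\Sp_{T(n)}$. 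This yields the claimed isomorphism.

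No part of this argument is genuinely hard; the content is packaged entirely in the earlier results. The only subtlety worth spelling out is the comparison between ``primitive higher $p^r$-th root of unity'' and ``$(\ZZ/p^r,n)$-orientation,'' which is where virtual $(\FF_p,n)$-orientability of $\Sp_{T(n)}$ enters decisively via \Cref{Orientation_Primitive} — in a general stable higher semiadditive setting primitivity need not imply the Fourier transform is an isomorphism (cf.\ \Cref{war:_Allen}), so the telescopic input of \Cref{Tn_Virt_Or} is essential.
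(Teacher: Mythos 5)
Your proof is correct and takes the same route as the paper, which simply cites \Cref{Kummer}; you have filled in the implicit details — passing to $\cC = \Mod_R(\Sp_{T(n)})$ with $S = \one_\cC$ and invoking \Cref{Tn_Virt_Or} together with \Cref{Orientation_Primitive} to upgrade the primitive root of unity to a $(\ZZ/p^r,n)$-orientation. Your closing remark correctly identifies why the telescopic input is essential and not a formality.
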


\begin{proof}
    This is a special case of \Cref{Kummer}.
\end{proof}

Classically, Kummer theory is used to classify abelian Galois extensions. Applying \Cref{Tn_Higher_Kummer} to the case of finite abelian $p$-groups, we get a similar classification in arbitrary chromatic heights.  
\begin{cor}
    For every $n\ge 0$, every $R\in \calg(\Sp_{T(n)})$ admitting a primitive higher $p^r$-th root of unity, and every finite abelian $p^r$-torsion group $M$, there is a natural isomorphism
       \[
        \calg^{BM-\gal}(R;\Sp_{T(n)}) \:\simeq \:
        \Map_{\Sp^\cn}(\Sigma^nM^*,\pic(R)).
    \]
\end{cor}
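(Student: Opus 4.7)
The plan is to reduce this corollary to the higher Kummer theorem (\cref{Tn_Higher_Kummer}) by a suspension shift. Assume first that $n \ge 1$, and view $M$ as a discrete $\ZZ/p^r$-module. Then the suspension $\Sigma M$ is concentrated in homotopical degree $1$, and hence belongs to $\Modfin{\ZZ/p^r}{n}$. Moreover, $\und{\Sigma M} \simeq B\und{M} = BM$, so $BM$-Galois extensions are precisely $\und{\Sigma M}$-Galois extensions. Applying \cref{Tn_Higher_Kummer} with $\Sigma M$ in place of $M$ yields
\[
    \calg^{BM-\gal}(R;\Sp_{T(n)}) \simeq \Map_{\Sp^\cn}(\Dual{(\Sigma M)}{n}, R^\times).
\]

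The next step is to identify the shifted Brown--Comenetz dual. Since $M$ is discrete and finite, $\hom_{\Sp}(M, I_{\QQ_p/\ZZ_p}) \simeq M^*$ is concentrated in degree $0$, so
\[
    \Dual{(\Sigma M)}{n} \:=\: \tau_{\ge 0}\hom_{\Sp}(\Sigma M, \Sigma^n I_{\QQ_p/\ZZ_p}) \:\simeq\: \Sigma^{n-1}M^*,
\]
which is already connective thanks to $n \ge 1$. Finally, using that $\Omega \pic(R) \simeq R^\times$ and the adjunction $\Sigma \dashv \Omega$ on $\Sp^\cn$, one obtains
\[
    \Map_{\Sp^\cn}(\Sigma^{n-1}M^*, R^\times) \:\simeq\: \Map_{\Sp^\cn}(\Sigma^{n-1}M^*, \Omega\pic(R)) \:\simeq\: \Map_{\Sp^\cn}(\Sigma^n M^*, \pic(R)),
\]
completing the case $n \ge 1$. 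All of these identifications are natural in $R$ and $M$.

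The case $n = 0$ needs to be treated separately, since $\Sigma M$ is no longer $0$-truncated and the higher Kummer theorem \cref{Tn_Higher_Kummer} does not apply in this form. Here $\Sp_{T(0)} = \Sp_\QQ$, and a primitive higher $p^r$-th root of unity of height $0$ is a primitive $p^r$-th root of unity in the classical sense. The desired identification is then an instance of the $\pic$-version of classical Kummer theory used in \cref{Kummer_Lower}, extended from cyclic groups to arbitrary finite abelian $p^r$-torsion groups $M$ (either by direct-sum decomposition into cyclic factors, or by running the same argument with $M$ in place of $C_m$), and using a (non-canonical) isomorphism $M \simeq M^*$. The main subtlety of the proof is really just unifying the $n=0$ and $n \ge 1$ formulas into a single clean statement; neither step involves any genuine technical obstacle beyond the looping identification $\Omega\pic(R)\simeq R^\times$, which is what allows the $\pic(R)$ on the right to absorb the extra suspension that distinguishes the two cases.
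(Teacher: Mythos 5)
Your proof is correct and follows essentially the same route as the paper: for $n\ge 1$, suspend $M$, apply \cref{Tn_Higher_Kummer} to $\Sigma M$, identify $\Dual{(\Sigma M)}{n}\simeq\Sigma^{n-1}M^*$, and use $\Omega\pic(R)\simeq R^\times$; for $n=0$, invoke the classical $\pic$-Kummer theory (the paper cites \cite[Theorem 3.18]{carmeli2021chromatic} directly, which is exactly the result behind \cref{Kummer_Lower}). The only wrinkle you flag — the non-canonical isomorphism $M\simeq M^*$ at $n=0$ — is a fair point and consistent with the fact that the stated naturality is in $R$, not in $M$.
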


\begin{proof}
The case $n=0$ follows from \cite[Theorem 3.18]{carmeli2021chromatic}. 
For $n\ge 1$ we have 
\[
    \Map(\Sigma^{n}M^*,\pic(R))\simeq
    \Map(\Sigma^{n-1} M^*,\Omega \pic(R)) \simeq 
\]
\[
    \Map(\Sigma^{n-1} M^*,R^\times)\simeq
    \Map(\Dual{\Sigma M}{n}, R^\times).
\]
By \Cref{Tn_Higher_Kummer}, the last space identifies with 

\[
\calg^{\und{\Sigma M}-\gal}(R;\Sp_{T(n)}) =
\calg^{BM-\gal}(R;\Sp_{T(n)}). \qedhere
\]
\end{proof}

\subsubsection{Virtual $\ZZ_{(p)}$-orientability \& speculations}

By \Cref{Zp_Univ_Cyclo}, the infinite cyclotomic extension $R_n^f$ is the universal $(\ZZ_{(p)},n)$-orientable $T(n)$-local commutative ring spectrum, so in particular it supports a Fourier transform for all $\pi$-finite $p$-local $\ZZ$-modules. Note that by universality, this lifts the $K(n)$-local Fourier transform over $R_n$ from \Cref{R_n_fourier}. However, in contrast with $R_n$, we do not know whether $R_n^f$ is \textit{faithful} (even though all the $R_{n,r}^f$-s are). This question can be re-formulated in a way that might shed some light on the relationship between $\Sp_{T(n)}$ and $\Sp_{K(n)}$. 

By \Cref{initial_virtually_R_oriented}, the Bousfield localization of $\Sp_{T(n)}$ with respect to $R_n^f$ is the universal virtually $\ZZ_{(p)}$-orientable symmetric monoidal localization of $\Sp_{T(n)}$. \Cref{Z_p_virt_smashing} tells us that this localization $\widehat{\Sp}_{T(n)} := (\Sp_{T(n)})_{R_n^f}$ is \textit{smashing} and that its unit is given by $(R_n^f)^{hG} \in \calg(\Sp_{T(n)})$, where
\[
    G:= T_p \times \ZZ \:\sseq\: 
    T_p \times \ZZ_p \:\simeq\: 
    \ZZ_p^\times.
\]
We also observe that since, essentially by construction, $\widehat{\Sp}_{T(n)}$ is virtually $\ZZ_{(p)}$-orientable, it is in fact virtually $\Sph_{(p)}$-orientable, by \Cref{Virt_Zp_Bootstrap}. 
Now,
Since $\Sp_{K(n)}$ is a virtually $\ZZ_{(p)}$-orientable localization of $\Sp_{T(n)}$, we obtain a chain of fully faithful embeddings
\[
    \Sp_{K(n)} \:\sseq\: \widehat{\Sp}_{T(n)} \:\sseq\: \Sp_{T(n)}.
\] 

The gap between $\Sp_{K(n)}$ and $\Sp_{T(n)}$ is the subject of 
Ravenel's celebrated \emph{telescope conjecture}, which would imply that all the above inclusions are in fact equalities. However, this conjecture is not only open, but also believed by many experts to be \emph{false} for heights greater than $1$. It is also not known whether there can be any Bousfield localization strictly in between $\Sp_{K(n)}$ and $\Sp_{T(n)}$. In this light, we propose the following:

\begin{ques} 
    What can be said about the location of the intermediate localization $\widehat{\Sp}_{T(n)}$? In particular, is $\widehat{\Sp}_{T(n)} = \Sp_{K(n)}$? Is $\widehat{\Sp}_{T(n)} = \Sp_{T(n)}$?
\end{ques}
  
On the one hand, $\widehat{\Sp}_{T(n)} = \Sp_{T(n)}$ if and only if $R_n^f$ is faithful in $\Sp_{T(n)}$. On the other, $\widehat{\Sp}_{T(n)} = \Sp_{K(n)}$ if and only if $R_n^f$ is itself $K(n)$-local, namely $R_n^f \simeq L_{K(n)}R_n^f \simeq R_n$.  Thus, the failure of the telescope conjecture is equivalent to at least one of these assertions being false, while the failure of both would produce a strictly intermediate localization.

\bibliographystyle{alpha}
\phantomsection\addcontentsline{toc}{section}{\refname}
\bibliography{four}

\end{document}